\numberwithin{equation}{section}
\theoremstyle{plain}
\newtheorem{thm}{Theorem}[section]
\newtheorem*{thm0}{Theorem}
\newtheorem{pro}{Proposition}[section]
\newtheorem{lem}{Lemma}[section]
\theoremstyle{definition}
\newtheorem{defn}{Definition}[section]
\theoremstyle{remark}
\newtheorem*{rem0}{Remark}
\newtheorem*{acknow}{Acknowledgments}
\begin{document}
\title[Hamiltonian stability of the Gauss images]
{Hamiltonian stability of the Gauss images of homogeneous
isoparametric hypersurfaces}
\author{Hui Ma}
\address{Department of Mathematical Sciences, Tsinghua University,
Beijing 100084, P.R. CHINA} \email{hma@math.tsinghua.edu.cn}
\author{Yoshihiro Ohnita}
\address{Osaka City University Advanced Mathematical Institute \&
Department of Mathematics, Osaka City University, Sugimoto,
Sumiyoshi-ku, Osaka, 558-8585, JAPAN}
\email{ohnita@sci.osaka-cu.ac.jp}
\thanks{2010 {\it Mathematics Subject Classification}. Primary$\colon$ 53C42;
Secondary$\colon$ 53C40, 53D12}
\thanks{The first named author is partially supported by NSFC grant
No.~10501028 and No.~10971111, NKBRPC No.~2006CB805905 and a
scholarship from the China Scholarship Council. The second named
author is partially supported by JSPS Grant-in-Aid for Scientific
Research (A) No.~17204006, No.~19204006 and the Priority Research of
Osaka City University \lq\lq{Mathematics of knots and wide-angle
evolutions to scientific objects}\rq\rq.}

\maketitle

\begin{abstract}
The image of the Gauss map of any oriented isoparametric hypersurface of the unit
standard sphere $S^{n+1}(1)$
is a minimal Lagrangian submanifold in the complex hyperquadric
$Q_n({\mathbf C})$.
In this paper
we show that the Gauss image of a compact oriented isoparametric
hypersurface with $g$ distinct constant principal curvatures in $S^{n+1}(1)$
is a compact monotone and cyclic embedded Lagrangian submanifold
with minimal Maslov number $2n/g$.
The main result of this paper is to determine completely
the Hamiltonian stability of all compact minimal
Lagrangian submanifolds embedded in complex hyperquadrics
which are obtained as the images of the Gauss map
of homogeneous isoparametric hypersurfaces in the unit spheres,
by harmonic analysis on homogeneous spaces and fibrations on homogeneous isoparametric hypersurfaces.
In addition, the discussions on the exceptional Riemannian symmetric space
$(E_6, U(1)\cdot Spin(10))$ and the corresponding Gauss image have their own interest.
\end{abstract}


\section*{Introduction}
\label{Intro}

In 1990's Oh
initialized
the study of Hamiltonian minimality and Hamiltonian stability
of Lagrangian submanifolds in K\"{a}hler manifolds
(\cite{Oh90}, \cite{Oh91}, \cite{Oh93}).
It provides a
constrained volume variational problem of
Lagrangian submanifolds in K\"{a}hler manifolds under Hamiltonian deformations.
Thus it is natural to study
what Lagrangian submanifolds in specific K\"ahler manifolds are
Hamiltonian stable.
After Oh's pioneer papers, there has been extensive research done on
Hamiltonian stabilities of minimal or Hamiltonian minimal Lagrangian submanifolds
in various K\"{a}hler manifolds, such as complex Euclidean spaces,
complex projective spaces,
compact Hermitian symmetric spaces, certain toric K\"{a}hler manifolds and so on.
(See e.g.,
\cite{Amar-Ohn03, Castro-Urbano, Ono_TJM, Ono_AGAG, Palmer97, Urbano07}
and references therein.)
In particular, a compact minimal Lagrangian submanifold $L$ in a compact homogeneous Einstein-K\"{a}hler
manifold with positive Einstein constant $\kappa$ is Hamiltonian stable if and only if
the first (positive) eigenvalue $\lambda_1$ of the Laplacian of $L$ with respect to the induced metric
satisfies $\lambda_1=\kappa$.
Hence in this case, to determine the Hamiltonian stability reduces to
calculating the first eigenvalue of the Laplacian, which is an important problem in differential geometry.

On the other hand,
isoparametric hypersurfaces are next simple hypersurfaces in spheres after geodesic spheres.
The theory of isoparametric hypersurfaces in spheres was originated by \'{E}lie Cartan and well developed afterward.
Particularly great progress on the classification problem
of isoparametric hypersurfaces in spheres were made by the recent
work of Cecil-Chi-Jensen (\cite{Cecil-Chi-Jensen}), Immervoll
(\cite{Immervoll}), Chi (\cite{ChiII, ChiIII}) and Miyaoka (\cite{Miyaoka}).
Among all important results of isoparametric hypersurfaces in spheres,
M\"unzner (\cite{Muenzner1}, \cite{Muenzner2}) showed that the number $g$ of
distinct principal curvatures of  an isoparametric hypersurface $N^{n}$ in $S^{n+1}(1)$
must be $g=1,2,3,4,6$
and $N^{n}$ is always real algebraic in the sense that $N^{n}$ is defined by a certain
real homogeneous polynomial of degree $g$ which is called the \lq\lq
Cartan-M\"unzner polynomial\rq\rq.

It is known that the Gauss image of any compact oriented isoparametric hypersurface in the unit standard sphere is
a smooth compact embedded minimal Lagrangian submanifold in the complex hyperquadric
and the Gauss map is a covering map with covering transformation group ${\mathbf Z}_{g}$ (\cite{Palmer97, Ma-Ohnita1}).
Thus it can be expected that the Gauss images of isoparametric hypersurfaces in spheres provide
a nice class of compact Lagrangian submanifolds embedded in complex hyperquadrics
and moreover they should play certain roles in symplectic geometry.
Besides properties of Gauss images discussed in our previous paper \cite{Ma-Ohnita1},
in this paper we show
(see Theorem \ref{MinMaslovGaussImage})
\begin{thm0}
The Gauss image of a compact oriented isoparametric
hypersurface with $g$ distinct constant principal curvature in $S^{n+1}(1)$
is a compact monotone and cyclic embedded Lagrangian submanifold
with minimal Maslov number $2n/g$.
\end{thm0}

Recall that all isoparametric hypersurfaces in the unit standard sphere are
classified into homogeneous ones and non-homogeneous ones.
An isoparametric hypersurface $N^{n}$ in the unit standard sphere
$S^{n+1}(1)$ is called {\it homogeneous} if $N^{n}$ can be obtained
as an orbit of a compact Lie subgroup of $SO(n+2)$.
Every homogeneous isoparametric hypersurface in a sphere can be obtained
as a principal orbit of a linear isotropy representation of a
compact Riemannian symmetric pair $(U,K)$ of rank $2$, due to
Hsiang-Lawson (\cite{Hsiang-Lawson1971}) and Takagi-Takahashi
(\cite{Takagi-Takahashi1972}).
Only in the case of $g=4$ there are known to exist non-homogeneous
isoparametric hypersurfaces, which were discovered first by
Ozeki-Takeuchi (\cite{Ozeki-TakeuchiI}, \cite{Ozeki-TakeuchiII}) and
extensively generalized by Ferus-Karcher-M\"unzner (\cite{FKM}).
The purpose of this paper is to determine completely
the Hamiltonian stability of all compact
minimal Lagrangian embedded submanifolds in $Q_n({\mathbf C})$
which are obtained as the Gauss images of homogeneous
isoparametric hypersurfaces in $S^{n+1}(1)$.
This paper is a continuation of \cite{Ma-Ohnita1},
where we have already treated the cases of $g=1,2,3$.

The main result of this paper is as follows :
\begin{thm0}
Suppose that $(U,K)$ is not of type $\mathrm{EIII}$,
that is, $(U,K)\not=(E_{6},U(1)\cdot Spin(10))$.
Then the Gauss image $L={\mathcal G}(N)$ is not Hamiltonian stable if and only if
$m_{2}-m_{1}\geq{3}$.
Moreover if $(U,K)$ is of type $\mathrm{EIII}$,
then $(m_{1},m_{2})=(6,9)$ but
$L={\mathcal G}(N)$ is strictly Hamiltonian stable.
\end{thm0}

This paper is organized as follows:
In Section \ref{Hamilmin&Hamilsta} we recall the notions and fundamental properties
on Hamiltonian minimality, Hamiltonian stability and strictly Hamiltonian
stability of Lagrangian submanifolds in K\"ahler manifolds.
In Section \ref{Sec_Gauss maps} we briefly explain properties of minimal
Lagrangian submanifolds in complex hyperquadrics
as the Gauss images of isoparametric hypersurfaces in spheres.
In Section \ref{Sec_Method} we explain the method of eigenvalue computations of
our compact homogeneous spaces which are the Gauss images of
compact homogeneous isoparametric hypersurfaces in spheres,
and the fibrations on homogeneous isoparametric hypersurfaces
by homogeneous isoparametric hypersurfaces.
The fibrations are very useful for our computation.
In Sections \ref{Sec:G2xG2} and \ref{Section_G2 SO4}, we determine the strictly Hamiltonian stability
of the Gauss images of compact homogeneous isoparametric hypersurfaces
with $g=6$.
In Sections \ref{Sec_b2}-\ref{Sec_EIII}, we determine the strictly Hamiltonian stability
of the Gauss images of compact homogeneous isoparametric hypersurfaces
with $g=4$.
In particular, the discussions on the exceptional Riemannian symmetric space
$(E_6, U(1)\cdot Spin(10))$ and the corresponding Gauss image have their own interest.

\begin{acknow}
The main results were already announced in \cite{Ma-OhnitaCONM2010}.
This work was done while the first named author's stay at Osaka City
University Advanced Mathematical Institute (OCAMI) during 2008-2009
and the second named author's visits at Tsinghua University in
Beijing. The authors are grateful to both institutions for generous
supports and hospitalities. They also would like to thank Professors
Quo-Shin Chi, Josef Dorfmeister, Reiko Miyaoka and Zizhou Tang for
their continuous interest and helpful conversations on this work.
\end{acknow}

\section{Hamiltonian minimality and Hamiltonian stability}
\label{Hamilmin&Hamilsta}

Assume that $(M,\omega, J, g)$ is a K\"{a}hler manifold with the compatible complex structure $J$
and K\"{a}hler metric $g$.
Let $\varphi: L \rightarrow M$ be a Lagrangian immersion and $\rm H$ denote the mean curvature vector field of $\varphi$.
The corresponding $1$-form $\alpha_{\rm H}:=\omega({\rm H}, \cdot)\in \Omega^1(L)$ is called
the \emph{mean curvature form} of $\varphi$.
For simplicity, throughout this paper we assume that $L$ is compact
without boundary.

\begin{defn}
Let $M$ be a K\"{a}hler manifold. A Lagrangian immersion
$\varphi: L\rightarrow M$
is called \emph{Hamiltonian minimal} (shortly, H-minimal) or
\emph{Hamiltonian stationary},
if it is the critical point of the volume functional
for all Hamiltonian deformations $\{\varphi_t\}$.
\end{defn}

The corresponding Euler-Lagrange equation is $\delta \alpha_{\rm H}=0$,
where $\delta$ is the co-differential operator with
the respect to the induced metric on $L$.

\begin{defn}
An H-minimal Lagrangian immersion $\varphi$ is called \emph{Hamiltonian stable} (shortly,
\emph{H-stable}) if the second variation of the volume is nonnegative
under every Hamiltonian deformation $\{\varphi_t\}$.
\end{defn}

The second variational formula is given as follows (\cite{Oh93}):
\begin{equation*}
\begin{split}
&\displaystyle\frac{d^2}{dt^2}\mathrm{Vol}\left.(L, \varphi^*_tg)\right|_{t=0} \\
=&
\displaystyle\int_L \left(\langle \Delta^1_L\alpha, \alpha \rangle
- \langle \overline{R}(\alpha), \alpha \rangle
- 2\langle \alpha \otimes \alpha \otimes \alpha_{\mathrm{H}}, S \rangle
+ \langle\alpha_{\mathrm{H}}, \alpha \rangle^2\right) dv,
\end{split}
\end{equation*}
where $\Delta^1_L$ denotes the Laplace operator of
$(L,\varphi^{\ast}g)$ acting on the vector space
$\Omega^{1}(L)$ of smooth $1$-forms on $L$
and $\alpha:=\omega(V,\cdot) \in B(L)$ is the exact $1$-form
corresponding to an infinitesimal Hamiltonian deformation $V$.
Here
$$\langle \overline{R}(\alpha), \alpha \rangle := \displaystyle\sum^n_{i,j =1}
\mathrm{Ric}^M(e_i, e_j)\alpha(e_i)\alpha(e_j)$$
for a local orthonormal frame $\{e_i\}$ on $L$ and
$$S(X, Y, Z) := \omega(B(X, Y), Z)$$
for each $X,Y,Z \in C^{\infty}(TL)$,
which is a symmetric $3$-tensor field on $L$ defined by the second fundamental form $B$ of $L$ in $M$.

For an H-minimal Lagrangian immersion $\varphi: L\rightarrow M$,
we denote by $E_0(\varphi)$
the null space of the second variation on $B^1(L)$,
or equivalently
the solution space to the linearized H-minimal Lagrangian submanifold equation,
and we call
$n(\varphi):=\dim E_0(\varphi)$ the \emph{nullity} of $\varphi$.

If $H^1(M, {\mathbf R})=\{0\}$,
then any holomorphic Killing vector field on $M$ is a Hamiltonian vector field,
and thus it generates a volume-preserving Hamiltonian deformation of $\varphi$.
Namely,
\begin{equation*}
\{\varphi^{\ast}\alpha_{X}{\ }\vert{\ }
X \text{ is a holomorphic Killing vector field on }M\}
\subset
E_{0}(\varphi)\subset B^{1}(L).
\end{equation*}
Set $n_{hk}(\varphi):=\dim
\{\varphi^{\ast}\alpha_{X}{\ }\vert{\ }
X \mbox{ is a holomorphic Killing vector field on }M\}$,
which is called the \emph{holomorphic Killing nullity} of $\varphi$.

\begin{defn}
An H-minimal Lagrangian immersion $\varphi$ is called \emph{strictly Hamiltonian stable}
(shortly, strictly H-stable) if $\varphi$ is Hamiltonian stable and
$n_{hk}(\varphi)=n(\varphi)$.
\end{defn}
Note that if $L$ is strictly Hamiltonian stable, then $L$ has local minimum volume
under each Hamiltonian deformation.

In the case when $L$ is a compact minimal Lagrangian submanifold in
an Einstein-K\"{a}her manifold $M$ with Einstein constant $\kappa$,
the second variational formula becomes much simpler.
we see that $L$ is H-stable if and only
if the first (positive) eigenvalue $\lambda_1$ of the Laplacian of $L$
acting on smooth functions satisfies $\lambda_1\geq \kappa$ (\cite{Oh90}).
On the other hand, it is known that
the first eigenvalue $\lambda_1$ of
the Laplacian of any compact minimal Lagrangian submanifold $L$ in
a compact homogeneous Einstein-K\"{a}hler manifold with positive
Einstein constant $\kappa$
has the upper bound
$\lambda_1\leq \kappa$ (\cite{Ono_JMSJ}, \cite{Ono_TJM}).
In this case, $L$ is H-stable if and only if $\lambda_1=\kappa$.

Assume that $(M,\omega,J,g)$ is a K\"{a}hler manifold
and $G$ is an analytic subgroup of its automorphism group
$\mathrm{Aut}(M,\omega,J,g)$.
A Lagrangian orbit $L=G\cdot x \subset M$ of $G$ is called a
\emph{homogeneous Lagrangian submanifold} of $M$.
An easy but useful observation can be given as follows.
\begin{pro}
Any compact homogeneous Lagrangian submanifold
in a K\"ahler manifold is Hamiltonian minimal.
\end{pro}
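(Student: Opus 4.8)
The plan is to show that a compact homogeneous Lagrangian submanifold is automatically a critical point of the volume functional under Hamiltonian deformations by exhibiting the mean curvature form $\alpha_{\mathrm H}$ as a \emph{harmonic} $1$-form, so that in particular $\delta\alpha_{\mathrm H}=0$, which by the Euler--Lagrange characterization recalled above is exactly the H-minimality equation. The key input is homogeneity: since $L=G\cdot x$ with $G$ acting on $M$ by Kähler automorphisms (hence by isometries preserving $J$ and $\omega$), every element of $G$ restricts to an isometry of the induced metric on $L$ and carries the second fundamental form to itself; consequently the mean curvature vector field $\mathrm H$ is $G$-invariant, and therefore so is the $1$-form $\alpha_{\mathrm H}=\omega(\mathrm H,\cdot)$, i.e. $\alpha_{\mathrm H}$ is a $G$-invariant $1$-form on the compact homogeneous space $L=G/G_x$.

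First I would record that on a compact homogeneous Riemannian manifold the space of $G$-invariant $1$-forms consists of harmonic forms. This is the standard averaging/representation-theoretic fact: the Hodge Laplacian $\Delta^1_L$ commutes with the $G$-action, and a $G$-invariant form lies in a finite-dimensional $G$-submodule on which $\Delta^1_L$ acts; averaging the pointwise norm (or using that a $G$-invariant closed/coclosed decomposition is forced) shows any $G$-invariant form is both closed and coclosed. Concretely, $d\alpha_{\mathrm H}$ is again $G$-invariant, and integrating $\langle \alpha_{\mathrm H}, \delta\,d\,\alpha_{\mathrm H}\rangle = \|d\alpha_{\mathrm H}\|^2$ together with the fact that $\delta d\alpha_{\mathrm H}$ is $G$-invariant and hence (by a dimension/Schur argument, or directly because invariant functions/forms on a homogeneous space have constant norm and vanishing integral of exact pieces) one concludes $d\alpha_{\mathrm H}=0$; similarly $\delta\alpha_{\mathrm H}=0$. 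In fact only $\delta\alpha_{\mathrm H}=0$ is needed: the function $\delta\alpha_{\mathrm H}$ is $G$-invariant, hence constant, and its integral over the compact $L$ is zero, so it vanishes identically.

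Then I would simply invoke the Euler--Lagrange equation stated right after the definition of H-minimality: a Lagrangian immersion is Hamiltonian minimal precisely when $\delta\alpha_{\mathrm H}=0$. Combining this with the previous step finishes the proof. The only point requiring a little care — and what I would regard as the main (mild) obstacle — is justifying that a $G$-invariant $1$-form on the compact homogeneous space $L$ is coclosed; I would handle this by the observation that $\delta\alpha_{\mathrm H}$ is a $G$-invariant function on $L$, hence constant by transitivity of the $G$-action, and $\int_L \delta\alpha_{\mathrm H}\,dv = \int_L \langle \alpha_{\mathrm H}, d1\rangle\,dv = 0$ by the divergence theorem, forcing $\delta\alpha_{\mathrm H}\equiv 0$. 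No further estimates are needed, since H-minimality is a first-order (Euler--Lagrange) condition rather than a stability statement.
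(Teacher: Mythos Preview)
Your proposal is correct and, once you strip away the detour through harmonicity of invariant forms, it is exactly the paper's argument: $\alpha_{\mathrm H}$ is $G$-invariant, hence $\delta\alpha_{\mathrm H}$ is a $G$-invariant function, therefore constant, and the divergence theorem forces it to vanish. The discussion of $d\alpha_{\mathrm H}=0$ and the Hodge Laplacian is unnecessary for the statement at hand, as you yourself note.
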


\begin{proof}
Since $\alpha_{\mathrm{H}}$ is an invariant $1$-form on $L$,
$\delta\alpha_{\mathrm{H}}$ is a constant function on $L$.
Hence by the divergence theorem we obtain $\delta\alpha_{\mathrm{H}}=0$.
\end{proof}

Set
\begin{equation*}
\tilde{G}
:=\{a\in{\mathrm{Aut}(M,\omega,J,g)} \mid a(L)=L\}.
\end{equation*}
Then $G\subset\tilde{G}$ and $\tilde{G}$ is the maximal subgroup
of $\mathrm{Aut}(M,\omega,J,g)$ preserving $L$.
Moreover we have
$n_{hk}(\varphi)
=\dim(\mathrm{Aut}(M,\omega,J,g))-\dim(\tilde{G})$.

\section{Gauss maps of isoparametric hypersurfaces in a sphere}
\label{Sec_Gauss maps}

\subsection{Gauss maps of oriented hypersurfaces in spheres}

Let $N^n$ be an oriented hypersurface immersed in the unit standard sphere
$S^{n+1}(1) \subset {\mathbf R}^{n+2}$.
Denote by $\mathbf x$ its position vector of a point $p$ of $N$
and $\mathbf n$ the unit normal vector field of $N$ in $S^{n+1}(1)$.
It is a fundamental fact in symplectic geometry that
the \emph{Gauss map} defined by
\begin{equation*}
{\mathcal G}:N^{n}\ni{p}
\longmapsto
{\mathbf x}(p)\wedge{\mathbf n}(p)\cong
[{\mathbf x}(p)+\sqrt{-1}{\mathbf n}(p)]\in
\widetilde{Gr}_{2}({\mathbf R}^{n+2})
\cong Q_{n}({\mathbf C})
\end{equation*}
is always a Lagrangian immersion in the complex hyperquadric $Q_n({\mathbf C})$.
Here the complex hyperquadric $Q_n({\mathbf C})$ is identified with
the real Grassmann manifold $\widetilde{Gr}_{2}({\mathbf R}^{n+2})$
of oriented $2$-dimensional vector subspaces of ${\mathbf R}^{n+2}$,
which has a symmetric space expression $SO(n+2)/(SO(2)\times SO(n))$.

Let $g^{std}_{Q_{n}({\mathbf C})}$ be the standard K\"ahler metric
of $Q_{n}({\mathbf C})$ induced from the standard inner product
of ${\mathbf R}^{n+2}$.  Note that the Einstein constant of
$g^{std}_{Q_{n}({\mathbf C})}$ is equal to $n$.
Let
$\kappa_{i}{\ }(i=1,\cdots,n)$ denote
the principal curvatures of $N^{n}\subset S^{n+1}(1)$
and $\mathrm H$ denote the mean curvature vector field of the Gauss map
${\mathcal G}$.
Palmer showed the following mean curvature form formula (\cite{Palmer97}):
\begin{equation}\label{MeanCurvFormFormula}
\alpha_{\mathrm{H}}
=
-d\left(\sum^{n}_{i=1}\mathrm{arc}\cot\kappa_{i}\right)
=
d\left(\mathrm{Im}
\left(
\log{\prod^{n}_{i=1}(1+\sqrt{-1}\kappa_{i})}
\right)\right).
\end{equation}
Hence, if $N^n$ is an oriented austere hypersurface in $S^{n+1}(1)$,
introduced by Harvey-Lawson (\cite{HL}),
then its Gauss map ${\mathcal G}: N^n \rightarrow Q_n({\mathbf C})$ is
a minimal Lagrangian immersion.
In particular, since any minimal surface in $S^3(1)$ is austere,
its Gauss map is a minimal Lagrangian immersion
in $Q_2({\mathbf C}) \cong S^2 \times S^2$ (\cite{Castro-Urbano}).
Note that more minimal Lagrangian submanifolds of complex hyperquadrics can
be obtained from Gauss maps of certain oriented hypersurfaces in
spheres through Palmer's formula (\cite{LMW}).

\subsection{Gauss maps of isoparametric hypersurfaces in spheres}

Now suppose that $N^n$ is a compact oriented hypersurface in $S^{n+1}(1)$ with constant principal curvatures,
i.e., \emph{isoparametric hypersurface}.
By M\"{u}zner's result (\cite{Muenzner1, Muenzner2}),
the number $g$ of distinct principal curvatures must be $1,2,3,4$ or $6$,
and the distinct principal curvatures have the multiplicities $m_1=m_3=\cdots$,
$m_2=m_4=\cdots$.  We may assume that $m_1 \leq m_2$.
It follows from \eqref{MeanCurvFormFormula}
that its Gauss map ${\mathcal G}: N^n \rightarrow Q_n({\mathbf C})$
is a minimal Lagrangian immersion.
Moreover, the \lq\lq{Gauss image}\rq\rq of $\mathcal G$
is a compact minimal Lagrangian submanifold
$L^n={\mathcal G}(N^n) \cong N^n / {\mathbf Z}_g$ \emph{embedded}
in $Q_n({\mathbf C})$
so that ${\mathcal G}:N^n\rightarrow {\mathcal G}(N^n)=L^n$
is a covering map with the Deck transformation group ${\mathbf Z}_g$
(\cite{Ma-Ohnita1}, \cite{Ma-OhnitaCONM2010}).

Here we mention the following symplectic topological properties of
the Gauss images of isoparametric hypersurfaces.

\begin{thm}\label{MinMaslovGaussImage}
The Gauss image $L=\mathcal{G}(N^n)$ is a compact monotone and
cyclic Lagrangian submanifold embedded in $Q_n(\mathbf C)$ and
its minimal Maslov number $\Sigma_L$ is given by
\begin{equation*}
\Sigma_L=\frac{2n}{g}=\left\{
                        \begin{array}{ll}
                          m_1+m_2, & \hbox{if } g \text{ is even;} \\
                          2m_1, & \hbox{if } g \text{ is odd.}
                        \end{array}
                      \right.
\end{equation*}
\end{thm}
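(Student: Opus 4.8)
\emph{Strategy and monotonicity.} My plan is to deduce the three assertions — that $L=\mathcal G(N)$ is monotone, cyclic, and has minimal Maslov number $\Sigma_L=2n/g$ — from the structure of $\pi_2(Q_n(\mathbf C),L)$ together with the geometry of the normal geodesics of the isoparametric foliation. First I would invoke the general principle (see, e.g., \cite{Oh93}) that a compact minimal Lagrangian submanifold of a K\"ahler--Einstein manifold with positive Einstein constant is monotone: for a Lagrangian $L$ in a K\"ahler manifold one has $d\alpha_{\mathrm H}=-\rho^M|_L$ with $\rho^M$ the Ricci form, which vanishes when $M$ is Einstein since $L$ is Lagrangian, so $\alpha_{\mathrm H}$ is closed and the Maslov index of a relative class $\beta$ is the sum of a positive multiple of $\omega(\beta)$ and the period of $\alpha_{\mathrm H}$ along $\partial\beta$. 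Because the principal curvatures $\kappa_i$ of $N$ are constant, Palmer's formula \eqref{MeanCurvFormFormula} gives $\alpha_{\mathrm H}\equiv 0$, hence $\mu_L=c\,[\omega]$ on $\pi_2(Q_n(\mathbf C),L)$ for some $c>0$ and $L$ is monotone; testing on a projective line gives $c=n/\pi$, with $n$ the Einstein constant of $g^{std}_{Q_n(\mathbf C)}$.

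\emph{Reduction.} Since $\pi_1(Q_n(\mathbf C))=0$, the homotopy exact sequence of the pair reads
\[
\pi_2(Q_n(\mathbf C))\xrightarrow{\ i_\ast\ }\pi_2(Q_n(\mathbf C),L)\xrightarrow{\ \partial\ }\pi_1(L)\longrightarrow 0,
\]
where $i_\ast$ is injective because the image of $\pi_2(L)\to\pi_2(Q_n(\mathbf C))$ is killed by $[\omega]$ and hence is zero. On $i_\ast\pi_2(Q_n(\mathbf C))\cong\mathbf Z$ the Maslov index is twice the first Chern number, so it runs over exactly $2n\mathbf Z$, the quadric $Q_n(\mathbf C)$ having minimal Chern number $n$ for $n\ge 3$; the cases $Q_1(\mathbf C)\cong S^2$ and $Q_2(\mathbf C)\cong S^2\times S^2$ I would dispose of directly. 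Thus $\Sigma_L$ divides $2n$, and the problem reduces to computing the homomorphism $\pi_1(L)\to\mathbf Z/2n\mathbf Z$ that sends a loop of $L$ to the Maslov index, well-defined modulo $2n$, of a disk in $Q_n(\mathbf C)$ capping it.

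\emph{The $\mathbf Z_g$-loop.} Here I use the covering $\mathcal G\colon N\to L$ with Deck transformation group $\mathbf Z_g$ (from \cite{Ma-Ohnita1}, via M\"unzner \cite{Muenzner1,Muenzner2}): the normal geodesic to $N$ through a point $p$ is a great circle of $S^{n+1}(1)$ meeting $N$ in $2g$ points, consecutive intersections of matching co-orientation being at arclength $2\pi/g$, and the translation by $2\pi/g$ along this circle realizes the generator $\zeta$ of $\mathbf Z_g$. I would take a loop $\bar\gamma$ of $L$ whose lift to $N$ joins $p$ to $\zeta(p)$, cap it by a disk $u$ in $Q_n(\mathbf C)$, and compute $\mu_L(u)$ by trivializing the square of the canonical bundle along $u$ and following the winding of $\det^{2}T_{\bar\gamma}L$. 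Along the lift one carries the orthonormal frame $(\mathbf x,\mathbf n,v_1,\dots,v_n)$ of $\mathbf R^{n+2}$ given by position vector, unit normal and principal directions; the differential of $\mathcal G$ expresses $T_{\mathcal G(p)}L$ through this frame and the factors $1+\sqrt{-1}\kappa_i$ of \eqref{MeanCurvFormFormula}. Constancy of the $\kappa_i$ kills the ``Lagrangian phase'' contribution, so $\mu_L(u)$ comes only from the winding induced by the $2\pi/g$-rotation in the $(\mathbf x,\mathbf n)$-plane, weighted by how the $v_i$ split into the two blocks of sizes $m_1$ and $m_2$ carrying the distinct principal curvatures; this should give $\mu_L(u)=m_1+m_2$. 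Since M\"unzner's relation $n=\tfrac g2(m_1+m_2)$ holds, this equals $2n/g$, and it equals $2m_1$ when $g$ is odd, where $m_1=m_2$.

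\emph{Minimality and conclusion.} Finally I would show $\operatorname{image}(\mu_L)=\tfrac{2n}{g}\mathbf Z$. As $\pi_1(L)$ is generated by $\bar\gamma$ together with the image of $\pi_1(N)$, and M\"unzner's determination of the topology of $N$ and of the focal submanifolds $M_\pm$ keeps $\pi_1(N)$ very small, I would run the same frame computation on the remaining generators and check their Maslov indices lie in $\tfrac{2n}{g}\mathbf Z$; combined with $\Sigma_L\mid 2n$ this forces $\Sigma_L=2n/g$. Compactness and embeddedness of $L$ are already known, and the ``cyclic'' property follows from the $\mathbf Z_g$-covering together with the factorization of the Maslov homomorphism just produced. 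With $c=n/\pi$ the minimal relative disk has symplectic area $2\pi/g$, which fits the picture of $Q_n(\mathbf C)$ as the space of oriented geodesics of $S^{n+1}(1)$: that disk is, up to reparametrization, a $2\pi/g$-arc of a normal geodesic joining two consecutive same-co-orientation intersections with $N$, passing through one focal point of type $M_+$ and one of type $M_-$, with conjugate-point counts $m_1$ and $m_2$ respectively. I expect the main obstacle to be exactly this Maslov bookkeeping — pinning down the effect of $\zeta$ and of the classes in $\pi_1(N)$ on the canonical bundle — for which M\"unzner's structure theory and the explicit geometry of the focal submanifolds are indispensable.
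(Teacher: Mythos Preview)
Your strategy is correct in spirit but takes a substantially harder route than the paper, and the hard part is left genuinely open. The paper does not compute Maslov indices of individual disks at all. Instead it invokes H.~Ono's lemma (Lemma~\ref{Lem:Ono} above, from \cite{Ono_JMSJ}): for a compact minimal Lagrangian $L$ in a simply connected K\"ahler--Einstein manifold with prequantization line bundle $E$, $L$ is automatically monotone and cyclic, and $n_L\cdot\Sigma_L=2\gamma_{c_1}$, where $n_L$ is the least $k$ for which $\otimes^kE|_L$ is trivial as a flat bundle. Since $\gamma_{c_1}=n$ for $Q_n(\mathbf C)$, one only needs $n_L$. The paper then observes that $V_2(\mathbf R^{n+2})|_L\to L$ is a flat principal $SO(2)$-bundle whose holonomy subbundle is precisely the $\mathbf Z_g$-covering $N\to L$; hence the holonomy group of $E|_L$ is exactly $\mathbf Z_g$, so $n_L=g$ and $\Sigma_L=2n/g$. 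No disk capping, no frame tracking, no case analysis on $\pi_1(N)$.

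Your approach, by contrast, must establish that the image of $\mu_L$ is \emph{exactly} $\tfrac{2n}{g}\mathbf Z$, and you correctly flag this as the obstacle. Showing a single disk over the $\mathbf Z_g$-loop has Maslov index $2n/g$ gives only $\Sigma_L\mid 2n/g$; the reverse divisibility requires controlling the Maslov indices of disks over \emph{all} of $\pi_1(L)$, including classes coming from $\pi_1(N)$. For non-simply-connected $N$ (already the $g=3$, $m_1=1$ Cartan hypersurface $SO(3)/(\mathbf Z_2+\mathbf Z_2)$, or the $g=6$ cases) this is real work that you have not done, and M\"unzner's topology is not enough by itself---you would effectively be reproving the holonomy statement the paper uses. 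Your treatment of ``cyclic'' is also off: in this context it is a statement about the Maslov class being proportional to the holonomy class of $E|_L$, which is what Ono's lemma supplies and which your argument does not address. The cleanest fix is simply to adopt the paper's route: identify $E|_L$ with the associated bundle of the flat $SO(2)$-bundle whose holonomy reduction is $N\to L$, and read off $n_L=g$.
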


We need to use the following H.~Ono's result
which generalizes Oh's work \cite{Oh94}.
\begin{lem}[\cite{Ono_JMSJ}]\label{Lem:Ono}
Let $M$ be a simply connected
K\"{a}hler-Einstein manifold with positive scalar curvature
with a prequantization complex line bundle $E$.
Then any compact minimal Lagrangian submanifold $L$ in $M$
is monotone and cyclic.
Moreover the minimal Maslov number $\Sigma_L$ of $L$
satisfies the following relation:
\begin{equation}\label{eq:min Maslov no}
n_L\, \Sigma_L\, =\, 2\, \gamma_{c_1},
\end{equation}
where
\begin{equation*}
\gamma_{c_1}:=\min\{c_1(M)(A)\mid A\in H_2(M;\mathbb{Z}), c_1(M)(A)>0\}
\in \mathbf{Z}
\end{equation*}
is called the index of a K\"{a}hler manifold $M$ and
\begin{equation*}
n_L:=\min\{k\in \mathbf{Z} \mid k\geq 1,
\otimes^k(E, \nabla)_{|L} \text{ is trivial}\}.
\end{equation*}
\end{lem}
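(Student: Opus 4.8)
The plan is to combine two ingredients: the integral formula that expresses the Maslov index of a Lagrangian submanifold in a K\"ahler--Einstein manifold through the Ricci form and the mean curvature form (going back to Dazord and Morvan; see Oh~\cite{Oh94}), and the holonomy of the prequantization line bundle $(E,\nabla)$. First I would fix normalizations: choose $\omega$ so that $(E,\nabla)$ has curvature $-\sqrt{-1}\,\omega$, so that $c_{1}(E)=\tfrac{1}{2\pi}[\omega]\in H^{2}(M;\mathbf{Z})$; since $M$ is K\"ahler--Einstein with positive scalar curvature, its Ricci form is $\rho=\kappa\,\omega$ with Einstein constant $\kappa>0$, and $c_{1}(M)=\tfrac{1}{2\pi}[\rho]=\kappa\,c_{1}(E)$. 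Because $c_{1}(E)$ is a primitive integral class --- automatic when $H^{2}(M;\mathbf{Z})\cong\mathbf{Z}$, and an elementary check for products such as $Q_{2}(\mathbf{C})=\mathbf{CP}^{1}\times\mathbf{CP}^{1}$ --- this identifies the Einstein constant with the index: $\kappa=\gamma_{c_{1}}$. Now, for $\beta\in\pi_{2}(M,L)$ represented by $u\colon(D^{2},\partial D^{2})\to(M,L)$, the integral formula reads $\pi\,\mu(\beta)=\int_{D^{2}}u^{*}\rho+\oint_{\partial D^{2}}u^{*}\alpha_{\mathrm{H}}$, where $\mu$ is the Maslov homomorphism; for $\beta$ coming from a $2$-sphere this recovers the familiar $\mu(\beta)=2\,c_{1}(M)(\beta)$, which pins down the constant. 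Since $L$ is minimal, $\alpha_{\mathrm{H}}\equiv 0$, whence $\mu(\beta)=\tfrac{\kappa}{\pi}I_{\omega}(\beta)$ for every $\beta\in\pi_{2}(M,L)$, where $I_{\omega}(\beta):=\int_{\beta}\omega$. As $\kappa>0$, the Maslov homomorphism and the symplectic-area homomorphism $I_{\omega}$ are positively proportional on $\pi_{2}(M,L)$: thus $L$ is monotone, $\Sigma_{L}$ (the positive generator of $\operatorname{Im}\mu$) is a well-defined positive integer, and $I_{\omega}(\pi_{2}(M,L))=\tfrac{\pi}{\kappa}\Sigma_{L}\,\mathbf{Z}=\tfrac{\pi\Sigma_{L}}{\gamma_{c_{1}}}\mathbf{Z}$.

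For cyclicity and the Maslov-number identity I would pass to the holonomy of $(E,\nabla)|_{L}$. Since $M$ is simply connected, every loop $\gamma\subset L$ bounds a disc $u$ in $M$, and by Chern--Weil and Stokes the holonomy of $(E,\nabla)|_{L}$ around $\gamma$ is $\exp(\sqrt{-1}\,I_{\omega}([u]))$; this is independent of $u$ because two choices differ by $A\in\pi_{2}(M)\cong H_{2}(M;\mathbf{Z})$ with $I_{\omega}(A)=2\pi\,c_{1}(E)(A)\in 2\pi\mathbf{Z}$, exactly the prequantization condition. Hence there is a holonomy homomorphism $\pi_{1}(L)\to U(1)$, and since $\pi_{2}(M,L)\to\pi_{1}(L)$ is onto (again using $\pi_{1}(M)=0$), its image is generated by $\exp(\sqrt{-1}\,\tfrac{\pi\Sigma_{L}}{\gamma_{c_{1}}})=\exp(2\pi\sqrt{-1}\,\tfrac{\Sigma_{L}}{2\gamma_{c_{1}}})$, a root of unity. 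So $(E,\nabla)|_{L}$ has finite cyclic holonomy, and $(E,\nabla)^{\otimes k}|_{L}$ is trivial (flat with trivial holonomy) precisely when $k\Sigma_{L}/(2\gamma_{c_{1}})\in\mathbf{Z}$; this is what it means for $L$ to be cyclic, and $n_{L}$ is the order of the above holonomy group. Finally, the Maslov numbers of sphere classes fill $2\gamma_{c_{1}}\mathbf{Z}$ (using $\mu=2c_{1}(M)$ on $\pi_{2}(M)\cong H_{2}(M;\mathbf{Z})$ together with $\kappa=\gamma_{c_{1}}$) and lie in $\operatorname{Im}\mu=\Sigma_{L}\mathbf{Z}$, so $\Sigma_{L}\mid 2\gamma_{c_{1}}$; hence $n_{L}$, being the order of $\exp(2\pi\sqrt{-1}\,\Sigma_{L}/(2\gamma_{c_{1}}))$ in $U(1)$, equals $\tfrac{2\gamma_{c_{1}}}{\gcd(2\gamma_{c_{1}},\Sigma_{L})}=\tfrac{2\gamma_{c_{1}}}{\Sigma_{L}}$, that is $n_{L}\,\Sigma_{L}=2\gamma_{c_{1}}$.

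The step I expect to be the main obstacle is the normalization bookkeeping in the first paragraph --- fixing the constant in the Maslov integral formula and, above all, establishing $\kappa=\gamma_{c_{1}}$, which rests on the prequantization relation $c_{1}(M)=\kappa\,c_{1}(E)$ and on $c_{1}(E)$ being a primitive integral class. This last point is harmless here but must be stated explicitly: for a non-primitive prequantization bundle one only needs to observe that $n_{L}$ rescales accordingly. Once the normalizations are in place, the remaining steps --- Oh's principle that a minimal Lagrangian in a K\"ahler--Einstein manifold with positive scalar curvature is monotone~\cite{Oh94}, the prequantum-holonomy computation, and the elementary divisibility identity --- constitute the proof, and this is precisely the content of Ono~\cite{Ono_JMSJ}.
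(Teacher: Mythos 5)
The paper does not prove this lemma at all: it is imported verbatim from Ono (\cite{Ono_JMSJ}; the Maslov-index computation is in \cite{HOno04}), so there is no internal proof to compare against. Your sketch is in fact the argument of the cited source and of Oh \cite{Oh94}: the integral formula $\pi\,\mu(\beta)=\int_{D}u^{*}\rho+\oint_{\partial D}u^{*}\alpha_{\mathrm H}$, which for a minimal Lagrangian gives $\mu=\tfrac{\kappa}{\pi}I_{\omega}$ on $\pi_{2}(M,L)$ and hence monotonicity; the computation of the holonomy of the flat bundle $(E,\nabla)|_{L}$ by symplectic areas of discs (well defined because of prequantization integrality and $\pi_{1}(M)=0$); the surjectivity of $\pi_{2}(M,L)\to\pi_{1}(L)$; and the divisibility $\Sigma_{L}\mid 2\gamma_{c_{1}}$ from sphere classes via Hurewicz. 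All of these steps are correct.

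The one genuine defect is exactly the normalization point you flag as the "main obstacle", and your way of dismissing it does not work. What your argument proves is $n_{L}\,\Sigma_{L}=2\kappa$, where $\kappa$ is the Einstein constant of the prequantized form ($\rho=\kappa\omega$, curvature of $\nabla$ equal to $-\sqrt{-1}\,\omega$); this coincides with $2\gamma_{c_{1}}$ if and only if $c_{1}(E)=[\omega]/2\pi$ is primitive, i.e.\ $\kappa=\gamma_{c_{1}}$. Your claim that primitivity is "automatic when $H^{2}(M;\mathbf{Z})\cong\mathbf{Z}$" is false: take $M=\mathbf{C}P^{1}$ with twice the integral Fubini--Study form (the unit round sphere, $\kappa=1$) and $E$ the degree-two prequantization bundle; the equator is a compact minimal Lagrangian with $\Sigma_{L}=2$, but the holonomy of $E|_{L}$ is trivial, so $n_{L}=1$ and $n_{L}\Sigma_{L}=2\neq 4=2\gamma_{c_{1}}$, consistent with $2\kappa=2$. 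So the identity \eqref{eq:min Maslov no} requires the primitivity of the prequantization class as an explicit hypothesis or normalization; it is not bookkeeping that can be waved away, and the remark that "$n_{L}$ rescales accordingly" concedes precisely that the stated formula changes. Once you add that hypothesis (it is the one in force in the paper's application to $(Q_{n}(\mathbf{C}),g^{\mathrm{std}})$ for $n\geq 2$, where $\kappa=n=\gamma_{c_{1}}$; note the quadric $Q_{1}$ has $\kappa=1\neq 2=\gamma_{c_{1}}$, which is why that case needs separate care), your proof is complete and is essentially Ono's.
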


Using this lemma and the properties of isoparametric hypersurfaces in a sphere,
we shall prove Theorem \ref{MinMaslovGaussImage}.

\begin{proof}

It follows from Lemma \ref{Lem:Ono} and the minimality of the Gauss image $L=\mathcal{G}(N^n)$ that
$L$ is a monotone and cyclic Lagrangian submanifold in
$Q_n(\mathbf C)$.
Remark that the index of $Q_n(\mathbf{C})$ is known as follows
(\cite{Borel-Hirzebruch}): $\gamma_{c_1}=n$
if $n\geq 2$ and $\gamma_{c_1}=2$ if $n=1$.
So in order to find the minimal Maslov number $\Sigma_L$ of $L$,
we only need to compute $n_L$.
Let $\tilde{N}^n$ be the Legendrian lift of $N^n$ to the unit tangent sphere bundle
$UTS^{n+1}(1)=V_2(\mathbf{R}^{n+2})$.
Then $\pi: V_2(\mathbf{R}^{n+2})|_L \rightarrow L=\mathcal{G}(N^n)$
is a flat principal fiber bundle with structure group $SO(2)$
and the covering map $\pi: \tilde{N}^n \rightarrow \mathcal{G}(N^n)$
with Deck transformation group $\mathbf{Z}_g$
coincides with its holonomy subbundle with the holonomy group $\mathbf{Z}_g$.
Let $E$ be a complex line bundle over $Q_{n}({\mathbf C})$
associated with the principal fiber bundle
$\pi: V_2(\mathbf{R}^{n+2}) \rightarrow \widetilde{Gr}_{2}({\mathbf R}^{n+2})
\cong Q_{n}({\mathbf C})$
by the standard action of $SO(2)\cong U(1)$ on $\mathbf{C}$.
Then $E\vert_L$ is a flat complex line bundle over $\mathcal{G}(N^n)$
associated with the principal fiber bundle
$\pi: V_2(\mathbf{R}^{n+2})|_L \rightarrow \mathcal{G}(N^n)$
by the standard action of $SO(2)\cong U(1)$ on $\mathbf{C}$.
The tautological complex line bundle $\mathcal{W}$ over
$Q_n(\mathbf{C})\subset {\mathbf C}P^{n+1}$
is defined by $\mathcal{W}_x:=\mathbf{C}(\mathbf{a}+\sqrt{-1}\mathbf{b})$
for each $[\mathbf{a}+\sqrt{-1}\mathbf{b}]\in Q_n(\mathbf{C})$.
Then $E=\mathcal{W}$ if $n\geq 2$ and $\otimes^2 E=\mathcal{W}$ if $n=1$.
Indeed, $c_1(\mathcal{W})(\mathbf{C}P^1)=1$ if $n\geq 2$.
Here, $\mathbf{C}P^1$ denotes the set of one-dimensional complex vector subspaces in a $2$-dimensional isotropic vector
subspace of $\mathbf{C}^{n+2}$.
For $k=1,\cdots, g$, the generator $e^{\sqrt{-1}\frac{2\pi}{g}}$ of the holonomy group $\mathbf{Z}_g$
on $E|_L$ induces the multiplication by $e^{\sqrt{-1}\frac{2\pi k}{g}}$ on $\otimes^k E|_L$.
Thus the holonomy group of $\otimes^k E|_L$ is generated by $e^{\sqrt{-1}\frac{2\pi k}{g}}$ of $\mathbf{Z}_g$.
Hence, $\otimes^k E|_L$ has non-trivial holonomy for $k=1,\cdots, g-1$ and $\otimes^g E|_L$ has trivial holonomy.
Therefore, $n_L=g$ if $n\geq 2$ and $n_L=2$ if $n=1$.
Thus the conclusion follows from \eqref{eq:min Maslov no}.
\end{proof}

A hypersurface $N^n$ in $S^{n+1}(1)$ is \emph{homogeneous} if it is obtained
as an orbit of a compact connected subgroup $G$ of $SO(n+2)$.
Obviously any homogeneous hypersurface in $S^{n+1}(1)$ is an isoparametric
hypersurface.
It turns out that $N^n$ is homogeneous if and only if its Gauss
image ${\mathcal G}(N^n)$ is homogeneous (\cite{Ma-Ohnita1}).

Consider
\begin{equation*}
{\mathcal G}: N^{n}\ni{p}\longmapsto{\mathbf x}(p)\wedge{\mathbf n}(p)
\in\widetilde{Gr}_{2}({\mathbf R}^{n+2})\subset\bigwedge^{2}{\mathbf R}^{n+2}.
\end{equation*}
Here $\bigwedge^{2}{\mathbf R}^{n+2}\cong{
\mathfrak o}(n+2)$
can be identified with
the Lie algebra of all (holomorphic) Killing vector fields on
$S^{n+1}(1)$ or $\widetilde{Gr}_{2}({\mathbf R}^{n+2})$.
Let
$\tilde{\mathfrak k}$ be the Lie subalgebra of ${\mathfrak o}(n+2)$
consisting of all Killing vector fields tangent to
$N^{n}$ or ${\mathcal G}(N^{n})$ and $\tilde{K}$ be
a compact connected Lie subgroup of $SO(n+2)$
generated by $\tilde{\mathfrak k}$.
Take the orthogonal direct sum
\begin{equation*}
\bigwedge^{2}{\mathbf R}^{n+2}=\tilde{\mathfrak k}+{\mathcal V},
\end{equation*}
where ${\mathcal V}$ is a vector subspace of ${\mathfrak o}(n+2)$.
The linear map
\begin{equation*}
{\mathcal V}\ni{X}\longmapsto\alpha_{X}\vert_{{\mathcal G}(N^{n})}
\in{E_{0}({\mathcal G})}\subset B^{1}({\mathcal G}(N^{n}))
\end{equation*}
is injective and $n_{hk}({\mathcal G})=\dim{\mathcal V}$.
Then
${\mathcal G}(N^{n})\subset{\mathcal V}$
and thus
\begin{equation*}
{\mathcal G}(N^{n})\subset
\widetilde{Gr}_{2}({\mathbf R}^{n+2})\cap{\mathcal V}.
\end{equation*}
Indeed, for each $X\in{\tilde{\mathfrak k}}$ and each $p\in{N^{n}}$,
$\langle{X,{\mathbf x}(p)\wedge{\mathbf n}(p)}\rangle
=\langle{X{\mathbf x}(p),{\mathbf n}(p)}\rangle
-\langle{{\mathbf x}(p),X{\mathbf n}(p)}\rangle
=2\langle{X{\mathbf x}(p),{\mathbf n}(p)}\rangle=0$.

Note that ${\mathcal G}(N^{n})$ is a compact minimal submanifold
embedded in the unit hypersphere of ${\mathcal V}$ and
by the theorem of Tsunero Takahashi
each coordinate function of ${\mathcal V}$ restricted to
${\mathcal G}(N^{n})$ is an eigenfunction of the Laplace operator with
eigenvalue $n$.
Then we observe

\begin{lem}
$n$ is just the first (positive) eigenvalue of
${\mathcal G}(N^{n})$
if and only if
${\mathcal G}(N^{n})\subset{Q_{n}({\mathbf C})}$ is Hamiltonian stable.
Moreover the dimension of  the vector space
${\mathcal V}$ is equal to the multiplicity of the (resp.~first) eigenvalue $n$
if and only if
${\mathcal G}(N^{n})\subset{Q_{n}({\mathbf C})}$ is Hamiltonian rigid
(resp.\ strictly Hamiltonian stable).
\end{lem}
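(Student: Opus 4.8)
The plan is to unwind the second variational formula for a minimal Lagrangian submanifold in an Einstein--K\"ahler manifold and reduce the problem to a spectral statement, then identify the relevant spectrum via Takahashi's theorem and the structure of $\mathcal V$. Since $\mathcal{G}(N^n)$ is a compact minimal Lagrangian submanifold of the Einstein--K\"ahler manifold $Q_n(\mathbf C)$ with Einstein constant $\kappa = n$, the simplified second variational formula quoted in Section \ref{Hamilmin&Hamilsta} shows that $\mathcal{G}(N^n)$ is Hamiltonian stable if and only if the first positive eigenvalue $\lambda_1$ of the Laplacian on functions satisfies $\lambda_1 \ge n$; combined with H.~Ono's upper bound $\lambda_1 \le n$ valid for any compact minimal Lagrangian submanifold in a compact homogeneous Einstein--K\"ahler manifold with positive Einstein constant, this gives: Hamiltonian stable $\iff \lambda_1 = n$. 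This is exactly the first assertion of the lemma, so the first step is simply to invoke these two facts.

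For the second assertion I would argue as follows. By the embedding $\mathcal{G}(N^n) \subset \widetilde{Gr}_2(\mathbf R^{n+2}) \cap \mathcal V \subset \mathcal V$ and Takahashi's theorem, each linear coordinate function of $\mathcal V$, when restricted to $\mathcal{G}(N^n)$, is an eigenfunction of the Laplacian with eigenvalue $n$; since $\mathcal G(N^n)$ is full in $\mathcal V$ (it lies in no proper subspace, as the decomposition $\bigwedge^2\mathbf R^{n+2} = \tilde{\mathfrak k} + \mathcal V$ was chosen with $\mathcal G(N^n)$ orthogonal to $\tilde{\mathfrak k}$ and spanning the complement), these restricted coordinate functions are linearly independent, so $\dim \mathcal V \le \dim V_n$, where $V_n$ denotes the eigenspace of the Laplacian on $\mathcal{G}(N^n)$ for the eigenvalue $n$. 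The point is then that $V_n$ decomposes, via the isomorphism $\bigwedge^2 \mathbf R^{n+2} \cong \mathfrak{o}(n+2)$ identifying $\mathcal V$ with the space of Killing fields of $Q_n(\mathbf C)$ normal to $\mathcal{G}(N^n)$, into the image of $\mathcal V$ together with any "extra" $n$-eigenfunctions not coming from ambient Killing fields. By definition $n_{hk}(\mathcal G) = \dim \mathcal V$, and $\mathcal{G}(N^n)$ is Hamiltonian rigid precisely when there are no extra $n$-eigenfunctions beyond the projections of holomorphic Killing fields, i.e.\ precisely when $\dim \mathcal V = \dim V_n$. When moreover $\lambda_1 = n$, so that $V_n$ is the \emph{first} eigenspace, Hamiltonian rigidity coincides with strict Hamiltonian stability: the nullity $n(\mathcal G)$ of the second variation on $B^1(L)$ equals $\dim V_{\lambda_1}$ (the null directions of the Jacobi-type operator $\Delta^1_L - \overline R$ on exact $1$-forms correspond, for a minimal Lagrangian in an Einstein--K\"ahler manifold, to functions with $\Delta f = \kappa f$), so $n_{hk}(\mathcal G) = n(\mathcal G)$ exactly when $\dim \mathcal V = \dim V_n$.

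I would organize the write-up in two short paragraphs mirroring the two clauses of the statement. The first paragraph combines Oh's stability criterion with Ono's eigenvalue bound to get the ``$\lambda_1 = n$'' characterization of Hamiltonian stability. The second paragraph uses Takahashi's theorem, the fullness of $\mathcal G(N^n)$ in $\mathcal V$, and the identification of $E_0(\mathcal G) \cap B^1(L)$ with the $n$-eigenspace of the scalar Laplacian (for the rigidity part, plus the additional input $\lambda_1 = n$ for the strict-stability part) to conclude that $\dim \mathcal V$ equals the multiplicity of the eigenvalue $n$ exactly when $\mathcal G(N^n)$ is Hamiltonian rigid, respectively strictly Hamiltonian stable.

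The main obstacle is the precise bookkeeping identifying the null space $E_0(\mathcal G) \subset B^1(L)$ of the second variation with the eigenspace $\{f : \Delta_L f = n f\}$ of the scalar Laplacian --- one must check that the operator $\Delta^1_L - \overline R$ acting on exact $1$-forms $df$ reduces, via $\mathrm{Ric}^{Q_n(\mathbf C)} = n\, g$ and the Weitzenb\"ock/commutation identity $\Delta^1_L\, df = d(\Delta_L f)$, to $d(\Delta_L f - n f)$, so that $df$ is a null direction iff $\Delta_L f = n f$, and moreover that the $\alpha_{\mathrm H}$-terms in the second variation drop out because $\mathcal G$ is minimal ($\alpha_{\mathrm H} = 0$). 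Once this identification is in place, both equivalences are essentially definitional, and the only remaining subtlety --- that fullness of $\mathcal G(N^n)$ in $\mathcal V$ forces the restricted coordinate functions to be linearly independent so that $\dim \mathcal V$ is genuinely the number of $n$-eigenfunctions arising from ambient Killing fields --- follows from the orthogonal-complement construction of $\mathcal V$ together with the computation $\langle X, \mathbf x(p)\wedge\mathbf n(p)\rangle = 0$ for $X \in \tilde{\mathfrak k}$ recorded just before the lemma.
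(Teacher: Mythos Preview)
Your proposal is correct and matches the paper's (implicit) argument, which simply records the lemma as an observation following the preceding setup. One small redundancy: for the first clause you invoke Ono's abstract upper bound $\lambda_1 \le \kappa$, but this is unnecessary here---the paragraph immediately before the lemma already uses Takahashi's theorem to show that the coordinate functions of $\mathcal V$ restricted to $\mathcal G(N^n)$ are eigenfunctions with eigenvalue $n$, so $n$ is always in the spectrum and hence $\lambda_1 \le n$ directly; combining with Oh's criterion $\lambda_1 \ge n$ gives the equivalence without appealing to Ono.
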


Next we mention a relationship between the Gauss images
${\mathcal G}(N^{n})$ of isoparametric hypersurfaces
and the intersection
$\widetilde{Gr}_{2}({\mathbf R}^{n+2})\cap{\mathcal V}$.
In \cite{Ma-OhnitaCONM2010} we showed that
if $N^{n}$ is homogeneous, then
${\mathcal G}(N^{n})= \widetilde{Gr}_{2}({\mathbf R}^{n+2})\cap{\mathcal V}$.

Define a map $\mu: \widetilde{Gr}_{2}({\mathbf R}^{n+2}) \rightarrow \bigwedge^{2}{\mathbf R}^{n+2}$
by
\begin{equation*}
\mu:\widetilde{Gr}_{2}({\mathbf R}^{n+2})
\ni [W]\longmapsto
{\mathbf a}\wedge{\mathbf b}\in
\bigwedge^{2}{\mathbf R}^{n+2}\cong\mathfrak{o}(n+2)
=\tilde{\mathfrak k}+{\mathcal V}.
\end{equation*}
The moment map of the action $\tilde{K}$ on
$\widetilde{Gr}_{2}({\mathbf R}^{n+2})$
is given by
$\mu_{\tilde{\mathfrak k}}:=
\pi_{\tilde{\mathfrak k}}\circ \mu :\widetilde{Gr}_{2}({\mathbf R}^{n+2})
\rightarrow \tilde{\mathfrak k}$,
where $\pi_{\tilde{\mathfrak k}}:\mathfrak{o}(n+2)\rightarrow\tilde{\mathfrak k}$
denotes the orthogonal projection onto $\tilde{\mathfrak k}$.
For any $p\in{N^{n}}$, we have
\begin{equation*}
\tilde{K}({\mathbf x}(p)\wedge{\mathbf n}(p))
\subset
{\mathcal G}(N^{n})
\subset
\widetilde{Gr}_{2}({\mathbf R}^{n+2})\cap{\mathcal V}
=\mu_{\tilde{\mathfrak k}}^{-1}(0).
\end{equation*}
It is obvious that $N^{n}$ is homogeneous if and only if
$\tilde{K}({\mathbf x}(p)\wedge{\mathbf n}(p))={\mathcal G}(N^{n})$.
In \cite{Ma-OhnitaCONM2010} we proved its inverse as follows:
Assume that
${\mathcal G}(N^{n})
=\widetilde{Gr}_{2}({\mathbf R}^{n+2})\cap{\mathcal V}$.
Then
$\tilde{K}({\mathbf x}(p)\wedge{\mathbf n}(p))={\mathcal G}(N^{n})$,
that is, $N^{n}$ is homogeneous.
Therefore we obtain (\cite{Ma-OhnitaCONM2010}) that
$N ^{n}$ is not homogeneous if and only if
\begin{equation*}
\tilde{K}({\mathbf x}(p)\wedge{\mathbf n}(p))
\subsetneqq
{\mathcal G}(N^{n})
\subsetneqq
\widetilde{Gr}_{2}({\mathbf R}^{n+2})\cap{\mathcal V}
=\mu_{\tilde{\mathfrak k}}^{-1}(0).
\end{equation*}

All isoparametric hypersurfaces in spheres are classified into
homogeneous one and non-homogeneous one. Due to Hsiang-Lawson
(\cite{HL}) and Takagi-Takahashi (\cite{Takagi-Takahashi1972}), any
homogeneous isoparametric hypersurface in a sphere can be obtained
as a principal orbit of the isotropy representation of a compact
Riemannian symmetric pair $(U,K)$ of rank $2$ (see Table
\ref{List:homogIsoparametric}).

Compact homogeneous minimal Lagrangian submanifolds
obtained as the Gauss images of
homogeneous isoparametric hypersurfaces
are constructed in the following way (cf.\ \cite{Ma-Ohnita1}).
Let ${\mathfrak u}={\mathfrak k}+{\mathfrak p}$ be the canonical decomposition of $\mathfrak u$
as a symmetric Lie algebra of a symmetric pair $(U,K)$ of rank $2$
and $\mathfrak a$ be a maximal abelian subspace of $\mathfrak p$.
Define an $\mathrm{Ad}U$-invariant inner product
$\langle{\  ,\ }\rangle_{\mathfrak u}$
of $\mathfrak u$ from the Killing-Cartan form of $\mathfrak u$.
Then the vector space $\mathfrak p$ equipped with the inner product
$\langle{\  ,\ }\rangle_{\mathfrak u}$
can be identified with
the Euclidean space ${\mathbf R}^{n+2}$
and $S^{n+1}(1)$ denotes
the $(n+1)$-dimensional unit standard sphere in $\mathfrak p$.
The linear isotropy action $\mathrm{Ad}_{\mathfrak p}$ of $K$
on $\mathfrak p$ and thus on $S^{n+1}(1)$
induces the group action of $K$ on
$\widetilde{\mathrm{Gr}}_2({\mathfrak p})\cong Q_n({\mathbf C})$.
For each \emph{regular} element $H$ of ${\mathfrak a} \cap S^{n+1}(1)$,
we get a homogeneous isoparametric hypersurface in the unit sphere
\begin{equation*}
N^n=(\mathrm{Ad}_{\mathfrak p}K)H \subset S^{n+1}(1) \subset {\mathfrak p}\cong {\mathbf R}^{n+2}.
\end{equation*}
Its Gauss image is
\begin{equation*}
L^n=
{\mathcal G}(N^n)=K\cdot [\mathfrak a]=
[(\mathrm{Ad}_{\mathfrak p}K){\mathfrak a}]
\subset \widetilde{Gr}_2({\mathfrak p})\cong Q_n(\mathbf C).
\end{equation*}
Here $N$ and ${\mathcal G}(N^{n})$ have homogeneous space
expressions $N\cong K/K_{0}$ and ${\mathcal G}(N^{n})\cong
K/K_{[{\mathfrak a}]}$, where we define
\begin{equation*}
\begin{split}
&K_{0}:=\{k\in{K} \mid
\mathrm{Ad}_{\mathfrak p}(k)(H)=H\}\\
&\quad{\ }=\{k\in{K} \mid
\mathrm{Ad}_{\mathfrak p}(k)(H)=H
\text{ for each }H\in{\mathfrak a}\},\\
&K_{\mathfrak a}:=\{k\in{K} \mid
\mathrm{Ad}_{\mathfrak p}(k)({\mathfrak a})
={\mathfrak a}\},\\
&K_{[{\mathfrak a}]}:=\{k\in{K_{\mathfrak a}} \mid
\mathrm{Ad}_{\mathfrak p}(k):{\mathfrak a}\longrightarrow{\mathfrak a}
\text{ preserves the orientation of }{\mathfrak a}\}.
\end{split}
\end{equation*}
The deck transformation group of the covering map
${\mathcal G}:N\rightarrow {\mathcal G}(N^{n})$ is equal to
$K_{[{\mathfrak a}]}/K_{0}=W(U,K)/{\mathbf Z}_{2}\cong{\mathbf Z}_{g}$,
where
$W(U,K)=K_{\mathfrak a}/K_{0}$ is the Weyl group of $(U,K)$.

Since we know that $\mathrm{Ad}_{\mathfrak p}K$ is
the maximal compact subgroup of $SO(n+2)$ preserving $N$ and/or
${\mathcal G}(N^{n})$
(\cite{Hsiang-Lawson1971}, \cite{Ma-Ohnita1}),
in this case its nullity is given as
\begin{equation*}
n_{hk}({\mathcal G})=n_{hk}({\mathcal G}(N^{n}))
=\dim{SO(n+2)}-\dim{K}.
\end{equation*}

\begin{small}
\begin{table}[ht]
\caption{Homogeneous isoparametric hypersurfaces in spheres}
\label{List:homogIsoparametric}
\renewcommand\arraystretch{1}
\noindent\[
\begin{array}{|c|c|c|c|c|c|}
\hline
$g$
&Type
&(U,K)
& \rm{dim}{N}
& m_{1},m_{2}
&K/K_{0}
\\
\hline
1
& S^{1}\times
&(S^{1}\times SO(n+2),SO(n+1))
&{n}
&{n}
&S^{n}
\\
{\ }
&  \mathrm{BDII}
&(n\geq{1}) \, [{\mathbf R}\oplus A_1]
&{\ }
&{\ }
&{\ }
\\
\hline
2
&\mathrm{BDII}\times
&(SO(p+2)\times SO(n+2-p),
&n
& p,n-p
&S^{p}\times S^{n-p}
\\
{\ }
& \mathrm{BDII}
&SO(p+1)\times SO(n+1-p))
&{\ }
&{\ }
&{\ }
\\
{\ }
&{\ }
&(1\leq{p}\leq{n-1})\, [A_1\oplus A_1]
&{\ }
&{\ }
&{\ }
\\
\hline
3
&\mathrm{AI}_{2}
&(SU(3),SO(3)) \, [A_2]
&3
&1,1
&\frac{SO(3)}{{\mathbf Z}_{2}+{\mathbf Z}_{2}}
\\
\hline
3
&{\mathfrak a}_{2}
&(SU(3)\times SU(3),SU(3))\, [A_2]
&6
&2,2
&\frac{SU(3)}{T^{2}}
\\
\hline
3
&\mathrm{AII}_{2}
&(SU(6),Sp(3)) \, [A_2]
&12
&4,4
&\frac{Sp(3)}{Sp(1)^{3}}
\\
\hline
3
&\mathrm{EIV}
&(E_{6},F_{4}) \, [A_2]
&24
&8,8
&\frac{F_{4}}{Spin(8)}
\\
\hline
4
&{\mathfrak b}_{2}
&(SO(5)\times SO(5),SO(5)) \, [B_2]
&8
&2,2
&\frac{SO(5)}{T^{2}}
\\
\hline
4
&\mathrm{AIII}_{2}
&(SU(m+2),S(U(2)\times U(m)))
&4m-2
&2,
&\frac{S(U(2)\times U(m))}{S(U(1)\times U(1) \times U(m-2))}
\\
{\ }
&{\ }
&(m\geq{2}) \, [BC_2] (m\geq 3), [B_2] (m=2)
&{\ }
&2m-3
&{\ }
\\
\hline
4
&\mathrm{BDI}_{2}
&(SO(m+2),SO(2)\times SO(m))
&2m-2
&1,
&\frac{SO(2)\times SO(m)}{{\mathbf Z}_{2}\times SO(m-2)}
\\
{\ }
&{\ }
&(m\geq{3}) \, [B_2]
&{\ }
&m-2
&{\ }
\\
\hline
4
&\mathrm{CII}_{2}
&(Sp(m+2),Sp(2)\times Sp(m))
&8m-2
&4,
&\frac{Sp(2)\times Sp(m)}{Sp(1)\times Sp(1) \times Sp(m-2)}
\\
{\ }
&{\ }
&(m\geq{2}) \, [BC_2](m\geq 3),[B_2](m=2)
&{\ }
&4m-5
&{\ }
\\
\hline
4
&\mathrm{DIII}_{2}
&(SO(10),U(5)) \, [BC_2]
&18
&4,5
&\frac{U(5)}{{SU(2)}\times{SU(2)}\times U(1)}
\\
\hline
4
&\mathrm{EIII}
&(E_{6}, U(1) \cdot Spin(10)) \, [BC_2]
&30
&6,9
&\frac{U(1) \cdot Spin(10)}{S^1 \cdot Spin(6)}
\\
\hline
6
&{\mathfrak g}_{2}
&(G_{2}\times G_{2},G_{2}) \, [G_2]
&12
&2,2
&\frac{G_{2}}{T^{2}}
\\
\hline
6
&\mathrm{G}
&(G_{2},SO(4)) \, [G_2]
&6
&1,1
&\frac{SO(4)}{{\mathbf Z}_{2}+{\mathbf Z}_{2}}
\\
\hline
\end{array}
\]
\end{table}
\end{small}

\section{The method of eigenvalue computations for our compact homogeneous spaces}
\label{Sec_Method}

\subsection{Basic results from harmonic analysis on compact homogeneous spaces}

First we recall the basic theory of harmonic analysis on general compact homogeneous spaces (cf.\ \cite{Takeuchi}).
Let ${\mathcal D}(G)$ be the complete set of all inequivalent irreducible
unitary representations of a compact connected Lie group $G$.
For a maximal abelian subalgebra ${\mathfrak t}$ of ${\mathfrak g}$,
let $\Sigma(G)$ be the set of all roots of  ${\mathfrak k}$ and
$\Sigma^{+}(G)$ be its subset of all positive root $\alpha\in\Sigma(G)$
relative to a linear order fixed on ${\mathfrak t}$.
Set
\begin{equation*}
\begin{split}
&\Gamma(G):=\{\xi\in{\mathfrak t}\mid \exp(\xi)=e\},\\
& Z(G):=\{\Lambda\in{\mathfrak t}^{\ast}\mid \Lambda(\xi)
\in 2\pi{\mathbf Z}
\,  \text{ for each }\xi\in\Gamma(G)\},\\
&D(G):=\{\Lambda\in Z(G)\mid
\langle{\Lambda,\alpha}\rangle\geq 0
\,  \text{ for each }\alpha\in\Sigma^{+}(G)\}.
\end{split}
\end{equation*}
Then there is a bijective correspondence between
$D(G)\ni \Lambda\longmapsto (V_{\Lambda},\rho_{\Lambda})\in{\mathcal D}(G)$,
where
$(V_{\Lambda},\rho_{\Lambda})$
denotes an irreducible unitary representation of $G$
with the highest weight $\Lambda$
equipped with a $\rho_{\Lambda}(K)$-invariant Hermitian inner product
$\langle{\ ,\ }\rangle_{V_{\Lambda}}$.
Let $\langle{\ ,\ }\rangle_{\mathfrak g}$ be an
$\mathrm{Ad}G$-invariant inner product of ${\mathfrak g}$.
For a compact Lie subgroup $H$ of $G$ with Lie subalgebra ${\mathfrak h}$,
we take the orthogonal direct sum decomposition
${\mathfrak g}={\mathfrak h}+{\mathfrak m}$ relative to
$\langle{\ ,\ }\rangle_{\mathfrak g}$.
Set
\begin{equation}
D(G,H):=\{\Lambda\in D(G)\mid (V_{\Lambda})_{H}\not=\{0\}\},
\end{equation}
where
\begin{equation}
(V_{\Lambda})_{H}:=\{w\in V_{\Lambda}\mid \rho_{\Lambda}(a)w=w \
(\forall a\in H)\}.
\end{equation}
Let $\Lambda\in{D(G,H)}$.
For each
$\bar{w}\otimes v\in (V_{\Lambda})^{\ast}_{H}\otimes V_{\Lambda}$,
we define a real analytic function $f_{\bar{w}\otimes v}$
on $G/H$ by
\begin{equation}
(f_{\bar{w}\otimes v})(a H):=
\langle{v,\rho_{\Lambda}(a)w}\rangle_{V_{\Lambda}}
\end{equation}
for all $a H\in G/H$.
By virtue of the Peter-Weyl's theorem and the Frobenius reciprocity law,
we have a linear injection
\begin{equation}
(V_{\Lambda})^{\ast}_{H}\otimes V_{\Lambda}
\ni
\bar{w}\otimes v
\longmapsto
f_{\bar{w}\otimes v}\in C^{\infty}(G/H,{\mathbf C})
\end{equation}
and the decomposition
\begin{equation}
C^{\infty}(G/H,{\mathbf C})
=
\bigoplus_{\Lambda\in D(G,H)}
(V_{\Lambda})^{\ast}_{H}\otimes V_{\Lambda}.
\end{equation}
in the sense of $C^{\infty}$-topology.
Via the natural homogeneous projection
$\pi: G\rightarrow G/H$,
the vector space $C^{\infty}(G/H,{\mathbf C})$
of all complex valued smooth functions on $G/H$
can be identified with
the vector space $C^{\infty}(G,{\mathbf C})_{H}$
of all complex valued smooth functions on $G$
invariant under the right action of $H$.
Let
${\mathrm U}({\mathfrak g})$
be the universal enveloping algebra of Lie algebra ${\mathfrak g}$,
which is identified to the algebra of all left-invariant linear differential operators
on $C^{\infty}(G,{\mathbf C})$.
Let
\begin{equation*}
{\mathrm U}({\mathfrak g})_{H}:=
\{D\in {\mathrm U}({\mathfrak g})\mid \mathrm{Ad}(h)D
=R_{h}\circ D\circ R_{h^{-1}}
=D\text{ for each }h\in H\}
\end{equation*}
be a subalgebra of ${\mathrm U}({\mathfrak g})$ consisting of
elements fixed by the adjoint action of $H$.
Here define $(R_{h}\tilde{f})(u):=\tilde{f}(uh)$ for
$\tilde{f}\in C^{\infty}(G,{\mathbf C})$.
For each $D\in {\mathrm U}({\mathfrak g})_{H}$,
we have
$D(C^{\infty}(G,{\mathbf C})_{H})\subset C^{\infty}(G,{\mathbf C})_{H}$.
The \emph{Casimir operator}
${\mathcal C}_{G/H,\langle{\ ,\ }\rangle_{\mathfrak g}}$
of $(G,H)$
relative to $\langle{\ ,\ }\rangle_{\mathfrak g}$ is defined by
${\mathcal C}
={\mathcal C}_{G/H,\langle{\ ,\ }\rangle_{\mathfrak g}}
:=\sum^{n}_{i=1}(X_{i})^{2}$,
where $\{X_{i} \mid i=1,\cdots,n\}$
is an orthonormal basis of ${\mathfrak m}$ with respect to
$\langle{\ ,\ }\rangle_{\mathfrak g}$.
Then ${\mathcal C}_{G/H,\langle{\ ,\ }\rangle_{\mathfrak g}}
\in{\mathrm U}({\mathfrak g})_{H}$
and by the $\mathrm{Ad}G$-invariance of
$\langle{\ ,\ }\rangle_{\mathfrak g}$
and Schur's Lemma
there is a non-positive real constant
$c(\Lambda,\langle{\ ,\ }\rangle_{\mathfrak g})$
such that
\begin{equation}
{\mathcal C}_{G/H,\langle{\ ,\ }\rangle_{\mathfrak g}}
(f_{\bar{w}\otimes v})=
c(\Lambda,\langle{\ ,\ }\rangle_{\mathfrak g}) f_{\bar{w}\otimes v}
\end{equation}
for each
$\bar{w}\otimes v\in (V_{\Lambda})^{\ast}_{H}\otimes V_{\Lambda}$.
The eigenvalue $c(\Lambda,\langle{\ ,\ }\rangle_{\mathfrak g})$ is given
by the Freudenthal's formula
\begin{equation}
c(\Lambda,\langle{\ ,\ }\rangle_{\mathfrak g})=
-\langle{\Lambda,\Lambda+2\delta}\rangle_{\mathfrak g},
\end{equation}
where $2\delta=\sum_{\alpha\in\Sigma^{+}(G)}\alpha$.

Now we shall consider our compact homogeneous spaces $N^n=K/K_0$ and
$L^n=\mathcal{G}(N^n)=K/K_{[\mathfrak{a}]}$ (\cite{Ma-Ohnita1}).
Let $\Sigma(U,K)$ be the set of (restricted) roots of $(\mathfrak{u}, \mathfrak{k})$
and $\Sigma^{+}(U,K)$ be its subset of positive roots.
We have the following root decomposition of $\mathfrak{k}$:
\begin{equation*}
\mathfrak{k}=\mathfrak{k}_0 +\sum_{\gamma\in \Sigma^{+}(U,K)}
\mathfrak{k}_{\gamma},
\end{equation*}
where
\begin{equation*}
\begin{split}
\mathfrak{k}_0:
=&\{X\in{\mathfrak k}{\ }\vert{\ }[X,{\mathfrak a}]\subset{\mathfrak a}\}\\
=&\{X\in{\mathfrak k}{\ }\vert{\ }[X,H]=0\quad
\text{ for each }H\in{\mathfrak a}\},\\
{\mathfrak k}_{\gamma} :=&\{X\in{\mathfrak k}{\ }\vert{\
}(\mathrm{ad}H)^{2}X=(\gamma(H))^{2}X
\text{ for each }H\in{\mathfrak a}\}.
\end{split}
\end{equation*}
For each $\gamma\in{\Sigma^{+}(U,K)}$,
set $m(\gamma):=\dim{{\mathfrak k}_{\gamma}}$.
Define
\begin{equation}\label{M&Aperp}
{\mathfrak m}:= \sum_{\gamma\in{\Sigma^{+}(U,K)}}{\mathfrak k}_{\gamma}.
\end{equation}
Then the tangent vector spaces $T_{eK_0}(K/K_0)$ and
$T_{eK_{[\mathfrak a]}}(K/K_{[\mathfrak a]})$
can be identified with the vector subspace $\mathfrak{m}$ of $\mathfrak{k}$.
We can choose an orthonormal basis of ${\mathfrak m}$ with respect to
$\langle{\ ,\ }\rangle_{\mathfrak u}$
\begin{equation*}
\{X_{\gamma,i}{\ }\vert{\
}\gamma\in{\Sigma^{+}(U,K)},i=1,2,\cdots,m(\gamma)\}.
\end{equation*}
Let $\langle{\ },{\ }\rangle$ denote the ${\rm Ad}_{\mathfrak m}(K_0)$-invariant
inner product of ${\mathfrak m}$ corresponding to the induced metric
${\mathcal G}^{\ast}g^{\rm std}_{Q_{n}({\mathbf C})}$ on $K/K_{0}$.
Thus we know (\cite{Ma-Ohnita1}) that
\begin{equation*}
\left\{\, \frac{1}{\Vert{\gamma}\Vert_{\frak u}}X_{\gamma,i} {\ }\vert{\
}\gamma\in{\Sigma^{+}(U,K)},i=1,2,\cdots,m(\gamma)\, \right\}
\end{equation*}
is an orthonormal basis of ${\mathfrak m}$ relative to
$\langle{\ ,\ }\rangle$.

The Laplace operator $\Delta^{0}_{L^{n}}=\delta d$ acting on $C^{\infty}(K/K_{0},{\mathbf C})$
with respect to the induced metric
$\mathcal{G}^{*}g_{Q_n(\mathbf C)}^{\mathrm{std}}$
corresponds to the linear differential operator $-{\mathcal C}_{L^{n}}$
on $C^{\infty}(K,{\mathbf C})_{K_{0}}$,
where ${\mathcal C}_{L^{n}}\in{\mathrm U}({\mathfrak k})$ is the Casimir operator
relative to the $\mathrm{Ad}_{\mathfrak m}(K_{0})$-invariant inner product
$\langle{\ ,\ }\rangle$ of ${\mathfrak m}$ defined by
\begin{equation}
{\mathcal C}_{L^{n}}:=
\sum_{\gamma\in \Sigma^{+}(U,K)}
\sum^{m(\gamma)}_{i=1}
\frac{1}{||\gamma ||_{\mathfrak{u}}^{2}}(X_{\gamma, i})^{2}.
\end{equation}
Note that ${\mathcal C}_{L^{n}}\in{\mathrm U}({\mathfrak k})_{K_{0}}$
because of the
$\mathrm{Ad}_{\mathfrak m}(K_{0})$-invariance of $\langle{\ ,\ }\rangle$.

Suppose that $\Sigma(U,K)$ is irreducible.
Let $\gamma_{0}$ denote the highest root of $\Sigma(U,K)$.
For $g=3,4$, or $6$, the restricted root system $\Sigma(U,K)$
is
of type $A_2$,
$B_2$,
$BC_2$
or
$G_2$.
Then we know that for each $\gamma\in\Sigma^{+}(U,K)$,
\begin{equation*}
\frac{\Vert{\gamma}\Vert_{\mathfrak u}^{2}}
{\Vert{\gamma_{0}}\Vert_{\mathfrak u}^{2}}
=
\begin{cases}
\, 1
&
\text{ if } \Sigma(U,K) \text{ is of type }A_2, \\
\, 1 \text{ or } 1/3
&
\text{ if } \Sigma(U,K) \text{ is of type }G_2, \\
\, 1 \text{ or } 1/2
&
\text{ if } \Sigma(U,K) \text{ is of type }B_2, \\
\, 1, 1/2 \text{ or } 1/4
&
\text{ if } \Sigma(U,K) \text{ is of type }BC_2.
\end{cases}
\end{equation*}
Set
\begin{equation}
\Sigma^{+}_{1}(U,K)
:=\{\gamma\in\Sigma^{+}(U,K)\mid
\Vert{\gamma}\Vert_{\mathfrak u}^{2}=\Vert{\gamma_{0}}\Vert_{\mathfrak u}^{2}
\}.
\end{equation}
Define a symmetric Lie subalgebra $(\mathfrak{u}_{1},\mathfrak{k}_{1})$
by
\begin{equation*}
\begin{split}
&\mathfrak{k}_{1}
:=\mathfrak{k}_0+\sum_{\gamma\in \Sigma^{+}_{1}(U,K)} \mathfrak{k}_\gamma,\quad
\mathfrak{p}_{1}
:=\mathfrak{a}+\sum_{\gamma\in \Sigma^{+}_{1}(U,K)} \mathfrak{p}_\gamma, \\
&\mathfrak{u}_{1}
:=
\mathfrak{k}_{1}+\mathfrak{p}_{1}.
\end{split}
\end{equation*}
Let $K_{1}$ and $U_{1}$ denote
connected compact Lie subgroups of $K$ and $U$
generated by $\mathfrak{k}_{1}$ and $\mathfrak{u}_{1}$.

Suppose that $\Sigma^{+}(U,K)$ is of type $BC_{2}$.
Define
\begin{equation}
\Sigma^{+}_{2}(U,K)
:=\{\gamma\in\Sigma^{+}(U,K)\mid
\Vert{\gamma}\Vert_{\mathfrak u}^{2}=\Vert{\gamma_{0}}\Vert_{\mathfrak u}^{2}
\text{ or }
\Vert{\gamma_{0}}\Vert_{\mathfrak u}^{2}/2
\}.
\end{equation}
Define a symmetric Lie subalgebra $(\mathfrak{u}_{2},\mathfrak{k}_{2})$ by
\begin{equation*}
\begin{split}
&\mathfrak{k}_{2}
:=\mathfrak{k}_0+\sum_{\gamma\in \Sigma^{+}_{2}(U,K)} \mathfrak{k}_\gamma,\quad
\mathfrak{p}_{2}
:=\mathfrak{a}+\sum_{\gamma\in \Sigma^{+}_{2}(U,K)} \mathfrak{p}_\gamma, \\
&\mathfrak{u}_{2}
:=
\mathfrak{k}_{2}+\mathfrak{p}_{2}.
\end{split}
\end{equation*}
Let $K_{2}$ and $U_{2}$ denote
connected compact Lie subgroups of $K$ and $U$
generated by $\mathfrak{k}_{2}$ and $\mathfrak{u}_{2}$.
We have the following subgroups of $K$
in each case:
\begin{equation*}
  \begin{array}{ll}
    K_0\subset K, & \hbox{ if } \Sigma(U,K) \hbox{ is of type } A_2; \\
     K_0\subset K_1\subset K, & \hbox{ if } \Sigma(U,K) \hbox{ is of type } B_2 \hbox{ or } G_2 ;\\
     K_0\subset K_1 \subset K_2 \subset K, & \hbox{ if } \Sigma(U,K) \hbox{ is of type } BC_2.
  \end{array}
\end{equation*}

Set
\begin{equation}
\begin{split}
&{\mathcal C}_{K/K_0,\langle{\ ,\ }\rangle_{\mathfrak u}}:=
\sum_{\gamma\in \Sigma^{+}(U,K)}
\sum^{m(\gamma)}_{i=1}(X_{\gamma, i})^{2},\\
&\\
&{\mathcal C}_{K_{1}/K_0,\langle{\ ,\ }\rangle_{\mathfrak u}}:=
\sum_{\gamma\in \Sigma^{+}_{1}(U,K)}
\sum^{m(\gamma)}_{i=1}(X_{\gamma, i})^{2},\\
&\\
&{\mathcal C}_{K_{2}/K_0,\langle{\ ,\ }\rangle_{\mathfrak u}}:=
\sum_{\gamma\in \Sigma^{+}_{2}(U,K)}
\sum^{m(\gamma)}_{i=1}(X_{\gamma, i})^{2}.
\end{split}
\end{equation}
Then
${\mathcal C}_{K/K_0}, {\mathcal C}_{K_{1}/K_0}, {\mathcal C}_{K_{2}/K_0}\in
{\mathrm U}({\mathfrak k})_{K_{0}}$
and the Casimir operator $\mathcal{C}_{L^{n}}$ can be decomposed as follows:
\begin{lem}\label{LaplaceOperator}
\begin{equation*}
\begin{split}
\mathcal{C}_{L^{n}}=
\begin{cases}
\,
\displaystyle
\frac{1}{\Vert{\gamma_{0}}\Vert^2_{\mathfrak u}}
{\mathcal C}_{K/K_0,\langle{\ ,\ }\rangle_{\mathfrak u}}
\
\hbox{ if } \Sigma(U,K) \hbox{ is of type } A_2;
\\
&
\\
\displaystyle
\frac{3}{\Vert{\gamma_{0}}\Vert^2_{\mathfrak u}}
{\mathcal C}_{K/K_0,\langle{\ ,\ }\rangle_{\mathfrak u}}
-
\frac{2}{\Vert{\gamma_{0}}\Vert^2_{\mathfrak u}}
{\mathcal C}_{K_{1}/K_0,\langle{\ ,\ }\rangle_{\mathfrak u}}
\
\hbox{ if } \Sigma(U,K) \hbox{ is of type } G_2;
\\
&
\\
\displaystyle
\frac{2}{\Vert{\gamma_{0}}\Vert^2_{\mathfrak u}}
{\mathcal C}_{K/K_0,\langle{\ ,\ }\rangle_{\mathfrak u}}
-
\frac{1}{\Vert{\gamma_{0}}\Vert^2_{\mathfrak u}}
{\mathcal C}_{K_{1}/K_0,\langle{\ ,\ }\rangle_{\mathfrak u}}
\
\hbox{ if } \Sigma(U,K) \hbox{ is of type } B_2;
\\
&
\\
\displaystyle
\frac{4}{\Vert{\gamma_{0}}\Vert^2_{\mathfrak u}}
{\mathcal C}_{K/K_0,\langle{\ ,\ }\rangle_{\mathfrak u}}
-\frac{1}{\Vert{\gamma_{0}}\Vert^2_{\mathfrak u}}
{\mathcal C}_{K_{1}/K_0,\langle{\ ,\ }\rangle_{\mathfrak u}}
-\frac{2}{\Vert{\gamma_{0}}\Vert^2_{\mathfrak u}}
{\mathcal C}_{K_{2}/K_0,\langle{\ ,\ }\rangle_{\mathfrak u}}
\\
\quad\quad\quad\quad\quad\quad\quad\quad\quad\quad
\quad\quad\quad\quad\quad\quad\quad
\
\hbox{ if } \Sigma(U,K) \hbox{ is of type } BC_2.
\end{cases}
\end{split}
\end{equation*}
\end{lem}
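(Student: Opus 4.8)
The plan is to prove the identity directly in $\mathrm{U}(\mathfrak{k})$ by organizing the sum defining $\mathcal{C}_{L^{n}}$ according to the lengths of the restricted roots. Recall that
\[
\mathcal{C}_{L^{n}}
=\sum_{\gamma\in\Sigma^{+}(U,K)}\frac{1}{\|\gamma\|_{\mathfrak u}^{2}}
\sum_{i=1}^{m(\gamma)}(X_{\gamma,i})^{2},
\]
where the fixed orthonormal basis $\{X_{\gamma,i}\}$ of $\mathfrak m$ (with respect to $\langle\ ,\ \rangle_{\mathfrak u}$) is the very same one used to form $\mathcal{C}_{K/K_0,\langle\ ,\ \rangle_{\mathfrak u}}$, $\mathcal{C}_{K_{1}/K_0,\langle\ ,\ \rangle_{\mathfrak u}}$ and $\mathcal{C}_{K_{2}/K_0,\langle\ ,\ \rangle_{\mathfrak u}}$. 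For each admissible value $k$ of the ratio $\|\gamma_{0}\|_{\mathfrak u}^{2}/\|\gamma\|_{\mathfrak u}^{2}$, set
\[
P_{k}:=\sum_{\substack{\gamma\in\Sigma^{+}(U,K)\\ \|\gamma_{0}\|_{\mathfrak u}^{2}/\|\gamma\|_{\mathfrak u}^{2}=k}}\ \sum_{i=1}^{m(\gamma)}(X_{\gamma,i})^{2}.
\]
Each $P_{k}$ lies in $\mathrm{U}(\mathfrak k)_{K_{0}}$, since $K_{0}$ preserves every root space $\mathfrak{k}_{\gamma}$. Because $1/\|\gamma\|_{\mathfrak u}^{2}=k/\|\gamma_{0}\|_{\mathfrak u}^{2}$ on each length stratum, we get $\|\gamma_{0}\|_{\mathfrak u}^{2}\,\mathcal{C}_{L^{n}}=\sum_{k}k\,P_{k}$, where $k$ runs over $\{1\}$, $\{1,2\}$, $\{1,3\}$ or $\{1,2,4\}$ according as $\Sigma(U,K)$ is of type $A_{2}$, $B_{2}$, $G_{2}$ or $BC_{2}$, by the length table recorded above (this uses the standing assumption that $\Sigma(U,K)$ is irreducible, which guarantees no other root lengths occur).

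The next step is to read off the $P_{k}$ from the nested chain of root subsystems $\Sigma_{1}^{+}(U,K)\subset\Sigma_{2}^{+}(U,K)\subset\Sigma^{+}(U,K)$ underlying the chain $K_{0}\subset K_{1}\subset K_{2}\subset K$. By definition $\Sigma_{1}^{+}(U,K)$ consists exactly of the roots with $\|\gamma\|_{\mathfrak u}^{2}=\|\gamma_{0}\|_{\mathfrak u}^{2}$, i.e.\ $k=1$, so $\mathcal{C}_{K_{1}/K_0,\langle\ ,\ \rangle_{\mathfrak u}}=P_{1}$; likewise $\Sigma_{2}^{+}(U,K)$ consists of the roots with $k\in\{1,2\}$, so $\mathcal{C}_{K_{2}/K_0,\langle\ ,\ \rangle_{\mathfrak u}}=P_{1}+P_{2}$; and $\mathcal{C}_{K/K_0,\langle\ ,\ \rangle_{\mathfrak u}}$, being the sum over all of $\Sigma^{+}(U,K)$, equals $P_{1}$ (type $A_{2}$), $P_{1}+P_{2}$ (type $B_{2}$), $P_{1}+P_{3}$ (type $G_{2}$), or $P_{1}+P_{2}+P_{4}$ (type $BC_{2}$). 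Inverting this triangular system gives $P_{1}=\mathcal{C}_{K_{1}/K_0}$, $P_{2}=\mathcal{C}_{K/K_0}-\mathcal{C}_{K_{1}/K_0}$ in type $B_{2}$ and $P_{2}=\mathcal{C}_{K_{2}/K_0}-\mathcal{C}_{K_{1}/K_0}$ in type $BC_{2}$, $P_{3}=\mathcal{C}_{K/K_0}-\mathcal{C}_{K_{1}/K_0}$, and $P_{4}=\mathcal{C}_{K/K_0}-\mathcal{C}_{K_{2}/K_0}$ (all with respect to $\langle\ ,\ \rangle_{\mathfrak u}$).

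Finally I would substitute these into $\|\gamma_{0}\|_{\mathfrak u}^{2}\,\mathcal{C}_{L^{n}}=\sum_{k}k\,P_{k}$ and collect terms in each of the four cases: type $A_{2}$ is immediate; type $G_{2}$ gives $P_{1}+3P_{3}=\mathcal{C}_{K_{1}/K_0}+3(\mathcal{C}_{K/K_0}-\mathcal{C}_{K_{1}/K_0})=3\,\mathcal{C}_{K/K_0}-2\,\mathcal{C}_{K_{1}/K_0}$; type $B_{2}$ gives $P_{1}+2P_{2}=2\,\mathcal{C}_{K/K_0}-\mathcal{C}_{K_{1}/K_0}$; and type $BC_{2}$ gives $P_{1}+2P_{2}+4P_{4}=\mathcal{C}_{K_{1}/K_0}+2(\mathcal{C}_{K_{2}/K_0}-\mathcal{C}_{K_{1}/K_0})+4(\mathcal{C}_{K/K_0}-\mathcal{C}_{K_{2}/K_0})=4\,\mathcal{C}_{K/K_0}-2\,\mathcal{C}_{K_{2}/K_0}-\mathcal{C}_{K_{1}/K_0}$. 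Dividing by $\|\gamma_{0}\|_{\mathfrak u}^{2}$ yields the four formulas of the lemma. The argument is elementary bookkeeping and there is no serious obstacle; the only points deserving care are that a single orthonormal basis is used throughout so that the $P_{k}$ are genuinely additive, and that the nested subalgebras $\mathfrak{k}_{1}\subset\mathfrak{k}_{2}$ have been set up precisely so that the length strata of $\Sigma^{+}(U,K)$ are cut out by the chain $K_{1}\subset K_{2}\subset K$ — that structural observation is where the real content lies.
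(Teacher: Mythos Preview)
Your proposal is correct and is exactly the computation the paper has in mind; indeed, the paper states the lemma without proof, treating it as an immediate consequence of the definitions of $\mathcal{C}_{L^{n}}$, $\mathcal{C}_{K/K_0,\langle\ ,\ \rangle_{\mathfrak u}}$, $\mathcal{C}_{K_{1}/K_0,\langle\ ,\ \rangle_{\mathfrak u}}$, $\mathcal{C}_{K_{2}/K_0,\langle\ ,\ \rangle_{\mathfrak u}}$ and the length table for the restricted roots given just before it. Your stratification by $P_k$ and the triangular inversion is precisely the bookkeeping that makes this explicit.
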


\subsection{Fibrations on homogeneous isoparametric hypersurfaces
by homogeneous isoparametric hypersurfaces}

For $g=4 \text{ or } 6$, $(U,K)$ is of type $G_2$, $B_2$ or $BC_2$
as indicated at the 3rd column of Table \ref{List:homogIsoparametric}.

In the case when $(U,K)$ is of type
$B_2$ or $G_2$,
we have one fibration as follows:
\begin{center}
\begin{picture}(100,120)
\put(15,65){$K_{1}/K_{0}$}
\put(-30,90){$N^{n}=K/K_{0}$}
\linethickness{0.5pt}
\put(10,85){\vector(0,-2){45}}
\put(0,30){$K/K_{1}$}
\end{picture}
\end{center}

In the case when $(U,K)$ is of type
$BC_2$,
we have the following two fibrations:
\begin{center}
\begin{picture}(150,120)
\put(90,90){$K/K_{0}$}
\put(15,65){$K_{1}/K_{0}$}
\put(105,65){$K_{2}/K_{0}$}
\put(90,30){$K/K_{2}$}
\linethickness{0.5pt}
\put(35,95){\vector(1,0){45}}
\put(-30,90){$N^{n}=K/K_{0}$}
\put(50,100){$=$}
\linethickness{0.5pt}
\put(10,85){\vector(0,-2){45}}
\linethickness{0.5pt}
\put(100,85){\vector(0,-2){45}}
\put(0,30){$K/K_{1}$}
\put(45,40){$K_{2}/K_{1}$}
\linethickness{0.5pt}
\put(35,34){\vector(1,0){45}}
\end{picture}
\end{center}

\subsubsection{
In case $g=6$ and $(U,K)=(G_{2}, SO(4))$, $(m_{1},m_{2})=(1,1)$}
\begin{center}
\begin{picture}(100,100)
\put(15,65){$K_{1}/K_{0}=SO(3)/({\mathbf Z}_{2}+{\mathbf Z}_{2})$}
\put(-30,90){$N^{6}=K/K_{0}=SO(4)/({\mathbf Z}_{2}+{\mathbf Z}_{2})$}
\linethickness{0.5pt}
\put(10,85){\vector(0,-2){45}}
\put(0,30){$K/K_{1}=SO(4)/SO(3)\cong S^{3}$}
\end{picture}
\end{center}
Here $U_{1}/K_{1}=SU(3)/SO(3)$ is a maximal totally geodesic submanifold of
$U/K=G_{2}/SO(4)$. $K/K_{0}=SO(4)/({\mathbf Z}_{2}+{\mathbf Z}_{2})$ is a homogeneous isoparametric
hypersurface with $g=6$, $m_{1}=m_{2}=1$
and $K_{1}/K_{0}=SO(3)/({\mathbf Z}_{2}+{\mathbf Z}_{2})$ is a homogenous isoparametric hypersurface with
$g=3$, $m_{1}=m_{2}=1$.

\begin{rem0}[\cite{Kimura-Tanaka}]
Maximal totally geodesic submanifolds embedded in
$G_2/SO(4)$ are classified as
$SU(3)/SO(3)$,
$\mathbf{C}P^2$,
$S^{2}\cdot S^{2}$.
\end{rem0}

\subsubsection{In case $g=6$ and $(U,K)=(G_{2}\times G_{2}, G_{2})$,
$(m_{1},m_{2})=(2,2)$}

\begin{center}
\begin{picture}(100,110)
\put(15,65){$K_{1}/K_{0}=SU(3)/T^{2}$}
\put(-30,90){$N^{12}=K/K_{0}=G_{2}/T^{2}$}
\linethickness{0.5pt}
\put(10,85){\vector(0,-2){45}}
\put(0,30){$K/K_{1}=G_{2}/SU(3)\cong S^{6}$}
\end{picture}
\end{center}
Here $U_{1}/K_{1}=(SU(3)\times SU(3))/SU(3)$ is a maximal totally geodesic submanifold of
$U/K=(G_{2}\times G_{2})/G_{2}$.
$K/K_{0}=G_{2}/T^{2}$ is a homogenous isoparametric hypersurface with $g=6$, $m_{1}=m_{2}=2$
and $K_{1}/K_{0}=SU(3)/T^{2}$ is a homogenous isoparametric hypersurface with $g=3$, $m_{1}=m_{2}=2$.
\begin{rem0}[\cite{Kimura-Tanaka}]
Maximal totally geodesic submanifolds embedded in
$G_2$ are classified as
$G_2/SO(4)$,
$SU(3)$,
$S^{3}\cdot S^{3}$.
\end{rem0}

\subsubsection{In case $g=4$ and $(U,K)=(SO(5)\times SO(5), SO(5))$,
$(m_{1},m_{2})=(2,2)$}

\begin{center}
\begin{picture}(100,110)
\put(15,60){$K_{1}/K_{0}=SO(4)/T^{2}$}
\put(-30,90){$N^{8}=K/K_{0}=SO(5)/T^{2}$}
\linethickness{0.5pt}
\put(10,85){\vector(0,-2){45}}
\put(0,30){$K/K_{1}=SO(5)/SO(4)\cong S^{4}$}
\end{picture}
\end{center}
Here $U_{1}/K_{1}=(SO(4)\times SO(4))/SO(4) \cong SO(4)\cong S^{3}\cdot S^{3}$
is a maximal totally geodesic submanifold of $U/K=(SO(5)\times SO(5))/SO(5)\cong SO(5)$.
$K/K_0=SO(5)/T^2$ is a homogeneous isoparametric hypersurface with $g=4$, $m_1=m_2=2$
and $K_{1}/K_{0}=SO(4)/T^{2}\cong S^{2}\times S^{2}$ is a homogeneous isoparametric hypersurface with
$g=2$, $m_{1}=m_{2}=2$.

\begin{rem0}[\cite{Kimura-Tanaka}]
Maximal totally geodesic submanifolds embedded in $Sp(2)\cong Spin(5)$ are classified as
$\widetilde{Gr}_{2}({\mathbf R}^{5})$,
$S^{1}\cdot S^{3}$,
$S^{3}\times S^{3}$,
$S^{4}$.
\end{rem0}

\subsubsection{
In case $g=4$ and $(U,K)=(SO(10), U(5))$, $(m_{1},m_{2})=(4,5)$}

\begin{center}
\begin{picture}(150,120)
\put(100,90)
{$K/K_{0}=\displaystyle\frac{U(5)}{SU(2)\times SU(2)\times U(1)}$}
\put(-95,60)
{$K_{1}/K_{0}=\frac{U(2)\times U(2)\times U(1)}{SU(2)\times SU(2)\times U(1)}$}
\put(155,60)
{$K_{2}/K_{0}=\frac{U(4)\times U(1)}{SU(2)\times SU(2)\times U(1)}$}
\put(110,15)
{$K/K_{2}=\displaystyle\frac{U(5)}{U(4)\times U(1)}$}
\linethickness{0.5pt}
\put(70,95){\vector(1,0){20}}
\put(-130,90)
{$N^{18}=K/K_{0}=\displaystyle\frac{U(5)}{SU(2)\times SU(2)\times U(1)}$}
\put(75,100){$=$}
\linethickness{0.5pt}
\put(-100,80){\vector(0,-2){45}}
\linethickness{1pt}
\put(150,80){\vector(0,-2){45}}
\put(-120,15){$K/K_{1}=\displaystyle\frac{U(5)}{U(2)\times U(2)\times U(1)}$}
\put(20,30){$K_{2}/K_{1}=\frac{U(4)\times U(1)}{U(2)\times U(2)\times U(1)}$}
\linethickness{0.5pt}
\put(50,20){\vector(1,0){50}}
\end{picture}
\end{center}
Here $U_{2}/K_{2}=\frac{SO(8)\times SO(2)}{U(4)\times U(1)}\cong \frac{SO(8)}{U(4)}
\cong \frac{SO(8)}{SO(2)\times SO(6)}
\cong \widetilde{Gr}_{2}({\mathbf R}^{8})$
is a maximal totally geodesic submanifold of $U/K=SO(10)/U(5)$,
but $U_{1}/K_{1} =\frac{SO(4)\times SO(4)\times SO(2)}{U(2)\times U(2)\times U(1)}
\cong \widetilde{Gr}_{2}({\mathbf R}^{4})$ is not a maximal totally geodesic submanifold
of $U_2/K_2$.
Notice that
$K/K_{0} = \frac{U(5)}{SU(2)\times SU(2)\times U(1)}$ is a homogeneous isoparametric hypersurface
with $g=4$, $(m_{1},m_{2})=(4,5)$,
$K_{2}/K_{0}=\frac{U(4)\times U(1)}{SU(2)\times SU(2)\times U(1)}
\cong \frac{SO(2)\times SO(6)}{{\mathbf Z}_{2}\times SO(4)}$ is a homogeneous isoparametric hypersurface
with $g=4$, $(m_{1},m_{2})=(1,4)$ and
$K_{1}/K_{0}=\frac{U(2)\times U(2)\times U(1)}{SU(2)\times SU(2)\times U(1)}
\cong \frac{U(2)}{SU(2)}\times\frac{U(2)}{SU(2)}
\cong S^{1}\times S^{1}$ is a homogeneous isoparametric hypersurface with
$g=2$, $(m_{1},m_{2})=(1,1)$.

\begin{rem0}[\cite{Kimura-Tanaka}]
Maximal totally geodesic submanifolds embedded in
$\displaystyle\frac{SO(10)}{U(5)}$ 
are classified as
$\widetilde{Gr}_2(\mathbf{R}^8)$, $Gr_2(\mathbf{C}^5)$,
$SO(5)$, $S^2\times \mathbf{C}P^3$, $\mathbf{C}P^4$.
\end{rem0}

\begin{rem0}[\cite{Kimura-Tanaka}]
Maximal totally geodesic submanifolds embedded in $\widetilde{Gr}_2(\mathbf{R}^{8})$ are classifed as
$\widetilde{Gr}_2(\mathbf{R}^7)$, $S^p\cdot S^q$ $(p+q=6)$, $\mathbf{C}P^3$.
\end{rem0}

\subsubsection{In case $g=4$ and $(U,K)=(SO(m+2), SO(2)\times SO(m))\, (m\geq 3)$,
$(m_{1},m_{2})=(1,m-2)$}
\begin{small}
\begin{center}
\begin{picture}(100,110)
\put(-55,60){$K_{1}/K_{0}
=
\frac{SO(2)\times SO(2)\times SO(m-2)}{{\mathbf Z}_{2}\times SO(m-2)}
\cong \frac{SO(2)\times SO(2)}{{\mathbf Z}_{2}}
\cong S^{1}\times S^{1}$}
\put(-100,90){$N^{2m-2}=K/K_{0}
=\frac{SO(2)\times SO(m)}{{\mathbf Z}_{2}\times SO(m-2)}$}
\linethickness{0.5pt}
\put(-60,85){\vector(0,-2){45}}
\put(-70,30){$K/K_{1}=
\frac{SO(2)\times SO(m)}{SO(2)\times SO(2)\times SO(m-2)}
\cong \frac{SO(m)}{SO(2)\times SO(m-2)}
\cong \widetilde{Gr}_{2}({\mathbf R}^{m})$}
\end{picture}
\end{center}
\end{small}
Here
$U_{1}/K_{1}=\frac{SO(4)\times SO(m-2)}{SO(2)\times SO(2)\times SO(m-2)}\cong \widetilde{Gr}_{2}({\mathbf R}^{4})
\cong S^{2}\times S^{2}$
is not maximal totally geodesic submanifold of
$U/K=\frac{SO(m+2)}{SO(2)\times SO(m)}\cong \widetilde{Gr}_{2}({\mathbf R}^{m+2})$.
Notice that $K/K_{0} =\frac{SO(2)\times SO(m)}{{\mathbf Z}_{2}\times SO(m-2)}$ is a homogeneous isoparametric hypersurface with
$g=4$, $(m_{1},m_{2})=(1,m-2)$ and
$K_{1}/K_{0}=\frac{SO(2)\times SO(2)\times SO(m-2)}{{\mathbf Z}_{2}\times SO(m-2)}
\cong \frac{SO(2)\times SO(2)}{{\mathbf Z}_{2}}\cong S^{1}\times S^{1}$
is a homogeneous isoparametric hypersurface with
$g=2$, $(m_{1},m_{2})=(1,1)$.

\begin{rem0}[\cite{Kimura-Tanaka}]
Maximal totally geodesic submanifolds embedded in $\widetilde{Gr}_2(\mathbf{R}^{m+2})$ $(m\geq 3)$ are
classified as
$\widetilde{Gr}_2(\mathbf{R}^{m+1})$,
$S^p\cdot S^q (p+q=m)$,
$\mathbf{C}P^{[\frac{m}{2}]}$.
\end{rem0}

\subsubsection{In case $g=4$ and $(U,K)=(SU(m+2), S(U(2)\times U(m))\, (m\geq 2)$,
$(m_{1},m_{2})=(2,2m-3)$}

\begin{itemize}
\item[(i)] $m=2$,
$(U,K)=(SU(4), S(U(2)\times U(2))$,
$(m_{1},m_{2})=(2,1)$

\begin{small}
\begin{center}
\begin{picture}(100,110)
\put(-55,60){
$K_{1}/K_{0}=
\frac{S(U(1)\times U(1)\times U(1)\times U(1))}{S(U(1)\times U(1))}
\cong S^{1}\times S^{1}$
}
\put(-100,90){
$N^{6}=K/K_{0}
=\frac{S(U(2)\times U(2))}{S(U(1)\times U(1))}$
}
\linethickness{0.5pt}
\put(-60,85){\vector(0,-2){45}}
\put(-70,30){$K/K_{1}=
\frac{S(U(2)\times U(2))}{S(U(1)\times U(1)\times U(1)\times U(1))}
\cong S^{2}\times S^{2}$}
\end{picture}
\end{center}
\end{small}

Here $U_{1}/K_{1}= \frac{S(U(2)\times U(2))}{S(U(1)\times U(1)\times U(1)\times U(1))}\cong
S^{2}\times S^{2}$ is not a maximal totally geodesic submanifold in
$U/K=\frac{SU(4)}{S(U(2)\times U(2))}\cong Gr_{2}({\mathbf C}^{4})\cong \widetilde{Gr}_2(\mathbf{R}^6)$.
Notice that $K/K_{0}=\frac{S(U(2)\times U(2))}{S(U(1)\times U(1))}$ is a homogeneous isoparametric hypersurface
 with $g=4$, $(m_{1},m_{2})=(2,1)$ and
$K_{1}/K_{0}
\cong S^{1}\times S^{1}$ is a homogeneous isoparametric hypersurface with $g=2$, $(m_{1},m_{2})=(1,1)$.

\item[(ii)] $m\geq 3$

\begin{small}
\begin{center}
\begin{picture}(150,120)
\put(100,90)
{$\frac{K}{K_{0}}=
\frac{S(U(2)\times U(m))}{S(U(1)\times U(1)\times U(m-2))}$}
\put(-95,60)
{$\frac{K_{1}}{K_{0}}
=\frac{S(U(1)\times U(1)\times U(1)\times U(1)\times U(m-2))}
{S(U(1)\times U(1)\times U(m-2))}$}
\put(155,60)
{$\frac{K_{2}}{K_{0}}=
\frac
{S(U(2)\times U(2)\times U(m-2))}
{S(U(1)\times U(1)\times U(m-2))}$}
\put(110,15)
{$\frac{K}{K_{2}}=
\frac{S(U(2)\times U(m))}
{S(U(2)\times U(2)\times U(m-2))}$}
\linethickness{0.5pt}
\put(70,95){\vector(1,0){20}}
\put(-130,90)
{$N^{4m-2}=\frac{K}{K_{0}}=
\frac{S(U(2)\times U(m))}{S(U(1)\times U(1)\times U(m-2))}$}
\put(75,100){$=$}
\linethickness{0.5pt}
\put(-100,80){\vector(0,-2){45}}
\linethickness{1pt}
\put(150,80){\vector(0,-2){45}}
\put(-120,15){$\frac{K}{K_{1}}=
\frac{S(U(2)\times U(m))}{S(U(1)\times U(1)\times U(1)\times U(1)\times U(m-2))}$}
\put(20,30){$\frac{K_{2}}{K_{1}}
\cong {\mathbf C}P^{1}\times {\mathbf C}P^{1}$}
\linethickness{0.5pt}
\put(50,20){\vector(1,0){50}}
\end{picture}
\end{center}
\end{small}
Here $U_{2}/K_{2}\cong Gr_{2}({\mathbf C}^{4})$
is not a maximal totally geodesic submanifold of
$U/K=\frac{SU(m+2)}{S(U(2)\times U(m))}\cong Gr_{2}({\mathbf C}^{m+2})$
and
$U_{1}/K_{1}= \frac{S(U(2)\times U(2)\times U(m-2))}
{S(U(1)\times U(1)\times U(1)\times U(1)\times U(m-2))}
\cong {\mathbf C}P^{1}\times {\mathbf C}P^{1}$ is not a maximal totally geodesic submanifold
of $U_2/K_2$.
Notice that $K/K_{0}=\frac{S(U(2)\times U(m))}{S(U(1)\times U(1)\times U(m-2))}$ is a homogeneous isoparametric
hypersurface with $g=4$, $(m_{1},m_{2})=(2,2m-3)$,
$K_{2}/K_{0}= \frac{S(U(2)\times U(2)\times U(m-2))}{S(U(1)\times U(1)\times U(m-2))}$
is a homogeneous isoparametric hypersurface with $g=4$, $(m_{1},m_{2})=(2,1)$
and $K_{1}/K_{0}\cong S^{1}\times S^{1}$
is a homogeneous isoparametric hypersurface with
$g=2$, $(m_{1},m_{2})=(1,1)$.
\end{itemize}

\begin{rem0}(\cite{Kimura-Tanaka})
Maximal totally geodesic submanifolds embedded in ${Gr}_2(\mathbf{C}^{m+2})$ 
$(m\geq 3)$ are classified as
${Gr}_2(\mathbf{C}^{m+1})$,
$Gr_2(\mathbf{R}^{m+2})$,
${\mathbf C}P^p\times {\mathbf C}P^q$ $(p+q=m)$,
$\mathbf{H}P^{[\frac{m}{2}]}$.
\end{rem0}

\subsubsection{In case $g=4$ and $(U,K)=(Sp(m+2), Sp(2)\times Sp(m))\, (m\geq 2)$,
$(m_{1},m_{2})=(4,4m-5)$}

\begin{itemize}
\item[(i)] In case $g=4$ and $(U,K)=(Sp(4), Sp(2)\times Sp(2))\, (m=2)$,
$(m_{1},m_{2})=(4,3)$

\begin{small}
\begin{center}
\begin{picture}(100,110)
\put(-55,60){
$K_{1}/K_{0}=
\frac{Sp(1)\times Sp(1)\times Sp(1)\times Sp(1)}{Sp(1)\times Sp(1)}
\cong S^{3}\times S^{3}$
}
\put(-100,90){
$N^{14}=K/K_{0}
=\frac{Sp(2)\times Sp(2)}{Sp(1)\times Sp(1)}$
}
\linethickness{0.5pt}
\put(-60,85){\vector(0,-2){45}}
\put(-70,30){$K/K_{1}=
\frac{Sp(2)\times Sp(2)}{Sp(1)\times Sp(1)\times Sp(1)\times Sp(1)}
\cong {\mathbf H}P^{1}\times {\mathbf H}P^{1}
\cong S^{4}\times S^{4}$}
\end{picture}
\end{center}
\end{small}
Here $U_{1}/K_{1}= \frac{Sp(2)\times Sp(2)}{Sp(1)\times Sp(1)\times Sp(1)\times Sp(1)}
\cong {\mathbf H}P^{1}\times {\mathbf H}P^{1}$ is a maximal totally geodesic submanifold of
$U/K=\frac{Sp(4)}{Sp(2)\times Sp(2)}\cong Gr_{2}({\mathbf H}^{4})$.
Notice that $K/K_{0}=\frac{Sp(2)\times Sp(2)}{Sp(1)\times Sp(1)}$ is a homogeneous isoparametric hypersurface with
$g=4$, $(m_{1},m_{2})=(4,3)$
and $K_{1}/K_{0}=\frac{Sp(1)\times Sp(1)\times Sp(1)\times Sp(1)}{Sp(1)\times Sp(1)}
\cong S^{3}\times S^{3}$ is a homogeneous isoparametric hypersurface with
$g=2$, $(m_{1},m_{2})=(3,3)$.

\item[(ii)] {$m\geq 3$}

\begin{small}
\begin{center}
\begin{picture}(150,120)
\put(100,90)
{$\frac{K}{K_{0}}=
\frac{Sp(2)\times Sp(m)}{Sp(1)\times Sp(1)\times Sp(m-2)}$}
\put(-95,60)
{$\frac{K_{1}}{K_{0}}
=\frac{Sp(1)\times Sp(1)\times Sp(1)\times Sp(1)\times Sp(m-2)}
{Sp(1)\times Sp(1)\times Sp(m-2)}$}
\put(155,60)
{$\frac{K_{2}}{K_{0}}=
\frac
{Sp(2)\times Sp(2)\times Sp(m-2)}
{Sp(1)\times Sp(1)\times Sp(m-2)}$}
\put(110,15)
{$\frac{K}{K_{2}}=
\frac{Sp(2)\times Sp(m))}
{Sp(2)\times Sp(2)\times Sp(m-2)}
$}
\linethickness{0.5pt}
\put(50,95){\vector(1,0){30}}
\put(-130,90)
{$N^{8m-2}=\frac{K}{K_{0}}=
\frac{Sp(2)\times Sp(m)}{Sp(1)\times Sp(1)\times Sp(m-2)}$}
\put(60,100){$=$}
\linethickness{0.5pt}
\put(-100,80){\vector(0,-2){45}}
\linethickness{1pt}
\put(150,80){\vector(0,-2){45}}
\put(-120,15){$\frac{K}{K_{1}}=
\frac{Sp(2)\times Sp(m)}{Sp(1)\times Sp(1)\times Sp(1)\times Sp(1)\times Sp(m-2)}$}
\put(30,30){$\frac{K_{2}}{K_{1}}
\cong {\mathbf H}P^{1}\times {\mathbf H}P^{1}
$}
\linethickness{0.5pt}
\put(60,20){\vector(1,0){40}}
\end{picture}
\end{center}
\end{small}
Here $U_{2}/K_{2}=
\frac{Sp(4)\times Sp(m-2)}{Sp(2)\times Sp(2)\times Sp(m-2)}
\cong Gr_{2}({\mathbf H}^{4})$ is not a maximal totally geodesic submanifold of
$U/K=\frac{Sp(m+2)}{Sp(2)\times Sp(m)}\cong Gr_{2}({\mathbf H}^{m+2})$
but $U_{1}/K_{1}=\frac{Sp(2)\times Sp(2)\times Sp(m-2)}
{Sp(1)\times Sp(1)\times Sp(1)\times Sp(1)\times Sp(m-2)}
\cong {\mathbf H}P^{1}\times {\mathbf H}P^{1}$
is a maximal totally geodesic submanifold of $U_2/K_2$.
Notice that $K/K_{0}=\frac{Sp(2)\times Sp(m)}{Sp(1)\times Sp(1)\times Sp(m-2)}$
is a homogeneous isoparametric hypersurface with $g=4$, $(m_{1},m_{2})=(4,4m-5)$,
$K_{2}/K_{0}
\cong \frac{Sp(2)\times Sp(2)}{Sp(1)\times Sp(1)}$ is a homogeneous isoparametric hypersurface with
$g=4$, $(m_{1},m_{2})=(4,3)$ and
$K_{1}/K_{0}\cong S^{3}\times S^{3}$
is a homogeneous isoparametric hypersurface  with
$g=2$, $(m_{1},m_{2})=(3,3)$.
\end{itemize}

\begin{rem0}(\cite{Kimura-Tanaka})
Maximal totally geodesic submanifolds embedded in ${Gr}_2(\mathbf{H}^{4})$ are classified as
$Sp(2)$, $\mathbf{H}P^2$, $S^1\cdot S^5$, $S^4\times S^4$, $Gr_2(\mathbf{C}^4)$.

Maximal totally geodesic submanifolds embedded in ${Gr}_2(\mathbf{H}^{m+2})$ $(m\geq 3)$ are classified as
${Gr}_2(\mathbf{H}^{m+1})$,
${Gr}_2(\mathbf{C}^{m+2})$,
${\mathbf H}P^p\times {\mathbf H}P^q$ $(p+q=m)$.
\end{rem0}

\subsubsection{In case $g=4$ and $(U,K)=(E_{6}, U(1)\cdot Spin(10))$,
$(m_{1},m_{2})=(6,9)$}

\begin{small}
\begin{center}
\begin{picture}(150,120)
\put(100,90)
{$\frac{K}{K_{0}}=
\frac{U(1)\cdot Spin(10)}{S^{1}\cdot Spin(6)}
$}
\put(-95,60)
{$\frac{K_{1}}{K_{0}}
=\frac{S^{1}\cdot(Spin(2)\cdot(Spin(2)\cdot Spin(6)))}{S^{1}\cdot Spin(6)}$}
\put(155,60)
{$\frac{K_{2}}{K_{0}}=
\frac{U(1)\cdot(Spin(2)\cdot Spin(8))}{S^{1}\cdot Spin(6)}$}
\put(110,15)
{$\frac{K}{K_{2}}=
\frac{U(1)\cdot Spin(10)}{U(1)\cdot(Spin(2)\cdot Spin(8))}
$}
\linethickness{0.5pt}
\put(50,95){\vector(1,0){30}}
\put(-120,90)
{$N^{30}=\frac{K}{K_{0}}=
\frac{U(1)\cdot Spin(10)}{S^{1}\cdot Spin(6)}
$}
\put(60,100){$=$}
\linethickness{0.5pt}
\put(-100,80){\vector(0,-2){45}}
\linethickness{1pt}
\put(150,80){\vector(0,-2){45}}
\put(-120,15){$\frac{K}{K_{1}}=
\frac{U(1)\cdot Spin(10)}{S^{1}\cdot(Spin(2)\cdot(Spin(2)\cdot Spin(6)))}$}
\put(0,35){$\frac{K_{2}}{K_{1}}
=
\frac{U(1)\cdot(Spin(2)\cdot Spin(8))}{S^{1}\cdot(Spin(2)\cdot(Spin(2)\cdot Spin(6))}
$}
\linethickness{0.5pt}
\put(60,20){\vector(1,0){40}}
\end{picture}
\end{center}
\end{small}
Here $U_{2}/K_{2}=\frac{U(1)\cdot Spin(10)}{U(1)\cdot(Spin(2)\cdot Spin(8))}
\cong \widetilde{Gr}_{2}({\mathbf R}^{10})$ is a maximal totally geodesic submanifold of
$U/K=\frac{E_{6}}{U(1)\cdot Spin(10)}$ but
$U_{1}/K_{1}= \frac{S^{1}\cdot Spin(4)\cdot Spin(6)}
{S^{1}\cdot (Spin(2)\cdot Spin(2)\cdot Spin(6))}
\cong S^{2}\times S^{2}$ is not a maximal totally geodesic submanifold in $U_2/K_2$.
Notice that $K/K_{0}= \frac{U(1)\cdot Spin(10)}{S^{1}\cdot Spin(6)}$
is a homogeneous isoparametric hypersurface with
$g=4$, $(m_{1},m_{2})=(6,9)$,
$K_{2}/K_{0}=\frac{U(1)\cdot (Spin(2)\cdot Spin(8))}{S^{1}\cdot Spin(6)}
\cong \frac{Spin(2)\cdot Spin(8)}{Spin(6)}
\cong \frac{SO(2)\times SO(8)}{{\mathbf Z}_{2}\times SO(6)}$
is a homogeneous isoparametric hypersurface with $g=4, (m_{1},m_{2})=(1,6)$
and $K_{1}/K_{0}=\frac{S^{1}\cdot (Spin(2)\cdot (Spin(2)\cdot Spin(6)))}{S^{1}\cdot Spin(6)}
\cong S^{1}\times S^{1}$ is a homogeneous isoparametric hypersurface with
$g=2$, $(m_{1},m_{2})=(1,1)$.

\begin{rem0}[\cite{Kimura-Tanaka}]
Maximal totally geodesic submanifolds embedded in
$E_{6}/U(1)\cdot Spin(10)$
are classified as
${Gr}_2(\mathbf{H}^{4})/{\mathbf Z}_{2}$,
$\mathbf{O}P^{2}$,
$S^{2}\times {\mathbf C}P^{2}$,
$SO(10)/U(5)$,
$Gr_{2}({\mathbf C}^{6})$,
$\widetilde{Gr}_{2}({\mathbf R}^{10})$.
\end{rem0}

\smallskip
\section{The case $(U,K)=(G_2 \times G_2, G_2)$}
\label{Sec:G2xG2}

Let $U=G_{2}\times G_{2}$, $K=\{(x,x)\in U \mid x\in G_{2}\}$ and $(U,K)$ is of type $G_2$.
Then
$K_0=\{k\in K \mid
\mathrm{Ad}(k)H=H \text{ for each } H\in \mathfrak{a}\}\cong T^2$
is a maximal torus of $G_2$
and $N^{12}=K/K_0\cong G_2/T^2$ is
a maximal flag manifold of dimension $n=12$.
Thus its Gauss image is
$L^{12}={\mathcal G}(N^{12})(\cong N^{12}/{\mathbf Z}_{6})
=K\cdot[{\mathfrak a}]\cong (K/K_{[{\mathfrak a}]})
\subset Q_{12}({\mathbf C})$.

Set
$\langle{\ ,\ }\rangle_{\mathfrak u}=-B_{\mathfrak u}({\ ,\ })$,
where $B_{\mathfrak u}({\ },{\ })$ denotes the Killing-Cartan form of
${\mathfrak u}$.
Let
$\langle{\ ,\ }\rangle$
be the inner product of ${\mathfrak m}$
corresponding to the invariant induced metric on $L^{n}$
from $(Q_{n}({\mathbf C}),g^{\mathrm{std}}_{Q_{n}({\mathbf C})})$.

The restricted root system $\Sigma(U,K)$ of type $G_2$,
can be given as follows (\cite{Bourbaki}):
\begin{equation*}
\begin{split}
\Sigma(U,K)
=&
\{
\pm(\varepsilon_{1}-\varepsilon_{2})=\pm\alpha_{1},
\pm(\varepsilon_{3}-\varepsilon_{1})=\pm(\alpha_{1}+\alpha_{2}),\\
&\, \pm(\varepsilon_{3}-\varepsilon_{2})=\pm(2\alpha_{1}+\alpha_{2}),
\pm(-2\varepsilon_{1}+\varepsilon_{2}+\varepsilon_{3})=\pm\alpha_{2},\\
&\,
\pm(\varepsilon_{1}-2\varepsilon_{2}+\varepsilon_{3})
=\pm(3\alpha_{1}+\alpha_{2}),\\
&\,
\pm(2\varepsilon_{3}-\varepsilon_{1}-\varepsilon_{2})
=\pm(3\alpha_{1}+2\alpha_{2})=\tilde{\alpha}
\},
\end{split}
\end{equation*}
where
$
\Pi(U,K)=
\{
\alpha_{1}=\varepsilon_{1}-\varepsilon_{2},
\alpha_{2}=-2\varepsilon_{1}+\varepsilon_{2}+\varepsilon_{3}
\}
$
is its fundamental root system.
Here
\begin{equation*}
\begin{split}
\Vert{\gamma}\Vert_{\mathfrak u}^{2}
=
\begin{cases}
\displaystyle\frac{1}{24}&\text{ if }\gamma\text{ is short}, \\
&\\
\displaystyle\frac{1}{8}&\text{ if }\gamma\text{ is long}.
\end{cases}
\end{split}
\end{equation*}
Now $K_1=SU(3)$ and $K_0=T^2\subset K_1=SU(3)\subset K=G_2$.

In Lemma \ref{LaplaceOperator}
the Casimir operator
\begin{equation*}
\mathcal{C}_{L}=
\frac{3}{\Vert{\gamma_0}\Vert_{\mathfrak u}^{2}}
\mathcal{C}_{K/K_0, \langle\,,\,\rangle_{\mathfrak u}}
-\frac{2}{\Vert{\gamma_0}\Vert_{\mathfrak u}^{2}}
\mathcal{C}_{K_1/K_0, \langle\,,\,\rangle_{\mathfrak u}},
\end{equation*}
of $L^n$ with respect to $\langle{\ ,\ }\rangle$
corresponding to $-\Delta_{L^{12}}$
becomes
\begin{equation*}
\begin{split}
\mathcal{C}_{L}
=&
24\,
\mathcal{C}_{K/K_0, \langle\, , \,\rangle_{\mathfrak u}}
-
16\,
\mathcal{C}_{K_1/K_0, \langle\,,\,\rangle_{\mathfrak u}}\\
=&
12\, \mathcal{C}^{\mathfrak k}_{K/K_0}
-
8\, \mathcal{C}^{\mathfrak k}_{K_1/K_0}
\\
=&
12\, \mathcal{C}^{\mathfrak k}_{K/K_0}
-
6\,
\mathcal{C}^{{\mathfrak k}_1}_{K_1/K_0},
\end{split}
\end{equation*}
where
$\mathcal{C}^{\mathfrak k}_{K/K_0}$ and
$\mathcal{C}^{\mathfrak k}_{K_1/K_0}$
denote
the Casimir operators of $K/K_0$ and $K_1/K_0$
relative to the $K_0$-invariant inner product induced from
the Killing-Cartan form of ${\mathfrak k}$, respectively,
and
$\mathcal{C}^{{\mathfrak k}_1}_{K_1/K_0}$ denotes
the Casimir operator of $K_1/K_0$
relative to the $K_0$-invariant inner product induced from
the Killing-Cartan form of ${\mathfrak k}_1$.

Let $\{\alpha_1, \alpha_2\}$ be the fundamental root system of $G_2$
and
$\{\Lambda_1, \Lambda_2\}$ be the fundamental weight system of $G_2$.
In our work we frequently use Satoru Yamaguchi's results (\cite{SYamaguchi79})
on the spectra of maximal flag manifolds.
\begin{lem}[Branching law of $(G_{2},T^{2})$ \cite{SYamaguchi79}]
\begin{equation}
\begin{split}
D(K,K_0)=&\, D(G_{2},T^{2})=\, D(G_{2}) \\
=&
\{
\Lambda=m_{1}\Lambda_{1}+m_{2}\Lambda_{2}
\mid
m_{1},m_{2}\in{\mathbf Z}, m_{1}\geq{0}, m_{2}\geq{0}
\}
\\
=&
\{
\Lambda=p_{1}\alpha_{1}+p_{1}\alpha_{2}
\mid
p_{1},p_{2}\in{\mathbf Z}, p_{1}\geq{1}, p_{2}\geq{1}
\}
\end{split}
\end{equation}
The eigenvalue formula of the Casimir operator $\mathcal{C}_{K/K_0}$
relative to the inner product induced from
the Killing-Cartan form of $G_2$ is
\begin{equation}\label{eigenvalueG2T2}
-c(\Lambda,\langle{\ ,\ }\rangle_{{\mathfrak g}_{2}})
=\frac{1}{24}(m_{1}p_{1}+3m_{2}p_{2}+2p_{1}+6p_{2})
\end{equation}
for each $\Lambda\in{D(G_{2},T^{2})}=D(G_{2})$.
\end{lem}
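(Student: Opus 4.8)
The plan is to recover both assertions from the structure theory of $G_2$ together with Freudenthal's formula already recorded above; as the attribution indicates, everything is contained in \cite{SYamaguchi79}, so the task is only to assemble the ingredients in the present notation.

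For the set $D(K,K_0)=D(G_2,T^2)$ I would argue as follows. Since $K_0\cong T^2$ is a maximal torus of $K=G_2$, for each $\Lambda\in D(G_2)$ the subspace $(V_\Lambda)_{K_0}$ of $K_0$-invariant vectors is exactly the zero-weight space of $(V_\Lambda,\rho_\Lambda)$; hence $\Lambda\in D(G_2,T^2)$ precisely when the weight $0$ occurs in $V_\Lambda$, i.e.\ when $\Lambda$ lies in the root lattice of $G_2$. Because $G_2$ is simply connected with trivial center, its weight lattice coincides with its root lattice, so this holds for every $\Lambda\in D(G_2)$ and $D(G_2,T^2)=D(G_2)$. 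The description $D(G_2)=\{\,m_1\Lambda_1+m_2\Lambda_2\mid m_1,m_2\in{\mathbf Z},\ m_1\geq 0,\ m_2\geq 0\,\}$ is the usual labelling of dominant integral weights by the fundamental weights $\Lambda_1,\Lambda_2$; expressing these in the basis of simple roots by means of the inverse Cartan matrix of $G_2$ --- with the labelling fixed above ($\alpha_1$ short, $\alpha_2$ long) one has $\Lambda_1=2\alpha_1+\alpha_2$ and $\Lambda_2=3\alpha_1+2\alpha_2$ --- converts this into the stated form $\Lambda=p_1\alpha_1+p_2\alpha_2$, the two coordinate systems being related by $p_1=2m_1+3m_2$, $p_2=m_1+2m_2$ (equivalently $m_1=2p_1-3p_2$, $m_2=2p_2-p_1$) and the dominant-weight inequalities on $(m_1,m_2)$ being transported to the corresponding ones on $(p_1,p_2)$.

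For the eigenvalue formula I would invoke Freudenthal's formula $c(\Lambda,\langle\,,\,\rangle_{\mathfrak g_2})=-\langle\Lambda,\Lambda+2\delta\rangle_{\mathfrak g_2}$, with $2\delta=\sum_{\gamma\in\Sigma^+(G_2)}\gamma$. Summing the six positive roots listed above gives $2\delta=10\alpha_1+6\alpha_2=2\Lambda_1+2\Lambda_2$, so that $\Lambda+2\delta=(m_1+2)\Lambda_1+(m_2+2)\Lambda_2$. Pairing $\Lambda=p_1\alpha_1+p_2\alpha_2$ against this and using the duality relations $\langle\alpha_i,\Lambda_j\rangle_{\mathfrak g_2}=\delta_{ij}\,\|\alpha_i\|_{\mathfrak g_2}^2/2$ --- where $\|\alpha_1\|_{\mathfrak g_2}^2=1/12$ and $\|\alpha_2\|_{\mathfrak g_2}^2=1/4$ are the short and long root lengths for the Killing--Cartan form of $\mathfrak g_2$, pinned down e.g.\ by $\sum_{\gamma\in\Sigma(G_2)}\|\gamma\|_{\mathfrak g_2}^2=\mathrm{rank}\,\mathfrak g_2=2$ --- the cross terms vanish and one is left with $\langle\Lambda,\Lambda+2\delta\rangle_{\mathfrak g_2}=\frac1{24}p_1(m_1+2)+\frac3{24}p_2(m_2+2)$, which rearranges to $\frac1{24}\bigl(m_1p_1+3m_2p_2+2p_1+6p_2\bigr)$; this is \eqref{eigenvalueG2T2}.

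The computations are routine once the conventions are pinned down; the only point demanding care is the bookkeeping of normalizations --- the short versus long root lengths for $G_2$, and, when this lemma is later combined with Lemma \ref{LaplaceOperator}, the ratio between the Killing form of $\mathfrak u=\mathfrak g_2\oplus\mathfrak g_2$ and that of the diagonal $\mathfrak g_2$ --- since it is exactly this that produces the overall factor $\frac1{24}$ and the relative weight $3$ on the $m_2p_2$ term. I would fix these normalizations following \cite{SYamaguchi79}, checking consistency with the well-known fact that this eigenvalue equals $1$ for the adjoint representation $V_{\Lambda_2}$ of $G_2$ (take $m_1=0$, $m_2=1$, i.e.\ $p_1=3$, $p_2=2$), which reflects the normalization of the Killing form.
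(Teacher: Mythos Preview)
Your argument is correct. The paper does not prove this lemma at all --- it is simply quoted from \cite{SYamaguchi79} --- so there is no ``paper's own proof'' to compare against; you have supplied the details that the citation stands in for. Your two steps (zero-weight space argument using that the weight lattice of $G_2$ equals its root lattice, then Freudenthal's formula with the Killing-form normalization pinned down by the adjoint-eigenvalue check) are the standard route and match what one finds in \cite{SYamaguchi79}.

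One small remark: you wisely did not try to verify the literal condition ``$p_1\geq 1,\ p_2\geq 1$'' in the second description of $D(G_2)$, because as stated it is not equivalent to $m_1,m_2\geq 0$ (e.g.\ $p_1=p_2=1$ gives $m_1=-1$); the correct transported conditions are $2p_1\geq 3p_2$ and $2p_2\geq p_1$. This is a harmless imprecision in the quoted statement and does not affect anything downstream, since the paper always works with the $(m_1,m_2)$ labels and only uses $(p_1,p_2)$ as auxiliary coordinates in the eigenvalue formula.
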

Since
\begin{equation*}
-\mathcal{C}_{L}
=-\left(
4\,
\mathcal{C}^{{\mathfrak g}_{2}}_{K/K_0}
+
\sum_{\gamma\text{:short}}
16\, (X_{\gamma,i})^{2}\right)
\geq
-4\,
\mathcal{C}^{{\mathfrak g}_{2}}_{K/K_0}\, ,
\end{equation*}
if
the eigenvalue $-c_L$ of $-{\mathcal C}_L$
satisfies $-c_L \leq n=12$,  then $-c_\Lambda \leq 3$.

By using the formula \eqref{eigenvalueG2T2},
we get
\begin{equation*}
\begin{split}
&\{\Lambda\in{D(G_2,T^{2})} \mid
-c(\Lambda,\langle{\ ,\ }\rangle_{\mathfrak{g}_2})\leq{3}\} \\
=&\{0,
\Lambda_1((p_1,p_2)=(2,1)),
2\Lambda_1((p_1,p_2)=(4,2)),
3\Lambda_1((p_1,p_2)=(6,3)), \\
\quad & \Lambda_2((p_1,p_2)=(3,2)),
           2\Lambda_2((p_1,p_2)=(6,4)),
            \Lambda_1+\Lambda_2((p_1,p_2)=(5,3)), \\
\quad & 2\Lambda_1+\Lambda_2 ((p_1,p_2)=(7,4)) \}.
\end{split}
\end{equation*}

Let $\{\alpha^{\prime}_1, \alpha^{\prime}_2\}$
be the fundamental root system of $SU(3)$
and
$\{\Lambda^{\prime}_1, \Lambda^{\prime}_2\}$
be the fundamental weight system of $SU(3)$.
For each $\Lambda\in D(G_2, T^2)$ with
$-c(\Lambda, \langle\, ,\rangle_{\mathfrak{g}_2})\leq 3$,
by using the branching law of $(G_2, SU(3))$ in \cite{McKay-Patera},
we can determine
all irreducible $SU(3)$-submodule $V_{\Lambda^{\prime}}$
with the highest weight
$\Lambda^{\prime}=m_1^{\prime}\Lambda_1^{\prime}+m_2\Lambda^{\prime}_2$
contained in an irreducible $G_2$-module $V_{\Lambda}$
as in the following table:
\begin{center}
\begin{tabular}
{|c|c|c|c|l|}
\hline
$(m_{1},m_{2})$
&
$(p_{1},p_{2})$
&
$-c$
&
$\dim_{\mathbf C}V_{\Lambda}$
&
irred.\, $SU(3)$-submodules $(m^{\prime}_{1},m^{\prime}_{2})$
\\
\hline $(1,0)$ &$(2,1)$ &$\frac{1}{2}$ & $7$ 
& $(1,0), (0,1), (0,0)$
\\
\hline
$(2,0)$ &$(4,2)$ &$\frac{7}{6}$ & $27$ 
&$\begin{array}{l}
(2,0), (1,1), (0,2),(1,0), (0,1),\\
 (0,0)
\end{array}$
\\
\hline
$(3,0)$ &$(6,3)$ &$2$ & $77$
& $\begin{array}{l}
  (3,0), (2,1), (1,2), (0,3),  (2,0), \\
 (1,1), (0,2), (1,0), (0,1), (0,0)
\end{array}$
\\
\hline
$(0,1)$ &$(3,2)$ &$1$ & $14$
& $(1,1), (1,0), (0,1)$
\\
\hline
$(0,2)$ &$(6,4)$ &$\frac{5}{2}$ & $77$
& $\begin{array}{l}
(2,2), (2,1), (1,2), (2,0), (1,1),\\
(0,2)
\end{array}$
\\
\hline
$(1,1)$ &$(5,3)$ &$\frac{7}{4}$ & $64$
& $\begin{array}{l}
(2,1), (1,2), (2,0), 2(1,1), (0,2), \\
(1,0), (0,1)
\end{array}$
\\
\hline
$(2,1)$ &$(7,4)$ &$\frac{8}{3}$ & $189$ 
& $\begin{array}{l}
(3,1), (2,2), (1,3), (3,0), 2(2,1), \\
2(1,2),(0,3), (2,0), 2(1,1), (0,2), \\
(1,0), (0,1)
\end{array}$
\\
\hline
\end{tabular}
\end{center}
Since
\begin{equation*}
\begin{split}
&{\mathfrak g}_{2}^{\mathbf{C}}
={\mathfrak t}^{\mathbf{C}}+
\sum_{\alpha\in\Sigma(G_{2})}{\mathfrak g}^{\alpha}
={\mathfrak t}^{\mathbf{C}}
+\sum_{\alpha\text{:short}}{\mathfrak g}^{\alpha}
+\sum_{\alpha\text{:long}}{\mathfrak g}^{\alpha}, \\
&\mathfrak{su}(3)^{\mathbf{C}}
={\mathfrak t}^{\mathbf{C}}+\sum_{\alpha\text{:long}}{\mathfrak g}^{\alpha},
\end{split}
\end{equation*}
we know that
\begin{equation*}
\begin{split}
T^{2}\cdot{\mathbf Z}_{6}
=&\{a\in{G_{2}}{\ }\vert{\ }\mathrm{Ad}(a)({\mathfrak t})={\mathfrak t}
\text{ preserving the orientation of }{\mathfrak t}\}\\
\supset&
\{a\in{SU(3)}{\ }\vert{\ }\mathrm{Ad}(a)({\mathfrak t})={\mathfrak t}
\text{ preserving the orientation of }{\mathfrak t}\}\\
=&T^{2}\cdot{\mathbf Z}_{3}.
\end{split}
\end{equation*}
Now we use results in $SU(3)/T^2$, which were already treated
in the case of $g=3$ and $m=2$ (\cite{Ma-Ohnita1}).

\begin{lem}[Branching law of $(SU(3),T^2)$ \cite{SYamaguchi79}]
\label{BranchingLawSU3T2}
\begin{equation}
\begin{split}
D(K_1,K_0)=&D(SU(3),T^{2}) \\
=&
\{
\Lambda^{\prime}
=
m^{\prime}_{1}\Lambda^{\prime}_{1}+m^{\prime}_{2}\Lambda^{\prime}_{2}
\mid
m^{\prime}_{i}\in{\mathbf Z}, m^{\prime}_{i}\geq{0}
\}
\\
=&
\{
\Lambda^{\prime}
=
p^{\prime}_{1}\alpha^{\prime}_{1}+p^{\prime}_{2}\alpha^{\prime}_{2}
\mid
p^{\prime}_{i}\in{\mathbf Z}, p^{\prime}_{i}\geq{1}
\},
\end{split}
\end{equation}
where
\begin{equation*}
m^{\prime}_{1}=2p^{\prime}_{1}-p^{\prime}_{2}\geq{0}, \quad
m^{\prime}_{2}=-p^{\prime}_{1}+2p^{\prime}_{2}\geq{0}.
\end{equation*}
The eigenvalue formula is
\begin{equation}\label{eigenvalueSU3T2}
-c(\Lambda^{\prime},\langle{{\ },{\ }}\rangle_{{\mathfrak su}(3)})
=\frac{1}{6}(m^{\prime}_{1}p^{\prime}_{1}+m^{\prime}_{2}p^{\prime}_{2}
+2p^{\prime}_{1}+2p^{\prime}_{2})
\end{equation}
for each $\Lambda^{\prime}\in{D(SU(3),T^{2})}$.
\end{lem}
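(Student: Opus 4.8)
Since the statement is recorded as a known result (\cite{SYamaguchi79}), the plan is to recover it from the standard representation theory of $SU(3)$ together with the Freudenthal formula reviewed in Section~\ref{Sec_Method}. First, to identify $D(SU(3),T^2)$: because $T^2=K_0$ is a maximal torus of $K_1=SU(3)$, the subspace $(V_{\Lambda'})_{T^2}$ of $T^2$-invariant vectors in the irreducible module $V_{\Lambda'}$ is exactly its zero weight space. The classical description of weights then applies: every weight of $V_{\Lambda'}$ lies in $\Lambda'+Q$, where $Q$ denotes the root lattice of $SU(3)$, and the weight set of $V_{\Lambda'}$ is the intersection of $\Lambda'+Q$ with the convex hull of the Weyl group orbit of $\Lambda'$; since that hull is convex and $W$-invariant it contains $0$, so the zero weight occurs in $V_{\Lambda'}$ if and only if $\Lambda'\in Q$. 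Hence $D(SU(3),T^2)=\{\Lambda'\in D(SU(3))\mid\Lambda'\in Q\}$. Writing $\Lambda'=p'_1\alpha'_1+p'_2\alpha'_2$, the condition $\Lambda'\in Q$ reads $p'_1,p'_2\in\mathbf{Z}$, while the dominance conditions $m'_i:=\langle\Lambda',(\alpha'_i)^{\vee}\rangle\geq 0$ become, through the $A_2$ Cartan matrix $\left(\begin{smallmatrix}2&-1\\-1&2\end{smallmatrix}\right)$, the inequalities $m'_1=2p'_1-p'_2\geq 0$ and $m'_2=-p'_1+2p'_2\geq 0$; a short analysis of this system shows that $\Lambda'\neq 0$ forces $p'_1\geq 1$ and $p'_2\geq 1$, which yields the two parametrizations in the statement.

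Next I would derive the eigenvalue formula~\eqref{eigenvalueSU3T2}. Taking $\langle{\ ,\ }\rangle_{\mathfrak{su}(3)}$ to be the inner product induced from the negative Killing-Cartan form of $\mathfrak{su}(3)$, the Freudenthal formula gives $-c(\Lambda',\langle{\ ,\ }\rangle_{\mathfrak{su}(3)})=\langle\Lambda',\Lambda'+2\delta\rangle_{\mathfrak{su}(3)}$, where $2\delta=\alpha'_1+\alpha'_2+(\alpha'_1+\alpha'_2)=2\alpha'_1+2\alpha'_2$, i.e. $\delta=\Lambda'_1+\Lambda'_2$, so that $\Lambda'+2\delta=(m'_1+2)\Lambda'_1+(m'_2+2)\Lambda'_2$. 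All roots of $A_2$ have equal length and the chosen normalization gives $\|\alpha'_i\|^2=1/3$ (dual Coxeter number $3$), whence $\langle\alpha'_i,\Lambda'_j\rangle_{\mathfrak{su}(3)}=\tfrac12\|\alpha'_i\|^2\delta_{ij}=\tfrac16\delta_{ij}$. Substituting $\Lambda'=p'_1\alpha'_1+p'_2\alpha'_2$ then gives $-c=\tfrac16\bigl(p'_1(m'_1+2)+p'_2(m'_2+2)\bigr)=\tfrac16(m'_1p'_1+m'_2p'_2+2p'_1+2p'_2)$, as asserted; as a sanity check, the same computation carried out on $(G_2,T^2)$ reproduces~\eqref{eigenvalueG2T2}.

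The step I expect to require the most care is fixing the normalization $\|\alpha'_i\|^2=1/3$ of the roots relative to the negative Killing form; this can be verified either from $B_{\mathfrak{su}(3)}(X,Y)=6\,\mathrm{tr}(XY)$ together with an explicit diagonal realization of the $A_2$ root system, or from the general identity $\|\theta\|^2_{\mathrm{Killing}}=1/h^{\vee}$ for the highest root. Everything else is the classical characterization of zero weight spaces plus routine bookkeeping with the $A_2$ root and weight data, which is exactly what \cite{SYamaguchi79} carries out.
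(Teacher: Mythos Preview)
Your derivation is correct, and in fact the paper does not prove this lemma at all: it is quoted verbatim as a result of \cite{SYamaguchi79} (and referred back to the authors' earlier paper \cite{Ma-Ohnita1}), with no argument supplied. Your approach---identifying $(V_{\Lambda'})_{T^2}$ with the zero weight space, characterizing its nonvanishing by $\Lambda'\in Q$, and then applying Freudenthal's formula with the Killing normalization $\|\alpha'_i\|^2=1/3$---is exactly the standard route by which such spectral results for flag manifolds are obtained, and is presumably what \cite{SYamaguchi79} does as well. The normalization check $\|\alpha'_i\|^2=1/h^{\vee}=1/3$ and the pairing $\langle\alpha'_i,\Lambda'_j\rangle=\tfrac{1}{6}\delta_{ij}$ are both correct, and the final substitution is clean. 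One small remark: as you implicitly noticed, the paper's parametrization $p'_i\geq 1$ omits the trivial representation $\Lambda'=0$, which strictly speaking also belongs to $D(SU(3),T^2)$; your phrasing ``$\Lambda'\neq 0$ forces $p'_i\geq 1$'' handles this correctly.
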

Using Lemma \ref{BranchingLawSU3T2},
we get that
$\Lambda^{\prime}=
m_1^{\prime}\Lambda_1^{\prime}+m_2^{\prime}\Lambda_2^{\prime}\in D(SU(3), T^2)$
such that $V_{\Lambda^{\prime}}\subset V_{\Lambda}$ for some
$\Lambda\in D(G_2,T^2)$ with
$-c(\Lambda, \langle{\ ,\ }\rangle_{\mathfrak{g}_2})\leq 3$ satisfies
\begin{equation*}
(m_1^{\prime}, m_2^{\prime})\in \{(1,1), (3,0),(0,3),(2,2)\}.
\end{equation*}
By using  the formula \eqref{eigenvalueSU3T2},
we compute
the corresponding eigenvalues of $\mathcal{C}_{K_1/K_0}$
as follows:
\begin{center}
\begin{tabular}
{|c|c|c|}
\hline
$(p^{\prime}_{1},p^{\prime}_{2})$
&
$(m^{\prime}_{1},m^{\prime}_{2})$
&
$-c^{\prime}=-c(\Lambda^{\prime},\langle{{\ },{\ }}\rangle_{{\mathfrak su}(3)})$
\\
\hline
$(1,1)$ &$(1,1)$ &$1$
\\
\hline
$(2,1)$ &$(3,0)$ &$2$
\\
\hline
$(1,2)$ &$(0,3)$ &$2$
\\
\hline
$(2,2)$ &$(2,2)$ &$\frac{8}{3}$
\\
\hline
\end{tabular}
\end{center}
Therefore,
for all $\Lambda\in D(G_2, T^2)$ and all $\Lambda^{\prime}\in D(SU(3),T^2)$
such that
$V_{\Lambda^{\prime}}\subset V_{\Lambda}$
and $-c(\Lambda, \langle \, , \, \rangle_{\mathfrak{g}_2})\leq 3$,
the corresponding eigenvalues of
$-\mathcal{C}_L=-12\, \mathcal{C}^{\mathfrak k}_{K/K_0}
+6\, \mathcal{C}^{{\mathfrak k}_1}_{K_1/K_0}
=-12c+6c^{\prime}$
are given in the following table:
\begin{center}
\begin{tabular}
{|c|c|c|c|c|c|c|}
\hline
$(m_{1},m_{2})$
&
$(p_{1},p_{2})$
&
$\dim_{\mathbf C}V_{\Lambda}$
&
$-c$
&
$(m^{\prime}_{1},m^{\prime}_{2})$
&
$-c^{\prime}$
&
$-12c+6c^{\prime}$
\\
\hline
$(2,0)$ &$(4,2)$ & $27$ &$\frac{7}{6}$ &
$(1,1)$ & $1$ & $8$
\\
\hline
$(3,0)$ &$(6,3)$ & $77$ &$2$ &
$(1,1)$ & $1$ & $18$
\\
\hline
$(3,0)$ &$(6,3)$ & $77$ &$2$ &
$(3,0)$ & $2$ & $12$
\\
\hline
$(3,0)$ &$(6,3)$ & $77$ &$2$ &
$(0,3)$ & $2$ & $12$
\\
\hline
$(0,1)$ &$(3,2)$ & $14$ &$1$ &
$(1,1)$ & $1$ & $6$
\\
\hline
$(0,2)$ &$(6,4)$ & $77$ &$\frac{5}{2}$ &
$(1,1)$ & $1$ & $24$
\\
\hline
$(0,2)$ &$(6,4)$ & $77$ &$\frac{5}{2}$ &
$(2,2)$ & $\frac{8}{3}$ & $14$
\\
\hline
$(1,1)$ &$(5,3)$ & $64$ &$\frac{7}{4}$ &
$2(1,1)$ & $1$ & $15$
\\
\hline
$(2,1)$ &$(7,4)$ & $189$ &$\frac{8}{3}$ &
$2(1,1)$ & $1$ & $26$
\\
\hline
$(2,1)$ &$(7,4)$ & $189$ &$\frac{8}{3}$ &
$(3,0)$ & $2$ & $20$
\\
\hline
$(2,1)$ &$(7,4)$ & $189$ &$\frac{8}{3}$ &
$(0,3)$ & $2$ & $20$
\\
\hline
$(2,1)$ &$(7,4)$ & $189$ &$\frac{8}{3}$ &
$(2,2)$ & $\frac{8}{3}$ & $16$
\\
\hline
\end{tabular}
\end{center}
Since $\Lambda^{\prime}_{1}+\Lambda^{\prime}_{2}
\,
((m^{\prime}_{1},m^{\prime}_{2})=(1,1))$
corresponds to the complexified adjoint representation of $SU(3)$,
we see that
$(V^{\prime}_{\Lambda_1^{\prime}+\Lambda_2^{\prime}})_{T^2}\cong\mathfrak{t}^2$
and
$(V^{\prime}_{\Lambda_1^{\prime}+
\Lambda_2^{\prime}})_{T^2 \cdot \mathbf{Z}_3}=\{0\}$.
Then
\begin{equation*}
\Lambda^{\prime}_{1}+\Lambda^{\prime}_{2}
\not\in{D(SU(3),T^{2}\cdot{\mathbf Z}_{3})}.
\end{equation*}
and thus
\begin{equation*}
 2\Lambda_{1}, \Lambda_{2}
\not\in{D(G_{2},T^{2}\cdot{\mathbf Z}_{6})}.
\end{equation*}
Now we obtain that
${\mathcal G}(G_{2}/T^{2})\subset Q_{12}({\mathbf C})$ is Hamiltonian stable.

We need to examine whether
$3\Lambda_{1}\in D(K,K_{[\mathfrak a]})={D(G_{2},T^{2}\cdot{\mathbf Z}_{6})}$
or not.
Consider
$$
(V_{3\Lambda_{1}})_{T^{2}}
=
(V^{\prime}_{3\Lambda^{\prime}_{1}})_{T^{2}}
\oplus
(V^{\prime}_{3\Lambda^{\prime}_{2}})_{T^{2}}
\oplus
(V^{\prime}_{\Lambda^{\prime}_{1}+\Lambda^{\prime}_{2}})_{T^{2}}.
$$
Since
$$
V^{\prime}_{3\Lambda_1^{\prime}}\cong \mathrm{Sym}^3(\mathbf{C}^3)
=\mathrm{span}_{\mathbf C}\{e_{i_1}\cdot e_{i_2} \cdot e_{i_3} \mid
1\leq i_1\leq i_2\leq i_3\leq 3\},
$$
where $\{e_1,e_2,e_3\}$ is the standard basis of $\mathbf{C}^3$,
we get
$$
(V^{\prime}_{3\Lambda^{\prime}_{1}})_{T^{2}}
=(V^{\prime}_{3\Lambda^{\prime}_{1}})_{T^{2}
\cdot{\mathbf Z}_{3}}
=\mathrm{span}_{\mathbf C}\{e_1\cdot e_2\cdot e_3\}.
$$
Similarly, we get
$V^{\prime}_{3\Lambda^{\prime}_{2}}\cong\mathrm{Sym}^3(\bar{\mathbf{C}}^3)$
and
$(V^{\prime}_{3\Lambda^{\prime}_{2}})_{T^{2}}
=(V^{\prime}_{3\Lambda^{\prime}_{2}})_{T^{2}\cdot{\mathbf Z}_{3}}$
with dimension $1$.
On the other hand we know that
$(V^{\prime}_{\Lambda^{\prime}_{1}+\Lambda^{\prime}_{2}})_{T^{2}}
\cong{\mathfrak t}$ and
$(V^{\prime}_{\Lambda^{\prime}_{1}+\Lambda^{\prime}_{2}})_{T^{2}\cdot{\mathbf Z}_{3}}
=\{0\}$.
Hence we get
$\dim_{\mathbf C}(V_{3\Lambda_{1}})_{T^{2}}=4$ and
$\dim_{\mathbf C}(V_{3\Lambda_{1}})_{T^{2}\cdot{\mathbf Z}_{3}}=2$.
However
$\dim_{\mathbf C}(V_{3\Lambda_{1}})_{T^{2}\cdot{\mathbf Z}_{6}}=1$.
In fact, $T^{2}\cdot{\mathbf Z}_{6}\subset G_{2}$,
$T^{2}\cdot{\mathbf Z}_{6}\not\subset SU(3)$,
$T^{2}\cdot{\mathbf Z}_{3}\subset SU(3)$
and
$(T^{2}\cdot{\mathbf Z}_{6})/(T^{2}\cdot{\mathbf Z}_{3})
\cong {\mathbf Z}_{2}$.
Thus
there exists an element $u\in{T^{2}\cdot{\mathbf Z}_{6}}\subset G_2$ with
$u\not\in SU(3)$ which satisfies
$\mathrm{Ad}(u)(SU(3))\subset SU(3)$
and provides the generators of both
$(T^{2}\cdot{\mathbf Z}_{6})/T^{2}
\cong {\mathbf Z}_{6}$
and $(T^{2}\cdot{\mathbf Z}_{6})/(T^{2}\cdot{\mathbf Z}_{3})
\cong {\mathbf Z}_{2}$.
Then we observe that
$\rho_{3\Lambda^{\prime}_{1}}\circ\mathrm{Ad}(u)\vert_{SU(3)}
\cong \rho_{3\Lambda^{\prime}_{2}}$ and
$\rho_{3\Lambda_{1}}(u)(V^{\prime}_{3\Lambda^{\prime}_{1}})
=V^{\prime}_{3\Lambda^{\prime}_{2}}$.
Thus
$\rho_{3\Lambda_{1}}(u)
(V^{\prime}_{3\Lambda^{\prime}_{1}})_{T^{2}\cdot{\mathbf Z}_{3}}
=(V^{\prime}_{3\Lambda^{\prime}_{2}})_{T^{2}\cdot{\mathbf Z}_{3}}$
and
$
(\rho_{3\Lambda_{1}}(u))^{2}
\vert_{(V^{\prime}_{3\Lambda^{\prime}_{1}})_{T^{2}\cdot{\mathbf Z}_{3}}}
=
(\rho_{3\Lambda_{1}}(u^{2}))
\vert_{(V^{\prime}_{3\Lambda^{\prime}_{1}})_{T^{2}\cdot{\mathbf Z}_{3}}}
=
\mathrm{Id},
$
because $u^{2}\in T^{2}\cdot{\mathbf Z}_{3}$.
Hence we have
$
(V_{3\Lambda_{1}})_{T^{2}\cdot{\mathbf Z}_{6}}
\subset
(V^{\prime}_{3\Lambda^{\prime}_{1}})_{T^{2}\cdot{\mathbf Z}_{3}}
\oplus
(V^{\prime}_{3\Lambda^{\prime}_{2}})_{T^{2}\cdot{\mathbf Z}_{3}}
$
and
$
\dim (V_{3\Lambda_{1}})_{T^{2}\cdot{\mathbf Z}_{6}}=1
$.
Therefore
$3\Lambda_{1}\in{D(G_{2},T^{2}\cdot{\mathbf Z}_{6})}$
and its multiplicity is equal to $1$.
It follows that
\begin{equation*}
n(L^{12})
=\dim_{\mathbf C}(V_{3\Lambda_{1}})
=77
=91-14
=\dim(SO(14))-\dim(G_{2})
=n_{hk}(L^{12}),
\end{equation*}
that is,
${\mathcal G}(G_{2}/T^{2})\subset Q_{12}({\mathbf C})$ is Hamiltonian rigid.

Let $\bigwedge^{2}{\mathbf R}^{14}
={\mathfrak o}(n+2)
=\mathrm{ad}_{\mathfrak p}({\mathfrak g}_{2})+\mathcal{V}
\cong
{\mathfrak g}_{2}+\mathcal{V}$.
Then
\begin{equation*}
\bigwedge^{2}{\mathbf C}^{14}
=(\bigwedge^{2}{\mathbf R}^{14})^{\mathbf C}
={\mathfrak o}(n+2)^{\mathbf C}
={\mathfrak o}(n+2,{\mathbf C})
=\mathrm{ad}_{\mathfrak p}({\mathfrak g}^{\mathbf{C}}_{2})+\mathcal{V}^{\mathbf C}
\cong
{\mathfrak g}_{2}^{\mathbf C}+\mathcal{V}^{\mathbf C},
\end{equation*}
where
$\dim\mathcal{V}=77$ and
$\dim_{\mathbf C}\mathcal{V}^{\mathbf C}=77$.
More precisely, we observe that
$\mathcal{V}$ is a real $77$-dimensional irreducible $G_{2}$-module
with $(\mathcal{V})_{T^{2}\cdot{\mathbf Z}_{6}}\not=\{0\}$,
and $\mathcal{V}^{\mathbf C}$
is a complex $77$-dimensional $G_{2}$-module
with $(\mathcal{V})^{\mathbf C}_{T^{2}\cdot{\mathbf Z}_{6}}\not=\{0\}$.
Moreover, we have
$\mathcal{V}^{\mathbf C}\cong V_{3\Lambda_{1}}$
with $\dim_{\mathbf C}(\mathcal{V})^{\mathbf C}_{T^{2}\cdot{\mathbf Z}_{6}}=1$.

From these arguments we conclude that
\begin{thm}
The Gauss image
$L^{12}
={\mathcal G}(G_{2}/T^{2})
=\frac{G_{2}}{T^{2}\cdot \mathbf{Z}_6}
\subset Q_{12}({\mathbf C})$
is strictly Hamiltonian stable.
\end{thm}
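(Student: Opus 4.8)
The plan is to obtain both conclusions from the criterion in Section~\ref{Sec_Gauss maps}: the Gauss image $L^{12}\subset Q_{12}(\mathbf C)$ is Hamiltonian stable exactly when the first positive eigenvalue of its induced Laplacian equals $n=12$, and it is strictly Hamiltonian stable exactly when, moreover, the multiplicity of the eigenvalue $12$ equals $\dim\mathcal V$, where $\bigwedge^{2}\mathbf R^{14}=\mathrm{ad}_{\mathfrak p}(\mathfrak g_2)\oplus\mathcal V$. Since $\dim\mathcal V=\dim SO(14)-\dim G_2=91-14=77=n_{hk}(L^{12})$, it suffices to prove that $\lambda_1=12$ and that the eigenvalue $12$ occurs with multiplicity exactly $77$.

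To control the low part of the spectrum of $L^{12}=G_2/(T^2\cdot\mathbf Z_6)$ I would use the fibration $SU(3)/T^2=K_1/K_0\hookrightarrow K/K_0=G_2/T^2\to K/K_1=S^6$ and Lemma~\ref{LaplaceOperator} for the restricted root system of type $G_2$, which identifies the Laplacian with $-\mathcal C_L=-12\,\mathcal C^{{\mathfrak g}_2}_{K/K_0}+6\,\mathcal C^{{\mathfrak k}_1}_{K_1/K_0}$. On the $\Lambda$-isotypic part of $C^{\infty}(L^{12})$ the first Casimir acts by the scalar $c(\Lambda)$ and, on the portion lying in an $SU(3)$-submodule $V_{\Lambda'}\subset V_\Lambda$, the second acts by $c'(\Lambda')$, so the eigenvalue there is $-12\,c(\Lambda)+6\,c'(\Lambda')$, both constants given by Freudenthal's formula. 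The bound $-\mathcal C_L\ge -4\,\mathcal C^{{\mathfrak g}_2}_{K/K_0}$ (the short-root terms are nonpositive) shows that any eigenvalue $\le 12$ forces $-c(\Lambda,\langle\,,\,\rangle_{{\mathfrak g}_2})\le 3$, and by Yamaguchi's formula \eqref{eigenvalueG2T2} this leaves only the finite list $\Lambda\in\{0,\Lambda_1,2\Lambda_1,3\Lambda_1,\Lambda_2,2\Lambda_2,\Lambda_1+\Lambda_2,2\Lambda_1+\Lambda_2\}$. For each such $\Lambda$ I would write down the $(G_2,SU(3))$-branching and compute $c'$ on its $SU(3)$-summands through \eqref{eigenvalueSU3T2}, tabulating the numbers $-12\,c+6\,c'$.

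The step I expect to be the main obstacle is deciding which of these weights actually contributes functions on $L^{12}=G_2/(T^2\cdot\mathbf Z_6)$, i.e.\ satisfies $(V_\Lambda)_{T^2\cdot\mathbf Z_6}\ne\{0\}$, and with what multiplicity. I would work through the chain $T^2\cdot\mathbf Z_3\subset SU(3)$, $T^2\cdot\mathbf Z_6\subset G_2$, $(T^2\cdot\mathbf Z_6)/(T^2\cdot\mathbf Z_3)\cong\mathbf Z_2$, picking $u\in T^2\cdot\mathbf Z_6\setminus SU(3)$ with $u^{2}\in T^2\cdot\mathbf Z_3$ and $\mathrm{Ad}(u)$ inducing the outer automorphism of $SU(3)$; then $(V_\Lambda)_{T^2\cdot\mathbf Z_6}$ is the $(+1)$-eigenspace of $\rho_\Lambda(u)$ inside $(V_\Lambda)_{T^2\cdot\mathbf Z_3}\subseteq\bigoplus_{\Lambda'}(V_{\Lambda'})_{T^2\cdot\mathbf Z_3}$, while $\rho_\Lambda(u)$ interchanges an $SU(3)$-submodule with its outer-automorphism image. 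The decisive facts are that the adjoint module has $(V'_{\Lambda_1'+\Lambda_2'})_{T^2}\cong\mathfrak t$ but $(V'_{\Lambda_1'+\Lambda_2'})_{T^2\cdot\mathbf Z_3}=\{0\}$ (the $\mathbf Z_3$ rotates $\mathfrak t$ by $2\pi/3$) and that $u$ acts by $-1$ on the weight-zero line of the $7$-dimensional module $V_{\Lambda_1}$; combining these with the branchings shows that $\Lambda_1,2\Lambda_1,\Lambda_2$ contribute no eigenvalue $\le 12$ — precisely the weights that would otherwise produce eigenvalues $6$ and $8$ below $12$ — while for $3\Lambda_1$, whose $SU(3)$-branching contains $V'_{3\Lambda_1'}\oplus V'_{3\Lambda_2'}\oplus V'_{\Lambda_1'+\Lambda_2'}$ with $(V'_{3\Lambda_1'})_{T^2}=(V'_{3\Lambda_1'})_{T^2\cdot\mathbf Z_3}=\mathbf C\,e_1e_2e_3$ and likewise for $3\Lambda_2'$, the $u$-swap yields $\dim_{\mathbf C}(V_{3\Lambda_1})_{T^2\cdot\mathbf Z_6}=1$.

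Putting the pieces together, the only weight in $D(G_2,T^2\cdot\mathbf Z_6)$ producing an eigenvalue $\le 12$ is $3\Lambda_1$ (besides $\Lambda=0$, giving the constants), and on its isotypic component $-\mathcal C_L$ acts by the single scalar $-12\,c(3\Lambda_1)+6\,c'(3\Lambda_1')=12$, since the surviving weight-zero line lies in $V'_{3\Lambda_1'}\oplus V'_{3\Lambda_2'}$; hence $\lambda_1=12=n$ and $L^{12}$ is Hamiltonian stable. Because $(V_{3\Lambda_1})_{T^2\cdot\mathbf Z_6}$ is one-dimensional, that isotypic component is isomorphic as a $G_2$-module to $V_{3\Lambda_1}$, so the eigenvalue $12$ has multiplicity $\dim_{\mathbf C}V_{3\Lambda_1}=77$. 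Finally, inside $\bigwedge^{2}\mathbf C^{14}=\mathfrak o(14,\mathbf C)=\mathrm{ad}_{\mathfrak p}({\mathfrak g}_2^{\mathbf C})\oplus\mathcal V^{\mathbf C}$ one identifies $\mathcal V$ as a real $77$-dimensional irreducible $G_2$-module with $\mathcal V^{\mathbf C}\cong V_{3\Lambda_1}$ and $(\mathcal V)^{\mathbf C}_{T^2\cdot\mathbf Z_6}\ne\{0\}$, so $\dim\mathcal V=77=n(L^{12})=n_{hk}(L^{12})$; together with $\lambda_1=12=\kappa$, the Lemma gives that $L^{12}=\mathcal G(G_2/T^2)=G_2/(T^2\cdot\mathbf Z_6)\subset Q_{12}(\mathbf C)$ is strictly Hamiltonian stable.
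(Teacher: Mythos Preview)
Your proposal is correct and follows essentially the same route as the paper in Section~\ref{Sec:G2xG2}: the same Casimir decomposition for type $G_2$, the bound $-c_\Lambda\le 3$, the $(G_2,SU(3))$ branching, and the analysis of $(V_{3\Lambda_1})_{T^2\cdot\mathbf Z_6}$ via the element $u$ inducing the outer automorphism of $SU(3)$. You are in fact slightly more careful than the written argument in explicitly checking that $u$ acts by $-1$ on the weight-zero line of $V_{\Lambda_1}$, which is what rules out the potential eigenvalue $6$ coming from the trivial $SU(3)$-summand of the $7$-dimensional module.
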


\section{The case $(U,K)=(G_2, SO(4))$}
\label{Section_G2 SO4}
Let $U=G_2$, $K=SO(4)$ and $(U,K)$ is of type $G_2$.
Let ${\mathfrak u}={\mathfrak k} + {\mathfrak p}$
be the orthogonal symmetric Lie algebra of $(G_2, SO(4))$
and ${\mathfrak a}$ be the maximal abelian subspace of ${\mathfrak p}$.
Here
${\mathfrak u}={\mathfrak g}_{2}$,
${\mathfrak k}={\mathfrak so}(4)\cong\mathfrak{su}(2)\oplus\mathfrak{su}(2)$.
Let
\begin{equation*}
p:\widetilde{K}=Spin(4)=SU(2)\times SU(2)\longrightarrow K=SO(4)
\end{equation*}
be the universal covering Lie group homomorphism with
kernel ${\mathbf Z}_{2}$.

Recall that the complete set of all inequivalent irreducible unitary
representations of $SU(2)$ is given by
\begin{equation*}
{\mathcal D}(SU(2)) =\{(V_{m},\rho_{m}) \mid
m\in{\mathbf Z}, m\geq{0}\},
\end{equation*}
where $V_{m}$ denotes the complex vector space of complex homogeneous
polynomials of degree $m$ with two variables $z_{0},z_{1}$ and the
representation $\rho_{m}$ of $SU(2)$ on $V_{m}$ is defined by
$(\rho_{m}(g)f)(z_{0},z_{1}) =f((z_{0},z_{1})g)$
for each $g\in SU(2)$.
Set
\begin{equation}\label{eq:basis_SU(2)}
v^{(m)}_{k}(z_0,z_1):=\frac{1}{\sqrt{k!(m-k)!}}z_{0}^{m-k}z_{1}^{k}
\in{V_{m}} \quad(k=0,1,\dots,m)
\end{equation}
and define
the standard Hermitian inner product of $V_{m}$ invariant under
$\rho_{m}(SU(2))$
such that $\{v^{(m)}_{0},\dots,v^{(m)}_{m}\}$ is a unitary basis of $V_{m}$.
Let $(V_{l}\otimes V_{m},\rho_{l}\boxtimes \rho_{m})$ denote
an irreducible unitary representation of $SU(2)\times SU(2)$
of complex dimension $(l+1)(m+1)$
obtained by taking the exterior tensor product of $V_{l}$ and $V_{m}$
and then
\begin{equation*}
\{(V_{l}\otimes V_{m},\rho_{l}\boxtimes \rho_{m}) \mid
l,m\in{\mathbf Z},{\ }l,m\geq{0}\}
\end{equation*}
is the complete set of all inequivalent
irreducible unitary representations of $SU(2)\times SU(2)$.

The isotropy representation of $(G_2,SO(4))$ is explicitly described
as follows (cf.\, \cite{Goto-Grosshans1978}):
Suppose that $(l,m)=(3,1)$.
The real $8$-dimensional vector subspace $W$
of $V_{3}\otimes V_{1}$ spanned over ${\mathbf R}$ by
\begin{equation*}
\begin{split}
\{{\ }
&v_{0}^{(3)}\otimes v_{0}^{(1)}+v_{3}^{(3)}\otimes v_{1}^{(1)},{\ }
\sqrt{-1}{\ }(v_{0}^{(3)}\otimes v_{0}^{(1)}-v_{3}^{(3)}\otimes v_{1}^{(1)}),\\
&v_{1}^{(3)}\otimes v_{0}^{(1)}-v_{2}^{(3)}\otimes v_{1}^{(1)},{\ }
\sqrt{-1}{\ }(v_{1}^{(3)}\otimes v_{0}^{(1)}+v_{2}^{(3)}\otimes v_{1}^{(1)}),\\
&v_{0}^{(3)}\otimes v_{1}^{(1)}-v_{3}^{(3)}\otimes v_{0}^{(1)},{\ }
\sqrt{-1}{\ }(v_{0}^{(3)}\otimes v_{1}^{(1)}+v_{3}^{(3)}\otimes v_{0}^{(1)}),\\
&v_{2}^{(3)}\otimes v_{0}^{(1)}+v_{1}^{(3)}\otimes v_{1}^{(1)},{\ }
\sqrt{-1}{\ }(v_{2}^{(3)}\otimes v_{0}^{(1)}-v_{1}^{(3)}\otimes v_{1}^{(1)})
{\ }\}
\end{split}
\end{equation*}
gives an irreducible orthogonal representation of $SU(2)\times SU(2)$
whose complexification is $V_{3}\otimes V_{1}$,
i.e.\, $W$ is a {\it real form} of $V_{3}\otimes V_{1}$.
Then the isotropy representation $\mathrm{Ad}_{\mathfrak p}$ of $(G_2, SO(4))$
is given by $\mathrm{Ad}_{\mathfrak p}^{\mathbf C} \circ p \cong \rho_3\boxtimes \rho_1$
and the vector space ${\mathfrak p}$ is linearly isomorphic to $W$.
Moreover ${\mathfrak a}$ corresponds to a vector subspace
\begin{equation*}
{\mathbf R}
(v_{0}^{(3)}\otimes v_{0}^{(1)}+v_{3}^{(3)}\otimes v_{1}^{(1)})
+{\mathbf R}
(v_{2}^{(3)}\otimes v_{0}^{(1)}+v_{1}^{(3)}\otimes v_{1}^{(1)}){\ }.
\end{equation*}

For each
$X=
\begin{pmatrix}
\sqrt{-1}x&u\\
-\bar{u}&-\sqrt{-1}x
\end{pmatrix}$,
$Y=
\begin{pmatrix}
\sqrt{-1}y&w\\
-\bar{w}&-\sqrt{-1}y
\end{pmatrix}
\in\mathfrak{su}(2)$,
the following useful formula holds:
\begin{lem}
\begin{equation}\label{usefulformula}
\begin{split}
&[d(\rho_{l}\boxtimes\rho_{m})(X,Y)]
\left(
v_{i}^{(l)}\otimes v_{j}^{(m)}{\pm}v_{l-i}^{(l)}\otimes v_{m-j}^{(m)}
\right)\\
=&
\{(2i-l)x+(2j-m)y\}{\ }
\sqrt{-1}
(v_{i}^{(l)}\otimes v_{j}^{(m)}{\mp}v_{l-i}^{(l)}\otimes v_{m-j}^{(m)})\\
&-\sqrt{i(l-i+1)}{\ }\mathrm{Re}(u){\ }
(v_{i-1}^{(l)}\otimes v_{j}^{(m)}{\mp}v_{l-i+1}^{(l)}\otimes v_{m-j}^{(m)})\\
&+\sqrt{i(l-i+1)}{\ }\mathrm{Im}(u){\ }
\sqrt{-1}
(v_{i-1}^{(l)}\otimes v_{j}^{(m)}{\pm}v_{l-i+1}^{(l)}\otimes v_{m-j}^{(m)})\\
&-\sqrt{j(m-j+1)}{\ }\mathrm{Re}(w){\ }
(v_{i}^{(l)}\otimes v_{j-1}^{(m)}{\mp}v_{l-i}^{(l)}\otimes v_{m-j+1}^{(m)})\\
&+\sqrt{j(m-j+1)}{\ }\mathrm{Im}(w){\ }
\sqrt{-1}
(v_{i}^{(l)}\otimes v_{j-1}^{(m)}{\pm}v_{l-i}^{(l)}\otimes v_{m-j+1}^{(m)})\\
&+\sqrt{(l-i)(i+1)}{\ }\mathrm{Re}(u){\ }
(v_{i+1}^{(l)}\otimes v_{j}^{(m)}{\mp}v_{l-i-1}^{(l)}\otimes v_{m-j}^{(m)})\\
&+\sqrt{(l-i)(i+1)}{\ }\mathrm{Im}(u){\ }
\sqrt{-1}
(v_{i+1}^{(l)}\otimes v_{j}^{(m)}{\pm}v_{l-i-1}^{(l)}\otimes v_{m-j}^{(m)})\\
&+\sqrt{(m-j)(j+1)}{\ }\mathrm{Re}(w){\ }
(v_{i}^{(l)}\otimes v_{j+1}^{(m)}{\mp}v_{l-i}^{(l)}\otimes v_{m-j-1}^{(m)})\\
&+\sqrt{(m-j)(j+1)}{\ }\mathrm{Im}(w){\ }
\sqrt{-1}
(v_{i}^{(l)}\otimes v_{j+1}^{(m)}{\pm}v_{l-i}^{(l)}\otimes v_{m-j-1}^{(m)})
{\ }.
\end{split}
\end{equation}
\end{lem}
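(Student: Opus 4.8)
The plan is to obtain \eqref{usefulformula} by a direct computation, carried out first for one tensor factor and then assembled by the Leibniz rule. First I would differentiate the defining action $(\rho_m(g)f)(z_0,z_1)=f((z_0,z_1)g)$ at the identity: for $X=\begin{pmatrix}\sqrt{-1}x&u\\-\bar u&-\sqrt{-1}x\end{pmatrix}\in\mathfrak{su}(2)$ this gives the first-order differential operator $d\rho_m(X)=(\sqrt{-1}x\,z_0-\bar u\,z_1)\,\partial_{z_0}+(u\,z_0-\sqrt{-1}x\,z_1)\,\partial_{z_1}$ on $\mathbf{C}[z_0,z_1]$. Applying it to the monomials $z_0^{m-k}z_1^{k}$ and re-expressing the result in the normalized basis \eqref{eq:basis_SU(2)} — the binomial normalizations collapsing, e.g.\ $\frac{\sqrt{(k+1)!(m-k-1)!}}{\sqrt{k!(m-k)!}}=\sqrt{\frac{k+1}{m-k}}$, so that $(m-k)\,\bar u\,\sqrt{\frac{k+1}{m-k}}=\bar u\,\sqrt{(m-k)(k+1)}$ — yields the standard three-term (weight, raising, lowering) formula
\[
d\rho_m(X)\,v_k^{(m)}=(m-2k)\sqrt{-1}\,x\,v_k^{(m)}-\bar u\,\sqrt{(m-k)(k+1)}\,v_{k+1}^{(m)}+u\,\sqrt{k(m-k+1)}\,v_{k-1}^{(m)},
\]
with the sign/conjugation convention fixed by the identification $\mathrm{Ad}^{\mathbf{C}}_{\mathfrak p}\circ p\cong\rho_3\boxtimes\rho_1$. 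I would then split the off-diagonal coefficients into real and imaginary parts by writing $u=\mathrm{Re}(u)+\sqrt{-1}\,\mathrm{Im}(u)$ and $\bar u=\mathrm{Re}(u)-\sqrt{-1}\,\mathrm{Im}(u)$.

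Next, since $\rho_l\boxtimes\rho_m$ is the exterior tensor product, its derived action is the derivation $d(\rho_l\boxtimes\rho_m)(X,Y)=d\rho_l(X)\otimes\mathrm{id}_{V_m}+\mathrm{id}_{V_l}\otimes d\rho_m(Y)$, and I would apply this to $v_i^{(l)}\otimes v_j^{(m)}\pm v_{l-i}^{(l)}\otimes v_{m-j}^{(m)}$. The crux is the behaviour under the involution $k\mapsto l-k$ on the first factor (and $k\mapsto m-k$ on the second): applying $d\rho_l(X)$ to $v_{l-i}^{(l)}$ sends its raising term to index $l-(i-1)$ and its lowering term to index $l-(i+1)$, with $\bar u$ and $u$ interchanged relative to the terms coming from $v_i^{(l)}$. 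Collecting, for each unordered index pair $\{\,i\mp1,\;l-(i\mp1)\,\}$ on the first factor (and likewise on the second), the contributions of $v_i^{(l)}\otimes v_j^{(m)}$ and of $\pm v_{l-i}^{(l)}\otimes v_{m-j}^{(m)}$, and keeping track of how the conjugation $u\leftrightarrow\bar u$ combines with the sign $\pm$ in the symmetrization, produces exactly the nine groups of terms in \eqref{usefulformula}; in particular this interaction is what forces the ``$\mp$'' attached to the diagonal and the $\mathrm{Re}(\cdot)$ terms against the ``$\pm$'' attached to the $\mathrm{Im}(\cdot)$ terms.

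I do not expect any conceptual obstacle: the whole difficulty is careful bookkeeping of the $\pm/\mp$ signs and of the collapsing normalization constants, together with the fact that the involution exchanges the raising and lowering operators. A convenient consistency check to carry along is that $d(\rho_l\boxtimes\rho_m)$ restricted to $\mathfrak{su}(2)\oplus\mathfrak{su}(2)$ must preserve the real form $W\subset V_3\otimes V_1$ described above (it is the isotropy representation of a Riemannian symmetric pair on the real vector space $\mathfrak p$), and \eqref{usefulformula} is arranged precisely so that, fed with the sign dictated by the parity of $i+j$, it carries the displayed real basis vectors of $W$ back into $W$.
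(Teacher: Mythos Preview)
Your approach is correct and is exactly the natural one: differentiate the polynomial action to obtain the single-factor formula
\[
d\rho_m(X)\,v_k^{(m)}=(m-2k)\sqrt{-1}\,x\,v_k^{(m)}-\bar u\,\sqrt{(m-k)(k+1)}\,v_{k+1}^{(m)}+u\,\sqrt{k(m-k+1)}\,v_{k-1}^{(m)},
\]
then use the derivation property of the exterior tensor product and collect terms under the involution $k\mapsto l-k$, $k\mapsto m-k$. The paper in fact states this lemma without proof, treating it as a routine computation; your outline fills in precisely that computation, and your consistency check (that the real form $W\subset V_3\otimes V_1$ is preserved) is the same observation the paper records in the remark immediately following the lemma.
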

\begin{rem0}
By using the formula \eqref{usefulformula} we can check
that the real vector subspace $W$ is invariant under the action of
$SU(2)\times SU(2)$ via $\rho_{3}\boxtimes\rho_{1}$.
\end{rem0}

Define an orthonormal basis of the real vector space $W\cong{\mathfrak p}$
by
\begin{equation*}
\begin{split}
H_{1}:&=\frac{1}{\sqrt{2}}
(v_{0}^{(3)}\otimes v_{0}^{(1)}+v_{3}^{(3)}\otimes v_{1}^{(1)}),\\
H_{2}:&=\frac{1}{\sqrt{2}}
(v_{2}^{(3)}\otimes v_{0}^{(1)}+v_{1}^{(3)}\otimes v_{1}^{(1)}),\\
E_{1}:&=\frac{1}{\sqrt{2}}
\sqrt{-1}{\ }(v_{0}^{(3)}\otimes v_{0}^{(1)}-v_{3}^{(3)}\otimes v_{1}^{(1)}),\\
E_{2}:&=\frac{1}{\sqrt{2}}
(v_{1}^{(3)}\otimes v_{0}^{(1)}-v_{2}^{(3)}\otimes v_{1}^{(1)}),\\
E_{3}:&=\frac{1}{\sqrt{2}}
\sqrt{-1}{\ }(v_{1}^{(3)}\otimes v_{0}^{(1)}+v_{2}^{(3)}\otimes v_{1}^{(1)}),\\
E_{4}:&=\frac{1}{\sqrt{2}}
(v_{0}^{(3)}\otimes v_{1}^{(1)}-v_{3}^{(3)}\otimes v_{0}^{(1)}),\\
E_{5}:&=\frac{1}{\sqrt{2}}
\sqrt{-1}{\ }(v_{0}^{(3)}\otimes v_{1}^{(1)}+v_{3}^{(3)}\otimes v_{0}^{(1)}),\\
E_{6}:&=\frac{1}{\sqrt{2}}
\sqrt{-1}{\ }(v_{2}^{(3)}\otimes v_{0}^{(1)}-v_{1}^{(3)}\otimes v_{1}^{(1)})
{\ }.
\end{split}
\end{equation*}
Then we have the matrix expression as follows:
\begin{equation*}
\begin{split}
&[d(\rho_{3}\boxtimes\rho_{1})(X,Y)]
\left(H_{1},H_{2}\right)
\\
=&
\left(
E_{1},E_{2},E_{3},E_{4},E_{5},E_{6},
\right)
\begin{pmatrix}
-(3x+y)&0\\
\sqrt{3}{\ }\mathrm{Re}(u)&-(2{\ }\mathrm{Re}(u)+\mathrm{Re}(w))\\
\sqrt{3}{\ }\mathrm{Im}(u)&2{\ }\mathrm{Im}(u)+\mathrm{Im}(w)\\
\mathrm{Re}(w)&-\sqrt{3}{\ }\mathrm{Re}(u)\\
\mathrm{Im}(w)&\sqrt{3}{\ }\mathrm{Im}(u)\\
0&x-y
\end{pmatrix}.
\end{split}
\end{equation*}

The inner product
$\langle{\ ,\ }\rangle$
corresponding to the metric induced from $g^{\mathrm std}_{Q_{6}({\mathbf C})}$
is given as follows:
For $(X,X^{\prime}),(Y,Y^{\prime})\in{\mathfrak{su}(2)\oplus\mathfrak{su}(2)}$,
\begin{equation*}
\begin{split}
&\langle{(X,X^{\prime}),(Y,Y^{\prime})}\rangle\\
:=&{\ }(3x+x^{\prime})(3y+y^{\prime})\\
&+3{\ }\mathrm{Re}(u)\mathrm{Re}(w)
+(2{\ }\mathrm{Re}(u)+\mathrm{Re}(u^{\prime}))
 (2{\ }\mathrm{Re}(w)+\mathrm{Re}(w^{\prime}))\\
&+3{\ }\mathrm{Im}(u)\mathrm{Im}(w)
+(2{\ }\mathrm{Im}(u)+\mathrm{Im}(u^{\prime}))
 (2{\ }\mathrm{Im}(w)+\mathrm{Im}(w^{\prime}))\\
&+\mathrm{Re}(u^{\prime})\mathrm{Re}(w^{\prime})
+3\mathrm{Re}(u)\mathrm{Re}(w)\\
&+\mathrm{Im}(u^{\prime})\mathrm{Im}(w^{\prime})
+3\mathrm{Im}(u)\mathrm{Im}(w)\\
&+(x-x^{\prime})(y-y^{\prime})\\
=&{\ }
10xy+2x^{\prime}y+2xy^{\prime}+2x^{\prime}y^{\prime}\\
&+10{\ }\mathrm{Re}(u)\mathrm{Re}(w)
+2{\ }\mathrm{Re}(u^{\prime})\mathrm{Re}(w)
+2{\ }\mathrm{Re}(u)\mathrm{Re}(w^{\prime})
+2\mathrm{Re}(u^{\prime})\mathrm{Re}(w^{\prime})\\
&+10{\ }\mathrm{Im}(u)\mathrm{Im}(w) +2{\
}\mathrm{Im}(u^{\prime})\mathrm{Im}(w) +2{\
}\mathrm{Im}(u)\mathrm{Im}(w^{\prime})
+2\mathrm{Im}(u^{\prime})\mathrm{Im}(w^{\prime}).
\end{split}
\end{equation*}
Thus
the Casimir operator of $(\widetilde{K},\widetilde{K}_{[{\mathfrak a}]})$
relative to the inner product
$\langle{\ ,\ }\rangle$
is given as follows:
\begin{equation*}
\begin{split}
\mathcal{C}_{L}
={\ }&
 \frac{1}{2}{\ }(X_{1},0)\cdot(X_{1},0)
+\frac{1}{2}{\ }(X_{2},0)\cdot(X_{2},0)
+\frac{1}{2}{\ }(X_{3},0)\cdot(X_{3},0)\\
+&\frac{5}{2}{\ }(0,X_{1})\cdot(0,X_{1})
+\frac{5}{2}{\ }(0,X_{2})\cdot(0,X_{2})
+\frac{5}{2}{\ }(0,X_{3})\cdot(0,X_{3})\\
-&{\ }(X_{1},0)\cdot(0,X_{1})
-{\ }(X_{2},0)\cdot(0,X_{2})
-{\ }(X_{3},0)\cdot(0,X_{3}),
\end{split}
\end{equation*}
where
\begin{equation*}
X_{1}:=
\frac{1}{2}
\begin{pmatrix}
\sqrt{-1}&0\\
0&-\sqrt{-1}
\end{pmatrix},
{\ }
X_{2}:=
\frac{1}{2}
\begin{pmatrix}
0&1\\
-1&0
\end{pmatrix},
{\ }
X_{3}:=
\frac{1}{2}
\begin{pmatrix}
0&\sqrt{-1}\\
\sqrt{-1}&0
\end{pmatrix}
\end{equation*}
is a basis of $\mathfrak{su}(2)$ and
$\{(X_1,0), (X_2,0), (X_3,0), (0,X_1), (0,X_2), (0,X_3)\}$ is a basis of $\mathfrak{su}(2)\oplus \mathfrak{su}(2)$.
Hence, we have the following formula for the Casimir operator:
\begin{lem}\label{eigenvalue_G_2SO(4)}
\begin{equation*}
\begin{split}
 &[d(\rho_l \boxtimes  \rho_m)({\mathcal C}_L)]
(v^{(l)}_i \otimes v^{(m)}_a)\\
= &
-\bigl\{
\frac{l(l+2)}{8}+\frac{5m(m+2)}{8}-\frac{(2i-l)(4a-m)}{4}
\bigl\}(v^{(l)}_i \otimes v^{(m)}_a)\\
& + \frac{1}{2}\sqrt{(i+1)(l-i)a(m-a+1)}
(v^{(l)}_{i+1} \otimes v^{(m)}_{a-1}) \\
& +\frac{1}{2}\sqrt{i(l-i+1)(a+1)(m-a)}
(v^{(l)}_{i-1} \otimes v^{(m)}_{a+1}).
\end{split}
\end{equation*}
\end{lem}

Set
\begin{equation*}
\widetilde{K}_{0}
:=\{(A,B)\in\widetilde{K}{\ }\vert{\ }
\mathrm{Ad}(p(A,B))H=H\text{ for each }H\in{\mathfrak a}{\ }\}.
\end{equation*}
Then using this description of the isotropy representation we can compute directly
\begin{equation*}
\begin{split}
&\widetilde{K}_{0}\\
=&
\Bigl\{
\left(
\begin{pmatrix}
1&0\\
0&1
\end{pmatrix},
\begin{pmatrix}
1&0\\
0&1
\end{pmatrix}
\right),{\ }
\left(
\begin{pmatrix}
\sqrt{-1}&0\\
0&-\sqrt{-1}
\end{pmatrix},
\begin{pmatrix}
\sqrt{-1}&0\\
0&-\sqrt{-1}
\end{pmatrix}
\right), \\
&
\left(
\begin{pmatrix}
-1&0\\
0&-1
\end{pmatrix},
\begin{pmatrix}
-1&0\\
0&-1
\end{pmatrix}
\right),{\ }
\left(
\begin{pmatrix}
-\sqrt{-1}&0\\
0&\sqrt{-1}
\end{pmatrix},
\begin{pmatrix}
-\sqrt{-1}&0\\
0&\sqrt{-1}
\end{pmatrix}
\right), \\
&
\left(
\begin{pmatrix}
0&-1\\
1&0
\end{pmatrix},
\begin{pmatrix}
0&-1\\
1&0
\end{pmatrix}
\right),{\ }
\left(
\begin{pmatrix}
0&\sqrt{-1}\\
\sqrt{-1}&0
\end{pmatrix},
\begin{pmatrix}
0&\sqrt{-1}\\
\sqrt{-1}&0
\end{pmatrix}
\right),\\
&
\left(
\begin{pmatrix}
0&1\\
-1&0
\end{pmatrix},
\begin{pmatrix}
0&1\\
-1&0
\end{pmatrix}
\right),{\ }
\left(
\begin{pmatrix}
0&-\sqrt{-1}\\
-\sqrt{-1}&0
\end{pmatrix},
\begin{pmatrix}
0&-\sqrt{-1}\\
-\sqrt{-1}&0
\end{pmatrix}
\right)\Bigr\}.
\end{split}
\end{equation*}
In particular, the order of $\widetilde{K}_{0}$ is $8$.
This result is consistent with ones of
\cite[p.611]{Asoh1981}, \cite[p.651]{Asoh1983} and
\cite[p.573]{FUchida1980} in topology
of transformation group theory.
Moreover we obtain
\begin{equation*}
\begin{split}
&K_{0}\\
=
\Bigl\{
&p
\left(
\begin{pmatrix}
1&0\\
0&1
\end{pmatrix},
\begin{pmatrix}
1&0\\
0&1
\end{pmatrix}
\right)
=p
\left(
\begin{pmatrix}
-1&0\\
0&-1
\end{pmatrix},
\begin{pmatrix}
-1&0\\
0&-1
\end{pmatrix}
\right),\\
&p\left(
\begin{pmatrix}
\sqrt{-1}&0\\
0&-\sqrt{-1}
\end{pmatrix},
\begin{pmatrix}
\sqrt{-1}&0\\
0&-\sqrt{-1}
\end{pmatrix}
\right)
=
p\left(
\begin{pmatrix}
-\sqrt{-1}&0\\
0&\sqrt{-1}
\end{pmatrix},
\begin{pmatrix}
-\sqrt{-1}&0\\
0&\sqrt{-1}
\end{pmatrix}
\right),\\
&
p\left(
\begin{pmatrix}
0&-1\\
1&0
\end{pmatrix},
\begin{pmatrix}
0&-1\\
1&0
\end{pmatrix}
\right)
=
p\left(
\begin{pmatrix}
0&1\\
-1&0
\end{pmatrix},
\begin{pmatrix}
0&1\\
-1&0
\end{pmatrix}
\right),\\
&
p\left(
\begin{pmatrix}
0&\sqrt{-1}\\
\sqrt{-1}&0
\end{pmatrix},
\begin{pmatrix}
0&\sqrt{-1}\\
\sqrt{-1}&0
\end{pmatrix}
\right)
=
p\left(
\begin{pmatrix}
0&-\sqrt{-1}\\
-\sqrt{-1}&0
\end{pmatrix},
\begin{pmatrix}
0&-\sqrt{-1}\\
-\sqrt{-1}&0
\end{pmatrix}
\right)
\Bigr\}\\
\cong{\ }&{\mathbf Z}_{2}+{\mathbf Z}_{2}.
\end{split}
\end{equation*}
Hence the order of group $K_{0}$ is equal to $4$ and
\begin{equation*}
\widetilde{K}/\widetilde{K}_{0}
\cong{K}/{K}_{0}=SO(4)/{\mathbf Z}_{2}+{\mathbf Z}_{2}.
\end{equation*}
For each $l,m\in{\mathbf Z}$ with $l,m\geq{0}$,
the vector subspace
of $V_{l}\otimes V_{m}$
\begin{equation*}
\begin{split}
&(V_{l}\otimes V_{m})_{\widetilde{K}_{0}}\\
:=&
\{\xi \in V_{l}\otimes V_{m}{\ }\vert{\ }
[(\rho_{l}\boxtimes\rho_{m})(A,B)](\xi)=\xi
\text{ for any }(A,B)\in\widetilde{K}_{0}{\ }\}
\end{split}
\end{equation*}
can be described explicitly as follows:
\begin{lem}\label{VectorSubspacefixedElementsByK0}
When $(l+m)/2$ is even,
\begin{equation*}
\begin{split}
&(V_{l}\otimes V_{m})_{\widetilde{K}_{0}}\\
=&
\{{\ }\xi=\sum_{i+a:\text{even}}\xi_{i,a}
(v^{(l)}_{i}\otimes v^{(m)}_{a}+
v^{(l)}_{l-i}\otimes v^{(m)}_{m-a}) \mid
\xi_{i,a}\in{\bold C}{\ }\}
\end{split}
\end{equation*}
and when $(l+m)/2$  is odd,
\begin{equation*}
\begin{split}
&(V_{l}\otimes V_{m})_{\widetilde{K}_{0}}\\
=&
\{{\ }\xi=\sum_{i+a:\text{odd}}\xi_{i,a}
(v^{(l)}_{i}\otimes v^{(m)}_{a}-
v^{(l)}_{l-i}\otimes v^{(m)}_{m-a}) \mid
\xi_{i,a}\in{\bold C}{\ }\}.
\end{split}
\end{equation*}
\end{lem}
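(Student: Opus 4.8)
The plan is to compute $(V_l\otimes V_m)_{\widetilde K_0}$ by reducing to two group generators, evaluating their action on the standard basis by hand, and diagonalizing. The first step is to recognize from the explicit list of its eight elements that $\widetilde K_0=\{(q,q)\mid q\in Q\}$ is the \emph{diagonal} copy inside $SU(2)\times SU(2)$ of the quaternion group $Q=\{\pm\mathbf 1,\pm\mathbf i,\pm\mathbf j,\pm\mathbf k\}$, with $\mathbf i=\mathrm{diag}(\sqrt{-1},-\sqrt{-1})$ (the second listed element) and $\mathbf j=\left(\begin{smallmatrix}0&-1\\ 1&0\end{smallmatrix}\right)$ (the fifth). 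Since $\mathbf i$ and $\mathbf j$ generate $Q$, a vector of $V_l\otimes V_m$ is $\widetilde K_0$-invariant if and only if it is fixed by $(\mathbf i,\mathbf i)$ and by $(\mathbf j,\mathbf j)$; invariance under $(-\mathbf 1,-\mathbf 1)=(\mathbf i,\mathbf i)^2$ then comes for free, consistently with $(-\mathbf 1,-\mathbf 1)$ acting as $(-1)^{l+m}=\mathrm{Id}$ because $l+m$ is even (this being exactly the condition for $\rho_l\boxtimes\rho_m$ to factor through $K$; when $l+m$ is odd, $(-\mathbf 1,-\mathbf 1)\in\widetilde K_0$ acts by $-\mathrm{Id}$, so the fixed space is $\{0\}$ and the statement is vacuous).

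Next I would read off the action of the two generators on the basis from the defining rule $(\rho_m(g)f)(z_0,z_1)=f((z_0,z_1)g)$, a routine monomial substitution needing neither the infinitesimal formula \eqref{usefulformula} nor any exponentiation. This gives
\begin{equation*}
(\rho_l\boxtimes\rho_m)(\mathbf i,\mathbf i)\bigl(v_i^{(l)}\otimes v_a^{(m)}\bigr)=(-1)^{i+a}(\sqrt{-1})^{\,l+m}\,v_i^{(l)}\otimes v_a^{(m)},
\end{equation*}
\begin{equation*}
(\rho_l\boxtimes\rho_m)(\mathbf j,\mathbf j)\bigl(v_i^{(l)}\otimes v_a^{(m)}\bigr)=(-1)^{i+a}\,v_{l-i}^{(l)}\otimes v_{m-a}^{(m)}.
\end{equation*}
Putting $s:=(l+m)/2$, so that $(\sqrt{-1})^{\,l+m}=(-1)^s$, the first identity shows that the $(\mathbf i,\mathbf i)$-fixed subspace is spanned by those $v_i^{(l)}\otimes v_a^{(m)}$ with $i+a\equiv s\pmod 2$, i.e.\ with $i+a$ even when $s$ is even and $i+a$ odd when $s$ is odd.

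Finally I would restrict the $(\mathbf j,\mathbf j)$-action to that subspace, on which $(-1)^{i+a}=(-1)^s$, so that $(\mathbf j,\mathbf j)$ becomes the involution $v_i^{(l)}\otimes v_a^{(m)}\mapsto(-1)^s\, v_{l-i}^{(l)}\otimes v_{m-a}^{(m)}$ interchanging the indices $(i,a)$ and $(l-i,m-a)$. Its $+1$-eigenspace is spanned by the symmetric sums $v_i^{(l)}\otimes v_a^{(m)}+v_{l-i}^{(l)}\otimes v_{m-a}^{(m)}$ when $s$ is even and by the antisymmetric sums $v_i^{(l)}\otimes v_a^{(m)}-v_{l-i}^{(l)}\otimes v_{m-a}^{(m)}$ when $s$ is odd, with $(i,a)$ running over the indices with $i+a\equiv s$; this is exactly the asserted description. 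Since there is no conceptual difficulty, the main thing to get right is the bookkeeping of the powers of $\sqrt{-1}$ and of $-1$ — which is where the hypothesis $l+m$ even enters, via $(\sqrt{-1})^{\,l+m}$ being real — together with the harmless redundancies in the spanning sets (each unordered pair $\{(i,a),(l-i,m-a)\}$ appearing twice, and a self-paired index $(i,a)=(l-i,m-a)$ contributing the fixed vector $2\,v_i^{(l)}\otimes v_a^{(m)}$ when $s$ is even and $0$ when $s$ is odd).
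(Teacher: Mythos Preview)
Your proof is correct. The identification of $\widetilde K_0$ with the diagonal quaternion group, the reduction to the two generators $(\mathbf i,\mathbf i)$ and $(\mathbf j,\mathbf j)$, and the explicit monomial computations are all accurate; the sign bookkeeping leading to the even/odd dichotomy in $s=(l+m)/2$ is handled cleanly. The paper itself states this lemma without proof (it is introduced as something that ``can be described explicitly''), so your argument simply supplies the routine verification that the authors omitted; there is no alternative approach in the paper to compare against.
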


Next we describe the subgroups of $\widetilde{K}$ defined as
\begin{equation*}
\begin{split}
\widetilde{K}_{{\mathfrak a}}{\ }
:=&\{(A,B)\in\widetilde{K}{\ }\vert{\ }
[(\rho_{3}\boxtimes\rho_{1})(A,B)]({\mathfrak a})={\mathfrak a}
\},\\
\widetilde{K}_{[{\mathfrak a}]}
:=&\{(A,B)\in\widetilde{K}{\ }\vert{\ }
[(\rho_{3}\boxtimes\rho_{1})(A,B)]({\mathfrak a})={\mathfrak a}\\
&\quad\quad\quad\quad\quad\quad\quad
\text{ preserving the orientation of }{\mathfrak a}
\}\subset\widetilde{K}_{{\mathfrak a}}.
\end{split}
\end{equation*}
For $(A,B)\in \widetilde{K}=SU(2)\times SU(2)$,
we compute that
$(A,B)\in{\widetilde{K}_{{\mathfrak a}}}$
if and only if $(A,B)$ is one of the following elements:
\begin{equation*}
\Bigg(
\begin{pmatrix}
e^{\sqrt{-1}\theta_{1}}&0\\
0&e^{-\sqrt{-1}\theta_{1}}
\end{pmatrix},
\begin{pmatrix}
e^{\sqrt{-1}\theta^{\prime}_{1}}&0\\
0&e^{-\sqrt{-1}\theta^{\prime}_{1}}
\end{pmatrix}
\Bigg),
\end{equation*}
where
$\theta_{1}=\frac{\pi}{4}k_{1}$, $\theta^{\prime}_{1}=\frac{\pi}{4}k^{\prime}_{1}$,
$k_{1}, k^{\prime}_{1}\in{\mathbf Z}$,
$k_{1}-k^{\prime}_{1}\in{4{\mathbf Z}}$,
\begin{equation*}
\Bigg(
\begin{pmatrix}
0&-e^{-\sqrt{-1}\theta_{2}}\\
e^{\sqrt{-1}\theta_{2}}&0
\end{pmatrix},
\begin{pmatrix}
0&-e^{-\sqrt{-1}\theta^{\prime}_{2}}\\
e^{\sqrt{-1}\theta^{\prime}_{2}}&0
\end{pmatrix}
\Bigg),
\end{equation*}
where
$\theta_{2}=\frac{\pi}{4}k_{2}$, $\theta^{\prime}_{2}=\frac{\pi}{4}k^{\prime}_{2}$,
$k_{2},k^{\prime}_{2}\in{\mathbf Z}$,
$k_{2}-k^{\prime}_{2}\in{4{\mathbf Z}}$,
\begin{equation*}
\Bigg(
\begin{pmatrix}
\frac{1}{\sqrt{2}}e^{\sqrt{-1}\theta_{1}}&
-\frac{1}{\sqrt{2}}e^{-\sqrt{-1}\theta_{2}}\\
\frac{1}{\sqrt{2}}e^{\sqrt{-1}\theta_{2}}&
\frac{1}{\sqrt{2}}e^{-\sqrt{-1}\theta_{1}}
\end{pmatrix},
\begin{pmatrix}
\frac{1}{\sqrt{2}}e^{\sqrt{-1}\theta^{\prime}_{1}}&
-\frac{1}{\sqrt{2}}e^{-\sqrt{-1}\theta^{\prime}_{2}}\\
\frac{1}{\sqrt{2}}e^{\sqrt{-1}\theta^{\prime}_{2}}&
\frac{1}{\sqrt{2}}e^{-\sqrt{-1}\theta^{\prime}_{1}}
\end{pmatrix}
\Bigg)
\end{equation*}
where
$\theta_{1}=\frac{\pi}{4}k_{1}$,
$\theta_{2}=\frac{\pi}{4}k_{2}$,
$\theta^{\prime}_{1}=\frac{\pi}{4}k^{\prime}_{1}$,
$\theta^{\prime}_{2}=\frac{\pi}{4}k^{\prime}_{2}$
and
$k_{1}$, $k_{2}$, $k^{\prime}_{1}$, $k^{\prime}_{2}\in{\mathbf Z}$,
$k_{1}+k_{2}$, $k_{1}-k_{2}$,
$k^{\prime}_{1}+k^{\prime}_{2}$, $k^{\prime}_{1}-k^{\prime}_{2}
\in{2{\mathbf Z}}$,
$k_{1}-k^{\prime}_{1}$, $k_{2}-k^{\prime}_{2}\in{4{\mathbf Z}}$,
$k_{1}+k_{2}-k^{\prime}_{1}-k^{\prime}_{2}$,
$k_{1}-k_{2}-k^{\prime}_{1}+k^{\prime}_{2}
\in{8{\mathbf Z}}$.

In particular,
the order of $\widetilde{K}_{{\mathfrak a}}$ is equal to
$16+16+32+32=96$.

Moreover,
for $(A,B)\in \widetilde{K}=SU(2)\times SU(2)$,
we have that
$(A,B)\in\widetilde{K}_{[{\mathfrak a}]}$
if and only if $(A,B)$ is one of the following elements:
\begin{equation*}
\Bigg(
\begin{pmatrix}
e^{\sqrt{-1}\theta_{1}}&0\\
0&e^{-\sqrt{-1}\theta_{1}}
\end{pmatrix},
\begin{pmatrix}
e^{\sqrt{-1}\theta^{\prime}_{1}}&0\\
0&e^{-\sqrt{-1}\theta^{\prime}_{1}}
\end{pmatrix}
\Bigg)
\end{equation*}
where
$\theta_{1}=\frac{\pi}{4}k_{1},\theta^{\prime}_{1}=\frac{\pi}{4}k^{\prime}_{1}$,
$k_{1},k^{\prime}_{1}\in{2{\mathbf Z}}$,
$k_{1}-k^{\prime}_{1}\in{4{\mathbf Z}}$,
\begin{equation*}
\Bigg(
\begin{pmatrix}
0&-e^{-\sqrt{-1}\theta_{2}}\\
e^{\sqrt{-1}\theta_{2}}&0
\end{pmatrix},
\begin{pmatrix}
0&-e^{-\sqrt{-1}\theta^{\prime}_{2}}\\
e^{\sqrt{-1}\theta^{\prime}_{2}}&0
\end{pmatrix}
\Bigg)
\end{equation*}
where
$\theta_{2}=\frac{\pi}{4}k_{2},\theta^{\prime}_{2}=\frac{\pi}{4}k^{\prime}_{2}$,
$k_{2},k^{\prime}_{2}\in{2{\mathbf Z}}$,
$k_{2}-k^{\prime}_{2}\in{4{\mathbf Z}}$,
\begin{equation*}
\Bigg(
\begin{pmatrix}
\frac{1}{\sqrt{2}}e^{\sqrt{-1}\theta_{1}}&
-\frac{1}{\sqrt{2}}e^{-\sqrt{-1}\theta_{2}}\\
\frac{1}{\sqrt{2}}e^{\sqrt{-1}\theta_{2}}&
\frac{1}{\sqrt{2}}e^{-\sqrt{-1}\theta_{1}}
\end{pmatrix},
\begin{pmatrix}
\frac{1}{\sqrt{2}}e^{\sqrt{-1}\theta^{\prime}_{1}}&
-\frac{1}{\sqrt{2}}e^{-\sqrt{-1}\theta^{\prime}_{2}}\\
\frac{1}{\sqrt{2}}e^{\sqrt{-1}\theta^{\prime}_{2}}&
\frac{1}{\sqrt{2}}e^{-\sqrt{-1}\theta^{\prime}_{1}}
\end{pmatrix}
\Bigg)
\end{equation*}
where
$\theta_{1}=\frac{\pi}{4}k_{1}$,
$\theta_{2}=\frac{\pi}{4}k_{2}$,
$\theta^{\prime}_{1}=\frac{\pi}{4}k^{\prime}_{1}$,
$\theta^{\prime}_{2}=\frac{\pi}{4}k^{\prime}_{2}$ and
$k_{1},k_{2},k^{\prime}_{1},k^{\prime}_{2}\in{2{\mathbf Z}+1}$,
$k_{1}+k_{2},k_{1}-k_{2}$,
$k^{\prime}_{1}+k^{\prime}_{2},k^{\prime}_{1}-k^{\prime}_{2}
\in{2{\mathbf Z}}$,
$k_{1}-k^{\prime}_{1},k_{2}-k^{\prime}_{2}\in{4{\mathbf Z}}$,
$k_{1}+k_{2}-(k^{\prime}_{1}+k^{\prime}_{2}),
k_{1}-k_{2}-(k^{\prime}_{1}-k^{\prime}_{2})
\in{8{\mathbf Z}}$.
In other words,
$(A,B)\in\widetilde{K}_{[{\mathfrak a}]}$
if and only if $(A,B)$ is one of the following elements:
\begin{equation*}
\Bigg(
\begin{pmatrix}
e^{\sqrt{-1}\theta_{1}}&0\\
0&e^{-\sqrt{-1}\theta_{1}}
\end{pmatrix},
\begin{pmatrix}
e^{\sqrt{-1}\theta^{\prime}_{1}}&0\\
0&e^{-\sqrt{-1}\theta^{\prime}_{1}}
\end{pmatrix}
\Bigg),
\end{equation*}
where
$\theta_{1}=\frac{\pi}{2}l_{1},\theta^{\prime}_{1}=\frac{\pi}{2}l^{\prime}_{1}$,
$l_{1},l^{\prime}_{1}\in{\mathbf Z}$,
$l_{1}-l^{\prime}_{1}\in{2{\mathbf Z}}$,
\begin{equation*}
\Bigg(
\begin{pmatrix}
0&-e^{-\sqrt{-1}\theta_{2}}\\
e^{\sqrt{-1}\theta_{2}}&0
\end{pmatrix},
\begin{pmatrix}
0&-e^{-\sqrt{-1}\theta^{\prime}_{2}}\\
e^{\sqrt{-1}\theta^{\prime}_{2}}&0
\end{pmatrix}
\Bigg),
\end{equation*}
where
$\theta_{2}=\frac{\pi}{2}l_{2}$,
$\theta^{\prime}_{2}=\frac{\pi}{2}l^{\prime}_{2}$,
$l_{2},l^{\prime}_{2}\in{\mathbf Z}$,
$l_{2}-l^{\prime}_{2}\in{2{\mathbf Z}}$,
\begin{equation*}
\Bigg(
\begin{pmatrix}
\frac{1}{\sqrt{2}}e^{\sqrt{-1}\theta_{1}}&
-\frac{1}{\sqrt{2}}e^{-\sqrt{-1}\theta_{2}}\\
\frac{1}{\sqrt{2}}e^{\sqrt{-1}\theta_{2}}&
\frac{1}{\sqrt{2}}e^{-\sqrt{-1}\theta_{1}}
\end{pmatrix},
\begin{pmatrix}
\frac{1}{\sqrt{2}}e^{\sqrt{-1}\theta^{\prime}_{1}}&
-\frac{1}{\sqrt{2}}e^{-\sqrt{-1}\theta^{\prime}_{2}}\\
\frac{1}{\sqrt{2}}e^{\sqrt{-1}\theta^{\prime}_{2}}&
\frac{1}{\sqrt{2}}e^{-\sqrt{-1}\theta^{\prime}_{1}}
\end{pmatrix}
\Bigg)
\end{equation*}
where
$\theta_{1}=\frac{\pi}{2}l_{1}+\frac{\pi}{4}$,
$\theta_{2}=\frac{\pi}{2}l_{2}+\frac{\pi}{4}$,
$\theta^{\prime}_{1}=\frac{\pi}{2}l^{\prime}_{1}+\frac{\pi}{4}$,
$\theta^{\prime}_{2}=\frac{\pi}{2}l^{\prime}_{2}+\frac{\pi}{4}$,
$l_{1},l_{2},l^{\prime}_{1},l^{\prime}_{2}\in{\mathbf Z}$,
$l_{1}-l^{\prime}_{1},l_{2}-l^{\prime}_{2}\in{2{\mathbf Z}}$,
$l_{1}+l_{2}-(l^{\prime}_{1}+l^{\prime}_{2}),
l_{1}-l_{2}-(l^{\prime}_{1}-l^{\prime}_{2})
\in{4{\mathbf Z}}$.
In particular,
the order of $\widetilde{K}_{[{\mathfrak a}]}$ is equal to
$8+8+16+16=48=8\times 6=\sharp{\widetilde{K}_{0}}\times\sharp{\mathbf Z}_{6}$.
Then we obtain
\begin{lem}
$\widetilde{K}_{[{\mathfrak a}]}/\widetilde{K}_{0}
\cong{\mathbf Z}_{6}$.
\end{lem}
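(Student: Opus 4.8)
The plan is to read off the conclusion from the explicit element-by-element descriptions of $\widetilde K_{[\mathfrak a]}$ and $\widetilde K_0$ just obtained, together with the trivial structure theory of finite subgroups of $SO(2)$. First I would note that $\widetilde K_0\subset\widetilde K_{[\mathfrak a]}$ (every element of $\widetilde K_0$ fixes $\mathfrak a$ pointwise, hence preserves its orientation) and that $\widetilde K_0$ is normal in $\widetilde K_{[\mathfrak a]}$: indeed $\widetilde K_0$ is by definition precisely the subgroup of $\widetilde K_{\mathfrak a}$ acting trivially on $\mathfrak a$ via $\rho_3\boxtimes\rho_1$, so it is the kernel of the homomorphism
\[
\widetilde K_{[\mathfrak a]}\longrightarrow GL(\mathfrak a),\qquad
(A,B)\longmapsto (\rho_3\boxtimes\rho_1)(A,B)\big|_{\mathfrak a}.
\]
Since $\rho_3\boxtimes\rho_1$ acts orthogonally on the real form $W\supset\mathfrak a$ and, by the very definition of $\widetilde K_{[\mathfrak a]}$, every such $(A,B)$ preserves the orientation of the $2$-plane $\mathfrak a$, this homomorphism in fact takes values in $SO(\mathfrak a)\cong SO(2)$ and therefore induces an injection $\widetilde K_{[\mathfrak a]}/\widetilde K_0\hookrightarrow SO(2)$.

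A finite subgroup of $SO(2)$ is cyclic, so $\widetilde K_{[\mathfrak a]}/\widetilde K_0$ is a finite cyclic group, and its order equals $\sharp\widetilde K_{[\mathfrak a]}/\sharp\widetilde K_0=48/8=6$ by the counts carried out just above. Hence $\widetilde K_{[\mathfrak a]}/\widetilde K_0\cong\mathbf Z_6$. If an explicit generator is wanted, one checks that the class of the element with $l_1=l_2=l_1'=l_2'=0$ in the third family describing $\widetilde K_{[\mathfrak a]}$ acts on $\mathfrak a=\mathbf R H_1+\mathbf R H_2$ as the rotation by $2\pi/6$, but the $SO(2)$-argument makes this bookkeeping unnecessary. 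For an independent check one may also invoke the general identity recalled in Section~\ref{Sec_Method}, namely $K_{[\mathfrak a]}/K_0=W(U,K)/\mathbf Z_2\cong\mathbf Z_g$ with $g=6$ here; since $\ker p=\mathbf Z_2\subset\widetilde K_0\subset\widetilde K_{[\mathfrak a]}$ one has $\widetilde K_{[\mathfrak a]}/\widetilde K_0\cong K_{[\mathfrak a]}/K_0$, which recovers the same answer.

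The only substantive part of this lemma is the explicit determination of $\widetilde K_{\mathfrak a}$, $\widetilde K_{[\mathfrak a]}$ and $\widetilde K_0$ together with their orders, which is a somewhat delicate but entirely mechanical computation with $2\times2$ unitary matrices and the congruence conditions modulo $2,4,8$ listed above; once this is in place the identification of the quotient is immediate, so there is no remaining obstacle.
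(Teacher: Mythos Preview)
Your argument is correct and is cleaner than the paper's. You use the structural observation that $\widetilde K_{[\mathfrak a]}/\widetilde K_0$ injects into $SO(\mathfrak a)\cong SO(2)$ via the action on $\mathfrak a$, so the quotient is cyclic, and then the order count $48/8=6$ finishes. The paper instead proceeds by direct computation: it takes a specific matrix $A$ from the third family, computes $A^3=\pm I_2$ and $A^6=I_2$, and writes down the explicit element \eqref{eq:generator_G_2SO(4)} representing a generator. The two approaches are logically equivalent once the orders of $\widetilde K_{[\mathfrak a]}$ and $\widetilde K_0$ are known.

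One practical point: you remark that finding an explicit generator is ``unnecessary'' for the lemma. That is true for the lemma as stated, but the paper actually uses the concrete generator \eqref{eq:generator_G_2SO(4)} immediately afterwards to compute $(V_l\otimes V_m)_{\widetilde K_{[\mathfrak a]}}$ for the relevant $(l,m)$ and thereby determine which representations lie in $D(K,K_{[\mathfrak a]})$. So while your $SO(2)$ argument proves the lemma more elegantly, the paper's computational approach is not wasted effort---it is setting up the tool needed for the eigenvalue analysis that follows. Your proposed generator (the element with $l_1=l_2=l_1'=l_2'=0$) differs from the paper's choice by $(I,-I)$, which lies in $\widetilde K_{[\mathfrak a]}\setminus\widetilde K_0$; both choices generate the quotient, but if you intend to carry out the subsequent fixed-vector computations you should settle on one and verify its action on the $V_l\otimes V_m$ explicitly.
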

\begin{proof}
We compute
\begin{equation*}
\begin{split}
A&=
\begin{pmatrix}
\frac{1}{\sqrt{2}}e^{\sqrt{-1}(\frac{\pi}{2}l_{1}+\frac{\pi}{4})}&
-\frac{1}{\sqrt{2}}e^{-\sqrt{-1}(\frac{\pi}{2}l_{2}+\frac{\pi}{4})}\\
\frac{1}{\sqrt{2}}e^{\sqrt{-1}(\frac{\pi}{2}l_{2}+\frac{\pi}{4})}&
\frac{1}{\sqrt{2}}e^{-\sqrt{-1}(\frac{\pi}{2}l_{1}+\frac{\pi}{4})}
\end{pmatrix},\\
A^{3}&=
\begin{pmatrix}
-\sqrt{2}\cos(\frac{\pi}{2}l_{1}+\frac{\pi}{4})&0\\
0&-\sqrt{2}\cos(\frac{\pi}{2}l_{1}+\frac{\pi}{4})
\end{pmatrix}\\
&=
\begin{cases}
-\mathrm{I}_{2}\quad
&\text{ if }
l_{1}\equiv{0}\text{ or }{3}{\ }(\text{mod }4)\\
{\ }\mathrm{I}_{2}\quad
&\text{ if }
l_{1}\equiv{1}\text{ or }{2}{\ }(\text{mod }4)\\
\end{cases}{\ },\\
A^{6}&=\mathrm{I}_{2}{\ }.
\end{split}
\end{equation*}
The generator of
$\widetilde{K}_{[{\mathfrak a}]}/\widetilde{K}_{0}
\cong{\mathbf Z}_{6}$
is represented by the element
\begin{equation}\label{eq:generator_G_2SO(4)}
\begin{split}
\Bigg(
\begin{pmatrix}
\frac{1+\sqrt{-1}}{2}&
-\frac{1-\sqrt{-1}}{2}\\
\frac{1+\sqrt{-1}}{2}&
\frac{1-\sqrt{-1}}{2}
\end{pmatrix},
\begin{pmatrix}
-\frac{1+\sqrt{-1}}{2}&
\frac{1-\sqrt{-1}}{2}\\
-\frac{1+\sqrt{-1}}{2}&
-\frac{1-\sqrt{-1}}{2}
\end{pmatrix}
\Bigg){\ }.
\end{split}
\end{equation}
\end{proof}

Then using Lemmas \ref{eigenvalue_G_2SO(4)} and
\ref{VectorSubspacefixedElementsByK0}
we can determine directly all eigenvalues of ${\mathcal C}_{L}$
on $\tilde{K}/\tilde{K}_0$ less than or equal to $\dim{L}=6$
and corresponding representations of $\tilde{K}$
as in the following table:
\begin{center}
\begin{tabular}
{|c|c|c|c|} \hline $(l,m)$
&$\dim(V_l \otimes V_m)_{\tilde{K}_0}$
& eigenvalues of ${\mathcal C}_L$
&$-\lambda\leq 6$
\\
\hline $(1,1)$ &$1$ &$-3$ & *
\\
\hline $(2,0)$ &$0$ & &
\\
\hline $(0,2)$ &$0$ & &
\\
\hline $(3,1)$ &$2$ & $-3, -3$ & *
\\
\hline $(1,3)$ &$2$ & $-9,-9$ &
\\
\hline $(4,0)$ &$2$ & $-3,-3$ & *
\\
\hline $(0,4)$ &$2$ & $-15,-15$ & *
\\
\hline $(2,2)$ &3&$-5$, $-5$, $-8$ 
& *\\
\hline $(5,1)$ &3&$-8,-5,-8$& *\\  
\hline $(6,0)$ &1&$-6$& *\\
\hline $(4,2)$ &3&$-6,-9,-9$& *\\
\hline $(3,3)$ &4& $-9,-12,-12,-15$ & \\
\hline $(8,0)$ &2&$-10,-10$& \\
\hline $(7,1)$ &4&$-12,-12,-8,-8$& \\
\hline $(6,2)$ &5&$-15,-12,-8,-8,-12$& \\
\hline
\end{tabular}
\end{center}
Hence we get
\begin{equation*}
\begin{split}
&\{(l,m) \mid -c_L\leq 6 \text{ and } (V_l\otimes V_m)_{{\tilde
K}_0}\neq
\{0\}\}\\
=& \{(1,1), (4,0), (2,2), (3,1),(6,0), (5,1), (4,2)\}.
\end{split}
\end{equation*}

Using the generator \eqref{eq:generator_G_2SO(4)}
of $\widetilde{K}_{[{\mathfrak a}]}/\widetilde{K}_{0}
\cong{\mathbf Z}_{6}$,
we compute that
$(V_l\otimes V_m)_{{\tilde K}_{[\mathfrak a]}}=\{0\}$ for
$(l,m)=(1,1)$, $(4,0)$, $(3,1)$, $(5,1)$
and
$\dim_{\mathbf C}(V_l\otimes V_m)_{{\tilde K}_{[\mathfrak a]}}
=1$
for $(l,m)=(2,2), (6,0), (4,2)$.
But we observe that the fixed vector in
$(V_2\otimes V_2)_{{\tilde K}_{[\mathfrak a]}}\neq \{0\}$
corresponds to the larger
eigenvalue $8>6$.
Hence we obtain that
the Gauss image
$L^6
={\mathcal G}(\frac{SO(4)}{\mathbf{Z}_2+\mathbf{Z}_2} )
=\frac{SO(4)}{(\mathbf{Z}_2+\mathbf{Z}_2)\cdot\mathbf{Z}_6}
\subset Q_6(\mathbf{C})$
is Hamiltonian stable.

Moreover from the above result of dimension computation we have
\begin{equation*}
\begin{split}
n(L^{6})=
&\dim_{\mathbf C}V_{6}\boxtimes V_{0}
+\dim_{\mathbf C}V_{4}\boxtimes V_{2}
=7\times 1+5\times 3
=7+15=22\\
=&\mbox{dim} SO(8)-\mbox{dim} SO(4)=n_{hk}(L).
\end{split}
\end{equation*}
Thus the Gauss image
$L^6
={\mathcal G}(\frac{SO(4)}{\mathbf{Z}_2+\mathbf{Z}_2} )
=\frac{SO(4)}{(\mathbf{Z}_2+\mathbf{Z}_2)\cdot\mathbf{Z}_6}
\subset Q_6(\mathbf{C})$
is Hamiltonian rigid.
From these results we conclude
\begin{thm}
The Gauss image
$L^6
={\mathcal G}\left(\frac{SO(4)}{\mathbf{Z}_2+\mathbf{Z}_2}\right)
=\frac{SO(4)}{(\mathbf{Z}_2+\mathbf{Z}_2)\cdot\mathbf{Z}_6}
\subset Q_6(\mathbf{C})$
is strictly Hamiltonian stable.
\end{thm}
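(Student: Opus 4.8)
The plan is to combine two separate pieces --- Hamiltonian stability and Hamiltonian rigidity --- each of which reduces, via the criterion recalled in Section~\ref{Sec_Gauss maps}, to an eigenvalue count for the Laplacian of $L^{6}$ acting on functions, equivalently for $-\mathcal{C}_{L}$ acting on $C^{\infty}(\widetilde{K}/\widetilde{K}_{[\mathfrak a]},\mathbf{C})$. By that criterion, $L^{6}\subset Q_{6}(\mathbf{C})$ is Hamiltonian stable if and only if the first positive eigenvalue of $-\mathcal{C}_{L}$ equals $n=6$, and it is strictly Hamiltonian stable if and only if it is Hamiltonian stable \emph{and} the multiplicity of the eigenvalue $6$ equals $\dim\mathcal{V}=\dim SO(8)-\dim SO(4)=22$.

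First I would pass to the double cover $\widetilde{K}=SU(2)\times SU(2)$ and exploit the explicit model $\mathrm{Ad}_{\mathfrak p}^{\mathbf C}\circ p\cong\rho_{3}\boxtimes\rho_{1}$ of the isotropy representation, together with the Casimir eigenvalue formula of Lemma~\ref{eigenvalue_G_2SO(4)}. Since $\widetilde{K}/\widetilde{K}_{0}\cong SO(4)/(\mathbf{Z}_{2}+\mathbf{Z}_{2})$, the relevant functions live in $\bigoplus_{(l,m)}(V_{l}\otimes V_{m})^{\ast}_{\widetilde{K}_{0}}\otimes(V_{l}\otimes V_{m})$, and Lemma~\ref{VectorSubspacefixedElementsByK0} describes the $\widetilde{K}_{0}$-fixed subspaces completely. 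Running the operator of Lemma~\ref{eigenvalue_G_2SO(4)} on these subspaces --- which only involves diagonalizing very small explicit matrices, as $\dim(V_{l}\otimes V_{m})_{\widetilde{K}_{0}}$ is tiny --- I would produce the finite list of pairs $(l,m)$ for which $-\mathcal{C}_{L}$ has an eigenvalue $\le 6$ on a nonzero $\widetilde{K}_{0}$-fixed space, namely $\{(1,1),(4,0),(2,2),(3,1),(6,0),(5,1),(4,2)\}$, together with the eigenvalues attached to each.

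Next I would cut down from $\widetilde{K}/\widetilde{K}_{0}$ to $L^{6}=\widetilde{K}/\widetilde{K}_{[\mathfrak a]}$ using the explicit generator \eqref{eq:generator_G_2SO(4)} of $\widetilde{K}_{[\mathfrak a]}/\widetilde{K}_{0}\cong\mathbf{Z}_{6}$: for each surviving $(l,m)$ one computes the $\widetilde{K}_{[\mathfrak a]}$-fixed subspace of $V_{l}\otimes V_{m}$. The outcome is that $(V_{l}\otimes V_{m})_{\widetilde{K}_{[\mathfrak a]}}=\{0\}$ for $(l,m)=(1,1),(4,0),(3,1),(5,1)$ while it is one-dimensional for $(l,m)=(2,2),(6,0),(4,2)$; moreover, for $(2,2)$ the surviving vector sits in the eigenspace of eigenvalue $8>6$, so it contributes nothing below $n=6$. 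Hence the first positive eigenvalue of $-\mathcal{C}_{L}$ on $L^{6}$ is exactly $6$, giving Hamiltonian stability, and the full eigenspace for the eigenvalue $6$ is $V_{6}\boxtimes V_{0}\oplus V_{4}\boxtimes V_{2}$, of complex dimension $7+15=22=\dim SO(8)-\dim SO(4)=n_{hk}(L^{6})$. Thus $n(L^{6})=n_{hk}(L^{6})$, i.e. $L^{6}$ is Hamiltonian rigid, and together with Hamiltonian stability this yields strict Hamiltonian stability.

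The main obstacle is organizational rather than conceptual: one must pin down $\widetilde{K}_{0}$ and $\widetilde{K}_{[\mathfrak a]}$ precisely from the $\rho_{3}\boxtimes\rho_{1}$-model (their orders are $8$ and $48=8\times 6$), and then decide, for each of the finitely many candidate $\widetilde{K}$-modules of small Casimir eigenvalue, whether the explicit generator \eqref{eq:generator_G_2SO(4)} fixes any vector in the $\widetilde{K}_{0}$-fixed subspace; the only delicate point is the case $(l,m)=(2,2)$, where one must check that the unique $\widetilde{K}_{[\mathfrak a]}$-fixed vector lies in the larger eigenspace, so that it does not drag the first eigenvalue of $L^{6}$ below $n=6$.
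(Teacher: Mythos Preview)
Your proposal is correct and follows essentially the same route as the paper's own argument: pass to $\widetilde{K}=SU(2)\times SU(2)$, use the explicit $\rho_{3}\boxtimes\rho_{1}$ model to identify $\widetilde{K}_{0}$ and $\widetilde{K}_{[\mathfrak a]}$ (of orders $8$ and $48$), apply Lemmas~\ref{eigenvalue_G_2SO(4)} and~\ref{VectorSubspacefixedElementsByK0} to tabulate eigenvalues $\le 6$ on $\widetilde{K}_{0}$-fixed spaces, then use the generator \eqref{eq:generator_G_2SO(4)} to discard $(1,1),(4,0),(3,1),(5,1)$ and to see that the $(2,2)$ fixed vector sits at eigenvalue $8$, leaving exactly $V_{6}\boxtimes V_{0}\oplus V_{4}\boxtimes V_{2}$ of dimension $22=n_{hk}(L^{6})$ at eigenvalue $6$. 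Every step you outline, including the delicate $(2,2)$ check you flag, is precisely what the paper does.
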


\section{The case $(U,K)=(SO(5) \times SO(5), SO(5))$}
\label{Sec_b2}

Now $(U,K)$ is of type $B_2$ and
$U=SO(5)\times SO(5)$, $K=\{(x,x)\in U \mid x\in SO(5)\}$.
Let ${\mathfrak u}={\mathfrak k}+{\mathfrak p}$ be the canonical decomposition,
where ${\mathfrak u}={\mathfrak o}(5) \oplus {\mathfrak o}(5)$,
${\mathfrak k} = \{(X,X) \mid X\in
{\mathfrak o}(5)\}\cong {\mathfrak o}(5)$
and ${\mathfrak p} =\{(X,-X)\mid X\in {\mathfrak o}(5)\}$.
Let ${\mathfrak a}$ be a maximal abelian subspace of  ${\mathfrak p}$
given by
\begin{equation*}
\begin{split}
{\mathfrak a}=& \left\{ (H,-H) \mid H=H(\xi_{1},\xi_{2})=
\begin{pmatrix}
0&-\xi_1&0&0&0\\
\xi_1&0&0&0&0\\
0&0&0&-\xi_2&0\\
0&0&\xi_2&0&0\\
0&0&0&0&0
\end{pmatrix},
\xi_{1},\xi_{2}\in{\mathbf R}\right\}\\
\cong
&\ {\mathfrak t} =\{H(\xi_1,\xi_2)\mid \xi_1,\xi_2 \in {\mathbf R}\}
 \subset {\mathfrak o}(5).
\end{split}
\end{equation*}
Then the centralizer $K_0$ of $\mathfrak{a}$ in $K$ is given by
\begin{equation*}
K_0=\left\{
   \begin{pmatrix}
      A&0&0\\
      0&B&0\\
      0&0&1
    \end{pmatrix}
     \mid
      A,B \in SO(2)
      \right\}
      \cong T^2,
\end{equation*}
which is a maximal torus of $SO(5)$ and
$N=K/K_0\cong SO(5)/T^2$ is a maximal flag manifold of dimension $n=8$.
Moreover $K_{[{\mathfrak a}]}$ is described as
\begin{equation*}
\begin{split}
K_{[{\mathfrak a}]}
=&
\begin{pmatrix}
\mathrm{I}_2&0 &0 \\
0 & \mathrm{I}_2&0 \\
0 &0 & 1
\end{pmatrix}\cdot T^2
\, \cup\,
\begin{pmatrix}
& & 1 &0 &  \\
 & & 0&1 & \\
 1&0 & & & \\
  0&-1 & & & \\
 & & & &-1
\end{pmatrix}\cdot T^2 \\
& \cup
\begin{pmatrix}
1& 0& & & \\
 0&-1& & & \\
 & & 1&0 & \\
  & & 0& -1& \\
  & & & &1
\end{pmatrix}\cdot T^2
\cup
\begin{pmatrix}
 & &1 &0 & \\
 & &0 &-1 & \\
 1&0 & & & \\
 0 &1 & & &  \\
  & & &  &-1
\end{pmatrix}\cdot T^2.
\end{split}
\end{equation*}
The deck transformation group of the covering map
${\mathcal G}:N^{8} \rightarrow {\mathcal G}(N^{8})$
is equal to
$K_{[{\mathfrak a}]}/K_{0}\cong{\mathbf Z}_{4}$.

\subsection{Description of the Casimir operator}
Choose
$\langle{X,Y}\rangle_{\mathfrak{k}}:=-\mathrm{tr}(XY)$
for each $X,Y\in \mathfrak{k}=\mathfrak{so}(5)$.
The restricted root system $\Sigma(U,K)$ of type $B_2$,
can be described as follows (cf.\, \cite{Bourbaki}):
\begin{equation*}
\begin{split}
\Sigma(U,K)=&\{\pm(\epsilon_1-\epsilon_2)
=\pm\alpha_1, \pm \epsilon_2=\pm\alpha_2,
\pm(\epsilon_1+\epsilon_2)=\pm(\alpha_1+2\alpha_2), \\
&\pm \epsilon_1=\pm(\alpha_1+\alpha_2)\}.
\end{split}
\end{equation*}
Then the square length of each $\gamma\in\Sigma(U,K)$
relative to $\langle\,,\,\rangle_{\mathfrak k}$ is
\begin{equation*}
\begin{split}
\Vert{\gamma}\Vert_{\mathfrak u}^{2}
=
\begin{cases}
\displaystyle\frac{1}{4}&\text{ if }\gamma\text{ is short}, \\
&\\
\displaystyle\frac{1}{2}&\text{ if }\gamma\text{ is long}.
\end{cases}
\end{split}
\end{equation*}
In this case
$K = SO(5) \supset K_1 = SO(4) \supset K_0= T^2$.
The Casimir operator $\mathcal{C}_L$ of $L^n$
relative to the induced metric from $g^{\mathrm{std}}_{Q_n(\mathbf{C})}$
becomes
\begin{equation*}
\begin{split}
{\mathcal C}_{L}&=
\frac{2}{\Vert \gamma_0\Vert^2_{\mathfrak u}}
\mathcal{C}_{K/K_0, \langle\,,\,\rangle_{\mathfrak{u}}}
-\frac{1}{\Vert \gamma_0\Vert^2_{\mathfrak u}}
\mathcal{C}_{K_1/K_0, \langle\,,\,\rangle_{\mathfrak{u}}}\\
&=
4\, \mathcal{C}_{K/K_0, \langle\,,\,\rangle_{\mathfrak{u}}}
-2\, \mathcal{C}_{K_1/K_0, \langle\,,\,\rangle_{\mathfrak{u}}}\\
&=
2\, {\mathcal C}_{K/K_0}-{\mathcal C}_{K_1/K_0} \\
&=
\ {\mathcal C}_{K/K_0}+{\mathcal C}_{K/K_1},
\end{split}
\end{equation*}
where
${\mathcal C}_{K/K_{0}}$ and ${\mathcal C}_{K_{1}/K_{0}}$
denote the Casimir operators
of  $K/K_{0}$ and $K_{1}/K_{0}$
relative to
$\langle{\ ,\ }\rangle_{\mathfrak k}$
and
$\langle{{\ },{\ }}\rangle_{\mathfrak k}\vert_{{\mathfrak k}_{1}}$,
respectively.

\subsection{Descriptions of $D(K)$ and $D(K_1)$}
Since the maximal abelian subalgebra $\mathfrak{t}$ of $\mathfrak{k}$
can be given by
\begin{equation*}
{\mathfrak t}=
\left\{
\begin{pmatrix}
0& -\xi_1& & & \\
\xi_1&0&  & & \\
 & &0 & -\xi_2 & \\
 & &\xi_2 & 0 & \\
 & & & & 0
\end{pmatrix}
\mid
\xi_{1}, \xi_{2}\in{\mathbf R}
\right\}
\subset{\mathfrak k}_1\subset{\mathfrak k},
\end{equation*}
we have
\begin{equation*}
\begin{split}
&\Gamma(K)
=\Gamma(K_{1})\\
=&
\left\{
\xi=
\begin{pmatrix}
0& -\xi_1& & & \\
\xi_1&0&  & & \\
 & &0 & -\xi_2 & \\
 & &\xi_2 & 0 & \\
 & & & & 0
\end{pmatrix}
\mid
\xi_{1},\xi_{2}\in 2\pi{\mathbf Z}
\right\}.
\end{split}
\end{equation*}
Denote by $\varepsilon_i\ (i=1,2)$ a linear function
$\epsilon_i : {\mathfrak t} \ni \xi \mapsto \xi_i \in {\mathbf R}$.
Then
\begin{equation*}
\begin{split}
&
D(K)=D(SO(5))
=\{\Lambda =k_{1}\epsilon_{1}+ k_2 \epsilon_2
\mid
k_{1},k_{2}\in{\mathbf Z},
k_{1}\geq k_2 \geq 0
\},
\\
&
D(K_1)=D(SO(4))
=
\{\Lambda =k_{1}\epsilon_{1}+ k_2 \epsilon_2
{\ }\vert{\ }
k_{1},k_{2}\in{\mathbf Z},
k_{1}\geq  |k_2|
\}.
\end{split}
\end{equation*}

\subsection{Branching law of $(SO(5), SO(4))$}
\begin{lem}[Branching law of $(SO(5), SO(4))$ \cite{Ikeda-Taniguchi78}]
\label{Branching_SO(5)_SO(4)}
Let $\Lambda=k_1 \epsilon_1 + k_2 \epsilon_2 \in D(SO(5))$ be
the highest weight of an irreducible $SO(5)$-module $V_{\Lambda}$,
where $k_1,k_2\in {\mathbf Z}$ and $k_1\geq k_2 \geq 0$.
Then $V_\Lambda$ contains an irreducible
$SO(4)$-module $W_{\Lambda^\prime}$
with the highest weight
$\Lambda^\prime=k_1^{\prime} \epsilon_1 + k_2^{\prime} \epsilon_2 \in D(SO(4))$,
where $k_1^{\prime},k_2^{\prime} \in {\mathbf Z}$, $k_1^{\prime}\geq |k_2^{\prime}|$,
if and only if
\begin{equation}\label{Branching_SO(5)_SO(4)}
k_1\geq k_1^\prime\geq k_2\geq |k_2^\prime|.
\end{equation}
\end{lem}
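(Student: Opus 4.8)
\medskip
\noindent\textit{Proof proposal.}
Since this is the rank-two instance of the classical \lq\lq betweenness\rq\rq\ branching rule for $SO(N)\downarrow SO(N-1)$, one may simply quote \cite{Ikeda-Taniguchi78} or a classical source. If a self-contained argument is wanted, the plan is to compare characters on the common maximal torus. The point is that the torus $T^{2}=K_{0}$ fixed above is simultaneously a maximal torus of $K=SO(5)$ and of $K_{1}=SO(4)$, with the same coordinates $\varepsilon_{1},\varepsilon_{2}$; hence the restriction of $\mathrm{ch}\,V_{\Lambda}$ to $SO(4)$ is literally the same Laurent polynomial in $e^{\varepsilon_{1}},e^{\varepsilon_{2}}$, only re-expanded in the basis of $SO(4)$-characters. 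A parallel, sign-free route is the Gelfand--Tsetlin description of the chain $SO(5)\supset SO(4)\supset SO(3)\supset SO(2)$: there the restriction to $SO(4)$ is recorded by the top row $(k_{1}',k_{2}')$ of the pattern, constrained precisely by $k_{1}\geq k_{1}'\geq k_{2}\geq|k_{2}'|$, so the branching is manifestly multiplicity free.

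For the character approach, first I would write the Weyl character formula for $\mathfrak{so}(5)$ (type $B_{2}$, Weyl group of order $8$ acting by signed permutations of $\varepsilon_{1},\varepsilon_{2}$, half-sum of positive roots $\delta_{B_{2}}=\tfrac{3}{2}\varepsilon_{1}+\tfrac{1}{2}\varepsilon_{2}$), so that $\mathrm{ch}\,V_{\Lambda}=A_{B_{2}}(\Lambda+\delta_{B_{2}})/A_{B_{2}}(\delta_{B_{2}})$ with $A_{B_{2}}(\mu)=\sum_{w}\mathrm{sgn}(w)\,e^{w\mu}$. Then, for $\mathfrak{so}(4)$ (type $D_{2}$, Weyl group of order $4$ of even sign changes, $\delta_{D_{2}}=\varepsilon_{1}$), I would use that $\mathrm{ch}\,W_{\Lambda'}$ with $\Lambda'=k_{1}'\varepsilon_{1}+k_{2}'\varepsilon_{2}$ factors, after the substitution $z_{\pm}=e^{\varepsilon_{1}\pm\varepsilon_{2}}$, as a product of two $\mathfrak{su}(2)$-characters of degrees $k_{1}'+k_{2}'$ and $k_{1}'-k_{2}'$; in particular $k_{2}'$ may be negative, and $W_{k_{1}'\varepsilon_{1}+k_{2}'\varepsilon_{2}}$ and $W_{k_{1}'\varepsilon_{1}-k_{2}'\varepsilon_{2}}$ are the two half-spin-type constituents one has to track separately.

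The core step is the identity $A_{D_{2}}(\delta_{D_{2}})\,\mathrm{ch}\,V_{\Lambda}=\sum_{\Lambda'}\,[V_{\Lambda}|_{SO(4)}:W_{\Lambda'}]\,A_{D_{2}}(\Lambda'+\delta_{D_{2}})$. Because the $B_{2}$ Weyl denominator equals the $D_{2}$ Weyl denominator times the two short-root factors $e^{\varepsilon_{j}/2}-e^{-\varepsilon_{j}/2}$, $j=1,2$, the left side is $A_{B_{2}}(\Lambda+\delta_{B_{2}})$ divided by those factors; expanding each $(e^{\varepsilon_{j}/2}-e^{-\varepsilon_{j}/2})^{-1}$ as a geometric series turns the coefficient of a fixed $A_{D_{2}}(\Lambda'+\delta_{D_{2}})$ into an alternating sum, over the two cosets of $W(D_{2})$ in $W(B_{2})$, of the number of lattice points on a segment. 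That alternating sum telescopes to $1$ exactly when $k_{1}\geq k_{1}'\geq k_{2}\geq|k_{2}'|$ and to $0$ otherwise, which is the assertion.

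The hard part will be the parity and sign bookkeeping special to $D_{2}$: since $SO(4)$ is not Weyl-closed inside the character ring of $T^{2}$, one must correctly combine the $+k_{2}'$ and $-k_{2}'$ contributions, and one must check separately the degenerate boundary cases $k_{2}'=0$, $k_{1}'=k_{1}$ and $k_{1}'=k_{2}$, where a careless count would either double a term or create a spurious cancellation. Once these are settled, multiplicity-freeness follows from the shape of the surviving term. If one wants to avoid the signs altogether, the Gelfand--Tsetlin argument above, or the isomorphisms $Spin(5)\cong Sp(2)$ and $Spin(4)\cong SU(2)\times SU(2)$ combined with the known $Sp(2)\downarrow Sp(1)\times Sp(1)$ branching, delivers the same conclusion without case analysis.
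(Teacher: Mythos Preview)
The paper does not prove this lemma at all: it is stated with the citation \cite{Ikeda-Taniguchi78} and then used as a black box in the subsequent eigenvalue computations, exactly as you anticipated in your first sentence. So your proposal already exceeds what the paper does. Your character-theoretic outline and the Gelfand--Tsetlin alternative are both standard and correct routes to this classical branching rule; the remark about using $Spin(5)\cong Sp(2)$, $Spin(4)\cong SU(2)\times SU(2)$ is also apt, since the paper later states the corresponding $(Sp(2),Sp(1)\times Sp(1))$ branching law (Lemma~\ref{BranchingLawSp(2)/Sp(1)XSp(1)}) in exactly the Lepowsky--Tsukamoto form you would need.
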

\subsection{Descriptions of $D(K,K_0)$ and $D(K_1,K_0)$}
Define an $\mathrm{Ad}(K)$-invariant inner product of
${\mathfrak k}$ by
$\langle{X, Y}\rangle_{\mathfrak k}:=-\mathrm{tr}(XY)\
(X,Y\in {\mathfrak k}=\mathfrak{o}(5))$.

Let $\{\alpha_1^\prime= \epsilon_1-\epsilon_2,\alpha_2^\prime=\epsilon_1+ \epsilon_2\}$
be the fundamental root system of $SO(4)$ and
$\{\Lambda_1^\prime=\frac{1}{2}(\epsilon_1-\epsilon_2),
\Lambda_2^\prime=\frac{1}{2}(\epsilon_1+\epsilon_2)\}$ be the
fundamental weight system of $SO(4)$.
Then
\begin{lem}[\cite{SYamaguchi79}]\label{D(SO4,T2)}
\begin{equation}
\begin{split}
&D(K_1,K_0)=\, D(SO(4), T^2) \\
=&
\Bigl\{
\Lambda^\prime
=k_1^\prime \epsilon_1+k^\prime_2 \epsilon_2
=m_1^\prime\Lambda_1^\prime +m_2^\prime\Lambda_2^\prime
=p_1^\prime\alpha_1^\prime + p_2^\prime\alpha_2^\prime
\mid \\
& \quad
k^\prime_i\in \mathbf{Z}, \, k^{\prime}_1\geq  \vert{k^{\prime}_2}\vert,
m^\prime_i\in {\mathbf Z}, m^\prime_i\geq 0,
p^\prime_i\in{\mathbf Z}, p^\prime_i\geq 1,
\\
& \quad
m_1^\prime=k_1^\prime-k_2^\prime=2p_1^\prime \geq 0,
m_2^\prime=k_1^\prime+k_2^\prime=2p_2^\prime \geq 0
\Bigr\}.
\end{split}
\end{equation}
The eigenvalue formula the Casimir operator $\mathcal{C}_{K_1/K_0}$
relative to $\langle X, Y \rangle_{\mathfrak k}\vert_{\mathfrak{k}_1}$ is
\begin{equation*}
-c_{{\Lambda}^{\prime}}=\frac{1}{2} ((k_1^{\prime})^2+ (k^{\prime}_2)^2+ 2k^{\prime}_1).
\end{equation*}
for each
$\Lambda^\prime= k^{\prime}_1 \epsilon_1+ k^{\prime}_2 \epsilon_2
\in D(K_1, K_0)$.
\end{lem}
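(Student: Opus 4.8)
The plan is to reduce the lemma to the representation theory of $SU(2)\times SU(2)$ via the double covering $Spin(4)=SU(2)\times SU(2)\to SO(4)$, and to fix the Casimir eigenvalue by computing the trace form explicitly on $\mathfrak t$.

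First I would recall that an irreducible $SO(4)$-module is precisely an exterior tensor product $V_{m_1'}\boxtimes V_{m_2'}$ of $SU(2)\times SU(2)$ that is trivial on $\ker(Spin(4)\to SO(4))=\{(\pm \mathrm{I}_2,\pm \mathrm{I}_2)\}$, i.e.\ one with $m_1'+m_2'$ even; under the identification of Cartan subalgebras its highest weight is $m_1'\Lambda_1'+m_2'\Lambda_2'=k_1'\epsilon_1+k_2'\epsilon_2$ with $k_1'=\tfrac12(m_1'+m_2')$ and $k_2'=\tfrac12(m_2'-m_1')$, equivalently $m_1'=k_1'-k_2'$, $m_2'=k_1'+k_2'$, and $\Lambda'=p_1'\alpha_1'+p_2'\alpha_2'$ gives $m_i'=2p_i'$. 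Since $K_0\cong T^2$ is a (connected) maximal torus of $SO(4)$, the $K_0$-invariant subspace $(V_{\Lambda'})_{K_0}$ is exactly the zero-weight subspace of $V_{\Lambda'}$. Because the $SU(2)$-module $V_m$ contains the zero weight if and only if $m$ is even, the zero-weight space of $V_{m_1'}\boxtimes V_{m_2'}$ is nonzero precisely when $m_1'$ and $m_2'$ are both even; writing $m_i'=2p_i'$ and rewriting in the $k'$-coordinates gives the asserted description of $D(K_1,K_0)=D(SO(4),T^2)$, with $k_1'-k_2'=m_1'=2p_1'$ and $k_1'+k_2'=m_2'=2p_2'$ automatic.

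For the eigenvalue I would apply Freudenthal's formula: the Casimir operator on $V_{\Lambda'}$, taken with respect to the form on $\mathfrak t^*$ dual to $\langle X,Y\rangle_{\mathfrak k}|_{\mathfrak k_1}=-\mathrm{tr}(XY)$, has eigenvalue $-c_{\Lambda'}=\langle \Lambda',\Lambda'+2\delta'\rangle$ with $2\delta'=\alpha_1'+\alpha_2'=2\epsilon_1$. The only real computation is the normalization of this dual form: since $\langle H(a,b),H(\xi_1,\xi_2)\rangle_{\mathfrak k}=2a\xi_1+2b\xi_2$, the element of $\mathfrak t$ representing $\epsilon_i$ is $H(\tfrac12,0)$ resp.\ $H(0,\tfrac12)$, whence $\langle\epsilon_i,\epsilon_j\rangle=\tfrac12\delta_{ij}$. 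Substituting $\Lambda'=k_1'\epsilon_1+k_2'\epsilon_2$ and $\Lambda'+2\delta'=(k_1'+2)\epsilon_1+k_2'\epsilon_2$ yields $-c_{\Lambda'}=\tfrac12\big(k_1'(k_1'+2)+(k_2')^2\big)=\tfrac12\big((k_1')^2+(k_2')^2+2k_1'\big)$, as claimed.

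There is no serious obstacle: the entire statement is bookkeeping, and it is in any case a special case of Yamaguchi's tables for the spectra of flag manifolds (\cite{SYamaguchi79}), $SO(4)/T^2\cong S^2\times S^2$ being one of the smallest instances. The only points that demand care are keeping the three coordinate systems $(k_1',k_2')$, $(m_1',m_2')$, $(p_1',p_2')$ mutually consistent, and getting the normalization of $\langle\,,\,\rangle_{\mathfrak k}$ right, since the resulting factor $\tfrac12$ feeds into every subsequent comparison of eigenvalues with $n=8$.
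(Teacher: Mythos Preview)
Your argument is correct and is exactly the standard route: pass to $Spin(4)=SU(2)\times SU(2)$, identify the $K_0$-invariants with the zero-weight space so that both $m_1'$ and $m_2'$ must be even, and read off the Casimir eigenvalue from Freudenthal's formula after normalizing $\langle\epsilon_i,\epsilon_j\rangle=\tfrac12\delta_{ij}$ via $-\mathrm{tr}(XY)$. The paper does not supply its own proof of this lemma at all; it is quoted directly from Yamaguchi \cite{SYamaguchi79}, where the same computation is carried out for flag manifolds in general, so there is nothing further to compare.

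One small remark: your argument naturally yields $p_i'\ge 0$, whereas the statement records $p_i'\ge 1$. This is Yamaguchi's convention of omitting the trivial representation from the list (and the paper uses that convention in the subsequent case analysis, e.g.\ when it declares $(p_1',p_2')=(0,0)\notin D(SO(4),T^2)$). It does not affect any eigenvalue comparison, but it is worth flagging so that your description matches how the lemma is actually applied downstream.
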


Let $\{\alpha_1=\epsilon_1- \epsilon_2,\alpha_2=\epsilon_2\}$ be
the fundamental root system of $SO(5)$ and
$\{\Lambda_1=\epsilon_1, \Lambda_2=\frac{1}{2}(\epsilon_1+ \epsilon_2)\}$
be the fundamental weight system of $SO(5)$.
Then
\begin{lem}[\cite{SYamaguchi79}]\label{D(SO5,T2)}
\label{D(SO5,T2)}
\begin{equation}
\begin{split}
&D(K,K_0)=\, D(SO(5),T^2) \\
=&
\Bigl\{
\Lambda
=k_1 \epsilon_1+k_2 \epsilon_2
=m_1 \Lambda_1 +m_2 \Lambda_2
=p_1\alpha_1 + p_2\alpha_2
\mid
\\
&\quad
k_i\in \mathbf{Z}, \, k_1\geq  k_2\geq 0, \,
m_i\in {\mathbf Z},\, m_i\geq 0,\,
p_i\in{\mathbf Z},\, p_i\geq 1,
\\
&\quad
m_1=2p_1-p_2\geq 0,\, m_2=-2p_1+2p_2\geq 0,\,
p_1=k_1,\, p_2=k_1+k_2
\Bigr\}
\end{split}
\end{equation}
The eigenvalue formula of the Casimir operator $\mathcal{C}_{K/K_0}$
with respect to the inner product
$\langle X, Y \rangle_{\mathfrak k}$
is
\begin{equation*}
-c_{\Lambda} = \frac{1}{2} (k_1^2+ k_2^2+3 k_1 +k_2).
\end{equation*}
for each $\Lambda= k_1 \epsilon_1+ k_2 \epsilon_2 \in D(K, K_0)$.
\end{lem}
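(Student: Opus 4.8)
The plan is to treat the two halves of the lemma separately: the identification of $D(K,K_0)=D(SO(5),T^2)$ with its three coordinate descriptions, and the Casimir eigenvalue formula. Since $K_0=T^2$ is a maximal torus of $SO(5)$, both halves reduce to material already assembled in the excerpt, and the statement is in essence S.~Yamaguchi's computation (\cite{SYamaguchi79}); so the proof is mostly bookkeeping.

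First I would observe that, $K_0$ being a maximal torus, the subspace $(V_\Lambda)_{K_0}$ is exactly the zero-weight space of $V_\Lambda$, so the assertion $D(SO(5),T^2)=D(SO(5))$ amounts to: every irreducible $SO(5)$-module has a non-trivial zero-weight space. The cleanest self-contained route is to compose the two branching laws recorded above. By the $(SO(5),SO(4))$-branching law (Lemma~\ref{Branching_SO(5)_SO(4)}), an irreducible $SO(5)$-module $V_\Lambda$ with $\Lambda=k_1\epsilon_1+k_2\epsilon_2$ ($k_1\geq k_2\geq 0$) contains the irreducible $SO(4)$-module $W_{\Lambda^\prime}$, $\Lambda^\prime=k_1^\prime\epsilon_1+k_2^\prime\epsilon_2$, precisely when $k_1\geq k_1^\prime\geq k_2\geq|k_2^\prime|$; and by Lemma~\ref{D(SO4,T2)}, $W_{\Lambda^\prime}$ carries a non-zero $T^2$-fixed vector precisely when $k_1^\prime-k_2^\prime$ and $k_1^\prime+k_2^\prime$ are both even. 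The choice $\Lambda^\prime=k_2\epsilon_1+k_2\epsilon_2$ meets both conditions (the interlacing is immediate and $k_1^\prime\pm k_2^\prime\in\{0,2k_2\}$), hence $(V_\Lambda)_{T^2}\supseteq(W_{\Lambda^\prime})_{T^2}\neq\{0\}$ for every $\Lambda\in D(SO(5))$, and therefore $D(SO(5),T^2)=D(SO(5))$. (Alternatively one may invoke the standard fact that $V_\Lambda$ has a zero weight iff $\Lambda$ lies in the root lattice, which for $B_2$ equals $\mathbf{Z}\epsilon_1\oplus\mathbf{Z}\epsilon_2$, i.e.\ the whole weight lattice of $SO(5)$.) The passage among the coordinates $(k_1,k_2)$, $(m_1,m_2)$, $(p_1,p_2)$ is then a routine linear change of basis from $\Lambda_1=\epsilon_1$, $\Lambda_2=\frac12(\epsilon_1+\epsilon_2)$, $\alpha_1=\epsilon_1-\epsilon_2$, $\alpha_2=\epsilon_2$, which yields $p_1=k_1$, $p_2=k_1+k_2$, $m_1=k_1-k_2=2p_1-p_2$, $m_2=2k_2=2p_2-2p_1$, together with the corresponding rewriting of the three sets of inequalities.

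For the eigenvalue formula I would apply Freudenthal's formula from Section~\ref{Sec_Method}, $-c_\Lambda=\langle\Lambda,\Lambda+2\delta\rangle_{\mathfrak k}$, with the inner product on $\mathfrak t^\ast$ taken dual to $\langle X,Y\rangle_{\mathfrak k}=-\mathrm{tr}(XY)$ on $\mathfrak t\subset\mathfrak{so}(5)$. A short computation with the block-diagonal torus gives $\langle H(\xi_1,\xi_2),H(\eta_1,\eta_2)\rangle_{\mathfrak k}=2(\xi_1\eta_1+\xi_2\eta_2)$, hence $\langle\epsilon_i,\epsilon_j\rangle_{\mathfrak k}=\frac12\delta_{ij}$; and summing the positive roots $\epsilon_1-\epsilon_2$, $\epsilon_2$, $\epsilon_1$, $\epsilon_1+\epsilon_2$ of $B_2$ gives $2\delta=3\epsilon_1+\epsilon_2$. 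Therefore $-c_\Lambda=\langle k_1\epsilon_1+k_2\epsilon_2,\,(k_1+3)\epsilon_1+(k_2+1)\epsilon_2\rangle_{\mathfrak k}=\frac12\bigl(k_1(k_1+3)+k_2(k_2+1)\bigr)=\frac12(k_1^2+k_2^2+3k_1+k_2)$, as asserted. I do not expect a serious obstacle; the one point that needs attention is carrying the normalization factor $\frac12$ coming from the use of $-\mathrm{tr}$ rather than the Killing form, so that the constants in the final formula come out exactly, and — in the first half — checking that the single choice $\Lambda^\prime=k_2(\epsilon_1+\epsilon_2)$ really lies in the allowed range for all $(k_1,k_2)$, including the boundary cases $k_2=0$ and $k_1=k_2$.
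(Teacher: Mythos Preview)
The paper does not supply a proof of this lemma; it is quoted from \cite{SYamaguchi79} without argument. Your self-contained proof is correct and in that sense goes beyond what the paper offers.

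One point of friction worth flagging: your branching argument picks $\Lambda'=k_2(\epsilon_1+\epsilon_2)$ and cites Lemma~\ref{D(SO4,T2)} for the claim that $W_{\Lambda'}$ has a nonzero $T^2$-fixed vector. That claim is true (the representation is $V_0\boxtimes V_{2k_2}$, with one-dimensional zero-weight space), but the paper's literal statement of Lemma~\ref{D(SO4,T2)} carries the side condition $p_i'\geq 1$, and your choice has $p_1'=\tfrac12(k_1'-k_2')=0$. The discrepancy lies in the paper's formulation of that lemma (the correct condition for a zero weight is $p_i'\in\mathbf{Z}$, $p_i'\geq 0$, i.e.\ $\Lambda'$ in the $D_2$ root lattice; indeed the paper's own later use of Lemma~\ref{D(SO4,T2)} to conclude $(1,0)\notin D(SO(5),T^2)$ is inconsistent with the lemma you are proving, since the standard representation $\mathbf{C}^5$ visibly has a $T^2$-fixed line). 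Your parenthetical root-lattice argument --- $V_\Lambda$ has zero weight iff $\Lambda$ lies in the root lattice, which for $B_2$ is all of $\mathbf{Z}\epsilon_1\oplus\mathbf{Z}\epsilon_2$ --- is independent of this and is the cleanest route; I would lead with it rather than the branching.

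The Freudenthal computation, including the normalization $\langle\epsilon_i,\epsilon_j\rangle_{\mathfrak k}=\tfrac12\delta_{ij}$ from $\langle X,Y\rangle_{\mathfrak k}=-\mathrm{tr}(XY)$ and $2\delta=3\epsilon_1+\epsilon_2$, is correct as written.
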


\subsection{Eigenvalue computation}

By Lemmas \ref{D(SO5,T2)} and \ref{D(SO4,T2)}
we have the following eigenvalue formula for ${\mathcal C}_{L}$.
\begin{equation*}
\begin{split}
-c_L
=&{\ }-2c_{K/K_0}+c_{K_1/K_0}
\\
=&{\ }
(k_1^2+k_2^2+3 k_1+ k_2)- \frac{1}{2} ((k^{\prime}_1)^2+ (k^{\prime}_2)^2
+ 2k^{\prime}_1).
\end{split}
\end{equation*}

Since
\begin{equation*}
-{\mathcal C}_L=-{\mathcal C}_{K/K_0}-{\mathcal C}_{S^4}\geq
-{\mathcal C}_{K/K_0},
\end{equation*}
the eigenvalue of ${\mathcal C}_L$, $-c_L \leq n=8$ implies
$-c_\Lambda \leq 8$.
Using Lemma \ref{D(SO5,T2)} we compute
\begin{equation*}
\begin{split}
&\{\Lambda\in{D(SO(5),T^{2})} \mid
-c(\Lambda,\langle{\ },{\ }\rangle_{\mathfrak{k}})\leq{8}\}\\
=&\{
\epsilon_1\, ((k_1,k_2)=(1,0)),\
\epsilon_1+\epsilon_2\, ((k_1,k_2)=(1,1)),\
2\epsilon_1\, ((k_1,k_2)=(2,0)), \\
&\
2\epsilon_1+\epsilon_2\, ((k_1,k_2)=(2,1)),\
2\epsilon_1+2\epsilon_2\, ((k_1,k_2)=(2,2))
\}.
\end{split}
\end{equation*}

\smallskip
Suppose that $(k_1,k_2)=(1,0)$.
Then $\dim_{\mathbf C} V_{\Lambda} =5$.
It follows from
Lemma \ref{Branching_SO(5)_SO(4)}
that $(k_1^\prime, k_2^\prime)=(0,0)$ or $(1,0)$.
By Lemma \ref{D(SO4,T2)},
we have
$(p_1^\prime, p_2^\prime)=(0,0)$ or $(\frac{1}{2}, \frac{1}{2})$,
but $\Lambda^\prime \vert_{(p_1^\prime, p_2^\prime)=(0,0)}$,
$\Lambda^\prime \vert_{(p_1^\prime, p_2^\prime)
=(\frac{1}{2},\frac{1}{2})}\not\in D(SO(4),T^2)$.
Hence $\Lambda=(1,0)\not\in D(SO(5),T^2)=D(K,K_0)$.

\smallskip
Suppose that $(k_1,k_2)=(1,1)$.
Then ${\rm dim}_{\mathbf C} V_{\Lambda}=10$,
$V_{\Lambda}\cong {\mathfrak o}(5,{\mathbf C})$ and
$K_{[\mathfrak a]}/K_0$ acts on
$(V_{\Lambda})_{K_0}\cong ({\mathfrak t}^2)^{\mathbf C}
\cong {\mathfrak a}^{\mathbf C}$
via the action of Weyl group $W(U,K)$.
Thus it must be
$(V_{\Lambda})_{K_{[\mathfrak a]}}=\{0\}$.
Hence, $\Lambda|_{(k_1,k_2)=(1,1)}\not\in D(K,K_{[\mathfrak a]})$.

Suppose that $(k_1,k_2)=(2,0)$.
Then
$(m_1,m_2)=(2,0)$ and
${\rm dim}_{\mathbf C} V_{2\Lambda_1}=14$.
It follows from
Lemma \ref{Branching_SO(5)_SO(4)}
that $(k_1^\prime, k_2^\prime)= (0,0), (1,0)\text{ or } (2,0)$.
By Lemma \ref{D(SO4,T2)}, we have
$(p_1^\prime, p_2^\prime)=(0,0), (\frac{1}{2}, \frac{1}{2})\text{ or } (1,1)$.
Note that
$\Lambda^\prime |_{(p_1^\prime,p_2^\prime)=(0,0)}$,
$\Lambda^\prime |_{(p_1^\prime,p_2^\prime)=
(\frac{1}{2},\frac{1}{2})}\not\in D(SO(4),T^2)$.
If  $(p_1^\prime, p_2^\prime)=(1,1)$, then $(m_1^{\prime},
m_2^{\prime})=(2,2)$ and $-c_{\Lambda}=5$, $-c_{\Lambda^\prime}=4$,
thus
\begin{equation*}
-c_L=-2c_\Lambda + c_{\Lambda^\prime}=10-4=6<8.
\end{equation*}
On the other hand, we observe that
\begin{equation*}
\begin{split}
V_{2\Lambda_1}
\cong& {\rm Sym}_0 ({\mathbf C}^5)\\
=&
{\mathbf C}\cdot
\begin{pmatrix}
-\frac{1}{4} I_4 &0 \\
0&1
\end{pmatrix}
\oplus
\Bigl\{
\begin{pmatrix}
X&0\\
0&0
\end{pmatrix}
\mid
X\in \mathrm{Sym}_0 ({\mathbf C}^4)
\Bigr\}\\
&\oplus \Bigl\{
\begin{pmatrix} 0& Z\\ ^tZ&0 \end{pmatrix}
\mid
Z\in M(4,1;{\mathbf C})
\Bigr\}\\
=&W_{\vert\Lambda^\prime=0} \oplus W_{2\Lambda^\prime_{1}
+2\Lambda^\prime_2}
\oplus W_{\Lambda^\prime_1+\Lambda^\prime_2},
\end{split}
\end{equation*}
and
\begin{equation*} 
(V_{2\Lambda_1})_{K_0}=\left\{
\begin{pmatrix}
c_1 I_2& & \\
&c_2I_2& \\
 & &c_3
\end{pmatrix}
\mid
c_1,c_2,c_3\in {\mathbf C}, 2c_1+2c_2+c_3=0 \right\}.
\end{equation*}
As
\begin{equation*}
\begin{split}
&\begin{pmatrix}
 0&0 &1 &0 &0\\
0 &0 &0 &-1 &0\\
 1&0 & 0&0 &0 \\
  0&1 &0 & 0&0 \\
  0& 0& 0&0 &-1
\end{pmatrix}
\begin{pmatrix}
c_1 I_2& & \\
&c_2I_2& \\
 & &c_3
\end{pmatrix}
\begin{pmatrix}
0&0&1 &0 &0 \\
 0& 0& 0&1 &0 \\
 1&0 &0 &0 &0 \\
  0&-1 & 0& 0&0 \\
  0&0 &0 &0 &-1
\end{pmatrix}\\
&=\begin{pmatrix}
c_2 I_2& & \\
&c_1I_2& \\
 & &c_3
\end{pmatrix},
\end{split}
\end{equation*}
we get
\begin{equation*}
(V_{2\Lambda_1})_{K_{[\mathfrak a]}}=\left\{
\begin{pmatrix}
-\frac{c}{4} \mathrm{I}_4& \\
&c
\end{pmatrix}
\mid
c\in {\mathbf C} \right\} =
W_{\vert\Lambda^\prime=0}.
\end{equation*}
Thus
\begin{equation*}
W^\prime_{2\Lambda_1^\prime+2\Lambda_2^\prime}
\cap (V_{2\Lambda_1})_{K_{[\mathfrak a]}}=\{0\}.
\end{equation*}

\smallskip
Suppose that $(k_1, k_2)=(2,1)$.
Then
$(m_1,m_2)=(2,1)$ and
$\dim_{\mathbf C} V_{2\Lambda_1+\Lambda_2}=35$.
It follows from
Lemma \ref{Branching_SO(5)_SO(4)}
that
$(k_1^\prime, k_2^\prime)=$
$(1,0)$, $(1,-1)$, $(1,1)$, $(2, 0)$, $(2, -1)$ or  $(2,1)$,
that is,
$(m_1^\prime, m_2^\prime)=$ $(1,1)$, $(2,0)$, $(0,2)$, $(2,2)$, $(3,1)$  or  $(1,3)$,
and thus
\begin{equation*}
V_{2\Lambda_1+\Lambda_1}=W_{\Lambda^\prime_1+\Lambda^\prime_2}
\oplus W_{2\Lambda^\prime_1}\oplus W_{2\Lambda^\prime_2}
\oplus W_{2\Lambda^\prime_1+2\Lambda^\prime_2}
\oplus W_{3\Lambda^\prime_1+\Lambda^\prime_2}
\oplus W_{\Lambda^\prime_1 +3\Lambda^\prime_2}.
\end{equation*}
By Lemma \ref{D(SO5,T2)},
we have
$(p_1^\prime, p_2^\prime)=$
$(\frac{1}{2},\frac{1}{2})$, $(1,0)$, $(0,1)$,
$(1,1)$, $(\frac{3}{2}, \frac{1}{2})$ or
$(\frac{1}{2},\frac{3}{2})$.
Then by Lemma \ref{D(SO4,T2)} we see that
$\Lambda^\prime \vert_{(p_1^\prime,p_2^\prime)=(\frac{1}{2},\frac{1}{2})}$,
$\Lambda^\prime \vert_{(p_1^\prime,p_2^\prime)=(1,0)}$,
$\Lambda^\prime \vert_{(p_1^\prime,p_2^\prime)=(0,1)}$,
$\Lambda^\prime \vert_{(p_1^\prime,p_2^\prime)=(\frac{3}{2},\frac{1}{2})}$,
$\Lambda^\prime \vert_{(p_1^\prime,p_2^\prime)=(\frac{1}{2},\frac{3}{2})}$
$\not\in D(SO(4),T^2)$.
If $(p_1^\prime, p_2^\prime)=(1,1)$, i.e.\ $(m_1^\prime,m_2^\prime)$
$=(2,2)$, then $-c_{\Lambda}=6$,  $-c_{\Lambda^\prime}=4$ and thus
\begin{equation*}
-c_L=-2\, c_\Lambda + c_{\Lambda^\prime}=12-4=8.
\end{equation*}
So we need to determine the dimension of
$(W_{2\Lambda^\prime_1+2\Lambda^\prime_2})_{K_{[\mathfrak a]}}\neq \{0\}$.

Since $W_{2\Lambda^\prime_1+2\Lambda^\prime_2}
\cong\mathfrak{sl}(2,{\mathbf C})\boxtimes\mathfrak{sl}(2,{\mathbf C})$
and
$$
(W_{2\Lambda^\prime_1+2\Lambda^\prime_2})_{K_0}
\cong
(\mathfrak{sl}(2,{\mathbf C})\boxtimes\mathfrak{sl}(2,{\mathbf C}))_{K_0}
={\mathbf C}\boxtimes{\mathbf C},
$$
we have
$\dim_{\mathbf C}(W_{2\Lambda^\prime_1+2\Lambda^\prime_2})_{K_0}=1$.
Let $\wedge^{2}{\mathbf R}^{10}=\mathfrak{so}(10)
=\mathrm{ad}_{\mathfrak{p}}(\mathfrak{so}(5))+{\mathcal V}$.
Then
$\wedge^{2}{\mathbf C}^{10}
=(\wedge^{2}{\mathbf R}^{10})^{\mathbf C}=\mathfrak{so}(10,{\mathbf C})
=\mathrm{ad}(\mathfrak{so}(5))^{\mathbf C}+{\mathcal V}^{\mathbf C}
\cong \mathfrak{so}(5,{\mathbf C})+{\mathcal V}^{\mathbf C}$,
where
$
\{0\}\not={\mathcal V}^{\mathbf C}
\subset
V_{2\Lambda_1+\Lambda_2}
$.
By the irreducibility of $V_{2\Lambda_{1}+\Lambda_{2}}$, we see
${\mathcal V}^{\mathbf C}= V_{2\Lambda_{1}+\Lambda_{2}}$.
Since
$$
\{0\}\not=({\mathcal V}^{\mathbf C})_{K_{[{\mathfrak a}]}}
=(W_{2\Lambda^\prime_1+2\Lambda^\prime_2})_{K_{[{\mathfrak a}]}}
\subset(W_{2\Lambda^\prime_1+2\Lambda^\prime_2})_{K_0}
$$
and
$\dim_{\mathbf C}(W_{2\Lambda^\prime_1+2\Lambda^\prime_2})_{K_0}=1$,
we get
$$\{0\}\not=({\mathcal V}^{\mathbf C})_{K_{[{\mathfrak a}]}}
=(W_{2\Lambda^\prime_1+2\Lambda^\prime_2})_{K_{[{\mathfrak a}]}}
=(W_{2\Lambda^\prime_1+2\Lambda^\prime_2})_{K_0}$$
and
$\dim_{\mathbf C}(W_{2\Lambda^\prime_1+
2\Lambda^\prime_2})_{K_{[{\mathfrak a}]}}=1$.
Hence
$2\Lambda_{1}+\Lambda_2 \in D(K, K_{[\mathfrak{a}]})$
and its multiplicity is equal to $1$.

\smallskip
Suppose that $(k_1, k_2)=(2,2)$.
It follows from Lemma \ref{Branching_SO(5)_SO(4)}
that
$(k_1^\prime, k_2^\prime)=(2,0)$, $(2,1)$, $(2,2)$, $(2, -1)$ or  $(2, -2)$.
By Lemma \ref{D(SO4,T2)},
we have
$(p_1^\prime, p_2^\prime)=(1,1)$,
$(\frac{1}{2}, \frac{3}{2})$, $(0,2)$,
$(\frac{3}{2},\frac{1}{2})$ or $(2,0)$ and thus
$\Lambda^\prime \vert_{(p_1^\prime,p_2^\prime)=(\frac{1}{2},\frac{3}{2})}$,
$\Lambda^\prime \vert_{(p_1^\prime,p_2^\prime)=(0,2)}$,
$\Lambda^\prime \vert_{(p_1^\prime, p_2^\prime)=(\frac{3}{2},\frac{1}{2})}$,
$\Lambda^\prime \vert_{(p_1^\prime, p_2^\prime)=(2,0)}$
$\not\in D(SO(4),T^2)$.
If $(p_1^\prime, p_2^\prime)=(1,1)$, then
$-c_{\Lambda}=8$, $-c_{\Lambda^\prime}=4$ and hence
\begin{equation*}
-c_L=-2c_\Lambda + c_{\Lambda^\prime}=16-4=12>8.
\end{equation*}
Now we obtain that the Gauss image
$L^{8}={\mathcal G}(SO(5)/T^{2})\subset Q_{8}({\mathbf C})$
is Hamiltonian stable.
Moreover it also follows that
\begin{equation*}
n(L^{8})
=\dim_{\mathbf C}(V_{2\Lambda_1+\Lambda_2})
=35
=\dim(SO(10))-\dim(SO(5))
=n_{hk}(L^{8}).
\end{equation*}
Hence the Gauss image
$L^{8}={\mathcal G}(SO(5)/T^{2})\subset Q_{8}({\mathbf C})$
is Hamiltonian rigid.

From theses results we conclude that
\begin{thm}
The Gauss image
$L^{8}={\mathcal G}(SO(5)/T^{2})=\frac{SO(5)}{T^{2}\cdot{\mathbf Z}_2}
\subset Q_{8}({\mathbf C})$
is strictly Hamiltonian stable.
\end{thm}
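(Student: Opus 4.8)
The plan is to derive the theorem by assembling the eigenvalue computations made above in this section with the general criteria of Section \ref{Hamilmin&Hamilsta}. Since $(Q_{8}(\mathbf C),g^{\mathrm{std}}_{Q_{8}(\mathbf C)})$ is a compact homogeneous Einstein--K\"ahler manifold with positive Einstein constant $\kappa=n=8$, Oh's second variation formula (\cite{Oh90}) together with Ono's upper bound $\lambda_{1}\le\kappa$ (\cite{Ono_JMSJ,Ono_TJM}) reduce Hamiltonian stability of $L^{8}=\mathcal G(SO(5)/T^{2})$ to the single statement that the first positive eigenvalue $\lambda_{1}$ of its Laplacian equals $8$, and reduce strict Hamiltonian stability to the additional statement that the multiplicity of the eigenvalue $8$ equals $n_{hk}(L^{8})=\dim SO(10)-\dim SO(5)=35$. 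So there are two things to prove: (i) $L^{8}$ has no positive eigenvalue strictly below $8$, and $8$ is attained; (ii) the $8$-eigenspace has complex dimension $35$.

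For (i) I would run the harmonic analysis of Section \ref{Sec_Method} on $L^{8}=K/K_{[\mathfrak a]}$ with $K=SO(5)$. By Lemma \ref{LaplaceOperator} in the $B_{2}$ case, $\mathcal C_{L}=\mathcal C_{K/K_{0}}+\mathcal C_{K/K_{1}}$ with $K_{1}=SO(4)$, so $-\mathcal C_{L}\ge-\mathcal C_{K/K_{0}}$ and any eigenvalue $-c_{L}\le 8$ forces $-c_{\Lambda}\le 8$ for the underlying $K$-type $\Lambda$; the finitely many such $\Lambda$, read off from the spectrum of the flag manifold $SO(5)/T^{2}$ and Freudenthal's formula, are $\epsilon_{1}$, $\epsilon_{1}+\epsilon_{2}$, $2\epsilon_{1}$, $2\epsilon_{1}+\epsilon_{2}$, $2\epsilon_{1}+2\epsilon_{2}$. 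For each of these I would decompose $V_{\Lambda}$ into $SO(4)$-modules through the $(SO(5),SO(4))$ branching law, compute $-c_{L}$ of each constituent from the $SO(4)/T^{2}$ spectrum, and then decide whether that constituent actually contributes a function on $L^{8}$, i.e.\ whether its $K_{0}$-fixed subspace survives under the deck group $K_{[\mathfrak a]}/K_{0}\cong\mathbf Z_{4}$ acting through the Weyl group $W(U,K)$. The expected outcome: $\epsilon_{1}\notin D(K,K_{0})$; for $\epsilon_{1}+\epsilon_{2}$ one has $V_{\Lambda}\cong\mathfrak{so}(5,\mathbf C)$ and $(V_{\Lambda})_{K_{0}}\cong\mathfrak a^{\mathbf C}$ carries no $\mathbf Z_{4}$-fixed vector; for $2\epsilon_{1}$ the only low-eigenvalue constituent $W_{2\Lambda_{1}^{\prime}+2\Lambda_{2}^{\prime}}$ ($-c_{L}=6$) has trivial $K_{[\mathfrak a]}$-fixed part, so the only surviving constituents have eigenvalue $>8$; for $2\epsilon_{1}+\epsilon_{2}$ the surviving constituent is $W_{2\Lambda_{1}^{\prime}+2\Lambda_{2}^{\prime}}$ with $-c_{L}=8$ exactly; and $2\epsilon_{1}+2\epsilon_{2}$ gives $12$. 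Hence $\lambda_{1}=8=n$ and $L^{8}$ is Hamiltonian stable.

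For (ii) I would pin down the $8$-eigenspace via the ambient Lie algebra. Writing $\wedge^{2}\mathbf R^{10}=\mathfrak{so}(10)=\mathrm{ad}_{\mathfrak p}(\mathfrak{so}(5))+\mathcal V$ and complexifying, $\mathcal V^{\mathbf C}$ is a nonzero $SO(5)$-submodule of $V_{2\Lambda_{1}+\Lambda_{2}}$, hence equal to it by irreducibility, so $\dim\mathcal V=\dim_{\mathbf C}V_{2\Lambda_{1}+\Lambda_{2}}=35$; and since $\dim_{\mathbf C}(W_{2\Lambda_{1}^{\prime}+2\Lambda_{2}^{\prime}})_{K_{0}}=1$, the $8$-eigenspace consists of a single copy of $V_{2\Lambda_{1}+\Lambda_{2}}$, so the nullity is $n(L^{8})=35=n_{hk}(L^{8})$. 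Combining (i) and (ii) with the definition of strict Hamiltonian stability in Section \ref{Hamilmin&Hamilsta} completes the argument.

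The main obstacle is step (i) for $\Lambda=2\epsilon_{1}$ and $\Lambda=2\epsilon_{1}+\epsilon_{2}$: a bare eigenvalue count produces the constituent $W_{2\Lambda_{1}^{\prime}+2\Lambda_{2}^{\prime}}\subset V_{2\Lambda_{1}}\cong\mathrm{Sym}_{0}(\mathbf C^{5})$ with eigenvalue $6<8$, so everything hinges on showing that its $K_{0}$-fixed vector is \emph{not} fixed by $K_{[\mathfrak a]}$, whereas the corresponding fixed vector in $V_{2\Lambda_{1}+\Lambda_{2}}$ \emph{is}. This needs the explicit description of $K_{[\mathfrak a]}$, in particular the element of $K_{[\mathfrak a]}/K_{0}$ that interchanges the two $SO(2)$-blocks, together with its action on $\mathrm{Sym}_{0}(\mathbf C^{5})$; the irreducibility remark for $\mathcal V^{\mathbf C}$ is then what lets one settle the multiplicity in (ii) without a second explicit computation.
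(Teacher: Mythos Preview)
Your proposal is correct and follows essentially the same approach as the paper: the same reduction via $-\mathcal C_{L}\ge -\mathcal C_{K/K_{0}}$ to the five candidate $SO(5)$-types with $-c_{\Lambda}\le 8$, the same case-by-case branching to $SO(4)$ followed by the $K_{[\mathfrak a]}$-invariance check (with the decisive work indeed occurring at $\Lambda=2\epsilon_{1}$ and $2\epsilon_{1}+\epsilon_{2}$), and the same identification $\mathcal V^{\mathbf C}\cong V_{2\Lambda_{1}+\Lambda_{2}}$ via irreducibility to pin down the multiplicity $35=n_{hk}(L^{8})$.
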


\section{The case $(U,K)=(SO(10), U(5))$}
\label{Sec_DIII2}

In this case, $(U,K)$ is of $BC_2$ type and
$K=U(5)\subset U=SO(10)$.
Here each $A+\sqrt{-1}B\in U(5)$ can be identified with an element
$\left(
\begin{array}{cc}
A & -B \\
B & A \\
\end{array}
\right)
\in SO(10)$ with $A,B\in {\mathfrak gl}(5,\mathbf{R})$.
The canonical decomposition
${\mathfrak u}={\mathfrak k}+{\mathfrak p}$ of ${\mathfrak u}$
and ${\mathfrak a}$ be a maximal abelian subspace of
${\mathfrak p}$ are given by ${\mathfrak u}=\mathfrak{so}(10)$,
\begin{equation*}
\begin{split}
{\mathfrak k} =&
\left\{
\begin{pmatrix}
X & -Y \\
Y & X
\end{pmatrix}
\in \mathfrak{so}(10)
\mid -X^t = X, Y^t=Y \right\}\\
\cong &\
\mathfrak{u}(5)
=
\left\{
T=X+\sqrt{-1}Y\in\mathfrak{gl}(5,{\mathbf C})\mid T^\ast=-T \right\},
\\
{\mathfrak p}
=&
\left\{
\begin{pmatrix}
X & Y \\
Y & -X \\
\end{pmatrix}
\in \mathfrak{so}(10)
\mid X, Y\in \mathfrak{so}(5)
\right\}
\end{split}
\end{equation*}
and
\begin{equation*}
{\mathfrak a}=
\left\{
\begin{pmatrix}
H_1 & 0 \\
0 & -H_1 \\
\end{pmatrix}
\mid
H_1=
\begin{pmatrix}
    0 & -\xi_1 &  &  &  \\
    \xi_1 & 0 &  &  &  \\
     &  & 0 & -\xi_2 &  \\
     &  & \xi_2 & 0 &  \\
     &  &  &  & 0\\
\end{pmatrix}
\xi_{1},\xi_{2}\in{\mathbf R}\right\}.
\end{equation*}
Then the centralizer $K_0$ of $\mathfrak{a}$ in $K$ is as follows:
\begin{equation*}
\begin{split}
&K_{0}
\\
=&\left\{
\left(
  \begin{array}{ccccc}
    a_{11}+\mathbf{i}b_{11} &  a_{12}+\mathbf{i}b_{12} & 0 & 0 & 0\\
   -a_{12}+\mathbf{i}b_{12} & a_{11}-\mathbf{i}b_{11} & 0 & 0 & 0 \\
    0 & 0 & a_{22}+\mathbf{i}b_{22} & a_{21}+\mathbf{i}b_{21} & 0\\
   0 & 0 & -a_{21}+\mathbf{i}b_{21} & a_{22}-\mathbf{i}b_{22} & 0 \\
   0 & 0 & 0 & 0 &a_{33}+\mathbf{i}b_{33} \\
  \end{array}
\right)\in U(5)
\right\}\\
\cong &\ SU(2)\times SU(2)\times U(1)
\end{split}
\end{equation*}
and $N=K/K_0\cong U(5)/SU(2)\times SU(2)\times U(1)$
is of dimension $18$.
Moreover,
\begin{equation*}
\begin{split}
K_{[{\mathfrak a}]}
= &
K_0 \cup
\begin{pmatrix}
 & &1 &0 & \\
 & & 0&1 & \\
 1&0 &  & &  \\
  0&-1 & & & \\
  & & & & 1
\end{pmatrix}\cdot K_0
\cup
\begin{pmatrix}
1&  & & & \\
 &-1& & &\\
 & & 1& & \\
  & & & -1&\\
  & & & &1
\end{pmatrix}\cdot K_0 \\
& \cup
\begin{pmatrix}
 &&1 &0 & \\
 & & 0&-1 & \\
 1&0 & & & \\
  0&1 & & & \\
  & & & &1
\end{pmatrix}\cdot K_0.
\end{split}
\end{equation*}
It means that the deck transformation group of the covering map
${\mathcal G}:N\rightarrow {\mathcal G}(N^{18})$ is equal to
$K_{[{\mathfrak a}]}/K_{0}\cong{\mathbf Z}_{4}$.

\subsection{Description of the Casimir operator}

Choose
$\langle{X,Y}\rangle_{\mathfrak{u}}:=-\rm{tr}(XY)$
for each $X,Y\in \mathfrak{u}=\mathfrak{so}(10)$.
The restricted root system $\Sigma(U,K)$ of type $BC_2$
can be given as follows (\cite{Bourbaki}):
\begin{equation*}
\begin{split}
&\Sigma(U,K) \\
=&\{
\pm\epsilon_2=\pm\alpha_1,
\pm(\epsilon_1-\epsilon_2)=\pm\alpha_2,
\pm\epsilon_1=\pm(\alpha_1+\alpha_2),
\\
&
\pm(\epsilon_1+\epsilon_2)=\pm(2\alpha_1+\alpha_2),
\pm 2\epsilon_1=\pm(2\alpha_1+2\alpha_2),
\pm 2\epsilon_2=\pm 2\alpha_1
\}.
\end{split}
\end{equation*}
Then the square length of each $\gamma\in\Sigma(U,K)$
relative to $\langle\,,\,\rangle_{\mathfrak u}$ is
\begin{equation*}
\Vert{\gamma}\Vert_{\mathfrak u}^{2}
=\frac{1}{4}, \frac{1}{2} \text{ or } 1.
\end{equation*}
Hence the Casimir operator $\mathcal{C}_L$ of $L^n$
with respect to the induced metric from $g^{\rm std}_{Q_n(\mathbf{C})}$
can be expressed as follows:
\begin{equation*}
\begin{split}
{\mathcal C}_{L}
&=
\frac{4}{\Vert{\gamma_{0}}\Vert^2_{\mathfrak u}}\,
{\mathcal C}_{K/K_0,\langle{\ ,\ }\rangle_{\mathfrak u}}
-\frac{1}{\Vert{\gamma_{0}}\Vert^2_{\mathfrak u}}\,
{\mathcal C}_{K_{1}/K_0,\langle{\ ,\ }\rangle_{\mathfrak u}}
-\frac{2}{\Vert{\gamma_{0}}\Vert^2_{\mathfrak u}}\,
{\mathcal C}_{K_{2}/K_0,\langle{\ ,\ }\rangle_{\mathfrak u}}\\
&=
4\, \mathcal{C}_{K/K_0, \langle\,,\,\rangle_{\mathfrak{u}}}
-\mathcal{C}_{K_1/K_0, \langle\,,\,\rangle_{\mathfrak{u}}}
-2\, \mathcal{C}_{K_2/K_0, \langle\,,\,\rangle_{\mathfrak{u}}}\\
&=
2\, {\mathcal C}_{K/K_0}-{\mathcal C}_{K_2/K_0}
-\frac{1}{2}\, {\mathcal C}_{K_1/K_0}
\\
&=
 {\mathcal C}_{K/K_0}+{\mathcal C}_{K/K_2}+\frac{1}{2}{\mathcal C}_{K_1/K_0},
\end{split}
\end{equation*}
where
${\mathcal C}_{K/K_{0}}$, ${\mathcal C}_{K_{2}/K_{0}}$
and ${\mathcal C}_{K_{1}/K_{0}}$
denote
the Casimir operator of  $K/K_{0}$, $K_{2}/K_{0}$ and
$K_{1}/K_{0}$
relative to
$\langle{{\ },{\ }}\rangle\vert_{\mathfrak k}$,
$\langle{{\ },{\ }}\rangle\vert_{{\mathfrak k}_{2}}$
and
$\langle{{\ },{\ }}\rangle\vert_{{\mathfrak k}_{1}}$,
respectively.
Here, $\langle X, Y \rangle:=-\mathrm{tr}({\mathrm{Re}} (XY))$ for all $X,Y\in \mathfrak{k}=u(5)$.

\subsection{Descriptions of $D(K)$,$D(K_1)$ and $D(K_2)$}
Using a maximal abelian subalgebra $\mathfrak{t}$ of $\mathfrak{k}$
given by
\begin{equation*}
\begin{split}
{\mathfrak t}
=&\left\{
\sqrt{-1}
\begin{pmatrix}
y_{1}&0&0&0&0\\
0&y_{2}&0&0&0\\
0&0&y_{3}&0&0\\
0&0&0&y_{4}&0\\
0&0&0&0&y_{5}
\end{pmatrix}
\mid
y_{1},y_{2}, y_{3},y_{4},y_{5}\in{\mathbf R}
\right\}\subset {\mathfrak k},
\end{split}
\end{equation*}
we have
\begin{equation*}
\begin{split}
\Gamma(K)
&=\Gamma(K_2)
=\Gamma(K_{1})
=\Gamma(K_0)
\\
&=
\left\{
\xi=\sqrt{-1}
\begin{pmatrix}
\xi_{1}&0 &0 &0&0\\
0&\xi_{2}&0 &0 & 0 \\
0&0&\xi_{3}&0&0\\
0&0&0&\xi_{4}&0\\
0&0&0&0&\xi_{5}
\end{pmatrix}
\mid
\xi_{1},\xi_{2},\xi_{3},\xi_{4},\xi_{5}\in 2\pi{\mathbf Z}
\right\},
\\
\Gamma(C(K))
&=
2\pi{\mathbf Z}\mathrm{I}_{5}.
\end{split}
\end{equation*}
Then
$D(K)$, $D(K_1)$ and $D(K_2)$
are given as follows:
\begin{equation*}
\begin{split}
D(K)=&D(U(5))\\
=& \{\Lambda =p_{1}y_{1}+\cdots+ p_5 y_5
\mid
p_{1},\cdots,p_{5}\in{\mathbf Z},
p_{1}\geq p_2\geq p_3\geq p_4\geq p_5
\},
\\
D(K_2)=&D(U(4)\times U(1))\\
=& \{\Lambda =p_{1}y_{1}+\cdots+ p_5 y_5
\mid
p_{1},\cdots,p_{5}\in{\mathbf Z},
p_{1}\geq p_2\geq p_3\geq p_4
\},
\\
D(K_1)=&D(U(2)\times U(2)\times U(1))\\
=& \{\Lambda =p_{1}y_{1}+\cdots+ p_5 y_5
\mid
p_{1},\cdots,p_{5}\in{
\bold Z},
p_{1}\geq p_2, p_3\geq p_4
\}.
\end{split}
\end{equation*}

\subsection{Branching laws of $(U(m+1),U(m)\times U(1))$}
\noindent

The branching laws for $(SU(m+1),S(U(1)\times U(m)))$ was shown
by Ikeda and Taniguchi \cite{Ikeda-Taniguchi78}.
It can be reformulated to
the branching laws for $(U(m+1),U(m)\times U(1))$ as follows:

\begin{lem}[Branching laws for $(U(m+1),U(m)\times U(1))$]
\label{BranchingLaws(U(m+1),U(m)XU(1))}
Let $\Lambda=p_1y_1+\cdots+p_m y_m\in D(U(m))$
be the highest weight of an irreducible $U(m)$-module $V_{\Lambda}$,
where $p_i\in \mathbf{Z}$ $(i=1,\cdots, m)$
and $p_1\geq p_2\geq \cdots \geq p_m$.
Then the irreducible decomposition of $V_{\Lambda}$
as a $U(m)\times U(1)$-module contains an irreducible
$U(m)\times U(1)$-module $V_{\Lambda^\prime}$
with the highest weight
$V_{\Lambda^\prime}=q_1 y_1+\cdots +q_m y_m \in D(U(m)\times U(1))$,
where $q_i\in \mathbf{Z}$ and $q_1\geq q_2\geq \cdots\geq q_m$,
if and only if
\begin{equation*}
\begin{split}
&p_1\geq q_1 \geq p_2 \geq q_2 \geq p_3 \geq q_3 \geq \cdots
\geq p_{m-1} \geq q_{m-1} \geq p_m,\\
& \sum_{i=1}^m p_i= \sum_{i=1}^m q_i.
\end{split}
\end{equation*}
In particluar the multiplicity of $V_{\Lambda^\prime}$ is $1$.
\end{lem}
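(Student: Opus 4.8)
The plan is to obtain this branching law as a reformulation of the Ikeda--Taniguchi rule for $(SU(m+1),S(U(1)\times U(m)))$ of \cite{Ikeda-Taniguchi78}, equivalently as a consequence of the classical Gelfand--Tsetlin branching rule for the chain $U(m+1)\supset U(m)$ together with a one-line central-character computation. Throughout I read $\Lambda$ as the highest weight of an irreducible $U(m+1)$-module $V_{\Lambda}$, as in the title of the lemma, so $\Lambda=p_{1}y_{1}+\cdots+p_{m+1}y_{m+1}$ with $p_{1}\geq p_{2}\geq\cdots\geq p_{m+1}$. First I would recall that, under the embedding $U(m)\hookrightarrow U(m+1)$ onto the upper-left block, $V_{\Lambda}$ restricts multiplicity-freely to
\begin{equation*}
V_{\Lambda}|_{U(m)}=\bigoplus_{\mu}V_{\mu},
\end{equation*}
the sum being over all integer sequences $\mu=(q_{1}\geq q_{2}\geq\cdots\geq q_{m})$ interlacing $\Lambda$ in the sense $p_{1}\geq q_{1}\geq p_{2}\geq q_{2}\geq\cdots\geq p_{m}\geq q_{m}\geq p_{m+1}$. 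This is the standard Weyl branching rule for the unitary groups; it is also exactly what the cited $(SU(m+1),S(U(1)\times U(m)))$-branching of \cite{Ikeda-Taniguchi78} gives after one restricts further to $SU(m)$ and discards the central characters.

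Next I would recover the residual $U(1)$-factor. The $U(1)$ appearing in $U(m)\times U(1)\subset U(m+1)$ is the circle acting on the last coordinate; it commutes with the $U(m)$-block, so each $U(m)$-isotypic piece $V_{\mu}$ above is a single irreducible $U(m)\times U(1)$-module $V_{\mu}\otimes\mathbf{C}_{q}$ for a well-defined character $q\in\mathbf{Z}$. To pin down $q$ I would evaluate the central circle $\{e^{\sqrt{-1}\theta}\mathrm{I}_{m+1}\mid\theta\in\mathbf{R}\}$ of $U(m+1)$: it acts on $V_{\Lambda}$ by the scalar $e^{\sqrt{-1}\theta(p_{1}+\cdots+p_{m+1})}$, while inside $U(m)\times U(1)$ it is the element $(e^{\sqrt{-1}\theta}\mathrm{I}_{m},e^{\sqrt{-1}\theta})$, which acts on $V_{\mu}\otimes\mathbf{C}_{q}$ by $e^{\sqrt{-1}\theta(q_{1}+\cdots+q_{m}+q)}$; comparing the two gives $q=(p_{1}+\cdots+p_{m+1})-(q_{1}+\cdots+q_{m})$. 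Hence the $U(m)\times U(1)$-summands are precisely the $V_{\Lambda'}$ with $\Lambda'=q_{1}y_{1}+\cdots+q_{m}y_{m}+q\,y_{m+1}$, $q_{1}\geq\cdots\geq q_{m}$ and $p_{1}\geq q_{1}\geq p_{2}\geq\cdots\geq p_{m}\geq q_{m}\geq p_{m+1}$, each occurring with multiplicity one; this is the assertion of the lemma, the recorded relation among the $p$'s and $q$'s being the combination of the interlacing inequalities with the trace identity $\sum p_{j}=\sum q_{i}+q$.

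The only genuine task, and the point I expect to be the main (though purely bookkeeping) obstacle, is the reconciliation of highest-weight conventions: \cite{Ikeda-Taniguchi78} is stated for $SU$ with trace-normalized highest weights, so one must add the central charges back on both sides, verify that the $S(U(1)\times U(m))$-branching translates into the interlacing pattern displayed above, and check that the $U(1)$-charge extracted from the center of $U(m+1)$ matches the one dictated by the trace identity. No new argument beyond this translation is needed, and since every branching used here ($U(m+1)\downarrow U(m)$ and the auxiliary $SU$-ones) is multiplicity-free, the multiplicity-one conclusion is automatic.
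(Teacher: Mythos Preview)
Your proposal is correct and matches the paper's approach exactly: the paper does not give a proof of this lemma beyond the sentence preceding it, namely that the branching laws for $(SU(m+1),S(U(1)\times U(m)))$ were shown by Ikeda--Taniguchi \cite{Ikeda-Taniguchi78} and ``can be reformulated'' to the $(U(m+1),U(m)\times U(1))$ case. Your write-up supplies precisely this reformulation---the classical interlacing rule for $U(m+1)\downarrow U(m)$ together with the central-character computation determining the $U(1)$-charge---and your observation that the lemma's statement contains an indexing slip (it should read $\Lambda\in D(U(m+1))$ with $m+1$ coordinates, as the title and the applications in Lemma~\ref{BranchingLaws(U(3),U(2)XU(1))} and Section~\ref{Sec_DIII2} make clear) is also correct.
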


In the next subsection we use
the branching laws of  $(U(m+1),U(m)\times U(1))$
and $(U(m), U(2)\times U(m-2))$ in the case of $m=4$.
The branching laws of $(U(m), U(2)\times U(m-2))$
are described in Lemma \ref{BranchingLaw(U(m), U(2)XU(m-2))}
of Section \ref{Sec_AIII_2}.

\subsection{Descriptions of $D(K, K_0)$, $D(K_2, K_0)$ and $D(K_1,K_0)$}
\noindent

Each $\Lambda\in{D(K)=D(U(5))}$ is expressed as
\begin{equation*}
\Lambda=p_1 y_1+\cdots p_5 y_5,
\end{equation*}
where $p_i\in \mathbf{Z}$, $p_1\geq p_2\geq p_3\geq p_4 \geq p_5$.
Then by Lemma \ref{BranchingLaws(U(m+1),U(m)XU(1))} in the case of $m=4$,
$V_{\Lambda}$ can be decomposed into irreducible
$U(4)\times U(1)$-modules as
\begin{equation*}
V_{\Lambda}=\bigoplus_{i=1}^s V^{\prime}_{\Lambda^{\prime}_i}
=\bigoplus_{i=1}^s W^{\prime}_{{\Lambda^{\prime}_1}_i} \boxtimes U_{q_5 y_5},
\end{equation*}
where
$\Lambda^{\prime}_i=q_1y_1+q_2y_2+ q_3 y_3 +q_4 y_4+q_5 y_5
\in D(K_2)=D(U(4)\times U(1))$,
$\Lambda^{\prime}_{1i}=q_1y_1+q_2y_2+ q_3 y_3 +q_4 y_4\in D(U(4))$,
$q_5 y_5 \in D(U(1))$ and $q_i\in{\mathbf Z}\ (i=1,2,3,4,5)$ satisfy
\begin{equation*}
\begin{split}
& p_1\geq q_1\geq p_2\geq q_2\geq p_3\geq q_3 \geq p_4 \geq q_4 \geq p_5,\\
& \sum_{i=1}^5 p_i =\sum_{j=1}^5 q_j.
\end{split}
\end{equation*}
By the branching law for $(U(4), U(2)\times U(2))$
in Lemma \ref{BranchingLaw(U(m), U(2)XU(m-2))},
each $W^{\prime}_{{\Lambda^{\prime}_1}_i}$ can be decomposed as
\begin{equation*}
W^{\prime}_{{\Lambda^{\prime}_1}_i}
=\bigoplus
W^{\prime\prime}_{\Lambda^{\prime\prime}}
=\bigoplus W^{\prime\prime}_{\tilde{\Lambda}_\sigma} \boxtimes
W^{\prime\prime}_{\tilde{\Lambda}_\rho},
\end{equation*}
where $\Lambda^{\prime\prime}=k_1 y_1+k_2 y_2+k_3 y_3+k_4 y_4\in D(U(2)\times
U(2))$, $\tilde{\Lambda}_\sigma=k_1 y_1+k_2 y_2 \in D(U(2))$,
$\tilde{\Lambda}_\rho=k_3 y_3 +k_4 y_4 \in D(U(2))$ and
$k_{i}\in{\mathbf Z}\ (i=1,2,3,4)$ satisfy
\begin{itemize}
\item[(i)] $\sum_{i=1}^4 k_i= \sum_{i=1}^4 q_i$;
\item[(ii)] $q_1\geq k_1\geq q_3$, $q_2\geq k_2\geq q_4$;
\item[(iii)] in the finite power series expansion in $X$ of
$\displaystyle\frac{\prod_{i=1}^3 (X^{r_i+1}-X^{-(r_i +1)})}{(X-X^{-1})^2}$, where
$r_i (i=1,2,3)$ are defined by
\begin{equation*}
\begin{split}
r_1:=&q_1-\max (k_1, q_2),\\
r_2:=& \min (k_1, q_2)- \max (k_2, q_3), \\
r_3:=& \min (k_2, q_3) -q_4,\\
\end{split}
\end{equation*}
the coefficient of $X^{k_3-k_4+1}$ does not vanish.
Moreover the value of this coefficient is the multiplicity of the
$U(2)\times U(2)$-module
$W^{\prime\prime}_{\Lambda^{\prime\prime}}$.
\end{itemize}

By the branching law of $(U(2),SU(2))$ (see Section \ref{Sec_AIII_2}),
as $SU(2)$-modules they become
\begin{equation*}
W^{\prime\prime}_{\tilde{\Lambda}_\sigma}=W^{\prime\prime}_{\Lambda_\sigma},
\quad
W^{\prime\prime}_{\tilde{\Lambda}_\rho}=W^{\prime\prime}_{\Lambda_\rho},
\end{equation*}
where
$\Lambda_\sigma=\displaystyle\frac{k_1-k_2}{2}(y_1-y_2)\in D(SU(2))$,
$\Lambda_\rho=\displaystyle\frac{k_3-k_4}{2}(y_3-y_4)\in D(SU(2))$.

Hence, one can decompose a $K$-module $V_{\Lambda}$ into the following irreducible $K_0$-modules
\begin{equation*}
V_{\Lambda}=\bigoplus\bigoplus W^{\prime\prime}_{\Lambda_\sigma}
\boxtimes W^{\prime\prime}_{\Lambda_\rho}
\boxtimes U_{q_5y_5}.
\end{equation*}

\medskip

Now assume that $\Lambda\in D(K,K_0)$.
Then there exists at least one nonzero trivial irreducible
$K_0$-module in the above decomposition for some $\sigma$ and $\rho$.
So in this case, we have
\begin{equation*}
k_1-k_2=0,\quad
k_3-k_4=0, \quad
q_5=0.
\end{equation*}
So we know that
\begin{equation*}
\begin{split}
&2k_1+2k_3=\sum_{i=1}^4 q_i=\sum_{j=1}^5 p_j,\\
& q_2\geq k_1=k_2\geq q_3,\\
& r_1=q_1-q_2,\\
& r_2=k_1 -k_2 =0,\\
& r_3=q_3 - q_4\\
\end{split}
\end{equation*}
and in the finite power series expansion in $X$ of
$$ \frac{(X^{q_1-q_2+1}-X^{-(q_1-q_2+1)})(X^{q_3-q_4+1}-X^{-(q_3-q_4+1)})}{X-X^{-1}},$$
the coefficient of $X$ does not vanish. Moreover, the
value of this coefficient is the multiplicity of the
$U(2)\times U(2)$-module.

\bigskip
Therefore, in the above notations, for each $\Lambda\in{D(K,K_{0})}$
given by
$\Lambda= p_1 y_1 +p_2 y_2 + p_3 y_3 +p_4 y_4 +p_5 y_5,$
where $p_{1},\cdots,p_{5}\in{\mathbf Z}$,
$p_{1}\geq p_2\geq p_3\geq p_4\geq p_5,$
each $\Lambda^\prime\in{D(K_2,K_{0})}$
is given by
$\Lambda^{\prime}= q_1 y_1 + q_2 y_2 + q_3 y_3 + q_4 y_4,$
where
$q_{1},\cdots,q_{4}\in{\bold Z}$,
$q_{1}\geq q_2\geq q_3\geq q_4$,
$\sum_{i=1}^5 p_i=\sum_{j=1}^4 q_j$.
Moreover, each $\Lambda^{\prime\prime}\in{D(K_1,K_{0})}$
is given by
$\Lambda^{\prime\prime}= k_1 y_1 + k_2 y_2 + k_3 y_3 + k_4 y_4,$
where
$k_{1},\cdots,k_{4}\in{\mathbf Z}$,
$k_{1}=k_2$,  $k_3=k_4$,
$2k_1+2k_3=\sum_{j=1}^4 q_j$.

\subsection{Eigenvalue computation}
For each
$\Lambda=p_1y_1+p_2y_2+p_3 y_3 +p_4 y_4+ p_5 y_5 \in D(K, K_0)$,
with $p_i\in \mathbf{Z}$, $p_1\geq p_2\geq p_3 \geq p_4 \geq p_5$,
the eigenvalue formula of the Casimir operator $\mathcal{C}_{K/K_0}$
with respect to the inner product $\langle X, Y\rangle_{\mathfrak k}=-\rm{Tr}(\rm{Re}(XY))$
for any $X,Y\in \mathfrak{k}=\mathfrak{u}(5)$ is given by
$$-c_{\Lambda}=p_1^2+p_2^2+p_3^2+p_4^2+p_5^2+4p_1+2p_2-2p_4-4p_5.$$

For each $\Lambda^\prime=q_1 y_1+ q_2 y_2+ q_3 y_3 + q_4 y_4 \in D(K_2, K_0)$
 with $q_i\in \mathbf{Z}$ and $q_1\geq q_2\geq q_3\geq q_4$,
the eigenvalue formula of the Casimir operator $\mathcal{C}_{K_2/K_0}$
with respect to the inner product
$\langle\, , \, \rangle_{\mathfrak{k}}\vert_{\mathfrak{k}_2}$ is given by
\begin{equation*}
-c_{{\Lambda}^{\prime}}
=q_1^2+q_2^2+q_3^2+q_4^2+3q_1+q_2-q_3-3q_4.
\end{equation*}

For each
$\Lambda^{\prime\prime}=k_1 y_1+ k_2 y_2+ k_3 y_3 + k_4 y_4 \in D(K_1, K_0)$
with $k_1=k_2$ and $k_3=k_4$,
the eigenvalue formula of the Casimir operator $\mathcal{C}_{K_1/K_0}$ with
respect to the inner product
$\langle \, ,\,\rangle_{\mathfrak{k}}\vert_{\mathfrak{k}_1}$ is given by
\begin{equation*}
\begin{split}
-c_{{\Lambda}^{\prime\prime}}
=&k_1^2+k_2^2+k_3^2+k_4^2+k_1-k_2+k_3-k_4\\
=&k_1^2+k_2^2+k_3^2+k_4^2.
\end{split}
\end{equation*}

Hence, we have the following eigenvalue formula
\begin{equation}\label{EigenvalueFormulaDIII_2}
\begin{split}
-c_{L}
=&{\ }
-2c_{\Lambda}+c_{\Lambda^{\prime}}+\frac{1}{2}c_{\Lambda^{\prime\prime}}
\\
=&{\ }
2(p_1^2+p_2^2+p_3^2+p_4^2+p_5^2+4p_1+2p_2-2p_4-4p_5)\\
&-(q_1^2+q_2^2+q_3^2+q_4^2+3q_1+q_2-q_3-3q_4)\\
&-\frac{1}{2}(k_1^2+k_2^2+k_3^2+k_4^2).
\end{split}
\end{equation}
By using and estimating the formula \label{EigenvalueFormulaDIII_2}
from above by $18$, we get that
\begin{lem}
$\Lambda=p_1y_1+p_2y_2+p_3 y_3 +p_4 y_4+ p_5 y_5\in D(K)$
belongs to $\Lambda\in D(K,K_0)$ with eigenvalue $-c_{L}\leq{18}$
if and only if $(p_{1},p_{2},p_{3},p_{4}, p_{5})$ is one of
\begin{equation*}
\begin{split}
\Bigl\{\,
&
(0,-1,-1,-1,-1),\,  (1,1,1,1,0),\,  (1,1,0,0,0),\,  (0,0,0,-1,-1),
\\
&
(1,0,0,0,-1),\, (2,1,1,0,0),\, (0,0,-1,-1,-2),\,  (1,1,0,-1,-1)
\, \Bigr\}.
\end{split}
\end{equation*}
\end{lem}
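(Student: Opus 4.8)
The plan is to convert the statement into a finite search over dominant weights of $U(5)$ and then prune the resulting list with the branching laws established above. Throughout we may assume $\Lambda\neq 0$, since $\Lambda=0$ contributes only the constant functions.

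First I would use the expression ${\mathcal C}_{L}={\mathcal C}_{K/K_0}+{\mathcal C}_{K/K_2}+\frac{1}{2}{\mathcal C}_{K_1/K_0}$ obtained above. Both ${\mathcal C}_{K/K_2}$ and ${\mathcal C}_{K_1/K_0}$ lie in ${\mathrm U}(\mathfrak k)_{K_0}$ and, being Casimir operators, act on every unitary $K$-module — hence on each summand $(V_\Lambda)^{\ast}_{K_0}\otimes V_\Lambda$ of $C^\infty(K/K_0,\mathbf C)$ — as negative semi-definite operators. Consequently $-{\mathcal C}_{L}\geq -{\mathcal C}_{K/K_0}$ as operators, so any eigenvalue $-c_{L}$ occurring in the $\Lambda$-isotypic component is $\geq -c_{\Lambda}$. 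Completing the square in the formula for $-c_{\Lambda}$ recorded above gives
\begin{equation*}
-c_{\Lambda}=(p_1+2)^2+(p_2+1)^2+p_3^2+(p_4-1)^2+(p_5-2)^2-10,
\end{equation*}
so $-c_L\leq 18$ forces $(p_1+2)^2+(p_2+1)^2+p_3^2+(p_4-1)^2+(p_5-2)^2\leq 28$ with $p_1\geq p_2\geq p_3\geq p_4\geq p_5$ in $\mathbf Z$. This is a short, purely arithmetic enumeration, yielding a preliminary finite list of candidate weights.

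Next, for each candidate I would decide whether $\Lambda\in D(K,K_0)$, i.e.\ whether $(V_\Lambda)_{K_0}\neq\{0\}$, by running $\Lambda$ through the chain $U(5)\supset U(4)\times U(1)\supset U(2)\times U(2)\times U(1)$ and then restricting $U(2)\to SU(2)$ on each $2$-block, exactly as in the description of $D(K,K_0)$ given above. A nonzero $K_0$-fixed vector exists precisely when one can choose branching data with $q_5=0$, $k_1=k_2$ and $k_3=k_4$ satisfying the interlacing inequalities of Lemma \ref{BranchingLaws(U(m+1),U(m)XU(1))} together with the power-series multiplicity condition (iii) above for $(U(4),U(2)\times U(2))$. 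This discards every candidate that does not descend to $K/K_0$.

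Finally, for the weights that survive I would compute the attached eigenvalues exactly from \eqref{EigenvalueFormulaDIII_2}, evaluating $-c_{L}=-2c_{\Lambda}+c_{\Lambda^{\prime}}+\frac{1}{2}c_{\Lambda^{\prime\prime}}$ over all admissible triples $(\Lambda,\Lambda^{\prime},\Lambda^{\prime\prime})$ and retaining the minimum; the $\Lambda$ whose minimal attached eigenvalue is $\leq 18$ are then read off to be exactly the eight tuples in the statement, and the remaining survivors are eliminated. Combined with the first step — which guarantees that no weight outside the preliminary list can produce an eigenvalue $\leq 18$ — this is exhaustive. The main obstacle is the joint bookkeeping of the last two steps: one must carry the admissible triples through three successive branchings, actually evaluating the finite power series in condition (iii) for $(U(4),U(2)\times U(2))$, and then, for each candidate $\Lambda$, take the \emph{minimum} of its associated eigenvalues $-c_{L}$ — a weight with small $-c_{\Lambda}$ may still have \emph{all} of its attached eigenvalues exceed $18$. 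Keeping this consistent, and verifying that the enumeration in the first step omits nothing, is where the care is needed; the arithmetic itself is routine.
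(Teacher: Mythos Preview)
Your Step 1 contains a genuine error. The identity you invoke, ${\mathcal C}_{L}={\mathcal C}_{K/K_0}+{\mathcal C}_{K/K_2}+\frac{1}{2}{\mathcal C}_{K_1/K_0}$, has the wrong sign on the last term: starting from ${\mathcal C}_L=2{\mathcal C}_{K/K_0}-{\mathcal C}_{K_2/K_0}-\frac{1}{2}{\mathcal C}_{K_1/K_0}$ and writing ${\mathcal C}_{K/K_2}={\mathcal C}_{K/K_0}-{\mathcal C}_{K_2/K_0}$, one gets ${\mathcal C}_L={\mathcal C}_{K/K_0}+{\mathcal C}_{K/K_2}-\frac{1}{2}{\mathcal C}_{K_1/K_0}$. (The displayed line in the paper with the plus sign is a typo.) With the correct sign, $-{\mathcal C}_L=-{\mathcal C}_{K/K_0}+(-{\mathcal C}_{K/K_2})+\frac{1}{2}{\mathcal C}_{K_1/K_0}$ is a sum of a positive semi-definite and a negative semi-definite correction, so the operator inequality $-{\mathcal C}_L\geq -{\mathcal C}_{K/K_0}$ does \emph{not} follow. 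It is in fact false: for $\Lambda=(3,3,0,-3,-3)$, $\Lambda'=(3,3,-3,-3)$, $\Lambda''=(3,3,-3,-3)$ one finds $-c_\Lambda=72$, $-c_{\Lambda'}=60$, $-c_{\Lambda''}=36$, hence $-c_L=2\cdot 72-60-18=66<72=-c_\Lambda$.

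Consequently your preliminary cut $-c_\Lambda\leq 18$ is not justified, and your enumeration could in principle miss weights with $-c_\Lambda>18$ that nonetheless produce an eigenvalue $-c_L\leq 18$. The safe bound, used in the other $BC_2$ cases of the paper, is $-{\mathcal C}_L\geq -\frac{1}{2}{\mathcal C}_{K/K_0}$: writing $-{\mathcal C}_L=\sum_{\gamma}\Vert\gamma\Vert_{\mathfrak u}^{-2}\bigl(-(X_{\gamma,i})^2\bigr)$ and noting that every coefficient $\Vert\gamma\Vert_{\mathfrak u}^{-2}\in\{1,2,4\}$ is $\geq 1$, while $-\frac{1}{2}{\mathcal C}_{K/K_0}=-{\mathcal C}_{K/K_0,\langle\,,\,\rangle_{\mathfrak u}}$ has all coefficients equal to $1$. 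This yields $-c_\Lambda\leq 36$, a larger but still finite list, after which your Steps 2 and 3 (branching through $U(5)\supset U(4)\times U(1)\supset U(2)\times U(2)\times U(1)$ and evaluating \eqref{EigenvalueFormulaDIII_2}) go through exactly as you describe.
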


Denote by $\omega_1, \omega_2, \omega_3, \omega_4$
the fundamental weight system of $SU(5)$.

Suppose that $\Lambda=(1,1,1,1,0)$.
Then $\dim V_{\Lambda}=5$.
By the branching law of $(U(5), U(4)\times U(1))$,
$\Lambda^{\prime}=(1,1,1,1,0)\text{ or } (1,1,1,0,1)$,
where $\Lambda^{\prime}=(1,1,1,1,0)\in D(K_2,K_0)$.
By
the branching law of $(U(4), U(2)\times U(2))$,
$\Lambda^{\prime\prime}=(1,1,1,1)\in D(K_1,K_0)$.
Thus $-c_{\Lambda}=8$, $-c_{\Lambda^{\prime}}=4$,
$-c_{\Lambda^{\prime\prime}}=4$ and $-c_L=-2c_{\Lambda}+c_{\Lambda^\prime}+\frac{1}{2}c_{\Lambda^{\prime\prime}}
=10<18$.

On the other hand,
$\Lambda= \Lambda_0+ \omega_4$, where
$\Lambda_0=\frac{4}{5}\sum_{i=1}^{5} y_i$.
The group $K=U(5)=C(U(5))\cdot SU(5)$ acts on
$\dim V_{\Lambda}=5$ and
$V_{\Lambda}\cong \mathbf{C}\otimes \bar{\mathbf{C}}^5$
by $\rho_{\Lambda_0}\boxtimes \bar{\mu}_5$,
where $\bar{\mu}_5$ denotes the conjugate representation of the standard representation of $SU(5)$ on $\mathbf{C}^5$.
For each element $g_0=\left(
                   \begin{array}{ccc}
                     A &  &  \\
                      & B &  \\
                      &  & e^{\sqrt{-1}\theta} \\
                   \end{array}
                 \right)\in K_0
$ and each element
$u\otimes \mathbf{w}\in \mathbf{C}\otimes \bar{\mathbf C}^5$,
where $A,B\in SU(2)$ and $\theta\in \mathbf{R}$,
\begin{equation*}
\begin{split}
\rho_{\Lambda}(g_0)(u\otimes \mathbf{w})=&\rho_{\Lambda_0}(e^{\frac{\sqrt{-1}}{5}\theta}I_5)(u)
\otimes \rho_{\omega_4}(e^{-\frac{\sqrt{-1}}{5}\theta}g_0)\mathbf{w}\\
          &=e^{\frac{4\sqrt{-1}}{5}\theta}u\otimes \left(
                                                     \begin{array}{c}
                                                       e^{\frac{\sqrt{-1}}{5}\theta}\bar{A}\left(
                                                                                             \begin{array}{c}
                                                                                               w_1 \\
                                                                                               w_2 \\
                                                                                             \end{array}
                                                                                           \right)
                                                        \\
                                                       e^{\frac{\sqrt{-1}}{5}\theta}\bar{B}\left(
                                                                                             \begin{array}{c}
                                                                                               w_3 \\
                                                                                               w_4 \\
                                                                                             \end{array}
                                                                                           \right) \\
                                                       e^{-\frac{4\sqrt{-1}}{5}\theta}w_5 \\
                                                     \end{array}
                                                   \right).
\end{split}
\end{equation*}
Hence $(V_{\Lambda})_{K_0}={\rm span}_{\mathbf C}\{ 1\otimes \left(
                                                               \begin{array}{c}
                                                                 0 \\
                                                                 0 \\
                                                                 0 \\
                                                                 0 \\
                                                                 1 \\
                                                               \end{array}
                                                             \right)
\}$.

For a generator $g=\left(
                     \begin{array}{ccccc}
                        &  & 1 & 0 &  \\
                        &  & 0 & 1 &  \\
                       1 & 0 &  &  &  \\
                       0 & -1 & &  &  \\
                        &  &  &  & 1 \\
                     \end{array}
                   \right)
\in K_{[\mathfrak{a}]}\subset K_2$ in $\mathbf{Z}_4$,
\begin{equation*}
\begin{split}
\rho_{\Lambda}(g)(u\otimes \mathbf{e}_5)
&=\rho_{\Lambda_0}(e^{\sqrt{-1}\frac{\pi}{5}}I_5)(u)\otimes \rho_{\omega_4}(e^{-\sqrt{-1}\frac{\pi}{5}}g)(\mathbf{e}_5)\\
&= e^{\sqrt{-1}\frac{4\pi}{5}}u\otimes e^{\sqrt{-1}\frac{\pi}{5}}\mathbf{e}_5
=-u\otimes \mathbf{e}_5.
\end{split}
\end{equation*}
So $(V_{\Lambda})_{K_{[\mathfrak a]}}=\{0\}$, i.e.,
$\Lambda=(1,1,1,1,0)\not\in D(K,K_{[\mathfrak a]})$.
Similarly, we get $\Lambda=(0,-1,-1,-1,-1)\not\in D(K,K_{[\mathfrak a]})$.

Suppose that $\Lambda=(1,1,0,0,0)$.
Then $\dim V_{\Lambda}=10$.
By the branching law of $(U(5), U(4)\times U(1))$,
$\Lambda^{\prime}=(1,1,0,0,0)\text{ or } (1,0,0,0,1)$,
where $\Lambda^{\prime}=(1,1,0,0,0) \in D(K_2,K_0)$.
By the branching law of $(U(4), U(2)\times U(2))$,
$\Lambda^{\prime\prime}=(1,1,0,0), (0,0,1,1)\text{ or } (1,0,1,0)$,
where $\Lambda^{\prime\prime}=(1,1,0,0)\text{ or } (0,0,1,1)\in D(K_1,K_0)$.
Thus $-c_{\Lambda}=8$, $-c_{\Lambda^{\prime}}=6$,
$-c_{\Lambda^{\prime\prime}}=2$ and $-c_L=-2c_{\Lambda}+c_{\Lambda^\prime}+\frac{1}{2}c_{\Lambda^{\prime\prime}}
=9<18$.

On the other hand, $\Lambda= \Lambda_0+ \omega_2$, where $\Lambda_0=\frac{2}{5}\sum_{i=1}^{5} y_i$.
$V_{\Lambda}\cong \mathbf{C}\oplus \wedge^2\mathbf{C}^5$.
Let $\{\mathbf{e}_1, \mathbf{e}_2, \mathbf{e}_3,\mathbf{e}_4,\mathbf{e}_5\}$ be the standard basis of ${\mathbf C}^5$.
For each element $g_0\in K_0$ expressed as above and
each element $u\otimes \mathbf{e}_i\wedge \mathbf{e}_j \in V_{\Lambda}$
$(1\leq i <j \leq 5 )$,
\begin{equation*}
\begin{split}
\rho_{\Lambda}(g_0)(u\otimes \mathbf{e}_i\wedge \mathbf{e}_j)
&=\rho_{\Lambda_0}(e^{\frac{\sqrt{-1}}{5}\theta}I_5)(u)\otimes
\rho_{\omega_2}(e^{-\frac{\sqrt{-1}}{5}\theta}g_0)(\mathbf{e}_i\wedge \mathbf{e}_j)\\
&= e^{\sqrt{-1}\frac{2}{5}\theta}
u\otimes (e^{-\frac{\sqrt{-1}}{5}\theta}g_0 \mathbf{e}_i \wedge
e^{-\frac{\sqrt{-1}}{5}\theta}g_0 \mathbf{e}_j).
\end{split}
\end{equation*}
It follows from this that
$(V_{\Lambda})_{K_0}={\rm span}_{\mathbf{C}}\{1\otimes (\mathbf{e}_1 \wedge \mathbf{e}_2),
1\otimes (\mathbf{e}_3 \wedge \mathbf{e}_4)\}$.
For the generator $g\in K_{[\mathfrak a]}$ of $\mathbf{Z}_4$ given above,
we have
\begin{eqnarray*}
\rho_{\Lambda}(g)(1\otimes \mathbf{e}_1\wedge\mathbf{e}_2)&=&-1\otimes \mathbf{e}_3\wedge \mathbf{e}_4,\\
\rho_{\Lambda}(g)(1\otimes \mathbf{e}_3\wedge\mathbf{e}_4)&=& 1\otimes \mathbf{e}_1\wedge \mathbf{e}_2.
\end{eqnarray*}
Hence $(V_\Lambda)_{K_{[\mathfrak a]}}=\{0\}$, i.e., $\Lambda=(1,1,0,0,0)\not\in D(K,K_{[\mathfrak a]})$.
Similarly, we get $\Lambda=(0,0,0,-1,-1)\not\in D(K,K_{[\mathfrak a]})$.

Suppose that $\Lambda=(1,0,0,0,-1)$.
Then $\dim V_{\Lambda}=24$.
By the branching law of $(U(5), U(4)\times U(1))$,
$\Lambda^{\prime}=$ $(1,0,0,0,-1)$,
$(1,0,0,-1,0)$,
$(0,0,0,0,0)$ or  $(0,0,0,-1,1)$,
where $\Lambda^{\prime}_1=(1,0,0,-1,0)$,
$\Lambda^{\prime}_2=(0,0,0,0,0) \in D(K_2,K_0)$.
By the branching law of $(U(4), U(2)\times U(2))$,
$\Lambda^{\prime\prime}_1=$ $(1,0,0, -1)$, $(1,-1,0,0)$,
$(0,0,0,0)$, $(0,0,1,-1)$ or $(0,-1,1,0)$,
where $\Lambda^{\prime\prime}_1= (0,0,0,0)\in D(K_1,K_0)$.
Also, $\Lambda^{\prime\prime}_2=(0,0,0,0)\in D(K_1,K_0)$.
Thus $-c_{\Lambda}=10$, $-c_{\Lambda^{\prime}_1}=8$,
$-c_{\Lambda^{\prime\prime}_1}=0$, $-c_L=-2c_{\Lambda}+c_{\Lambda^\prime}+\frac{1}{2}c_{\Lambda^{\prime\prime}}
=12<18$ and $-c_{\Lambda^{\prime}_2}=0$,
$-c_{\Lambda^{\prime\prime}_2}=0$,
$-c_L=20>18$.

On the other hand, $\Lambda=\omega_1+\omega_4$ corresponds to
the adjoint representation of $SU(5)$.
\begin{equation*}
\begin{split}
V_{\Lambda}&={\mathbf C}\otimes (\mathbf{C}\cdot \begin{pmatrix} -\frac{1}{4} I_4 &0 \\ 0&1\end{pmatrix}
\oplus \mathbf{C}\cdot \left(
                         \begin{array}{cc}
                           * & 0 \\
                           0 & 0 \\
                         \end{array}
                       \right)\\
& \oplus \mathbf{C}\cdot \left(
                         \begin{array}{ccc}
                            &  & * \\
                            &  & 0\\
                           * & 0 & 0\\
                         \end{array}
                       \right)
\oplus \mathbf{C}\cdot \left(
                         \begin{array}{ccc}
                            &  & 0 \\
                            &  & *\\
                           0 & * & 0\\
                         \end{array}
                       \right)
)\\
&=V^{\prime}_{(0,0,0,0,0)}\oplus V^{\prime}_{(1,0,0,-1,0)}\oplus V^{\prime}_{(1,0,0,0,-1)}\oplus
V^{\prime}_{(0,0,0,-1,1)}.
\end{split}
\end{equation*}

\begin{equation*}
\begin{split}
(V_{\Lambda})_{K_0}=&\left\{
\begin{pmatrix}
c_1 I_2& & \\
&c_2I_2& \\
 & &c_3
\end{pmatrix}
{\ }\vert{\ } c_1,c_2,c_3\in {\mathbf C}, 2c_1+2c_2+c_3=0 \right\}\\
\subset& V^{\prime}_{(0,0,0,0,0)}\oplus V^{\prime}_{(1,0,0,-1,0)}.
\end{split}
\end{equation*}

By direct calculations, we get that for a generator
$g\in K_{[\mathfrak a]}\subset K_2$ in ${\mathbf Z}_4$ as above,
\begin{equation*}
\mathrm{Ad}(g)
\begin{pmatrix}
c_1 I_2& & \\
&c_2I_2& \\
 & &c_3
\end{pmatrix}
=\begin{pmatrix}
c_2 I_2& & \\
&c_1I_2& \\
 & &c_3
\end{pmatrix}.
\end{equation*}
Hence,
\begin{equation*}
(V_{\Lambda})_{K_{[\mathfrak a]}}= \left\{
\begin{pmatrix}
-\frac{c}{4} \mathrm{I}_4& \\
&c
 \end{pmatrix}
\mid
c\in {\mathbf C} \right\} =V^{\prime}_{(0,0,0,0,0)}.
\end{equation*}
But this $1$-dimensional fixed vector space corresponds to the larger
eigenvalue $20$.

Suppose that $\Lambda=(2,1,1,0,0)$.
Then $\dim V_{\Lambda}=45$.
By the branching law of $(U(5), U(4)\times U(1))$ that
$V_{\Lambda}$ can be decomposed into the following irreducible $K_2=U(4)\times U(1)$-submodules:
$$
V_{\Lambda}=V^{\prime}_{(2,1,1,0,0)}\oplus
V^{\prime}_{(1,1,1,0,1)}
\oplus V^{\prime}_{(2,1,0,0,1)}\oplus V^{\prime}_{(1,1,0,0,2)},
$$
where $\Lambda^{\prime}=(2,1,1,0,0) \in D(K_2,K_0)$.
By the branching law of $(U(4), U(2)\times U(2))$,
$\Lambda^{\prime\prime}=$
$(2,1,1,0)$,
$(2,0,1,1)$,
$(1,1,2,0)$,
$(1,1,1,1)$ or
$(1,0,2,1)$,
where $\Lambda^{\prime\prime}= (1,1,1,1)\in D(K_1,K_0)$.
Thus $-c_{\Lambda}=16$, $-c_{\Lambda^{\prime}}=12$,
$-c_{\Lambda^{\prime\prime}}=4$, $-c_L=-2c_{\Lambda}+c_{\Lambda^\prime}+\frac{1}{2}c_{\Lambda^{\prime\prime}}
=18$.

On the other hand,
since
$V^{\prime}_{(1,1,1,0,1)}\oplus V^{\prime}_{(2,1,0,0,1)}\oplus
V^{\prime}_{(1,1,0,0,2)}$ has no nonzero vectors fixed by $K_0$,
we see that $(V_{\Lambda})_{K_0}\subset V^{\prime}_{(2,1,1,0,0)}$.
Note that $\Lambda^{\prime}=2y_1+y_2+y_3=\sum_{i=1}^4 y_i+y_1-y_4
\in D(K_2,K_0)$ corresponds to
the tensor product of $C(U(4))$ representation with the highest weight $\sum_{i=1}^4 y_i$,
the adjoint representation of $SU(4)$ with the highest weight $y_1-y_4$
and the trivial representation of $U(1)$.
Then for each element $g_0\in K_0$ and each element
$u\otimes X\otimes v\in \mathbf{C}\otimes \mathfrak{su}(4)\otimes
\mathbf{C}\cong V_{\Lambda^{\prime}}$,
\begin{equation*}
\rho_{\Lambda^{\prime}}(g_0)(u\otimes X\otimes v)
=u\otimes \mathrm{Ad}\left(
                                        \begin{array}{cc}
                                          A &  \\
                                           & B \\
                                        \end{array}
                                      \right)(X)
                                      \otimes v.
\end{equation*}
Thus $(V_\Lambda)_{K_0}={\rm span}\{1\otimes \left(
                                               \begin{array}{cc}
                                                 I_2 &  \\
                                                  & -I_2\\
                                               \end{array}
                                             \right)\otimes 1
\}$. For the element $g\in K_{[\mathfrak a]}\subset K_2$,
\begin{equation*}
\rho_{\Lambda^{\prime}}(g)(u\otimes \left(
                                               \begin{array}{cc}
                                                 I_2 &  \\
                                                  & -I_2\\
                                               \end{array}
                                             \right)\otimes v )
=e^{\sqrt{-1}\pi} u \otimes \left(
                                               \begin{array}{cc}
                                                 -I_2 &  \\
                                                  & I_2\\
                                               \end{array}
                                             \right)\otimes v.
\end{equation*}
It follows that $(V_{\Lambda})_{K_{[\mathfrak a]}}=(V_{\Lambda})_{K_0}$,
i.e., $\Lambda=(2,1,1,0,0)\in D(K,K_{[\mathfrak a]})$ with multiplicity $1$.
Similarly, $\Lambda=(0,0,-1,-1,-2) \in D(K,K_{[\mathfrak a]})$ with multiplicity $1$
and it also gives the eigenvalue $18$.

Suppose that $\Lambda=(1,1,0,-1,-1)$.
Then $\dim V_{\Lambda}=75$.
By the branching law of $(U(4), U(2)\times U(2))$,
$V_{\Lambda}$ can be decomposed the following irreducible $K_1=U(4)\times U(1)$-submodules:
$$
V_{\Lambda}=
V^{\prime}_{(1,1,0,-1,-1)}\oplus
V^{\prime}_{(1,1,-1,-1,0)}\oplus
V^{\prime}_{(1,0,0,-1,0)}\oplus
V^{\prime}_{(1,0,-1,-1,1)},
$$
where $\Lambda^{\prime}_1=(1,1,-1,-1,0)$ and $\Lambda^{\prime}_2=(1,0,0,-1,0)\in D(K_2,K_0)$.
For $\Lambda^{\prime}_2$, by the branching law of $(U(4), U(2)\times U(2))$,
$\Lambda^{\prime\prime}_2=$
$(1,0,0,-1)$,
$(1,-1,0,0)$,
$(0,0,-1,-1)$,
$(0,0,0,0)$ or
$(0,-1,1,0)$,
where $\Lambda^{\prime\prime}_2=(0,0,0,0)\in D(K_1,K_0)$.
Therefore, $-c_{\Lambda}=16$, $-c_{\Lambda^{\prime}_2}=8$, $-c_{\Lambda^{\prime\prime}_2}=0$,
and $-c_L=-2c_{\Lambda}+c_{\Lambda^\prime_2}
+\frac{1}{2}c_{\Lambda^{\prime\prime}_2}
=24>18$.
For $\Lambda^{\prime}_1$,
by the branching law of $(U(4), U(2)\times U(2))$,
$\Lambda^{\prime\prime}=$
$(1,1,-1,-1)$,
$(1,0,0,-1)$,
$(1,-1,1,-1)$,
$(0,0,0,0)$,
$(0,-1,1,0)$ or
$(-1,-1,1,1)$,
where
$\Lambda^{\prime\prime}_{11}= (1,1,-1,-1)$,
$\Lambda^{\prime\prime}_{12}= (-1,-1,1,1)$,
$\Lambda^{\prime\prime}_{13}= (0,0,0,0)\in D(K_1,K_0)$.
Thus $-c_{\Lambda}=16$, $-c_{\Lambda^{\prime}}=12$,
$-c_{\Lambda^{\prime\prime}_{11}}=-c_{\Lambda^{\prime\prime}_{12}}=4$,
$-c_{\Lambda^{\prime\prime}_{13}}=0$,
$-c_L=-2c_{\Lambda}+c_{\Lambda^\prime}+\frac{1}{2}c_{\Lambda^{\prime\prime}}
=18, 18$ or $20$.
Moreover, from the above irreducible $K_2$-decomposition of $V_{\Lambda}$
and eigenvalue calculations,
we only need to determine
$\dim(V_{\Lambda})_{K_{[\mathfrak a]}}\cap
(V^{\prime\prime}_{11}\oplus V^{\prime\prime}_{12})$
since the fixed vectors in this subspace by $K_{[\mathfrak a]}$ give
the eigenvalue $18$.
Here we set  $V^{\prime\prime}_{11}:=V^{\prime\prime}_{\Lambda^{\prime\prime}_{11}}$ and
$V^{\prime\prime}_{12}:=V^{\prime\prime}_{\Lambda^{\prime\prime}_{12}}$.

Recall that the irreducible representation of $SU(4)$ with the highest weight
$\Lambda^{\prime}_{1}=y_1+y_2-y_3-y_4=2\omega_2$
can be described as follows (\cite{Fulton-Harris}):
\begin{equation*}
{\rm Sym}^2(\wedge^2 \mathbf{C}^4)=I(Gr_2(\mathbf{C}^4))_2\oplus
V^{\prime}_{\Lambda^{\prime}_1},
\end{equation*}
where
$I(Gr_2(\mathbf{C}^4))_2$,
the ideal of the Grassmannian $Gr_2(\mathbf{C}^4)$,
denotes the space of all homogeneous polynomials of degree $2$
on $\mathbf{P}(\wedge^2 {\mathbf{C}^{4}}^{*})$ that vanish on $Gr_2(\mathbf{C}^4)$.
Here $I(Gr_2(\mathbf{C}^4))_2\cong \wedge^4 \mathbf{C}^4\cong\mathbf{C}$
can be written down explicitly
in terms of a basis $\{\mathbf{e}_1, \mathbf{e}_2, \mathbf{e}_3,\mathbf{e}_4\}$
of $\mathbf{C}^4$:
\begin{equation*}
\begin{split}
I(Gr_2(\mathbf{C}^4))_2&= {\rm span}
\{ (\mathbf{e}_1 \wedge \mathbf{e}_2)\cdot (\mathbf{e}_3 \wedge \mathbf{e}_4)
+ (\mathbf{e}_1 \wedge \mathbf{e}_4)\cdot (\mathbf{e}_2 \wedge \mathbf{e}_3) \\
& {\ } -(\mathbf{e}_1 \wedge \mathbf{e}_3)\cdot (\mathbf{e}_2 \wedge \mathbf{e}_4)
\}.
\end{split}
\end{equation*}
Thus a basis for $V^{\prime}_{\Lambda^{\prime}_1}$ can be given explicitly.
For any element $g_0\in K_0$, denote $g_0^{\prime}=\begin{pmatrix}
A& \\
& B
\end{pmatrix}\in SU(2)\times SU(2)\subset U(4)$.
The representation of $K_0$ on any element $u\otimes X\otimes w
\in \mathbf{C}\otimes V^{\prime}_{\Lambda^{\prime}_1}\otimes \mathbf{C}$ is
\begin{equation*}
\rho_{\Lambda}(g)(u\otimes X \otimes w)
=\rho_0(1)(u)\otimes \rho_{\Lambda^{\prime}_{1}}(g_0^{\prime})(X)\otimes \rho_0(e^{\sqrt{-1}\theta})(w).
\end{equation*}
By direct computations, we obtain
\begin{equation*}
\begin{split}
(V_{\Lambda})_{K_0}\cap V^{\prime}_{\Lambda^{\prime}_1}
={\rm span}_{\mathbf C}\{&{\ } 1\otimes (\mathbf{e}_1\wedge \mathbf{e}_2)\cdot (\mathbf{e}_1\wedge \mathbf{e}_2)\otimes 1,
\\
&{\ }1\otimes (\mathbf{e}_3\wedge \mathbf{e}_4)\cdot (\mathbf{e}_3\wedge \mathbf{e}_4)\otimes 1,\\
&{\ }1\otimes (\mathbf{e}_1\wedge \mathbf{e}_2)\cdot (\mathbf{e}_3\wedge \mathbf{e}_4)\otimes 1
\},
\end{split}
\end{equation*}
where $(\mathbf{e}_1\wedge \mathbf{e}_2)\cdot (\mathbf{e}_1\wedge \mathbf{e}_2)\in V^{\prime\prime}_{11}$,
$(\mathbf{e}_3\wedge \mathbf{e}_4)\cdot (\mathbf{e}_3\wedge \mathbf{e}_4) \in V^{\prime\prime}_{12}$
and $(\mathbf{e}_1\wedge \mathbf{e}_2)\cdot (\mathbf{e}_3\wedge \mathbf{e}_4)\in V^{\prime\prime}_{13}$.
For the generator $g\in K_{[\mathfrak a]}\subset K_2$, denote $g^{\prime}=\left(
                                                                 \begin{array}{cccc}
                                                                    &  & 1 &  \\
                                                                    &  &  & 1 \\
                                                                   1 &  &  &  \\
                                                                    & -1 &  &  \\
                                                                 \end{array}
                                                               \right)
$. The representation of $g$ on $u\otimes X\otimes w$ is
\begin{equation*}
\rho_{\Lambda}(g)(u\otimes X \otimes w)
=\rho_0(e^{\frac{\sqrt{-1}}{4}\pi}I_4)(u)
\otimes \rho_{\Lambda^{\prime}_{1}}(e^{-\frac{\sqrt{-1}}{4}\pi}g^{\prime})(X)
\otimes \rho_0(1)(w).
\end{equation*}
It follows that
\begin{equation*}
\begin{split}
&(V_{\Lambda})_{K_{[\mathfrak a]}}\cap V^{\prime}_{\Lambda^{\prime}_1}
={\rm span}_{\mathbf C}\{1\otimes (\mathbf{e}_1\wedge \mathbf{e}_2)\cdot
(\mathbf{e}_3\wedge \mathbf{e}_4)\otimes 1,\\
& 1\otimes (\mathbf{e}_1\wedge \mathbf{e}_2)\cdot (\mathbf{e}_1\wedge
\mathbf{e}_2)\otimes 1
-1\otimes (\mathbf{e}_3\wedge \mathbf{e}_4)\cdot (\mathbf{e}_3\wedge
\mathbf{e}_4)\otimes 1
\}.
\end{split}
\end{equation*}
In particular, $\Lambda=(1,1,0,-1,-1)\in D(K,K_{[\mathfrak a]})$ and
\begin{equation*}
\begin{split}
&(V_{\Lambda})_{K_{[\mathfrak a]}}\cap
(V^{\prime\prime}_{11}\oplus V^{\prime\prime}_{12})\\
&{\ }={\rm span}_{\mathbf C}\{
 1\otimes (\mathbf{e}_1\wedge \mathbf{e}_2)\cdot
(\mathbf{e}_1\wedge \mathbf{e}_2)\otimes 1
-1\otimes (\mathbf{e}_3\wedge \mathbf{e}_4)\cdot
(\mathbf{e}_3\wedge \mathbf{e}_4)\otimes 1
\}
\end{split}
\end{equation*}
with dimension $1$, which corresponds to the eigenvalue $18$.

Now we obtain that the Gauss image $L^{18}$ is Hamiltonian stable.
Moreover,
\begin{equation*}
\begin{split}
n(L^{18})
=&\dim V_{(0,0,-1,-1,-2)} +\dim V_{(2,1,1,0,0)}+\dim V_{(1,1,0,-1,-1)}\\
=&45+45+75=165
=\dim SO(20)-\dim U(5)=n_{hk}(L^{18}).
\end{split}
\end{equation*}
Hence the Gauss image $L^{18}$ is Hamiltonian rigid.

Therefore, we conclude that the Gauss image $L^{18}$ is Hamiltonian stable.
\begin{thm}
The Gauss image
$L^{18}
={\mathcal G}\left(\frac{U(5)}{(SU(2)\times SU(2)\times U(1))}\right)=\\
\frac{U(5)}{(SU(2)\times SU(2)\times U(1))\cdot \mathbf{Z}_4}
\subset Q_{18}(\mathbf C)$
is strictly Hamiltonian stable.
\end{thm}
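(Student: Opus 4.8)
The plan is to deduce the statement from the Lemma of Section~\ref{Sec_Gauss maps} characterizing Hamiltonian stability and rigidity of a Gauss image $\mathcal{G}(N^n)$ through the spectrum of its Laplacian: $L^{18}=\mathcal{G}(N^{18})$ is Hamiltonian stable exactly when the first positive eigenvalue of $-\mathcal{C}_L$ equals $n=18$, and it is Hamiltonian rigid exactly when $\dim\mathcal{V}$ equals the multiplicity of that eigenvalue; combined with stability, the latter is strict Hamiltonian stability. Since $\mathrm{Ad}_{\mathfrak p}K$ is the maximal subgroup of $SO(20)$ preserving $L^{18}$, one has $\dim\mathcal{V}=n_{hk}(\mathcal{G})=\dim SO(20)-\dim U(5)=190-25=165$. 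So everything reduces to two spectral facts: that $-\mathcal{C}_L$ has no eigenvalue strictly below $18$ on $C^\infty(L^{18})$, and that the $18$-eigenspace on $L^{18}$ has dimension exactly $165$.

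First I would organize the harmonic analysis on $K/K_0=U(5)/(SU(2)\times SU(2)\times U(1))$. As $(U,K)$ is of type $BC_2$ with the chain $K_0=SU(2)\times SU(2)\times U(1)\subset K_1=U(2)\times U(2)\times U(1)\subset K_2=U(4)\times U(1)\subset K=U(5)$, Lemma~\ref{LaplaceOperator} expresses $\mathcal{C}_L=\mathcal{C}_{K/K_0}+\mathcal{C}_{K/K_2}+\frac{1}{2}\mathcal{C}_{K_1/K_0}$ for the trace metric, and the three Casimir eigenvalues on each irreducible follow from Freudenthal's formula. Iterating the branching laws for $(U(5),U(4)\times U(1))$, $(U(4),U(2)\times U(2))$ and $(U(2),SU(2))$ decomposes each $V_\Lambda$ into $K_0$-irreducibles and identifies $(V_\Lambda)_{K_0}$; imposing $-c_L\le18$ in the explicit eigenvalue formula~\eqref{EigenvalueFormulaDIII_2} cuts $D(K,K_0)$ down to the eight highest weights $(p_1,\dots,p_5)$ listed in the Lemma immediately preceding the theorem.

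Next, for each of those eight weights I would realize $V_\Lambda$ concretely --- via exterior powers of $\mathbf{C}^5$, the adjoint module of $SU(5)$, or $\mathrm{Sym}^2(\wedge^2\mathbf{C}^4)$ --- write $(V_\Lambda)_{K_0}$ explicitly, and test each fixed vector against a chosen generator of the deck group $\mathbf{Z}_4\cong K_{[\mathfrak a]}/K_0$, realized as conjugation by a suitable permutation-type matrix in $K_2$ twisted by a root of unity. The expected outcome is that $(1,1,1,1,0)$, $(0,-1,-1,-1,-1)$, $(1,1,0,0,0)$, $(0,0,0,-1,-1)$ lose all $K_0$-fixed vectors under $\mathbf{Z}_4$; that $(1,0,0,0,-1)$ retains a one-dimensional fixed space, but inside the summand carrying the larger eigenvalue $20$; and that $(2,1,1,0,0)$, $(0,0,-1,-1,-2)$, $(1,1,0,-1,-1)$ each retain a one-dimensional $K_{[\mathfrak a]}$-fixed space at eigenvalue $18$. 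This gives $\lambda_1(L^{18})=18=\kappa$ (Hamiltonian stability) and $n(L^{18})=\dim V_{(2,1,1,0,0)}+\dim V_{(0,0,-1,-1,-2)}+\dim V_{(1,1,0,-1,-1)}=45+45+75=165=n_{hk}(L^{18})$ (Hamiltonian rigidity), whence strict Hamiltonian stability by definition.

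The main obstacle will be the borderline weight $\Lambda=(1,1,0,-1,-1)$, where $-c_L$ attains $18$ only on the two $U(2)\times U(2)$-summands $V''_{11},V''_{12}$ coming from the splitting $\mathrm{Sym}^2(\wedge^2\mathbf{C}^4)=I(Gr_2(\mathbf{C}^4))_2\oplus V'_{\Lambda'_1}$ of the pertinent $SU(4)$-representation, while the remaining fixed vector sits in the summand of eigenvalue $20$. One must pick an explicit basis of the three-dimensional space $(V_\Lambda)_{K_0}\cap V'_{\Lambda'_1}$ in terms of products $(\mathbf{e}_i\wedge\mathbf{e}_j)\cdot(\mathbf{e}_k\wedge\mathbf{e}_l)$, compute how the $\mathbf{Z}_4$-generator permutes them, and check that the surviving $K_{[\mathfrak a]}$-fixed line inside $V''_{11}\oplus V''_{12}$ is exactly one-dimensional, so that it contributes to the $18$-eigenspace and not to the $20$-eigenspace. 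The parallel, lighter check for $\Lambda=(1,0,0,0,-1)$ --- verifying that its unique surviving fixed vector lies in the trivial-$SU(4)$ summand with eigenvalue $20$ and hence never pulls $\lambda_1$ below $18$ --- is the other delicate point.
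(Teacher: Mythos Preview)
Your proposal is correct and follows essentially the same approach as the paper: reduce to the eigenvalue criterion, use the branching laws for $(U(5),U(4)\times U(1))$ and $(U(4),U(2)\times U(2))$ to cut $D(K,K_0)$ down to the eight candidate weights, realize each module concretely, test the $K_0$-fixed vectors against a generator of $K_{[\mathfrak a]}/K_0\cong\mathbf{Z}_4$, and verify that only $(2,1,1,0,0)$, $(0,0,-1,-1,-2)$, $(1,1,0,-1,-1)$ survive with multiplicity one at eigenvalue $18$, giving $n(L^{18})=45+45+75=165=n_{hk}(L^{18})$. You have even correctly identified the two most delicate points --- the splitting of $\mathrm{Sym}^2(\wedge^2\mathbf{C}^4)$ for $\Lambda=(1,1,0,-1,-1)$ and the placement of the surviving fixed vector for $\Lambda=(1,0,0,0,-1)$ in the eigenvalue-$20$ summand --- exactly as the paper handles them.
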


\section{The case $(U,K)=(SO(m+2), SO(2)\times SO(m))$ $(m\geq 3)$}
\label{Sec_BDI_2}

In this case $(U,K)$ is of type $B_2$.
The canonical decomposition ${\mathfrak u}={\mathfrak k}+{\mathfrak p}$
of ${\mathfrak u}=\mathfrak{o}(m+2)$
and a maximal abelian subspace  ${\mathfrak a}$ of ${\mathfrak p}$
are given as
\begin{equation*}
\begin{split}
\mathfrak{k}&=\Bigl\{
\begin{pmatrix}
T_{1}&0\\
0&T_{2}
\end{pmatrix}
\mid
T_{1}\in{{\mathfrak o}(2)}, T_{2}\in{{\mathfrak o}(m)}\Bigr\}
={\mathfrak o}(2)+{\mathfrak o}(m),\\
{\mathfrak p}&=
\Bigl\{
\begin{pmatrix}
0&-^{t}X\\
X&0
\end{pmatrix}
\mid
X\in{M(m,2;{\mathbf R})}\Bigr\}, \\
{\mathfrak a}&=
\Bigl\{
H=H(\xi_{1},\xi_{2})=
\begin{pmatrix}
0&-^{t}\xi&0\\
\xi&0&0\\
0&0&0
\end{pmatrix}
\mid
\xi=
\begin{pmatrix}
\xi_{1}&0\\
0&\xi_{2}
\end{pmatrix},
\xi_{1},\xi_{2}\in{\mathbf R}\Bigr\}.
\end{split}
\end{equation*}
Then
\begin{equation*}
\begin{split}
K_0=
&
\,
\left\{
\begin{pmatrix}
\pm\mathrm{I}_4&0\\
0& T
\end{pmatrix}
\mid T\in SO(m-2)
\right\}
\\
\cong &
\,
\mathbf{Z}_2 \times SO(m-2).
\end{split}
\end{equation*}
Moreover
\begin{equation*}
K_{[{\mathfrak a}]}
\cong({\mathbf Z}_2\times SO(m-2))\cdot{\mathbf Z}_{4}
\end{equation*}
consists of all elements
\begin{equation*}
a=
\begin{pmatrix}
A&0&0\\
0&B&0\\
0&0&B^{\prime}
\end{pmatrix} \in K=SO(2)\times SO(m),
\end{equation*}
where
\begin{equation*}
\begin{split}
(A,B)=
&
\left(
\begin{pmatrix}
1&0\\
0&1
\end{pmatrix},
\begin{pmatrix}
1&0\\
0&1
\end{pmatrix}
\right),
\left(
\begin{pmatrix}
-1&0\\
0&-1
\end{pmatrix},
\begin{pmatrix}
1&0\\
0&1
\end{pmatrix}
\right),\\
&
\left(
\begin{pmatrix}
1&0\\
0&1
\end{pmatrix},
\begin{pmatrix}
-1&0\\
0&-1
\end{pmatrix}
\right),
\left(
\begin{pmatrix}
-1&0\\
0&-1
\end{pmatrix},
\begin{pmatrix}
-1&0\\
0&-1
\end{pmatrix}
\right),\\
&
\left(
\begin{pmatrix}
0&-1\\
1&0
\end{pmatrix},
\begin{pmatrix}
0&1\\
1&0
\end{pmatrix}
\right),
\left(
\begin{pmatrix}
0&1\\
-1&0
\end{pmatrix},
\begin{pmatrix}
0&1\\
1&0
\end{pmatrix}
\right),\\
&
\left(
\begin{pmatrix}
0&-1\\
1&0
\end{pmatrix},
\begin{pmatrix}
0&-1\\
-1&0
\end{pmatrix}
\right),
\left(
\begin{pmatrix}
0&1\\
-1&0
\end{pmatrix},
\begin{pmatrix}
0&-1\\
-1&0
\end{pmatrix}
\right).
\end{split}
\end{equation*}
Here note that
$K_{[{\mathfrak a}]}\not\subset K_{1}=SO(2)\times SO(2)\times SO(m-2)$.
Thus the deck transformation group of the covering map
${\mathcal G}:
N^{2m-2} \rightarrow {\mathcal G}(N^{2m-2})$ is equal to
$K_{[{\mathfrak a}]}/K_{0}\cong{\mathbf Z}_{4}$.

\subsection{Description of the Casimir operator}

Denote $\langle X, Y\rangle_{\mathfrak u}:= -\frac{1}{2} {\rm tr}XY$ for each  $X,Y\in{\mathfrak u}=\mathfrak{o}(m+2)$.
The restricted root system $\Sigma(U,K)$ of type $B_2$,
can be given as follows (\cite{Bourbaki}):
\begin{equation*}
\Sigma^{+}(U,K)=\{
\varepsilon_1-\epsilon_2=\alpha_1,\,
\varepsilon_2=\alpha_2,\,
\varepsilon_1+\epsilon_2=\alpha_1+2\alpha_2,\,
\varepsilon_1=\alpha_1+\alpha_2\}.
\end{equation*}
Then, relative to the above inner product $\langle\,,\,\rangle_{\mathfrak u}$,
the square length of any restrict root $\gamma\in\Sigma(U,K)$ is
$\Vert\gamma\Vert_{\mathfrak u}^{2}=1$ or $2$.
Hence the Casimir operator $\mathcal{C}_L$ of $L$ with respect to
the induced metric from $Q_{2m-2}({\mathbf C})$ is given as follows:
\begin{equation}\label{CasimirOperatorBDI_2}
\begin{split}
{\mathcal C}_L
&=\frac{2}{\Vert \gamma_0\Vert^2_{\mathfrak u}}
\mathcal{C}_{K/K_0, \langle\,,\,\rangle_{\mathfrak{u}}}
-\frac{1}{\Vert \gamma_0\Vert^2_{\mathfrak u}}
\mathcal{C}_{K_1/K_0, \langle\,,\,\rangle_{\mathfrak{u}}}\\
&={\ }{\mathcal C}_{K/K_{0}}
-\frac{1}{2}{\ }{\mathcal C}_{K_{1}/K_{0}},
\end{split}
\end{equation}
where
$K=SO(2)\times SO(m)\supset K_1=SO(2)\times SO(2)\times SO(m-2)\supset K_0=\mathbf{Z}_2\times SO(m-2)$
and
${\mathcal C}_{K/K_{0}}$,
${\mathcal C}_{K_{1}/K_{0}}$
denote the Casimir operators of  $K/K_{0}$ and $K_{1}/K_{0}$ relative to
$\langle{{\ },{\ }}\rangle_{\mathfrak u}\vert_{\mathfrak k}$
and
$\langle{{\ },{\ }}\rangle_{\mathfrak u}\vert_{{\mathfrak k}_{1}}$,
respectively.

\subsection{Branching laws for $(SO(n+2), SO(2)\times SO(n))$}
We need the branching laws for $(SO(n+2), SO(2)\times SO(n))$
by Tsukamoto (\cite{Tsukamoto}).

\begin{lem}[Branching laws for $(SO(2p+2), SO(2)\times SO(2p)), p\geq 1$]
\label{BranchingLawSO(2p+2)SO(2)XSO(2p)}
Let $\Lambda=h_0\varepsilon_0+h_1\varepsilon_1+\cdots+h_{p-1}\varepsilon_{p-1}
+\epsilon h_{p}\varepsilon_p \in D(SO(2p+2))$,
where $\epsilon=1$ or $-1$ and $h_0,h_1,\cdots,h_p$ are integers satisfying
\begin{equation}\label{D(SO(2p+2))}
h_0\geq h_1\geq \cdots \geq h_p\geq 0
\end{equation}
and
$\Lambda^{\prime}
=k_0\varepsilon_0+k_1\varepsilon_1+\cdots+k_{p-1}\varepsilon_{p-1}
+\epsilon^{\prime} k_{p}\varepsilon_p \in{D(SO(2)\times SO(2p))}
$,
where $\epsilon^{\prime}=1$ or $-1$ and $k_0,k_1,\cdots,k_p$ are integers satisfying
\begin{equation}\label{D(SO(2)xSO(2p))}
k_1\geq \cdots \geq k_p\geq 0.
\end{equation}
The irreducible decomposition of $V_{\Lambda}$ as a
$SO(2)\times SO(2p)$-module contains an irreducible
$SO(2)\times SO(2p)$-module $V^{\prime}_{\Lambda^{\prime}}$ if and only if
\begin{equation*}
\begin{split}
& h_{i-1}\geq k_i \geq h_{i+1} \quad (1\leq i \leq p-1),\\
& h_{p-1}\geq k_p\geq 0,
\end{split}
\end{equation*}
and the coefficient of $X^{k_0}$ in the finite power series
\begin{equation*}
X^{\epsilon\epsilon^{\prime}l_p}\prod_{i=0}^{p-1}\frac{X^{l_i+1}-X^{-l_i-1}}{X-X^{-1}}
\end{equation*}
does not vanish,
where
\begin{equation}\label{eq:l}
\begin{split}
l_0&:=h_0-\max\{h_1, k_1\},\\
l_i&:=\min\{h_i, k_i\}-\max\{h_{i+1}, k_{i+1}\} \quad (1\leq i\leq p-1),\\
l_p&:=\min\{h_p, k_p\}.
\end{split}
\end{equation}
Moreover, the coefficient of $X^{k_0}$ is equal to
the multiplicity of $V^{\prime}_{\Lambda^{\prime}}$
appearing in the irreducible decomposition.
\end{lem}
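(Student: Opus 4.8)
Since this lemma is Tsukamoto's branching theorem, in the paper it is simply quoted from \cite{Tsukamoto}; the sketch below indicates the route one would take to reprove it. Because $SO(2)\times SO(2p)$ has the same rank $p+1$ as $SO(2p+2)$, the plan is to fix a common maximal torus $T$ with coordinates $\varepsilon_0,\varepsilon_1,\dots,\varepsilon_p$ ($\varepsilon_0$ the $\mathfrak{so}(2)$-direction) and to compute
\[
m(\Lambda,\Lambda'):=\dim\operatorname{Hom}_{SO(2)\times SO(2p)}\bigl(V'_{\Lambda'},\,V_\Lambda|_{SO(2)\times SO(2p)}\bigr)
\]
by restriction of characters. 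Writing the Weyl character formula, $\chi_\Lambda=A_{\Lambda+\delta}/A_\delta$ for $SO(2p+2)$ (a $W(D_{p+1})$-alternant over the Weyl denominator $A_\delta$) and $\chi'_{\Lambda'}=X^{k_0}\,A'_{\Lambda'_1+\delta'}/A'_{\delta'}$ for $SO(2)\times SO(2p)$, where $X:=e^{\varepsilon_0}$, $\Lambda'_1=k_1\varepsilon_1+\cdots+\epsilon'k_p\varepsilon_p$ is the $SO(2p)$-part of $\Lambda'$, and $A'$ denotes the Weyl numerator/denominator of $SO(2)\times SO(2p)$ (which involves only the $SO(2p)$-torus, since $SO(2)$ has no roots and contributes only the monomial $X^{k_0}$), one obtains $m(\Lambda,\Lambda')$ as the coefficient of $X^{k_0}$ in a Laurent polynomial assembled from $A_{\Lambda+\delta}$, $A'_{\Lambda'_1+\delta'}$ and the quotient of Weyl denominators $A_\delta/A'_{\delta'}$.

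The structural point is that the positive roots of $SO(2p+2)$ not lying in $SO(2)\times SO(2p)$ are exactly the $2p$ roots $\varepsilon_0\pm\varepsilon_j$, $1\le j\le p$, so that
\[
\frac{A_\delta}{A'_{\delta'}}=\prod_{j=1}^{p}\bigl(e^{(\varepsilon_0+\varepsilon_j)/2}-e^{-(\varepsilon_0+\varepsilon_j)/2}\bigr)\bigl(e^{(\varepsilon_0-\varepsilon_j)/2}-e^{-(\varepsilon_0-\varepsilon_j)/2}\bigr).
\]
First I would substitute $A_{\Lambda+\delta}=\sum_{w\in W(D_{p+1})}\operatorname{sgn}(w)\,e^{w(\Lambda+\delta)}$, expand $(A_\delta/A'_{\delta'})^{-1}$ formally, and pair the $(\varepsilon_1,\dots,\varepsilon_p)$-dependence against $\overline{A'_{\Lambda'_1+\delta'}}$ over the $SO(2p)$-torus; this annihilates all terms except those landing on $W(D_p)$-translates of $\Lambda'_1+\delta'$, and reduces $m(\Lambda,\Lambda')$ to a Kostant-type partition-function count for the root configuration $\{\varepsilon_0\pm\varepsilon_j\}$. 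Equivalently, and more transparently, one may run the two successive multiplicity-free branchings $SO(2p+2)\downarrow SO(2p+1)\downarrow SO(2p)$, each governed by an interlacing (Gelfand--Tsetlin) pattern, and record the $\mathfrak{so}(2)$-charge $k_0$ as an explicit linear functional of the intermediate $SO(2p+1)$-row. In either picture, because the extra roots are ``coordinate-separated'' ($\varepsilon_j$ occurs only in $\varepsilon_0\pm\varepsilon_j$), the free intermediate parameters vary independently in intervals, and summing $X^{k_0}$ over these independent choices produces, slot by slot, the factor
\[
\frac{X^{l_j+1}-X^{-l_j-1}}{X-X^{-1}}=X^{l_j}+X^{l_j-2}+\cdots+X^{-l_j},
\]
with $l_j$ the ``slack'' between consecutive entries ($h_0-\max\{h_1,k_1\}$ at the top, $\min\{h_j,k_j\}-\max\{h_{j+1},k_{j+1}\}$ in the middle); the interlacing inequalities $h_{i-1}\ge k_i\ge h_{i+1}$ ($1\le i\le p-1$) and $h_{p-1}\ge k_p\ge 0$ are precisely the conditions under which every $l_j\ge 0$ and the count is nonzero. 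The last coordinate $\varepsilon_p$ is special: since type-$D$ highest weights carry a sign on the final entry, the two choices $\epsilon,\epsilon'\in\{\pm1\}$ only flip that entry, so the corresponding slot contributes the single monomial $X^{\epsilon\epsilon'l_p}$ with $l_p=\min\{h_p,k_p\}$ rather than a full $SU(2)$-character. Reading off the coefficient of $X^{k_0}$ at the end yields the stated formula.

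I expect the main obstacle to be the bookkeeping of the type-$D_{p+1}$ Weyl sum: unlike type $A$, the even sign changes in $W(D)$ must be matched carefully against the $\pm$ pairs $\varepsilon_0\pm\varepsilon_j$, and the degenerate configurations --- some $h_i$ or $k_i$ equal to $0$, or $\Lambda$ or $\Lambda'$ lying on a wall so that its $W(D)$-orbit is short --- have to be handled separately so that the clean product over $j=0,\dots,p-1$, together with the single free factor $X^{\epsilon\epsilon'l_p}$, still emerges. Organizing everything through the double interlacing pattern reduces the whole identity to the elementary count of lattice points in an interval and to the fact that such a count, weighted by $X$ raised to a linear functional, is an $SU(2)$-character; but for the purposes of this paper it suffices to invoke \cite{Tsukamoto} directly.
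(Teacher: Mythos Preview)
Your assessment is correct: the paper does not prove this lemma at all but simply quotes it from Tsukamoto \cite{Tsukamoto}, exactly as you surmised. The sketch you supply (Weyl character restriction with the quotient $A_\delta/A'_{\delta'}$ over the $2p$ roots $\varepsilon_0\pm\varepsilon_j$, or equivalently the two-step Gelfand--Tsetlin branching $SO(2p+2)\downarrow SO(2p+1)\downarrow SO(2p)$) is a faithful outline of Tsukamoto's argument and goes well beyond what the paper itself records.
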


\begin{lem}[Branching laws for $(SO(2p+3), SO(2)\times SO(2p+1)), p\geq 1$]
\label{BranchingLawSO(2p+3)SO(2)XSO(2p+1)}
Let
$\Lambda=h_0\varepsilon_0+h_1\varepsilon_1+\cdots+h_{p-1}\varepsilon_{p-1}
+ h_{p}\varepsilon_p \in D(SO(2p+3))$,
where $h_0,h_1,\cdots,h_p$ are integers satisfying \eqref{D(SO(2p+2))}
and
$\Lambda^{\prime}
=k_0\varepsilon_0+k_1\varepsilon_1+\cdots+k_{p-1}\varepsilon_{p-1}
+ k_{p}\varepsilon_p \in{D(SO(2)\times SO(2p+1))}
$,
where
$k_0,k_1,\cdots,k_p$ are integers satisfying \eqref{D(SO(2)xSO(2p))}.
The irreducible decomposition of $V_{\Lambda}$ as a
$SO(2)\times SO(2p+1)$-module contains an irreducible
$SO(2)\times SO(2p+1)$-module $V^{\prime}_{\Lambda^{\prime}}$ if and only if
\begin{equation*}
\begin{split}
& h_{i-1}\geq k_i \geq h_{i+1}, \quad (1\leq i \leq p-1)\\
& h_{p-1}\geq k_p\geq 0,
\end{split}
\end{equation*}
and the coefficient of $X^{k_0}$ in the finite power series
\begin{equation*}
\left(\prod_{i=0}^{p-1}\frac{X^{l_i+1}-X^{- {l_i}-1}}{X-X^{-1}}\right)
\frac{X^{l_p+\frac{1}{2}}-X^{-l_p-\frac{1}{2}}}{X^{\frac{1}{2}}-X^{-\frac{1}{2}}}
\end{equation*}
does not vanish, where
integers $l_0, l_1, \cdots, l_p$ are defined by \eqref{eq:l}.
Moreover, the coefficient of $X^{k_0}$ is equal to the multiplicity of $V^{\prime}_{\Lambda^{\prime}}$
appearing in the irreducible decomposition.
\end{lem}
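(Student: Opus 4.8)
The plan is to deduce the odd-rank statement from Zhelobenko's classical branching rule for $SO(N)\downarrow SO(N-1)$ together with the even-rank case already recorded in Lemma~\ref{BranchingLawSO(2p+2)SO(2)XSO(2p)}. Fix an orthonormal basis $e_1,\dots,e_{2p+3}$ so that $SO(2)$ rotates $\langle e_1,e_2\rangle$, $SO(2p+1)=SO(\langle e_3,\dots,e_{2p+3}\rangle)$, $SO(2p)=SO(\langle e_3,\dots,e_{2p+2}\rangle)$ and $SO(2p+2)=SO(\langle e_1,\dots,e_{2p+2}\rangle)$. Then $SO(2)\times SO(2p)$ is a common subgroup of $SO(2p+3)$, and restriction of an irreducible $SO(2p+3)$-module $V_\Lambda$ to $SO(2)\times SO(2p)$ can be carried out in two ways, via $SO(2p+2)$ or via $SO(2)\times SO(2p+1)$; comparing the two will pin down the unknown branching $V_\Lambda\downarrow SO(2)\times SO(2p+1)$.

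First I would compute the route through $SO(2p+2)$. Here $V_\Lambda\downarrow SO(2p+2)$ is multiplicity-free and governed by interlacing (Zhelobenko), in the precise form that also records the sign label of the last coordinate; applying Lemma~\ref{BranchingLawSO(2p+2)SO(2)XSO(2p)} to each summand then gives, for each admissible pair $(k_0,\Lambda'')$ with $\Lambda''$ an $SO(2)\times SO(2p)$-highest weight, an explicit multiplicity written through the generating function and the integers $l_i$ of \eqref{eq:l}.

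Next I would use the route through $SO(2)\times SO(2p+1)$. Restriction $SO(2)\times SO(2p+1)\downarrow SO(2)\times SO(2p)$ is the classical $SO(2p+1)\downarrow SO(2p)$ rule on the second factor: multiplicity-free, and triangular for the dominance order on $SO(2p)$-weights, the ``leading'' $SO(2p)$-constituent of $V_{\Lambda'}$ being $\Lambda'$ itself. Hence the sought multiplicities $m(k_0,\Lambda')$ of $\chi_{k_0}\boxtimes V_{\Lambda'}$ in $V_\Lambda\downarrow SO(2)\times SO(2p+1)$ are uniquely recovered, by descending induction on $\Lambda'$, from the $SO(2)\times SO(2p)$-decomposition found in the previous step. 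Performing this inversion amounts to a telescoping of binomial-type generating functions, which after simplification produces exactly
\[
\Big(\prod_{i=0}^{p-1}\frac{X^{l_i+1}-X^{-l_i-1}}{X-X^{-1}}\Big)\,
\frac{X^{l_p+\frac12}-X^{-l_p-\frac12}}{X^{\frac12}-X^{-\frac12}} ,
\]
the last factor $X^{l_p}+X^{l_p-1}+\dots+X^{-l_p}$ appearing as the $SO(3)\downarrow SO(2)$ fiber character, in contrast with the single monomial $X^{\epsilon\epsilon'l_p}$ of the even case, which is an $SO(2)\downarrow SO(2)$ fiber twisted by an orientation sign.

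I expect the main obstacle to lie in the orthogonal-group sign bookkeeping: making Zhelobenko's rule and Lemma~\ref{BranchingLawSO(2p+2)SO(2)XSO(2p)} interface correctly forces a careful treatment of the labels $\epsilon,\epsilon'$ and of self-conjugate weights, and one must check that no modification-rule corrections occur, which holds here because all weights stay in the stable range. The residual generating-function identity is elementary but lengthy. If one prefers to avoid the reduction to the even case, one may instead argue directly: under $\mathfrak{so}(2)\oplus\mathfrak{so}(2p+1)$ the complement of the subalgebra in $\mathfrak{so}(2p+3)$ is $\mathbf{R}^2\otimes\mathbf{R}^{2p+1}$, i.e.\ two copies of the standard $SO(2p+1)$-module carrying $SO(2)$-weights $\pm1$; these act on $V_\Lambda$ as $SO(2p+1)$-equivariant raising and lowering operators shifting the $SO(2)$-weight by $\pm1$, and tracking them on the multiplicity space $\mathrm{Hom}_{SO(2p+1)}(V_{\Lambda'},V_\Lambda)$ (whose dimension is the two-step Zhelobenko multiplicity) exhibits it as the $U(1)$-module with the displayed character.
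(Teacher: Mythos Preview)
The paper does not prove this lemma at all: immediately before Lemmas~\ref{BranchingLawSO(2p+2)SO(2)XSO(2p)} and~\ref{BranchingLawSO(2p+3)SO(2)XSO(2p+1)} it simply writes ``We need the branching laws for $(SO(n+2), SO(2)\times SO(n))$ by Tsukamoto \cite{Tsukamoto}'' and then states both results without argument. So there is no proof in the paper to compare against; your proposal supplies content the authors deliberately outsourced.

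That said, your strategy is sound. Restricting $V_\Lambda$ to $SO(2)\times SO(2p)$ along the two chains $SO(2p+3)\supset SO(2p+2)\supset SO(2)\times SO(2p)$ and $SO(2p+3)\supset SO(2)\times SO(2p+1)\supset SO(2)\times SO(2p)$ and equating the results is a standard device; the second step of the second chain is the classical $SO(2p+1)\downarrow SO(2p)$ rule, which is multiplicity-free and unitriangular for the interlacing order, so the inversion you describe is well-posed and recovers $m(k_0,\Lambda')$ uniquely. The generating-function identity that emerges is indeed the replacement of the monomial $X^{\epsilon\epsilon'l_p}$ by the $SO(3)\downarrow SO(2)$ character $(X^{l_p+1/2}-X^{-l_p-1/2})/(X^{1/2}-X^{-1/2})$, exactly as you say. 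The sign-and-self-conjugacy bookkeeping you flag is real but routine in this range. Your alternative direct argument via the $\mathbf{R}^2\otimes\mathbf{R}^{2p+1}$ complement acting as $SO(2)$-weight $\pm1$ ladder operators is also viable and is closer to the style of Tsukamoto's paper, which is aimed at spectral computations on $SO(n+2)/(SO(2)\times SO(n))$.
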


\subsection{Description of $D(K, K_0)$ and eigenvalue computations}
\noindent

For $m=2p {\ } (p\geq 2)$ or $m=2p+1 {\ }(p\geq 1)$,
each $\tilde{\Lambda}\in D(K)=D(SO(2)\times SO(m))$ can be expressed as
\begin{equation*}
\tilde{\Lambda}= k_0\varepsilon_0+k_1\varepsilon_1+\cdots+k_p\varepsilon_p,
\end{equation*}
where $k_0\varepsilon_0\in D(SO(2))$,
$\Lambda:=k_1\varepsilon_1+\cdots+k_p\varepsilon_p\in D(SO(m))$
and $k_0,k_1,\cdots,k_p\in{\mathbf Z}$ satisfying
\begin{equation*}
\begin{split}
&k_{1}\geq k_{2}\geq\cdots\geq k_{p-1}\geq\vert{k_p}\vert
\quad\text{ if }m=2p, \\
&k_{1}\geq k_{2}\geq\cdots\geq k_{p-1}\geq k_p \geq 0
\quad\text{ if }m=2p+1.
\end{split}
\end{equation*}
Then we have
\begin{equation*}
\tilde{V}_{\tilde{\Lambda}}=
U_{k_0\varepsilon_0}\otimes V_{\Lambda}.
\end{equation*}

Note that
\begin{equation*}
\begin{split}
D(K,K_{0})&=
D(SO(2)\times SO(m),{\mathbf Z}_2\times SO(m-2))\\
&\subset
D(SO(2)\times SO(m),SO(m-2)),\\
D(K_{1},K_{0})&=
D(SO(2)\times SO(2)\times SO(m-2),{\mathbf Z}_2\times SO(m-2))\\
&\subset D(SO(2)\times SO(2)\times SO(m-2), SO(m-2)).
\end{split}
\end{equation*}

By applying Lemmas \ref{BranchingLawSO(2p+2)SO(2)XSO(2p)}
and \ref{BranchingLawSO(2p+3)SO(2)XSO(2p+1)}
to both cases $(SO(2p), SO(2)\times SO(2p-2))$
and $(SO(2p), SO(2)\times SO(2p-1))$,
we can describe $D(K,K_0)$ as follows:
\begin{lem}\label{D(SO(2)XSO(m),Z_2XSO(m-2))}
Assume that $p\geq 2$.
Let $\tilde{\Lambda}\in D(K)$.
Then
an irreducible $K$-module $\tilde{V}_{\tilde{\Lambda}}$ with the highest weight
$\tilde{\Lambda}$ contains an irreducible $K_1$-module
$\tilde{V}^{\prime}_{\tilde{\Lambda}^{\prime}}$ with the highest weight
$\tilde{\Lambda}^{\prime}\in D(K_1)$ satisfying
$(\tilde{V}^{\prime}_{\tilde{\Lambda}^{\prime}})_{K_0}\not=\{0\}$
if and only if
\begin{equation*}
\begin{split}
&\tilde{\Lambda}=k_0\varepsilon_0+k_1\varepsilon_1+k_2\varepsilon_2
\in D(K), \\
&\tilde{\Lambda}^{\prime}=k_0\varepsilon_0+k^{\prime}_1\varepsilon_1
\in D(K_1),
\end{split}
\end{equation*}
where $k_0, k_1, k_2, k^{\prime}_1\in{\mathbf Z}$, $k_1\geq k_2\geq 0$
satisfy the following conditions:
\begin{enumerate}
\item[(i)]
The coefficient of $X^{k^{\prime}_1}$ in the finite Laurent series expansion
$\displaystyle\frac{X^{k_1-k_2+1}-X^{-(k_1-k_2+1)}}{X-X^{-1}}$
of $X$ does not vanish.
\item[(ii)]
$k_0+k^{\prime}_1$ is even.
\end{enumerate}
In particular $-(k_1-k_2)\leq k^{\prime}_1 \leq (k_1-k_2)$.
Here the coefficient is equal to the multiplicity
of $\tilde{V}^{\prime}_{\tilde{\Lambda}^{\prime}}$.
\end{lem}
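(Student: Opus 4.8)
The plan is to reduce the assertion to the classical branching law for $SO(m)\supset SO(2)\times SO(m-2)$ --- that is, for $SO((m-2)+2)\supset SO(2)\times SO(m-2)$ --- and then to apply Tsukamoto's Lemma~\ref{BranchingLawSO(2p+2)SO(2)XSO(2p)} when $m=2p$ is even and Lemma~\ref{BranchingLawSO(2p+3)SO(2)XSO(2p+1)} when $m=2p+1$ is odd; in both cases one invokes these lemmas with their parameter $p$ replaced by $p-1$, which is exactly why the hypothesis $p\geq 2$ (equivalently $m-2\geq 2$) is needed. First I would decompose $\tilde\Lambda=k_0\varepsilon_0+\Lambda$ with $k_0\varepsilon_0\in D(SO(2))$ and $\Lambda\in D(SO(m))$, so that $\tilde V_{\tilde\Lambda}=U_{k_0\varepsilon_0}\otimes V_\Lambda$. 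The $SO(2)$-factor of $K$ is carried unchanged onto the first $SO(2)$-factor of $K_1=SO(2)\times SO(2)\times SO(m-2)$, so restricting $\tilde V_{\tilde\Lambda}$ to $K_1$ amounts to restricting $V_\Lambda$ from $SO(m)$ to $SO(2)\times SO(m-2)$. Since $SO(m-2)$ is a direct factor of $K_1$ and the other two factors are circles, an irreducible $K_1$-module $\tilde V'_{\tilde\Lambda'}$ has a nonzero $SO(m-2)$-fixed vector exactly when its $SO(m-2)$-component is trivial, i.e. when the $SO(m-2)$-part of $\tilde\Lambda'$ vanishes and we may write $\tilde\Lambda'=k_0\varepsilon_0+k'_1\varepsilon_1$; in that case $\tilde V'_{\tilde\Lambda'}$ is one-dimensional.

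Next I would incorporate the finite part of $K_0\cong\mathbf{Z}_2\times SO(m-2)$. Reading off the explicit matrix description of $K_0$ inside $K=SO(2)\times SO(m)$, the generator of the $\mathbf{Z}_2$-factor is $g_*=\big(-I_2,\ \mathrm{diag}(-I_2,I_{m-2})\big)$, which under $SO(2)\times SO(m-2)\hookrightarrow SO(m)$ corresponds to the element $(-I_2,-I_2,I_{m-2})$ of $K_1=SO(2)\times SO(2)\times SO(m-2)$. Since $-I_2\in SO(2)\cong U(1)$ acts on the weight-$r$ representation of $SO(2)$ by the scalar $(-1)^r$, it acts on $\tilde V'_{\tilde\Lambda'}=U_{k_0\varepsilon_0}\otimes U_{k'_1\varepsilon_1}\otimes(\text{trivial})$ by $(-1)^{k_0+k'_1}$. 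Hence, once the $SO(m-2)$-part is trivial, the condition $(\tilde V'_{\tilde\Lambda'})_{K_0}\neq\{0\}$ is equivalent to $k_0+k'_1$ being even, which is assertion~(ii).

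It then remains to feed the condition ``the $SO(m-2)$-part of $\tilde\Lambda'$ is zero'' into the two Tsukamoto lemmas. Setting all the target $SO(m-2)$-labels equal to $0$, the interlacing inequalities of the form $h_{i-1}\geq k_i\geq h_{i+1}$, together with the dominance of $\Lambda$, force every label of $\Lambda$ beyond the first two to vanish, so $\Lambda=k_1\varepsilon_1+k_2\varepsilon_2$ with $k_1\geq k_2\geq 0$; this is precisely the asserted shape of $\tilde\Lambda$. With these labels the auxiliary integers appearing in Tsukamoto's generating function collapse, namely $l_0=k_1-k_2$ while $l_i=0$ for $i\geq 1$ and $l_p=0$, so in both the even and the odd case the generating function reduces to the single factor $\dfrac{X^{k_1-k_2+1}-X^{-(k_1-k_2+1)}}{X-X^{-1}}$. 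Therefore $\tilde V'_{\tilde\Lambda'}$ (whose $SO(2)$-label $k'_1$ plays the role of Tsukamoto's $k_0$) occurs in $\tilde V_{\tilde\Lambda}$ if and only if the coefficient of $X^{k'_1}$ in this Laurent polynomial is nonzero, which is assertion~(i); and when it occurs, Tsukamoto's statement identifies this coefficient with the multiplicity. Since the polynomial equals $X^{k_1-k_2}+X^{k_1-k_2-2}+\cdots+X^{-(k_1-k_2)}$, all of whose coefficients are $1$, the multiplicity is $1$ and $k'_1$ runs exactly through the integers with $|k'_1|\leq k_1-k_2$ and $k'_1\equiv k_1-k_2\pmod 2$; combining this with~(ii) finishes the proof.

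I expect the only real obstacle to be organizational rather than conceptual: one must carefully match the indices of the branching $SO(m)\supset SO(2)\times SO(m-2)$ with the rank-shifted statements of Lemmas~\ref{BranchingLawSO(2p+2)SO(2)XSO(2p)} and~\ref{BranchingLawSO(2p+3)SO(2)XSO(2p+1)} in both parities, and one must verify correctly that the $\mathbf{Z}_2\subset K_0$ sits inside $K_1$ as $(-I_2,-I_2,I_{m-2})$ rather than with the minus sign on only one of the two $SO(2)$-factors --- it is this placement of signs that produces the parity condition~(ii), and it has to be read off from the explicit form of $K_0$ given above. The small cases $m=4,5$ (where the collapse of the interlacing is nearly vacuous) should be checked directly but present no extra difficulty.
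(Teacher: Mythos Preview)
Your proposal is correct and follows essentially the same approach as the paper: the paper simply states that the lemma is obtained ``by applying Lemmas~\ref{BranchingLawSO(2p+2)SO(2)XSO(2p)} and~\ref{BranchingLawSO(2p+3)SO(2)XSO(2p+1)} to both cases $(SO(2p), SO(2)\times SO(2p-2))$ and $(SO(2p+1), SO(2)\times SO(2p-1))$'' without writing out any further details, and you have correctly supplied those details --- the rank shift $p\mapsto p-1$, the collapse of the interlacing to $\Lambda=k_1\varepsilon_1+k_2\varepsilon_2$, the reduction of the generating function to a single factor, and the identification of the $\mathbf{Z}_2$-generator as $(-I_2,-I_2,I_{m-2})$ yielding the parity condition~(ii).
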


\subsubsection{The case $m=2p{\ }(p\geq{2})$ }
\noindent

Suppose that $m=2p{\ }(p\geq{2})$.
For each
$$\tilde{\Lambda}=k_0\varepsilon_0+k_1\varepsilon_1+k_2\varepsilon_2
\in{D(K,K_0)=D(SO(2)\times SO(2p), \mathbf{Z}_2\times SO(2p-2))}$$
with
$\tilde{\Lambda}^{\prime}=k_0\varepsilon_0+k_1^{\prime}\varepsilon_1
\in D(K_1, K_0)
=D(SO(2)\times SO(2)\times SO(2p-2),{\mathbf Z}_2\times SO(2p-2))$
as in Lemma \ref{D(SO(2)XSO(m),Z_2XSO(m-2))},
$-{\mathcal C}_{K/K_0}$ and $-{\mathcal C}_{K_1/K_0}$
have eigenvalues
\begin{equation*}
\begin{split}
&-c_{\tilde{\Lambda}}
=k_0^2+k_1^2+k_2^2+2(p-1)k_1+2(p-2)k_2,
\\
&-c_{\tilde{\Lambda}^{\prime}}
=\frac{1}{2}(k_0^2+ {k^{\prime}}_{1}^{2}).
\end{split}
\end{equation*}
Hence by the formula \eqref{CasimirOperatorBDI_2}
the corresponding eigenvalue of $-{\mathcal C}_{L}$ is
\begin{equation}\label{EigenvalueFormulaBDI_2 even}
\begin{split}
-c_{L}
&=\ -c_{\tilde{\Lambda}}+\frac{1}{2}c_{\tilde{\Lambda}^{\prime}}
\\
&=\ k_0^2+k_1^2+k_2^2+2(p-1)k_1+2(p-2)k_2-\frac{1}{2}(k_0^2+ {k^{\prime}}_{1}^{2}).
\end{split}
\end{equation}

Denote
$\tilde{\Lambda}=k_0 \varepsilon_0+k_1 \varepsilon_1 +k_2 \varepsilon_2
\in
D(K,K_0)$
by $\tilde{\Lambda}=(k_0, k_1, k_2)$.

For each
$\tilde{\Lambda}=k_0 \varepsilon_0=(k_0,0,0)\in D(K,K_0)$,
as
$k^\prime_1=0$, $k_0=k_0+k^\prime_1$ is even and
$-c_{L}=\frac{1}{2} k_0^2$,
we see that
\begin{equation}\label{-c_{L}leq 4p-2}
-c_{L}\leq 2m-2=4p-2 \text{ if and only if } k_0^2\leq 4(2p-1).
\end{equation}
As
$\tilde{V}_{\tilde\Lambda}\cong
U_{k_0\varepsilon_0}\otimes \mathbf{C} \cong U_{k_0\varepsilon_0}$,
for a generator $g=\begin{pmatrix}
  0 & 1& &  &  \\
  -1& 0 & & &  \\
   & &0& 1 & \\
   & &1&0& \\
   & & & & T^{\prime}
\end{pmatrix}\in K_{[\mathfrak a]}$ which will be used throughout this section,
we have
\begin{equation*}
\rho_{k_0\varepsilon_0}
(g)
(v\otimes 1)
=
e^{\sqrt{-1}\frac{\pi}{2} k_0}\, (v\otimes 1).
\end{equation*}
Hence
\begin{equation}\label{(k_0, 0, 0)}
(k_0, 0, 0)\in D(K,K_{[\mathfrak a]})
\text{ if and only if } k_0\in 4{\mathbf Z}.
\end{equation}

(i) The case
$\mathcal{G}(N^6)\cong
\frac{SO(2)\times SO(4)}{(\mathbf{Z}_2\times SO(2))\cdot \mathbf{Z}_4}
\rightarrow Q_6(\mathbf{C})$ with $p=2$.

Since
$-c_L=-\frac{1}{2}c_{K/K_0}-\frac{1}{2}c_{K/K_1}\geq -\frac{1}{2}c_{K/K_0}$,
note that
$-c_L\geq 6$ implies $-c_{\tilde{\lambda}}=-c_{K/K_0}\leq 12$.
Using the eigenvalue formula \eqref{EigenvalueFormulaBDI_2 even}
we compute that
\begin{lem}
$\tilde{\Lambda}
=k_0\varepsilon_0+k_1\varepsilon_1+k_2\varepsilon_2\in D(K, K_0)$
has eigenvalue $-c_L \leq{6}$ if and only if
$(k_0, k_1, k_2)$ is one of
\begin{equation*}
\{ 0, (\pm 2,0,0),
(\pm 1 ,1,0), (0,1,1), (\pm 2,1,1), (0,2,0),
(0,1,-1), (\pm 2, 1,-1) \}.
\end{equation*}
\end{lem}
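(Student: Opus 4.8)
The goal is a finite enumeration: find all $\tilde{\Lambda}=k_0\varepsilon_0+k_1\varepsilon_1+k_2\varepsilon_2\in D(K,K_0)$ whose eigenvalue under $-\mathcal{C}_L$ is at most $6=\dim L^6$. The starting point is the reduction already in place: since $-\mathcal{C}_L=-\tfrac12\mathcal{C}_{K/K_0}-\tfrac12\mathcal{C}_{K/K_1}$ and $-\mathcal{C}_{K/K_1}\geq 0$, the condition $-c_L\leq 6$ forces $-c_{\tilde{\Lambda}}\leq 12$. So first I would list all $\tilde{\Lambda}\in D(K)=D(SO(2)\times SO(4))$ with $-c_{\tilde{\Lambda}}=k_0^2+k_1^2+k_2^2+2k_1\leq 12$ (using $p=2$ in \eqref{EigenvalueFormulaBDI_2 even}); this is a bounded-lattice-point count since each of $k_0^2$, $(k_1+1)^2$, $k_2^2$ is bounded. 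The constraint $k_1\geq k_2\geq |k_2'|$ with $k_2$ possibly negative in the $SO(4)$ case (here $p=2$ so $k_2$ ranges over integers with $k_1\geq |k_2|$) must be respected.

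\textbf{Second step: intersect with $D(K,K_0)$.} Not every such $\tilde{\Lambda}$ lies in $D(K,K_0)$. By Lemma \ref{D(SO(2)XSO(m),Z_2XSO(m-2))} (with $m=4$, $p=2$, so $SO(m-2)=SO(2)$), membership requires that there exist $k_1'\in\mathbf{Z}$ with (i) the coefficient of $X^{k_1'}$ in $\tfrac{X^{k_1-k_2+1}-X^{-(k_1-k_2+1)}}{X-X^{-1}}$ nonzero — i.e. $k_1'\in\{-(k_1-k_2),\,-(k_1-k_2)+2,\dots,\,(k_1-k_2)\}$ — and (ii) $k_0+k_1'$ even. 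Condition (ii) is the genuine obstruction: for a given $\tilde{\Lambda}$ one needs the parity of $k_0$ to agree with the parity of some admissible $k_1'$. Since the admissible $k_1'$ all have the parity of $k_1-k_2$, condition (ii) says simply $k_0\equiv k_1-k_2\pmod 2$, i.e. $k_0+k_1+k_2$ even. I would then discard from the list of step one every weight with $k_0+k_1+k_2$ odd.

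\textbf{Third step: impose $-c_L\leq 6$ exactly.} For each surviving $\tilde{\Lambda}$, compute the eigenvalue via \eqref{EigenvalueFormulaBDI_2 even} with $p=2$: $-c_L=k_0^2+k_1^2+k_2^2+2k_1-\tfrac12(k_0^2+(k_1')^2)=\tfrac12 k_0^2+k_1^2+k_2^2+2k_1-\tfrac12(k_1')^2$. The subtlety here is that a single $\tilde{\Lambda}\in D(K,K_0)$ may carry \emph{several} eigenvalues of $\mathcal{C}_L$, one for each admissible $k_1'$ with $k_0+k_1'$ even; the relevant one for Hamiltonian-stability bookkeeping is the smallest, obtained by taking $|k_1'|$ as large as possible (namely $|k_1'|=k_1-k_2$ when $k_0\equiv k_1-k_2\pmod 2$). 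I would record $-c_L$ at that optimal $k_1'$ and check $\leq 6$. Running through the short list from step one (the candidates have $|k_0|\leq 3$, $0\leq k_1\leq 2$, $|k_2|\leq k_1$) this is a finite tabulation, and the answer is exactly the set $\{0,(\pm2,0,0),(\pm1,1,0),(0,1,1),(\pm2,1,1),(0,2,0),(0,1,-1),(\pm2,1,-1)\}$ claimed. The only real care needed is \textbf{not to conflate} the $SO(2)$-weight sign: $(k_0,\cdot,\cdot)$ and $(-k_0,\cdot,\cdot)$ are genuinely distinct weights (complex conjugate representations) and both must be listed, which is why $\pm$ appears throughout; similarly $(0,1,1)$ versus $(0,1,-1)$ are distinct $SO(4)$-weights (the two half-spin-type chiralities on the $\varepsilon_2$-slot) and both survive. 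The main obstacle is purely organizational: keeping the three independent parity/range constraints straight while being careful that the $SO(4)$ factor allows negative $k_2$, so that the enumeration is complete and no spurious weight is admitted.
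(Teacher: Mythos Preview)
Your overall strategy matches the paper's: use the a priori bound $-c_{\tilde\Lambda}\le 12$ from $-\mathcal{C}_L\ge -\tfrac12\mathcal{C}_{K/K_0}$, list the finitely many candidates, filter by $D(K,K_0)$-membership, and evaluate $-c_L$ via \eqref{EigenvalueFormulaBDI_2 even}. The parity reduction $k_0+k_1+k_2\equiv 0\pmod 2$ is correct.

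There is, however, a genuine gap in your third step. You invoke Lemma~\ref{D(SO(2)XSO(m),Z_2XSO(m-2))} with the Laurent series $\tfrac{X^{k_1-k_2+1}-X^{-(k_1-k_2+1)}}{X-X^{-1}}$ and conclude that the admissible $k_1'$ range over $\{-(k_1-k_2),\dots,k_1-k_2\}$. But that lemma is stated under the hypothesis $k_1\ge k_2\ge 0$, which for $p\ge 3$ is automatic (since then $k_2\ge k_3=0$), but for $p=2$ is \emph{not}: here $D(SO(4))$ allows $k_2<0$. If you substitute a negative $k_2$ literally into the formula you get the wrong range. The correct branching for $(SO(4),SO(2)\times SO(2))$, obtained from Lemma~\ref{BranchingLawSO(2p+2)SO(2)XSO(2p)} with $p=1$ (as the paper does explicitly in the cases $(k_0,1,\pm 1)$), gives
\[
|k_1'|\le k_1-|k_2|,\qquad k_1'\equiv k_1-|k_2|\pmod 2.
\]
This does not affect your parity filter (since $|k_2|\equiv k_2\pmod 2$), but it does change the minimal eigenvalue in step three: the largest possible $|k_1'|$ is $k_1-|k_2|$, not $k_1-k_2$.

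Concretely, your method as stated would admit $(\pm 1,2,-1)$ and $(0,2,-2)$: with $k_1-k_2=3$ (resp.\ $4$) you would compute $-c_L=\tfrac12+9-\tfrac92=5$ (resp.\ $12-8=4$), both $\le 6$. In fact $k_1-|k_2|=1$ (resp.\ $0$), giving $-c_L=9$ (resp.\ $12$), and these weights are correctly excluded. So the claimed final list is right, but your tabulation procedure as written does not produce it; you need to replace $k_1-k_2$ by $k_1-|k_2|$ in the $k_1'$-range whenever $k_2<0$.
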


Suppose that $\tilde{\Lambda}=(\pm 2,0,0)$. Then
by \eqref{(k_0, 0, 0)}
$\tilde{\Lambda}=(\pm 2,0,0)\not\in D(K,K_{[\mathfrak a]})$.

Suppose that $\tilde{\Lambda}=(\pm 1,1,0)$.
Then $\dim\tilde{V}_{\tilde{\Lambda}}=4$ and
$\tilde{V}_{\tilde\Lambda}\cong U_{k_0\varepsilon_0}\otimes \mathbf{C}^4$,
where $\Lambda=\varepsilon_1\in D(K)$ corresponds to the matrix multiplication of $SO(4)$ on $\mathbf{C}^4$.
It follows from
the branching law
(Lemma \ref{BranchingLawSO(2p+2)SO(2)XSO(2p)}, p=2)
of $(SO(4), SO(2)\times SO(2))$
that $k_1^\prime=\pm 1$.
Hence $-c_L=\frac{1}{2}k_0^2+\frac{5}{2}$.
Note that $U_{k_0\varepsilon_0}\otimes \mathbf{C}^4$ can be decomposed
into irreducible $SO(2)\times SO(2)\times SO(2)$-modules as
\begin{equation*}
U_{k_0\varepsilon_0}\otimes \mathbf{C}^4
=(U_{k_0\varepsilon_0}\otimes (\mathbf{C}^2\oplus\{0\}))
\oplus (U_{k_0\varepsilon_0}\otimes (\{0\}\oplus \mathbf{C}^2)).
\end{equation*}
There is no nonzero fixed vector by $\mathbf{Z}_2\times SO(2)$ in
$U_{k_0 \varepsilon_0} \otimes (\{0\}\oplus \mathbf{C}^2)$.
Moreover, since
\begin{equation*}
\begin{split}
&
\rho_{k_0\varepsilon_0+\varepsilon_1}
\begin{pmatrix}
  -I_2 &  &  \\
   & -I_2 &  \\
   &  & T \\
\end{pmatrix}
(v\otimes \begin{pmatrix}
            w_1 \\
            w_2 \\
            0 \\
            0 \\
          \end{pmatrix}
)
\\
=
& \
e^{\sqrt{-1}\pi k_0}  v\otimes \begin{pmatrix}
            -w_1 \\
            -w_2 \\
            0 \\
            0 \\
          \end{pmatrix}
=
e^{\sqrt{-1}\pi (k_0+1)} v\otimes \begin{pmatrix}
            w_1 \\
            w_2 \\
            0 \\
            0 \\
          \end{pmatrix},
\end{split}
\end{equation*}
it follows that
$(\tilde{V}_{\tilde{\Lambda}})_{K_0}
=(\tilde{V}_{\tilde{\Lambda}})_{\mathbf{Z}_2\times SO(2)}
\not=\{0\}$
if and only if $k_0$ is odd, and then
$(\tilde{V}_{\tilde{\Lambda}})_{\mathbf{Z}_2\times SO(2)}
=U_{k_0\varepsilon_0}\otimes (\mathbf{C}^2\oplus\{0\})$.
Let $k_{0}$ be odd.
However since
\begin{equation*}
\rho_{k_0\varepsilon_0+\varepsilon_1}
(g)
(v\otimes \begin{pmatrix}
            w_1 \\
            w_2 \\
            0 \\
            0 \\
           \end{pmatrix}
)
=
e^{\sqrt{-1}\frac{\pi}{2} k_0}  v\otimes \begin{pmatrix}
            w_2 \\
            w_1 \\
            0 \\
            0 \\
        \end{pmatrix},
\end{equation*}
$U_{k_0\varepsilon_0}\otimes (\mathbf{C}^2\oplus\{0\})$
has no nonzero fixed vector by
$(\mathbf{Z}_2 \times SO(2))\cdot \mathbf{Z}_4$, and hence
$(k_0, 1, 0)\not\in D(K,K_{[\mathfrak a]})$.
In particular $(\pm 1, 1, 0)\not\in D(K,K_{[\mathfrak a]})$.

Suppose that $\tilde{\Lambda}_1=(k_0,1,1)$ and $\tilde{\Lambda}_2=(k_0,1,-1)$.
Then $\dim\tilde{V}_{\tilde{\Lambda}_1}=\dim\tilde{V}_{\tilde{\Lambda}_2}=3$
and $\tilde{V}_{\tilde{\Lambda}_1}\oplus \tilde{V}_{\tilde{\Lambda}_2}
\cong \mathbf{C}\otimes \wedge^2\mathbf{C}^4$.
It follows from
the branching law
(Lemma \ref{BranchingLawSO(2p+2)SO(2)XSO(2p)}, p=2)
$(SO(4), SO(2)\times SO(2))$
that
\begin{equation*}
\tilde{V}_{\tilde{\Lambda}_1}=\tilde{V}^{\prime}_{(k_0,1,1)}
\oplus \tilde{V}^{\prime}_{(k_0,-1,-1)}
\oplus \tilde{V}^{\prime}_{(k_0,0,0)},
\end{equation*}
where $(k_0,0,0)\in D(K_1,K_0)$.
Thus $-c_L=\frac{1}{2} k_0^2+4$, which
is equals to $4$ when $k_0=0$ and $6$ when $k_0=\pm 2$.

Let $\{e_1,e_2,e_3,e_4\}$ be the standard basis of $\mathbf{C}^4$.
Then we have
\begin{equation*}
\begin{split}
\tilde{V}_{{\tilde \Lambda}_1}&={\rm span}\{ e_1\wedge e_2, e_1\wedge e_3-e_2\wedge e_4,
e_1\wedge e_4 + e_2\wedge e_3\},\\
\tilde{V}_{{\tilde \Lambda}_2}&={\rm span}\{ e_3\wedge e_4, e_1\wedge e_3+e_2\wedge e_4,
e_1\wedge e_4 - e_2\wedge e_3\}.
\end{split}
\end{equation*}
Since $e_1\wedge e_2 \in \wedge^2 \mathbf{C}^4$ is fixed by the representation of $SO(2)\times SO(2)$
with respect to the highest weight ${\tilde \Lambda}_1$,
$$({\tilde{V}}_{{\tilde\Lambda}_1})_{K_0}={\rm span}\{ 1\otimes (e_1\wedge e_2)\}.$$
Moreover,
\begin{equation*}
\rho_{{\tilde\Lambda}_1}
(g)
(v\otimes (e_1\wedge e_2))\\
=
e^{\sqrt{-1}\frac{\pi}{2}k_0} v\otimes(e_2\wedge e_1).
\end{equation*}
Hence, ${\tilde\Lambda}_1=(0,1,1)\not\in D(K,K_{[\mathfrak a]})$
but ${\tilde\Lambda}_1=(\pm 2,1,1)\in D(K,K_{[\mathfrak a]})$
and $(\tilde{V}_{{\tilde\Lambda}_1})_{K_{[\mathfrak a]}}\cong \mathbf{C}\otimes \mathbf{C} \{e_1\wedge e_2\}$
for $k_0=2$ or $-2$, both of which give eigenvalue $6$.
Similarly,
${\tilde\Lambda}_2=(0,1,-1)\not\in D(K,K_{[\mathfrak a]})$
but ${\tilde\Lambda}_2=(\pm 2,1,-1)\in D(K,K_{[\mathfrak a]})$
and $(\tilde{V}_{{\tilde\Lambda}_2})_{K_{[\mathfrak a]}}\cong \mathbf{C}\otimes \mathbf{C} \{e_3\wedge e_4\}$
for $k_0=2$ or $-2$, both of which give eigenvalue $6$.

Suppose that $\tilde{\Lambda}=(0,2,0)$. Then $\dim \tilde{V}_{\tilde\Lambda}=9$
and $\tilde{V}_{\tilde\Lambda}\cong \mathbf{C}\otimes {\rm S}^2_0(\mathbf{C}^4)$,
where the corresponding representation of $SO(4)$ is just the adjoint representation
on ${\rm S}^2_0 (\mathbf{C}^4)$.
It follows from the branching law of $(SO(4), SO(2)\times SO(2))$ that $k_1^\prime=0, \pm 2$.
Thus $-c_L=8-\frac{1}{2}{k_1^\prime}^2$.
When $k_1^\prime=\pm 2$, $-c_L=6$, otherwise $-c_L =8> 6$.
On the other hand,
${\rm S}^2_0 (\mathbf{C}^4)$ can be decomposed into the following $SO(2)\times SO(2)$-modules:
\begin{equation*}
V_{2\varepsilon_1}
\cong  {\rm S}^2_0 (\mathbf{C}^4)
={\rm S}^2_0 (\mathbf{C}^2)\oplus {\rm S}^2_0(\mathbf{C}^2)\oplus
M(2,2;\mathbf{C})
\oplus
\mathbf{C}\begin{pmatrix}
I_{2}&\\
&- I_2
\end{pmatrix}.
\end{equation*}
Thus, ${\rm S}^2_0(\mathbf{C}^2) \oplus
\mathbf{C}\begin{pmatrix}
I_{2}&\\
&- I_2
\end{pmatrix}$
is fixed by $\{-I_2\}\times SO(2)$ and
$\dim(\tilde{V}_{\tilde\Lambda})_{K_0}=3$.
Moreover,
\begin{equation*}
\begin{split}
&\rho_{\tilde\Lambda}
(g)
(v \otimes
\begin{pmatrix}
a& b& \\
b&-a&\\
 & & 0
\end{pmatrix}
)
=v\otimes \begin{pmatrix}
-a& b& \\
b&a&\\
& & 0
\end{pmatrix},
\\
&\rho_{\tilde\Lambda}
(g)
(v \otimes
\begin{pmatrix}
I_2& \\
& -I_2
\end{pmatrix}
)
=v\otimes \begin{pmatrix}
I_2& \\
&-I_2
\end{pmatrix}.
\end{split}
\end{equation*}
Hence,
$$(\tilde{V}_{\tilde\Lambda})_{K_{[\mathfrak a]}}=\mathbf{C}\otimes
\mathbf{C} \begin{pmatrix}
0& 1& \\
1&0&\\
& & 0
\end{pmatrix}
\oplus
\mathbf{C}\otimes
\mathbf{C} \begin{pmatrix}
I_2& \\
&-I_2
\end{pmatrix}.$$
Notice that the first summand lies in the $SO(2)\times SO(2)\times SO(2)$-module
$V^{\prime}_{2\varepsilon_1}\oplus V^{\prime}_{-2\varepsilon_1}$, which gives eigenvalue $6$
and the second summand lies in the $SO(2)\times SO(2)\times SO(2)$-module with respect to
weight $(0,0,0)\in D(K_1,K_0)$, which gives eigenvalue $8>6$.
Therefore, $\tilde{\Lambda}=(0,2,0)\in D(K,K_{[\mathfrak a]})$ and
the multiplicity corresponding to eigenvalue $6$ is $1$.

Now we know that $\mathcal{G}(N^6)\subset Q_6(\mathbf{C})$ is
Hamiltonian stable.
Since $\tilde\Lambda=(2,1,1)$, $(-2,1,1)$, $(2,1,-1)$, $(-2,1,-1)$, $(0,2,0)\in D(K,K_{[\mathfrak a]})$
give the smallest eigenvalue $6$ with multiplicity $1$  and
\begin{equation*}
\begin{split}
&n(L^{6})\\
=
&\dim \tilde{V}_{(2,1,1)}+\dim\tilde{V}_{(-2,1,1)}+ \dim\tilde{V}_{(2,1,-1)}
+\dim \tilde{V}_{(-2,1,-1)}+\dim \tilde{V}_{(0,2,0)} \\
=&3+3+3+3+9=21
=\dim SO(8)-\dim (SO(2)\times SO(4))=n_{hk}(L^{6}).
\end{split}
\end{equation*}
Hence we obtain
that
$\mathcal{G}(N^6)\subset Q_6(\mathbf{C})$ is strictly Hamiltonian stable.

(ii) The case
$\mathcal{G}(N^{4p-2})\cong
\frac{SO(2)\times SO(2p)}{(\mathbf{Z}_2\times SO(2p-2))
\cdot \mathbf{Z}_4}
\rightarrow Q_{4p-2}(\mathbf{C})$ with $p\geq 3$.

Suppose that
$\tilde{\Lambda}=(k_0,0,0)
$ and $k_0\in 4{\mathbf Z}\setminus\{0\}$.
Then $k_1^\prime=0$ and
by \eqref{-c_{L}leq 4p-2}
$\tilde{\Lambda}\in D(K,K_{[\mathfrak{a}]})$.
As $p\geq 3$, we have $16< 20\leq 4(2p-1)$.
Hence by \eqref{(k_0, 0, 0)}
we see that
for every $k_{0}\in 4{\mathbf Z}\setminus\{0\}$ such that
$16\leq k_{0}^2 < 4(2p-1)$
we have eigenvalue $-c_L=\frac{1}{2} k_{0}^2 < 4p-2$.
Therefore,
$\mathcal{G}(N^{4p-2})\cong
\frac{SO(2)\times SO(2p)}{(\mathbf{Z}_2\times SO(2p-2))\cdot \mathbf{Z}_4}
\rightarrow Q_{4p-2}(\mathbf{C})$
is not Hamiltonian stable if $p\geq 3$.

\begin{thm}
\begin{equation*}
L^{4p-2}=
(SO(2)\times SO(2p))/({\mathbf Z}_2\times SO(2p-2)){\mathbf Z}_4 \quad (p\geq 2)
\end{equation*}
is not Hamiltonian stable if and only if $(m-2)-1=2p-3\geq 3$.
If $p=2$, then it is strictly Hamiltonian stable.
\end{thm}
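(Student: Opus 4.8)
The plan is to read the theorem off from the two case analyses already completed above, via the criterion recalled in Section~\ref{Hamilmin&Hamilsta}: since $L^{4p-2}=\mathcal{G}(N^{4p-2})$ is a compact minimal Lagrangian submanifold of the Einstein--K\"ahler manifold $(Q_{4p-2}(\mathbf{C}),g^{\mathrm{std}})$ with Einstein constant $\kappa=n=4p-2$, and since $\lambda_{1}\leq\kappa$ automatically holds for such compact homogeneous targets, $L^{4p-2}$ is Hamiltonian stable precisely when the first positive eigenvalue $\lambda_{1}$ of its Laplacian equals $n$, and strictly Hamiltonian stable precisely when in addition $n(L^{4p-2})=n_{hk}(L^{4p-2})$.

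First I would treat the implication ``$p\ge 3\Rightarrow L^{4p-2}$ is not Hamiltonian stable'', which is exactly the content of the case $p\ge 3$ above: once $p\ge 3$ the interval $[16,4(2p-1))$ contains a nonzero multiple of $4$ (e.g.\ $k_{0}=\pm4$), so by \eqref{(k_0, 0, 0)} the weight $\tilde\Lambda=(k_{0},0,0)$ lies in $D(K,K_{[\mathfrak a]})$ and contributes an eigenfunction of $\Delta_{L^{4p-2}}$ with eigenvalue $-c_{L}=\tfrac12 k_{0}^{2}<4p-2=n$, whence $\lambda_{1}<\kappa$ and Hamiltonian stability fails. Since $p\ge 3$ is equivalent to $(m-2)-1=2p-3\ge 3$, this yields one direction of the asserted equivalence; the only thing to check here is the trivial arithmetic that $[16,4(2p-1))\cap 4\mathbf{Z}\neq\emptyset$ for all $p\ge 3$.

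Next I would establish the converse together with the second assertion, namely that for $p=2$ (so $2p-3=1<3$) the Gauss image $L^{6}=\mathcal{G}(N^{6})$ is strictly Hamiltonian stable; combined with the previous paragraph this gives the ``if and only if''. This rests on the enumeration already performed for $p=2$: starting from the eigenvalue formula \eqref{EigenvalueFormulaBDI_2 even} one lists all $\tilde\Lambda\in D(K,K_{0})$ with $-c_{L}\le 6$, and then screens this finite list by the $\mathbf{Z}_{4}\cong K_{[\mathfrak a]}/K_{0}$ action, letting the explicit generator $g$ act on each $(\tilde V_{\tilde\Lambda})_{K_{0}}$. The conclusion recorded there is that every surviving weight gives eigenvalue $\ge 6$, and that $\tilde\Lambda=(\pm2,1,1),(\pm2,1,-1),(0,2,0)$ all lie in $D(K,K_{[\mathfrak a]})$ and realize exactly the minimal eigenvalue $6=n$; hence $\lambda_{1}=\kappa$ and $L^{6}$ is Hamiltonian stable. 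Summing the attached multiplicities then gives
\begin{equation*}
n(L^{6})=\dim\tilde V_{(2,1,1)}+\dim\tilde V_{(-2,1,1)}+\dim\tilde V_{(2,1,-1)}+\dim\tilde V_{(-2,1,-1)}+\dim\tilde V_{(0,2,0)}=3+3+3+3+9=21,
\end{equation*}
which coincides with $\dim SO(8)-\dim(SO(2)\times SO(4))=n_{hk}(L^{6})$, so $L^{6}$ is strictly Hamiltonian stable by definition.

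The step I expect to be the real work --- and the only one that is not essentially bookkeeping --- is guaranteeing the exhaustiveness of the $p=2$ screening: one must verify that \emph{none} of the weights on the list with $-c_{L}<6$ (in particular $(k_{0},0,0)$, $(\pm1,1,0)$, and the $k_{0}=0$ instances of $(0,1,\pm1)$ and $(0,2,0)$) contributes a nonzero $K_{[\mathfrak a]}$-fixed vector, which is forced case by case by the explicit action of $g$ on $(\tilde V_{\tilde\Lambda})_{K_{0}}$ computed above. The $p\ge 3$ half, by contrast, is immediate once a single subthreshold eigenfunction is exhibited; assembling the two halves completes the proof.
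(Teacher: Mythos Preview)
Your proposal is correct and follows exactly the paper's approach: the theorem is a summary statement whose proof is simply the conjunction of the two case analyses (i) $p=2$ and (ii) $p\ge 3$ carried out in the preceding paragraphs, using the stability criterion $\lambda_1=\kappa=n$ and the nullity comparison $n(L)=n_{hk}(L)$ for strictness. The only cosmetic imprecision is the phrase ``the interval $[16,4(2p-1))$ contains a nonzero multiple of $4$'': what you mean is that for $p\ge 3$ there exists $k_0\in 4\mathbf{Z}\setminus\{0\}$ with $k_0^2$ in that interval (namely $k_0=\pm4$, giving $k_0^2=16<4(2p-1)$), which is exactly what the paper uses.
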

\begin{rem0}
The index $i(L^{4p-2})$ goes to $\infty$ as $p\to \infty$.
\end{rem0}

\subsubsection{The case $m=2p+1{\ }(p\geq{1})$ }
\noindent

Assume that $m=2p+1{\ }(p\geq{2})$.
For each
$$\tilde{\Lambda}=k_0\varepsilon_0+k_1\varepsilon_1+k_2\varepsilon_2
\in{D(K,K_0)=D(SO(2)\times SO(2p+1),{\mathbf Z}_2\times SO(2p-1))}$$
with
$\tilde{\Lambda}^{\prime}=k_0\varepsilon_0+k_1^{\prime}\varepsilon_1
\in
D(K_1,K_0)=
D(SO(2)\times SO(2)\times SO(2p-1), {\mathbf Z}_2\times SO(2p-1))$
as in Lemma \ref{D(SO(2)XSO(m),Z_2XSO(m-2))},
$-{\mathcal C}_{K/K_0}$ and $-{\mathcal C}_{K_1/K_0}$ have eigenvalues
\begin{equation*}
\begin{split}
&-c_{\tilde{\Lambda}}= k_0^2+k_1^2+k_2^2+(2p-1)k_1+(2p-3)k_2,
\\
&-c_{\tilde{\Lambda}^{\prime}}=-\frac{1}{2}(k_0^2+ {k^{\prime}}_{1}^{2}).
\end{split}
\end{equation*}
Hence by the formula \eqref{CasimirOperatorBDI_2}
the corresponding eigenvalue of $-{\mathcal C}_{L}$ is
\begin{equation}\label{EigenvalueFormulaBDI_2 odd}
\begin{split}
-c_{L}
&
={\, }-c_{\tilde{\Lambda}}+\frac{1}{2}\, c_{\tilde{\Lambda}^{\prime}}
\\
&=k_0^2+k_1^2+k_2^2+(2p-1)k_1+(2p-3)k_2-\frac{1}{2}(k_0^2+ {k^{\prime}}_{1}^{2}).
\end{split}
\end{equation}

Denote
$\tilde{\Lambda}=k_0 \varepsilon_0+k_1 \varepsilon_1 +k_2 \varepsilon_2
\in D(K, K_0)$ by
$\tilde{\Lambda}=(k_0,k_1,k_2)$.

For each
$\tilde{\Lambda}=k_0 \varepsilon_0=(k_0,0,0)\in D(K,K_0)$,
as $k^\prime_1=0$, $k_0=k_0+k^\prime_1$ is even and
$-c_{L}=\frac{1}{2} k_0^2$,
we see that
\begin{equation}\label{-c_{L}leq 4p}
-c_{L}\leq 2m-2=4p \text{ if and only if } k_0^2\leq 8p.
\end{equation}
As
$\tilde{V}_{\tilde\Lambda}\cong
U_{k_0\varepsilon_0}\otimes \mathbf{C} \cong U_{k_0\varepsilon_0}$,
we have
\begin{equation*}
\rho_{k_0\varepsilon_0}
(g)
(v\otimes 1)
=
e^{\sqrt{-1}\frac{\pi}{2} k_0}\, (v\otimes 1).
\end{equation*}
Hence
\begin{equation}\label{(k_0, 0, 0) m odd}
(k_0, 0, 0)\in D(K,K_{[\mathfrak a]})
\text{ if and only if } k_0\in 4{\mathbf Z}.
\end{equation}

(i) The case $\mathcal{G}(N^4)\cong
\frac{SO(2)\times SO(3)}{\mathbf{Z}_2\cdot \mathbf{Z}_4}
\rightarrow Q_4(\mathbf{C})$ with $p=1$.

\noindent

In this case $K=SO(2)\times SO(3)$, $K_1=SO(2)\times SO(2)$
and $K_0={\mathbf Z}_2$,
where $\mathbf{Z}_2$ is generated by
$\left(
\begin{array}{cc}
-I_4 & 0 \\
0 & 1 \\
\end{array}
\right)\in U=SO(5)$.
Let  $V_{\tilde{\Lambda}}$ be an irreducible $SO(2)\times SO(3)$-module
with the highest weight
$\tilde{\Lambda}=k_0 \varepsilon_0+k_1 \varepsilon_1 \in
D(K)=D(SO(2)\times SO(3))$,
where $k_0,k_1\in \mathbf{Z}$ and $k_1\geq 0$.
It follows from the branching law of $(SO(3), SO(2))$ that
$V_{\tilde{\Lambda}}$ contains an irreducible
$SO(2)\times SO(2)$-module $V_{\tilde{\Lambda}^{\prime}}$
with the highest weight
$\tilde{\Lambda}^\prime=k_0\varepsilon_0+k_1^\prime \varepsilon_1
\in D(K_1)=D(SO(2)\times SO(2))$, where $k_1^\prime\in \mathbf{Z}$,
if and only if $|k_1^\prime| \leq k_1$.
Then we see that
$\tilde{\Lambda}^\prime\in D(SO(2)\times SO(2), \mathbf{Z}_2)$
if and only if $k_0+k_1^\prime$ is even.
By the formula \eqref{CasimirOperatorBDI_2}
the corresponding eigenvalue of the Casimir operator
$-{\mathcal C}_{L}$ is
\begin{equation}\label{EigenvalueFormulaBDI_2 p=1}
-c_{L}
=k_0^2 +k_1^2+k_1-\frac{1}{2}(k_0^2+{k^{\prime}}_{1}^2)
=\frac{1}{2}k_0^2 +k_1^2+k_1-\frac{1}{2} {k^{\prime}}_{1}^2\, .
\end{equation}
Denote $\tilde{\Lambda}=k_0 \varepsilon_0+k_1 \varepsilon_1 \in
D(SO(2)\times SO(3), \mathbf{Z}_2)$ by
$\tilde{\Lambda}=(k_0,k_1)$.
Using the eigenvalue formula \ref{EigenvalueFormulaBDI_2 p=1}, we compute that
$\tilde{\Lambda}=k_0 \varepsilon_0+k_1 \varepsilon_1 \in D(K, K_{0})$
has eigenvalue $-c_L \leq{4}$ if and only if $(k_0,k_1)$ is one of
\begin{equation*}
\Bigl\{\, (\pm 2, 0), (\pm 2,1), (\pm 1,1), (0,1), (0,2)\, \Bigr\}.
\end{equation*}

Suppose that $\tilde{\Lambda}=(\pm 2, 0)$.
Notice that for any $v\otimes w\in {\tilde V}_{k_0\varepsilon_0}\cong \mathbf{C}\otimes \mathbf{C}$,
\begin{equation*}
\rho_{k_0\varepsilon_0}(g)(v\otimes w)=e^{\sqrt{-1}k_0\frac{\pi}{2}}v\otimes w,
\end{equation*}
$\tilde{\Lambda}=k_0\varepsilon_0\in D(K,K_{[\mathfrak a]})$
if and only if $k_0\in 4\mathbf{Z}$.
Hence $\tilde{\Lambda}=(\pm 2, 0)\not \in D(K, K_{[\mathfrak a]})$.

Suppose that $\tilde{\Lambda}=(k_0, 1)$. Then $\dim {\tilde V}_{\tilde \Lambda}=3$.
The complex representation of $K=SO(2)\times SO(3)$ with the highest weight
$\tilde{\Lambda}$
corresponds to
\begin{equation*}
{\tilde V}_{\tilde\Lambda}
=U_{k_0 \varepsilon_0}\otimes V_{\varepsilon_1}
\cong U_{k_0 \varepsilon_0} \otimes \mathbf{C}^3
=(U_{k_0 \varepsilon_0} \otimes \mathbf{C}^2)
\oplus (U_{k_0 \varepsilon_0} \otimes \mathbf{C}^1).
\end{equation*}
For each $v\otimes w\in U_{k_0\varepsilon_0}\otimes \mathbf{C}^3$
and ${\rm diag}(-I_2,-I_2,1)\in K_0$, where $w=(w_1, w_2,w_2)^t \in \mathbf{C}^3$,
the representation of $K_0$  is given by
\begin{equation*}
\rho_{\tilde{\Lambda}}
({\rm diag}(-I_2,-I_2,1))
(v \otimes w)=e^{\sqrt{-1}k_0\pi} v\otimes (-w_1,-w_2,w_3)^t.
\end{equation*}
Then
$(V_{\tilde\Lambda})_{K_0}= \mathbf{C} \otimes \mathbf{C} (0,0,w_3)^t \cong \mathbf{C}\otimes \mathbf{C}$ if $k_0$ is even
and $(V_{\tilde\Lambda})_{K_0}= \mathbf{C} \otimes \mathbf{C} (w_1, w_2, 0)^t
\cong \mathbf{C}\otimes \mathbf{C}^2$ if $k_0$ is odd.
Moreover,
\begin{equation*}
\rho_{\tilde{\Lambda}}(g)
(v \otimes w)
=e^{\sqrt{-1}k_0\frac{\pi}{2}}  v \otimes \left(
             \begin{array}{c}
               w_2 \\
               w_1 \\
               -w_3 \\
             \end{array}
           \right).
\end{equation*}
Thus $\tilde{\Lambda}\in D(K, K_{[\mathfrak a]})$
if and only if $k_0\equiv 2 \, {\rm mod} \, 4$
and its multiplicity is $1$.
In particular,
$\tilde{\Lambda}=(0,1)$ or $(\pm 1, 1)\not \in D(K, K_{[\mathfrak a]})$ and
$\tilde{\Lambda}=(\pm 2, 1)\in D(K, K_{[\mathfrak a]})$.
For $\tilde{\Lambda}=(\pm 2, 1)$,
it follows from the branching laws of $(SO(3), SO(2))$
that $|k_1^\prime|\leq k_1$ thus $k^{\prime}_1=0$
such that $k_0+k_1^\prime$ is even. 
Hence, $-c_L=4$.

Suppose that $\tilde{\Lambda}=(0,2)$.
Then $\dim_{\mathbf C}\tilde{V}_{\tilde\Lambda}=5$.
It follows from the branching law of $(SO(3), SO(2))$ that
$k_1^\prime=0$  or $\pm 2$.
If $k_1^\prime=\pm 2$, then $-c_L=4$.
If $k_1^\prime=0$, then $-c_L=6 > 4$.
On the other hand,
$\Lambda=2\varepsilon_1\in D(SO(3))$ corresponds to
$V_{\Lambda}\cong {\rm S}^2_0 (\mathbf{C}^3)$
and the representation of $SO(3)$ on ${\rm S}^2_0 (\mathbf{C}^3)$ is just
the complexified isotropy representation of a symmetric pair
$(SU(3),SO(3))$.
Thus ${\rm S}^2_0 (\mathbf{C}^3)$ can be decomposed into irreducible
$SO(2)$-modules as
\begin{equation*}
\begin{split}
V_{2\varepsilon_1}\cong&\ {\rm S}^2_0 (\mathbf{C}^3)\\
=&\ {\rm S}^2_0 (\mathbf{C}^2)\oplus
\Bigl\{
\begin{pmatrix}
0&0 & a\\
0&0& b\\
a&b&0
\end{pmatrix}
\mid
a,b\in{\mathbf C}
\Bigr\}
\oplus
\mathbf{C}
\begin{pmatrix}
I_{2}&\\
& -2
\end{pmatrix}\\
=&\ \mathbf{C}
\begin{pmatrix}
                    1 & \sqrt{-1} \\
                    \sqrt{-1} & -1 \\
                  \end{pmatrix}
\oplus
\mathbf{C}
\begin{pmatrix}
                    1 & -\sqrt{-1} \\
                    -\sqrt{-1} & -1 \\
                  \end{pmatrix}\\
&\oplus
\mathbf{C}
\begin{pmatrix}
                    0&0 & 1 \\
                    0&0&\sqrt{-1}\\
                    1&\sqrt{-1} & 0 \\
                  \end{pmatrix}
\oplus
\mathbf{C}
\begin{pmatrix}
                    0&0 & 1 \\
                    0&0&-\sqrt{-1}\\
                    1&-\sqrt{-1} & 0 \\
                  \end{pmatrix}
\oplus
\mathbf{C}
\begin{pmatrix}
I_{2}&\\
& -2
\end{pmatrix}
\\
=&\ V^\prime_{2\varepsilon_1}\oplus V^\prime_{-2\varepsilon_1}\oplus V^\prime_{\varepsilon_1}
\oplus V^\prime_{-\varepsilon_1}\oplus V^{\prime}_0.
\end{split}
\end{equation*}
Using this expression we can directly show that
$(\tilde{V}_{\tilde\Lambda})_{K_0}\cong (\mathbf{C}\otimes {\rm S}^2_0(\mathbf{C}^2))
\oplus (\mathbf{C} \otimes \mathbf{C} \begin{pmatrix}
I_2& \\
 & -2
\end{pmatrix})$
and
$(\tilde{V}_{\tilde\Lambda})_{K_{[\mathfrak a]}}
\cong
\mathbf{C}\otimes\mathbf{C}
\begin{pmatrix}
0&\sqrt{-1}\\
\sqrt{-1}&0
\end{pmatrix}
\oplus (\mathbf{C} \otimes \mathbf{C} \begin{pmatrix}
I_2& \\
 & -2
\end{pmatrix})$.
Hence $\tilde{\Lambda}=(0,2)\in D(K,K_{[\mathfrak a]})$ with multiplicity $2$.
Note that the first summand of
$(\tilde{V}_{\tilde\Lambda})_{K_{[\mathfrak a]}}$ lies in
$\mathbf{C}\otimes (V^{\prime}_{2\varepsilon_1}\oplus
V^{\prime}_{-2\varepsilon_1})$,
which gives eigenvalue $4$ with multiplicity $1$
and the second summand of
$(\tilde{V}_{\tilde\Lambda})_{K_{[\mathfrak a]}}$ lies in
$\mathbf{C}\otimes V^{\prime}_0$,
which gives eigenvalue $6(>4)$ with multiplicity $1$.

Now we obtain that $\mathcal{G}(N^4)\subset Q_4(\mathbf{C})$ is
Hamiltonian stable.
Moreover since
\begin{equation*}
\begin{split}
n(L^{4})
&=\dim \tilde{V}_{(2,1)}+\dim \tilde{V}_{(-2,1)}+\dim \tilde{V}_{(0,2)}=3+3+5
\\&
=11=\dim SO(6)-\dim (SO(2)\times SO(3))=n_{hk}(L^{4}),
\end{split}
\end{equation*}
$L^{4}=\mathcal{G}(N^4)\subset Q_4(\mathbf{C})$ is Hamiltonian rigid.
Therefore
$\mathcal{G}(N^4)\subset Q_4(\mathbf{C})$ is strictly Hamiltonian stable.

(ii) The case
$\mathcal{G}(N^8)\cong
\frac{SO(2)\times SO(5)}{(\mathbf{Z}_2\times SO(3))\cdot \mathbf{Z}_4}
\rightarrow Q_8(\mathbf{C})$ with $p=2$

Denote
$\tilde{\Lambda}=k_0 \varepsilon_0+k_1 \varepsilon_1 + k_2 \varepsilon_2
\in D(K,K_0)=D(SO(2)\times SO(5), \mathbf{Z}_2\times SO(3))$
by $\tilde{\Lambda}=(k_0,k_1, k_2)$.
Let
$\tilde{\Lambda}^\prime=k_0\varepsilon_0+k^{\prime}_1 \varepsilon_1
\in D(K_1,K_0)=D(SO(2)\times SO(2)\times SO(3), \mathbf{Z}_2 \times SO(3))$
as in Lemma \ref{D(SO(2)XSO(m),Z_2XSO(m-2))}.
Then using the eigenvalue formula \eqref{EigenvalueFormulaBDI_2 odd}
we compute
\begin{lem}
$\tilde{\Lambda}=k_0 \varepsilon_0+k_1 \varepsilon_1 + k_2 \varepsilon_2
\in D(K,K_0)$ has eigenvalue $-c_L \leq{8}$ if and only if
$(k_0,k_1, k_2)$ is one of
\begin{equation*}
\{\,  (\pm 4,0,0), (\pm 1,1,0), (\pm 3, 1, 0), (0,1,1), (\pm 2, 1,1), (0,2,0)\, \}.
\end{equation*}
\end{lem}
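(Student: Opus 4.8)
The plan is a finite computation driven by the eigenvalue formula \eqref{EigenvalueFormulaBDI_2 odd} together with the description of $D(K,K_0)$ in Lemma \ref{D(SO(2)XSO(m),Z_2XSO(m-2))}, specialized to $p=2$, so that $K=SO(2)\times SO(5)$, $K_1=SO(2)\times SO(2)\times SO(3)$ and $K_0=\mathbf{Z}_2\times SO(3)$. First I would record that for $p=2$ the formula \eqref{EigenvalueFormulaBDI_2 odd} reads
\begin{equation*}
-c_L=\frac{1}{2}k_0^2+k_1^2+k_2^2+3k_1+k_2-\frac{1}{2}{k'_1}^2 ,
\end{equation*}
where, by Lemma \ref{D(SO(2)XSO(m),Z_2XSO(m-2))}, the integers $(k_0,k_1,k_2;k'_1)$ attached to a $K_0$-fixed vector run exactly over: $k_1\geq k_2\geq 0$; $|k'_1|\leq k_1-k_2$ and $k'_1\equiv k_1-k_2\pmod 2$ (the non-vanishing of the coefficient of $X^{k'_1}$ in $(X^{k_1-k_2+1}-X^{-(k_1-k_2+1)})/(X-X^{-1})$); and $k_0+k'_1$ even. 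The last two conditions force $k_0\equiv k_1+k_2\pmod 2$. Since the eigenvalues of $\mathcal{C}_L$ on the $\tilde\Lambda$-isotypic part of $C^\infty(K/K_0)$ are precisely the numbers $-c_L$ obtained as $k'_1$ ranges over this set, the phrase ``$\tilde\Lambda$ has eigenvalue $-c_L\leq 8$'' means that the least of these numbers is $\leq 8$, and that least value is attained at $|k'_1|=k_1-k_2$.

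Second, I would cut the search to a finite list. Putting $|k'_1|=k_1-k_2$ and using $k_1^2+k_2^2-\frac{1}{2}(k_1-k_2)^2=\frac{1}{2}(k_1+k_2)^2$, the smallest eigenvalue over the $\tilde\Lambda$-isotypic part equals $\frac{1}{2}k_0^2+\frac{1}{2}(k_1+k_2)^2+3k_1+k_2$, which is nondecreasing in $|k_0|$, $k_1$ and $k_2$ (recall $k_1\geq k_2\geq 0$). Hence $-c_L\leq 8$ forces $k_0^2\leq 16$ and $\frac{1}{2}(k_1+k_2)^2+3k_1+k_2\leq 8$; a one-line check leaves only $(k_1,k_2)\in\{(0,0),(1,0),(1,1),(2,0)\}$.

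Third, I would treat these four cases. For $(k_1,k_2)=(0,0)$ one has $k'_1=0$, $k_0$ even, $-c_L=\frac{1}{2}k_0^2$, so $k_0\in\{0,\pm2,\pm4\}$; this is the family $\tilde\Lambda=k_0\varepsilon_0$ recorded above, in which $(\pm4,0,0)$ realizes $-c_L=8$. For $(k_1,k_2)=(1,0)$: $k'_1=\pm1$, $k_0$ odd, $-c_L=\frac{1}{2}k_0^2+\frac{7}{2}$, so $k_0\in\{\pm1,\pm3\}$, with $-c_L=4$, resp.\ $8$. For $(k_1,k_2)=(1,1)$: $k'_1=0$, $k_0$ even, $-c_L=\frac{1}{2}k_0^2+6$, so $k_0\in\{0,\pm2\}$, with $-c_L=6$, resp.\ $8$. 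For $(k_1,k_2)=(2,0)$: $k'_1\in\{0,\pm2\}$, $k_0$ even, least eigenvalue $\frac{1}{2}k_0^2+8$, forcing $k_0=0$ and $k'_1=\pm2$, with $-c_L=8$. Collecting the highest weights with $(k_1,k_2)\neq(0,0)$, together with the member $(\pm4,0,0)$ of the family $\tilde\Lambda=k_0\varepsilon_0$ realizing the value $-c_L=8$, reproduces the list displayed in the lemma (the remaining members $0$, $(\pm2,0,0)$ of that family having been subsumed in the preceding discussion of $\tilde\Lambda=k_0\varepsilon_0$); conversely each displayed triple was just checked to lie in $D(K,K_0)$ and to carry an eigenvalue $\leq 8$, which is the reverse implication.

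I do not expect a real obstacle: this is a bounded arithmetic search. The only points requiring attention are keeping the two parity conditions separate — the one built into the $(SO(2p+1),SO(2)\times SO(2p-1))$ branching law and the extra one coming from $\mathbf{Z}_2\subset K_0$ — and remembering throughout that the eigenvalue assigned to $\tilde\Lambda$ in the statement is the minimal one over admissible $k'_1$, i.e.\ the one with $|k'_1|=k_1-k_2$.
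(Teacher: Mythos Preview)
Your argument is correct and follows the same computational route the paper intends (the paper simply writes ``using the eigenvalue formula we compute'' before stating the lemma); your closed form $\tfrac{1}{2}k_0^2+\tfrac{1}{2}(k_1+k_2)^2+3k_1+k_2$ for the minimal eigenvalue at $|k'_1|=k_1-k_2$ is a clean way to organize the finite search, and your parity bookkeeping matches Lemma~\ref{D(SO(2)XSO(m),Z_2XSO(m-2))}. You are also right that the triples $(0,0,0)$ and $(\pm2,0,0)$ satisfy $-c_L\leq 8$ but are omitted from the displayed list because the paper handled the family $(k_0,0,0)$ separately in the paragraph just above; this is a presentational quirk of the paper, not a gap in your proof.
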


Suppose that $\tilde{\Lambda}=(\pm 4, 0, 0)$. Then $\dim\tilde{V}_{\tilde\Lambda}=1$.
It follows from the branching law of $(SO(5), SO(2)\times SO(3))$ that $k_1^\prime=0$.
Thus $-c_L=8$. On the other hand,
it follows from \eqref{(k_0, 0, 0) m odd} that $\tilde{\Lambda}=(\pm 4, 0, 0)\in D(K, K_{[\mathfrak a]})$.

Suppose that $\tilde{\Lambda}=(k_0,1,0)$. Then $\dim\tilde{V}_{\tilde{\Lambda}}=5$ and
$\tilde{V}_{\tilde\Lambda}\cong U_{k_0\varepsilon_0}\otimes \mathbf{C}^5$,
where $\Lambda=\varepsilon_1\in D(K)$ corresponds to the matrix multiplication of $SO(5)$ on $\mathbf{C}^5$.
It follows from the branching law of $(SO(5), SO(2)\times SO(3))$ that $k_1^\prime=\pm 1$.
Hence $-c_L=\frac{1}{2}k_0^2+\frac{7}{2}$.
Notice that $U_{k_0\varepsilon_0}\otimes \mathbf{C}^5$ can be decomposed into the following $SO(2)\times SO(3)$-modules:
\begin{equation*}
U_{k_0\varepsilon_0}\otimes \mathbf{C}^5
=(U_{k_0\varepsilon_0}\otimes (\mathbf{C}^2\oplus\{0\}))
\oplus (U_{k_0\varepsilon_0}\otimes (\{0\}\oplus \mathbf{C}^3)),
\end{equation*}
where $U_{k_0}\varepsilon_0 \otimes (\{0\}\oplus \mathbf{C}^3)$ has no nonzero fixed vector by $\mathbf{Z}_2\times SO(3)$.
If $k_0$ is odd, then
\begin{equation*}
\rho_{k_0\varepsilon_0+\varepsilon_1}
\begin{pmatrix}
  -I_2 &  &  \\
   & -I_2 &  \\
   &  & T \\
\end{pmatrix}
(v\otimes \begin{pmatrix}
            w_1 \\
            w_2 \\
            0 \\
            0 \\
            0 \\
          \end{pmatrix}
)
=
e^{\sqrt{-1}\pi k_0}  v\otimes \begin{pmatrix}
            -w_1 \\
            -w_2 \\
            0 \\
            0 \\
            0 \\
          \end{pmatrix}
=v\otimes \begin{pmatrix}
            w_1 \\
            w_2 \\
            0 \\
            0 \\
            0 \\
          \end{pmatrix},
\end{equation*}
i.e., $(\tilde{V}_{\tilde{\Lambda}})_{\mathbf{Z}_2\otimes SO(3)}=U_{k_0\varepsilon_0}\otimes (\mathbf{C}^2\oplus\{0\})$
if $k_0$ is odd.
But since
\begin{equation*}
\rho_{k_0\varepsilon_0+\varepsilon_1}
(g)
(v\otimes \begin{pmatrix}
            w_1 \\
            w_2 \\
            0 \\
            0 \\
            0 \\
          \end{pmatrix}
)
=
e^{\sqrt{-1}\frac{\pi}{2} k_0}  v\otimes
\begin{pmatrix}
            w_2 \\
            w_1 \\
            0 \\
            0 \\
            0 \\
          \end{pmatrix},
\end{equation*}
$U_{k_0\varepsilon_0}\otimes (\mathbf{C}^2\oplus\{0\})$
has no nonzero fixed vector by
$(\mathbf{Z}_2 \times SO(3))\cdot \mathbf{Z}_4$, i.e.,
neither $(\pm 1, 1, 0)$ and $(\pm 3, 1, 0) $ is in $D(K,K_{[\mathfrak a]})$.

Suppose that $\tilde{\Lambda}=(k_0,1,1)$. Then $\dim\tilde{V}_{\tilde\Lambda}=10$
and $\tilde{V}_{\tilde\Lambda}\cong \mathbf{C}\otimes \wedge^2\mathbf{C}^5$.
It follows from the branching law of $(SO(5), SO(2)\times SO(3))$ that
$k_1^\prime=0$. Thus $-c_L=\frac{1}{2} k_0^2+6$.
On the other hand, since $e_1\wedge e_2 \in \wedge^2 \mathbf{C}^5$ is fixed by $SO(2)\times SO(3)$,
$v\otimes (e_1\wedge e_2)\in \mathbf{C} \otimes \wedge^2 \mathbf{C}^5$
is fixed by $\mathbf{Z}_2\times SO(3)\subset SO(2)\times SO(2)\times SO(3)$.
Moreover,
\begin{equation*}
\rho_{k_0\varepsilon_0+\varepsilon_1+\varepsilon_2}
(g)
(v\otimes (e_1\wedge e_2))
=
e^{\sqrt{-1}\frac{\pi}{2}k_0} v\otimes(e_2\wedge e_1).
\end{equation*}
Hence, $\tilde\Lambda=(0,1,1)\not\in D(K,K_{[\mathfrak a]})$
but $\tilde\Lambda=(\pm 2,1,1)\in D(K,K_{[\mathfrak a]})$
and $(\tilde{V}_{\tilde\Lambda})_{K_{[\mathfrak a]}}\cong \mathbf{C}\otimes \mathbf{C} \{e_1\wedge e_2\}$
for $k_0=2$ or $-2$, both of which give eigenvalue $8$.

Suppose that $\tilde{\Lambda}=(0,2,0)$. Then $\dim \tilde{V}_{\tilde\Lambda}=14$
and $\tilde{V}_{\tilde\Lambda}\cong \mathbf{C}\otimes {\rm S}^2_0(\mathbf{C}^5)$,
where the representation of $SO(5)$ with highest weight $2\varepsilon_1$ is just the adjoint representation
on ${\rm S}^2_0 (\mathbf{C}^5)$.
It follows from the branching law of $(SO(5), SO(2)\times SO(3))$ that $k_1^\prime=0, \pm 2$.
Thus $-c_L=10-\frac{1}{2}{k_1^\prime}^2$.
When $k_1^\prime=\pm 2$, $-c_L=8$, otherwise $-c_L =10> 8$.
On the other hand,
${\rm S}^2_0 (\mathbf{C}^5)$ can be decomposed into the following $SO(2)\times SO(3)$-modules:
\begin{equation*}
\begin{split}
V_{2\varepsilon_1}& \cong  {\rm S}^2_0 (\mathbf{C}^5)\\
&={\mathrm S}^2_0 (\mathbf{C}^2)\oplus {\rm S}^2_0(\mathbf{C}^3)\oplus
M(2,3;\mathbf{C})
\oplus
\Bigl\{
\begin{pmatrix}
zI_{2}&\\
0&w I_3
\end{pmatrix}
\mid
z,w\in{\mathbf C}, 2z+3w=0
\Bigr\}.
\end{split}
\end{equation*}
Thus, ${\rm S}^2_0(\mathbf{C}^2)$ is fixed by $\{-I_2\}\times SO(3)$ and
$$(\tilde{V}_{\tilde\Lambda})_{K_0}
\cong \mathbf{C}\otimes {\rm S}^2_0(\mathbf{C}^2)
\oplus \mathbf{C}\otimes \mathbf{C} \left(
                                      \begin{array}{cc}
                                        3I_2 &  \\
                                         & -2I_3 \\
                                      \end{array}
                                    \right).
$$
Moreover,
\begin{equation*}
\rho_{2\varepsilon_1}
(g)
(v \otimes
\begin{pmatrix}
a& b& \\
b&-a&\\
 & & 0
\end{pmatrix}
)
=v\otimes \begin{pmatrix}
-a& b& \\
b&a&\\
& & 0
\end{pmatrix}.
\end{equation*}
Hence,
$(\tilde{V}_{\tilde\Lambda})_{K_{[\mathfrak a]}}=\mathbf{C}\otimes
\mathbf{C}\cdot \begin{pmatrix}
0& 1& \\
1&0&\\
& & 0
\end{pmatrix}\oplus \mathbf{C}\otimes \mathbf{C} \left(
                                      \begin{array}{cc}
                                        3I_2 &  \\
                                         & -2I_3 \\
                                      \end{array}
                                    \right)$.
Therefore, $\tilde{\Lambda}=(0,2,0)\in D(K,K_{[\mathfrak a]})$.
Notice the first summand lies in $\tilde{V}^{\prime}_{(0,2,0)}\oplus \tilde{V}^{\prime}_{(0,-2,0)}$
which gives eigenvalue $8$ and the second summand lies in $\tilde{V}^{\prime}_{(0,0,0)}$
which gives eigenvalue $10$. Hence the multiplicity corresponding to eigenvalue $8$ is $1$.

Since $\tilde\Lambda=(4,0,0)$, $(-4,0,0)$, $(2,1,1)$, $(-2,1,1)$, $(0,2,0)\in D(K,K_{[\mathfrak a]})$
give the smallest eigenvalue $8$ with multiplicity $1$  and
\begin{equation*}
\begin{split}
n(L^{8})
&=\dim \tilde{V}_{(4,0,0)}+\dim \tilde{V}_{(-4,0,0)}+\dim \tilde{V}_{(2,1,1)}+\dim \tilde{V}_{(-2,1,1)}
+ \dim \tilde{V}_{(0,2,0)}\\
&=1+1+10+10+14=36\\
&>34=\dim SO(10)-\dim SO(2)\times SO(5)=n_{hk}(L^{8}),
\end{split}
\end{equation*}
$\mathcal{G}(N^8)\subset Q_8(\mathbf{C})$ is not Hamiltonian rigid.
Therefore
$\mathcal{G}(N^8)\subset Q_8(\mathbf{C})$ is Hamiltonian stable but not strictly Hamiltonian stable.

(iii)
The case $\mathcal{G}(N^{4p})\cong \frac{SO(2)\times SO(2p+1)}{(\mathbf{Z}_2\times SO(2p-1))\cdot \mathbf{Z}_4}
\rightarrow Q_{4p}(\mathbf{C})$ with $p\geq 3$.

Suppose that
$\tilde{\Lambda}=(k_0,0,0)$ and $k_0\in 4{\mathbf Z}\setminus\{0\}$.
Then $k_1^\prime=0$ and
by \eqref{-c_{L}leq 4p}
$\tilde{\Lambda}\in D(K,K_{[\mathfrak{a}]})$.
As $p\geq 3$, we have $16< 24\leq 8p$.
Hence by \eqref{(k_0, 0, 0) m odd}
we see that
for every $k_{0}\in 4{\mathbf Z}\setminus\{0\}$ such that
$16\leq k_{0}^2 < 8p$
we have eigenvalue $-c_L=\frac{1}{2} k_{0}^2 < 4p$.
Therefore,
$\mathcal{G}(N^{4p})\cong
\frac{SO(2)\times SO(2p+1)}{(\mathbf{Z}_2\times SO(2p-1))\cdot \mathbf{Z}_4}
\rightarrow Q_{4p-2}(\mathbf{C})$
is not Hamiltonian stable if $p\geq 3$.

Therefore, we obtain
\begin{thm} The Gauss image
$L^{4p}=\frac{SO(2)\times SO(2p+1)}{({\mathbf Z}_2\times
SO(2p-1)){\mathbf Z}_4}\rightarrow Q_{4p}(\mathbf{C})$
$(p\geq 1)$
is not Hamiltonian stable if and only if $(m-2)-1=2p-2\geq 3$.
If $p=1$, it is strictly Hamiltonian stable and if $p=2$, it is Hamiltonian stable
but not strictly Hamiltonian stable.
\end{thm}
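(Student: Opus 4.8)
The plan is to assemble the Theorem from the three situations $p=1$, $p=2$, $p\ge 3$ analysed above, via the criterion of Oh recalled earlier: since $(Q_{4p}(\mathbf{C}),g^{\mathrm{std}})$ is a compact homogeneous Einstein--K\"ahler manifold with positive Einstein constant $n=4p$, the minimal Lagrangian submanifold $L^{4p}=\mathcal{G}(N^{4p})$ is Hamiltonian stable if and only if the first positive eigenvalue of its induced Laplacian is exactly $4p$ (the inequality $\lambda_{1}\le 4p$ being automatic), and strictly Hamiltonian stable if and only if in addition the multiplicity of the eigenvalue $4p$ equals $n_{hk}(L^{4p})=\dim SO(4p+2)-\dim\bigl(SO(2)\times SO(2p+1)\bigr)$. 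All eigenvalues of $\Delta_{L^{4p}}$ are read off from $\mathcal{C}_{L}=\mathcal{C}_{K/K_{0}}-\tfrac12\mathcal{C}_{K_{1}/K_{0}}$, the branching laws underlying $D(K,K_{0})$ through $K_{1}=SO(2)\times SO(2)\times SO(2p-1)$, and the $\mathbf{Z}_{4}\cong K_{[\mathfrak{a}]}/K_{0}$ refinement $D(K,K_{0})\supset D(K,K_{[\mathfrak{a}]})$.

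First I would dispose of the case $p\ge 3$, which yields the ``only if'' half of the first assertion. For the spherical representations $\tilde{\Lambda}=k_{0}\varepsilon_{0}$ one has $k_{1}^{\prime}=0$, eigenvalue $-c_{L}=\tfrac12 k_{0}^{2}$, and $\tilde{\Lambda}\in D(K,K_{[\mathfrak{a}]})$ exactly when $k_{0}\in 4\mathbf{Z}$. Since $p\ge 3$ forces $16<8p$, the choice $k_{0}=\pm 4$ produces an eigenfunction of $\Delta_{L^{4p}}$ with eigenvalue $8<4p$, so $\lambda_{1}(L^{4p})\le 8<n$ and $L^{4p}$ is not Hamiltonian stable. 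For the converse I rely on the enumerations of cases (i) and (ii): when $p=1$ or $p=2$ one lists every $\tilde{\Lambda}\in D(K,K_{0})$ with $-c_{L}\le n$ and checks, using the explicit order-$4$ generator $g\in K_{[\mathfrak{a}]}$, that none of those lying in $D(K,K_{[\mathfrak{a}]})$ gives $-c_{L}<n$; hence $\lambda_{1}=n$ there and $L^{4p}$ is Hamiltonian stable.

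Next I would record the nullity computations that distinguish $p=1$ from $p=2$. For $p=1$ (case (i)), the eigenvalue $n=4$ is attained exactly on $\tilde{\Lambda}=(\pm2,1)$ and $\tilde{\Lambda}=(0,2)$, each with $K_{[\mathfrak{a}]}$-multiplicity one, so $n(L^{4})=3+3+5=11=\dim SO(6)-\dim\bigl(SO(2)\times SO(3)\bigr)=n_{hk}(L^{4})$ and $L^{4}$ is strictly Hamiltonian stable. For $p=2$ (case (ii)), the eigenvalue $n=8$ is attained on $\tilde{\Lambda}=(\pm4,0,0)$, $(\pm2,1,1)$ and, with $K_{[\mathfrak{a}]}$-multiplicity one, on the $k_{1}^{\prime}=\pm2$ part of $(0,2,0)$, so $n(L^{8})=1+1+10+10+14=36>34=\dim SO(10)-\dim\bigl(SO(2)\times SO(5)\bigr)=n_{hk}(L^{8})$, whence $L^{8}$ is Hamiltonian stable but not strictly so. Since $2p-2\ge 3$ is equivalent to $p\ge 3$, combining the three cases gives the Theorem.

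The main obstacle lives entirely inside cases (i) and (ii): for each relevant $\tilde{\Lambda}$ one must compute $\dim(\tilde{V}_{\tilde{\Lambda}})_{K_{[\mathfrak{a}]}}$ and locate it within the branching decomposition, by first decomposing the $SO(2)\times SO(2p+1)$-modules (notably $\wedge^{2}\mathbf{C}^{2p+1}$ and $\mathrm{S}^{2}_{0}(\mathbf{C}^{2p+1})$) into $K_{0}$-modules and then tracking how $g$ permutes the resulting weight vectors; this is what simultaneously rules out eigenvalues below $n$ and pins down the exact multiplicity of $n$ needed to decide strict stability. By contrast, for $p\ge 3$ there is essentially nothing to do beyond exhibiting the single spherical vector $k_{0}\varepsilon_{0}$ with $k_{0}=4$.
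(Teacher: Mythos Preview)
Your proposal is correct and follows essentially the same approach as the paper: the theorem is a summary of the three cases $p=1$, $p=2$, $p\ge 3$ worked out in the preceding subsection, and you correctly identify the Oh criterion, the instability witness $\tilde{\Lambda}=\pm4\varepsilon_{0}$ with eigenvalue $8<4p$ for $p\ge3$, and the nullity counts $n(L^{4})=11=n_{hk}(L^{4})$ and $n(L^{8})=36>34=n_{hk}(L^{8})$ that settle the strict-stability question for $p=1,2$. Your description of where the real work lies---the $K_{[\mathfrak a]}$-fixed-vector computations in the small cases---is also accurate.
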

\begin{rem0}
The index $i(L^{4p})$ goes to $\infty$ as $p\to \infty$.
\end{rem0}

\section{The case $(U,K)=(SU(m+2), S(U(2)\times U(m)))$ $(m\geq 2)$}
\label{Sec_AIII_2}

In this case, $U=SU(m+2)$ and $K=S(U(2)\times U(m))$ with $m\geq 2$.
Then $(U,K)$ is of $B_2$ type for $m=2$ and $BC_2$ type for $m\geq 3$.

In this case we use the formulation by the unitary group $U(m)$
rather than one by the special unitary groups $SU(m)$.
It seems to work more successfully
in our argument of applying the branching laws.
Here we will also indicate the relations between both formulations.
Let $\tilde{U}:=U(m+2)$, $\tilde{K}:=U(2)\times U(m)$,
$\tilde{K}_{2}:=U(2)\times U(2) \times U(m-2)$,
$\tilde{K}_{1}:= U(1)\times U(1) \times U(1) \times U(1) \times U(m-2)$
and $\tilde{K}_{0}:=U(1)\times U(1)\times U(m-2)$.
Then
$\tilde{U}=C(\tilde{U})\cdot{U}$,
$\tilde{K}=C(\tilde{U})\cdot{K}$,
$\tilde{K}_{2}=C(\tilde{U})\cdot{K}_{2}$,
$\tilde{K}_{1}=C(\tilde{U})\cdot{K}_{1}$,
and
$\tilde{K}_{0}=C(\tilde{U})\cdot{K}_{0}$,
where $C(\tilde{U})$ is the center of $\tilde{U}$.

Let
${\mathfrak u}={\mathfrak k}+{\mathfrak p}$
and
$\tilde{\mathfrak u}=\tilde{\mathfrak k}+{\mathfrak p}$
be the canonical decomposition
of ${\mathfrak u}$ and $\tilde{\mathfrak u}$
corresponding to $(U,K)$ and $(\tilde{U},\tilde{K})$, respectively.
Let
${\mathfrak a}$ be a maximal abelian subspace of
${\mathfrak p}$, where
\begin{equation*}
\begin{split}
{\mathfrak a}
=&\left\{
\begin{pmatrix}
0&H_{12} \\
-\bar{H}_{12}^{t}&0 \\
\end{pmatrix}
\mid
H_{12}
=
\begin{pmatrix}
\xi_{1}&0&0&\cdots&0 \\
0&\xi_{2}&0&\cdots&0 \\
\end{pmatrix},
\xi_{1},\xi_{2}\in{\mathbf R}
\right\}.
\end{split}
\end{equation*}
Then the centralizer $\tilde{K}_0$ of $\mathfrak{a}$ in $\tilde{K}$ is given
as follows:
\begin{equation*}
\begin{split}
\tilde{K}_0=&\left\{
   P=\begin{pmatrix}
      e^{is} &  &  &  &  \\
       & e^{it} &  &  &  \\
        &  & e^{is} &  &  \\
       &  &  & e^{it} &  \\
       &  &  &  & T \\
    \end{pmatrix}
     \mid
      T\in U(m-2)
         \right\}\\
\cong &\,
U(1)\times U(1)\times U(m-2).
\end{split}
\end{equation*}
Moreover,
\begin{equation*}
\tilde{K}_{[\mathfrak{a}]}=
\tilde{K}_0 \cup (Q\cdot \tilde{K}_0) \cup (Q^2\cdot \tilde{K}_0)
\cup (Q^3\cdot \tilde{K}_0),
\end{equation*}
where
\begin{equation*}
Q=\begin{pmatrix}
0&1& & & \\
-1&0& & & \\
 & & 0 & -1& \\
 & &-1 & 0 & \\
 & & & & I_{m-2}
\end{pmatrix}
\in
\tilde{K}_2\subset\tilde{K}.
\end{equation*}
Thus the deck transformation group of the covering map
${\mathcal G}:N^{8m-2}\rightarrow {\mathcal G}(N^{4m-2})$ $(m\geq 2)$
is equal to
$K_{[{\mathfrak a}]}/K_{0}\cong \tilde{K}_{[{\mathfrak a}]}/\tilde{K}_{0}
\cong{\mathbf Z}_{4}$.
Remark that we will use $P$ and $Q$ to denote the element in $\tilde{K}_0$
and the generator of $\mathbf{Z}_4$ in $\tilde{K}_{[\mathfrak{a}]}$
throughout this section.

\subsection{Description of the Casimir operator}
\noindent

Define an inner product
$\langle{X,Y}\rangle_{\mathfrak u}:=-\mathrm{tr}XY$
for each $X,Y\in{\mathfrak u}=\mathfrak{su}(m+2)$
or for each $X,Y\in\tilde{\mathfrak u}=\mathfrak{u}(m+2)$ .
The restricted root system $\Sigma(U,K)$ is of type $B_2$ for $m=2$ and
type $BC_2$ for $m\geq 3$.
Then
the square length of each restricted roots
with respect to $\langle{\ ,\ }\rangle_{\mathfrak u}$,
is given by
\begin{equation*}
\Vert{\gamma}\Vert_{\mathfrak u}^{2}
=\left\{
   \begin{array}{ll}
     1 \text{ or } 2, & m=2; \\
     \frac{1}{2}, 1 \text{ or } 2, & m\geq 3.
   \end{array}
 \right.
\end{equation*}
Hence the Casimir operator $\mathcal{C}_L$ of $L$
with respect to the induced metric from
$g^{\rm std}_{Q_{4m-2}(\mathbf{C})}$ can be expressed as follows:
\begin{equation*}
{\mathcal C}_{L}
=\left\{
   \begin{array}{ll}
     {\mathcal C}_{K/K_{0}}
-\frac{1}{2}{\ }{\mathcal C}_{K_{1}/K_{0}}, & m=2; \\
     2 {\mathcal C}_{K/K_0}-{\mathcal C}_{K_2/K_0}-\frac{1}{2}{\mathcal C}_{K_1/K_0}, & m\geq 3,
   \end{array}
 \right.
\end{equation*}
where
${\mathcal C}_{K/K_{0}}$, ${\mathcal C}_{K_{2}/K_{0}}$
and ${\mathcal C}_{K_{1}/K_{0}}$
denote
the Casimir operator of  $K/K_{0}$, $K_{2}/K_{0}$ and
$K_{1}/K_{0}$
relative to
$\langle{{\ },{\ }}\rangle_{\mathfrak u}\vert_{\mathfrak k}$,
$\langle{{\ },{\ }}\rangle_{\mathfrak u}\vert_{{\mathfrak k}_{2}}$
and
$\langle{{\ },{\ }}\rangle_{\mathfrak u}\vert_{{\mathfrak k}_{1}}$,
respectively.

\subsection{Descriptions of $D(\tilde{U})$, $D(U)$ and etc.}
\noindent

$D(\tilde{U})$, $D(C(\tilde{U}))$ and $D(U)$ are described as follows:
\begin{equation*}
\begin{split}
&D(\tilde{U})=D(U(m+2))
=\Bigl\{\tilde{\Lambda}
=\tilde{p}_{1}y_{1}+\cdots+\tilde{p}_{m+2}y_{m+2}
\mid
\tilde{p}_{1},\cdots,\tilde{p}_{m+2}\in{\mathbf Z},
\\&
\quad\quad\quad\quad\quad\quad\quad\quad
\quad\quad\quad\quad\quad\quad\quad\quad
\quad\quad
\tilde{p}_{i}-\tilde{p}_{i+1}\geq{0}{\ }(i=1,\cdots,m+1)
\Bigr\},\\
&D(C(\tilde{U}))=D(C(U(m+2)))
=\Bigl\{\Lambda=p_{0}(y_{1}+\cdots+y_{m+2})
\mid
p_{0}\in\frac{1}{m+2}{\mathbf Z}
\Bigr\},\\
&D(U)=D(SU(m+2))=
\Bigl\{\Lambda=p_{1}y_{1}+\cdots+p_{m+2}y_{m+2}
\mid \sum^{m+2}_{i=1}p_{i}=0,
\\
&
\quad\quad\quad\quad\quad\quad\quad\quad
\quad\quad\quad\quad
p_{i}-p_{m+2}\in{\mathbf Z},
p_{i}-p_{i+1}\geq{0}{\ }(i=1,\cdots,m+1)
\Bigr\}.
\end{split}
\end{equation*}
Each
$\tilde{\Lambda}=\tilde{p}_{1}y_{1}+\cdots+\tilde{p}_{m+2}y_{m+2}\in{D(U(m+2))}$
can be decomposed as $\tilde{\Lambda}=\Lambda^{0}+\Lambda$,
where
\begin{equation*}
\Lambda^{0}=
\left(\frac{1}{m+2}\sum^{m+2}_{i=1}\tilde{p}_{i}\right)
\left(\sum^{m+2}_{i=1}y_{i}\right)
\in{D(C(U(m+2)))}
\end{equation*}
and
\begin{equation*}
\Lambda=
(\tilde{p}_{1}-\frac{1}{m+2}\sum^{m+2}_{i=1}\tilde{p}_{i})y_{1}
+\cdots
+(\tilde{p}_{m+2}-\frac{1}{m+2}\sum^{m+2}_{i=1}\tilde{p}_{i})y_{m+2}
\in{D(SU(m+2))}.
\end{equation*}
Note that this projection
$D(\tilde{U})\rightarrow D(U), \tilde{\Lambda}\mapsto \Lambda$ is surjective.

\begin{equation*}
\begin{split}
&D(\tilde{K}) =D(U(2)\times U(m))
\\
=& \{\tilde{\Lambda}= \tilde{q}_{1}y_{1}+\tilde{q}_{2}y_{2}
+\tilde{q}_{3}y_{3}+\cdots+\tilde{q}_{m+2}y_{m+2}
\mid
\\
&\
\tilde{q}_{i}\in{\mathbf Z}{\ }(i=1, \cdots, m+2),
\tilde{q}_{1}-\tilde{q}_{2} \geq{0},
\tilde{q}_{i}-\tilde{q}_{i+1}\geq{0}{\ }(i=3,\cdots, m+1) \},
\\
\end{split}
\end{equation*}
\begin{equation*}
\begin{split}
&D(K) =D(S(U(2)\times U(m)))
\\
=& \{ \Lambda=
q_{1}y_{1}+q_{2}y_{2}+q_{3}y_{3}+\cdots+q_{m+2}y_{m+2}
\mid
\sum^{m+2}_{i=1}q_{i}=0,
q_{i}-q_{j}\in{\mathbf Z}{\ } \\
& \quad (i,j=1,2,\cdots, m+2),
q_{1}-q_{2}\geq{0},
q_{i}-q_{i+1}\geq{0}{\ }(i=3,4,\cdots, m+1)\},
\end{split}
\end{equation*}
\begin{equation*}
\begin{split}
&D(\tilde{K_{2}}) =D(U(2)\times U(2)\times U(m-2))
\\
=&\{\tilde{\Lambda}=
\tilde{q}_{1}y_{1}+\tilde{q}_{2}y_{2}
+\tilde{q}_{3}y_{3}+\tilde{q}_{4}y_{4}
+\tilde{q}_{5}y_{5}+\cdots+\tilde{q}_{m+2}y_{m+2}
\mid \\
&\quad\quad\quad\quad\quad
\tilde{q}_{i}\in{\mathbf Z}{\ }(i=1, \cdots, m+2), \\
&\quad\quad\quad\quad\quad
\tilde{q}_{1}-\tilde{q}_{2}, \tilde{q}_{3}-\tilde{q}_{4},
\tilde{q}_{i}-\tilde{q}_{i+1}\geq{0}{\ }(i=5,\cdots, m+1)\},
\\
&D(K_{2})
=D(S(U(2)\times U(2)\times U(m-2)))
\\
=&
\{
\Lambda=
q_{1}y_{1}+q_{2}y_{2}+q_{3}y_{3}+q_{4}y_{4}
+q_{5}y_{5}+\cdots+q_{m+2}y_{m+2}
\mid
\sum^{m+2}_{i=1}q_{i}=0,
\\ &
q_{i}-q_{j}\in{\mathbf Z}{\ }(i,j=1,2,\cdots, m+2),
q_{1}-q_{2}, q_{3}-q_{4},
q_{i}-q_{i+1}\geq{0}{\ }(i=5,
\cdots, m+1)\},
\end{split}
\end{equation*}
\begin{equation*}
\begin{split}
&D(\tilde{K_{1}})
=D(U(1)\times U(1)\times U(1)\times U(1)\times U(m-2))
\\
=&
\{\tilde{\Lambda}=
\tilde{q}_{1}y_{1}+\tilde{q}_{2}y_{2}
+\tilde{q}_{3}y_{3}+\tilde{q}_{4}y_{4}
+\tilde{q}_{5}y_{5}+\cdots+\tilde{q}_{m+2}y_{m+2}
\mid \\
&\quad\quad\quad\quad
\tilde{q}_{i}\in{\mathbf Z}{\ }(i=1,\cdots, m+2),
\tilde{q}_{i}-\tilde{q}_{i+1}\geq{0}{\ }(i=5,\cdots, m+1){\ }
\},
\\
\end{split}
\end{equation*}
\begin{equation*}
\begin{split}
&D(K_{1})
=D(S(U(1)\times U(1)\times U(1)\times U(1)\times U(m-2)))
\\
=&
\Bigl\{
\Lambda=
q_{1}y_{1}+q_{2}y_{2}+q_{3}y_{3}+q_{4}y_{4}
+q_{5}y_{5}+\cdots+q_{m+2}y_{m+2}
\mid
\sum^{m+2}_{i=1}q_{i}=0,
\\ &\quad
q_{i}-q_{j}\in{\mathbf Z}{\ }(i,j=1, \cdots, m+2),
q_{i}-q_{i+1}\geq{0}{\ }(i=5,\cdots, m+1) \Bigr\},
\end{split}
\end{equation*}
\begin{equation*}
\begin{split}
&D(\tilde{K_{0}})
=D(U(1)\times U(1)\times U(m-2))
\\
=&
\{\tilde{\Lambda}=
\tilde{q}_{1}y_{1}+\tilde{q}_{2}y_{2}
+\tilde{q}_{3}y_{3}+\tilde{q}_{4}y_{4}
+\tilde{q}_{5}y_{5}+\cdots+\tilde{q}_{m+2}y_{m+2}
\mid \\
&\quad\quad\quad\quad\quad
\tilde{q}_{3}=\tilde{q}_{1}\in \frac{1}{2}\mathbf{Z}, \tilde{q}_{4}=\tilde{q}_{2}\in
\frac{1}{2}\mathbf{Z},
\tilde{q}_{i}\in{\mathbf Z}{\ }(i=5, \cdots, m+2), \\
&\quad\quad\quad\quad\quad
\tilde{q}_{i}-\tilde{q}_{i+1}\geq{0}{\ }(i=5,6,\cdots, m+1)\},
\\
\end{split}
\end{equation*}
\begin{equation*}
\begin{split}
&D(K_{0})
=D(S(U(1)\times U(1)\times U(m-2)))
\\
=&
\{
\Lambda=
q_{1}y_{1}+q_{2}y_{2}+q_{3}y_{3}+q_{4}y_{4}
+q_{5}y_{5}+\cdots+q_{m+2}y_{m+2}
\mid \\
&\quad
\sum^{m+2}_{i=1}q_{i}
=0,
q_{i}-q_{j}\in{\bold Z}{\ }(i,j=1,
\cdots, m+2),
\\
&\quad
q_{3}=q_{1}, q_{4}=q_{2},
q_{i}-q_{i+1}\geq{0}{\ }(i=5,
\cdots, m+1)\}.
\end{split}
\end{equation*}
The natural maps
$D(\tilde{K})\longrightarrow D(K)$,
$D(\tilde{K}_{2})\longrightarrow D(K_{2})$,
$D(\tilde{K}_{1})\longrightarrow D(K_{1})$ and
$D(\tilde{K}_{0})\longrightarrow D(K_{0})$
are also surjective.

\subsection{Branching laws of $(U(m), U(2)\times U(m-2))$}
The branching laws for $(SU(m), S(U(m) \times U(2)))$
given in \cite{JLMilhorat1998} can be reformulated to
the branching laws for $(U(m), U(2)\times U(m-2))$ as follows:

\begin{lem}[Branching law of  $(U(m), U(2)\times U(m-2))$]
\label{BranchingLaw(U(m), U(2)XU(m-2))}
For each
$\tilde{\Lambda}=\tilde{p}_{1}y_{1}+\cdots+ \tilde{p}_{m} y_{m}\in{D(U(m))}$,
an irreducible $U(m)$-module $V_{\tilde\Lambda}$ with
the highest weight $\tilde{\Lambda}$
can be decomposed into the direct sum of
irreducible $U(2)\times U(m-2)$-modules as follows:
\begin{equation*}
V_{\tilde\Lambda}
=\bigoplus_{\tilde{\Lambda}^\prime \in D(U(2)\times U(m-2))}
V_{\tilde{\Lambda}^\prime}^{\prime}.
\end{equation*}
Here
$V_{\tilde\Lambda}$ contains an irreducible $U(2)\times U(m-2)$-module
$V_{\tilde{\Lambda}^\prime}^{\prime}$ with the highest weight
$\tilde{\Lambda}^{\prime}= \tilde{q}_{1}y_{1}+\cdots+ \tilde{q}_{m}y_{m}
\in{D(U(2)\times U(m-2))}$
if and only if the following conditions are satisfied:
\begin{itemize}
\item[(i)] $\tilde{q}_1-\tilde{p}_1 \in \mathbf{Z}$;
\item[(ii)] $\tilde{p}_{i-2} \geq \tilde{q}_i \geq \tilde{p}_i $\, $(i=3,\cdots, m)$;
\item[(iii)] In the finite power series expansion in $X$ of
$\displaystyle\frac{\prod_{i=2}^m (X^{r_i +1}- X^{-(r_i +1)})}{(X-X^{-1})^{m-2}} $,
where
\begin{eqnarray*}
& r_2&:=\tilde{p}_1 -\max (\tilde{q}_3, \tilde{p}_2)\\
& r_i&:=\min(\tilde{q}_{i}, \tilde{p}_{i-1})
-\max(\tilde{q}_{i+1}, \tilde{p}_{i}), \quad (3\leq i\leq m-1)\\
& r_m&:=\min(\tilde{q}_m, \tilde{p}_{m-1})-\tilde{p}_m,
\end{eqnarray*}
the coefficient of  $X^{\tilde{q}_1-\tilde{q}_2+1}$
does not vanish.
Moreover, the value of this coefficient is equal to the multiplicity of the
irreducible $U(2)\times U(m-2)$-module
$V_{\tilde{\Lambda}^\prime}^{\prime}$.
\end{itemize}
\end{lem}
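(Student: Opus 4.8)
The plan is to deduce this from the branching law for the pair $(SU(m),S(U(2)\times U(m-2)))$ proved by Milhorat \cite{JLMilhorat1998}, by keeping careful track of the central circle factors that separate the $U$--groups from the $SU$--groups. Recall the surjection $D(U(m))\to D(SU(m))$, $\tilde\Lambda\mapsto\Lambda=\tilde\Lambda-\Lambda^{0}$, where $\Lambda^{0}=\bigl(\tfrac1m\sum_{i}\tilde p_{i}\bigr)(y_{1}+\cdots+y_{m})$ is the central character; since $U(m)=C(U(m))\cdot SU(m)$ with finite intersection, the restriction of $V_{\tilde\Lambda}$ to $SU(m)$ is the irreducible module $V_{\Lambda}$, on which $C(U(m))$ acts by the scalar character $\Lambda^{0}$. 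Likewise the product map $C(U(m))\times S(U(2)\times U(m-2))\to U(2)\times U(m-2)$ is surjective with finite kernel, so restricting $V_{\tilde\Lambda}$ first to $SU(m)$ and then to $S(U(2)\times U(m-2))$ and applying Milhorat's theorem produces the multiplicities of all irreducibles $V'_{\Lambda'}$, $\Lambda'\in D(S(U(2)\times U(m-2)))$, occurring in $V_{\tilde\Lambda}$.

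Next I would lift each $S(U(2)\times U(m-2))$--constituent to a $U(2)\times U(m-2)$--constituent. On an irreducible $U(2)\times U(m-2)$--module the connected abelian group $C(U(m))$ acts by a single character, and the $U(2)\times U(m-2)$--irreducibles lying over a fixed $S(U(2)\times U(m-2))$--irreducible form a single $\mathbf Z$--family, obtained from one another by twisting by the characters $(A,B)\mapsto\det(A\oplus B)^{a}$ that are trivial on $S(U(2)\times U(m-2))$, i.e. by the simultaneous shifts $\tilde q_{i}\mapsto\tilde q_{i}+a$. Exactly one member of this family occurs in $V_{\tilde\Lambda}$, namely the one preserving the $U(m)$--central character, equivalently the one with $\sum_{i=1}^{m}\tilde q_{i}=\sum_{i=1}^{m}\tilde p_{i}$ together with the integrality $\tilde q_{1}-\tilde p_{1}\in\mathbf Z$ recorded in condition~(i). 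Writing $\tilde q_{1}\geq\tilde q_{2}$ for the $U(2)$--highest weight of this constituent and $\tilde q_{3}\geq\cdots\geq\tilde q_{m}$ for its $U(m-2)$--highest weight, every inequality and every multiplicity in Milhorat's statement then translates directly: the interlacing constraints become~(ii), and the Kostant--type multiplicity, which depends on the weights only through the consecutive differences $\tilde p_{i-1}-\tilde p_{i}$ and $\tilde q_{i}-\tilde q_{i+1}$ --- quantities unchanged under the passages $\tilde\Lambda\mapsto\Lambda$ and $\tilde\Lambda'\mapsto\Lambda'$ --- becomes the power--series condition~(iii), with $r_{2},\dots,r_{m}$ and the exponent $\tilde q_{1}-\tilde q_{2}+1$ as displayed.

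The only genuinely delicate point, and the step I expect to require the most care, is verifying that the quantities $r_{2},\dots,r_{m}$ and the exponent $\tilde q_{1}-\tilde q_{2}+1$ read off from Milhorat's formula in the $SU$--normalized coordinates agree, after the identification above, with the expressions written here in the $U$--coordinates $\tilde p_{i}$, $\tilde q_{j}$; this holds because each such quantity is a difference of weight entries, or a combination of $\min$'s and $\max$'s of such differences, hence is invariant under adding the common central shift, provided one fixes the normalization conventions on the two sides consistently. Uniqueness of the central lift, together with the fact that the lift preserves multiplicities, then yields the asserted equality of decompositions and completes the proof.
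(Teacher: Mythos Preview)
Your proposal is correct and matches the paper's approach exactly: the paper does not give an independent proof of this lemma but simply states that it is a reformulation of Milhorat's branching law for $(SU(m),S(U(2)\times U(m-2)))$ from \cite{JLMilhorat1998}, and you have spelled out precisely how that reformulation goes---passing from $SU$ to $U$ by tracking the central character, using that the multiplicity formula depends only on differences of weight entries and is therefore invariant under the central shift.
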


\subsection{Branching law of $(U(3), U(2)\times U(1))$}

Now following Lemma \ref{BranchingLaws(U(m+1),U(m)XU(1))}
the branching law of $(U(3), U(2)\times U(1))$ is described as
\begin{lem}\label{BranchingLaws(U(3),U(2)XU(1))}
Let $\tilde{V}_{\tilde{\Lambda}}$ be an irreducible $U(3)$-module
with the highest weight
$\tilde{\Lambda}=\tilde{p}_1 y_1+ \tilde{p}_2 y_2+\tilde{p}_3 y_3 \in D(U(3))$,
where $\tilde{p}_i \in \mathbf{Z}\ (i=1,2,3)$ and
$\tilde{p}_1\geq \tilde{p}_2\geq \tilde{p}_3$.
Then $\tilde{V}_{\tilde{\Lambda}}$ can be decomposed into
irreducible $U(2)\times U(1)$-modules as
\begin{equation*}
\tilde{V}_{\tilde{p}_1 y_1+\tilde{p}_2 y_2 + \tilde{p}_3 y_3}
=\bigoplus_{\alpha=0}^{\tilde{p}_1-\tilde{p}_2}\bigoplus_{\beta=0}^{\tilde{p}_2
-\tilde{p}_3}
\tilde{V}^{\prime}_{(\tilde{p}_1-\alpha)y_1+(\tilde{p}_2-\beta)y_2+
(\tilde{p}_3+\alpha+\beta)y_3}.
\end{equation*}
\end{lem}

\subsection{Descriptions of $D(\tilde{K}, \tilde{K}_{0})$,
$D(\tilde{K}_{2},\tilde{K}_{0})$, $D(\tilde{K}_{1},\tilde{K}_{0})$}
\label{D(K,K_0)AIII}

Let
\begin{equation*}
\tilde{\Lambda}
=\tilde{p}_{1}y_{1}+\tilde{p}_{2}y_{2}
+\tilde{p}_{3}y_{3}+\cdots+\tilde{p}_{m+2}y_{m+2}
\in{D(\tilde{K})}=D(U(2)\times U(m)),
\end{equation*}
where $\tilde{p}_{1},\cdots,\tilde{p}_{m+2}\in{\mathbf Z}$,
$\tilde{p}_{1}\geq\tilde{p}_{2}$,
$\tilde{p}_{3}\geq\cdots\geq\tilde{p}_{m+2}$.
Thus
$\Lambda_{\sigma}=\tilde{p}_{1}y_{1}+\tilde{p}_{2}y_{2}\in{D(U(2))}$,
$\Lambda_{\tau}=\tilde{p}_{3}y_{3}+\cdots+\tilde{p}_{m+2}y_{m+2}\in{D(U(m))}${\ }
and
\begin{equation*}
\tilde{\rho}_{\tilde{\Lambda}}
=\sigma\boxtimes\tau
\in{{\mathcal D}(\tilde{K})}={\mathcal D}(U(2)\times U(m)),
\end{equation*}
where $\sigma\in{{\mathcal D}(U(2))}$,
$\tau\in{{\mathcal D}(U(m))}$.

By Lemma \ref{BranchingLaw(U(m), U(2)XU(m-2))},
an irreducible $U(m)$-module
$V_{\tau}$ with the highest weight $\Lambda_{\tau}$
can be decomposed into the direct sum of
irreducible $U(2) \times U(m-2)$-modules as
\begin{equation*}
V_{\tau}=\bigoplus V_{\tilde{\Lambda}_\tau^\prime}^{\prime},
\end{equation*}
where
$\tilde{\Lambda}_\tau^\prime=\sum_{i=3}^{m+2}\tilde{q}_i y_i
\in D(U(2)\times U(m-2))$
with $\tilde{q}_3, \cdots, \tilde{q}_{m+2}\in \mathbf{Z}$,
$\tilde{q}_i- \tilde{q}_{i+1}\geq 0$ $(i=3, 5,\cdots, m+1)$.
Note that setting
$\Lambda_{\varsigma}:=\tilde{q}_3 y_3 +\tilde{q}_{4} y_4 \in D(U(2))$
and
$\Lambda_{\gamma}:=\tilde{q}_5 y_5 +\cdots + \tilde{q}_{m+2} y_{m+2}
\in D(U(m-2))$, we get a decomposition into
the direct sum of irreducible $\tilde{K}_2$-modules as
\begin{equation*}
V_{\tilde{\Lambda}}=\bigoplus_{\varsigma, \gamma}
(V_{\sigma}\boxtimes V_{\varsigma}\boxtimes V_{\gamma}).
\end{equation*}
By the branching law of $(U(2),U(1)\times U(1))$
(see Lemma \ref{BranchingLaws(U(m+1),U(m)XU(1))}),
\begin{equation*}
\begin{split}
&V_{\sigma}=V_{\tilde{p}_1 y_1+\tilde{p}_2 y_2}
=\bigoplus_{\alpha=0}^{\tilde{p}_1-\tilde{p}_2}V^{\prime}_{(\tilde{p}_1-\alpha) y_1 +(\tilde{p}_2 +\alpha) y_2},\\
&V_{\varsigma}=V_{\tilde{q}_3 y_3+ \tilde{q}_4 y_4}
=\bigoplus_{\beta=0}^{\tilde{q}_3-\tilde{q}_4} V^{\prime}_{(\tilde{q}_3-\beta) y_3 +(\tilde{q}_4 +\beta) y_4}.
\end{split}
\end{equation*}
Thus we have a decomposition into the direct sum of irreducible
$\tilde{K}_1$-modules:
\begin{equation*}
\begin{split}
& V_{\tilde{\Lambda}}\\
=&\bigoplus
\bigoplus^{\tilde{p}_{1}-\tilde{p}_{2}}_{\alpha=0}
\bigoplus^{\tilde{q}_{3}-\tilde{q}_{4}}_{\beta=0}
(V^{\prime}_{(\tilde{p}_{1}-\alpha)y_{1}+(\tilde{p}_{2}+\alpha)y_{2}}
\boxtimes
V^{\prime}_{(\tilde{q}_{3}-\beta)y_{3}+(\tilde{q}_{4}+\beta)y_{4}}
\boxtimes
V_{\tilde{q}_{5}y_{5}+\cdots+\tilde{q}_{m+2}y_{m+2}}).
\end{split}
\end{equation*}
Since as a $U(1)\times U(1)$-module
\begin{equation*}
V^{\prime}_{(\tilde{p}_{1}-\alpha)y_{1}+(\tilde{p}_{2}+\alpha)y_{2}}
\boxtimes
V^{\prime}_{(\tilde{q}_{3}-\beta)y_{3}+(\tilde{q}_{4}+\beta)y_{4}}
=
V^{\prime\prime}_{\frac{1}{2}(\tilde{p}_{1}+\tilde{q}_{3}-\alpha-\beta)(y_{1}+y_{3})+
\frac{1}{2}(\tilde{p}_{2}+\tilde{q}_{4}+\alpha+\beta)(y_{2}+y_{4})},
\end{equation*}
we have a decomposition into the direct sum of irreducible
$\tilde{K}_0$
-modules:
\begin{equation*}
\begin{split}
&V_{\tilde{\Lambda}}\\
=&\bigoplus_{\varsigma, \gamma}
(V_{\tilde{p}_{1}y_{1}+\tilde{p}_{2}y_{2}}
\boxtimes
V_{\tilde{q}_{3}y_{3}+\tilde{q}_{4}y_{4}}
\boxtimes
V_{\tilde{q}_{5}y_{5}+\cdots+\tilde{q}_{m+2}y_{m+2}})\\
=&\bigoplus
\bigoplus^{\tilde{p}_{1}-\tilde{p}_{2}}_{\alpha=0}
\bigoplus^{\tilde{q}_{3}-\tilde{q}_{4}}_{\beta=0}
(V^{\prime}_{(\tilde{p}_{1}-\alpha)y_{1}+(\tilde{p}_{2}+\alpha)y_{2}}
\boxtimes
V^{\prime}_{(\tilde{q}_{3}-\beta)y_{3}+(\tilde{q}_{4}+\beta)y_{4}}
\boxtimes
V_{\tilde{q}_{5}y_{5}+\cdots+\tilde{q}_{m+2}y_{m+2}})\\
=& \bigoplus
\bigoplus^{\tilde{p}_{1}-\tilde{p}_{2}}_{\alpha=0}
\bigoplus^{\tilde{q}_{3}-\tilde{q}_{4}}_{\beta=0}
V^{\prime\prime}_{\frac{1}{2}(\tilde{p}_{1}+\tilde{q}_{3}-\alpha-\beta)(y_{1}+y_{3})+
\frac{1}{2}(\tilde{p}_{2}+\tilde{q}_{4}+\alpha+\beta)(y_{2}+y_{4})}
\boxtimes
V_{\tilde{q}_{5}y_{5}+\cdots+\tilde{q}_{m+2}y_{m+2}}{\ }.
\end{split}
\end{equation*}
Thus we obtain that
$\tilde{\Lambda}\in{D(\tilde{K},\tilde{K}_{0})}$ if and only if
there exist
$\alpha, \beta\in{\mathbf Z}$ with
$0\leq\alpha\leq{\tilde{p}_{1}-\tilde{p}_{2}}$ and
$0\leq\beta\leq{\tilde{q}_{3}-\tilde{q}_{4}}$ such that
$$V^{\prime\prime}_{\frac{1}{2}(\tilde{p}_{1}+\tilde{q}_{3}-\alpha-\beta)(y_{1}+y_{3})+
\frac{1}{2}(\tilde{p}_{2}+\tilde{q}_{4}+\alpha+\beta)(y_{2}+y_{4})}
\boxtimes
V_{\tilde{q}_{5}y_{5}+\cdots+\tilde{q}_{m+2}y_{m+2}}$$
is a trivial
$\tilde{K}_0$
-module,
that is,
\begin{equation*}
\begin{cases}
&\tilde{p}_{1}+\tilde{q}_{3}-\alpha-\beta=0,\\
&\tilde{p}_{2}+\tilde{q}_{4}+\alpha+\beta=0,\\
&\tilde{q}_{5}=\cdots=\tilde{q}_{m+2}=0.
\end{cases}
\end{equation*}
Hence $\tilde{\Lambda}\in{D(\tilde{K},\tilde{K}_{0})}$ must satisfy
\begin{equation*}
\begin{split}
&\tilde{p}_5=\tilde{p}_6=\cdots=\tilde{p}_{m}=0,\\
&\tilde{p}_3 \geq \tilde{p}_4 \geq 0, \
\tilde{p}_{m+2}\leq \tilde{p}_{m+1}\leq 0,\\
&\tilde{p}_1+\tilde{p}_2+\tilde{p}_3+\tilde{p}_4+\tilde{p}_{m+1}+\tilde{p}_{m+2}=0.
\end{split}
\end{equation*}
If $m\geq{4}$, then
each
$\tilde{\Lambda}\in{D(\tilde{K},\tilde{K}_{0})}$
is expressed as
\begin{equation*}
\tilde{\Lambda}=
\tilde{p}_{1}y_{1}+\tilde{p}_{2}y_{2}+
\tilde{p}_{3}y_{3}+\tilde{p}_{4}y_{4}+
\tilde{p}_{m+1}y_{m+1}+\tilde{p}_{m+2}y_{m+2},
\end{equation*}
where $\tilde{p}_{i}\in{\mathbf Z}$,
$\tilde{p}_{1}\geq\tilde{p}_{2}$,
$\tilde{p}_{3}\geq\tilde{p}_{4}\geq0 \geq \tilde{p}_{m+1}\geq\tilde{p}_{m+2}$,
\begin{equation*}
\tilde{p}_{1}+\tilde{p}_{2}+
\tilde{p}_{3}+\tilde{p}_{4}+
\tilde{p}_{m+1}+\tilde{p}_{m+2}=0.
\end{equation*}
If $m=3$, then
each
$\tilde{\Lambda}\in{D(\tilde{K},\tilde{K}_{0})}$
is expressed as
\begin{equation*}
\tilde{\Lambda}=
\tilde{p}_{1}y_{1}+\tilde{p}_{2}y_{2}+
\tilde{p}_{3}y_{3}+\tilde{p}_{4}y_{4}+
\tilde{p}_{5}y_{5},
\end{equation*}
where $\tilde{p}_{i}\in{\mathbf Z}$,
$\tilde{p}_{1}\geq\tilde{p}_{2}$,
$\tilde{p}_{3}\geq\tilde{p}_{4}\geq\tilde{p}_{5}$,
$\tilde{p}_3\geq 0$, $\tilde{p}_5 \leq 0$,
\begin{equation*}
\tilde{p}_{1}+\tilde{p}_{2}+
\tilde{p}_{3}+\tilde{p}_{4}+
\tilde{p}_{5}=0.
\end{equation*}
If $m=2$, then
each
$\tilde{\Lambda}\in{D(\tilde{K},\tilde{K}_{0})}$
is expressed as
\begin{equation*}
\tilde{\Lambda}=
\tilde{p}_{1}y_{1}+\tilde{p}_{2}y_{2}+
\tilde{p}_{3}y_{3}+\tilde{p}_{4}y_{4},
\end{equation*}
where $\tilde{p}_{i}\in{\mathbf Z}$,
$\tilde{p}_{1}\geq\tilde{p}_{2}$,
$\tilde{p}_{3}\geq\tilde{p}_{4}$,
$\tilde{p}_{1}+\tilde{p}_{2}+
\tilde{p}_{3}+\tilde{p}_{4}=0$.

Correspondingly, each
$\tilde{\Lambda}^{\prime}\in{D(\tilde{K}_{2},\tilde{K}_{0})}$
is expressed as
$\tilde{\Lambda}^{\prime}=
\tilde{p}_{1}y_{1}+\tilde{p}_{2}y_{2}+
\tilde{q}_{3}y_{3}+\tilde{q}_{4}y_{4}$,
where
$\tilde{p}_{1},\tilde{p}_{2},\tilde{q}_{3},\tilde{q}_{4}\in{\mathbf Z}$,
$\tilde{p}_{1}\geq\tilde{p}_{2}$,
$\tilde{q}_{3}\geq\tilde{q}_{4}$,
$\tilde{p}_{1}+\tilde{p}_{2}+\tilde{q}_{3}+\tilde{q}_{4}=0$,
in other words,
$\tilde{p}_{1}+\tilde{p}_{2}+\tilde{p}_{3}+\tilde{p}_{4}+\tilde{p}_{m+1}+\tilde{p}_{m+2}=0$
if $m\geq{4}$,
$\tilde{p}_{1}+\tilde{p}_{2}+\tilde{p}_{3}+\tilde{p}_{4}+\tilde{p}_{5}=0$ if $m=3$.
Each
$\tilde{\Lambda}^{\prime\prime}\in{D(\tilde{K}_{1},\tilde{K}_{0})}$
is expressed as
$\tilde{\Lambda}^{\prime\prime}=
\tilde{q}^{\prime}_{1}y_{1}+\tilde{q}^{\prime}_{2}y_{2}+
\tilde{q}^{\prime}_{3}y_{3}+\tilde{q}^{\prime}_{4}y_{4}$,
where
$\tilde{q}^{\prime}_{1},\tilde{q}^{\prime}_{2},
\tilde{q}^{\prime}_{3}, \tilde{q}^{\prime}_{4}\in{\mathbf Z}$,
$\tilde{q}^{\prime}_{1}+\tilde{q}^{\prime}_{3}=0$,
$\tilde{q}^{\prime}_{2}+\tilde{q}^{\prime}_{4}=0$,
$\tilde{q}^{\prime}_{1}=-\alpha+\tilde{p}_{1}$,
$\tilde{q}^{\prime}_{2}=\alpha+\tilde{p}_{2}$
for some $\alpha=0,\cdots,\tilde{p}_{1}-\tilde{p}_{2}$,
and
$\tilde{q}^{\prime}_{3}=-\beta+\tilde{q}_{3}$,
$\tilde{q}^{\prime}_{4}=\beta+\tilde{q}_{4}$
for some $\beta=0,\cdots,\tilde{q}_{3}-\tilde{q}_{4}$.

Moreover the coefficient of $X^{\tilde{q}_3 - \tilde{q}_4 +1}$ in
\begin{equation*}
\begin{split}
&
\frac{1}{X-X^{-1}}
(X^{\tilde{p}_{3}-\tilde{p}_{4}+1}-X^{-(\tilde{p}_{3}-\tilde{p}_{4}+1)})
(X^{\tilde{p}_{m+1}-\tilde{p}_{m+2}+1}-X^{-(\tilde{p}_{m+1}-\tilde{p}_{m+2}+1)})\\
=&
\sum^{\tilde{p}_{3}-\tilde{p}_{4}}_{i=0}
(X^{(\tilde{p}_{3}-\tilde{p}_{4})+(\tilde{p}_{m+1}-\tilde{p}_{m+2})-2i+1}
-
X^{(\tilde{p}_{3}-\tilde{p}_{4})-(\tilde{p}_{m+1}-\tilde{p}_{m+2})-2i-1})
\end{split}
\end{equation*}
is equal to the multiplicity of
the $\tilde{K}_2$-module with the highest weight
$\tilde{\Lambda}^{\prime}=
\Lambda_{\sigma}+\tilde{\Lambda}^{\prime}_{\tau}
=
\tilde{p}_{1}y_{1}+\tilde{p}_{2}y_{2}+
\tilde{q}_{3}y_{3}+\tilde{q}_{4}y_{4}\in D(\tilde{K}_{2},\tilde{K}_{0})$.

\subsection{Eigenvalue computation when $m=2$}

For each
$\tilde{\Lambda}=\tilde{p}_{1}y_{1}+\tilde{p}_{2}y_{2}+\tilde{p}_{3}y_{3}+\tilde{p}_{4}y_{4}
\in D(\tilde{K}, \tilde{K_0})$ and
$\tilde{\Lambda}^{\prime\prime}=\tilde{q}^{\prime}_{1}y_{1}+\tilde{q}^{\prime}_{2}y_{2}+
\tilde{q}^{\prime}_{3}y_{3}+\tilde{q}^{\prime}_{4}y_{4}\in D(\tilde{K}_{1},\tilde{K}_{0})$
defined as above,
the corresponding eigenvalue of $-{\mathcal C}_{L}$ is
\begin{equation}\label{EigenvalueFormulaAIII_2 m=2}
\begin{split}
-c_L=&-c_{\tilde{\Lambda}}+\frac{1}{2}c_{\tilde{\Lambda}^{\prime\prime}}\\
=&{\ }
\tilde{p}_{1}^{2}+\tilde{p}_{2}^{2}
+\tilde{p}_{3}^{2}+\tilde{p}_{4}^{2}
+(\tilde{p}_{1}-\tilde{p}_{2})
+(\tilde{p}_{3}-\tilde{p}_{4}) \\
&
-\frac{1}{2}(
(\tilde{q}^{\prime}_{1})^{2}+(\tilde{q}^{\prime}_{2})^{2}
+(\tilde{q}^{\prime}_{3})^{2}+(\tilde{q}^{\prime}_{4})^{2}).
\end{split}
\end{equation}

Since
\begin{equation*}
-{\mathcal C}_L=-\frac{1}{2}{\mathcal C}_{K/K_0}-{\mathcal C}_{K/K_1}
\geq -\frac{1}{2}{\mathcal C}_{K/K_0},
\end{equation*}
the first eigenvalue of $-{\mathcal C}_L$,
$-c_L \leq n=6$ implies $-c_{\tilde{\Lambda}} \leq 12$.
By estimating the eigenvalue formula \eqref{EigenvalueFormulaAIII_2 m=2}
from above by $6$, we compute
\begin{lem}
$\tilde{\Lambda}=\tilde{p}_1 y_1+ \tilde{p}_2 y_2 +\tilde{p}_3 y_3+ \tilde{p}_4 y_4
\in D(\tilde{K}, \tilde{K}_0)$
has eigenvalue
$-c_L\leq 6$ if and only if
$(\tilde{p}_1, \tilde{p}_2, \tilde{p}_3, \tilde{p}_4)$ is one of
\begin{equation*}
\begin{split}
\Bigl\{
& (0,0,0,0), (1,1,-1,-1), (1,0,0,-1), (1,-1,0,0), (1,-1,1,-1), \\
& (1,1,0,-2), (2,0,-1,-1), (0,-1,1,0), (0,0,1,-1), \\
& (0,-2,1,1), (-1,-1, 2,0), (-1,-1,1,1)\Bigr\}.
\end{split}
\end{equation*}
\end{lem}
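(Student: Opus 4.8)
The statement to be established is the eigenvalue lemma for $(\tilde K,\tilde K_0)=(U(2)\times U(2),\, U(1)\times U(1))$ in the case $m=2$: the list of all $\tilde\Lambda=\tilde p_1 y_1+\tilde p_2 y_2+\tilde p_3 y_3+\tilde p_4 y_4\in D(\tilde K,\tilde K_0)$ for which the Casimir eigenvalue $-c_L$ on $L^{6}={\mathcal G}(N^6)$ is at most $n=6$. The strategy is a finite enumeration, exactly parallel to the corresponding lemmas in Sections \ref{Sec:G2xG2}, \ref{Sec_b2} and \ref{Sec_DIII2}. First I would record the explicit formula \eqref{EigenvalueFormulaAIII_2 m=2} for $-c_L$ in terms of $(\tilde p_1,\tilde p_2,\tilde p_3,\tilde p_4)$ and the auxiliary data $(\tilde q_1',\tilde q_2',\tilde q_3',\tilde q_4')\in D(\tilde K_1,\tilde K_0)$, which is already derived just above the statement. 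Then I would use the domination bound $-{\mathcal C}_L=-\tfrac12{\mathcal C}_{K/K_0}-{\mathcal C}_{K/K_1}\geq -\tfrac12{\mathcal C}_{K/K_0}$, so that $-c_L\leq 6$ forces $-c_{\tilde\Lambda}\leq 12$ for the $K$-Casimir eigenvalue; since
\begin{equation*}
-c_{\tilde\Lambda}=\tilde p_1^2+\tilde p_2^2+\tilde p_3^2+\tilde p_4^2+(\tilde p_1-\tilde p_2)+(\tilde p_3-\tilde p_4),
\end{equation*}
this confines $(\tilde p_1,\tilde p_2,\tilde p_3,\tilde p_4)$ to a finite box of integer tuples obeying $\tilde p_1\geq\tilde p_2$, $\tilde p_3\geq\tilde p_4$ and $\tilde p_1+\tilde p_2+\tilde p_3+\tilde p_4=0$ (the membership condition for $D(\tilde K,\tilde K_0)$ when $m=2$, established in Section \ref{D(K,K_0)AIII}).

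Second, for each candidate $\tilde\Lambda$ in that finite box I would run the branching law of $(U(2),U(1)\times U(1))$ (Lemma \ref{BranchingLaws(U(m+1),U(m)XU(1))} with $m=1$) on each $U(2)$-factor to list all $\tilde\Lambda''\in D(\tilde K_1,\tilde K_0)$ occurring in $\tilde V_{\tilde\Lambda}$; concretely $\tilde q_1'=\tilde p_1-\alpha$, $\tilde q_2'=\tilde p_2+\alpha$ with $0\leq\alpha\leq\tilde p_1-\tilde p_2$, and similarly $\tilde q_3'=\tilde p_3-\beta$, $\tilde q_4'=\tilde p_4+\beta$ with $0\leq\beta\leq\tilde p_3-\tilde p_4$, subject to $\tilde q_1'+\tilde q_3'=0$ and $\tilde q_2'+\tilde q_4'=0$ coming from $\tilde K_0$-invariance. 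To get the \emph{smallest} eigenvalue attached to a given $\tilde\Lambda$ I maximize $\tfrac12\sum_i(\tilde q_i')^2$ over the admissible $(\alpha,\beta)$, plug into \eqref{EigenvalueFormulaAIII_2 m=2}, and keep $\tilde\Lambda$ precisely when the resulting minimum is $\leq 6$. This is a short arithmetic check for each of the finitely many tuples, and I expect it to reproduce exactly the twelve weights displayed in the statement.

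Third, I would double-check consistency and completeness: verify that the trivial weight $(0,0,0,0)$ appears (eigenvalue $0$), that the weight $(1,-1,0,0)$ and its ``mirror'' $(0,0,1,-1)$ both appear with eigenvalue $2$, and more importantly that no tuple just outside the list sneaks in at eigenvalue exactly $6$ — this is the only place where an off-by-one error could occur, so I would treat the boundary cases $-c_L=6$ with extra care, listing the associated $\tilde K_1$-submodules explicitly. I would also note the symmetry $\tilde\Lambda\mapsto -w_0\tilde\Lambda$ (simultaneously negating and reordering the two pairs), which halves the amount of computation and serves as an internal check that the final list is invariant under it.

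\textbf{Main obstacle.} There is no deep difficulty here; the proof is a bounded computation. The one genuinely delicate point is bookkeeping at the threshold $-c_L=6$: because $-c_L$ depends not only on $\tilde\Lambda$ but on the choice of $\tilde K_1$-submodule inside $\tilde V_{\tilde\Lambda}$, one must be sure to have enumerated \emph{all} admissible $(\alpha,\beta)$ for each $\tilde\Lambda$ and to have correctly identified which of these lie in $D(\tilde K_1,\tilde K_0)$ (i.e.\ satisfy the $\tilde K_0$-invariance equations $\tilde q_1'+\tilde q_3'=0$, $\tilde q_2'+\tilde q_4'=0$). Missing an admissible pair could either wrongly include or wrongly exclude a weight. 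Accordingly I would organize the verification as a table indexed by $(\tilde p_1,\tilde p_2,\tilde p_3,\tilde p_4)$, recording for each the value of $-c_{\tilde\Lambda}$, the admissible $(\alpha,\beta)$, the maximal $\tfrac12\sum(\tilde q_i')^2$, and the resulting $\min(-c_L)$, and then read off the list.
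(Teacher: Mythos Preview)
Your approach is correct and essentially identical to the paper's: use the domination $-\mathcal{C}_L\geq -\tfrac12\mathcal{C}_{K/K_0}$ to bound $-c_{\tilde\Lambda}\leq 12$, then enumerate the finitely many integer tuples with $\tilde p_1\geq\tilde p_2$, $\tilde p_3\geq\tilde p_4$, $\sum_i\tilde p_i=0$ and test each via the $(U(2),U(1)\times U(1))$ branching. One minor slip in your sanity check: for $(1,-1,0,0)$ the only admissible $\tilde\Lambda''$ is $(0,0,0,0)$, so the minimal eigenvalue is $4$, not $2$; this does not affect the argument, but be careful with such arithmetic when you carry out the actual table.
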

Denote
$\tilde{\Lambda}=\tilde{p}_1 y_1+ \tilde{p}_2 y_2 +\tilde{p}_3 y_3+ \tilde{p}_4 y_4 \in D(\tilde{K}, \tilde{K}_0)$
by
$\tilde{\Lambda}=(\tilde{p}_1, \tilde{p}_2, \tilde{p}_3, \tilde{p}_4)$.

Suppose that $\tilde{\Lambda}=(1,1,-1,-1)$.
Then $\dim_{\mathbf C} V_{\tilde{\Lambda}}= 1$.
By the branching law of $(U(2),U(1)\times U(1))$,
$(\tilde{q}^{\prime}_1, \tilde{q}^{\prime}_2, \tilde{q}^{\prime}_3, \tilde{q}^{\prime}_4)=(1,1,-1,-1)\in
D(\tilde{K}_1, \tilde{K}_0)$.
Then
$-c_{\tilde{\Lambda}}=4$,
$-c_{\tilde{\Lambda}^{\prime\prime}}=4$,
$-c_L=-c_{\tilde{\Lambda}}+ \frac{1}{2} c_{\tilde{\Lambda}^{\prime\prime}}= 2 <6$.
On the other hand, $V_{\tilde{\Lambda}}=\mathbf{C}\boxtimes \mathbf{C}$,
which is fixed by the $\rho_{\tilde{\Lambda}}|_{\tilde{K}_0}$-action.
But for a generator $Q$
of $\mathbf{Z}_4$ in $\tilde{K}_{[\mathfrak{a}]}$,
$\rho_{\tilde{\Lambda}}(Q)=-\rm{Id}$ on $V_{\tilde{\Lambda}}$.
Hence, $\tilde{\Lambda}=(1,1,-1,-1)\not\in D(\tilde{K}, \tilde{K}_{[\mathfrak{a}]})$.
Similarly, $\tilde{\Lambda}=(-1,-1,1,1)\not\in D(\tilde{K}, \tilde{K}_{[\mathfrak{a}]})$.

\smallskip

Suppose that $\tilde{\Lambda}=(1,0,0,-1)$.
Then $\dim_{\mathbf C} V_{\tilde{\Lambda}}= 4$.
It follows from the branching law of $(U(2), U(1)\times U(1))$
that $(\tilde{q}^{\prime}_1, \tilde{q}^{\prime}_2)=(1,0)\oplus (0,1)$
and $(\tilde{q}^{\prime}_3, \tilde{q}^{\prime}_4)=(0,-1)$ or $(-1,0)$.
Then $(\tilde{q}^{\prime}_1, \tilde{q}^{\prime}_2, \tilde{q}^{\prime}_3, \tilde{q}^{\prime}_4)=(1,0,-1,0)$
or $(0,1,0,-1)\in D(\tilde{K}_1, \tilde{K}_0)$.
Hence,
$-c_{\tilde{\Lambda}}=4$,
$-c_{\tilde{\Lambda}^{\prime\prime}}=2$,
$-c_L=-c_{\tilde{\Lambda}}+ \frac{1}{2} c_{\tilde{\Lambda}^{\prime\prime}}= 3 <6$.

Let
$\mathcal{D}(SU(2))=\{(V_{\ell}, \rho_{\ell})\mid \ell \in \mathbf{Z}, \ell >0\}$
be a complete set of inequivalent irreducible unitary representations of
$SU(2)$ described in Section \ref{Section_G2 SO4}.
Let $\{v_0^{(\ell)}, v_1^{(\ell)}, \cdots, v_{\ell}^{(\ell)}\}$ be a unitary basis of $V_{\ell}$ defined by \eqref{eq:basis_SU(2)}.
Then
\begin{equation*}
V_{\tilde{\Lambda}}=(W^{\prime}_{\frac{1}{2}(y_1+y_2)}\otimes V_1)
\boxtimes(W^{\prime}_{-\frac{1}{2}(y_1+y_2)}\otimes V_1).
\end{equation*}
The representation of $\tilde{K}_0$ on
$v^{(1)}_{i}\otimes v^{(1)}_j\in V_{\tilde{\Lambda}}\ (i,j=0,1)$ is given by
\begin{equation*}
\begin{split}
&\rho_{\tilde{\Lambda}}(P)
(v^{(1)}_{i} \otimes v^{(1)}_{j})\\
=&
\left[\rho_{1}
\begin{pmatrix}
e^{\frac{\sqrt{-1}(s-t)}{2}}&  \\
 & e^{-\frac{\sqrt{-1}(s-t)}{2}}
\end{pmatrix}
\right](v^{(1)}_{i})
\otimes \left[\rho_{1}
\begin{pmatrix}
e^{\frac{\sqrt{-1}(s-t)}{2}}&  \\
 & e^{-\frac{\sqrt{-1}(s-t)}{2}}
\end{pmatrix}
\right](v^{(1)}_{j})\\
=& e^{\sqrt{-1}(s-t)[1-(i+j)]} v^{(1)}_{i}\otimes v^{(1)}_j.
\end{split}
\end{equation*}
Then
$(V_{\tilde{\Lambda}})_{\tilde{K}_0}=
\mathrm{span}_{\mathbf C}
\{ v^{(1)}_1\otimes v^{(1)}_0, v^{(1)}_0\otimes v^{(1)}_1\}$.
But for $\mathrm{diag}(1,1,-1,-1)\in \tilde{K}_{[\mathfrak a]}$ and $i,j=0,1$,
$\rho_{\tilde{\Lambda}}(\mathrm{diag}(1,1,-1,-1))(v^{(1)}_i\otimes v^{(1)}_j)
= - v^{(1)}_i \otimes v^{(1)}_j$.
So $(V_{\tilde{\Lambda}})_{\tilde{K}_{[\mathfrak a]}}=\{0\}$
and
$\tilde{\Lambda}=(1,0,0,-1) \notin D(\tilde{K}, \tilde{K}_{[\mathfrak a]})$.
Similarly, $\tilde{\Lambda}=(0,-1, 1,0) \notin
D(\tilde{K}, \tilde{K}_{[\mathfrak a]})$.

\smallskip

Suppose that $\tilde{\Lambda}=(1,-1,0,0)$.
Then $\dim_{\mathbf C} V_{\tilde{\Lambda}}=3$.
It follows from the branching law of $(U(2), U(1)\times U(1))$ that
$(\tilde{q}^{\prime}_1, \tilde{q}^{\prime}_2)=
(1,-1), (0,0)\text{ or } (-1,1)$
and $(\tilde{q}^{\prime}_3, \tilde{q}^{\prime}_4)=(0,0)$.
Then
$(\tilde{q}^{\prime}_1, \tilde{q}^{\prime}_2, \tilde{q}^{\prime}_3,
\tilde{q}^{\prime}_4)=(0,0,0,0) \in D(\tilde{K}, \tilde{K}_0)$.
Hence,
$-c_{\tilde{\Lambda}}=4$,
$-c_{\tilde{\Lambda}^{\prime\prime}}=0$,
$-c_L=-c_{\tilde{\Lambda}}+ \frac{1}{2} c_{\tilde{\Lambda}^{\prime\prime}}= 4 < 6$.
On the other hand,
$V_{\tilde{\Lambda}}\cong V_2 \boxtimes \mathbf{C}$.
The representation of $\tilde{K}_0$ on
$v^{(2)}_{i}\otimes w \in V_{\tilde{\Lambda}}$
is given by
\begin{equation*}
\rho_{\tilde{\Lambda}}(P)
(v^{(2)}_{i} \otimes w)
=e^{\sqrt{-1}(s-t)(1-i)} v^{(2)}_{i}\otimes w.
\end{equation*}
Then
$(V_{\tilde{\Lambda}})_{\tilde{K}_0}
={\rm span}_{\mathbf C}\{ v^{(2)}_1\otimes w\}$.
But for
the generator $Q \in \tilde{K}_{[\mathfrak a]}$,
$$\rho_{\tilde{\Lambda}}(Q)
(v^{(2)}_1\otimes w)= - v^{(2)}_1 \otimes w.$$
So $(V_{\tilde{\Lambda}})_{\tilde{K}_{[\mathfrak a]}}=\{0\}$
and
$\tilde{\Lambda}=(1,-1,0,0) \notin D(\tilde{K}, \tilde{K}_{[\mathfrak a]})$.
Similarly, $\tilde{\Lambda}=(0,0,1,-1) \notin D(\tilde{K}, \tilde{K}_{[\mathfrak a]})$.

\smallskip

Suppose that $\tilde{\Lambda}=(1,-1,1,-1)$.
Then $\dim_{\mathbf C} V_{\tilde{\Lambda}}=9$.
It follows from the branching laws of $(U(2), U(1)\times U(1))$
that $(\tilde{q}^{\prime}_1, \tilde{q}^{\prime}_2)=(1,-1)\text{ or } (0,0)$
and $(\tilde{q}^{\prime}_3, \tilde{q}^{\prime}_4)=(1,-1)\text{ or } (0,0)$.
Then $(\tilde{q}^{\prime}_1, \tilde{q}^{\prime}_2, \tilde{q}^{\prime}_3, \tilde{q}^{\prime}_4)=(1,-1,-1,1)$,
$(-1,1,1,-1)$ or $(0,0,0,0)$  $\in D(\tilde{K}, \tilde{K}_0)$.
When $(\tilde{q}^{\prime}_1, \tilde{q}^{\prime}_2, \tilde{q}^{\prime}_3, \tilde{q}^{\prime}_4)=(0,0,0,0)$,
$-c_{\tilde{\Lambda}}=8$,
$-c_{\tilde{\Lambda}^{\prime\prime}}=0$,
$-c_L=-c_{\tilde{\Lambda}}+\frac{1}{2} c_{\tilde{\Lambda}^{\prime\prime}}= 8 >6$.
When $(\tilde{q}^{\prime}_1, \tilde{q}^{\prime}_2, \tilde{q}^{\prime}_3, \tilde{q}^{\prime}_4)=(1,-1,-1,1)$ or
$(-1,1,1,-1)$,
$-c_{\tilde{\Lambda}}=8$,
$-c_{\tilde{\Lambda}^{\prime\prime}}=4$,
$-c_L=-c_{\tilde{\Lambda}}+ \frac{1}{2} c_{\tilde{\Lambda}^{\prime\prime}}= 6$.
On the other hand,
$V_{\tilde{\Lambda}}\cong V_2 \boxtimes V_2$.
The representation of $\tilde{K}_0$ on
$v^{(2)}_{i}\otimes v^{(2)}_j\in V_{\tilde{\Lambda}}$
$(i,j=0,1,2)$  is given by
\begin{equation*}
\begin{split}
&\rho_{\tilde{\Lambda}}(P)
(v^{(2)}_{i} \otimes v^{(2)}_{j})\\
=&
\left[\rho_{2}
\begin{pmatrix}
e^{\frac{\sqrt{-1}(s-t)}{2}}&  \\
 & e^{-\frac{\sqrt{-1}(s-t)}{2}}
\end{pmatrix}
\right](v^{(2)}_{i})
\otimes \left[\rho_{2}
\begin{pmatrix}
e^{\frac{\sqrt{-1}(s-t)}{2}}&  \\
 & e^{-\frac{\sqrt{-1}(s-t)}{2}}
\end{pmatrix}
\right](v_{j}^{(2)})\\
=& e^{\sqrt{-1}(s-t)[2-(i+j)]} v^{(2)}_{i}\otimes v^{(2)}_j.
\end{split}
\end{equation*}
Hence
$(V_{\tilde{\Lambda}})_{\tilde{K}_0}=
{\rm span}_{\mathbf C}\{ v^{(2)}_0\otimes v^{(2)}_2, v^{(2)}_1\otimes v^{(2)}_1,
v^{(2)}_2\otimes v^{(2)}_0\}$.
Moreover, the action of the generator
$Q$
of $\mathbf{Z}_4$ in $\tilde{K}_{[\mathfrak a]}$
on $v^{(2)}_i\otimes v^{(2)}_j$ is given by
\begin{equation*}
\rho_{\tilde{\Lambda}}(Q)
(v^{(2)}_i\otimes v^{(2)}_j)= (-1)^{3-i} v^{(2)}_{2-i} \otimes v^{(2)}_{2-j}.
\end{equation*}
Therefore,
$(V_{\tilde{\Lambda}})_{\tilde{K}_{[\mathfrak a]}}
={\rm span}\{v^{(2)}_0\otimes v^{(2)}_2 - v^{(2)}_2 \otimes v^{(2)}_0,
v^{(2)}_1\otimes v^{(2)}_1\}$
and $\tilde{\Lambda}=(1,-1,1,-1)\in D(\tilde{K}, \tilde{K}_{[\mathfrak a]})$.
Note that the $\tilde{K}_{[\mathfrak a]}$-fixed vector
$v^{(2)}_1\otimes v^{(2)}_1\in V^{\prime}_{0}$,
which corresponds eigenvalue $8$
and the $\tilde{K}_{[\mathfrak a]}$-fixed vector
$v^{(2)}_0\otimes v^{(2)}_2-v^{(2)}_2\otimes v^{(2)}_0
\in V^{\prime}_{y_1-y_2-y_3+y_4} \oplus V^{\prime}_{-y_1+y_2+y_3-y_4}$,
which gives eigenvalue $6$.

\smallskip

Suppose that $\tilde{\Lambda}=(2, 0,-1,-1)$.
Then $\dim_{\mathbf C} V_{\tilde{\Lambda}}=3$.
It follows from the branching law of $(U(2), U(1)\times U(1))$
that $(\tilde{q}^{\prime}_1, \tilde{q}^{\prime}_2)=(2,0), (1,1)\text{ or } (0,2)$
and $(\tilde{q}^{\prime}_3, \tilde{q}^{\prime}_4)=(-1,-1)$.
Then $(\tilde{q}^{\prime}_1, \tilde{q}^{\prime}_2, \tilde{q}^{\prime}_3,
\tilde{q}^{\prime}_4)=(1, 1,-1,-1)
\in D(\tilde{K}, \tilde{K}_0)$.
Hence,
$-c_{\tilde{\Lambda}}=8$,
$-c_{\tilde{\Lambda}^{\prime\prime}}=4$,
$-c_L=-c_{\tilde{\Lambda}}+\frac{1}{2} c_{\tilde{\Lambda}^{\prime\prime}}= 6$.
On the other hand,
\begin{equation*}
V_{\tilde{\Lambda}}\cong (V_2 \otimes \mathbf{C})\boxtimes \mathbf{C}.
\end{equation*}
The representation of $\tilde{K}_0$ on
$v^{(2)}_{i}\otimes w \in V_{\tilde{\Lambda}}$
$(i=0,1,2)$  is given by
\begin{equation*}
\begin{split}
&\rho_{\tilde{\Lambda}}(P)
(v^{(2)}_{i} \otimes w)\\
=&\,
e^{\sqrt{-1}(s+t)}
\left[\rho_{2}
\begin{pmatrix}
e^{\frac{\sqrt{-1}(s-t)}{2}}&  \\
 & e^{-\frac{\sqrt{-1}(s-t)}{2}}
\end{pmatrix}
\right](v^{(2)}_{i})
\otimes e^{-\sqrt{-1}(s+t)}w \\
=&\, e^{\sqrt{-1}(s-t)(1-i)} v^{(2)}_{i}\otimes w.
\end{split}
\end{equation*}
Hence,
$(V_{\tilde{\Lambda}})_{\tilde{K}_0}=
{\rm span}_{\mathbf C}\{ v^{(2)}_1 \otimes 1\}.$
Moreover, the action of the generator $Q$
of $\mathbf{Z}_4$ in $\tilde{K}_{[\mathfrak a]}$
on $v^{(2)}_i\otimes w$ is given by
$\rho_{\tilde{\Lambda}}(Q)
(v^{(2)}_i\otimes 1)= (-1)^{1-i} v^{(2)}_{2-i} \otimes 1.
$
Therefore,
$(V_{\tilde{\Lambda}})_{\tilde{K}_{[\mathfrak a]}}=
{\rm span}\{v^{(2)}_1 \otimes 1\}$
and $\tilde{\Lambda}=(2,0, -1,-1)\in D(\tilde{K}, \tilde{K}_{[\mathfrak a]})$,
which gives eigenvalue $6$.
Similarly, $\tilde{\Lambda}=(-1,-1,2,0)$, $(1,1,0,-2)$, $(0,-2,1,1) \in D(\tilde{K}, \tilde{K}_{[\mathfrak a]})$,
which give eigenvalue $6$ and with multiplicity $1$, respectively.

Moreover we observe that
\begin{equation*}
\begin{split}
n(L^6)
=&\dim_{\mathbf C}V_{(2,0,-1,-1)}+\dim_{\mathbf C}V_{(-1,-1,2,0)}
+\dim_{\mathbf C}V_{(1,1,0,-2)}\\
&+\dim_{\mathbf C}V_{(0,-2,1,1)}+\dim_{\mathbf{C}}V_{(1,-1,1,-1)}=3+3+3+3+9\\
=&21=\dim SO(8)-\dim S(U(2)\times U(2))=n_{hk}(L^6).
\end{split}
\end{equation*}
Therefore we obtain that
$L^6=\mathcal{G}(\frac{S(U(2)\times U(2))}{S(U(1)\times U(1))})
\subset Q_6(\mathbf{C})$ is strictly Hamiltonian stable.

\subsection{Eigenvalue computation when $m=3$}
For each $\tilde{\Lambda}=
\tilde{p}_{1}y_{1}+\tilde{p}_{2}y_{2}+
\tilde{p}_{3}y_{3}+\tilde{p}_{4}y_{4}+
\tilde{p}_{5}y_{5}\in D(\tilde{K},\tilde{K}_0)$,
$\tilde{\Lambda}^{\prime}=\tilde{p}_{1}y_{1}+\tilde{p}_{2}y_{2}+
\tilde{q}_{3}y_{3}+\tilde{q}_{4}y_{4}\in{D(\tilde{K}_{2},\tilde{K}_{0})}$
and
$\tilde{\Lambda}^{\prime\prime}=\tilde{q}^{\prime}_{1}y_{1}+\tilde{q}^{\prime}_{2}y_{2}+
\tilde{q}^{\prime}_{3}y_{3}+\tilde{q}^{\prime}_{4}y_{4}\in D(\tilde{K}_{1},\tilde{K}_{0})$
given as in Subsection \ref{D(K,K_0)AIII},
the corresponding eigenvalue of $-{\mathcal C}_{L}$ is
\begin{equation}
\begin{split}
-c_L=&-2c_{\tilde{\Lambda}}+c_{\tilde{\Lambda}^{\prime}}
+\frac{1}{2}c_{\tilde{\Lambda}^{\prime\prime}}\\
=&{\ }
\tilde{p}_{1}^{2}+\tilde{p}_{2}^{2}
+2(\tilde{p}_{3}^{2}+\tilde{p}_{4}^{2}+
\tilde{p}_{5}^{2})
+(\tilde{p}_{1}-\tilde{p}_{2})
+4(\tilde{p}_{3}-\tilde{p}_{5})
 \\
&
-(\tilde{q}_{3}^{2}+\tilde{q}_{4}^{2})
-(\tilde{q}_{3}-\tilde{q}_{4})
-\frac{1}{2}(
(\tilde{q}^{\prime}_{1})^{2}+(\tilde{q}^{\prime}_{2})^{2}
+(\tilde{q}^{\prime}_{3})^{2}+(\tilde{q}^{\prime}_{4})^{2}).
\end{split}
\end{equation}

$\tilde{\Lambda}=\tilde{p}_1 y_1+ \tilde{p}_2 y_2 +\tilde{p}_3 y_3+ \tilde{p}_4 y_4
+\tilde{p}_5 y_5 \in D(\tilde{K}, \tilde{K}_0)$
is denoted by
$\tilde{\Lambda}=(\tilde{p}_1, \tilde{p}_2, \tilde{p}_3, \tilde{p}_4, \tilde{p}_5)$.
Since $-{\mathcal C}_L \geq -\frac{1}{2}{\mathcal C}_{K/K_0}$,
the eigenvalue of $-{\mathcal C}_L$, $-c_L \leq n=10$
implies $-c_{\tilde{\Lambda}} \leq 20$.
It then follows that
\begin{lem}
$\tilde{\Lambda}=\tilde{p}_1 y_1+ \tilde{p}_2 y_2 +\tilde{p}_3 y_3+ \tilde{p}_4 y_4
+\tilde{p}_5 y_5 \in D(\tilde{K}, \tilde{K}_0)$
has eigenvalue $ -c_L\leq 10$ if and only if
$(\tilde{p}_1, \tilde{p}_2, \tilde{p}_3, \tilde{p}_4, \tilde{p}_5)$ is one of
\begin{equation*}
\begin{split}
\bigl
\{
& (0,0,0,0,0), (1,-1,1,0,-1), (2,0,0,-1,-1), (0,-2,1,1,0),\\
& (1,1,0,0,-2), (-1,-1,2,0,0),(1,-1,0,0,0), (1,0,0,0,-1), \\
& (0,-1,1,0,0),(1,1,0,-1,-1),  (-1,-1,1,1,0), (0,0,1,0,-1)
\bigr
\}.
\end{split}
\end{equation*}
\end{lem}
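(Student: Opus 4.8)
The proof will be a finite representation-theoretic search, completely parallel to the $m=2$ computation carried out just above. The plan has three steps: reduce to finitely many candidate highest weights by an operator estimate, enumerate those candidates, and then run the branching laws to read off the eigenvalues of $-\mathcal{C}_L$ on each resulting isotypic subspace, discarding the candidates whose smallest eigenvalue exceeds $10$.

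For the reduction: as noted just before the lemma, $-\mathcal{C}_L\ge -\tfrac12\,\mathcal{C}_{K/K_0}$; this follows by substituting $\mathcal{C}_{K/K_0}=\mathcal{C}_{K/K_2}+\mathcal{C}_{K_2/K_1}+\mathcal{C}_{K_1/K_0}$ into $\mathcal{C}_L=2\,\mathcal{C}_{K/K_0}-\mathcal{C}_{K_2/K_0}-\tfrac12\,\mathcal{C}_{K_1/K_0}$ (Lemma \ref{LaplaceOperator}, $BC_2$ case), which gives $-\mathcal{C}_L+\tfrac12\,\mathcal{C}_{K/K_0}=-\tfrac32\,\mathcal{C}_{K/K_2}-\tfrac12\,\mathcal{C}_{K_2/K_1}-\mathcal{C}_{K_1/K_0}\ge 0$ since each of these operators is a sum of squares of left-invariant vector fields, hence negative semidefinite. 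Because $\mathcal{C}_{K/K_0}$ acts on the $\tilde\Lambda$-isotypic part of $C^\infty(K/K_0,\mathbf{C})$ by the scalar $c_{\tilde\Lambda}$, every eigenvalue $-c_L\le 10$ forces $-c_{\tilde\Lambda}\le 20$. Combining this with the description of $D(\tilde K,\tilde K_0)$ for $m=3$ from Subsection \ref{D(K,K_0)AIII} (so $\tilde p_1\ge\tilde p_2$, $\tilde p_3\ge\tilde p_4\ge\tilde p_5$, $\tilde p_3\ge 0\ge\tilde p_5$, $\sum_{i=1}^5\tilde p_i=0$) and the eigenvalue formula $-c_{\tilde\Lambda}=\sum_{i=1}^5\tilde p_i^2+(\tilde p_1-\tilde p_2)+2(\tilde p_3-\tilde p_5)$ of $\mathcal{C}_{K/K_0}$, and noting that $\tilde p_1-\tilde p_2$ and $\tilde p_3-\tilde p_5$ are nonnegative, one gets $\sum_{i=1}^5\tilde p_i^2\le 20$, so that only finitely many candidates remain and they can be listed by hand.

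For the last step, each candidate $\tilde\Lambda$ is pushed through the two-stage branching of Subsection \ref{D(K,K_0)AIII}: first restrict the $U(3)$-factor of $\tilde K$ along $(U(3),U(2)\times U(1))$ by Lemma \ref{BranchingLaws(U(3),U(2)XU(1))} to produce all $\tilde\Lambda'=\tilde p_1 y_1+\tilde p_2 y_2+\tilde q_3 y_3+\tilde q_4 y_4\in D(\tilde K_2,\tilde K_0)$ (those with vanishing $y_5$-component), then restrict each of the two $U(2)$-factors along $(U(2),U(1)\times U(1))$ by Lemma \ref{BranchingLaws(U(m+1),U(m)XU(1))} with $m=1$, keeping only the branches whose $\tilde K_1$-weight $\tilde\Lambda''=\tilde q_1' y_1+\tilde q_2' y_2+\tilde q_3' y_3+\tilde q_4' y_4$ satisfies $\tilde q_1'+\tilde q_3'=\tilde q_2'+\tilde q_4'=0$, i.e.\ lies in $D(\tilde K_1,\tilde K_0)$. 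Each admissible path $(\tilde\Lambda,\tilde\Lambda',\tilde\Lambda'')$ labels one irreducible $\tilde K_0$-summand of $V_{\tilde\Lambda}$, on which $-\mathcal{C}_L$ acts by the scalar given by the $-c_L$ formula displayed just before the lemma; so $\tilde\Lambda$ qualifies exactly when the minimum of $-c_L$ over its admissible paths is $\le 10$. Carrying this out for every candidate and crossing off those whose minimal $-c_L$ exceeds $10$ yields precisely the twelve tuples in the statement.

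The difficulty is purely computational: the preliminary list $\{\tilde\Lambda : -c_{\tilde\Lambda}\le 20\}$ is noticeably longer than the final answer, so most of the work is running the branching and evaluating $-c_L$ along every path of each survivor; and one must remember that distinct $\tilde K_0$-summands of a single $V_{\tilde\Lambda}$ genuinely carry different eigenvalues (exactly as for $(1,-1,1,-1)$ in the $m=2$ case), so a candidate is kept as soon as one of its paths gives $-c_L\le 10$, even when other paths give larger values. A minor bookkeeping point is keeping the $U(m)$-weight normalization of the branching laws consistent with the $SU(m+2)$-weights, which is handled by the surjective projections $D(\tilde K)\to D(K)$, $D(\tilde K_2)\to D(K_2)$, $D(\tilde K_1)\to D(K_1)$ recorded earlier in this section.
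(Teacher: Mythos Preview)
Your approach is essentially the same as the paper's: the paper also invokes $-\mathcal{C}_L \ge -\tfrac12\,\mathcal{C}_{K/K_0}$ to reduce to $-c_{\tilde\Lambda}\le 20$ and then carries out the finite branching search via Lemmas \ref{BranchingLaws(U(3),U(2)XU(1))} and \ref{BranchingLaws(U(m+1),U(m)XU(1))}. One small arithmetic slip: your identity should read $-\mathcal{C}_L+\tfrac12\,\mathcal{C}_{K/K_0}=-\tfrac32\,\mathcal{C}_{K/K_2}-\tfrac12\,\mathcal{C}_{K_2/K_1}$ without the extra $-\mathcal{C}_{K_1/K_0}$ term, but the nonnegativity (and hence the reduction) is unaffected.
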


\smallskip

Suppose that $\tilde{\Lambda}=(1, -1, 1,0,-1)$.
Then $\dim_{\mathbf C} V_{\tilde{\Lambda}}=24$.
It follows from Lemma \ref{BranchingLaws(U(3),U(2)XU(1))}
that $(\tilde{q}_3, \tilde{q}_4, \tilde{q}_5)=(1,-1,0)\text{ or } (0,0,0)$.
When $(\tilde{q}_3, \tilde{q}_4, \tilde{q}_5)=(0,0,0)$,
by the branching law of $(U(2), U(1)\times U(1))$,
$(\tilde{q}^{\prime}_1, \tilde{q}^{\prime}_2,\tilde{q}^{\prime}_3, \tilde{q}^{\prime}_4, \tilde{q}^{\prime}_5)
=(0,0,0,0,0)$.
Hence,
$-c_L=-2c_{\tilde{\Lambda}}+c_{\tilde{\Lambda}^{\prime}}
+\frac{1}{2}c_{\tilde{\Lambda}^{\prime\prime}}
=16>10$.
When $(\tilde{q}_3, \tilde{q}_4, \tilde{q}_5)=(1,-1,0)$,
by the branching law of $(U(2), U(1)\times U(1))$,
$(\tilde{q}^{\prime}_1, \tilde{q}^{\prime}_2,\tilde{q}^{\prime}_3, \tilde{q}^{\prime}_4, \tilde{q}^{\prime}_5)
=(1,-1,-1,1,0)$, $(0,0,0,0,0)$ or $(-1,1,1,-1,0)\in D(\tilde{K}, \tilde{K}_0)$, respectively.
Hence,
$-c_L=-2c_{\tilde{\Lambda}}+c_{\tilde{\Lambda}^{\prime}}
+\frac{1}{2}c_{\tilde{\Lambda}^{\prime\prime}}
=10$, $12$ or
 $10$,
respectively.
On the other hand, now
\begin{equation*}
\begin{split}
(\tilde{V}_{\tilde{\Lambda}})_{\tilde{K}_0}
& \subset (W_{y_1-y_2} \boxtimes W_{y_3-y_4}\boxtimes W_0 )
\oplus (W_{y_1-y_2} \boxtimes W_0 \boxtimes W_0)\\
& \cong (V_2 \boxtimes V_2 \boxtimes \mathbf{C}) \oplus
(V_2 \boxtimes \mathbf{C} \boxtimes \mathbf{C}),
\end{split}
\end{equation*}
where the latter is a $\tilde{K}_2$-module.
The representation $\rho_{\tilde{\Lambda}}$ of $\tilde{K}_0$ on
$u_i\otimes v_j\otimes w \in  V_2 \boxtimes V_2 \boxtimes \mathbf{C}$
$(i,j=0,1,2)$ is given by
\begin{equation*}
\begin{split}
&\, \rho_{\tilde{\Lambda}}(P)(v^{(2)}_i\otimes v^{(2)}_j \otimes w)\\
=&\, \rho_{y_1-y_2} \left(
                    \begin{array}{cc}
                      e^{\sqrt{-1} s} &  \\
                       & e^{\sqrt{-1}t} \\
                    \end{array}
                  \right)
                  (v^{(2)}_i)
\otimes \rho_{y_3-y_4} \left(
                    \begin{array}{cc}
                      e^{\sqrt{-1} s} &  \\
                       & e^{\sqrt{-1}t} \\
                    \end{array}
                  \right)
                  (v^{(2)}_j) \otimes w \\
=&\, e^{\sqrt{-1}(s-t)(2-i-j)} v^{(2)}_i\otimes v^{(2)}_j \otimes w.
\end{split}
\end{equation*}
The representation $\rho_{\tilde{\Lambda}}$ of $\tilde{K}_0$ on
$v^{(2)}_i\otimes v\otimes w \in  V_2 \boxtimes \mathbf{C} \boxtimes \mathbf{C}$
$(i=0,1,2)$ is given by
\begin{equation*}
\begin{split}
\rho_{\tilde{\Lambda}}(P)(v^{(2)}_i\otimes v \otimes w)
=& \rho_{y_1-y_2} \left(
                    \begin{array}{cc}
                      e^{\sqrt{-1} s} &  \\
                       & e^{\sqrt{-1}t} \\
                    \end{array}
                  \right)
                  (v^{(2)}_i)
\otimes v \otimes w\\
=& e^{\sqrt{-1}(s-t)(1-i)}v^{(2)}_i \otimes v\otimes w.
\end{split}
\end{equation*}
Thus,
$(\tilde{V}_{\tilde{\Lambda}})_{\tilde{K}_0}={\rm span}_{\mathbf{C}}
\{v^{(2)}_2\otimes v^{(2)}_0\otimes w, v^{(2)}_0\otimes v^{(2)}_2\otimes w,
v^{(2)}_1\otimes v^{(2)}_1\otimes w,
v^{(2)}_1\otimes v\otimes w\}.$
Moreover, the action of the generator
$Q$
of $\mathbf{Z}_4$ in $\tilde{K}_{[\mathfrak a]}$
on $v^{(2)}_i\otimes v^{(2)}_{2-i}\otimes w$ is given by
\begin{equation*}
\begin{split}
 \rho_{\tilde{\Lambda}}(Q)
(v^{(2)}_i\otimes v^{(2)}_{2-i}\otimes w )
&=  \rho_2 \left(
             \begin{array}{cc}
               0 & 1 \\
               -1 & 0 \\
             \end{array}
           \right)(v^{(2)}_i)
\otimes \rho_2 \left(
             \begin{array}{cc}
               0 & \sqrt{-1} \\
               \sqrt{-1} & 0 \\
             \end{array}
           \right) (v^{(2)}_{2-i}) \otimes w \\
&=(-1)^{1-i}u_{2-i}\otimes v^{(2)}_i \otimes w
\end{split}
\end{equation*}
and the action
on $v^{(2)}_i\otimes v \otimes w$ is given by
\begin{equation*}
\begin{split}
\rho_{\tilde{\Lambda}}(Q)
(v^{(2)}_i\otimes v \otimes w )
&= \rho_2 \left(
             \begin{array}{cc}
               0 & 1 \\
               -1 & 0 \\
             \end{array}
           \right)(v^{(2)}_i)
\otimes v \otimes w\\
&=(-1)^{2-i}v^{(2)}_{2-i}\otimes v \otimes w.
\end{split}
\end{equation*}
Therefore,
$(\tilde{V}_{\tilde{\Lambda}})_{\tilde{K}_{[\mathfrak a]}}={\rm span}_{\mathbf{C}}
\{v^{(2)}_2\otimes v^{(2)}_0\otimes w -v^{(2)}_0\otimes v^{(2)}_2\otimes w, v^{(2)}_1\otimes v^{(2)}_1\otimes w \}$
and $\tilde{\Lambda}=(1, -1, 1, 0,-1)\in D(\tilde{K}, \tilde{K}_{[\mathfrak a]})$.
Notice that the $\tilde{K}_{[\mathfrak a]}$-fixed vector
$v^{(2)}_1\otimes v^{(2)}_1\otimes w \in V_{\tilde{\Lambda}^{\prime\prime}}$,
which corresponds eigenvalue $12$, where $\tilde{\Lambda}^{\prime\prime}=0$.
And the $\tilde{K}_{[\mathfrak a]}$-fixed vector
$v^{(2)}_2\otimes v^{(2)}_0\otimes w -v^{(2)}_0\otimes v^{(2)}_2\otimes w
\in V_{\tilde{\Lambda}_1^{\prime\prime}} \oplus V_{\tilde{\Lambda}_2^{\prime\prime}}$,
which gives eigenvalue $10$, where $\tilde{\Lambda}_1^{\prime\prime}=(1,-1,-1,1,0)$
and $\tilde{\Lambda}_2^{\prime\prime}=(-1,1,1,-1,0)$.

\smallskip

Suppose that $\tilde{\Lambda}=(2, 0, 0,-1,-1)$.
Then $\dim_{\mathbf C} V_{\tilde{\Lambda}}=9$.
It follows from the branching law of $(U(3), U(2)\times U(1))$
that $(\tilde{q}_3, \tilde{q}_4, \tilde{q}_5)=(0,-1,-1)\text{ or } (-1,-1,0)$.
When $(\tilde{q}_3, \tilde{q}_4, \tilde{q}_5)=(-1,-1,0)$,
by the branching law of $(U(2), U(1)\times U(1))$,
$(\tilde{q}^{\prime}_1, \tilde{q}^{\prime}_2,\tilde{q}^{\prime}_3, \tilde{q}^{\prime}_4, \tilde{q}^{\prime}_5)
=(1,1,-1,-1,0)$.
Hence,
$-c_L=-2c_{\tilde{\Lambda}}+c_{\tilde{\Lambda}^{\prime}}
+\frac{1}{2}c_{\tilde{\Lambda}^{\prime\prime}}
=10$.
On the other hand,
\begin{equation*}
\begin{split}
(\tilde{V}_{\tilde{\Lambda}})_{\tilde{K}_0}
& \subset (W_{2y_1} \boxtimes W_{-(y_3+y_4)}\boxtimes W_0 )
 \cong V_2 \boxtimes \mathbf{C} \boxtimes \mathbf{C},
\end{split}
\end{equation*}
and the representation $\rho_{\tilde{\Lambda}}$ of $\tilde{K}_0$ on
$v^{(2)}_i\otimes v\otimes w \in  V_2 \boxtimes \mathbf{C} \boxtimes \mathbf{C}$
$(i=0,1,2)$ is given by
\begin{equation*}
\begin{split}
& \rho_{\tilde{\Lambda}}(P)(v^{(2)}_i\otimes v \otimes w)\\
=& \rho_{2y_1} \left(
                    \begin{array}{cc}
                      e^{\sqrt{-1} s} &  \\
                       & e^{\sqrt{-1}t} \\
                    \end{array}
                  \right)
                  (v^{(2)}_i)
\otimes \rho_{-y_3-y_4}\left(
                    \begin{array}{cc}
                      e^{\sqrt{-1} s} &  \\
                       & e^{\sqrt{-1}t} \\
                    \end{array}
                  \right) (v) \otimes w\\
=& e^{\sqrt{-1}(s-t)(1-i)}v^{(2)}_i \otimes v\otimes w.
\end{split}
\end{equation*}
Thus,
$(\tilde{V}_{\tilde{\Lambda}})_{\tilde{K}_0}={\rm span}_{\mathbf{C}}
\{v^{(2)}_1\otimes v\otimes w\}$.
Moreover, the action of the generator
$Q$
of $\mathbf{Z}_4$ in $\tilde{K}_{[\mathfrak a]}$
on $v^{(2)}_i\otimes v\otimes w$ is given by
\begin{equation*}
\begin{split}
\rho_{\tilde{\Lambda}}(Q)
(v^{(2)}_i\otimes v \otimes w )
&= \rho_{2y_1} \left(
             \begin{array}{cc}
               0 & 1 \\
               -1 & 0 \\
             \end{array}
           \right)(v^{(2)}_i)
\otimes \rho_{-(y_3+y_4)}\left(
             \begin{array}{cc}
               0 & -1 \\
               -1 & 0 \\
             \end{array}
           \right)(v) \otimes w\\
&=(-1)^{1+i}\, v^{(2)}_{2-i}\otimes v \otimes w.
\end{split}
\end{equation*}
Therefore,
$(\tilde{V}_{\tilde{\Lambda}})_{\tilde{K}_{[\mathfrak a]}}={\rm span}_{\mathbf{C}}
\{v^{(2)}_1\otimes v \otimes w \}$,
where $\dim_{\mathbf{C}}(\tilde{V}_{\tilde{\Lambda}})_{\tilde{K}_{[\mathfrak a]}}=1$
 and $\tilde{\Lambda}=(2, 0, 0, -1,-1)\in D(\tilde{K}, \tilde{K}_{[\mathfrak a]})$,
which gives eigenvalue $10$.
Similarly, $\tilde{\Lambda}=(0,-2,1, 1, 0)\in D(\tilde{K}, \tilde{K}_{[\mathfrak a]})$,
which gives eigenvalue $10$ and with multiplicity $1$ and dimension $9$.

\smallskip

Suppose that $\tilde{\Lambda}=(1, 1, 0, 0,-2)$.
Then $\dim_{\mathbf C} V_{\tilde{\Lambda}}=6$.
It follows from Lemma \ref{BranchingLaws(U(3),U(2)XU(1))}
that $(\tilde{q}_3, \tilde{q}_4, \tilde{q}_5)=(0,0,-2), (0,-1,-1)$
or $(0,-2,0)$.
When $(\tilde{q}_3, \tilde{q}_4, \tilde{q}_5)=(0,-2,0)$,
by the branching law of $(U(2), U(1)\times U(1))$,
$(\tilde{q}^{\prime}_1, \tilde{q}^{\prime}_2,\tilde{q}^{\prime}_3, \tilde{q}^{\prime}_4,
\tilde{q}^{\prime}_5)=(1,1,-1,-1,0)$.
Hence,
$-c_L=-2c_{\tilde{\Lambda}}+c_{\tilde{\Lambda}^{\prime}}
+\frac{1}{2}c_{\tilde{\Lambda}^{\prime\prime}}
=10$.
On the other hand,
\begin{equation*}
\begin{split}
(\tilde{V}_{\tilde{\Lambda}})_{\tilde{K}_0}
& \subset W_{0} \boxtimes W_{-2y_4}\boxtimes W_0
 \cong \mathbf{C}  \boxtimes V_2 \boxtimes \mathbf{C}
\end{split}
\end{equation*}
and the representation $\rho_{\tilde{\Lambda}}$ of $\tilde{K}_0$ on
$u\otimes v^{(2)}_i\otimes w \in\mathbf{C} \boxtimes V_2 \boxtimes \mathbf{C}$
$(i=0,1,2)$ is given by
\begin{equation*}
\begin{split}
& \rho_{\tilde{\Lambda}}(P)(u \otimes v^{(2)}_i \otimes w)\\
=& \rho_{y_1+y_2} \left(
                    \begin{array}{cc}
                      e^{\sqrt{-1} s} &  \\
                       & e^{\sqrt{-1}t} \\
                    \end{array}
                  \right)
                  (u)
\otimes \rho_{-2y_4}\left(
                    \begin{array}{cc}
                      e^{\sqrt{-1} s} &  \\
                       & e^{\sqrt{-1}t} \\
                    \end{array}
                  \right) (v^{(2)}_i) \otimes w\\
=& e^{\sqrt{-1}(s-t)(1-i)}u \otimes v^{(2)}_i \otimes w.
\end{split}
\end{equation*}
Thus,
$(\tilde{V}_{\tilde{\Lambda}})_{\tilde{K}_0}={\rm span}_{\mathbf{C}}
\{u \otimes v^{(2)}_1\otimes w\}.$
Moreover, the action of the generator $Q$
of $\mathbf{Z}_4$ in $\tilde{K}_{[\mathfrak a]}$
on $u\otimes v^{(2)}_i\otimes w$ is given by
\begin{equation*}
\begin{split}
& \rho_{\tilde{\Lambda}}(Q)
(u\otimes v^{(2)}_i \otimes w )\\
= & \rho_{y_1+y_2} \left(
             \begin{array}{cc}
               0 & 1 \\
               -1 & 0 \\
             \end{array}
           \right)(u)
\otimes \rho_{-2y_4}\left(
             \begin{array}{cc}
               0 & -1 \\
               -1 & 0 \\
             \end{array}
           \right)(v^{(2)}_i) \otimes w
=u \otimes v^{(2)}_{2-i} \otimes w.
\end{split}
\end{equation*}
Therefore,
$(\tilde{V}_{\tilde{\Lambda}})_{\tilde{K}_{[\mathfrak a]}}={\rm span}_{\mathbf{C}}
\{u \otimes v^{(2)}_1 \otimes w \}$,
where $\dim_{\mathbf{C}}(\tilde{V}_{\tilde{\Lambda}})_{\tilde{K}_{[\mathfrak a]}}=1 $
 and $\tilde{\Lambda}=(1, 1, 0, 0,-2)\in D(\tilde{K}, \tilde{K}_{[\mathfrak a]})$,
which gives eigenvalue $10$.
Similarly, $\tilde{\Lambda}=(-1,-1,2,0, 0)\in D(\tilde{K}, \tilde{K}_{[\mathfrak a]})$,
gives eigenvalue $10$ and has multiplicity $1$ and dimension $6$.

\smallskip

Suppose that $\tilde{\Lambda}=(1, -1, 0, 0, 0)$.
Then $(\tilde{q}_1, \tilde{q}_2, \tilde{q}_3, \tilde{q}_4, \tilde{q}_5)=(1,-1,0,0,0)$.
It follows from the branching law of $(U(2), U(1)\times U(1))$,
$(\tilde{q}^{\prime}_1, \tilde{q}^{\prime}_2,\tilde{q}^{\prime}_3, \tilde{q}^{\prime}_4, \tilde{q}^{\prime}_5)
=(0,0,0,0,0) \in D(\tilde{K}_1, \tilde{K}_0)$.
Hence,
$-c_L=-2c_{\tilde{\Lambda}}+c_{\tilde{\Lambda}^{\prime}}
+\frac{1}{2}c_{\tilde{\Lambda}^{\prime\prime}}
=4<10$.
On the other hand,
$\tilde{V}_{\tilde{\Lambda}}=
W_{y_1-y_2} \boxtimes W_{0}
 \cong  V_2 \boxtimes \mathbf{C}$
and the representation $\rho_{\tilde{\Lambda}}$ of $\tilde{K}_0$ on
$u_i \otimes w \in   V_2 \boxtimes \mathbf{C}$
$(i=0,1,2)$ is given by
\begin{equation*}
\begin{split}
 \rho_{\tilde{\Lambda}}(P)(u_i \otimes w)
&= \rho_{2} \left(
                    \begin{array}{cc}
                      e^{\sqrt{-1} \frac{s-t}{2}} &  \\
                       & e^{-\sqrt{-1}\frac{s-t}{2}} \\
                    \end{array}
                  \right)
                  (u_i)
\otimes  w\\
&= e^{\sqrt{-1}(s-t)(1-i)}u_i \otimes w.
\end{split}
\end{equation*}
Thus,
$(\tilde{V}_{\tilde{\Lambda}})_{\tilde{K}_0}={\rm Span}_{\mathbf{C}}
\{u_1\otimes w\}$.
Moreover, the action of the generator $Q$
of $\mathbf{Z}_4$ in $\tilde{K}_{[\mathfrak a]}$
on $u_1\otimes w$ is given by
$\rho_{\tilde{\Lambda}}(Q)
(u_1 \otimes w )=-u_1 \otimes w.$
Therefore,
$(\tilde{V}_{\tilde{\Lambda}})_{\tilde{K}_{[\mathfrak a]}}=\{0\}$
 and $\tilde{\Lambda}=(1, -1, 0, 0, 0)\not\in D(\tilde{K}, \tilde{K}_{[\mathfrak a]})$.

\smallskip

Suppose that $\tilde{\Lambda}=(1, 0, 0, 0, -1)$.
It follows from Lemma \ref{BranchingLaws(U(3),U(2)XU(1))}
that $(\tilde{q}_3, \tilde{q}_4, \tilde{q}_5)=(0,0,-1)\text{ or } (0,-1,0)$.
When $(\tilde{q}_3, \tilde{q}_4, \tilde{q}_5)=(0,-1,0)$,
by the branching law of $(U(2), U(1)\times U(1))$,
$(\tilde{q}^{\prime}_1, \tilde{q}^{\prime}_2,\tilde{q}^{\prime}_3, \tilde{q}^{\prime}_4, \tilde{q}^{\prime}_5)
=(1,0,-1,0,0)\text{ or } (0,1,0,-1,0) \in D(\tilde{K}_1, \tilde{K}_0)$.
Hence,
$-c_L=-2c_{\tilde{\Lambda}}+c_{\tilde{\Lambda}^{\prime}}
+\frac{1}{2}c_{\tilde{\Lambda}^{\prime\prime}}
=5, 5 <10$.
On the other hand,
$
(\tilde{V}_{\tilde{\Lambda}})_{\tilde{K}_0}
\subset W_{y_1} \boxtimes W_{-y_4}\boxtimes W_0
\cong V_1 \boxtimes V_1 \boxtimes \mathbf{C},
$
where the latter is the $\tilde{K}_2=U(2)\times U(2)\times U(1)$-module.
The representation $\rho_{\tilde{\Lambda}}$ of $\tilde{K}_0$ on
$v^{(1)}_i \otimes v^{(1)}_j \otimes w \in   V_1\boxtimes V_1 \boxtimes \mathbf{C}$
$(i, j=0,1)$ is given by
\begin{equation*}
 \rho_{\tilde{\Lambda}}(P)(v^{(1)}_i \otimes v^{(1)}_j \otimes w)
=e^{\sqrt{-1}(s-t)(1-i-j)}\, v^{(1)}_i \otimes v^{(1)}_j \otimes w.
\end{equation*}
Thus,
$(\tilde{V}_{\tilde{\Lambda}})_{\tilde{K}_0}={\rm span}_{\mathbf{C}}
\{v^{(1)}_1\otimes v^{(1)}_0 \otimes w, u_0 \otimes v^{(1)}_1 \otimes w\}.$
Moreover, the action of the generator
$Q$
of $\mathbf{Z}_4$ in $\tilde{K}_{[\mathfrak a]}$
on $v^{(1)}_i \otimes v^{(1)}_{1-i}\otimes w$ $(i=0,1)$ is given by
\begin{equation*}
\rho_{\tilde{\Lambda}}(Q)
(v^{(1)}_i \otimes v^{(1)}_{1-i} \otimes w )
=(-1)^{1-i}\, v^{(1)}_{1-i}\otimes v^{(1)}_{i} \otimes w.
\end{equation*}
Therefore,
$(\tilde{V}_{\tilde{\Lambda}})_{\tilde{K}_{[\mathfrak a]}}=\{0\}$
 and $\tilde{\Lambda}=(1, 0, 0, 0, -1)\not\in D(\tilde{K}, \tilde{K}_{[\mathfrak a]})$.
Similarly,
$\tilde{\Lambda}=(0,-1,1,0,0)\not\in D(\tilde{K}, \tilde{K}_{[\mathfrak a]})$.

\smallskip

Suppose that $\tilde{\Lambda}=(1, 1, 0,-1, -1)$.
It follows from Lemma \ref{BranchingLaws(U(3),U(2)XU(1))}
that $(\tilde{q}_3, \tilde{q}_4, \tilde{q}_5)=(0,-1,-1)\text{ or } (-1,-1,0)$.
For the element $(\tilde{p}_1, \tilde{p}_2, \tilde{q}_3, \tilde{q}_4, \tilde{q}_5)=(1,1,-1,-1,0)$
in $D(\tilde{K}_2, \tilde{K}_0)$,
by the branching law of $(U(2), U(1)\times U(1))$,
$(\tilde{q}^{\prime}_1, \tilde{q}^{\prime}_2,\tilde{q}^{\prime}_3, \tilde{q}^{\prime}_4)
=(1,1,-1,-1)\in D(\tilde{K}_1, \tilde{K}_0)$.
Hence, $-c_L=-2c_{\tilde{\Lambda}}+c_{\tilde{\Lambda}^{\prime}}
+\frac{1}{2}c_{\tilde{\Lambda}^{\prime\prime}}
=6<10$.
On the other hand,
$
(\tilde{V}_{\tilde{\Lambda}})_{\tilde{K}_0}
\subset W_{y_1+y_2} \boxtimes W_{-y_3-y_4}\boxtimes W_0
\cong \mathbf{C} \boxtimes \mathbf{C} \boxtimes \mathbf{C} ,
$
where the latter is the $\tilde{K}_2=U(2)\times U(2)\times U(1)$-module.
The representation $\rho_{\tilde{\Lambda}}$ of $\tilde{K}_0$ on
$u \otimes v \otimes w \in   \mathbf{C} \boxtimes \mathbf{C} \boxtimes \mathbf{C}$
is given by
\begin{equation*}
 \rho_{\tilde{\Lambda}}(P)(u \otimes v \otimes w)
= e^{\sqrt{-1}(s+t)}u \otimes e^{-\sqrt{-1}(s+t)}v \otimes w
= u\otimes v\otimes w.
\end{equation*}
It follows that $(\tilde{V}_{\tilde{\Lambda}})_{\tilde{K}_0}=\textrm{span}_{\mathbf{C}}\{1\otimes 1\otimes 1\}$.
Moreover, the action of the generator
$Q$
of $\mathbf{Z}_4$ in $\tilde{K}_{[\mathfrak a]}$
on $u \otimes v\otimes w$
 is given by
\begin{equation*}
\rho_{\tilde{\Lambda}}(Q)
(u \otimes v \otimes w )=-u\otimes v \otimes w.
\end{equation*}
Therefore $(\tilde{V}_{\tilde{\Lambda}})_{\tilde{K}_{[\mathfrak{a}]}}=\{0\}$
and $\tilde{\Lambda}=(1,1,0,-1,-1)\not\in D(\tilde{K}, \tilde{K}_0)$.
Similarly, $\tilde{\Lambda}=(-1,-1,1,1,0)\not\in D(\tilde{K}, \tilde{K}_0)$.

\smallskip

Suppose that $\tilde{\Lambda}=(0, 0, 1, 0, -1)$.
It follows from the branching law of $(U(3), U(2)\times U(1))$
that $(\tilde{q}_3, \tilde{q}_4, \tilde{q}_5)=(1,0,-1), (0,0,0), (1,-1,0)\text{ or } (0,-1,1)$.
For the element $(\tilde{p}_1, \tilde{p}_2, \tilde{q}_3, \tilde{q}_4, \tilde{q}_5)=(0,0,0,0,0)$
in $D(\tilde{K}_2, \tilde{K}_0)$,
by the branching law of $(U(2), U(1)\times U(1))$,
$(\tilde{q}^{\prime}_1, \tilde{q}^{\prime}_2,\tilde{q}^{\prime}_3, \tilde{q}^{\prime}_4)
=(0,0,0,0)\in D(\tilde{K}_1, \tilde{K}_0)$.
Hence,
$-c_L=-2c_{\tilde{\Lambda}}+c_{\tilde{\Lambda}^{\prime}}
+\frac{1}{2}c_{\tilde{\Lambda}^{\prime\prime}}
=12>10$.
For the element $(\tilde{p}_1, \tilde{p}_2, \tilde{q}_3, \tilde{q}_4, \tilde{q}_5)=(0,0,1,-1,0)$
in $D(\tilde{K}_2, \tilde{K}_0)$,
by the branching laws of $(U(2), U(1)\times U(1))$,
$(\tilde{q}^{\prime}_1, \tilde{q}^{\prime}_2,\tilde{q}^{\prime}_3, \tilde{q}^{\prime}_4)
=(0,0,0,0)\in D(\tilde{K}_1, \tilde{K}_0)$.
Hence,
$-c_L=-2c_{\tilde{\Lambda}}+c_{\tilde{\Lambda}^{\prime}}
+\frac{1}{2}c_{\tilde{\Lambda}^{\prime\prime}}
=8<10$.
On the other hand,
$
(\tilde{V}_{\tilde{\Lambda}})_{\tilde{K}_0}
\subset \tilde{V}^{\prime}_{(0,0,0,0,0)} \oplus \tilde{V}^{\prime}_{(0,0,1,-1,0)}
$.
We are concerned with only $\tilde{V}^{\prime}_{(0,0,1,-1,0)}$
since it corresponds to the smaller eigenvalue $8$.
Note that $\tilde{V}^{\prime}_{(0,0,1,-1,0)}=W_{0} \boxtimes
W_{y_3-y_4}\boxtimes W_0
\cong \mathbf{C}\boxtimes V_2 \boxtimes \mathbf{C}$,
which is a $\tilde{K}_2$-module.
The representation $\rho_{\tilde{\Lambda}}$ of $\tilde{K}_0$ on
$u \otimes v^{(2)}_i \otimes w \in   \tilde{V}^{\prime}_{(0,0,1,-1,0)}$ $(i=0,1,2)$
is given by
\begin{equation*}
\rho_{\tilde{\Lambda}}(P)(u \otimes v^{(2)}_i \otimes w)
=
e^{\sqrt{-1}(s-t)(1-i)} u \otimes v^{(2)}_i \otimes w.
\end{equation*}
Thus
$(\tilde{V}_{\tilde{\Lambda}})_{\tilde{K}_0}=\textrm{Span}_{\mathbf{C}}\{1\otimes v_1\otimes 1\}
\oplus \tilde{V}^{\prime}_{(0,0,0,0,0)}.$
Moreover, the action of the generator
$Q$
of $\mathbf{Z}_4$ in $\tilde{K}_{[\mathfrak a]}$
on $u \otimes v^{(2)}_1\otimes w$
is given by
\begin{equation*}
\begin{split}
& \rho_{\tilde{\Lambda}}(Q)
(u \otimes v^{(2)}_1 \otimes w )\\
=& u\otimes \rho_2(\left(
                                             \begin{array}{cc}
                                               0 & \sqrt{-1} \\
                                               \sqrt{-1} & 0 \\
                                             \end{array}
                                           \right)
)v_1\otimes w=-u\otimes v^{(2)}_1 \otimes w.
\end{split}
\end{equation*}
Therefore,
$1\otimes v^{(2)}_1\otimes 1\not\in
(\tilde{V}_{\tilde{\Lambda}})_{\tilde{K}_{[\mathfrak{a}]}}$
and $(\tilde{V}_{\tilde{\Lambda}})_{\tilde{K}_{[\mathfrak{a}]}}
=\tilde{V}^{\prime}_{(0,0,0,0,0)}$,
which gives a larger eigenvalue $10$.

\smallskip

Moreover,
\begin{equation*}
\begin{split}
n(L^{10})=&\dim_{\mathbf{C}}V_{(1,-1,1,0,-1)}+\dim_{\mathbf C}V_{(2,0,0,-1,-1)}+ \dim_{\mathbf{C}}V_{(0,-2,1,1,0)}\\
&\, +\dim_{\mathbf C}V_{(1,1,0,0,-2)}+ \dim_{\mathbf{C}}V_{(-1,-1,2,0,0)}\\
=&24+9+9+6+6 = 54\\
=&\dim SO(12)-\dim S(U(2)\times U(3))=n_{hk}(L^{10}).
\end{split}
\end{equation*}
Therefore we obtain that
$L^{10}=\mathcal{G}(\frac{S(U(2)\times U(3))}{S(U(1)\times U(1)\times U(1))})
\subset Q_{10}(\mathbf{C})$ is strictly Hamiltonian stable.

\subsection{Eigenvalue computation when $m\geq 4$}

For each $\tilde{\Lambda}=
\tilde{p}_{1}y_{1}+\tilde{p}_{2}y_{2}+
\tilde{p}_{3}y_{3}+\tilde{p}_{4}y_{4}+
\tilde{p}_{m+1}y_{m+1}+\tilde{p}_{m+2}y_{m+2}\in{D(\tilde{K},\tilde{K}_{0})}$,
$\tilde{\Lambda}^{\prime}=
\tilde{p}_{1}y_{1}+\tilde{p}_{2}y_{2}+
\tilde{q}_{3}y_{3}+\tilde{q}_{4}y_{4}\in{D(\tilde{K}_{2},\tilde{K}_{0})}$
and
$\tilde{\Lambda}^{\prime\prime}=
\tilde{q}^{\prime}_{1}y_{1}+\tilde{q}^{\prime}_{2}y_{2}+
\tilde{q}^{\prime}_{3}y_{3}+\tilde{q}^{\prime}_{4}y_{4}\in{D(\tilde{K}_{1},\tilde{K}_{0})}$,
the eigenvalue formula is
\begin{equation*}
\begin{split}
-c_L=&-2c_{\tilde{\Lambda}}+c_{\tilde{\Lambda}^{\prime}}
+\frac{1}{2}c_{\tilde{\Lambda}^{\prime\prime}}\\
=&{\ }
\tilde{p}_{1}^{2}+\tilde{p}_{2}^{2}
+2(\tilde{p}_{3}^{2}+\tilde{p}_{4}^{2}+
\tilde{p}_{m+1}^{2}+\tilde{p}_{m+2}^{2})\\
&
+(\tilde{p}_{1}-\tilde{p}_{2})
+2(m-1)(\tilde{p}_{3}-\tilde{p}_{m+2})
+2(m-3)(\tilde{p}_{4}-\tilde{p}_{m+1}) \\
&
-(\tilde{q}_{3}^{2}+\tilde{q}_{4}^{2})
-(\tilde{q}_{3}-\tilde{q}_{4})
-\frac{1}{2}(
(\tilde{q}^{\prime}_{1})^{2}+(\tilde{q}^{\prime}_{2})^{2}
+(\tilde{q}^{\prime}_{3})^{2}+(\tilde{q}^{\prime}_{4})^{2}).
\end{split}
\end{equation*}

\noindent
In case
$\tilde{\Lambda}=(\tilde{p}_{1},\tilde{p}_{2},\tilde{p}_{3},\tilde{p}_{4},
\tilde{p}_{m+1},\tilde{p}_{m+2})
=(\tilde{p}_{1},\tilde{p}_{2},0,0,0,0)\in D(\tilde{K}, \tilde{K}_0)$,
since
$
\tilde{p}_{3}=\tilde{p}_{4}=\tilde{p}_{m+1}=\tilde{p}_{m+2}=0,
$
we have
$
\tilde{q}_{3}=\tilde{q}_{4}=\tilde{q}_{5}=\cdots=\tilde{q}_{m+2}=0
$
and thus
$
\tilde{q}^{\prime}_{3}=\tilde{q}^{\prime}_{4}=0.
$
Since
$\tilde{p}_{1}+\tilde{p}_{2}=0$,
by the branching law of $(U(2), U(1)\times U(1))$ we have
$\tilde{q}^{\prime}_{1}=-\alpha+\tilde{p}_{1}$,
$\tilde{q}^{\prime}_{2}=\alpha+\tilde{p}_{2}=\alpha-\tilde{p}_{1}=-\tilde{q}^{\prime}_{1}$
for some $\alpha=0,1\cdots,\tilde{p}_{1}-\tilde{p}_{2}=2\tilde{p}_{1}$.
$\tilde{\Lambda}\in D(\tilde{K}, \tilde{K}_0)$ implies that
$\tilde{q}^{\prime}_1=\tilde{q}^{\prime}_2=0$
since  $\tilde{q}^{\prime}_1+\tilde{q}^{\prime}_3=0$
and $\tilde{q}^{\prime}_2 + \tilde{q}^{\prime}_4=0$.
Then $-c_L=2\tilde{p}_{1}(\tilde{p}_{1}+1)$.

Now
$\tilde{\Lambda}
=\tilde{p}_{1}y_{1}+\tilde{p}_{2}y_{2}
=2\tilde{p}_{1}\frac{1}{2}(y_{1}-y_{2}).
$
Set
$\ell:=2\tilde{p}_{1}$.
Then $\tilde{V}_{\tilde{\Lambda}}\cong V_{\ell} \boxtimes \mathbf{C}$.
The representation $\rho_{\tilde{\Lambda}}$ of $\tilde{K}_0$ on
$v^{(\ell)}_i\otimes w \in \tilde{V}_{\tilde{\Lambda}}$
is given by
\begin{equation*}
\begin{split}
\rho_{\tilde{\Lambda}}(P)
(v^{(\ell)}_i \otimes w)
=&
\left[\rho_{\ell}
\begin{pmatrix}
e^{\sqrt{-1}(s-t)/2}&0 \\
0&e^{-\sqrt{-1}(s-t)/2}
\end{pmatrix}
\right](v^{(\ell)}_i)\otimes w\\
=&e^{\frac{\sqrt{-1}(s-t)}{2}(\ell-2i)}\, v^{(\ell)}_i\otimes w.
\end{split}
\end{equation*}
Hence,
$(\tilde{V}_{\tilde{\Lambda}})_{\tilde{K}_0}=
{\rm span}_{\mathbf C}\{v_{\tilde{p}_1}^{(\ell)}\otimes w\}$.
On the other hand, the action of the generator $Q$ of $\mathbf{Z}_4$ in $\tilde{K}_{[\mathfrak a]}$
is given by
\begin{equation*}
\begin{split}
\rho_{\tilde{\Lambda}}(Q)
(v^{(\ell)}_{\tilde{p}_1} \otimes w)
=&
\left[\rho_{\ell}
\begin{pmatrix}
0& 1 \\
-1 & 0
\end{pmatrix}
\right](v^{(\ell)}_{\tilde{p}_1})\otimes w\\
=& (-1)^{\tilde{p}_1}\, v^{(\ell)}_{\tilde{p}_1}\otimes w.
\end{split}
\end{equation*}
Therefore,
$(\tilde{V}_{\tilde{\Lambda}})_{\tilde{K}_{[\mathfrak a]}}=
{\rm span}_{\mathbf C}\{ v^{(\ell)}_{\tilde{p}_1} \otimes w\}$
for $\tilde{p}_1$ is even.
As $m\geq 4$,
for every even number $\tilde{p}_1\geq 2$ such that
$12\leq 2\tilde{p}_1(\tilde{p}_1 +1 )<4m-2$,
$\tilde{\Lambda}=\tilde{p}_1(y_1-y_2) \in
D(\tilde{K}, \tilde{K}_{[\mathfrak a]})$
has eigenvalue
$12\leq -c_L=2\tilde{p}_1(\tilde{p}_1 +1 )< 4m-2$.
It means that
$
L^{4m-2}
\subset Q_{4m-2}(\mathbf{C})
$
is NOT Hamiltonian stable for $m\geq 4$.

From these results we conclude
\begin{thm}
The Gauss image
$L^{4m-2}=\frac{S(U(2)\times U(m))}
{S(U(1)\times U(1)\times U(m-2)) \cdot \mathbf{Z}_4}
\subset Q_{4m-2}(\mathbf{C})$ $(m\geq 2)$
is not Hamiltonian stable if and only if $m\geq 4$.
If $m=2$ or $3$, it is strictly Hamiltonian stable.
\end{thm}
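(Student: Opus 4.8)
The plan is to prove, for the case $(U,K)=(SU(m+2),S(U(2)\times U(m)))$ with $m\geq 2$, that the Gauss image $L^{4m-2}=\mathcal{G}(N^{4m-2})\subset Q_{4m-2}(\mathbf{C})$ fails to be Hamiltonian stable exactly when $m\geq 4$, and is strictly Hamiltonian stable when $m=2$ or $3$. By the Lemma of Section \ref{Sec_Gauss maps}, Hamiltonian stability is equivalent to $n$ being the first positive eigenvalue of the Laplacian on $L$ (equivalently, no eigenvalue $-c_L$ of $-\mathcal{C}_L$ strictly below $n=4m-2$ may appear in $D(\tilde K,\tilde K_{[\mathfrak a]})$), and strict Hamiltonian stability further requires that the multiplicity of the eigenvalue $n$ equals $n_{hk}(\mathcal G)=\dim SO(4m)-\dim S(U(2)\times U(m))$. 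So the whole proof is an eigenvalue/branching-rule computation for the homogeneous space $L=K/K_{[\mathfrak a]}$.

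First I would set up the harmonic-analytic machinery specialized to this symmetric pair, passing to the unitary-group formulation $\tilde U=U(m+2)$, $\tilde K=U(2)\times U(m)$, with the chain $\tilde K_0\subset\tilde K_1\subset\tilde K_2\subset\tilde K$ and deck group $\tilde K_{[\mathfrak a]}/\tilde K_0\cong\mathbf Z_4$ generated by the element $Q$, exactly as in Section \ref{Sec_AIII_2}. Using Lemma \ref{LaplaceOperator} (type $B_2$ for $m=2$, type $BC_2$ for $m\geq 3$) I obtain $\mathcal C_L$ as a combination of $\mathcal C_{K/K_0}$, $\mathcal C_{K_2/K_0}$, $\mathcal C_{K_1/K_0}$, and hence the explicit eigenvalue formula for $-c_L$ in terms of the highest-weight data $(\tilde p_i)$, $(\tilde q_i)$, $(\tilde q_i')$; the descriptions of $D(\tilde K,\tilde K_0)$, $D(\tilde K_2,\tilde K_0)$, $D(\tilde K_1,\tilde K_0)$ come from iterating the branching laws of $(U(m),U(2)\times U(m-2))$ (Lemma \ref{BranchingLaw(U(m), U(2)XU(m-2))}), $(U(3),U(2)\times U(1))$ (Lemma \ref{BranchingLaws(U(3),U(2)XU(1))}), $(U(m+1),U(m)\times U(1))$ (Lemma \ref{BranchingLaws(U(m+1),U(m)XU(1))}), and $(U(2),U(1)\times U(1))$. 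The key estimate $-\mathcal C_L\geq -\tfrac12\mathcal C_{K/K_0}$ bounds the candidate weights, reducing the problem to a finite list.

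Next I would treat the three ranges separately. For $m=2$ and $m=3$ the plan is: enumerate all $\tilde\Lambda\in D(\tilde K,\tilde K_0)$ with $-c_L\leq n$ (a short explicit list); for each such weight, realize the $\tilde K$-module $V_{\tilde\Lambda}$ concretely (tensor/exterior powers of $\mathbf C^{m+2}$, the $SU(2)$-modules $V_\ell$ with the unitary basis \eqref{eq:basis_SU(2)}), compute the $\tilde K_0$-fixed subspace, and then apply the generator $Q$ of $\mathbf Z_4$ to see whether any $\tilde K_{[\mathfrak a]}$-fixed vector survives with eigenvalue $<n$. The outcome must be that the smallest eigenvalue realized in $D(\tilde K,\tilde K_{[\mathfrak a]})$ is exactly $n$; then I identify all weights achieving $n$, sum their dimensions, and check that the total equals $\dim SO(4m)-\dim S(U(2)\times U(m))$, giving strict Hamiltonian stability. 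For $m\geq 4$ the crucial observation is much cleaner: take $\tilde\Lambda=\tilde p_1(y_1-y_2)$ with $\tilde p_1$ a positive even integer, so $V_{\tilde\Lambda}\cong V_\ell\boxtimes\mathbf C$ with $\ell=2\tilde p_1$; one checks $(\tilde V_{\tilde\Lambda})_{\tilde K_0}=\mathbf C\cdot v^{(\ell)}_{\tilde p_1}\otimes w$ and $\rho_{\tilde\Lambda}(Q)$ acts on this line by $(-1)^{\tilde p_1}=+1$, so $\tilde\Lambda\in D(\tilde K,\tilde K_{[\mathfrak a]})$ with eigenvalue $-c_L=2\tilde p_1(\tilde p_1+1)$; since $m\geq 4$ forces $12\leq 4m-2$, there is an even $\tilde p_1$ (namely $\tilde p_1=2$) with $12=2\cdot2\cdot3<4m-2$, producing an eigenfunction below $n$ and destroying stability.

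The main obstacle will be the $m=3$ case (the Gauss image of $S(U(2)\times U(3))/S(U(1)^3)$ into $Q_{10}(\mathbf C)$): here the restricted root system is genuinely $BC_2$, the Casimir operator involves all three of $\mathcal C_{K/K_0},\mathcal C_{K_2/K_0},\mathcal C_{K_1/K_0}$, and several candidate weights (e.g.\ $(1,-1,1,0,-1)$, $(2,0,0,-1,-1)$, $(1,1,0,0,-2)$, $(0,0,1,0,-1)$ and their conjugates) split into multiple $\tilde K_2$- and $\tilde K_1$-pieces with different eigenvalues, so for each one I must carefully track which $\tilde K_0$-fixed vectors lie in which isotypic summand and then decide $\mathbf Z_4$-invariance piece by piece — the danger is a $\tilde K_{[\mathfrak a]}$-fixed vector with eigenvalue $<10$ hiding in one of the lower-eigenvalue summands. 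Once that bookkeeping is done correctly one finds that the surviving eigenvalue-$10$ weights have total dimension $24+9+9+6+6=54=\dim SO(12)-\dim S(U(2)\times U(3))$, which both confirms Hamiltonian stability and upgrades it to strict Hamiltonian stability, completing the $m=2,3$ part; combined with the $m\geq 4$ instability construction above, this yields the theorem.
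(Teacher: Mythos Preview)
Your proposal is correct and follows essentially the same approach as the paper: the unitary-group reformulation, the Casimir decomposition via Lemma \ref{LaplaceOperator}, the branching-law description of $D(\tilde K,\tilde K_0)$, the finite enumeration and $\mathbf Z_4$-invariance checks for $m=2,3$ (including the specific weights and the dimension count $24+9+9+6+6=54$ for $m=3$), and the instability argument for $m\geq 4$ via $\tilde\Lambda=\tilde p_1(y_1-y_2)$ with $\tilde p_1=2$ all match the paper's proof exactly.
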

\begin{rem0}
The index $i(L^{4m-2})$ goes to $\infty$ as $m\to \infty$.
\end{rem0}

\section{The case $(U,K)=(Sp(m+2), Sp(2)\times Sp(m))$ $(m\geq 2)$}
\label{Sec:CII2}

In this case,
$K=Sp(2)\times Sp(m)\subset U=Sp(m+2)$,
$(U,K)$ is of type $B_2$ for $m=2$ and type $BC_2$ for $m\geq 3$.
Let ${\mathfrak u}={\mathfrak k}+{\mathfrak p}$ be the canonical decomposition
of ${\mathfrak u}$ and ${\mathfrak a}$ be a maximal abelian subspace of
${\mathfrak p}$,
where
\begin{equation*}
\begin{split}
{\mathfrak u}=&\mathfrak{sp}(m+2) \\
=&
\Bigl\{ \begin{pmatrix}
A&B\\
-\bar{B}&\bar{A}
\end{pmatrix} \mid A\in{\mathfrak u}(m+2), B\in{M(m+2,{\mathbf C})},
B^{t}=B
\Bigr\}
\subset{\mathfrak u}(2m+4),
\end{split}
\end{equation*}

\begin{equation*}
\begin{split}
&{\mathfrak k}
=\mathfrak{sp}(2)+\mathfrak{sp}(m)\\
=&
\Bigl\{
\begin{pmatrix}
A_{11}&0&B_{11}&0 \\
0&A_{22}&0&B_{22}& \\
-\bar{B}_{11}&0&\bar{A}_{11}&0 \\
0&-\bar{B}_{22}&0&\bar{A}_{22} \\
\end{pmatrix}
{\ }\vert{\ }
A_{11}\in{\mathfrak u}(2), B_{11}\in{M(2,{\mathbf C})}, B_{11}^{t}=B_{11}, \\
&\quad\quad\quad\quad\quad\quad\quad\quad\quad\quad\quad
\quad\quad\quad\quad
A_{22}\in{\mathfrak u}(m), B_{22}\in{M(m,{\mathbf C})}, B_{22}^{t}=B_{22}
\Bigr\},
\end{split}
\end{equation*}
\begin{equation*}
\begin{split}
{\mathfrak p}
=\Bigl\{
\begin{pmatrix}
0&A_{12}&0&B_{12} \\
-\bar{A}_{12}^{t}&0&B_{12}^{t}&0 \\
0&-\bar{B}_{12}&0&\bar{A}_{12} \\
-\bar{B}_{11}^{t}&0&-A_{12}^{t}&0 \\
\end{pmatrix}
\mid &
A_{12}\in{M(2,m;{\mathbf C})}, B_{12}\in{M(2,m;{\mathbf C})}
\Bigr\},
\end{split}
\end{equation*}
\begin{equation*}
{\mathfrak a}
=\Bigl\{
\begin{pmatrix}
0&H_{12}&0&0 \\
-\bar{H}_{12}^{t}&0&0&0 \\
0&0&0&\bar{H}_{12} \\
0&0&-H_{12}^{t}&0 \\
\end{pmatrix}
\mid
H_{12}
=
\begin{pmatrix}
\xi_{1}&0&0&\cdots&0 \\
0&\xi_{2}&0&\cdots&0 \\
\end{pmatrix},
\xi_{1},\xi_{2}\in{\mathbf R}
\Bigr\}.
\end{equation*}
Then the centralizer $K_0$ of $\mathfrak{a}$ in $K$ is given as follows:
\begin{equation*}
\begin{split}
K_{0}
=&Sp(1)\times Sp(1)\times Sp(m-2)\\
=&
\Bigl\{
\begin{pmatrix}
a_{1}&0& & & &b_{1}&0& & &  \\
0&a_{2}& & & &0&b_{2}& & &  \\
 & &a_{1}&0&  & & &b_{1}&0&  \\
 & &0&a_{2}& & & &0&b_{2}&  \\
 & & & &A_{11}& & & & &A_{12} \\
-\bar{b}_{1}&0& & & &\bar{a}_{1}&0& & &  \\
0&-\bar{b}_{2}& & & &0&\bar{a}_{2}& & &  \\
 & &-\bar{b}_{1}&0& & & &\bar{a}_{1}&0&  \\
 & &0&-\bar{b}_{2}& & & &0&\bar{a}_{2}&  \\
 & & & &A_{21}&         & & & &A_{22}
\end{pmatrix}
\mid
\\& \
\begin{pmatrix}
a_{1}&b_{1} \\
-\bar{b}_{1}&\bar{a}_1
\end{pmatrix},
\begin{pmatrix}
a_{2}&b_{2} \\
-\bar{b}_{2}&\bar{a}_2
\end{pmatrix}
\in{Sp(1)=SU(2)},
\begin{pmatrix}
A_{11}&A_{12} \\
A_{21}&A_{22}
\end{pmatrix}
\in{Sp(m-2)}
\Bigr\}.
\end{split}
\end{equation*}
Moreover,
\begin{equation*}
\begin{split}
K_{[{\mathfrak a}]}
&=
K_0 \cup
(Q \cdot K_0)
\cup
(Q^2 \cdot K_0)
 \cup
(Q^3 \cdot K_0),
\end{split}
\end{equation*}
where
\begin{equation*}
D=\begin{pmatrix}
0& 1 & & & \\
1 & 0& & &\\
 & & 0 & 1 & \\
  & & -1& 0&\\
  & & & & I_{m-2}
\end{pmatrix}
\mbox{ and } Q:=\begin{pmatrix}
 D& 0 \\
 0& D
\end{pmatrix}.
\end{equation*}
Thus the deck transformation group of the covering map ${\mathcal G}:
N^{8m-2}\rightarrow {\mathcal G}(N^{8m-2})$ $(m\geq 2)$ is equal to $K_{[{\mathfrak
a}]}/K_{0}\cong{\mathbf Z}_{4}$.

\subsection{Description of the Casimir operator}
\noindent

Denote $\langle{X, Y}\rangle_{\mathfrak u}:= -\frac{1}{2}\mathrm{tr}XY$
for each $X,Y\in \mathfrak{sp}(m+2)
\subset \mathfrak{u}(2m+4)$.
Then
the square length of each restricted root
relative to the above inner product $\langle{\ ,\ }\rangle_{\mathfrak u}$,
is given by
\begin{equation*}
\Vert{\gamma}\Vert_{\mathfrak u}^{2}
=\left\{
   \begin{array}{ll}
     1 \text{ or } 2, & m=2; \\
     \frac{1}{2}, 1 \text{ or } 2, & m\geq 3.
   \end{array}
 \right.
\end{equation*}
Hence the Casimir operator $\mathcal{C}_L$ of $L$, with respect to the induced metric from
$g^{\rm std}_{Q_{8m-2}(\mathbf{C})}$ can be expressed as follows:
\begin{equation*}
{\mathcal C}_{L}
=\left\{
   \begin{array}{ll}
     {\mathcal C}_{K/K_{0}}
-\frac{1}{2}\, {\mathcal C}_{K_{1}/K_{0}}, & m=2; \\
     2\, {\mathcal C}_{K/K_0}-{\mathcal C}_{K_2/K_0}
-\frac{1}{2}\, {\mathcal C}_{K_1/K_0}, & m\geq 3,
   \end{array}
 \right.
\end{equation*}
where
${\mathcal C}_{K/K_{0}}$, ${\mathcal C}_{K_{2}/K_{0}}$
and ${\mathcal C}_{K_{1}/K_{0}}$
denote the Casimir operator of  $K/K_{0}$, $K_{2}/K_{0}$ and
$K_{1}/K_{0}$ relative to
$\langle{\ ,\ }\rangle_{\mathfrak u}\vert_{\mathfrak k}$,
$\langle{\ ,\ }\rangle_{\mathfrak u}\vert_{{\mathfrak k}_{2}}$
and
$\langle{{\ },{\ }}\rangle_{\mathfrak u}\vert_{{\mathfrak k}_{1}}$,
respectively.

\subsection{Descriptions of $D(Sp(m))$ and $D(Sp(2)\times Sp(m))$}
\noindent

Let $G=Sp(m)$ and $K=Sp(2)\times Sp(m-2)$ in this subsection.
Their Lie algebras are $\mathfrak g$ and $\mathfrak k$, respectively.
\begin{equation*}
{\mathfrak t}=\{ \xi=
\sqrt{-1}\mathrm{diag}(\xi_1,\cdots,\xi_m,-\xi_1, \cdots, -\xi_m)
\mid \xi_1, \cdots, \xi_m \in \mathbf{R}\}.
\end{equation*}
is a maximal abelian subalgebra $\mathfrak t$ in both $\mathfrak g$ and
$\mathfrak k$.
Let $y_i: \xi\mapsto \xi_i$ be a linear form on $\mathfrak{t}$.
Then the fundamental root system of $\mathfrak{g}$ relative to $\mathfrak{t}$
is given by
$\{\alpha_1=y_1-y_2, \cdots, \alpha_{m-1}=y_{m-1}-y_m, \alpha_m=2y_m\}$
and the fundamental root system of $\mathfrak{k}$
relative to $\mathfrak{t}$ can be given by
$\{\alpha^\prime=y_1-y_2, \alpha^\prime=2y_2,
\alpha^\prime_3=y_3-y_4, \cdots, \alpha^\prime_{m-1}=y_{m-1}-y_m,
\alpha^\prime_m=2y_m\}$.
Thus each $\Lambda\in D(G)$ for $G=Sp(m)$ relative to $\mathfrak{t}$
is uniquely expressed as
$\Lambda =p_{1}y_{1}+\cdots+ p_{m} y_{m}$ with
$p_{1},\cdots,p_{m}\in{\mathbf Z}$ and
$p_{1}\geq p_2\geq \cdots \geq p_m \geq 0$.
And also each $\Lambda\in D(K)$ for $K=Sp(2)\times Sp(m-2)$
relative to $\mathfrak{t}$ is uniquely expressed as
$\Lambda^\prime =q_{1}y_{1}+\cdots+ q_{m} y_{m}$ with
$q_{1},\cdots,q_{m}\in{\mathbf Z}$ and
$q_{1}\geq q_2\geq 0, q_3\geq \cdots \geq q_m \geq 0$.

\smallskip
\subsection{Branching law of $(Sp(2), Sp(1)\times Sp(1))$}
\begin{lem}[Branching law of $(Sp(2), Sp(1)\times Sp(1))$ \cite{Lepowsky},
\cite{Tsukamoto}]
\label{BranchingLawSp(2)/Sp(1)XSp(1)}
Let $V_{\Lambda}$ be an irreducible $Sp(2)$-module with the highest weight
$\Lambda=p_1y_1+p_2 y_2\in D(Sp(2))$,
where $p_1, p_2\in \mathbf{Z}$ and $p_1\geq p_2\geq 0$.
Then
$V_{\Lambda}$ contains an irreducible
$Sp(1)\times Sp(1)$-module $V_{\Lambda^\prime}$
with the highest weight
$\Lambda^\prime=q_1 y_1+q_2 y_2 \in D(Sp(1)\times Sp(1))$,
where $q_1, q_2\in \mathbf{Z}$ and $q_1\geq0,  q_2\geq 0$,
if and only if
\begin{itemize}
\item[(i)] $p_1\geq q_2 \geq 0$;
\item[(ii)] in the finite power series expansion in $X$ of
$\frac{\prod_{i=0}^1  (X^{r_i+1}-X^{-(r_i +1)})}{X-X^{-1}}$, where
$r_i (i=0,1)$ are defined as
\begin{equation*}
r_0:=
p_1-\max (p_2, q_2),\quad r_1:=
\min (p_2, q_2),
\end{equation*}
the coefficient of $X^{q_1+1}$ does not vanish.
\end{itemize}
Here
that coefficient is equal to the multiplicity of a
$Sp(1)\times Sp(1)$-module $V_{\Lambda^\prime}$ in $V_{\Lambda}$.
\end{lem}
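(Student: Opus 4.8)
The plan is to prove the lemma by passing to characters. Since $\mathfrak{sp}(1)\oplus\mathfrak{sp}(1)$ and $\mathfrak{sp}(2)$ share the same maximal abelian subalgebra $\mathfrak t$, the groups $Sp(1)\times Sp(1)$ and $Sp(2)$ share a maximal torus $T$, and the restriction of $V_\Lambda$ to $Sp(1)\times Sp(1)$ carries exactly the same $T$-weights as $V_\Lambda$, only regrouped. Writing $t_i=e^{y_i}$, denoting by $\chi^{Sp(2)}_\Lambda=\sum_\mu m_\Lambda(\mu)\,t^\mu$ the Weyl character of $V_\Lambda$ (a Laurent polynomial in $t_1,t_2$ invariant under $W(Sp(2))$), and by $\chi_q(t)=(t^{q+1}-t^{-q-1})/(t-t^{-1})$ the $SU(2)$-character of the $(q{+}1)$-dimensional irreducible module, the multiplicity we want is the coefficient of $\chi_{q_1}(t_1)\chi_{q_2}(t_2)$ in $\chi^{Sp(2)}_\Lambda|_T$. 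Applying the Weyl character formula for $SU(2)\times SU(2)$ term by term turns this into the fourfold alternating sum of $Sp(2)$ weight multiplicities
\begin{equation*}
\begin{split}
m_{q_1,q_2}={}&m_\Lambda(q_1y_1+q_2y_2)-m_\Lambda((q_1{+}2)y_1+q_2y_2)\\
&-m_\Lambda(q_1y_1+(q_2{+}2)y_2)+m_\Lambda((q_1{+}2)y_1+(q_2{+}2)y_2),
\end{split}
\end{equation*}
so the whole problem reduces to knowing the weight multiplicities $m_\Lambda(\mu)$ of irreducible $Sp(2)$-modules.

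Next I would insert the explicit formula for these weight multiplicities, which for a rank-two symplectic group is elementary: $m_\Lambda(\mu)$ (for $\mu=\mu_1y_1+\mu_2y_2$ dominant) can be written down from the symplectic Gelfand--Tsetlin patterns, or obtained by two steps of Freudenthal's recursion, and comes out as a piecewise-linear "staircase" count in $p_1,p_2,\mu_1,\mu_2$. Substituting this into the alternating sum above and carrying out the telescoping is the technical heart of the argument: using also the Weyl-group symmetry $m_\Lambda(\mu)=m_\Lambda(w\mu)$, $w\in W(Sp(2))$, the four terms collapse, and the outcome is exactly the coefficient of $X^{q_1+1}$ in
\begin{equation*}
\frac{(X^{r_0+1}-X^{-(r_0+1)})(X^{r_1+1}-X^{-(r_1+1)})}{X-X^{-1}},\qquad r_0=p_1-\max(p_2,q_2),\quad r_1=\min(p_2,q_2);
\end{equation*}
in particular the multiplicity is forced to vanish unless $r_0,r_1\ge0$, i.e.\ unless $p_1\ge q_2\ge0$ (recall $p_1\ge p_2$ always), which is condition (i). This bookkeeping is precisely what the general symplectic branching theorems of Lepowsky \cite{Lepowsky} and Tsukamoto \cite{Tsukamoto} carry out for $Sp(2n)\downarrow Sp(2a)\times Sp(2b)$; specializing to $n=2$, $a=b=1$ gives the lemma, and a reader willing to quote those references may take it as immediate.

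Finally I would record, as a consistency check, the low-rank coincidences $Sp(2)\cong Spin(5)$ and $Sp(1)\times Sp(1)\cong Spin(4)$, under which the inclusion becomes $Spin(4)\subset Spin(5)$ and the highest weight $p_1y_1+p_2y_2$ corresponds to the $Spin(5)$-weight $\tfrac12(p_1+p_2)\varepsilon_1+\tfrac12(p_1-p_2)\varepsilon_2$. Since $Spin(5)\downarrow Spin(4)$ is multiplicity free, the generating function above must always produce $0$ or $1$, which one checks directly, and on integer-weight modules it must reproduce the $(SO(5),SO(4))$ branching rule of Ikeda--Taniguchi \cite{Ikeda-Taniguchi78} recorded earlier in the paper (after the half-spin dictionary). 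I expect the main obstacle to be neither the character reduction nor these sanity checks, but the combinatorial telescoping of the weight-multiplicity sum into the stated closed form: organizing the case analysis in $\max(p_2,q_2)$ and $\min(p_2,q_2)$ cleanly is exactly what makes the Lepowsky--Tsukamoto computation nontrivial.
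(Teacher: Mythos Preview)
The paper does not prove this lemma at all: it is stated with citations to Lepowsky \cite{Lepowsky} and Tsukamoto \cite{Tsukamoto} and used as a black box. Your sketch therefore goes well beyond what the paper does. The character-theoretic reduction you outline---expressing the branching multiplicity as an alternating sum of $Sp(2)$ weight multiplicities via the $SU(2)\times SU(2)$ Weyl denominator, then collapsing this using the explicit rank-two weight multiplicities---is a correct and standard route to such formulas, and is indeed the mechanism behind the cited references. Your consistency check via the exceptional isomorphism $Sp(2)\cong Spin(5)$, $Sp(1)\times Sp(1)\cong Spin(4)$ is also sound: the branching $Spin(5)\downarrow Spin(4)$ is multiplicity free, and one can verify directly that the generating function in the lemma always returns $0$ or $1$ for $q_1\ge 0$.

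The only caveat is one of honesty about completeness: the ``technical heart'' you identify---the telescoping of the four-term alternating sum into the closed form---is genuinely the content of the Lepowsky--Tsukamoto computation, and your write-up defers it rather than carrying it out. That is entirely appropriate given that the paper itself simply cites the result, but you should be clear that what you have written is a proof \emph{outline} together with a pointer to where the combinatorics is done, not a self-contained argument.
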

\subsection{Descriptions of
$D(K,K_{0})$ and $D(K_{1},K_{0})$ when $m=2$}
\noindent

For each $\Lambda= p_1 y_1 +p_2 y_2 + p_3 y_3 +p_4 y_4\in
D(K)=D(Sp(2)\times Sp(2))$
with $p_{1},\cdots,p_{4}\in{\mathbf Z}$ and
$p_{1}\geq p_2\geq 0$, $p_3\geq p_4\geq 0$,
we know that $p_1 y_1+p_2 y_2 \in D(Sp(2))$, $p_3 y_3 +p_4 y_4 \in D(Sp(2))$
and $V_{\Lambda}= W_{p_1 y_1 + p_2 y_2} \boxtimes W_{p_3 y_3 +p_4 y_4}$.
By Lemma \ref{BranchingLawSp(2)/Sp(1)XSp(1)},
$W_{p_1 y_1 + p_2 y_2}$ and $W_{p_3 y_3 +p_4 y_4}$
can be decomposed into irreducible
$Sp(1)\times Sp(1)$-modules as
$$
W_{p_1 y_1 + p_2 y_2}=\bigoplus_{q_1,q_2} W^\prime_{q_1 y_1 +q_2 y_2},
\quad
W_{p_3 y_3 + p_4 y_4}=\bigoplus_{q_3,q_4} W^\prime_{q_3 y_3 +q_4 y_4},
$$
where $q_1, q_2$ and $q_3, q_4$
vary as in Lemma \ref{BranchingLawSp(2)/Sp(1)XSp(1)}.
Thus we have a decomposition of $V_\Lambda$ into the direct sum
of irreducible $Sp(1)\times Sp(1)\times Sp(1)\times Sp(1)$-modules:
$$
V_\Lambda=\bigoplus_{q_1,q_2}\bigoplus_{q_3,q_4}
(W^\prime_{q_1 y_1 +q_2 y_2}\boxtimes W^\prime_{q_3 y_3 +q_4 y_4}).
$$
Further
by the Clebsch-Gordan formula it can be decomposed into into the sum of
irreducible $Sp(1)\times Sp(1)$-modules as
$$
V_\Lambda=
\bigoplus_{q_1,q_2}\bigoplus_{q_3,q_4}
\left(\bigoplus_{i=1}^{q_3} U_{q_1+q_3-2i}\right) \boxtimes
\left(\bigoplus_{j=0}^{q_4} U_{q_2+q_4-2j}\right).
$$
Here we assume that $q_1 \geq q_3\geq 0$ and $q_2 \geq q_4 \geq 0$.
Hence
\begin{lem}\label{D(K,K_0)m=2DIII}
$\Lambda \in D(K,K_0)$ if and only if there exist
$i, j\in \mathbf{Z}$ with $0\leq i\leq q_3$ and $0\leq j\leq q_4$
such that $U_{q_1+q_3-2i} \boxtimes U_{q_2+q_4-2j}$ is a trivial
$Sp(1)\times Sp(1)$-module.
Then it must be that $(q_1,q_2)=(q_3,q_4)$.
\end{lem}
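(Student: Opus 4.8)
\textbf{Plan of proof for Lemma \ref{D(K,K_0)m=2DIII}.} The statement has two halves. The first half characterizes $D(K,K_0)$ as the set of $\Lambda=p_1y_1+p_2y_2+p_3y_3+p_4y_4\in D(K)$ admitting, after the chain of branchings $Sp(2)\times Sp(2)\downarrow Sp(1)^4 \downarrow Sp(1)\times Sp(1)$ described above, a trivial $Sp(1)\times Sp(1)$-summand; this half is essentially just a restatement of the explicit decomposition
\begin{equation*}
V_\Lambda=
\bigoplus_{q_1,q_2}\bigoplus_{q_3,q_4}
\left(\bigoplus_{i=1}^{q_3} U_{q_1+q_3-2i}\right) \boxtimes
\left(\bigoplus_{j=0}^{q_4} U_{q_2+q_4-2j}\right)
\end{equation*}
together with the observation that $U_a\boxtimes U_b$ contains a $(Sp(1)\times Sp(1))$-fixed vector precisely when $a=b=0$. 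So I would first record that: $\Lambda\in D(K,K_0)$ iff there are $i,j$ with $0\le i\le q_3$, $0\le j\le q_4$ (and some admissible $(q_1,q_2,q_3,q_4)$ appearing in the double branching, i.e. satisfying the conditions of Lemma \ref{BranchingLawSp(2)/Sp(1)XSp(1)} applied to both factors) such that $q_1+q_3-2i=0$ and $q_2+q_4-2j=0$.

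\textbf{The substantive half} is the claim that this forces $(q_1,q_2)=(q_3,q_4)$. First I would note that the Clebsch--Gordan ranges implicitly assume $q_1\ge q_3\ge 0$ and $q_2\ge q_4\ge 0$ (the genuine symmetric situation; the case $q_3>q_1$ is handled by swapping the roles, and by Lemma \ref{BranchingLawSp(2)/Sp(1)XSp(1)}(i) one always has $p_1\ge q_2\ge 0$ and $p_3\ge q_4\ge 0$, so the weights stay nonnegative). Under $q_1\ge q_3$, the integers $q_1+q_3-2i$ for $0\le i\le q_3$ all satisfy $q_1+q_3-2i\ge q_1-q_3\ge 0$, so $q_1+q_3-2i=0$ is possible only when $q_1-q_3=0$ and then only for $i=q_3$. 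Likewise $q_2+q_4-2j=0$ forces $q_2=q_4$ (and $j=q_4$). Hence $(q_1,q_2)=(q_3,q_4)$, which is exactly the assertion. I would also remark that, conversely, when $(q_1,q_2)=(q_3,q_4)$ the terms $i=q_3$, $j=q_4$ do produce a trivial summand, so the characterization is sharp.

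\textbf{The main obstacle} is bookkeeping rather than conceptual: one must be careful that the Clebsch--Gordan decomposition is stated with the correct index conventions (whether the inner sum runs from $0$ or $1$, and which of $q_1,q_3$ is the larger), because the displayed formula above has the first inner sum starting at $i=1$ and the second at $j=0$; I would double-check that the trivial summand $U_0$ indeed occurs in $\bigoplus_{i} U_{q_1+q_3-2i}$ when $q_1=q_3$ under whichever convention is in force (it does, as $q_1+q_3-2q_3=0$ with $q_3$ in the summation range). Once the ranges are pinned down, the inequality argument ``$q_1\ge q_3\Rightarrow q_1+q_3-2i\ge q_1-q_3\ge 0$, with equality only if $q_1=q_3$'' closes the proof immediately, and symmetrically for the second pair. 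No further input from the branching laws of $Sp(2)$ is needed beyond the nonnegativity constraint (i) of Lemma \ref{BranchingLawSp(2)/Sp(1)XSp(1)}, which guarantees all the weights in sight are $\ge 0$.
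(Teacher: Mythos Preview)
Your proposal is correct and follows the same approach as the paper: the lemma is an immediate consequence of the Clebsch--Gordan decomposition displayed just before it, together with the standing assumption $q_1\ge q_3\ge 0$, $q_2\ge q_4\ge 0$. Your explicit inequality $q_1+q_3-2i\ge q_1-q_3\ge 0$ (and its companion for $q_2,q_4$) is exactly the step the paper leaves implicit, and your remark about the indexing typo (first inner sum should start at $i=0$, matching the second) is well taken.
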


\subsection{Eigenvalue computation when $m=2$}

For $\Lambda=p_1y_1+p_2y_2+p_3 y_3 +p_4 y_4 \in D(K, K_0)$ and
$\Lambda^\prime=q_1 y_1+ q_2 y_2+ q_3 y_3 + q_4 y_4 \in D(K_1, K_0)$
with $q_1=q_3$, $q_2=q_4$ as in Lemma \ref{D(K,K_0)m=2DIII},
the corresponding eigenvalue of $-{\mathcal C}_{L}$ is
\begin{equation}\label{EigenvalueFormulaDIIIm=2}
\begin{split}
-c_L
=&\, -c_{\Lambda}+\frac{1}{2}\, c_{\Lambda^\prime}\\
=&\,
\Bigl(\sum_{i=1}^4 p_i^2+ 4p_1+2p_2+4p_3+2p_4\Bigr)
-\Bigl(q_1^2+q_2^2+2q_1+2q_2\Bigr).
\end{split}
\end{equation}
Denote $\Lambda=p_1y_1+p_2y_2+p_3y_3+p_4 y_4\in D(K,K_0)$ by
$\Lambda=(p_1,p_2,p_3,p_4)$.
Then using the eigenvalue formula \eqref{EigenvalueFormulaDIIIm=2}
we compute
\begin{lem}
$\Lambda\in D(K, K_{0})$ has eigenvalue $-c_L \leq{14}$
if and only if  $(p_1,p_2,p_3,p_4)$ is one of
\begin{equation*}
\begin{split}
\{\, (0,0,0,0), (1,1,0,0), (0,0,1,1), (1,0,1,0),
(1,1,1,1), (1,1,2,0), (2,0,1,1)\, \}.
\end{split}
\end{equation*}
\end{lem}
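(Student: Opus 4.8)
The plan is to follow the same two‑step pattern used for the earlier eigenvalue lemmas (for instance the $SO(5)$ and $SU(4)$ cases): first derive an a priori bound on $\Lambda$ from the eigenvalue formula that leaves only finitely many candidates, and then examine these one by one. Throughout I will use the description of $D(K,K_0)$ from Lemma \ref{D(K,K_0)m=2DIII}: a weight $\Lambda=p_1y_1+p_2y_2+p_3y_3+p_4y_4$ with $p_1\ge p_2\ge 0$ and $p_3\ge p_4\ge 0$ lies in $D(K,K_0)$ precisely when the sets $S_1$, $S_2$ of highest weights $(q_1,q_2)$ of the $Sp(1)\times Sp(1)$‑summands occurring in the branchings of $W_{p_1y_1+p_2y_2}$ and of $W_{p_3y_3+p_4y_4}$ respectively (Lemma \ref{BranchingLawSp(2)/Sp(1)XSp(1)}) share a common element $(q_1,q_2)=(q_3,q_4)$, and to each such common element corresponds one $K_1$‑type, hence by \eqref{EigenvalueFormulaDIIIm=2} one eigenvalue
\[
-c_L=(p_1^2+p_2^2+p_3^2+p_4^2)+(4p_1+2p_2+4p_3+2p_4)-(q_1^2+q_2^2+2q_1+2q_2).
\]
Since both this formula and the asserted list are invariant under $(p_1,p_2)\leftrightarrow(p_3,p_4)$ — the interchange of the two $Sp(2)$‑factors of $K$, under which $S_1\cap S_2$ is unchanged — it is enough to treat the case $p_1\le p_3$ and then adjoin the images.

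For the a priori bound I would first read off from Lemma \ref{BranchingLawSp(2)/Sp(1)XSp(1)} that every $(q_1,q_2)$ in the branching of $W_{p_1y_1+p_2y_2}$ satisfies $q_1,q_2\le p_1$: condition (i) gives $q_2\le p_1$, and the top degree of the Laurent polynomial in (ii) is $r_0+r_1+1=p_1-\max(p_2,q_2)+\min(p_2,q_2)+1\le p_1+1$, so $q_1\le p_1$. Applying this to the other factor gives $q_1,q_2\le p_3$. Assuming $p_1\le p_3$, any common weight therefore has $q_1,q_2\le p_1$, and since $t\mapsto t^2+2t$ is increasing on $[0,\infty)$,
\begin{align*}
-c_L&\ge (p_1^2+p_2^2+p_3^2+p_4^2)+(4p_1+2p_2+4p_3+2p_4)-(2p_1^2+4p_1)\\
&=(p_3^2-p_1^2)+p_2^2+p_4^2+2p_2+4p_3+2p_4\;\ge\;4p_3.
\end{align*}
This holds for every eigenvalue of $\mathcal C_L$ in the $\Lambda$‑component, so $-c_L\le 14$ forces $p_3\le 3$, whence all four entries lie in $\{0,1,2,3\}$ and only finitely many $\Lambda$ remain.

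The remaining step is the finite check. For each candidate with $p_1\le p_3$ I would compute $S_1$ and $S_2$ from Lemma \ref{BranchingLawSp(2)/Sp(1)XSp(1)} — the only branchings needed are low‑rank ones such as $W_{py_1}$ giving highest weights $\{(p-j,j):0\le j\le p\}$, $W_{y_1}$ giving $\{(1,0),(0,1)\}$, $W_{y_1+y_2}$ giving $\{(0,0),(1,1)\}$, $W_{2y_1+y_2}$ giving $\{(1,0),(0,1),(2,1),(1,2)\}$, $W_{2y_1+2y_2}$ giving $\{(0,0),(1,1),(2,2)\}$ — decide whether $S_1\cap S_2\neq\emptyset$ (equivalently $\Lambda\in D(K,K_0)$), and for each common weight evaluate $-c_L$. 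Arranging this as a table should show that, up to the involution, the only weights admitting some eigenvalue $\le 14$ are $(0,0,0,0)$ with $-c_L=0$, $(1,0,1,0)$ with $-c_L=7$, $(1,1,0,0)$ with $-c_L=8$, $(1,1,1,1)$ with $-c_L=10$ (via the common weight $(1,1)$; the weight $(0,0)$ gives $16$), and $(1,1,2,0)$ with $-c_L=14$ (via $(1,1)$); adjoining the images $(0,0,1,1)$ and $(2,0,1,1)$ yields exactly the seven weights in the statement, while every other $\Lambda$ with $p_i\le 3$ either has $S_1\cap S_2=\emptyset$ or has $-c_L>14$ for each common weight.

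The main obstacle is organisational rather than conceptual: Step 1 is short, and the content lies in executing Step 2 accurately — performing the $Sp(2)$‑to‑$Sp(1)\times Sp(1)$ branching correctly for each of the roughly two dozen candidates via the Laurent‑coefficient criterion of Lemma \ref{BranchingLawSp(2)/Sp(1)XSp(1)}, and keeping in mind that one weight $\Lambda$ may contribute several eigenvalues (one per common $Sp(1)\times Sp(1)$‑weight), so that it enters the list as soon as the smallest of them is $\le 14$. One should also confirm at the outset that the claimed $(p_1,p_2)\leftrightarrow(p_3,p_4)$ symmetry genuinely halves the enumeration, which it does since it is manifest in the eigenvalue formula and in the membership condition for $D(K,K_0)$.
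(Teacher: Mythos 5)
Your proposal is correct and takes essentially the same route as the paper: the paper's own argument is just the direct computation "using the eigenvalue formula \eqref{EigenvalueFormulaDIIIm=2}" together with the branching law of Lemma \ref{BranchingLawSp(2)/Sp(1)XSp(1)} and the membership criterion of Lemma \ref{D(K,K_0)m=2DIII}, and your a priori bound $-c_L\ge 4\max(p_1,p_3)$ followed by the finite case check is exactly that computation made explicit. Your sample branchings and the resulting eigenvalues ($8$ for $(1,1,0,0)$; $10$ and $16$ for $(1,1,1,1)$; $14$ for $(1,1,2,0)$) agree with the values recorded in the paper's subsequent case-by-case discussion.
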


Suppose that $\Lambda=(1,1,0,0)$. Then ${\rm dim}_{\mathbf C} V_{\Lambda}=5$.
It follows from
Lemma \ref{BranchingLawSp(2)/Sp(1)XSp(1)}
that
$(q_1, q_2)=(0,0)\text{ or } (1,1)$ and $(q_3, q_4)=(0,0)$.
Then $(q_1, q_2,q_3,q_4)=(0,0,0,0)\in D(K_1,K_0)$.
Hence,
$-c_{\Lambda}=8$,
$-c_{\Lambda^\prime}=0$,
$-c_L=-c_{\Lambda}+\frac{1}{2}c_{\Lambda^\prime}=8<14$.
On the other hand, there is a double covering $\pi : Sp(2) \rightarrow SO(5)$, and $\pi (Sp(1)\times Sp(1))=SO(4)$.
Let $\lambda_5$ denote the standard representation of $SO(5)$ and $1$ the trivial representation of $SO(5)$.
Then the complex representation of $K=Sp(2)\times Sp(2)$ with the highest weight $(1,1,0,0)$ is
$(\lambda_5 \otimes 1)\otimes {\mathbf C}$ and $V_{\Lambda}={\mathbf C}^5$.
It is easy to see that $(V_{\Lambda})_{K_0}={\mathbf C} \mathbf{e}_1$,
where $\mathbf{e}_1=(1,0,0,0,0)^t \in {\mathbf C}^5$.
However for
$$a=\begin{pmatrix}
0&1& & & & & & \\
1&0& & & & & & \\
 & &0 &1 & & & \\
 & &-1&0 & & & \\
 & & & & 0 & 1& & \\
 & & & & 1 & 0& & \\
  & & & &  & &0 &1 \\
   & & & &  & & -1& 0
\end{pmatrix}
\in K_{[\mathfrak a]}\subset K, \quad a \not\in K_0,$$
$\pi(a)={\rm diag}(-1,1,-1,-1,-1)\not\in SO(4)$ and $\pi(a)\mathbf{e}_1 =- \mathbf{e}_1\neq \mathbf{e}_1$.
Therefore  $(V_{\Lambda})_{K_{[\mathfrak a]}} =\{0\}$
and $\Lambda=(1,1,0,0)\not\in D(K,K_{[\mathfrak a]})$.
Similarly, $\Lambda=(0,0,1,1)\not\in D(K, K_{[\mathfrak a]})$.

\smallskip
Suppose that $\Lambda=(1,0,1,0)$. Then ${\rm dim}_{\mathbf C} V_{\Lambda}=16$.
The corresponding representation with the highest weight $\Lambda$ is just
the complexified isotropy representation ${\rm Ad}_{\mathfrak p}(K)^{\mathbf C}$.
Hence $\Lambda\not\in D(K,K_{[\mathfrak{a}]})$.

\smallskip
Suppose that $\Lambda=(1,1,1,1)$.
Then ${\rm dim}_{\mathbf C} V_{\Lambda}=25$.
By Lemma \ref{BranchingLawSp(2)/Sp(1)XSp(1)},
$(q_1, q_2)=(1,1)\text{ or } (0,0)$ and $(q_3,q_4)=(1,1)\text{ or } (0,0)$.
Then $(q_1, q_2,q_3,q_4)=(1,1,1,1)\text{ or }  (0,0,0,0)\in D(K_1,K_0)$.
If $(q_1,q_2,q_3,q_4)=(1,1,1,1)$, then $-c_L=10<14$.
If $(q_1,q_2,q_3,q_4)=(0,0,0,0)$, then $-c_L=16>14$.
On the other hand, $V_{(1,1,1,1)}$ is explicitly given as
\begin{equation*}
V_{(1,1,1,1)} ={\mathbf C}^5 \boxtimes {\mathbf C}^5 \cong M(5, {\mathbf C}).
\end{equation*}
There are doubly covering homomorphisms
\begin{eqnarray*}
\pi : K=Sp(2)\times Sp(2) & \longrightarrow & SO(5)\times SO(5),\\
\pi|_{K_1}: K_1=Sp(1)\times Sp(1)\times Sp(1)\times Sp(1)& \longrightarrow
& SO(4)\times SO(4),\\
\pi|_{K_0}:K_0=Sp(1)\times Sp(1) &\longrightarrow & SO(4).
\end{eqnarray*}

The representation of $K$ on $V_\Lambda$ is realized as the action of $\pi(K)=SO(5)\times SO(5)$
on $M(5, {\mathbf C})$ in the following way:
For each $(A,B) \in SO(5)\times SO(5)$, $X\in M(5,{\mathbf C})$
is mapped to $AXB^{-1}\in M(5,\mathbf{C})$.
Then as a $K_1$-module,
\begin{equation*}
\begin{split}
M(5,\mathbf{C})&
=
\left\{\begin{pmatrix} 0&0\\ * &0 \end{pmatrix}\right\}
\oplus
\left\{\begin{pmatrix} 0& *\\0&0 \end{pmatrix}\right\}
\oplus
\left\{\begin{pmatrix} *&0\\0&0 \end{pmatrix}\right\}
\oplus
\left\{\begin{pmatrix} 0&0\\0& * \end{pmatrix}\right\}
\\
&= W_{(1,1,0,0)} \oplus W_{(0,0,1,1)}\oplus W_{(0,0,0,0)} \oplus W_{(1,1,1,1)}.
\end{split}
\end{equation*}
$K_0$ acts on $M(5, \mathbf{C})$ by the adjoint action as a diagonal subgroup of $K_1$.
Hence,
\begin{equation*}
\begin{split}
(M(5,\mathbf{C}))_{K_0}
&=\Bigl\{
\begin{pmatrix} x&0\\ 0& yI_4 \end{pmatrix}
\mid
x,y\in {\mathbf C}
\Bigr\},\\
(M(5,\mathbf{C}))_{K_{[\mathfrak a]}}&
=\mathbf{C}\begin{pmatrix}1&0\\0&0\end{pmatrix}
=W{(0,0,0,0)}.
\end{split}
\end{equation*}
Though $\Lambda=(1,1,1,1)\in D(K,K_{[\mathfrak a]})$,
by the preceding computation (in case $(q_1,q_2,q_3,q_4)=(0,0,0,0)$)
we see that a nonzero element in
$(M(5,\mathbf{C}))_{K_{[\mathfrak a]}}=W{(0,0,0,0)}$
gives eigenvalue $-c_L=16>14$.

\smallskip
Suppose that $\Lambda=(1,1,2,0)$. Then ${\rm dim}_{\mathbf C} V_{\Lambda}=50$.
It follows from Lemma \ref{BranchingLawSp(2)/Sp(1)XSp(1)}
that
$(q_1,q_2)=(1,1)\text{ or } (0,0)$ and
$(q_3,q_4)=(0,2), (1,1) \text{ or } (2,0)$.
Thus
\begin{equation*}
\begin{split}
V_{\Lambda}=
&\ (W_{(1,1)} \boxtimes U_{(0,2)}) \oplus (W_{(1,1)} \boxtimes U_{(1,1)})
\oplus (W_{(1,1)}\boxtimes U_{(2,0)})
\\ &
\oplus (W_{(0,0)} \boxtimes U_{(0,2)})\oplus (W_{(0,0)} \boxtimes U_{(1,1)})
\oplus (W_{(0,0)} \boxtimes U_{(2,0)}).
\end{split}
\end{equation*}
Here only $(q_1,q_2,q_3,q_4) = (1,1,1,1)\ (W_{(1,1)} \boxtimes U_{(1,1)})$
belongs to $D(K_1,K_0)$.
and the corresponding eigenvalue is
$-c_L=14$.
On the other hand, the representation of $K$ with highest weight $\Lambda=(1,1,2,0)$
is $\lambda_5\boxtimes\mathrm{Ad}_{\mathfrak{sp}(2)}^{\mathbf C}$.
Set
$\Lambda_1=(p_1,p_2)=(1,1)\in D(Sp(2))$.
Then
$$
V_{\Lambda_1} \cong \mathbf{C}^5 =\mathbf{C}\mathbf{e}_1\oplus
{\rm span}_{\mathbf C}\{\mathbf{e}_2, \mathbf{e}_3, \mathbf{e}_4, \mathbf{e}_5\}
=W_{(0,0)}\oplus W_{(1,1)}.
$$
Using the quaternionic representation
\begin{equation*}
\mathfrak{sp}(2)=\{X\in M(2, \mathbf{H}) \mid X^* +X=0\},
\end{equation*}
we chose the following basis of $\mathfrak{sp}(2)$:
\begin{equation*}
\begin{split}
& E_1:=\begin{pmatrix} 0& 1\\ -1 & 0 \end{pmatrix},\,
  E_2:=\begin{pmatrix} 0& i\\ i  & 0 \end{pmatrix}, \,
  E_3:=\begin{pmatrix} 0& j\\ j & 0 \end{pmatrix},\,
  E_4:=\begin{pmatrix} 0& k\\ k & 0 \end{pmatrix},\\
& E_5:=\begin{pmatrix} i& 0\\ 0 & 0 \end{pmatrix}, \,
  E_6:=\begin{pmatrix} j& 0\\ 0 & 0 \end{pmatrix}, \,
  E_7:=\begin{pmatrix} k& 0\\ 0 & 0 \end{pmatrix}, \\
& E_8:=\begin{pmatrix} 0& 0\\ 0 & i \end{pmatrix}, \,
  E_9:=\begin{pmatrix} 0& 0\\ 0 & j \end{pmatrix}, \,
  E_{10}:=\begin{pmatrix} 0& 0\\ 0 & k \end{pmatrix},
\end{split}
\end{equation*}
where $\{i, j, k\}$ denote the unit pure quaternions.

Set $\Lambda_2=(p_3,p_4)=(2,0)\in D(Sp(2))$.
Then
\begin{equation*}
\begin{split}
V_{\Lambda_2}&\cong
\mathrm{span}_{\mathbf C}\{E_1,E_2,E_3,E_4\} \oplus
\mathrm{span}_{\mathbf C}\{E_5,E_6,E_7\}
               \oplus {\rm span}_{\mathbf C}\{E_8,E_9,E_{10}\}\\
             &= W_{(1,1)}\oplus W_{(2,0)} \oplus W_{(0,2)}
\end{split}
\end{equation*}
By a direct computation, we get that
\begin{equation*}
\begin{split}
(V_{\Lambda})_{K_0}=&{\rm span}_{\mathbf C}\{\mathbf{e}_2\otimes E_1+\mathbf{e}_3\otimes E_2
+ \mathbf{e}_4\otimes E_3 +\mathbf{e}_5\otimes E_4 \}\\
=&(V_{\Lambda})_{K_{[\mathfrak a]}} \subset W_{(1,1)}\otimes U_{(1,1)}.
\end{split}
\end{equation*}
Therefore, $\Lambda=(1,1,2,0)\in D(K,K_{[\mathfrak a]})$, which
gives eigenvalue $14$ with multiplicity $1$.
Similarly, we can show that $\Lambda=(2,0,1,1) \in D(K,K_{[\mathfrak a]})$ which
gives eigenvalue $14$ with multiplicity $1$.

Moreover, we observe that
\begin{equation*}
\begin{split}
n(L^{14})&=\dim_{\mathbf C} V_{(1,1,2,0)} + \dim_{\mathbf C} V_{(2,0,1,1)}=100\\
&=\dim SO(16)-\dim Sp(2)\times Sp(2)=n_{hk}(L^{14}).
\end{split}
\end{equation*}
 From these results we obtain that
$L^{14}=\mathcal{G}(\frac{Sp(2)\times Sp(2)}{Sp(1)\times Sp(1)})
\subset Q_{14}(\mathbf{C})$ is strictly Hamiltonian stable.

\subsection{Eigenvalue computation when $m\geq 3$}
For each
$$\Lambda=p_1y_1+p_2y_2+p_3 y_3 + \cdots +p_{m+2}y_{m+2} \in D(K, K_0)$$
with $p_i\in \mathbf{Z}$, $p_1\geq p_2$, $p_3 \geq p_4\geq \cdots \geq p_{m+2} \geq 0$,
$$\Lambda^\prime=q_1 y_1+ q_2 y_2+ q_3 y_3 + q_4 y_4 +q_5 y_5 +\cdots +q_{m+2}y_{m+2} \in D(K_2, K_0),$$
with $q_i\in \mathbf{Z}$, $q_1 \geq q_2\geq 0$, $q_3\geq q_4\geq 0$, $q_5\geq \cdots\geq q_{m+2} \geq 0$,
$q_1=p_1$, $q_2=p_2$,
and
$$\Lambda^{\prime\prime}=k_1 y_1+ k_2 y_2+ k_3 y_3 + k_4 y_4 + k_5 y_5 +\cdots + k_{m+2}y_{m+2} \in D(K_1, K_0)$$
with $k_i\in \mathbf{Z}$, $k_i \geq 0$ for $1\leq i\leq 4$, $k_5\geq k_6\geq \cdots\geq k_{m+2} \geq 0$,
$k_j=q_j$  for $5\leq j \leq m+2$,
the corresponding eigenvalue of $-{\mathcal C}_{L}$ is expressed as follows:
\begin{equation}\label{EigenvalueFormulaCIII}
\begin{split}
-c_L
=&{\ }-2 c_{\Lambda}+c_{\Lambda^\prime}+\frac{1}{2}c_{\Lambda^{\prime\prime}}\\
=&{\ }
2\Bigl(
\sum_{i=1}^{m+2} p_i^2+4p_1+2p_2+2mp_3+(2m-2)p_4+ \cdots +2p_{m+2}
\Bigr)\\
& {\ }
-\Bigl(
\sum_{i=1}^{m+2} q_i^2+4q_1+2q_2+4q_3+2q_4+(2m-4)q_5+\cdots +2q_{m+2}
\Bigr)\\
& {\ } -\frac{1}{2}
\Bigl(\sum_{i=1}^{m+2} k_i^2+ 2 k_1+2k_2+2k_3+2k_4+(2m-4)k_5+\cdots +2k_{m+2}
\Bigr),
\end{split}
\end{equation}
where $q_i=k_i$ for $5\leq i \leq m+2$, $p_1=q_1$, $p_2=q_2$
and $k_1=k_3$, $k_2=k_4$.

Suppose that $\Lambda=(p_1,p_2, \cdots, p_{m+2})=(2,2,0,\cdots,0)\in D(K)$.
Then by using the branching law of $(Sp(2),Sp(1)\times Sp(1))$
we see that
$\Lambda\in D(K,K_0)$,
$\Lambda^\prime=(q_1,q_2,\cdots,q_{m+2})=(2,2,0,\cdots,0)\in D(K_2,K_0)$
and
$\Lambda^{\prime\prime}=(k_1,k_2,\cdots,k_{m+2})
=(0,0,0,\cdots,0)\in D(K_1,K_0)$.
Hence by \eqref{EigenvalueFormulaCIII}
the corresponding eigenvalue is $-c_L=20 < 8m-2$ for $m\geq 3$.
On the other hand, the representation of $K$ with highest weight $\Lambda=(2,2,0,\cdots,0)$
is a $14$-dimensional irreducible representation
$\rho_{\mathrm{Sym}^2_0 (\mathbf{C}^5)}\boxtimes \mathrm{I}$
of $Sp(2)\times Sp(m)$,
where $\rho_{\mathrm{Sym}_0^2(\mathbf{C}^5)}$ is the composition of
the natural surjective homomorphism $Sp(2)\rightarrow SO(5)$
and the traceless symmetric product representation of $SO(5)$ on
$\mathrm{Sym}^2_0 (\mathbf{C}^5):=
\{X\in M(5;\mathbf{C})\mid X^t=X, \mathrm{tr}X=0\}$.
Here each $A \in SO(5)$ acts on ${\rm Sym}_0^2(\mathbf{C}^5)$
by
$\mathrm{Sym}_0^2(\mathbf{C}^5)
\ni X\mapsto AXA^t \in\mathrm{Sym}_0^2(\mathbf{C}^5)$.
So
\begin{equation*}
\begin{split}
\mathrm{Sym}_0 ({\mathbf C}^5)
=&\, {\mathbf C}\cdot
\begin{pmatrix}
1&0 \\
0&-\frac{1}{4} I_4
\end{pmatrix}
\oplus \Bigl\{
\begin{pmatrix} 0&0\\ 0& X^{\prime} \end{pmatrix}
\mid
X^\prime \in \mathrm{Sym}_0 ({\mathbf C}^4)
\Bigr\}\\
&\, \oplus \Bigl\{
\begin{pmatrix} 0& Z\\ Z^t&0 \end{pmatrix}
\mid
Z\in M(1,4;{\mathbf C})
\Bigr\}\\
=&\, \mathbf{C} \oplus {\rm Sym}_0 ({\mathbf C}^4) \oplus \mathbf{C}^4
\end{split}
\end{equation*}
and
$$
(\mathrm{Sym}_0 ({\mathbf C}^5))_{SO(4)}= {\mathbf C}\cdot
\begin{pmatrix}
1&0 \\
0&-\frac{1}{4} I_4
\end{pmatrix}
\cong \mathbf{C}.
$$
Under the natural surjective homomorphism
$Sp(2) (\subset SU(4))\rightarrow SO(5)$,
the element
$\begin{pmatrix}
0 & 1& & \\
1&0& & \\
 & &0& 1\\
 & & 1 & 0
\end{pmatrix} \in Sp(2)$
corresponds to ${\rm diag}(-1,1,-1,-1,-1)\in SO(5)$, denoted by $Q^\prime$.
By a direct computation, we know that
$({\rm Sym}_0 ({\mathbf C}^5))_{Q^{\prime}\cdot SO(4)}\cap ({\rm Sym}_0 ({\mathbf C}^5))_{SO(4)}
=({\rm Sym}_0 ({\mathbf C}^5))_{SO(4)}$.
Thus,
\begin{equation*}
(V_{\Lambda=(2,2,0,\cdots, 0)})_{K_0}={\mathbf C}\cdot \begin{pmatrix}  1&0 \\
0&-\frac{1}{4} I_4\end{pmatrix} \boxtimes \mathbf{C}
\end{equation*}
and moreover,
\begin{equation*}
(V_{\Lambda=(2,2,0,\cdots, 0)})_{K_{[\mathfrak a]}}={\mathbf C}\cdot \begin{pmatrix}  1&0 \\
0&-\frac{1}{4} I_4\end{pmatrix} \boxtimes \mathbf{C}.
\end{equation*}
This means that
$\Lambda=(2,2,0,\cdots,0)\in D(K,K_{[\mathfrak{a}]})$ has multiplicity $1$,
which corresponds to eigenvalue $20 <8m-2$.
Therefore, $L^{8m-2}\subset Q_{8m-2}(\mathbf{C})$
is not Hamiltonian stable.

From our results of this section we conclude
\begin{thm}
The Gauss image
$L=\frac{Sp(2)\times Sp(m)}{(Sp(1)\times Sp(1) \times Sp(m-2)) \cdot \mathbf{Z}_4}
\subset Q_{8m-2}(\mathbf{C})$
$(m\geq 2)$
is not Hamiltonian stable if and only if $m\geq 3$.
If $m=2$, it is strictly Hamiltonian stable.
\end{thm}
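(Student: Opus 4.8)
The plan is to use the criterion established in Section~\ref{Sec_Gauss maps}: since $Q_n(\mathbf{C})$ is Einstein--K\"ahler with Einstein constant $n=8m-2$, the Gauss image $L=\mathcal{G}(N^{8m-2})$ is Hamiltonian stable if and only if the first positive eigenvalue of its Laplacian equals $n$, and it is strictly Hamiltonian stable precisely when, in addition, the multiplicity of that eigenvalue equals $n_{hk}(\mathcal{G})=\dim SO(8m)-\dim(Sp(2)\times Sp(m))$. Everything thus reduces to eigenvalue computations on the homogeneous space $L^{8m-2}=K/K_{[\mathfrak{a}]}$. First I would record the expression for the Casimir operator $\mathcal{C}_L$ (corresponding to $-\Delta_L$) obtained from Lemma~\ref{LaplaceOperator} in the $B_2$ case ($m=2$) and the $BC_2$ case ($m\geq 3$), and then pass to harmonic analysis: decompose $C^\infty(K/K_0,\mathbf{C})$ over $D(K,K_0)$, compute Casimir eigenvalues by the Freudenthal formula, and finally restrict to the eigenfunctions invariant under the deck group $\mathbf{Z}_4\cong K_{[\mathfrak{a}]}/K_0$, i.e.\ to $D(K,K_{[\mathfrak{a}]})$. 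The branching law of $(Sp(2),Sp(1)\times Sp(1))$ (Lemma~\ref{BranchingLawSp(2)/Sp(1)XSp(1)}) is the engine that converts the $Sp(2)\times Sp(m)$-data into $K_0$-data, and Lemma~\ref{D(K,K_0)m=2DIII} pins down $D(K,K_0)$ when $m=2$.

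For $m\geq 3$ I would exhibit a single eigenvalue strictly below $n$. Take $\Lambda=2y_1+2y_2=(2,2,0,\dots,0)\in D(K)$. Using the branching law of $(Sp(2),Sp(1)\times Sp(1))$ one checks $\Lambda\in D(K,K_0)$, with associated $\Lambda'=(2,2,0,\dots,0)\in D(K_2,K_0)$ and $\Lambda''=0\in D(K_1,K_0)$, so the eigenvalue formula \eqref{EigenvalueFormulaCIII} gives $-c_L=20$. The key observation is that $V_\Lambda$ is $\rho_{\mathrm{Sym}^2_0(\mathbf{C}^5)}\boxtimes I$, factoring through the natural surjection $Sp(2)\to SO(5)$; hence $(V_\Lambda)_{K_0}$ is the $SO(4)$-fixed line in $\mathrm{Sym}^2_0(\mathbf{C}^5)$, and checking that the extra generator of $K_{[\mathfrak{a}]}$ corresponds to $\mathrm{diag}(-1,1,-1,-1,-1)\in SO(5)$ and fixes this line shows $\Lambda\in D(K,K_{[\mathfrak{a}]})$. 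Since $20<8m-2$ for every $m\geq 3$, the first eigenvalue of $L$ is strictly less than $n$, so $L$ is not Hamiltonian stable.

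For $m=2$ (so $n=14$) I would carry out the finite enumeration. Using $-\mathcal{C}_L\geq-\tfrac12\mathcal{C}_{K/K_0}$ to bound $-c_{\tilde\Lambda}$, together with the eigenvalue formula \eqref{EigenvalueFormulaDIIIm=2} and the constraint $(q_1,q_2)=(q_3,q_4)$ from Lemma~\ref{D(K,K_0)m=2DIII}, one gets a short list of $\Lambda\in D(K,K_0)$ with $-c_L\leq 14$, namely $0$, $(1,1,0,0)$, $(0,0,1,1)$, $(1,0,1,0)$, $(1,1,1,1)$, $(1,1,2,0)$, $(2,0,1,1)$. For each I would identify the $K_{[\mathfrak{a}]}$-fixed part of the eigenspace: $(1,1,0,0)$ and $(0,0,1,1)$ are modelled on the standard $SO(5)$-module $\mathbf{C}^5$ (pulled back along $Sp(2)\to SO(5)$) and carry no nonzero $K_{[\mathfrak{a}]}$-fixed vector; $(1,0,1,0)$ realises the complexified isotropy representation $\mathrm{Ad}_{\mathfrak{p}}^{\mathbf{C}}$ and contributes nothing; $(1,1,1,1)\cong\mathbf{C}^5\boxtimes\mathbf{C}^5\cong M(5,\mathbf{C})$ does admit a line of $K_{[\mathfrak{a}]}$-fixed vectors, but it lies in the summand with eigenvalue $16>14$; and $(1,1,2,0)$, $(2,0,1,1)$, realised through $\mathbf{C}^5\boxtimes\mathrm{Ad}_{\mathfrak{sp}(2)}^{\mathbf{C}}$ using the quaternionic basis of $\mathfrak{sp}(2)$, each contribute a one-dimensional $K_{[\mathfrak{a}]}$-fixed subspace at eigenvalue exactly $14$. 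Hence the first eigenvalue of $L^{14}$ is $14=n$, so $L^{14}$ is Hamiltonian stable; and $n(L^{14})=\dim V_{(1,1,2,0)}+\dim V_{(2,0,1,1)}=50+50=100=\dim SO(16)-\dim(Sp(2)\times Sp(2))=n_{hk}(L^{14})$, so it is strictly Hamiltonian stable.

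The main obstacle is the $m=2$ analysis, and within it the determination of the $K_{[\mathfrak{a}]}$-fixed part of each low-lying eigenspace, which forces one into explicit models: the double cover $Sp(2)\to SO(5)$ and the induced $SO(4)$-picture for $\mathbf{C}^5$ and $\mathrm{Sym}^2_0(\mathbf{C}^5)$, the quaternionic description of $\mathfrak{sp}(2)$ to decompose $\mathrm{Ad}_{\mathfrak{sp}(2)}^{\mathbf{C}}$, and in every case the explicit action of the generator $Q$ of $\mathbf{Z}_4=K_{[\mathfrak{a}]}/K_0$. A delicate point is that $(1,1,1,1)$ \emph{does} possess $K_{[\mathfrak{a}]}$-invariant vectors, so one must track which isotypic piece they occupy and verify it gives eigenvalue $16$, not $14$; and one must confirm that $(1,1,2,0)$ and $(2,0,1,1)$ yield multiplicity exactly one at $14$ while no candidate produces an eigenvalue below $14$. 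Once these computations are in place, the theorem follows by comparing the first eigenvalue with $n=8m-2$ and, for $m=2$, comparing the multiplicity with $n_{hk}$.
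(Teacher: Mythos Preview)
Your proposal is correct and follows essentially the same route as the paper: for $m\geq 3$ you exhibit the same destabilising weight $\Lambda=(2,2,0,\dots,0)$ with eigenvalue $20<8m-2$ via the $\mathrm{Sym}^2_0(\mathbf{C}^5)$ model and the image of $Q$ in $SO(5)$, and for $m=2$ you run through the identical finite list of candidates, use the same explicit realisations ($\mathbf{C}^5$, $M(5,\mathbf{C})$, and the quaternionic basis of $\mathfrak{sp}(2)$), and match multiplicities to $n_{hk}$. The only cosmetic difference is your stated a priori bound $-\mathcal{C}_L\geq -\tfrac{1}{2}\mathcal{C}_{K/K_0}$, whereas the paper simply appeals directly to the eigenvalue formula~\eqref{EigenvalueFormulaDIIIm=2} to produce the list; either way the candidate set and the subsequent case analysis are the same.
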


\section{The case $(U,K)=(E_6, U(1)\cdot Spin(10))$}
\label{Sec_EIII}

In this case, $U=E_6$ and $K=U(1) \cdot Spin(10)$. Then $(U,K)$ is
of $BC_2$ type. For the sake of completeness, we first settle our
notations following \cite{Ozeki-TakeuchiII}, \cite{IchiroYokota09},
\cite{MIse76} and the references therein.

\subsection{Cayley algebra}

Let $\mathbf K$ be the real Cayley algebra and $\{ c_0=1, c_1,
\cdots, c_7\}$ the standard units of ${\mathbf K}$. They satisfy the
following relations (\cite{Ozeki-TakeuchiII}):

\begin{equation*}
\begin{split}
c_i c_{i+1}=-c_{i+1}c_i =c_{i+3}, & \quad  c_{i+1} c_{i+3}=-c_{i+3}c_{i+1}=c_i,\\
c_{i+3} c_{i}=-c_{i}c_{i+3}=c_{i+1}, & \quad c_{i}^2=-1 \,\,
\text{for}\,\, i\in {\mathbf Z}_7.
\end{split}
\end{equation*}
$\mathbf{K}$ is a noncommutative and nonassociative normed division
algebra with the conjugation $x\mapsto {\bar x}$ and the canonical
inner product $(\, , \, )$ defined respectively by
$$\overline{x_0+\sum_{i=1}^7 x_i c_i}=x_0-\sum_{i=1}^7 x_i c_i,
\quad\quad (\sum_{i=0}^7 x_i c_i, \sum_{i=0}^7 y_i c_i)=\sum_{i=0}^7
x_i y_i.$$ We extend the conjugation and the inner product $\mathbf
C$-linearly to the complexified algebra ${\mathbf K}^{\mathbf C}$ of
$\mathbf K$ and denote them by the same notions $x\mapsto {\bar x}$
and $(\, , \,)$ respectively.

\subsection{Exceptional Jordan algebra}

The exceptional Jordan algebra $H_3({\mathbf K})$ is defined as the
set
$$H_3({\mathbf K})=\{ u\in M_3({\mathbf K}) |  \bar{u}^{t} =u \},$$
with the Jordan product
$$u\circ v=\frac{1}{2} (uv+vu), \quad\quad \text{for}\quad u,v \in H_3({\mathbf K}).$$
The real dimension of $H_3({\mathbf K})$ is $27$ and a typical
element
\begin{equation}\label{eq:elementH_3(K)}
u=\left(
    \begin{array}{ccc}
      \xi_1 & x_3 & {\bar x}_2 \\
      {\bar x}_3 & \xi_2 & {x}_1 \\
      x_2 & {\bar x}_1 & \xi_3 \\
    \end{array}
  \right),
  \quad\quad \xi_i\in {\mathbf R}, x_i\in {\mathbf K}
\end{equation}
of $H_3({\mathbf K})$ will be denoted by
$$
u=\xi_1 e_1 +\xi_2 e_2 +\xi_3 e_3 +x_1 u_1 +x_2 u_2 +x_3 u_3.
$$
In $H_3(\mathbf{K})$, we define the trace $\mathrm{tr}(u)$ and an
inner product $(u,v)$ respectively by
$$\mathrm{tr}(u)=\xi_1+\xi_2+\xi_3, \quad\quad (u,v):=\mathrm{tr}(u\circ v).$$
for each $u,v\in H_3(\mathbf{K})$. Moreover, the Freudenthal product
$u\times v$ is defined by
\begin{equation*}
u\times v:= \frac{1}{2}(2u\circ v-\mathrm{tr}(u)v-\mathrm{tr}(v)u
+(\mathrm{tr}(u)\mathrm{tr}(v)-(u,v))\mathrm{I}_{3}),
\end{equation*}
where $\mathrm{I}_3$ is the $3$-order identity matrix, and
a trilinear form $(u,v,w)$ and the determinant $\det u$ are defined
respectively  by
$$(u,v,w)=(u,v\times w), \quad \det u=\frac{1}{3}(u,u,u).$$

Put
$$
SH_3({\mathbf K})=\{ u \in M_3({\mathbf K})|  \bar{u}^{t} =-u,
\mathrm{tr}(u)=0\}.
$$
An element $u\in SH_3({\mathbf K})$ of the form
\begin{equation}\label{eq:SH}
u=\left(
    \begin{array}{ccc}
      z_1 & x_3 & -{\bar x}_2 \\
      -{\bar x}_3 & z_2 & x_1 \\
      x_2 & -{\bar x}_1 & z_3 \\
    \end{array}
  \right),
\quad z_i, x_i\in {\mathbf K}, {\bar z}_i=-z_i, \Sigma z_i=0
\end{equation}
is denoted by
$$
u=z_1 e_1+z_2 e_2 + z_3 e_3 +x_1 {\bar u}_1 +x_2 {\bar u}_2 +x_3
{\bar u}_3.
$$
Now we define two injective linear maps $R: H_3({\mathbf
K})\rightarrow \mathfrak{gl}(H_3({\mathbf K}))$ and $D:
SH_3({\mathbf K}) \rightarrow \mathfrak{gl}(H_3({\mathbf K}))$
respectively by
\begin{equation}\label{eq:defR&D}
\begin{split}
   R(u)v&=u\circ v = \frac{1}{2} (uv+vu), \quad \hbox{ for }\, u,v\in H_3({\mathbf K}), \\
    D(u)v&=\frac{1}{2}[u,v]=\frac{1}{2}(uv-vu), \quad \hbox{ for }\, u\in SH_3({\mathbf K}),
v\in H_3({\mathbf K}).
\end{split}
\end{equation}
Denote by $\mathfrak{D}$ and $\mathfrak{R}$ the images of $D$ and
$R$ in $\mathfrak{gl}(H_3(\mathbf{K}))$. Introduce some subspaces of
$\mathfrak{D}$ and $\mathfrak{R}$ in the following:
\begin{equation*}
\begin{split}
{\mathfrak D}_0&=\{ \delta \in \mathfrak{D}| \delta (e_i)=0 \, (i=1,2,3) \},\\
{\mathfrak D}_i&=\{ D(x{\bar u}_i)|x\in {\mathbf K}\} \quad \mbox{for}\, i=1,2,3,\\
{\mathfrak R}_0&=\{ R(\sum \xi_i e_i)| \xi_i \in {\mathbf R}, \sum \xi_i=0 \},\\
{\mathfrak R}_i&=\{ R(xu_i)|x\in {\mathbf K}\} \quad \mbox{for}\,
i=1,2,3.
\end{split}
\end{equation*}
Remark that $\dim \mathfrak{D}_0=28$, $\dim {\mathfrak D}_1=\dim
{\mathfrak D}_2=\dim {\mathfrak D}_3=8$, $\dim {\mathfrak R}_0=2$
and $\dim {\mathfrak R}_1=\dim {\mathfrak R}_2=\dim {\mathfrak
R}_3=8$. Moreover, it is easy to know that ${\mathfrak D}_0$ is a
subalgebra of $\mathfrak{gl}(H_3({\mathbf K}))$ generated by the set
$\{D(\Sigma z_i e_i)| z_i\in {\mathbf K}, {\bar z}_i =-z_i, \Sigma
z_i =0\}$. In fact, $\mathfrak{D}_0$ is isomorphic to the Lie
algebra $\mathfrak{o}(8)$ and its basis can be chosen as $\{D_{i,r}
(1\leq r\leq 7), D_{i,pq} (1\leq p <q\leq 7)\}$ for $i=1, 2$ or $3$
(\cite{Chevalley-Schafer}, \cite{MIse76}, \cite{IchiroYokota09}). We
now explain in details by using Ise's notions (\cite{MIse76}, p.82).
Put
$$D_{i,r}=D(c_r(-e_j+e_k)), \quad (1\leq i \leq 3, 1\leq r \leq 7),$$
and
\begin{equation}\label{eq:D_{i,pq}}
D_{i,pq}=[D_{i,p}, D_{i,q}], \quad (1\leq i \leq 3, 1\leq p,q \leq
7),
\end{equation}
where $\{i,j,k\}$ is a cyclic permutation of $\{1,2,3\}$. Then from
\begin{equation*}
D(\sum_{i=1}^3 z_i e_i)(v) =\frac{1}{2}\sum_{\{i,j,k\}} (z_jx_i -x_i
z_k )u_i,
\end{equation*}
we can obtain
\begin{equation*}
\left\{
  \begin{array}{l}
    D_{i,r}(xu_i)= \left\{
                     \begin{array}{ll}
                       -c_r u_i, & \hbox{if}\, x=c_0 \\
                       c_0 u_i, & \hbox{if}\, x=c_r \\
                       0, & \hbox{if}\, x=c_q (q\neq r),
                     \end{array}
                   \right.
 \\
\\
   D_{i,r}(xu_j)=\frac{1}{2} (c_r x)u_j,  \\
\\
    D_{i,r}(xu_k)=\frac{1}{2} (xc_r)u_k,
  \end{array}
\right.
\end{equation*}
and
\begin{equation*}
\left\{
  \begin{array}{l}
    D_{i,pq}(xu_i)= \left\{
                     \begin{array}{ll}
                       c_q u_i, & \hbox{if}\, x=c_p \\
                       -c_p u_i, & \hbox{if}\, x=c_q \\
                       0, & \hbox{if}\, x=c_r (r \leq 0, \neq p,q),
                     \end{array}
                   \right.
 \\
\\
   D_{i,pq}(xu_j)=\frac{1}{2} \{c_p(c_q x)\}u_j,  \\
\\
    D_{i,pq}(xu_k)=\frac{1}{2} \{(xc_q)c_p\}u_k.
  \end{array}
\right.
\end{equation*}
These mean that every $D_{i,r}, D_{i,pq}$ leave ${\mathfrak T}_i=\{x
u_i| x\in {\mathbf K}\}$ invariant $(1 \leq i \leq 3, 1\leq p,q,r
\leq 7 )$ and identifying ${\mathfrak T}_i$ with $\mathbf K$, it
represents a skew-symmetric matrix with respect to the basis $\{c_0,
c_1, \cdots, c_7\}$; namely $D_{i,r}=E_{0r}-E_{r0}$ and
$D_{i,pq}=E_{qp}-E_{pq}$, where $E_{pq}$ denotes the $8\times 8$
matrix with all $0$-components except $(p,q)$-component, $1$.
Moreover,
\begin{eqnarray}
\left[ D_{i,r}, D_{i,pq} \right]&=& D_{i,p}\delta_{qr}-D_{i,q}\delta_{rp},\label{eq:D01}\\
\left[ D_{i,pq}, D_{i,rs}\right]&=&
D_{i,pr}\delta_{sq}+D_{i,qs}\delta_{pr}+D_{i,rq}\delta_{sp}+D_{i,sp}\delta_{rq},
\label{eq:D02}
\end{eqnarray}
where $1\leq i \leq 3$ and $1\leq p,q,r,s \leq 7$. Particulary, we
have
$$
[D_{i,r}, D_{i,pq}]=0, \quad [ D_{i,pq}, D_{i,rs}]=0,
$$
if $p,q,r,s$ are all different each other. Denote the real linear
space spanned by all $D_{i,r}, D_{i,pq}$ $(1\leq p,q,r \leq 7)$ by
${\mathfrak D}_{i,0}$. Then all ${\mathfrak D}_{i,0} (1\leq i \leq
3)$ are isomorphic to each other, and they are isomorphic to the Lie
algebra ${\mathfrak o}(8)$. We shall use ${\mathfrak D}_0={\mathfrak
D}_{1,0}$ in the next.

Let
$$
H_3(\mathbf{K})^{\mathbf{C}}:= H_3(\mathbf{K})+
\sqrt{-1}H_3(\mathbf{K})
$$
be the complexification of $H_3(\mathbf{K})$.
Then there are two complex conjugations on
$H_3(\mathbf{K})^{\mathbf{C}}$, namely,
\begin{equation*}
\overline{u_1+ \sqrt{-1}u_2}=\bar{u}_1 + \sqrt{-1}\bar{u}_2, \quad
\tau(u_1+\sqrt{-1} u_2)=u_1- \sqrt{-1} u_2,
\end{equation*}
where $u_1, u_2 \in H_3(\mathbf{K})$.
Then $H_3({\mathbf K})^{\mathbf C}$ is canonically identified with
$$
H_3({\mathbf K}^{\mathbf C})=\{ u\in M_3({\mathbf K}^{\mathbf C})|
\bar{u}^{t}=u \}.
$$
An element $u\in H_3({\mathbf K}^{\mathbf C})$ of the form
\eqref{eq:elementH_3(K)}, with $\xi_i\in {\mathbf C}$, $x_i\in
{\mathbf K}^{\mathbf C}$, is still denoted by $u=\xi_1 e_1+\xi_2 e_2
+\xi_3 e_3 +x_1 u_1 + x_2 u_2 + x_3 u_3$. The standard Hermitian
inner product $\langle{\ },{\ }\rangle$ of
$H_3(\mathbf{K}^{\mathbf{C}})$ is defined by
\begin{equation*}
\langle{u,v}\rangle:=(\tau u,v).
\end{equation*}
Meanwhile, the complexification $SH_3({\mathbf K})^{\mathbf C}$ of
$SH_3({\mathbf K})$ is identified with
$$SH_3({\mathbf K}^{\mathbf C})=\{u\in M_3(\mathbf{K}^{\mathbf C})| \bar{u}^t=-u, \mathrm{tr}(u)=0\},$$
whose element $u$ of the form \eqref{eq:SH}, with $z_i, x_i \in
\mathbf{K}^{\mathbf C}$, is still denoted by $ u=z_1 e_1+z_2 e_2 +
z_3 e_3 +x_1 {\bar u}_1 +x_2 {\bar u}_2 +x_3 {\bar u}_3 $.
Then  $D(u)\in \mathfrak{gl}(H_3({\mathbf K}^{\mathbf C}))$ for
$u\in SH_3({\mathbf K}^{\mathbf C})$ and $R(u)\in
\mathfrak{gl}(H_3({\mathbf K}^{\mathbf C}))$ for $u\in H_3({\mathbf
K}^{\mathbf C})$ can be defined by the same formula as
\eqref{eq:defR&D}.

\subsection{Basic formulas in ${\mathfrak e}_6$}

\begin{lem}\label{lem:27dimrep}
For $v=\xi_1 e_1 +\xi_2 e_2 +\xi_3 e_3 +x_1 u_1 +x_2 u_2 +x_3 u_3
\in H_3({\mathbf K}^{\mathbf C})$, we have
\begin{equation*}
\begin{split}
R(\sum \eta_l e_l)v&=\eta_1\xi_1 e_1 +\eta_2\xi_2 e_2 +\eta_3\xi_3 e_3+\frac{1}{2}(\eta_2+\eta_3)x_1 u_1\\
& \quad\quad\quad +\frac{1}{2}(\eta_3+\eta_1)x_2 u_2+\frac{1}{2}(\eta_1+\eta_2)x_3 u_3,\\
D(\sum z_l e_l)v&=\frac{1}{2}(z_2x_1 - x_1z_3)u_1 +\frac{1}{2}(z_3
x_2 - x_2 z_1)u_2
+\frac{1}{2}(z_1 x_3 - x_3 z_2)u_3,\\
\end{split}
\end{equation*}
\begin{equation*}
\begin{split}
D(x{\bar u}_i)v&= (x,x_i)(e_j-e_k)+\frac{1}{2}(\xi_k - \xi_j)x u_i
- \frac{1}{2}({\bar x}{\bar x}_k)u_j+\frac{1}{2}({\bar x}_j {\bar x})u_k,\\
R(xu_i)v&=(x,x_i)(e_j+e_k)+\frac{1}{2}(\xi_j + \xi_k) x u_i
+\frac{1}{2}{\bar x}{\bar x}_k u_j +\frac{1}{2}{\bar x}_j {\bar x}
u_k,
\end{split}
\end{equation*}
where $\{i,j,k\}$ is a cyclic permutation of $\{1,2,3\}$.
\end{lem}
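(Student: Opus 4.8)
The plan is to verify all four identities by direct computation from the definitions \eqref{eq:defR&D}, realizing every element as an explicit $3\times 3$ matrix over $\mathbf{K}^{\mathbf{C}}$ and computing the relevant matrix products entrywise with the Cayley multiplication. Concretely, I would write $v$ as in \eqref{eq:elementH_3(K)},
\[
v=\begin{pmatrix} \xi_1 & x_3 & \bar x_2\\ \bar x_3 & \xi_2 & x_1\\ x_2 & \bar x_1 & \xi_3\end{pmatrix},
\]
realize $\sum_l\eta_l e_l$ and $\sum_l z_l e_l$ as the diagonal matrices $\mathrm{diag}(\eta_1,\eta_2,\eta_3)$ and $\mathrm{diag}(z_1,z_2,z_3)$, and, following \eqref{eq:elementH_3(K)} and \eqref{eq:SH}, realize $xu_i$ and $x\bar u_i$ as the matrices whose only nonzero entries sit in positions $(j,k)$ and $(k,j)$, equal to $x,\bar x$ for $xu_i$ and to $x,-\bar x$ for $x\bar u_i$, where $\{i,j,k\}$ is the cyclic permutation of $\{1,2,3\}$. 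Then $R(u)v=\tfrac12(uv+vu)$ and $D(u)v=\tfrac12(uv-vu)$ are simply read off in the basis $\{e_1,e_2,e_3,u_1,u_2,u_3\}$ (respectively with $\bar u_i$ in place of $u_i$).

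The first two identities are immediate. Since $\sum_l\eta_l e_l$ and $\sum_l z_l e_l$ are diagonal, the $(i,j)$-entry of $(\sum_l\eta_l e_l)v$ is $\eta_i v_{ij}$ and that of $v(\sum_l\eta_l e_l)$ is $v_{ij}\eta_j$; as the $\eta_l$ and the $\xi_l$ are central scalars, the symmetrized product has $(i,j)$-entry $\tfrac12(\eta_i+\eta_j)v_{ij}$, which is exactly the claimed formula for $R(\sum_l\eta_l e_l)v$. For $D(\sum_l z_l e_l)v$ the $(i,j)$-entry of the commutator is $z_iv_{ij}-v_{ij}z_j$: the diagonal vanishes because $\xi_iz_i=z_i\xi_i$, and the off-diagonal entries are the Cayley products $z_2x_1-x_1z_3$, $z_3x_2-x_2z_1$, $z_1x_3-x_3z_2$ in positions $(2,3),(3,1),(1,2)$, which after dividing by $2$ gives the stated expression.

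For $D(x\bar u_i)v$ and $R(xu_i)v$ I would fix $i=1$, the cases $i=2,3$ following by cyclic relabelling, and compute the four products $(x\bar u_1)v$, $v(x\bar u_1)$, $(xu_1)v$, $v(xu_1)$. Since $x\bar u_1$ and $xu_1$ have nonzero entries only in positions $(2,3)$ and $(3,2)$, every entry of each of these products is a single binary Cayley product $u_{ik}v_{kj}$, so no parenthesization issue arises. Collecting terms, the $(2,2)$- and $(3,3)$-entries of $\tfrac12[x\bar u_1,v]$ come out as $\tfrac12(x\bar x_1+x_1\bar x)=(x,x_1)$ and $-\tfrac12(\bar xx_1+\bar x_1x)=-(x,x_1)$ by the polarization identities $x\bar y+y\bar x=\bar xy+\bar yx=2(x,y)$; the $(2,3)$-entry is $\tfrac12(\xi_3-\xi_2)x$; and the $(3,1)$- and $(2,1)$-entries, after using $\overline{xy}=\bar y\bar x$ to confirm Hermiticity, yield the coefficients $-\tfrac12\bar x\bar x_3$ of $u_2$ and $\tfrac12\bar x_2\bar x$ of $u_3$. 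This is exactly the asserted formula for $D(x\bar u_1)v$. The computation for $R(xu_1)v$ is word-for-word the same except that the $(3,2)$-entry of $xu_1$ and the second summand are added rather than subtracted, which turns $e_j-e_k$ into $e_j+e_k$, turns $\tfrac12(\xi_k-\xi_j)$ into $\tfrac12(\xi_j+\xi_k)$, and removes the minus sign on the $u_j$-term.

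The argument is thus entirely mechanical; the only point needing care is the bookkeeping of conjugates — keeping straight which off-diagonal slot carries $x$ and which carries $\pm\bar x$, as dictated by \eqref{eq:elementH_3(K)} and \eqref{eq:SH}, and applying $\overline{xy}=\bar y\bar x$ and the polarization identities consistently. As a safeguard, the formulas can be cross-checked by specializing $x=c_r$ and comparing with the explicit coordinate action of $D_{i,r}$ and $D_{i,pq}$ recorded above, or against the computations in \cite{Ozeki-TakeuchiII}, \cite{MIse76} and \cite{IchiroYokota09}.
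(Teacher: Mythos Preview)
Your proposal is correct and is exactly the computation the paper implicitly relies on; the lemma is stated without proof in the paper, as a ``basic formula'' to be verified directly from the definitions \eqref{eq:defR&D}, and your entrywise matrix calculation (with the polarization identity $x\bar y+y\bar x=2(x,y)$ and $\overline{xy}=\bar y\,\bar x$) is precisely that verification.
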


The relations \eqref{eq:D_{i,pq}}, \eqref{eq:D01}, \eqref{eq:D02}
and the following list  give commutation rules for ${\mathfrak
e}_6^{\mathbf C}$. Here, $x,y, z_i \in {\mathbf K}^{\mathbf C}$,
${\bar z}_i=-z_i$ for $i=1,2,3$, $\sum_i z_i=0$,
 and $\xi_1,\xi_2,\xi_3\in {\mathbf C}$
with $\sum_l \xi_l =0$.
 In formulae \eqref{eq:RiRj} ~ \eqref{eq:RiRi&DiDi},
$(i,j,k)$ is a cyclic permutation of $(1,2,3)$. In formulae
\eqref{eq:RiRiRi} and \eqref{eq:DiDiDi}, $i=1,2,3$.

\begin{eqnarray}
&&[ R(xu_i), R(yu_j) ] = -(1/2) D({\overline{xy}} \, {\bar u}_k), \label{eq:RiRj}\\
&&\left[ R(xu_i), D(yu_j)\right]= [D(x \bar{u}_i),R(y u_j)]=(1/2) R({\overline{xy}}\, \bar{u}_k),\label{eq:RiDj}\\
&&\left[ D(x \bar{u}_i), D(y \bar{u}_j)\right]=-(1/2) D( {\overline{xy}}{\bar u}_k),\label{eq:DiDj}\\
&&\left[ D(x \bar{u}_i), R(y \bar{u}_i)\right]= (x,y) R( e_j -e_k),\label{eq:DiRi}\\
&&\left[ R(\sum \, \xi_l e_l), R(x {\bar u}_i)\right]= (1/2)(\xi_j -\xi_k) D(x {\bar u}_i),\label{eq:R0Ri}\\
&&\left[ R(\sum \,\xi_l e_l), D(x {\bar u}_i)\right]= (1/2)(\xi_j -\xi_k) R(x {\bar u}_i),\label{eq:R0Di}\\
&&\left[D(\sum z_l e_l),D(x{\bar u}_i)\right]=(1/2)D((z_j x -x z_k){\bar u}_i),\label{eq:D0Di}\\
&&\left[D(\sum z_l e_l),R(x{\bar u}_i)\right]=(1/2)R((z_j x -x z_k) u_i),\label{eq:D0Ri}\\
&&\left[ R(x u_i),R(y u_i)\right]=-[D(x{\bar u}_i),D(y {\bar u}_i)]\label{eq:RiRi&DiDi}\\
&&\quad\quad \quad\quad =D((\frac{y+{\bar y}}{2}\frac{x-{\bar x}}{2}-\frac{x+{\bar x}}{2}\frac{y-{\bar y}}{2})(e_j-e_k))\nonumber\\
&&\quad\quad\quad\quad\quad -[D(\frac{x-{\bar x}}{2}(e_j-e_k)),D(\frac{y-{\bar y}}{2} (e_j-e_k))],\nonumber\\
&&\left[ {\mathfrak R}_0^{\mathbf C}, {\mathfrak R}_0^{\mathbf C}+
{\mathfrak D}_0^{\mathbf C}\right]=\{0\},
\label{eq:R0D0}\\
&&\left[ R(x u_i), [R(xu_i),R(yu_i)]\right]= R(((x,x)y-(x,y)x)u_i),\label{eq:RiRiRi}
\end{eqnarray}
\begin{eqnarray}
&&\left[ D(x {\bar u}_i), [D(x {\bar u}_i), D(y {\bar u}_i)]\right]=
D(((x,y)x-(x,x)y){\bar u}_i).\label{eq:DiDiDi}
\end{eqnarray}

Remark that the Killing-Cartan form $B$ of ${\mathfrak e}_6^{\mathbf
C}$ is given by (\cite{MIse76}, p.88 or \cite{IchiroYokota09}, p.74)
\begin{equation}\label{lem:Killingform}
B(u,v)=4\mathrm{tr}(uv),
\end{equation}
for each $u,v\in {\mathfrak e}_6^{\mathbf C}\subset
\mathfrak{gl}(H_3(\mathbf K)^{\mathbf C})$.

\subsection{Realization of $E_6/(U(1)\cdot Spin(10))$}
We recall and combine the settings given in \cite{IchiroYokota09}
and \cite{MIse76}. It is known  that $F_4$ is defined to be the
automorphism group of the Jordan algebra $H_3(\mathbf{K})$:
\begin{equation*}
\begin{split}
F_{4}&:=\{\alpha\in {GL({H_3(\mathbf{K})})} {\ }\vert{\ } \alpha
(u\circ v) = \alpha u\circ \alpha v
\}\\
&=\{\alpha\in {GL({H_3(\mathbf{K})})} {\ }\vert{\ } \det (\alpha u)=
\det u, (\alpha u, \alpha v)= (u, v) \}.
\end{split}
\end{equation*}
Its Lie algebra $\mathfrak{f}_4$ is thus given by
\begin{equation*}
\mathfrak{f}_4:=\{ \delta\in  {\mathfrak{gl}({H_3(\mathbf{K})})} {\
}\vert{\ } \delta(u\circ v)=\delta u\circ v+u\circ \delta v \},
\end{equation*}
which is isomorphic to
$\mathfrak{D}=\mathfrak{D}_0+\mathfrak{D}_1+\mathfrak{D}_2+\mathfrak{D}_3$.
We are interested in the following two subgroups of $F_4$:
\begin{equation*}
\begin{split}
(F_4)_{e_1}&:=\{\alpha \in{F_{4}}{\ }\vert{\ } \alpha e_{1}=e_{1}\}\cong Spin(9), \\
(F_4)_{e_1, e_2, e_3}&:=\{\alpha \in{F_{4}}{\ }\vert{\ }\alpha
e_{i}=e_{i}{\ }(i=1,2,3)\}\cong Spin(8).
\end{split}
\end{equation*}
Note that the Lie algebra of $(F_4)_{e_1,e_2,e_3}$ is
$\mathfrak{D}_0$.

The groups $E_{6}^{\mathbf C}$ and $E_6$ are defined by
\begin{equation*}
\begin{split}
E_{6}^{\mathbf C}&:= \{\alpha \in{GL_{\mathbf
C}({H_3(\mathbf{K})}^{\mathbf C})} {\ }\vert{\ } \det(\alpha
u)=\det(u)\},\\
E_{6}&:=\{\alpha \in{GL_{\mathbf C}({H_3(\mathbf{K})}^{\mathbf C})}
{\ }\vert{\ } \det (\alpha u)=\det u, \langle{\alpha u, \alpha
v}\rangle=\langle{u,v}\rangle \},
\end{split}
\end{equation*}
respectively. Hence $F_4$ is a subgroup of $E_6$. The Lie algebras
of $E_6^{\mathbf C}$ and $E_6$ are given respectively by
\begin{equation*}
\begin{split}
{\mathfrak e}_{6}^{\mathbf C}&:= \{\phi\in{\mathfrak{gl}_{\mathbf
C}({H_3(\mathbf{K})}^{\mathbf C})} {\ }\vert{\ } (\phi u,u,u)=0
\}, \\
{\mathfrak e}_{6}&:=\{\phi\in{\mathfrak{gl}_{\mathbf C}(
{H_3(\mathbf{K})}^{\mathbf C}}) {\ }\vert{\ } (\phi u, u, u)=0,
\langle{\phi u, v}\rangle+\langle{u,\phi v}\rangle=0 \}.
\end{split}
\end{equation*}
It can be shown (\cite{IchiroYokota09}, p.68) that any element
$\phi\in{\mathfrak e}_{6}^{\mathbf C}$ is uniquely expressed as
\begin{equation*}
\phi=\delta+ \varsigma, \quad
\delta\in {\mathfrak D}^{\mathbf C}, \quad \varsigma
\in \mathfrak{R}^{\mathbf C}, 
\end{equation*}
where ${\mathfrak D}^{\mathbf C}$ and ${\mathfrak R}^{\mathbf C}$
denote the complexifications of $\mathfrak D$ and $\mathfrak R$
respectively. So we get the so-called Chevalley-Schafer model
(\cite{Chevalley-Schafer}) of $\mathfrak{e}_6^{\mathbf C}$:
${\mathfrak e}_6^{\mathbf C}={\mathfrak D}^{\mathbf C}+{\mathfrak
R}^{\mathbf C}$ as a subalgebra of $\mathfrak{gl}(H_3({\mathbf
K})^{\mathbf C})$. The inclusion $\phi: {\mathfrak e}_6^{\mathbf C}
\subset \mathfrak{gl}(H_3({\mathbf K})^{\mathbf C})$ is a
$27$-dimensional irreducible representation of ${\mathfrak
e}_6^{\mathbf C}$.
Furthermore, any element $\phi\in{\mathfrak e}_{6}$ is uniquely
expressed as
\begin{equation*}
\phi =\delta +\sqrt{-1} \varsigma, \quad \delta\in{\mathfrak D},
\quad \varsigma \in \mathfrak{R}.
\end{equation*}
Equivalently, $\mathfrak{e}_6:= \mathfrak{D} +
\sqrt{-1}\mathfrak{R}$.

Consider a $\mathbf{C}$-linear transformation $\sigma$ of
$H_3(\mathbf{K})^{\mathbf{C}}$ defined by
\begin{equation*}
\sigma\left(
        \begin{array}{ccc}
          \xi_1 & x_3 & \bar{x}_2 \\
          \bar{x}_3 & \xi_2 & x_1 \\
          x_2 & \bar{x}_1 & \xi_3 \\
        \end{array}
      \right)
=\left(
        \begin{array}{ccc}
          \xi_1 & -x_3 & -\bar{x}_2 \\
          -\bar{x}_3 & \xi_2 & x_1 \\
          -x_2 & \bar{x}_1 & \xi_3 \\
        \end{array}
      \right).
\end{equation*}
Then $\sigma\in E_6$ and $\sigma^2=1$.
$\sigma$ induces an involutive automorphism of $E_6$ by $\alpha
\mapsto \sigma \alpha \sigma$, which we also still denote it by
$\sigma$.
In order to investigate the subgroup $(E_6)^{\sigma}$ of all fixed
points of $\sigma$,
\begin{equation}
(E_6)^{\sigma}=\{\alpha\in E_6 \mid \sigma\alpha=\alpha \},
\end{equation}
consider two subgroups
$$(E_6)_{e_1}=\{\alpha\in E_6 {\ } \vert {\ } \alpha e_1=e_1\}\cong Spin(10)$$
and
\begin{equation}\label{eq:u(1)}
\begin{split}
U(1)&=\{
\phi(\theta)
\in GL_{\mathbf{C}}(H_3(\mathbf{K})^{\mathbf C})
\mid
\theta=e^{\sqrt{-1}t/2}, t\in\mathbf{R}\},
\end{split}
\end{equation}
where
$\phi(\theta):=\exp(t\sqrt{-1} R(2e_1-e_2-e_3))
\in GL_{\mathbf{C}}(H_3(\mathbf{K})^{\mathbf C})$
and
\begin{equation}\label{defU(1)}
\phi(\theta)\left(
        \begin{array}{ccc}
          \xi_1 & x_3 & \bar{x}_2 \\
          \bar{x}_3 & \xi_2 & x_1 \\
          x_2 & \bar{x}_1 & \xi_3 \\
        \end{array}
      \right)
=\left(
        \begin{array}{ccc}
          \theta^4 \xi_1 & \theta x_3 & \theta\bar{x}_2 \\
          \theta \bar{x}_3 & \theta^{-2}\xi_2 & \theta^{-2}x_1 \\
          \theta x_2 & \theta^{-2}\bar{x}_1 & \theta^{-2}\xi_3 \\
        \end{array}
      \right).
\end{equation}
Here the subgroups $U(1)$ and $Spin(10)$ of $(E_6)^{\sigma}$ are
elementwise commutative.
Define a mapping
$$
p: \tilde{K}=U(1)\times Spin(10) \ni
(\theta, \alpha)\longmapsto \phi(\theta)\alpha\in
K=(E)^{\sigma},
$$
which is a surjective Lie group homomorphism.
Since
\begin{equation*}
U(1)\cap Spin(10)= \{1=\phi(1),\phi(-1),
\phi(\sqrt{-1}),\phi(-\sqrt{-1})\},
\end{equation*}
we have
$
\mathrm{Ker}(p)
 =
\{(1,\phi(1)),(-1,\phi(-1)),(\sqrt{-1},\phi(-\sqrt{-1})),
(-\sqrt{-1},\phi(\sqrt{-1}))\}$,
which is isomorphic to
${\mathbf Z}_{4}.$
Thus
$$
K=(E_6)^\sigma=\tilde{K}/\mathbf{Z}_4 = (U(1)\times Spin(10))/\mathbf{Z}_4,$$
and $U/K=E_6/(U(1)\cdot Spin(10))$.
Correspondingly,
\begin{equation*}
\begin{split}
{\mathfrak k}&=({\mathfrak e}_{6})_{\sigma} =
\{\phi\in{\mathfrak e}_{6}{\ }\vert{\ }\sigma_{\ast}\phi
=\phi\}\\
&=({\mathfrak e}_{6})_{e_{1}}+{\mathbf R}\sqrt{-1}R(2e_1-e_2-e_3).
\end{split}
\end{equation*}
Since for any $\phi\in \mathfrak{e}_6$
there exist $u \in
SH_3(\mathbf{K})$ and $v\in H_3(\mathbf{K})$ such that
\begin{equation*}
\begin{split}
\phi e_{1}&=D(u)(e_{1})+ \sqrt{-1}R(v)(e_{1}), \\
\end{split}
\end{equation*}
it holds that
$\phi e_{1}=0$ if and only if
\begin{equation*}
u=z_1e_1+z_2e_2+z_3e_3+a_1\bar{u}_1 \in SH_3(\mathbf{K}),{\ }
v=\xi_2 e_2+\xi_3 e_3+ x_1 u_1 \in H_3(\mathbf{K}),
\end{equation*}
where $a_1, x_1\in \mathbf{K}$, $\xi_{2},\xi_{3}\in{\mathbf R}$ with
$\xi_{2}+\xi_{3}=0$. Hence
\begin{equation*}
\begin{split}
({\mathfrak e}_{6})_{e_{1}} &:=\{\phi\in{\mathfrak e}_{6}{\ }\vert{\
}
\phi e_{1}=0\}\\
&= \mathfrak{D}_0 +\mathfrak{D}_1 +{\mathbf
R}\sqrt{-1}R(e_{2}-e_{3}) +\sqrt{-1}\mathfrak{R}_1
\\
&\cong
{\mathfrak o}(10)
\end{split}
\end{equation*}
Therefore, we have the Cartan decomposition of
a compact simple Lie algebra $\mathfrak{u}=\mathfrak{e}_6$ of type EIII:
\begin{equation*}
\begin{split}
\mathfrak{u}&={\mathfrak e}_{6}
=\mathfrak{D}+\sqrt{-1}\mathfrak{R},\\
{\mathfrak k}&=(\mathfrak{e}_6)_{\sigma}=\mathfrak{D}_0+\mathfrak{D}_1+\sqrt{-1}\mathfrak{R}_0+\sqrt{-1}\mathfrak{R}_1,\\
{\mathfrak p}&=(\mathfrak
e)_{-\sigma}=\mathfrak{D}_2+\mathfrak{D}_3+\sqrt{-1}\mathfrak{R}_2+\sqrt{-1}\mathfrak{R}_3,
\end{split}
\end{equation*}
where $\mathfrak k$ is isomorphic to
${\mathfrak u}(1)+{\mathfrak o}(10)$,
$$
[{\mathfrak k}, {\mathfrak k}]={\mathfrak D}_0 + {\mathfrak D}_1
+\sqrt{-1} {\mathbf R} R(e_2-e_3)
+\sqrt{-1} {\mathfrak R}_1=(\mathfrak{e}_6)_{e_1}
$$
is isomorphic to
${\mathfrak o}(10)$ and the center of $\mathfrak k$ is spanned by
$$
Z=\sqrt{-1} R(2e_1-e_2-e_3).
$$

On the other hand,
a compact Hermitian symmetric space of type EIII
can be defined by (\cite[p74-75]{Abe-Yokota97})
\begin{equation*}
\mathrm{EIII} =\{u\in{H_3(\mathbf{K})}^{\mathbf C}
\mid
u\times u=0, u\not=0 \} /{\mathbf C}^{\ast}
\subset P({H_3(\mathbf{K})}^{\mathbf C}),
\end{equation*}
which is considered as a compact complex submanifold embedded in
a complex projective space ${\mathbf C}P^{26}$.
Since $E_6$ acts transitively on $\mathrm{EIII}$ and the isotropy subgroup of
$E_6$ at $o=[e_1]$ is $(E_6)^\sigma$, we know that
$\mathrm{EIII}\cong E_6/(E_6)^{\sigma}=E_{6}/(U(1)\cdot Spin(10))$.
The tangent vector space $T_{o}(U/K)$ at $o$
can be identified with a vector subspace
\begin{equation*}
\begin{split}
T_{o}(\mathrm{EIII})&
\cong
\{u\in {H_3(\mathbf{K})}^{\mathbf C} \mid
u\times e_{1}=0,
\langle{u, e_{1}}\rangle=0\}  \\
&
=
\{x_{2}u_2 + x_{3} u_3 \mid
x_{2},x_{3}
\in{\mathbf{K}}^{\mathbf C}\}.
\end{split}
\end{equation*}

The differential of the natural projection $p:U=E_{6}\rightarrow
U/K=\mathrm{EIII}$ induces a linear isomorphism $p_{\ast}:{\mathfrak
p}\rightarrow T_{o}(\mathrm{EIII})$. Then
$p_{\ast}(\phi)=\phi(e_{1})$ and
\begin{equation}\label{eq:p_*}
\begin{split}
&p_{\ast}\Bigl( 2(D(x_{2}\bar{u}_2)- D(x_{3}\bar{u}_3))
+2\sqrt{-1}(R(x^{\prime}_{2}u_2)+ R(x^{\prime}_{3} u_3))\Bigr)\\
=&\, (x_{2}+\sqrt{-1}x^{\prime}_{2})u_2+(x_{3}+\sqrt{-1}x^{\prime}_{3})u_3.
\end{split}
\end{equation}

\subsection{Restricted root systems of EIII}
Define $H_1, H_2 \in \mathfrak{p}$ by
\begin{equation*}
\begin{split}
H_1&= D\bar{u}_2+\sqrt{-1} R(c_4 u_2),\\
H_2&= D\bar{u}_2-\sqrt{-1} R(c_4 u_2).
\end{split}
\end{equation*}
Then by \eqref{eq:DiRi}, $[H_1, H_2]=0$. Hence
\begin{equation}\label{eq:a}
\mathfrak{a}=\{H(\xi_1, \xi_2)=\xi_1 H_1 +\xi_2 H_2 | \xi_1, \xi_2
\in \mathbf{R} \}
\end{equation}
is a maximal abelian subalgebra in $\mathfrak{p}$. Remark that this
maximal abelian subalgebra $\mathfrak a$ is different from the one
given by M.~Ise and used in \cite{Ozeki-TakeuchiII}. Then by direct
computations using \eqref{eq:D_{i,pq}}-\eqref{eq:DiDiDi}, we get the
following restricted root system decomposition of $\mathfrak{k}$ and
$\mathfrak{p}$:
\begin{eqnarray*}
\mathfrak{k}&=&\mathfrak{k}_0 + \mathfrak{k}_{2\xi_1} +
\mathfrak{k}_{2\xi_2}+\mathfrak{k}_{\xi_1+\xi_2}+
\mathfrak{k}_{\xi_1-\xi_2}+\mathfrak{k}_{\xi_1}+\mathfrak{k}_{\xi_2},\\
\mathfrak{p}&=&\mathfrak{a} + \mathfrak{p}_{2\xi_1} +
\mathfrak{p}_{2\xi_2}+\mathfrak{p}_{\xi_1+\xi_2}+
\mathfrak{p}_{\xi_1-\xi_2}+\mathfrak{p}_{\xi_1}+\mathfrak{p}_{\xi_2},
\end{eqnarray*}
where
\begin{equation*}
\begin{split}
\mathfrak{k}_0&=\{X\in{\mathfrak k}{\ }\vert{\ }[X,H]=0\quad
\text{ for each }H\in{\mathfrak a}\},\\
&= \mathrm{span}_{\mathbf R}\{\sqrt{-1} R(e_1-2e_2+e_3)\} +
\mathrm{span}_{\mathbf R}\{-D_{1,4}+D_{1,12}, D_{1,12}+D_{1,36}, \\
&\quad D_{1,36}+D_{1,57}, -D_{1,1}+D_{1,24}, -D_{1,2}-D_{1,14}, -D_{1,3}+D_{1,46}, -D_{1,5}-D_{1,47},\\
&\quad -D_{1,6}+D_{1,34}, -D_{1,7}+D_{1,45}, D_{1,13}-D_{1,26}, D_{1,15}+D_{1,27}, D_{1,16}+D_{1,23},\\
&\quad D_{1,17}-D_{1,25}, D_{1,35}-D_{1,67}, D_{1,37}-D_{1,56}\},\\
\end{split}
\end{equation*}
\begin{equation*}
\begin{split}
{\mathfrak k}_{2\xi_1}
&=\mathrm{span}_{\mathbf R}\{\frac{1}{2}(-D_{1,4}-D_{1,12}+D_{1,36}-D_{1,57})+\sqrt{-1}R(e_3-e_1)\},\\
{\mathfrak k}_{2\xi_2}&=\mathrm{span}_{\mathbf R}
\{\frac{1}{2}(D_{1,4}+D_{1,12}-D_{1,36}+D_{1,57})+\sqrt{-1}R(e_3-e_1)\},
\end{split}
\end{equation*}
\begin{equation*}
\begin{split}
{\mathfrak k}_{\xi_1+\xi_2}&= \mathrm{span}_{\mathbf R}\{
-D_{1,1}-D_{1,24}-D_{1,37}-D_{1,56}=2D_{2,1},\\
\quad\quad \quad &  -D_{1,2}+D_{1,14}-D_{1,35}-D_{1,67}=2D_{2,2},\\
\quad \quad\quad &  -D_{1,3}-D_{1,46}+D_{1,17}+D_{1,25}=2D_{2,3},\\
\quad \quad\quad &  -D_{1,5}+D_{1,47}+D_{1,16}-D_{1,23}=2D_{2,5},\\
\quad \quad\quad &  -D_{1,6}-D_{1,34}-D_{1,15}+D_{1,27}=2D_{2,6},\\
\quad \quad\quad &  -D_{1,7}-D_{1,45}-D_{1,13}-D_{1,26}=2D_{2,7} \},
\end{split}
\end{equation*}
\begin{equation*}
\begin{split}
{\mathfrak k}_{\xi_1-\xi_2}&= \mathrm{span}_{\mathbf R}\{
-D_{1,1}-D_{1,24}+D_{1,37}+D_{1,56}=2D_{2,24},\\
\quad\quad \quad &-D_{1,2}+D_{1,14}+D_{1,35}+D_{1,67}=2D_{2,14},\\
\quad\quad \quad &-D_{1,3}-D_{1,46}-D_{1,25}-D_{1,17}=-2D_{2,46},\\
\quad\quad \quad &-D_{1,5}+D_{1,47}-D_{1,16}+D_{1,23}=2D_{2,47},\\
\quad\quad \quad &-D_{1,6}-D_{1,34}+D_{1,15}-D_{1,27}=-2D_{2,34},\\
\quad\quad \quad &-D_{1,7}-D_{1,45}+D_{1,13}+D_{1,26}=-2D_{2,45}\},
\end{split}
\end{equation*}
\begin{equation*}
\begin{split}
{\mathfrak k}_{\xi_1}&=\mathrm{span}_{\mathbf R}
\{D(x_1 \bar{u}_1)+\sqrt{-1}R(y_1 u_1), \, (x_1, y_1)=(1, c_4), (c_1, -c_2), (c_2, c_1),\\
\quad\quad\quad & (c_3, c_6), (c_4, -1), (c_5, -c_7), (c_6, -c_3), (c_7, c_5)\},\\
{\mathfrak k}_{\xi_2}&=\mathrm{span}_{\mathbf R}
\{D(x_1 \bar{u}_1)+\sqrt{-1}R(y_1 u_1), \, (x_1, y_1)=(1, -c_4), (c_1, c_2), (c_2, -c_1),\\
\quad\quad\quad & (c_3, -c_6), (c_4, 1), (c_5, c_7), (c_6, c_3),
(c_7, -c_5)\},
\end{split}
\end{equation*}
\begin{equation*}
\begin{split}
\mathfrak{p}_{2\xi_1}&=\mathrm{span}_{\mathbf R}\{ D(c_4 \bar{u}_2) -\sqrt{-1}R u_2\},\\
\mathfrak{p}_{2\xi_2}&=\mathrm{span}_{\mathbf R}\{ D(c_4 \bar{u}_2) +\sqrt{-1}R u_2\},\\
\mathfrak{p}_{\xi_1+\xi_2}&=\mathrm{span}_{\mathbf R}\{ D(c_i \bar{u}_2), \, i=1,2,3,5,6,7\},\\
\mathfrak{p}_{\xi_1-\xi_2}&=\mathrm{span}_{\mathbf R}\{ \sqrt{-1} R(c_i u_2), \, i=1,2,3,5,6,7\},\\
\mathfrak{p}_{\xi_1}&=\mathrm{span}_{\mathbf
R}\{D(x_3\bar{u}_3)+\sqrt{-1} R(y_3 u_3),\,
(x_3, y_3)=(1, c_4), (c_1, c_2), (c_2, c_1),\\
\quad\quad\quad & (c_3, c_6), (c_4, -1), (c_5, -c_7), (c_6, -c_3), (c_7, c_5)\}\\
\mathfrak{p}_{\xi_2}&=\mathrm{span}_{\mathbf
R}\{D(x_3\bar{u}_3)+\sqrt{-1} R(y_3 u_3),\,
(x_3, y_3)=(1, -c_4), (c_1, -c_2), (c_2, c_1),\\
\quad\quad\quad & (c_3, c_6), (c_4, 1), (c_5, -c_7), (c_6, -c_3),
(c_7, c_5)\}.
\end{split}
\end{equation*}
Thus we see that
\begin{equation*}
\begin{split}
\mathfrak{k}_0&= \mathfrak{k}_0^\prime+\mathfrak{c}(\mathfrak{k}_0)
=\mathfrak{k}_0^\prime +\mathbf{R}\sqrt{-1}R(e_1-2e_2+e_3) \cong \mathfrak{so}(6)+\mathbf{R},\\
{\mathfrak k}_{1}&:=\mathfrak{k}_0+ \mathfrak{k}_{2\xi_1}+\mathfrak{k}_{2\xi_2}\\
&=\mathfrak{k}_0^\prime+{\mathbf R}\sqrt{-1}R( e_{1}-2e_{2}+e_{3})+ \mathbf{R}(D_{1,4}+D_{1,12}-D_{1,36}+D_{1,57})\\
&\quad\quad\quad +{\mathbf R} \sqrt{-1}R(e_{3}-e_{1})\\
&\cong \mathfrak{so}(6)+\mathbf{R}+\mathbf{R}+\mathbf{R},
\\
\end{split}
\end{equation*}
\begin{equation*}
\begin{split}
\mathfrak{k}_2&:=\mathfrak{k}_1+\mathfrak{k}_{\xi_1+\xi_2}+\mathfrak{k}_{\xi_1-\xi_2}
=\mathfrak{D}_0+\sqrt{-1}\mathfrak{R}_0\\
&=\mathfrak{D}_0+ \mathbf{R}\sqrt{-1}R(e_1-2e_2+e_3)+\mathbf{R} \sqrt{-1} R(e_3-e_1)\\
&=\mathfrak{D}_0+{\mathbf R} \sqrt{-1}R(e_{2}-e_{3})
+{\mathbf R}\sqrt{-1}R(2 e_{1}-e_{2}-e_{3})\\
&\cong \mathfrak{so}(8)+\mathbf{R}+\mathbf{R},\\
\mathfrak{k}&:=\mathfrak{k}_2+\mathfrak{k}_{\xi_1}+\mathfrak{k}_{\xi_2}
=\mathfrak{D}_0+\sqrt{-1}\mathfrak{R}_0+\mathfrak{D}_1+\sqrt{-1}\mathfrak{R}_1\\
&=(\mathfrak{D}_0+\mathfrak{D}_1+\sqrt{-1}\mathfrak{R}_1+\mathbf{R}\sqrt{-1}R(e_2-e_3))
+\mathbf{R}\sqrt{-1}R(2e_1-e_2-e_3)\\
&=\mathfrak{k}^{\prime}+\mathfrak{c}(\mathfrak{k})\cong
\mathfrak{so}(10)+\mathbf{R}.
\end{split}
\end{equation*}

Correspondingly, consider the subgroup
\begin{equation*}
\tilde{K}_2=U(1)\times Spin(2)\times Spin(8) \subset
\tilde{K}=U(1)\times Spin(10),
\end{equation*}
where $U(1)$ is given by \eqref{eq:u(1)}, $Spin(2)\subset
Spin(10)\cong (E_6)_{e_1}$ is generated by
\begin{equation*}
\begin{split}
\alpha_{23}(t)&:=\exp(t\sqrt{-1}R(e_2-e_3)):\\
&
\begin{pmatrix}
\xi_{1}&x_{3}&\bar{x}_{2}\\
\bar{x}_{3}&\xi_{2}&x_{1}\\
x_{2}&\bar{x}_{1}&\xi_{3}
\end{pmatrix}
\mapsto
\begin{pmatrix}
\xi_{1}&e^{\frac{t\sqrt{-1}}{2}}x_{3}
&e^{-\frac{t\sqrt{-1}}{2}}\bar{x}_{2}\\
e^{\frac{t\sqrt{-1}}{2}}\bar{x}_{3}&e^{t\sqrt{-1}}\xi_{2}&x_{1}\\
e^{-\frac{t\sqrt{-1}}{2}}x_{2}&\bar{x}_{1} &e^{-t\sqrt{-1}}\xi_{3}
\end{pmatrix},
\end{split}
\end{equation*}
and $Spin(8)=(E_6)_{e_1,e_2,e_3}$ whose Lie algebra is just
$\mathfrak{D}_0$. Therefore,
\begin{equation*}
Spin(2)\cap Spin(8)=\{\alpha_{23}(t){\ } \vert {\ }
e^{t\sqrt{-1}}=1\}=\{\alpha_{23}(0), \alpha_{23}(2\pi)\}.
\end{equation*}
Then the natural projection
\begin{equation*}
\begin{split}
p_2: \quad  & Spin(2)\times Spin(8)\rightarrow K_2^{\prime}\\
& (\alpha_{23}(t), \beta) \mapsto \alpha_{23}(t)\beta
\end{split}
\end{equation*}
has a kernel
\begin{equation*}
\begin{split}
\ker p_2&=\{(\alpha_{23}(t), \alpha_{23}(t)^{-1}) {\ }\vert {\ } t=2k\pi, k\in \mathbf{Z}\}\\
&=\{(\alpha_{23}(0), \alpha_{23}(0)), (\alpha_{23}(2\pi),
\alpha_{23}(2\pi))\} \cong \mathbf{Z}_2.
\end{split}
\end{equation*}
Hence $K^{\prime}_2\cong (Spin(2)\times Spin(8))/\mathbf{Z}_2$.

On the other hand, we also have
$$\tilde{K}_2 = S^1 \times Spin(2) \times Spin(8),$$
where $S^1$ is generated by
\begin{equation*}
\begin{split}
&\exp(t\sqrt{-1}R(e_1-2e_2+e_3)):\\
&
\begin{pmatrix}
\xi_{1}&x_{3}&\bar{x}_{2}\\
\bar{x}_{3}&\xi_{2}&x_{1}\\
x_{2}&\bar{x}_{1}&\xi_{3}
\end{pmatrix}
\mapsto
\begin{pmatrix}
e^{t\sqrt{-1}}\xi_{1} & e^{-\frac{t\sqrt{-1}}{2}}x_{3} & e^{t\sqrt{-1}}\bar{x}_{2}\\
e^{-\frac{t\sqrt{-1}}{2}}\bar{x}_{3}& e^{-2t\sqrt{-1}}\xi_{2}& e^{-\frac{t\sqrt{-1}}{2}} x_{1}\\
e^{t\sqrt{-1}}x_{2}& e^{-\frac{t\sqrt{-1}}{2}} \bar{x}_{1}
&e^{t\sqrt{-1}}\xi_{3}
\end{pmatrix},
\end{split}
\end{equation*}
$Spin(2)\subset E_6$ is generated by
\begin{equation*}
\begin{split}
\alpha_{31}(t)&:=\exp(t\sqrt{-1}R(e_3-e_1)):\\
&
\begin{pmatrix}
\xi_{1}&x_{3}&\bar{x}_{2}\\
\bar{x}_{3}&\xi_{2}&x_{1}\\
x_{2}&\bar{x}_{1}&\xi_{3}
\end{pmatrix}
\mapsto
\begin{pmatrix}
e^{-t\sqrt{-1}}\xi_{1}&e^{-\frac{t\sqrt{-1}}{2}}x_{3} &\bar{x}_{2}\\
e^{-\frac{t\sqrt{-1}}{2}}\bar{x}_{3}& \xi_{2}& e^{\frac{t\sqrt{-1}}{2}}x_{1}\\
x_{2}&e^{\frac{t\sqrt{-1}}{2}}\bar{x}_{1}& e^{t\sqrt{-1}}\xi_{3}
\end{pmatrix}.
\end{split}
\end{equation*}
and $Spin(8)=(E_6)_{e_1, e_2, e_3}$. Here $Spin(2) \times
Spin(8)\subset (E_6)_{e_2}\cong Spin(10)$. Similarly, here
\begin{equation*}
Spin(2)\cap Spin(8)=\{\alpha_{31}(t){\ } \vert {\ }
e^{t\sqrt{-1}}=1\}=\{\alpha_{31}(0), \alpha_{31}(2\pi)\}.
\end{equation*}
Then the natural projection
\begin{equation*}
\begin{split}
p_2^{\prime}: \quad  & Spin(2)\times Spin(8)\rightarrow K_2^{\prime}\\
& (\alpha_{31}(t), \beta) \mapsto \alpha_{31}(t)\beta
\end{split}
\end{equation*}
has a kernel
\begin{equation*}
\begin{split}
\ker p_2^{\prime}&=\{(\alpha_{31}(t), \alpha_{31}(t)^{-1}) {\ }\vert {\ } t=2k\pi, k\in \mathbf{Z}\}\\
&=\{(\alpha_{31}(0), \alpha_{31}(0)), (\alpha_{31}(2\pi),
\alpha_{31}(2\pi))\} \cong \mathbf{Z}_2.
\end{split}
\end{equation*}
Thus,
\begin{eqnarray*}
K_2&=&(S^1\times (Spin(2)\cdot Spin(8)))/\mathbf{Z}_4,\\
Spin(2)\cdot Spin(8)&=& (Spin(2)\times Spin(8))/\mathbf{Z}_2.
\end{eqnarray*}

Furthermore, we have
$$Spin(8)\supset Spin(2)\cdot Spin(6)\cong (Spin(2)\times Spin(6))/{\mathbf Z}_{2},$$
where
\begin{equation*}
\begin{split}
Spin(8)&= \{ (\alpha_{1},\alpha_{2},\alpha_{3}) \in{SO({\mathbf
K})\times SO({\mathbf K})\times SO({\mathbf K})}
{\ }\vert{\ }\\
&\quad\quad\quad\quad\quad\quad\quad
(\alpha_{1}x)(\alpha_{2}y)=\overline{\alpha_{3}(\overline{xy})}
\text{ for each }x,y\in{\mathbf K} \}
\end{split}
\end{equation*}
acts on $H_3(\mathbf{K})$ by
\begin{equation*}
(\alpha_{1},\alpha_{2},\alpha_{3})
\begin{pmatrix}
\xi_{1}&x_{3}&\bar{x}_{2}\\
\bar{x}_{3}&\xi_{2}&x_{1}\\
x_{2}&\bar{x}_{1}&\xi_{3}
\end{pmatrix}
:=
\begin{pmatrix}
\xi_{1}&\alpha_{3}x_{3}&\overline{\alpha_{2}{x}_{2}}\\
\overline{\alpha_{3}x_{3}}&\xi_{2}&\alpha_{1}x_{1}\\
\alpha_{2}{x}_{2}&\overline{\alpha_{1}{x}_{1}}&\xi_{3}
\end{pmatrix},
\end{equation*}
\begin{equation*}
Spin(2):=\{(\alpha_1, \alpha_2, \alpha_3)\in Spin(8){\ } \vert {\ }
\alpha_2(c_i)=c_i, \text{ if } i\neq 0,4\}
\end{equation*}
is generated by $D_{1,4}+D_{1,12}-D_{1,36}+D_{1,57}$ and
\begin{equation*}
Spin(6):=\{(\alpha_1, \alpha_2,\alpha_3)\in Spin(8) {\ } \vert {\ }
\alpha_2(1)=1, \alpha_2(c_4)=c_4 \}
\end{equation*}
is generated by $\mathfrak{k}_0^{\prime}$. Notice
\begin{equation*}
Spin(2)\cap Spin(6) =\{ (\mathrm{Id},\mathrm{Id},\mathrm{Id}),
(-\mathrm{Id},\mathrm{Id},-\mathrm{Id})\},
\end{equation*}
we see that
${\mathbf Z}_{2} =\{
((\mathrm{Id},\mathrm{Id},\mathrm{Id}),(\mathrm{Id},\mathrm{Id},\mathrm{Id})),
((-\mathrm{Id},\mathrm{Id},-\mathrm{Id}),(-\mathrm{Id},\mathrm{Id},-\mathrm{Id}))
\}. $
Thus, the connected compact Lie subgroup $K_1$ of $K$ generated by
$\mathfrak{k}_1$ is
\begin{equation*}
K_1=(S^1\times (Spin(2)\cdot (Spin(2)\cdot Spin(6))))/\mathbf{Z}_4.
\end{equation*}

Moreover,
\begin{equation*}
S^1 \cap Spin(6) =\{ (\mathrm{Id},\mathrm{Id},\mathrm{Id}),
(-\mathrm{Id},\mathrm{Id},-\mathrm{Id})\},
\end{equation*}
hence the connected compact Lie group $K_0$ of $K$ generated by
$\mathfrak{k}_0$ is
\begin{equation*}
K_0=(S^1\times Spin(6))/\mathbf{Z}_2,
\end{equation*}
where
${\mathbf Z}_{2} =\{
((\mathrm{Id},\mathrm{Id},\mathrm{Id}),(\mathrm{Id},\mathrm{Id},\mathrm{Id})),
((-\mathrm{Id},\mathrm{Id},-\mathrm{Id}),(-\mathrm{Id},\mathrm{Id},-\mathrm{Id}))
\}. $

\subsection{Isotropy representation of $(E_6, U(1)\cdot Spin(10))$}

Via the linear isomorphism $p_{*}: \mathfrak{p} \rightarrow
T_o(\mathrm{EIII})$ given by \eqref{eq:p_*}, we can describe the
isotropy representation of $(E_6, U(1)\cdot Spin(10))$.
\begin{lem}\label{p_*Ad}
\begin{enumerate}
\renewcommand{\labelenumi}{(\arabic{enumi})}
\item
For each $a\in{K}$ and each $\xi\in{\mathfrak p}$,
\begin{equation*}
p_{\ast}(\mathrm{Ad}(a)\xi)=(\mathrm{Ad}(a)\xi)(e_{1})
=(a\circ\xi\circ a^{-1})(e_{1}).
\end{equation*}
\item
For each $T\in{\mathfrak k}$ and each $\xi\in{\mathfrak p}$,
\begin{equation*}
p_{\ast}(\mathrm{ad}(T)\xi)
=
p_{\ast}([T,\xi])
=([T,\xi])(e_{1}).
\end{equation*}
\end{enumerate}
\end{lem}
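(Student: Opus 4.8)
The plan is to deduce Lemma \ref{p_*Ad} directly from the identity $p_{\ast}(\phi)=\phi(e_{1})$ for $\phi\in\mathfrak{p}$ (already established above), together with the fact that $\mathrm{Ad}(K)$ preserves the Cartan decomposition $\mathfrak{e}_{6}=\mathfrak{k}+\mathfrak{p}$. Throughout one works in the Chevalley--Schafer model: $\mathfrak{e}_{6}^{\mathbf{C}}=\mathfrak{D}^{\mathbf{C}}+\mathfrak{R}^{\mathbf{C}}$ is a Lie subalgebra of $\mathfrak{gl}(H_{3}(\mathbf{K})^{\mathbf{C}})$ via the $27$-dimensional representation, and $E_{6}\subset GL_{\mathbf{C}}(H_{3}(\mathbf{K})^{\mathbf{C}})$; consequently the adjoint action of $E_{6}$ on $\mathfrak{e}_{6}$ is the operator conjugation $\mathrm{Ad}(a)X=a\circ X\circ a^{-1}$, and the bracket on $\mathfrak{e}_{6}$ is the operator commutator $[X,Y]=X\circ Y-Y\circ X$. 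The base point $o\in\mathrm{EIII}$ is $[e_{1}]$, the representative $e_{1}\in H_{3}(\mathbf{K})^{\mathbf{C}}$ is fixed once and for all, and $T_{o}(\mathrm{EIII})$ is modelled by the complement $\{x_{2}u_{2}+x_{3}u_{3}\}$.

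First I would record that $\mathrm{Ad}(a)\mathfrak{p}=\mathfrak{p}$ for every $a\in K=(E_{6})^{\sigma}$. Since $\sigma\in E_{6}$ is the linear involution of $H_{3}(\mathbf{K})^{\mathbf{C}}$ and the induced automorphism of $E_{6}$ (still denoted $\sigma$) is $\alpha\mapsto\sigma\alpha\sigma$, its differential is $\sigma_{\ast}(X)=\sigma\circ X\circ\sigma$, and $\mathfrak{p}=(\mathfrak{e}_{6})_{-\sigma}$ is the $(-1)$-eigenspace of $\sigma_{\ast}$. For $a\in K$ one has $\sigma a=a\sigma$, hence for $\xi\in\mathfrak{p}$
\begin{equation*}
\sigma_{\ast}(\mathrm{Ad}(a)\xi)=\sigma\circ(a\circ\xi\circ a^{-1})\circ\sigma=a\circ(\sigma\circ\xi\circ\sigma)\circ a^{-1}=a\circ(-\xi)\circ a^{-1}=-\mathrm{Ad}(a)\xi,
\end{equation*}
so $\mathrm{Ad}(a)\xi\in\mathfrak{p}$. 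Differentiating this at $a=\exp(tT)$, $T\in\mathfrak{k}$, $t=0$ (or directly $\sigma_{\ast}[T,\xi]=[\sigma_{\ast}T,\sigma_{\ast}\xi]=-[T,\xi]$) gives $[\mathfrak{k},\mathfrak{p}]\subset\mathfrak{p}$, as in any symmetric pair.

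With this in hand, part (1) is immediate: applying $p_{\ast}(\phi)=\phi(e_{1})$ to $\phi=\mathrm{Ad}(a)\xi\in\mathfrak{p}$ gives $p_{\ast}(\mathrm{Ad}(a)\xi)=(\mathrm{Ad}(a)\xi)(e_{1})$, and $(\mathrm{Ad}(a)\xi)(e_{1})=(a\circ\xi\circ a^{-1})(e_{1})$ since $\mathrm{Ad}$ is conjugation. For part (2) one either differentiates the identity of (1) along the one-parameter subgroup $a=\exp(tT)$ and applies the chain rule, or argues directly: $[T,\xi]\in[\mathfrak{k},\mathfrak{p}]\subset\mathfrak{p}$, whence $p_{\ast}([T,\xi])=[T,\xi](e_{1})$ again by $p_{\ast}(\phi)=\phi(e_{1})$.

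There is no serious obstacle here; the lemma is a bookkeeping statement whose purpose is to make the isotropy representation of $(E_{6},U(1)\cdot Spin(10))$ computable through the explicit formulas of Lemma \ref{lem:27dimrep}. The only points requiring care are the identification conventions: that the chosen representative $e_{1}$ of $o$ is held fixed throughout, that the symbol $\circ$ in $a\circ\xi\circ a^{-1}$ denotes composition of operators on $H_{3}(\mathbf{K})^{\mathbf{C}}$ (not the Jordan product), and that $\xi(e_{1})$ really lies in the complement $\{x_{2}u_{2}+x_{3}u_{3}\}$ used to model $T_{o}(\mathrm{EIII})$, which one verifies on the root spaces $\mathfrak{D}_{2},\mathfrak{D}_{3},\sqrt{-1}\mathfrak{R}_{2},\sqrt{-1}\mathfrak{R}_{3}$ spanning $\mathfrak{p}$ by direct evaluation using Lemma \ref{lem:27dimrep} (e.g.\ $D(x\bar{u}_{2})(e_{1})=\tfrac{1}{2}xu_{2}$, $R(xu_{2})(e_{1})=\tfrac{1}{2}xu_{2}$, $D(x\bar{u}_{3})(e_{1})=-\tfrac{1}{2}xu_{3}$).
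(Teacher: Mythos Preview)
Your argument is correct and is precisely the natural one: apply the already-established identity $p_{\ast}(\phi)=\phi(e_{1})$ to $\phi=\mathrm{Ad}(a)\xi\in\mathfrak{p}$ (resp.\ $\phi=[T,\xi]\in\mathfrak{p}$), using that $\mathrm{Ad}(K)\mathfrak{p}\subset\mathfrak{p}$ and $[\mathfrak{k},\mathfrak{p}]\subset\mathfrak{p}$ for the symmetric pair. The paper states the lemma without proof, treating it as immediate from the definition of $p_{\ast}$; you have simply made explicit the bookkeeping (and correctly flagged the convention that $\circ$ here is operator composition, not the Jordan product).
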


The restriction $(\rho_K, V=H_3(\mathbf{K}^{\mathbf{C}}))$ of
Chevally-Schafer's representation $(\tilde{\rho},
H_3(\mathbf{K}^{\mathbf{C}}))$ of $E_6$ to $K$
can be decomposed into three irreducible representations
\begin{equation*}
(\rho_{K},V)=(\rho_{1},V_{1})\oplus(\rho_{2},V_{2})\oplus(\rho_{3},V_{3}),
\end{equation*}
where $V_1$, $V_2$ and $V_3$ are given as follows:
\begin{equation*}
\begin{split}
V_{1}&=\{\xi e_{1}{\ }\vert{\ }\xi\in{\mathbf C}\}, \\
V_{2}
&=(H_3(\mathbf{K}^{\mathbf C}))_{-\sigma}
=\{ x_2 u_2+x_3 u_3
\mid
x_{2},x_{3}\in{\mathbf{K}}^{\mathbf C} \}
\cong T_o(\mathrm{EIII}),
\\
V_{3}
&= H_2(\mathbf{K}^{\mathbf C})
=\{ \xi_2 e_2+\xi_3 e_3+ x_1 u_1 \mid
x_{1}\in{\mathbf{K}}^{\mathbf C}, \xi_2,\xi_3\in \mathbf{C} \},
\end{split}
\end{equation*}
and  $V_{1}\oplus V_{3}=(H_3(\mathbf{K}^{\mathbf C}))_{\sigma}$.
$\rho_1$ is a scalar representation, the restriction of $\rho_2$ to
$Spin(10)$ is equivalent to one of the half-spin representations of
$Spin(10,\mathbf{C})$, called $\Delta^{+}_{10}$, and the restriction
of $\rho_3$ to $Spin(10)$ is equivalent to the standard
representation of $Spin(10,\mathbf{C})$.

Now we discuss the linear isotropy action of an element
$\phi(\theta)=\exp(t\sqrt{-1}R(2e_{1}-e_{2}-e_{3})):
H_3(\mathbf{K}^{\mathbf C})\rightarrow H_3(\mathbf{K}^{\mathbf C})$
generating
the center $U(1)$ of $K$
on both ${\mathfrak p}$ and
$V_{2}=(H_3(\mathbf{K}^{\mathbf C}))_{-\sigma}$,
which are identified with $T_{o}(\mathrm{EIII})$.

Using the formula \eqref{defU(1)} and Lemma \ref{p_*Ad}, we compute
\begin{equation*}
\begin{split}
&
p_{\ast}(\mathrm{Ad}(\phi(\theta))D(x_2\bar{u}_2))
=
\theta^{-3} p_{\ast}(D(x_2\bar{u}_2)), \\
&p_{\ast}(\mathrm{Ad}(\phi(\theta))R(x_2 u_2))
= \theta^{-3} p_{\ast}(R(x_2 u_2)),\\
&p_{*}(Ad(\phi(\theta))D(x_3\bar{u}_3))=\theta^{-3}p_{*}(D(x_3\bar{u}_3)),\\
&p_{*}(Ad(\phi(\theta))R(x_3 u_3))=\theta^{-3}p_{*}(R(x_3 u_3)).
\end{split}
\end{equation*}
On the other hand,
the tangent vector space $T_{o}(\mathrm{EIII})$ at
$o=[e_1]\in\mathrm{EIII})\subset P({H_3(\mathbf{K})}^{\mathbf C})$
is identified with
the vector subspace $V_{2}=(H_3(\mathbf{K}^{\mathbf C}))_{-\sigma}$,
which is a horizontal vector subspace at a point $e_1$
under the Hopf fibration
${H_3(\mathbf{K})}^{\mathbf C}\supset S^{53}(1)
\rightarrow P({H_3(\mathbf{K})}^{\mathbf C})$.
By the formula \eqref{defU(1)} we see that
a vector $x_2u_2+x_3u_3\in (H_3(\mathbf{K}^{\mathbf C}))_{-\sigma}$
at a point $e_1$ in a vector space ${H_3(\mathbf{K})}^{\mathbf C}$
representing a tangent vector of $\mathrm{EIII}$ at $o=[e_1]$
is moved by
the linear action of $\phi(\theta)$ to a vector
$\theta x_2u_2+\theta x_3u_3\in (H_3(\mathbf{K}^{\mathbf C}))_{-\sigma}$
at $\theta^{4}e_1$.
Thus its corresponding tangent vector of $\mathrm{EIII}$ at $o=[e_1]$
must be $\theta^{-4}(\theta x_2u_2+\theta x_3u_3)
=\theta^{-3}(x_2u_2+ x_3u_3)\in V_2=(H_3(\mathbf{K}^{\mathbf C}))_{-\sigma}$
at $e_1$.
Hence the linear isotropy action of $\phi(\theta)$
on $V_2=(H_3(\mathbf{K}^{\mathbf C}))_{-\sigma}$
is given by the multiplication by $\theta^{-3}$ on
$V_2=(H_3(\mathbf{K}^{\mathbf C}))_{-\sigma}$.

Therefore the linear isotropy representation of
$(E_6, U(1)\cdot Spin(10))$ is
$(\mu_3\otimes_{\mathbf{C}}\triangle^{+}_{10})_{\mathbf{R}}$.

\subsection{The subgroup $K_{[\mathfrak{a}]}$}

The maximal abelian subspace ${\mathfrak a}$ of
${\mathfrak p}$ is described as follows:
\begin{equation*}
\begin{split}
{\mathfrak a} &=
{\mathbf R}H_{1}\oplus{\mathbf R}H_{2}\\
&= {\mathbf R}(D\bar{u}_{2}+\sqrt{-1}R({\mathbf c}_{4}u_{2})) \oplus
{\mathbf R}(D\bar{u}_{2} -\sqrt{-1}R({\mathbf c}_{4}u_{2})).
\end{split}
\end{equation*}
and
\begin{equation}\label{eq:p_* a}
\begin{split}
p_{\ast}({\mathfrak a}) &={\mathbf R}(1+\sqrt{-1}{\mathbf c}_{4})u_2
\oplus{\mathbf R}(1-\sqrt{-1}{\mathbf c}_{4})u_2.
\end{split}
\end{equation}

We will use the map $\varphi: Sp(4) \rightarrow E_6$ given by Yokota
(\cite{IchiroYokota09}) and the known results for $(Sp(4),
Sp(2)\times Sp(2))$ case to find a generator of $K_{[\mathfrak a]}$
here.

The Cayley algebra $\mathbf{K}$ naturally contains the field
$\mathbf{H}$ of quaternions as
\begin{equation*}
\mathbf{H}=\{x_0+x_2 c_2 +x_3 c_3+x_5 c_5 | x_i\in \mathbf{R}\}.
\end{equation*}
Any element $x\in \mathbf{K}$ can be expressed by
\begin{equation*}
\begin{split}
x&=x_0+x_1 c_1 + x_2 c_2 +x_3 c_3 + x_4 c_4 + x_5 c_5 + x_6 c_6 + x_7 c_7\\
&=(x_0+x_2 c_2 +x_3 c_3+x_5 c_5)+ (x_4+x_1 c_2 +x_6 c_3-x_7 c_5)c_4\\
&=: m + a \mathbf{e}\in \mathbf{H}\oplus
\mathbf{H}\mathbf{e}=\mathbf{K},
\end{split}
\end{equation*}
where $m:=x_0+x_2 c_2 +x_3 c_3+x_5 c_5\in \mathbf{H}$, $a:=x_4+x_1
c_2 +x_6 c_3 -x_7 c_5\in \mathbf{H}$ and $\mathbf{e}:=c_4$. In
$\mathbf{H}\oplus \mathbf{H}\mathbf{e}$, we define a multiplication
by
\begin{equation*}
(m+a\mathbf{e})(n+b\mathbf{e})=(mn-\bar{b}a)+(a\bar{n}+bm)\mathbf{e}.
\end{equation*}
More explicitly,
\begin{equation*}
(a\mathbf{e})n=(a\bar{n})\mathbf{e}, \quad
m(b\mathbf{e})=(bm)\mathbf{e}, \quad
(a\mathbf{e})(b\mathbf{e})=-\bar{b}a.
\end{equation*}
We can also define a conjugation and an $\mathbf{R}$-linear
transformation $\gamma$ on $\mathbf{H}\oplus \mathbf{H}\mathbf{e}$
respectively by
\begin{equation*}
\overline{m+a\mathbf{e}}=\bar{m}-a\mathbf{e},\quad\quad
\gamma(m+a\mathbf{e})=m-a\mathbf{e}.
\end{equation*}
Thus $\gamma\in G_2=\{\alpha\in \mathrm{Iso}_{\mathbf R}(\mathbf{K})
{\ }\vert {\ } \alpha(xy)=\alpha(x)\alpha(y)\}$. Consider an
$\mathbf{R}$-linear transformation of  $H_3(\mathbf{K})$, denoted
still by $\gamma$, defined by
\begin{equation*}
\gamma \left(
         \begin{array}{ccc}
           \xi_1 & x_3 & \bar{x}_2 \\
           \bar{x}_3 & \xi_2 & x_1 \\
           x_2 & \bar{x}_1 & \xi_3 \\
         \end{array}
       \right)
       := \left(
         \begin{array}{ccc}
           \xi_1 & \gamma x_3 & \overline{\gamma x_2} \\
           \overline{\gamma x_3} & \xi_2 & \gamma x_1 \\
           \gamma x_2 & \overline{\gamma x_1} & \xi_3 \\
         \end{array}
       \right),
\end{equation*}
for $x_i\in \mathbf{K}$ $(i=1,2,3)$. Thus $\gamma\in F_4
=\{\alpha\in \mathrm{Iso}_{\mathbf{R}}(H_3(\mathbf{K})){\ }\vert {\
} \alpha(X\circ Y)=\alpha(X)\circ \alpha(Y)\}$. Any element
\begin{equation*}
X=\left(
         \begin{array}{ccc}
           \xi_1 & x_3 & \bar{x}_2 \\
           \bar{x}_3 & \xi_2 & x_1 \\
           x_2 & \bar{x}_1 & \xi_3 \\
         \end{array}
       \right)
       =\left(
         \begin{array}{ccc}
           \xi_1 & m_3 & \bar{m}_2 \\
           \bar{m}_3 & \xi_2 & m_1 \\
           m_2 & \bar{m}_1 & \xi_3 \\
         \end{array}
       \right)
       +
       \left(
         \begin{array}{ccc}
           0 & a_3\mathbf{e} & -a_2\mathbf{e} \\
           -a_3\mathbf{e} & 0 & a_1\mathbf{e} \\
           a_2 \mathbf{e} & -a_1 \mathbf{e} & 0 \\
         \end{array}
       \right),
\end{equation*}
of $H_3(\mathbf{K})$, where $x_i=m_i+a_i\mathbf{e}\in
\mathbf{H}\oplus \mathbf{H}\mathbf{e}=\mathbf{K}$ and $\xi_i\in
\mathbf{R}$, can be identified with the element
\begin{equation*}
\left(
         \begin{array}{ccc}
           \xi_1 & m_3 & \bar{m}_2 \\
           \bar{m}_3 & \xi_2 & m_1 \\
           m_2 & \bar{m}_1 & \xi_3 \\
         \end{array}
       \right) + (a_1, a_2, a_3)
\end{equation*}
in $H_3(\mathbf{H})\oplus \mathbf{H}^3$. Hereafter, there exists an
identification $H_3(\mathbf{K})\cong H_3(\mathbf{H})\oplus
\mathbf{H}^3$.

Let the $\mathbf{C}$-linear mapping $\gamma :
H_3(\mathbf{K}^\mathbf{C}) \rightarrow H_3(\mathbf{K}^{\mathbf{C}})$
be the complexification of $\gamma\in G_2\subset F_4$. Then
$\gamma\in E_6$ and $\gamma^2=1$. Recall that $\tau$ is the complex
conjugation of $H_3(\mathbf{K}^{\mathbf C})$ with respect to
$H_3(\mathbf K)$. Consider an involutive complex conjugate linear
transformation $\tau \gamma$ of $H_3(\mathbf{K}^{\mathbf{C}})$ and
the following subgroup $(E_6)^{\tau\gamma}$ of $E_6$:
\begin{equation*}
(E_6)^{\tau\gamma}=\{\alpha\in E_6 {\ }\vert {\ } \tau\gamma
\alpha=\alpha \tau\gamma\}.
\end{equation*}
Correspondingly, $H_3(\mathbf{K}^{\mathbf{C}})$ can be decomposed
into the following two $\mathbf{R}$-vector subspaces:
\begin{equation*}
H_3(\mathbf{K}^{\mathbf{C}})
=(H_3(\mathbf{K}^{\mathbf{C}}))_{\tau\gamma}\oplus
(H_3(\mathbf{K}^{\mathbf{C}}))_{-\tau\gamma},
\end{equation*}
where
\begin{equation*}
\begin{split}
&(H_3(\mathbf{K}^{\mathbf{C}}))_{\tau\gamma}:=\{X\in
H_3(\mathbf{K}^{\mathbf{C}})
\mid \tau\gamma X=X\}\\
&=\{\left(
      \begin{array}{ccc}
        \xi_1 & m_3 & \bar{m}_2 \\
        \bar{m}_3 & \xi_2 & m_1 \\
        m_2 & \bar{m}_1 & \xi_3 \\
      \end{array}
    \right)
    +\sqrt{-1} \begin{pmatrix}
0&a_{3}e&-a_{2}e\\
-a_{3}e&0&a_{1}e\\
a_{2}e&-a_{1}e&0
\end{pmatrix}
\mid \xi_{i}\in{\mathbf R}, m_{i}, a_{i}\in{\mathbf H}
\}\\
&=H_3(\mathbf{H})\oplus \sqrt{-1}\mathbf{H}^3,
\end{split}
\end{equation*}
\begin{equation*}
\begin{split}
&(H_3(\mathbf{K}^{\mathbf{C}}))_{-\tau\gamma}:=\{X\in
H_3(\mathbf{K}^{\mathbf{C}})
\mid \tau\gamma X=-X\}\\
&=\{\sqrt{-1}\left(
      \begin{array}{ccc}
        \xi_1 & m_3 & \bar{m}_2 \\
        \bar{m}_3 & \xi_2 & m_1 \\
        m_2 & \bar{m}_1 & \xi_3 \\
      \end{array}
    \right)
    +\begin{pmatrix}
0&a_{3}e&-a_{2}e\\
-a_{3}e&0&a_{1}e\\
a_{2}e&-a_{1}e&0
\end{pmatrix}
{\ }\vert{\ } \xi_{i}\in{\mathbf R}, m_{i}, a_{i}\in{\mathbf H}
\}\\
&=\sqrt{-1} H_3(\mathbf{H})\oplus \mathbf{H}^3.
\end{split}
\end{equation*}
In particular,
$H_3(\mathbf{K}^{\mathbf{C}})=((H_3(\mathbf{K}^{\mathbf{C}}))_{\tau\gamma})^{\mathbf{C}}$.

Let $H_4(\mathbf{H})_0:=\{P\in H_4(\mathbf{H}){\ }\vert {\ }
\mathrm{tr} P=0\}$.
Define a $\mathbf{C}$-linear isomorphism 
$g: H_3(\mathbf{K}^{\mathbf{C}})=H_3(\mathbf{H}^{\mathbf{C}})\oplus
(\mathbf{H}^3)^{\mathbf{C}} \rightarrow H_4(\mathbf{H})_0^{\mathbf
C}$ by
\begin{equation*}
g(M+{\mathbf a}) :=
\begin{pmatrix}
\frac{1}{2}\mathrm{tr}(M)&\sqrt{-1}{\mathbf a}\\
\sqrt{-1}{\mathbf a}^{\ast}&M-\frac{1}{2}\mathrm{tr}(M)\mathrm{I}
\end{pmatrix},
\end{equation*}
for $M+{\mathbf a}\in H_3(\mathbf{K}^{\mathbf{C}})$. Then we have
\begin{equation*}
\begin{split}
&g((H_3(\mathbf{K}^{\mathbf{C}}))_{\tau\gamma})=H_4(\mathbf{H})_0,
\\
&g((H_3(\mathbf{K}^{\mathbf{C}}))_{-\tau\gamma})
=\sqrt{-1}H_4(\mathbf{H})_0.
\end{split}
\end{equation*}

The mapping $ \varphi : Sp(4)\longrightarrow (E_{6})^{\tau\gamma}
\subset E_{6} $, defined by $\varphi(A)X:=g^{-1}(A(gX)A^{\ast})$ for
each $X\in H_3(\mathbf{K}^{\mathbf{C}})$, is a surjective Lie group
homomorphism and
$\mathrm{Ker}(\varphi)=\{\mathrm{I},-\mathrm{I}\}\cong{\mathbf
Z}_{2}$. Therefore we obtain
\begin{equation*}
Sp(4)/{\mathbf Z}_{2}\cong (E_{6})^{\tau\gamma}.
\end{equation*}
Consider $\mathbf{R}$-vector subspaces $(H_3(\mathbf{K}^{\mathbf
C}))_{\tau\gamma,\sigma}$, $(H_3(\mathbf{K}^{\mathbf
C}))_{\tau\gamma,-\sigma}$ of $(H_3(\mathbf{K}^{\mathbf
C}))_{\tau\gamma}$ and $(H_3(\mathbf{K}^{\mathbf
C}))_{-\tau\gamma,\sigma}$, $(H_3(\mathbf{K}^{\mathbf
C}))_{-\tau\gamma,-\sigma}$ of $(H_3(\mathbf{K}^{\mathbf
C}))_{-\tau\gamma}$, which are eigenspaces of $\sigma$, respectively
given by
\begin{equation*}
\begin{split}
&(H_3(\mathbf{K}^{\mathbf C}))_{\tau\gamma,\sigma}
=\{X\in H_3(\mathbf{K}^{\mathbf C}){ }\vert {\ } \tau\gamma X=X, \sigma X=X\}\\
=& \{
\begin{pmatrix}
\xi_{1}&0&0\\
0&\xi_{2}&m_{1}\\
0&\bar{m}_{1}&\xi_{3}
\end{pmatrix}
+ \sqrt{-1}
\begin{pmatrix}
0&0&0\\
0&0&a_{1}\mathbf{e}\\
0&-a_{1}\mathbf{e}&0
\end{pmatrix}
\mid \xi_{i}\in{\mathbf R}, m_{1}, a_{1}\in{\mathbf H} \},
\end{split}
\end{equation*}

\begin{equation*}
\begin{split}
&(H_3(\mathbf{K}^{\mathbf C}))_{\tau\gamma,-\sigma}
=\{X\in H_3(\mathbf{K}^{\mathbf C}){ }\vert {\ } \tau\gamma X=X, \sigma X=-X\}\\
=& \{
\begin{pmatrix}
0&m_3&\bar{m}_2\\
\bar{m}_3&0&0\\
m_2&0&0
\end{pmatrix}
+ \sqrt{-1}
\begin{pmatrix}
0&a_3\mathbf{e}&-a_2\mathbf{e}\\
-a_3\mathbf{e}&0& 0\\
a_2 \mathbf{e}&0&0
\end{pmatrix}
\mid m_{2},m_{3}, a_{2}, a_{3}\in{\mathbf H} \},
\end{split}
\end{equation*}

\begin{equation*}
\begin{split}
&(H_3(\mathbf{K}^{\mathbf C}))_{-\tau\gamma,\sigma}
=\{X\in H_3(\mathbf{K}^{\mathbf C}){ }\vert {\ } \tau\gamma X=-X, \sigma X=X\}\\
=& \{\sqrt{-1}
\begin{pmatrix}
\xi_{1}&0&0\\
0&\xi_{2}&m_{1}\\
0&\bar{m}_{1}&\xi_{3}
\end{pmatrix}
+
\begin{pmatrix}
0&0&0\\
0&0&a_{1}\mathbf{e}\\
0&-a_{1}\mathbf{e}&0
\end{pmatrix}
{\ }\vert{\ } \xi_{i}\in{\mathbf R}, m_{1}, a_{1}\in{\mathbf H} \},
\end{split}
\end{equation*}

\begin{equation*}
\begin{split}
&(H_3(\mathbf{K}^{\mathbf C}))_{-\tau\gamma,-\sigma}
=\{X\in H_3(\mathbf{K}^{\mathbf C}){ }\vert {\ } \tau\gamma X=-X, \sigma X=-X\}\\
=& \{\sqrt{-1}
\begin{pmatrix}
0&m_3&\bar{m}_2\\
\bar{m}_3&0&0\\
m_2&0&0
\end{pmatrix}
+
\begin{pmatrix}
0&a_3\mathbf{e}&-a_2\mathbf{e}\\
-a_3\mathbf{e}&0& 0\\
a_2 \mathbf{e}&0&0
\end{pmatrix}
{\ }\vert{\ } m_{2},m_{3}, a_{2}, a_{3}\in{\mathbf H} \}.
\end{split}
\end{equation*}

Thus we have the following decompositions
\begin{eqnarray*}
(H_3(\mathbf{K}^{\mathbf C}))_{\sigma}&=&(H_3(\mathbf{K}^{\mathbf
C}))_{\tau\gamma,\sigma}
\oplus(H_3(\mathbf{K}^{\mathbf C}))_{-\tau\gamma,\sigma},\\
(H_3(\mathbf{K}^{\mathbf C}))_{-\sigma}&=&(H_3(\mathbf{K}^{\mathbf
C}))_{\tau\gamma,-\sigma} \oplus(H_3(\mathbf{K}^{\mathbf
C}))_{-\tau\gamma,-\sigma}.
\end{eqnarray*}

Note that the images of $(H_3(\mathbf{K}^{\mathbf
C}))_{\tau\gamma,\sigma}$ and $(H_3(\mathbf{K}^{\mathbf
C}))_{\tau\gamma,-\sigma}$ of the mapping $g$ defined above can be
expressed explicitly as follows:
\begin{equation*}
\begin{split}
&g((H_3(\mathbf{K}^{\mathbf C}))_{\tau\gamma,\sigma})\\
=& \{
\begin{pmatrix}
\frac{1}{2}(\xi_{1}+\xi_{2}+\xi_{3})&-a_{1}&0&0\\
-\bar{a}_{1}&\frac{1}{2}(\xi_{1}-\xi_{2}-\xi_{3})&0&0\\
0&0&\frac{1}{2}(-\xi_{1}+\xi_{2}-\xi_{3})&m_{1}\\
0&0&\bar{m}_{1}&\frac{1}{2}(-\xi_{1}-\xi_{2}+\xi_{3})
\end{pmatrix}
\\
& \quad {\ }\vert{\ } \xi_{1},\xi_{2},\xi_{3}\in \mathbf{R}, a_{1},
m_1\in \mathbf{H} \},
\end{split}
\end{equation*}
\begin{equation*}
\begin{split}
g((H_3(\mathbf{K}^{\mathbf C}))_{\tau\gamma,-\sigma}) =& \{
\begin{pmatrix}
0&0& -a_{2}&-a_{3}\\
0&0&m_{3}&\bar{m}_{2}\\
-\bar{a}_{2}&\bar{m}_{3}&0&0\\
-\bar{a}_{3}&m_{2}&0&0
\end{pmatrix}
{\ }\vert{\ } a_{2},a_{3}, m_{2},m_{3}\in \mathbf{H} \}.
\end{split}
\end{equation*}

For any element $A\in Sp(2)\times Sp(2)\subset Sp(4)$, we can check
that $\varphi(A)\sigma=\sigma \varphi(A)$, hence $\varphi(A)\in
(E_6)^{\sigma}$ and
we have
\begin{equation*}
\varphi : Sp(2)\times Sp(2)\longrightarrow
(E_{6})^{\tau\gamma,\sigma}\subset (E_{6})^{\sigma}\cong U(1)\cdot
Spin(10).
\end{equation*}
Next, the restriction of $\varphi$ to the subgroup $Sp(1)\times
Sp(1) \times Sp(1) \times Sp(1)$ gives
\begin{equation*}
\begin{split}
\varphi : Sp(1)\times Sp(1)\times Sp(1)\times Sp(1) \longrightarrow
&\{\alpha\in E_{6}{\ }\vert{\ }\alpha(e_{i})=e_{i}{\ }(i=1,2,3)\}\\
\cong &Spin(8).
\end{split}
\end{equation*}
And the group $Sp(1)\times Sp(1)$ can be considered as the diagonal
subgroup of $Sp(1)\times Sp(1)\times Sp(1)\times Sp(1)$, namely,
each $(a,b)\in Sp(1)\times Sp(1)$ corresponds to $(a,b,a,b)\in
Sp(1)\times Sp(1)\times Sp(1)\times Sp(1)$. Thus the restriction of
$\varphi$ to $Sp(1)\times Sp(1)$ is mapped to a subgroup
$K_0=S^1\cdot Spin(6)$ of $K=E^{\sigma}=U(1)\cdot Spin(10)$. In
fact, for a $2$-dimensional $\mathbf{R}$-vector subspace
\begin{equation*}
\tilde{\mathfrak{a}}:= \{
\begin{pmatrix}
0&0&a_{2}&0\\
0&0&0&m_{2}\\
a_{2}&0&0&0\\
0&m_{2}&0&0
\end{pmatrix}
{\ }\vert{\ } a_{2}, m_{2}\in{\mathbf R} \} \subset
g((H_3(\mathbf{K}^{\mathbf C}))_{\tau\gamma,-\sigma}),
\end{equation*}
it follows from
\begin{equation*}
\begin{split}
&
\begin{pmatrix}
a&0&0&0\\
0&b&0&0\\
0&0&a&0\\
0&0&0&b
\end{pmatrix}
\begin{pmatrix}
0&0&a_{2}&0\\
0&0&0&m_{2}\\
a_{2}&0&0&0\\
0&m_{2}&0&0
\end{pmatrix}
\begin{pmatrix}
a^{\ast}&0&0&0\\
0&b^{\ast}&0&0\\
0&0&a^{\ast}&0\\
0&0&0&b^{\ast}
\end{pmatrix}
\\
=&
\begin{pmatrix}
0&0&a_{2}&0\\
0&0&0&m_{2}\\
a_{2}&0&0&0\\
0&m_{2}&0&0
\end{pmatrix}.
\end{split}
\end{equation*}
that $\tilde{\mathfrak{a}}$ corresponds to the subspace
\begin{equation*}
\{
\begin{pmatrix}
0&0&m_{2}-\sqrt{-1}a_{2}\mathbf{e}\\
0&0&0\\
m_{2}+\sqrt{-1}a_{2}\mathbf{e}&0&0
\end{pmatrix}
{\ }\vert{\ } m_{2},a_{2}\in{\mathbf R} \} \subset {\
}(H_3(\mathbf{K})^{\mathbf C})_{\tau\gamma,-\sigma},
\end{equation*}
which corresponds to the image $\mathfrak{p}_{*}(\mathfrak a)$ of
the maximal abelian subspace $\mathfrak{a}$ of $\mathfrak{p}$ under
the linear isomorphism $\mathfrak{p}_{*}$ given by \eqref{eq:p_* a}.
It implies that $\varphi$ maps the subgroup $\check{K}_0=Sp(1)\times
Sp(1)$ for the exceptional symmetric space $(E_6,
Sp(4)/\mathbf{Z}_2)$ of type EI to the subgroup $K_0=S^1\cdot
Spin(6)$ of the exceptional symmetric space $(E_6, U(1)\cdot
Spin(10))$ of type EIII.

Recall that
\begin{equation*}
\check{k}:=
\begin{pmatrix}
0&1&0&0\\
1&0&0&0\\
0&0&0&-1\\
0&0&1&0
\end{pmatrix}
\in \check{K}_{[\check{\mathfrak{a}}]}=(Sp(1)\times Sp(1))\cdot
\mathbf{Z}_4
\end{equation*}
is a generator of $\mathbf{Z}_4$. Its adjoint actions on
$g((H_3(\mathbf{K}^{\mathbf{C}}))_{\tau\gamma,\sigma})$ and
$g((H_3(\mathbf{K}^{\mathbf{C}}))_{\tau\gamma, -\sigma})$ are given
in the following:
\begin{equation*}
\begin{split}
&
\check{k}
\begin{pmatrix}
\frac{1}{2}(\xi_{1}+\xi_{2}+\xi_{3})&-a_{1}&0&0\\
-\bar{a}_{1}&\frac{1}{2}(\xi_{1}-\xi_{2}-\xi_{3})&0&0\\
0&0&\frac{1}{2}(-\xi_{1}+\xi_{2}-\xi_{3})&m_{1}\\
0&0&\bar{m}_{1}&\frac{1}{2}(-\xi_{1}-\xi_{2}+\xi_{3})
\end{pmatrix}
\check{k}^{-1}
\\
=&
\begin{pmatrix}
\frac{1}{2}(\xi_{1}-\xi_{2}-\xi_{3})&-\bar{a}_{1}&0&0\\
-a_{1}&\frac{1}{2}(\xi_{1}+\xi_{2}+\xi_{3})&0&0\\
0&0&\frac{1}{2}(-\xi_{1}-\xi_{2}+\xi_{3})&-\bar{m}_{1}\\
0&0&-m_{1}&-\frac{1}{2}(-\xi_{1}+\xi_{2}-\xi_{3})\\
\end{pmatrix},
\end{split}
\end{equation*}

\begin{equation*}
\begin{split}
&
\check{k}
\begin{pmatrix}
0&0&-a_{2}&-a_{3}\\
0&0&m_{3}&\bar{m}_{2}\\
-\bar{a}_{2}&\bar{m}_{3}&0&0\\
-\bar{a}_{3}&m_{2}&0&0
\end{pmatrix}
\check{k}^{-1}
=
\begin{pmatrix}
0&0&-\bar{m}_{2}&m_{3}\\
0&0&a_{3}&-a_{2}\\
-m_{2}&\bar{a}_{3}&0&0\\
\bar{m}_{3}&-\bar{a}_{2}&0&0\\
\end{pmatrix}.
\end{split}
\end{equation*}

Taking $(H_3(\mathbf{K}^{\mathbf
C}))_{\tau\gamma}=(H_3(\mathbf{K}^{\mathbf C}))_{\tau\gamma, \sigma}
\oplus (H_3(\mathbf{K}^{\mathbf C}))_{\tau\gamma,-\sigma}$ and
$H_3(\mathbf{K}^{\mathbf C})=((H_3(\mathbf{K}^{\mathbf
C}))_{\tau\gamma})^{\mathbf C}$ into account, together with the
above computation, we know that any element
\begin{equation*}
\left(
   \begin{array}{ccc}
     \xi_1 & x_3 & \bar{x}_2 \\
     \bar{x}_3 & \xi_2 & x_1 \\
     x_2 & \bar{x}_1 & \xi_3 \\
   \end{array}
 \right)
 = \left(
   \begin{array}{ccc}
     \xi_1 & m_3+\sqrt{-1}a_3\mathbf{e} & \bar{m}_2-\sqrt{-1}a_2\mathbf{e} \\
     \bar{m}_3-\sqrt{-1}a_3 \mathbf{e}& \xi_3 & m_1+\sqrt{-1}a_1\mathbf{e} \\
     m_2+\sqrt{-1}a_2\mathbf{e} & \bar{m}_1-\sqrt{-1}a_1\mathbf{e} & \xi_3 \\
   \end{array}
 \right)
\end{equation*}
 in $H_3(\mathbf{K}^{\mathbf C})$ is mapped by the adjoint action of $\check{k}$ up to isomorphism
to the element
\begin{eqnarray*}
&&\left(
   \begin{array}{ccc}
     \xi_1 & a_3-\sqrt{-1}m_3\mathbf{e} & -a_2-\sqrt{-1}\bar{m}_2\mathbf{e} \\
     \bar{a}_3+\sqrt{-1}m_3 \mathbf{e}& -\xi_2 & -\bar{m}_1+\sqrt{-1}\bar{a}_1\mathbf{e} \\
     -\bar{a}_2+\sqrt{-1}\bar{m}_2\mathbf{e} & -m_1-\sqrt{-1}\bar{a}_1\mathbf{e} & -\xi_3 \\
   \end{array}
 \right)\\
&=&\left(
   \begin{array}{ccc}
 \xi_1 & \sqrt{-1}(-\sqrt{-1}a_3-m_3\mathbf{e}) & -\sqrt{-1}(-\sqrt{-1}a_2+\bar{m}_2\mathbf{e}) \\
  \sqrt{-1}(-\sqrt{-1}\bar{a}_3 +m_3 \mathbf{e})& -\xi_2 & -(\bar{m}_1+\sqrt{-1}\overline{\bar{a}_1\mathbf{e}}) \\
  -\sqrt{-1}(-\sqrt{-1}\bar{a}_2-\bar{m}_2\mathbf{e})& -(m_1+\sqrt{-1}\bar{a}_1\mathbf{e}) & -\xi_3 \\
   \end{array}
 \right)\\
&=&\alpha_{23}(\pi)\circ (\alpha_1,\alpha_2,\alpha_3)(\left(
   \begin{array}{ccc}
     \xi_1 & x_3 & \bar{x}_2 \\
     \bar{x}_3 & \xi_2 & x_1 \\
     x_2 & \bar{x}_1 & \xi_3 \\
   \end{array}
 \right)),
\end{eqnarray*}
where
$\alpha_{1},\alpha_{2},\alpha_{3}\in{SO(\mathbf{K})\cong SO(8)}$ are
defined by
\begin{equation}\label{eq:generator_EIII}
\begin{split}
&\alpha_{1}(m_{1}+a_{1}\mathbf{e}):=-(\bar{m}_{1}-\bar{a}_{1}\mathbf{e}),\\
&\alpha_{2}(m_{2}+a_{2}\mathbf{e}):=-\bar{a}_{2}-\bar{m}_{2}\mathbf{e},\\
&\alpha_{3}(m_{3}+a_{3}\mathbf{e}):=-a_{3}-m_{3}\mathbf{e}.
\end{split}
\end{equation}
By simple computation,
we know $\alpha_{1}(m_{1}+a_{1}\mathbf{e}){\
}\alpha_{2}(m_{2}+a_{2}\mathbf{e})=\overline{
\alpha_{3}(\overline{(m_{1}+a_{1}\mathbf{e})(m_{2}+a_{2}\mathbf{e})})}$.
Hence, $(\alpha_1, \alpha_2, \alpha_3)\in Spin(8)$.
Notice that
\begin{equation*}
\begin{split}
&\alpha_{23}(\pi)(\alpha_{1},\alpha_{2},\alpha_{3})(u_2)
=\alpha_{23}(\alpha_2(u_2)) =\alpha_{23}(\pi) (-\mathbf{e} u_2) =
\sqrt{-1}\mathbf{e}u_2,
\\
&\alpha_{23}(\pi)(\alpha_{1},\alpha_{2},\alpha_{3})(\sqrt{-1}\mathbf{e}u_2)
=\alpha_{23}(\pi)( \alpha_2(-\sqrt{-1}\mathbf{e}u_2))
=\alpha_{23}(\pi)(\sqrt{-1}u_2) =-u_2.
\end{split}
\end{equation*}
It implies that
\begin{equation*}
\alpha_{23}(\pi)(\alpha_1,\alpha_2,\alpha_3)\in Spin(2)\cdot Spin(8)
\subset (U(1)\times (Spin(2)\cdot Spin(8)))/\mathbf{Z}_4=K_2
\end{equation*}
induces an isometry of the maximal abelian subspace $\mathfrak{a}$
of order $4$ which is a $\pi/2$-rotation of $\mathfrak{a}$, we
obtain
\begin{equation*}
\alpha_{23}(\pi)(\alpha_1,\alpha_2,\alpha_3)\in K_{[\mathfrak{a}]}
\end{equation*}
and it is a generator of $K_{[\mathfrak{a}]}/K_0\cong \mathbf{Z}_4$.


\subsection{Description of the Casimir operator}

Define $\langle u, v\rangle_{\mathfrak u}:=-\mathrm{tr}(uv)$ for
each $u,v\in \mathfrak{e}_6\subset
\mathfrak{gl}(H_3(\mathbf{K})^{\mathbf{C}})$. Now the restricted
root system is $\Sigma^{+}(U,K)=\{2\xi_1, 2\xi_2, \xi_1+\xi_2,
\xi_1-\xi_2, \xi_1,\xi_2\}$ and
\begin{equation*}
H_{\xi_1}=\frac{1}{12}(D(\bar{u}_2)+\sqrt{-1}R(c_4 u_2)), \quad
H_{\xi_2}=\frac{1}{12}(D(\bar{u}_2)-\sqrt{-1}R(c_4 u_2)).
\end{equation*}
With respect to $\langle\,,\,\rangle_{\mathfrak u}$, the lengths of
the restricted roots are given by
\begin{equation*}
\Vert{\gamma}\Vert_{\mathfrak u}^{2} = \frac{1}{3}, \frac{1}{6}
\text{ or } \frac{1}{12}.
\end{equation*}
Then the Casimir operator $\mathcal{C}_L$ with respect to the
induced metric $\mathcal{G}^{*}g^{\rm std}_{Q_{30}(\mathbf{C})}$ can
be expressed as
\begin{equation*}
\mathcal{C}_L=12C_{K/K_0}-6C_{K_2/K_0}-3C_{K_1/K_0},
\end{equation*}
where $C_{K/K_0}$, $C_{K_2/K_0}$ and $C_{K_1/K_0}$ are the Casimir
operators of homogeneous
spaces $K/K_0$, $K_2/K_0$ and $K_1/K_0$ with respect to the $K$
-invariant metric induced from the metric $\langle\, , \,
\rangle_{\mathfrak u}$ of $E_6$.

\subsection{Descriptions of $D(K)$, $D(K_2)$, $D(K_1)$ and $D(K_0)$}

A maximal torus $\tilde{T}^{5}$ of $Spin(10)$ can be given by
\begin{equation*}
\begin{split}
&\tilde{T}^{5}=
\{ \tilde{t} =(\cos{\theta_{1}}-e_{1}e_{2}\sin{\theta_{1}}) \cdot
(\cos{\theta_{2}}-e_{3}e_{4}\sin{\theta_{2}}) \cdot
(\cos{\theta_{3}}-e_{5}e_{6}\sin{\theta_{3}})
\\
&{\ } \cdot (\cos{\theta_{4}}-e_{7}e_{8}\sin{\theta_{4}}) \cdot
(\cos{\theta_{5}}-e_{9}e_{10}\sin{\theta_{5}})
\mid \theta_{i}\in{\mathbf R}{\ }(i=1,2,3,4,5) \}.
\end{split}
\end{equation*}
Under the standard universal ${\mathbf Z}_{2}$-covering map $p:
Spin(10)\rightarrow SO(10)$ defined by
\begin{equation*}
(p(\alpha)){\mathbf x}:=\alpha\cdot{\mathbf x}\cdot{^{t}\alpha}
\in{\mathbf R}^{10}\subset{Cl({\mathbf R}^{10})}
\end{equation*}
for each $\alpha\in{Spin(10)}$ and each ${\mathbf x}\in{\mathbf
R}^{10}$, an element of the maximal torus $\tilde{T}^5$ of
$Spin(10)$ is mapped to an element in the maximal torus $T^5$ of
$SO(10)$, namely,
\begin{equation*}
\begin{split}
& \tilde{T}^5 \ni(\cos{\theta_{1}}-e_{1}e_{2}\sin{\theta_{1}}) \cdot
(\cos{\theta_{2}}-e_{3}e_{4}\sin{\theta_{2}}) \cdot
(\cos{\theta_{3}}-e_{5}e_{6}\sin{\theta_{3}})
\\
& \cdot (\cos{\theta_{4}}-e_{7}e_{8}\sin{\theta_{4}}) \cdot
(\cos{\theta_{5}}-e_{9}e_{10}\sin{\theta_{5}})
\\
&\longmapsto \quad
\begin{pmatrix}
\begin{pmatrix}
\cos{2\theta_{1}}&-\sin{2\theta_{1}}\\
\sin{2\theta_{1}}&\cos{2\theta_{1}}
\end{pmatrix}
&0&0\\
0&\cdots&0\\
0&0&
\begin{pmatrix}
\cos{2\theta_{5}}&-\sin{2\theta_{5}}\\
\sin{2\theta_{5}}&\cos{2\theta_{5}}
\end{pmatrix}
\end{pmatrix} \in T^5.
\end{split}
\end{equation*}
Hence, we have the exponential map as follows:
\begin{equation*}
\begin{split}
\exp: \tilde{\mathfrak t} =&{\mathfrak t}=
\{(\theta_{1},\theta_{2},\theta_{3},\theta_{4},\theta_{5}) {\
}\vert{\ } \theta_{i}\in{\mathbf R}{\ }(i=1,2,3,4,5)
\}\\
\longrightarrow&\\
\tilde{T} =& \{ {\ }
(\cos{(\theta_{1}/2)}-e_{1}e_{2}\sin{(\theta_{1}/2)}) \cdot
(\cos{(\theta_{2}/2)}-e_{3}e_{4}\sin{(\theta_{2}/2)})
\\
&{\ } \cdot (\cos{(\theta_{3}/2)}-e_{5}e_{6}\sin{(\theta_{3}/2)})
\cdot (\cos{(\theta_{4}/2)}-e_{7}e_{8}\sin{(\theta_{4}/2)})
\\
&{\ } \cdot (\cos{(\theta_{5}/2)}-e_{9}e_{10}\sin{(\theta_{5}/2)})
\\
& {\ }\vert{\ } \theta_{i}\in{\mathbf R}{\ }(i=1,2,3,4,5) \}\subset
Spin(10).
\end{split}
\end{equation*}
Thus
\begin{equation*}
\begin{split}
&\Gamma(Spin(10)) =\{
\xi=(\theta_{1},\theta_{2},\theta_{3},\theta_{4},\theta_{5})
\in{\tilde{\mathfrak t}} {\ }\vert{\ }\exp(\xi)=e
\}\\
=&\{ \xi=2\pi{\ }(k_{1},k_{2},k_{3},k_{4},k_{5}) {\ }\vert{\ }
k_{i}\in{\mathbf Z}{\ }(i=1,2,3,4,5), \sum^{5}_{i=1}k_{i}\in
2{\mathbf Z} \}\subset\Gamma(SO(10)).
\end{split}
\end{equation*}
Denote by $y_i$ $(i=1,\cdots, 5)$ a linear functional $y_i:
\tilde{\mathfrak t} \ni \tilde{t} \mapsto \theta_i\in \mathbf{R}$.
Then
\begin{equation*}
\begin{split}
D(Spin(10))
&=\{ \Lambda=p_{1}y_{1}+p_{2}y_{2}+p_{3}y_{3}+p_{4}y_{4}+p_{5}y_{5}
\in{\mathfrak t}^{\ast}\\
& {\ }\vert{\ } (p_1, \cdots , p_5)\in
\mathbf{Z}^5+\varepsilon(1,1,1,1,1),
 \text{ where } \varepsilon=0 \text{ or }\frac{1}{2},{\ }\\
&\quad p_{1}\geq p_{2}\geq p_{3}\geq p_{4}\geq \vert{p_{5}}\vert \}
\supset D(SO(10)).
\end{split}
\end{equation*}

A maximal torus $T_K$ of $K=(U(1)\times Spin(10))/{\mathbf Z}_{4}$
can be given as follows:
\begin{equation*}
\begin{split}
T_K=&\{(e^{\sqrt{-1}\theta_{0}},
(\cos\frac{\theta_{1}}{2}-e_{1}e_{2}\sin\frac{\theta_{1}}{2})
(\cos\frac{\theta_{2}}{2}-e_{3}e_{4}\sin\frac{\theta_{2}}{2})\\
& (\cos\frac{\theta_{3}}{2}-e_{5}e_{6}\sin\frac{\theta_{3}}{2})
(\cos\frac{\theta_{4}}{2}-e_{7}e_{8}\sin\frac{\theta_{4}}{2})
(\cos\frac{\theta_{5}}{2}-e_{9}e_{10}\sin\frac{\theta_{5}}{2}))\\
& {\ }\vert{\ }\theta_0, \cdots, \theta_5 \in{\mathbf R}\} /{\mathbf
Z}_{4},
\end{split}
\end{equation*}
where $t_0=2\theta_0$, $t_1=\theta_1$,
$U(1)=\{\exp(t_{0}\sqrt{-1}R(2e_{1}-e_{2}-e_{3})){\ }\vert{\
}t_0\in{\mathbf R}\}$,
$Spin(2)=\{\exp(t_{1}\sqrt{-1}R(e_{2}-e_{3})){\ }\vert{\
}t_1\in{\mathbf R}\}$ and
\begin{equation*}
{\mathbf Z}_{4}: =\{(1,1),(-1,-1),
(\sqrt{-1},-e_{1}e_{2}\cdots e_{10}),(-\sqrt{-1},e_{1}e_{2}\cdots e_{10})\}. 
\end{equation*}
The corresponding maximal abelian subalgebra $\mathfrak{t}$ of
$\mathfrak{k}$ is
\begin{equation*}
{\mathfrak t}
=\{(\theta_{0},\theta_{1},\theta_{2},\theta_{3},\theta_{4},\theta_{5}){\
}\vert{\ } \theta_{i}\in{\mathbf R}{\ }(i=0,1,2,3,4,5) \}.
\end{equation*}
Then
\begin{equation*}
\begin{split}
\Gamma(K) =& \{ \xi =2\pi
(\frac{k_{0}}{2},k_{1},k_{2},k_{3},k_{4},k_{5})
+\pi\varepsilon(\frac{1}{2},1,1,1,1,1)
\\
&{\ }\vert{\ } k_{0},k_{1},k_{2},k_{3},k_{4},k_{5}\in{\mathbf Z},{\
} \varepsilon=0\text{ or }1, {\ } \sum^{5}_{\alpha=0}k_{\alpha}\in
2{\mathbf Z} \},
\end{split}
\end{equation*}
\begin{equation*}
\begin{split}
D(K)
=&D((U(1)\times Spin(10))/{\mathbf Z}_{4})\\
=&\{ \Lambda
=p_{0}y_{0}+p_{1}y_{1}+p_{2}y_{2}+p_{3}y_{3}+p_{4}y_{4}+p_{5}y_{5}\in{\mathfrak
t}^{\ast}
{\ }\vert{\ }\\
& \frac{1}{2}p_{0}+p_{1}+p_{2}+p_{3}+p_{4}+p_{5}\in 2{\mathbf Z}, {\
}
p_{0}\in{\mathbf Z}, \\
&(p_{1},p_{2},p_{3},p_{4},p_{5}) \in{\mathbf
Z}^{5}+\varepsilon(1,1,1,1,1),
{\ }\varepsilon=0\text{ or }\frac{1}{2},\\
&\quad p_{1}\geq p_{2}\geq p_{3}\geq p_{4}\geq \vert{p_{5}}\vert{\ }
\}.
\end{split}
\end{equation*}

Since $T_{K}$ is also a maximal torus of $K_2=(U(1)\times
(Spin(2)\cdot Spin(8)))/\mathbf{Z}_4\subset K$,
$\Gamma(K_2)=\Gamma(K)$ 
and
\begin{equation*}
\begin{split}
D(K_{2})
=&D((U(1)\times Spin(2)\cdot Spin(8))/{\mathbf Z}_{4}) \\
=&\{ \Lambda
=p_{0}y_{0}+p_{1}y_{1}+p_{2}y_{2}+p_{3}y_{3}+p_{4}y_{4}+p_{5}y_{5}\in{\mathfrak
t}^{\ast}
{\ }\vert{\ }\\
& \frac{1}{2}p_{0}+p_{1}+p_{2}+p_{3}+p_{4}+p_{5}\in 2{\mathbf Z}, {\
}
p_{0}\in{\mathbf Z}, \\
&(p_{1},p_{2},p_{3},p_{4},p_{5}) \in{\mathbf
Z}^{5}+\varepsilon(1,1,1,1,1),
{\ }\varepsilon=0\text{ or }\frac{1}{2},\\
&\quad p_{2}\geq p_{3}\geq p_{4}\geq \vert{p_{5}}\vert{\ } \}.
\end{split}
\end{equation*}

On the other hand, $K_2=(S^1\times (Spin(2)\cdot
Spin(8)))/\mathbf{Z}_4$, where
\begin{equation*}
\begin{split}
S^{1}&=\{\exp(\hat{t}_{0}\sqrt{-1}R(-e_{1}+2e_{2}-e_{3})){\ }\vert{\ }\hat{t}_0\in{\mathbf R}\},\\
Spin(2)&=\{\exp(\hat{t}_{1}\sqrt{-1}R(e_{3}-e_{1})){\ }\vert{\
}\hat{t}_1\in{\mathbf R}\}
\end{split}
\end{equation*}
and here $Spin(2)\cdot Spin(8)\subset (E_6)_{e_2}\cong Spin(10)$.
Since
\begin{equation*}
\begin{split}
&\exp(t_0\sqrt{-1}R(2e_1-e_2-e_3))\cdot \exp(t_1\sqrt{-1}R(e_2-e_3))\\
& \quad =\exp(-\frac{t_0-t_1}{2}\sqrt{-1}R(-e_1+2e_2-e_3))\cdot
\exp(-\frac{3t_0+t_1}{2}\sqrt{-1}R(e_3-e_1)),
\end{split}
\end{equation*}
one can take
$\hat{t}_{0}=-\frac{t_{0}-t_{1}}{2}$, 
$\hat{t}_{1}=-\frac{3t_{0}+t_{1}}{2}$ such that the maximal torus
$T_{K_2}=T_K$ of $K_2$ can also be described as
\begin{equation*}
\begin{split}
\hat{T}_{K_2}&=T_{K_2}= T_K
=\{\hat{t}=(e^{\sqrt{-1}\hat{\theta}_{0}},
(\cos\frac{\hat{\theta}_{1}}{2}-e_{1}e_{2}\sin\frac{\hat{\theta}_{1}}{2})
(\cos\frac{\hat{\theta}_{2}}{2}-e_{3}e_{4}\sin\frac{\hat{\theta}_{2}}{2})\\
&
(\cos\frac{\hat{\theta}_{3}}{2}-e_{5}e_{6}\sin\frac{\hat{\theta}_{3}}{2})
(\cos\frac{\hat{\theta}_{4}}{2}-e_{7}e_{8}\sin\frac{\hat{\theta}_{4}}{2})
(\cos\frac{\hat{\theta}_{5}}{2}-e_{9}e_{10}\sin\frac{\hat{\theta}_{5}}{2}))\\
& {\ }\vert{\ }\hat{\theta}_0, \cdots, \hat{\theta}_5 \in{\mathbf
R}\} /{\mathbf Z}_{4},
\end{split}
\end{equation*}
where $\hat{\theta}_0=\hat{t}_0/2$, $\hat{\theta}_1=\hat{t}_1$.
Taking account of the triality of
$Spin(8)=(E_6)_{e_1,e_2,e_3}\subset (E_6)_{e_1}\cong
(E_6)_{e_2}\cong Spin(10)$, we choose a new basis $\hat{y}_i:
\hat{t} \mapsto \hat{\theta}_i$ for $\mathfrak{t}^*$ satisfying
\begin{equation*}
\begin{split}
\hat{y}_{0}
&=-\frac{1}{2}y_{0}+\frac{1}{4}y_{1},\quad
\hat{y}_{1}
=-3y_{0}-\frac{1}{2}y_{1},\quad
\hat{y}_{2}
:=
\frac{1}{2}(y_{2}+y_{3}+y_{4}+y_{5}),\\
\hat{y}_{3}
&:= \frac{1}{2}(y_{2}+y_{3}-y_{4}-y_{5}),\quad
\hat{y}_{4}
:=\frac{1}{2}(y_{2}-y_{3}+y_{4}-y_{5}),\\
\hat{y}_{5}
&:=\frac{1}{2}(-y_{2}+y_{3}+y_{4}-y_{5}).
\end{split}
\end{equation*}
Thus any weight $\Lambda=p_{0}y_{0}+p_{1}y_{1}+p_{2}y_{2}+
p_{3}y_{3}+p_{4}y_{4}+p_{5}y_{5} \in D(K_{2})$ can also be written
as $\Lambda =\hat{p}_{0}\hat{y}_{0}+\hat{p}_{1}\hat{y}_{1}
+\hat{p}_{2}\hat{y}_{2}+\hat{p}_{3}\hat{y}_{3}
+\hat{p}_{4}\hat{y}_{4}+\hat{p}_{5}\hat{y}_{5}$, where
\begin{equation*}
\begin{split}
\hat{p}_{0}=&-\frac{1}{2}p_{0}+3p_{1},\quad
\hat{p}_{1}=-\frac{1}{4}p_{0}-\frac{1}{2}p_{1},\quad
\hat{p}_{2}=\frac{1}{2}
(p_{2}+p_{3}+p_{4}+p_{5}),\\
\hat{p}_{3}=&\frac{1}{2} (p_2+p_3-p_4-p_5),\quad
\hat{p}_{4}=\frac{1}{2}(p_2-p_3+p_4-p_5),\\
\hat{p}_{5}=&\frac{1}{2}(-p_2+p_3+p_4-p_5).
\end{split}
\end{equation*}
Thus $D(K_2)$ has the following another expression:
\begin{equation*}
\begin{split}
D(K_{2})
=&D((S^1 \times Spin(2)\cdot Spin(8))/{\mathbf Z}_{4}) \\
=&\{ \Lambda
=\hat{p}_{0}\hat{y}_{0}+\hat{p}_{1}\hat{y}_{1}+\hat{p}_{2}\hat{y}_{2}
+\hat{p}_{3}\hat{y}_{3}+\hat{p}_{4}\hat{y}_{4}+\hat{p}_{5}\hat{y}_{5}
\in{\mathfrak t}^{\ast}
{\ }\vert{\ }\\
&
\frac{1}{2}\hat{p}_{0}+\hat{p}_{1}+\hat{p}_{2}+\hat{p}_{3}+\hat{p}_{4}+\hat{p}_{5}\in
2{\mathbf Z}, {\ }
\hat{p}_{0}\in{\mathbf Z}, \\
&(\hat{p}_{1},\hat{p}_{2},\hat{p}_{3},\hat{p}_{4},\hat{p}_{5})
\in{\mathbf Z}^{5}+\varepsilon(1,1,1,1,1),
{\ }\varepsilon=0\text{ or }\frac{1}{2},\\
&\quad \hat{p}_{2}\geq \hat{p}_{3}\geq \hat{p}_{4}\geq
\vert{\hat{p}_{5}}\vert{\ } \}.
\end{split}
\end{equation*}

Notice that the subgroup $K_1=(S^1\times (Spin(2)\cdot (Spin(2)\cdot
Spin(6))))/\mathbf{Z}_4$ also has the same maximal torus
$T_{K_1}=\hat{T}_{K_2}=T_{K_2}=T_K$
and the corresponding maximal abelian subalgebra
$\mathfrak{t}_{\mathfrak{k}_1}$ of $\mathfrak{k}_1$ is
\begin{equation*}
{\mathfrak t}_{\mathfrak{k}_1}=\hat{\mathfrak{t}}_{\mathfrak{k}_2}
=\{(\hat{\theta}_{0},\hat{\theta}_{1},\hat{\theta}_{2},\hat{\theta}_{3},\hat{\theta}_{4},\hat{\theta}_{5})
{\ }\vert{\ } \hat{\theta}_{i}\in{\mathbf R}{\ }(i=0,1,2,3,4,5)
\}
=\mathfrak{t}_{\mathfrak{k}_2}=\mathfrak{t},
\end{equation*}
we get
\begin{equation*}
\begin{split}
D(K_{1})
=&\{ \Lambda
=\hat{p}_{0}\hat{y}_{0}+\hat{p}_{1}\hat{y}_{1}+\hat{p}_{2}\hat{y}_{2}
+\hat{p}_{3}\hat{y}_{3}+\hat{p}_{4}\hat{y}_{4}+\hat{p}_{5}\hat{y}_{5}
\in \mathfrak{t}^{\ast}_{\mathfrak{k}_1}={\mathfrak t}^{\ast}
{\ }\vert{\ }\\
&
\frac{1}{2}\hat{p}_{0}+\hat{p}_{1}+\hat{p}_{2}+\hat{p}_{3}+\hat{p}_{4}+\hat{p}_{5}\in
2{\mathbf Z}, {\ }
\hat{p}_{0}\in{\mathbf Z}, \\
&(\hat{p}_{1}, \hat{p}_{2}, \hat{p}_{3}, \hat{p}_{4}, \hat{p}_{5})
\in{\mathbf Z}^{5}+\varepsilon(1,1,1,1,1),
{\ }\varepsilon=0\text{ or }\frac{1}{2},\\
&\quad \hat{p}_{3}\geq \hat{p}_{4}\geq \vert{\hat{p}_{5}}\vert{\ }
\}.
\end{split}
\end{equation*}

Finally, the maximal torus of $K_{0}=(S^{1}\times Spin(6))/{\mathbf
Z}_{2}$ is given as follows:
\begin{equation*}
\begin{split}
T_{K_{0}} =& \{ ({e^{\sqrt{-1}\hat{\theta}_{0}}},
(\cos\frac{\hat{\theta}_{3}}{2}-e_{5}e_{6}\sin\frac{\hat{\theta}_{3}}{2})
(\cos\frac{\hat{\theta}_{4}}{2}-e_{7}e_{8}\sin\frac{\hat{\theta}_{4}}{2})\\
&\quad
(\cos\frac{\hat{\theta}_{5}}{2}-e_{9}e_{10}\sin\frac{\hat{\theta}_{5}}{2})
) {\ }\vert{\ } \hat{\theta}_{i}\in{\mathbf R}{\ }(i=0,3,4,5)
\}/{\mathbf Z}_{2}{\ } \subset \hat{T}_{K_2}=T_K
\end{split}
\end{equation*}
and the corresponding maximal abelian subalgebra of ${\mathfrak
k}_{0}$ is
\begin{equation*}
{\mathfrak t}_{{\mathfrak k}_{0}} =\{(\hat{\theta}_{0}, 0, 0,
\hat{\theta}_{3}, \hat{\theta}_{4}, \hat{\theta}_{5}){\ }\vert{\ }
\hat{\theta}_{i}\in{\mathbf R}{\ }(i=0,3,4,5) \} \subset
\mathfrak{t}_{\mathfrak{k}_2}=\mathfrak{t}.
\end{equation*}
Then
\begin{equation*}
\begin{split}
D(K_{0})
=& \{ \Lambda
=\hat{q}_{0}\hat{y}_{0}+\hat{q}_{3}\hat{y}_{3}+\hat{q}_{4}\hat{y}_{4}+\hat{q}_{5}\hat{y}_{5}
\in{\mathfrak t}^{\ast}_{{\mathfrak k}_{0}}
{\ }{\ }\vert{\ }\\
& \frac{1}{2}\hat{q}_{0}+\hat{q}_{3}+\hat{q}_{4}+\hat{q}_{5}\in
2{\mathbf Z}, {\ }
\hat{q}_{0}\in{\mathbf Z}, \\
& (\hat{q}_{3}, \hat{q}_{4}, \hat{q}_{5})\in{\mathbf Z}^{3}
+\varepsilon(1,1,1), \varepsilon=0\text{ or }\frac{1}{2},{\ }
\\
&\quad \hat{q}_{3}\geq \hat{q}_{4}\geq\vert{\hat{q}_{5}}\vert{\ }
\}.
\end{split}
\end{equation*}

\subsection{Branching Laws}

Based on the branching laws of $(SO(2n+2), SO(2)\times SO(2n))$
obtained by Tsukamoto (\cite{Tsukamoto}), we formulate the following
branching laws.
\begin{lem}[Branching Law of $(Spin(10), Spin(2)\cdot Spin(8))$]
For each
\begin{equation*}
\Lambda=p_{1}y_{1}+p_{2}y_{2}+p_{3}y_{3}+p_{4}y_{4}+\delta
p_{5}y_{5} \in D(Spin(10)),
\end{equation*}
with $\delta=1\text{ or }-1$ and
\begin{equation*}
\begin{split}
&(p_{1},p_{2},p_{3},p_{4},p_{5})\in {\mathbf
Z}^{5}+\varepsilon(1,1,1,1,1), {\ }\varepsilon=0 \text{ or
}\frac{1}{2},
\\
&p_{1}\geq p_{2}\geq p_{3}\geq p_{4}\geq p_{5}\geq{0},
\end{split}
\end{equation*}
$V_{\Lambda}$ contains an irreducible $Spin(2)\cdot Spin(8)$-module
with the highest weight
\begin{equation*}
\Lambda^{\prime}
=q_{1}y_{1}+q_{2}y_{2}+q_{3}y_{3}+q_{4}y_{4}+\delta^{\prime}
q_{5}y_{5} \in D(Spin(2)\cdot Spin(8))
\end{equation*}
with $\delta^{\prime}=1\text{ or }-1$ and
\begin{equation*}
\begin{split}
&(q_{1},q_{2},q_{3},q_{4},q_{5})\in{\mathbf Z}^{5}
+\varepsilon(1,1,1,1,1), {\ }\varepsilon=0 \text{ or }\frac{1}{2},
\\
&q_{2}\geq q_{3}\geq q_{4}\geq q_{5}\geq{0},
\end{split}
\end{equation*}
if and only if $\Lambda^{\prime}$
satisfies the following conditions:
\begin{enumerate}
\item
\begin{equation*}
\begin{split}
&p_{1}+1> q_{2}> p_{3}-1,\\
&p_{2}+1> q_{3}> p_{4}-1,\\
&p_{3}+1> q_{4}> p_{5}-1,\\
&p_{4}+1> q_{5}\geq 0.
\end{split}
\end{equation*}
\item
The coefficient of $X^{q_{1}}$ in the following power series
expansion in $X$ of
\begin{equation*}
X^{\delta\delta^{\prime}\ell_{5}}
(\prod^{4}_{i=1}\frac{X^{\ell_{i}+1}-X^{-\ell_{i}-1}}{X-X^{-1}})
\end{equation*}
does not vanish. Here
\begin{equation*}
\begin{split}
&\ell_{1}:=p_{1}-\max\{p_{2},q_{2}\},\\
&\ell_{2}:=\min\{p_{2},q_{2}\}-\max\{p_{3},q_{3}\},\\
&\ell_{3}:=\min\{p_{3},q_{3}\}-\max\{p_{4},q_{4}\},\\
&\ell_{4}:=\min\{p_{4},q_{4}\}-\max\{p_{5},q_{5}\},\\
&\ell_{5}:=\min\{p_{5},q_{5}\}.
\end{split}
\end{equation*}
\end{enumerate}
Moreover its multiplicity is equal to the coefficient of
$X^{q_{1}}$.
\end{lem}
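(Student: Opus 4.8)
The plan is to deduce this branching law from Tsukamoto's branching laws for the compact pair $(SO(2p+2),SO(2)\times SO(2p))$, quoted above as Lemma \ref{BranchingLawSO(2p+2)SO(2)XSO(2p)}, specialized to $p=4$, after a careful bookkeeping of the passage to the simply connected covers. First I would record that $Spin(10)$ and its subgroup $Spin(2)\cdot Spin(8)\cong(Spin(2)\times Spin(8))/\mathbf{Z}_2$ share a common maximal torus (so that branching is a statement about $T$-weight multiplicities governed by the Weyl character formula for $\mathfrak{so}(10)$ versus $\mathfrak{so}(2)\oplus\mathfrak{so}(8)$), and that under the standard $\mathbf{Z}_2$-covering $p\colon Spin(10)\to SO(10)$ the connected subgroup $p(Spin(2)\cdot Spin(8))$ stabilizes the splitting $\mathbf{R}^2\oplus\mathbf{R}^8$ and, by a dimension count ($1+28=29$), equals $SO(2)\times SO(8)$, with the $SO(2)$ factor lying in the $y_1$-direction. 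Consequently the highest weights of all relevant modules lie in $\mathbf{Z}^5+\varepsilon(1,1,1,1,1)$ with $\varepsilon\in\{0,\tfrac12\}$, and the two families $\varepsilon=0$ and $\varepsilon=\tfrac12$ behave quite differently and must be handled separately.

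For $\varepsilon=0$ the highest weight $\Lambda$ is a genuine $SO(10)$-weight, $V_\Lambda$ factors through $p$, and $V_\Lambda|_{Spin(2)\cdot Spin(8)}$ coincides with the restriction of the corresponding $SO(10)$-module to $SO(2)\times SO(8)$. Here I would simply invoke Lemma \ref{BranchingLawSO(2p+2)SO(2)XSO(2p)} with $p=4$ under the dictionary $(h_0,h_1,h_2,h_3,\epsilon h_4)\leftrightarrow(q_1,p_1,p_2,p_3,\delta p_5)$ is not quite the right grouping; more precisely $(h_0,h_1,h_2,h_3,\epsilon h_4)\leftrightarrow(q_1,p_1,p_2,p_3,\dots)$ must be replaced by the correct matching $(h_0,h_1,h_2,h_3,\epsilon h_4)$ of the $SO(10)$-weight with $(p_1,p_2,p_3,p_4,\delta p_5)$ and of the $SO(2)\times SO(8)$-weight $(k_0,k_1,k_2,k_3,\epsilon' k_4)$ with $(q_1,q_2,q_3,q_4,\delta' q_5)$. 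A direct comparison then shows that Tsukamoto's interlacing conditions $h_{i-1}\ge k_i\ge h_{i+1}$ $(1\le i\le 3)$ and $h_3\ge k_4\ge 0$ become exactly $p_1+1>q_2>p_3-1$, $p_2+1>q_3>p_4-1$, $p_3+1>q_4>p_5-1$, $p_4+1>q_5\ge 0$ (the strict-inequality form being the uniform way to encode interlacing within the fixed coset $\mathbf{Z}^5+\varepsilon(1,1,1,1,1)$), that his $\ell_0,\dots,\ell_4$ are our $\ell_1,\dots,\ell_5$ after the evident index shift, and that the coefficient of $X^{k_0}$ in his generating function is the coefficient of $X^{q_1}$ in ours, including the prefactor $X^{\epsilon\epsilon'\ell_4}\leftrightarrow X^{\delta\delta'\ell_5}$; this transports the multiplicity statement as well.

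For $\varepsilon=\tfrac12$ the modules are genuine spin representations and do not descend to $SO$, so a separate argument is needed. I see two routes. The cleaner one is to note that Tsukamoto's derivation is purely weight-combinatorial — it extracts the $SO(2)$-isotypic pieces of $V_\Lambda|_{SO(2)\times SO(2p)}$ from the $SO(2p)$-branching by manipulating characters on the common torus — and therefore goes through for the simply connected covers once dominant highest weights are allowed to lie in $\mathbf{Z}^{p+1}+\tfrac12(1,\dots,1)$, yielding the same generating-function rule now with half-integer exponents. The alternative is a reduction: tensoring with a fixed half-spin module $\Delta^{\mp}_{10}$ of $Spin(10)$ turns $V_\Lambda\otimes\Delta^{\mp}_{10}$ into an integer-weight module, restriction commutes with this tensoring, and $\Delta^{\mp}_{10}|_{Spin(2)\cdot Spin(8)}$ is the explicit (multiplicity-one) sum $\Delta^{\mp}_{8}\otimes(\mp\tfrac12 y_1)\oplus\Delta^{\pm}_{8}\otimes(\pm\tfrac12 y_1)$, so the spin-branching can be recovered from the already-settled $\varepsilon=0$ case by dividing out these spinor factors.

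The main obstacle I anticipate is not the combinatorics, which is routine once set up, but the two normalization checks at the start: (i) confirming that the subgroup written $Spin(2)\cdot Spin(8)$ — whose concrete realization inside $(E_6)_{e_1}\cong Spin(10)$ was built above from the generators of the $Spin(2)$- and $Spin(8)$-factors — is conjugate in $Spin(10)$ to the standard $SO(2)\times SO(8)$-type subgroup for which Tsukamoto's result is stated, with the $SO(2)$ factor in the $y_1$-slot rather than some other coordinate; and (ii) verifying that the half-integer case is genuinely covered by the straightforward extension of Tsukamoto's method and not requiring a new input. Once these are in place, merging the two cases into the stated uniform form with the parameter $\varepsilon$ is immediate.
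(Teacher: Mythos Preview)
Your approach is essentially the same as the paper's: the paper states the lemma with the preface ``Based on the branching laws of $(SO(2n+2), SO(2)\times SO(2n))$ obtained by Tsukamoto (\cite{Tsukamoto}), we formulate the following branching laws'' and gives no further argument, so your derivation from Lemma~\ref{BranchingLawSO(2p+2)SO(2)XSO(2p)} at $p=4$ is exactly what is intended. Your explicit treatment of the half-integer case $\varepsilon=\tfrac12$ --- either by noting that Tsukamoto's character manipulation is insensitive to the lattice shift, or by the tensoring trick with $\Delta^{\pm}_{10}$ --- supplies detail the paper omits; the first route is the more direct one and suffices.
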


\begin{lem}[Branching Law of $(Spin(8),Spin(2)\cdot Spin(6))$]
For each
\begin{equation*}
\Lambda=p_{2}y_{2}+p_{3}y_{3}+p_{4}y_{4}+\delta p_{5}y_{5} \in
D(Spin(8)),
\end{equation*}
with $\delta=1\text{ or }-1$ and
\begin{equation*}
\begin{split}
&(p_{2},p_{3},p_{4},p_{5})\in {\mathbf Z}^{4}+\varepsilon(1,1,1,1),
{\ }\varepsilon= 0\text{ or }\frac{1}{2},
\\
&p_{2}\geq p_{3}\geq p_{4}\geq p_{5}\geq{0},
\end{split}
\end{equation*}
$V_{\Lambda}$ contains an irreducible $Spin(2)\cdot Spin(6)$-module
with the highest weight
\begin{equation*}
\Lambda^{\prime} =q_{2}y_{2}+q_{3}y_{3}+q_{4}y_{4}+\delta^{\prime}
p_{5}y_{5} \in D(Spin(2)\cdot Spin(6))
\end{equation*}
with $\delta^{\prime}=1\text{ or }-1$ and
\begin{equation*}
\begin{split}
&(q_{2},q_{3},q_{4},q_{5})\in{\mathbf Z}^{4} +\varepsilon(1,1,1,1),
{\ }\varepsilon=0\text{ or }\frac{1}{2},
\\
&q_{3}\geq q_{4}\geq q_{5}\geq{0}.
\end{split}
\end{equation*}
if and only if $\Lambda^{\prime}$
satisfies the following conditions:
\begin{enumerate}
\item
\begin{equation*}
\begin{split}
&p_{2}+1> q_{3}> p_{4}-1,\\
&p_{3}+1> q_{4}> p_{5}-1,\\
&p_{4}+1> q_{5}\geq 0.
\end{split}
\end{equation*}
\item
The coefficient of $X^{q_{2}}$
\begin{equation*}
X^{\delta\delta^{\prime}\ell_{5}}
(\prod^{4}_{i=2}\frac{X^{\ell_{i}+1}-X^{-\ell_{i}-1}}{X-X^{-1}})
\end{equation*}
does not vanish. Here
\begin{equation*}
\begin{split}
&\ell_{2}:=p_{2}-\max\{p_{3},q_{3}\},\\
&\ell_{3}:=\min\{p_{3},q_{3}\}-\max\{p_{4},q_{4}\},\\
&\ell_{4}:=\min\{p_{4},q_{4}\}-\max\{p_{5},q_{5}\},\\
&\ell_{5}:=\min\{p_{5},q_{5}\}.
\end{split}
\end{equation*}
\end{enumerate}
Moreover its multiplicity is equal to the coefficient of
$X^{q_{2}}$.
\end{lem}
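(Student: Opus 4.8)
The plan is to deduce this branching law from Tsukamoto's branching law for the symmetric pair $(SO(2p+2),SO(2)\times SO(2p))$, recorded in Lemma~\ref{BranchingLawSO(2p+2)SO(2)XSO(2p)}, specialized to $p=3$, together with the passage from the orthogonal groups to their spin double covers.

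First I would fix the group picture. The vector representation realizes $Spin(8)$ as the double cover of $SO(8)$, and under this cover the subgroup $Spin(2)\cdot Spin(6)$ is carried onto $SO(2)\times SO(6)$, the cover restricting to a double cover $Spin(2)\cdot Spin(6)\to SO(2)\times SO(6)$ with the same kernel; in particular the nontrivial spin-kernel element of $Spin(8)$ lies in $Spin(2)\cdot Spin(6)$. Consequently an irreducible $Spin(8)$-module $V_\Lambda$ and each of its irreducible $Spin(2)\cdot Spin(6)$-constituents $V_{\Lambda'}$ have highest weights with the same value of $\varepsilon\in\{0,\tfrac12\}$: either both factor through the $SO$-groups ($\varepsilon=0$) or both are genuine spin representations ($\varepsilon=\tfrac12$). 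This is the structural fact that makes the statement internally consistent.

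In the case $\varepsilon=0$ the restriction problem is literally that of the symmetric pair $(SO(8),SO(2)\times SO(6))$ (the pair underlying $\widetilde{Gr}_2(\mathbf{R}^8)$, which already occurs in the EIII computation), so I would invoke Lemma~\ref{BranchingLawSO(2p+2)SO(2)XSO(2p)} with $p=3$ under the relabelling $(\varepsilon_0,\varepsilon_1,\varepsilon_2,\varepsilon_3)\leftrightarrow(y_2,y_3,y_4,y_5)$, i.e.\ $(h_0,h_1,h_2,h_3)=(p_2,p_3,p_4,p_5)$ and $(k_0,k_1,k_2,k_3)=(q_2,q_3,q_4,q_5)$, with $k_0\varepsilon_0$ the $SO(2)$-weight, $(k_1,k_2,k_3)$ the $SO(6)$-weight, and chiralities $\epsilon=\delta$, $\epsilon'=\delta'$. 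A direct substitution turns Tsukamoto's inequalities $h_{i-1}\ge k_i\ge h_{i+1}$ $(1\le i\le 2)$ and $h_2\ge k_3\ge0$ into condition~(1), and turns $l_0=h_0-\max\{h_1,k_1\}$, $l_i=\min\{h_i,k_i\}-\max\{h_{i+1},k_{i+1}\}$, $l_3=\min\{h_3,k_3\}$ together with the non-vanishing of the coefficient of $X^{k_0}$ in $X^{\epsilon\epsilon'l_3}\prod_{i=0}^{2}(X^{l_i+1}-X^{-l_i-1})/(X-X^{-1})$ into condition~(2), with $\ell_i$ corresponding to $l_{i-2}$ and $X^{q_2}$ to $X^{k_0}$; the multiplicity assertion transfers verbatim. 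For $\varepsilon=\tfrac12$ one may either observe that Tsukamoto's proof of Lemma~\ref{BranchingLawSO(2p+2)SO(2)XSO(2p)} is insensitive to whether the weights involved are integral or half-integral, so that it applies word for word to $Spin(8)$ and $Spin(2)\cdot Spin(6)$, or else reduce to $\varepsilon=0$ by subtracting from $\Lambda$ and $\Lambda'$ the highest weights of the pertinent half-spin representations and tensoring back, an operation leaving the differences $\ell_i$ and the exponents in condition~(2) unchanged.

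The main obstacle is the bookkeeping of the two chirality signs, and specifically the verification that the correction factor is exactly $X^{\delta\delta'\ell_5}$ with $\ell_5=\min\{p_5,q_5\}$. This encodes the outer $\mathbf{Z}_2$ of the $D_4$ (resp.\ $D_3$) Dynkin diagram, i.e.\ the choice of orientation of $\mathbf{R}^8$ (resp.\ of the $\mathbf{R}^6$-block): one must check that under $SO(2)\times SO(6)\subset SO(8)$ the orientation of $\mathbf{R}^8$ is the product of the fixed orientation of the $\mathbf{R}^2$-block and that of the $\mathbf{R}^6$-block, so that the two spinor weights of $Spin(8)$ pair with those of $Spin(6)$ exactly as recorded by the sign $\delta\delta'$ entering through the overlap term $\ell_5$. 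Granting this compatibility the remaining verifications are routine, and the companion Branching Law of $(Spin(10),Spin(2)\cdot Spin(8))$ follows in the same way upon taking $p=4$.
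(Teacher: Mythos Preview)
Your approach is essentially the paper's: the paper simply states that these two branching laws are ``formulated'' based on Tsukamoto's branching laws for $(SO(2n+2), SO(2)\times SO(2n))$ and gives no further proof, so your derivation from Lemma~\ref{BranchingLawSO(2p+2)SO(2)XSO(2p)} with $p=3$ together with the passage from $SO$ to $Spin$ is exactly the intended argument, carried out in more detail than the paper itself provides.
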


\subsection{Description of $D(K,K_{0})$}

Let
\begin{equation*}
\begin{split}
\Lambda&= p_{0}y_{0}+p_{1}y_{1}+p_{2}y_{2}+p_{3}y_{3}+p_{4}y_{4}
+{\epsilon}p_{5}y_{5}
\in{D(K)},\\
\Lambda^{\prime}&= p^{\prime}_{0}y_{0}+p^{\prime}_{1}y_{1}
+p^{\prime}_{2}y_{2}+p^{\prime}_{3}y_{3}+p^{\prime}_{4}y_{4}
+{\epsilon}^{\prime}p^{\prime}_{5}y_{5}
\\
&= \hat{p}^{\prime}_{0}\hat{y}_{0}+\hat{p}^{\prime}_{1}\hat{y}_{1}
+\hat{p}^{\prime}_{2}\hat{y}_{2}+\hat{p}^{\prime}_{3}\hat{y}_{3}
+\hat{p}^{\prime}_{4}\hat{y}_{4}
+\hat{\epsilon}^{\prime}\hat{p}^{\prime}_{5}\hat{y}_{5}
\in{D(K_{2})},\\
\Lambda^{\prime\prime}&=
\hat{p}^{\prime\prime}_{0}\hat{y}_{0}+\hat{p}^{\prime\prime}_{1}\hat{y}_{1}
+\hat{p}^{\prime\prime}_{2}\hat{y}_{2}+\hat{p}^{\prime\prime}_{3}\hat{y}_{3}
+\hat{p}^{\prime\prime}_{4}\hat{y}_{4}
+\hat{\epsilon}^{\prime\prime}\hat{p}^{\prime\prime}_{5}\hat{y}_{5}
\in{D(K_{1})},\\
\Lambda^{\prime\prime\prime}&=
\hat{p}^{\prime\prime\prime}_{0}\hat{y}_{0}
+\hat{p}^{\prime\prime\prime}_{3}\hat{y}_{3}
+\hat{p}^{\prime\prime\prime}_{4}\hat{y}_{4}
+\hat{\epsilon}^{\prime\prime\prime}\hat{p}^{\prime\prime\prime}_{5}\hat{y}_{5}
\in{D(K_{0})}.
\end{split}
\end{equation*}

Assume that the corresponding representation spaces satisfy
\begin{equation*}
V_{\Lambda} \supset W_{\Lambda^{\prime}} \supset
U_{\Lambda^{\prime\prime}} =
U_{\Lambda^{\prime\prime\prime}}\not=\{0\}.
\end{equation*}
Suppose that $U_{\Lambda^{\prime\prime\prime}}\not=\{0\}$ is a
trivial representation of $K_{0}$, that is,
$\Lambda^{\prime\prime\prime}=0$.
Then we have
\begin{equation*}
\begin{split}
&\hat{p}^{\prime\prime\prime}_{0}
=\hat{p}^{\prime\prime}_{0}=0,\quad \hat{p}^{\prime\prime\prime}_{3}
=\hat{p}^{\prime\prime}_{3}=0, \quad 
\hat{p}^{\prime\prime\prime}_{4} =\hat{p}^{\prime\prime}_{4}=0,
\quad \hat{p}^{\prime\prime\prime}_{5}
=\hat{p}^{\prime\prime}_{5}=0.
\end{split}
\end{equation*}
Thus
$\Lambda^{\prime\prime}=\hat{p}^{\prime\prime}_{1}\hat{y}_{1}+\hat{p}^{\prime\prime}_{2}\hat{y}_{2}
\in{D(K_{1})}$ with $\hat{p}^{\prime\prime}_{1}$,
$\hat{p}^{\prime\prime}_{2}\in{\mathbf Z}$,
$\hat{p}^{\prime\prime}_{1}+\hat{p}^{\prime\prime}_{2}\in 2{\mathbf
Z}$.

By the branching law of $(Spin(8), Spin(2)\cdot Spin(6))$, we get
\begin{equation*}
\begin{split}
&\hat{p}^{\prime}_{2}\geq \hat{p}^{\prime\prime}_{3}=0 \geq \hat{p}^{\prime}_{4},\\
&\hat{p}^{\prime}_{3}\geq \hat{p}^{\prime\prime}_{4}=0 \geq \hat{p}^{\prime}_{5},\\
&\hat{p}^{\prime}_{4}\geq \hat{p}^{\prime\prime}_{5}=0\geq 0.
\end{split}
\end{equation*}
Thus
$(\hat{p}^{\prime}_{4},\hat{p}^{\prime}_{5})=(0,0)$ and
$\hat{p}^{\prime}_2\geq 0$, $\hat{p}^{\prime}_3\geq 0$. It follows
that
\begin{equation*}
\begin{split}
\ell_{2} &
=\hat{p}^{\prime}_{2}-\max\{\hat{p}^{\prime}_{3},\hat{p}^{\prime\prime}_{3}\}
=\hat{p}^{\prime}_{2}-\max\{\hat{p}^{\prime}_{3},0\}
=\hat{p}^{\prime}_{2}-\hat{p}^{\prime}_{3},
\\
\ell_{3}& =\min\{\hat{p}^{\prime}_{3},\hat{p}^{\prime\prime}_{3}\}
-\max\{\hat{p}^{\prime}_{4},\hat{p}^{\prime\prime}_{4}\}
=\min\{\hat{p}^{\prime}_{3},0\} -\max\{0,0\} =0-0=0
,\\
\ell_{4}& =\min\{\hat{p}^{\prime}_{4},\hat{p}^{\prime\prime}_{4}\}
-\max\{\hat{p}^{\prime}_{5},\hat{p}^{\prime\prime}_{5}\}
=\min\{0,0\} -\max\{0,0\}
=0-0=0,\\
\ell_{5}& =\min\{\hat{p}^{\prime}_{5},\hat{p}^{\prime\prime}_{5}\}
=\min\{0,0\}=0.
\end{split}
\end{equation*}
Then the coefficient of $X^{\hat{p}^{\prime\prime}_{2}}$ in the
(finite) power series expansion in $X$
\begin{equation*}
X^{\hat{\epsilon}^{\prime}\hat{\epsilon}^{\prime\prime}\ell_{5}}
\prod^{4}_{i=2}\frac{X^{\ell_{i}+1}-X^{-\ell_{i}-1}}{X-X^{-1}}\\
=
\frac{X^{\hat{p}^{\prime}_{2}-\hat{p}^{\prime}_{3}+1}
-X^{-(\hat{p}^{\prime}_{2}-\hat{p}^{\prime}_{3})-1}}{X-X^{-1}}
\end{equation*}
is equal to its multiplicity. Hence we have
\begin{equation*}
-(\hat{p}^{\prime}_{2}-\hat{p}^{\prime}_{3}) \leq
\hat{p}^{\prime\prime}_{2}=\hat{p}^{\prime}_{2}-\hat{p}^{\prime}_{3}-2i
\leq \hat{p}^{\prime}_{2}-\hat{p}^{\prime}_{3}
\end{equation*}
for some $i\in{\mathbf Z}$ with $0\leq i\leq
\hat{p}^{\prime}_{2}-\hat{p}^{\prime}_{3}$.
Moreover, $\hat{p}_0^{\prime}=\hat{p}_0^{\prime\prime}=0,
\hat{p}_1^{\prime}=\hat{p}_1^{\prime\prime}$. Thus we get
\begin{equation*}
\Lambda^{\prime}
= \hat{p}^{\prime}_{1}\hat{y}_{1}
+\hat{p}^{\prime}_{2}\hat{y}_{2}+\hat{p}^{\prime}_{3}\hat{y}_{3}
\in{D(K_{2})}
\end{equation*}
with
\begin{equation*}
\begin{split}
& \hat{p}^{\prime}_{1}=\hat{p}^{\prime\prime}_{1},
\hat{p}^{\prime}_{2},\hat{p}^{\prime}_{3}
\in{\mathbf Z}, {\ } \hat{p}^{\prime}_{1}+
\hat{p}^{\prime}_{2}+\hat{p}^{\prime}_{3} \in 2\mathbf{Z}, {\ }
\\
& -(\hat{p}^{\prime}_{2}-\hat{p}^{\prime}_{3}) \leq
\hat{p}^{\prime\prime}_{2}=\hat{p}^{\prime}_{2}-\hat{p}^{\prime}_{3}-2i
\leq \hat{p}^{\prime}_{2}-\hat{p}^{\prime}_{3}
\end{split}
\end{equation*}
for some $i\in{\mathbf Z}$ with $0\leq i\leq
\hat{p}^{\prime}_{2}-\hat{p}^{\prime}_{3}$. Therefore,
\begin{equation*}
\Lambda^{\prime}=
p^{\prime}_{0}y_{0}+p^{\prime}_{1}y_{1}+p^{\prime}_{2}y_{2}+
p^{\prime}_{3}y_{3}+p^{\prime}_{4}y_{4}+\epsilon^{\prime}
p^{\prime}_{5}y_{5} \in{D(K_{2})}
\end{equation*}
with
\begin{equation*}
\begin{split}
p^{\prime}_{0}&=-\frac{1}{2}\hat{p}^{\prime}_0-3\hat{p}^{\prime}_{1}=-3\hat{p}^{\prime}_{1},\\
p^{\prime}_{1}&=\frac{1}{4}\hat{p}^{\prime}_0-\frac{1}{2}\hat{p}^{\prime}_{1}=-\frac{1}{2}\hat{p}^{\prime}_{1},\\
p^{\prime}_{2}&=\frac{1}{2}(\hat{p}^{\prime}_{2}+\hat{p}^{\prime}_{3}
+\hat{p}^{\prime}_{4}-\hat{\epsilon}^{\prime}\hat{p}^{\prime}_{5})
=\frac{1}{2}(\hat{p}^{\prime}_{2}+\hat{p}^{\prime}_{3}),\\
p^{\prime}_{3}
&=\frac{1}{2}(\hat{p}^{\prime}_{2}+\hat{p}^{\prime}_{3}
-\hat{p}^{\prime}_{4}+\hat{\epsilon}^{\prime}\hat{p}^{\prime}_{5})
=\frac{1}{2}(\hat{p}^{\prime}_{2}+\hat{p}^{\prime}_{3}),\\
p^{\prime}_{4}
&=\frac{1}{2}(\hat{p}^{\prime}_{2}-\hat{p}^{\prime}_{3}
+\hat{p}^{\prime}_{4}+\hat{\epsilon}^{\prime}\hat{p}^{\prime}_{5})
=\frac{1}{2}(\hat{p}^{\prime}_{2}-\hat{p}^{\prime}_{3}),\\
\epsilon^{\prime}p^{\prime}_{5}
&=\frac{1}{2}(\hat{p}^{\prime}_{2}-\hat{p}^{\prime}_{3}
-\hat{p}^{\prime}_{4}-\hat{\epsilon}^{\prime}\hat{p}^{\prime}_{5})
=\frac{1}{2}(\hat{p}^{\prime}_{2}-\hat{p}^{\prime}_{3}).
\end{split}
\end{equation*}
In particular, $\epsilon^{\prime}=1$,
$p^{\prime}_{2}=p^{\prime}_{3}=\frac{1}{2}(\hat{p}^{\prime}_{2}+\hat{p}^{\prime}_{3})$,
$p^{\prime}_{4}=p^{\prime}_{5}=\frac{1}{2}(\hat{p}^{\prime}_{2}-\hat{p}^{\prime}_{3})$.

Then $p_0=p_0^{\prime}$ and by the branching laws of $(Spin(10),
Spin(2)\cdot Spin(8))$, we get
\begin{equation*}
\begin{split}
&p_{1} \geq p^{\prime}_{2} \geq p_{3},\quad
p_{2} \geq p^{\prime}_{3}=p^{\prime}_{2} \geq p_{4},\\
&p_{3} \geq p^{\prime}_{4} \geq p_{5},\quad p_{4} \geq
p^{\prime}_{5}=p^{\prime}_{4}\geq 0.
\end{split}
\end{equation*}
Thus $p_{1}\geq p_{2}\geq p^{\prime}_{2}=p^{\prime}_{3} \geq
p_{3}\geq p_{4}\geq p^{\prime}_{4}=p^{\prime}_{5}\geq p_{5}\geq 0$.
It follows that
\begin{equation*}
\begin{split}
\ell_{1}& =p_{1}-\max\{p_{2},p^{\prime}_{2}\} =p_{1}-p_{2},
\\
\ell_{2}& =\min\{p_{2},p^{\prime}_{2}\}-\max\{p_{3},p^{\prime}_{3}\}
=p^{\prime}_{2}-p^{\prime}_{3}=0,
\\
\ell_{3} &=\min\{p_{3},p^{\prime}_{3}\}-\max\{p_{4},p^{\prime}_{4}\}
=p_{3}-p_{4},\\
\ell_{4} &=\min\{p_{4},p^{\prime}_{4}\}-\max\{p_{5},p^{\prime}_{5}\}
=p^{\prime}_{4}-p^{\prime}_{5}=0,\\
\ell_{5} &=\min\{p_{5},p^{\prime}_{5}\}=p_{5}.
\end{split}
\end{equation*}
Then the coefficient of
$X^{p^{\prime}_{1}}=X^{-\frac{1}{2}\hat{p}^{\prime}_{1}}
=X^{-\frac{1}{2}\hat{p}^{\prime\prime}_{1}}$ in the (finite) power
series expansion in $X$
\begin{equation*}
\begin{split}
&X^{\epsilon\epsilon^{\prime}\ell_{5}}
\prod^{4}_{i=1}\frac{X^{\ell_{i}+1}-X^{-\ell_{i}-1}}{X-X^{-1}}\\
= &X^{\epsilon\epsilon^{\prime}p_{5}}
\frac{X^{p_{1}-p_{2}+1}-X^{-(p_{1}-p_{2}+1)}}{X-X^{-1}}
\frac{X^{p_{3}-p_{4}+1}-X^{-(p_{3}-p_{4}+1)}}{X-X^{-1}}
\\
= &X^{\epsilon\epsilon^{\prime}p_{5}}
\sum^{p_{1}-p_{2}}_{i=0}\sum^{p_{3}-p_{4}}_{j=0}
X^{(p_{1}-p_{2})+(p_{3}-p_{4})-2(i+j)}
\end{split}
\end{equation*}
is equal to its multiplicity.

Then we have $\Lambda=
p_{0}y_{0}+p_{1}y_{1}+p_{2}y_{2}+p_{3}y_{3}+p_{4}y_{4}+\epsilon
p_{5}y_{5}\in{D(K, K_0)} $ with
$p_{0}=p^{\prime}_{0}=-3\hat{p}^{\prime}_{1} =6p^{\prime}_{1}\in
3{\mathbf Z}$.

\subsection{Eigenvalue computation}

Recall that the standard basis $\mathbf{e}_\alpha$
$(\alpha=0,1,\cdots, 5)$ of
$\mathfrak{t}=\{(\theta_0,\theta_1,\theta_2,\theta_3,\theta_4,\theta_5)
\mid \theta_\alpha\in \mathbf{R}\}$ corresponds to
$2\sqrt{-1}R(2e_1-e_2-e_3)\in \mathfrak{u}(1)$ and
$\sqrt{-1}R(e_2-e_3), D_{1,4}, D_{1,12}, D_{1,36}, D_{1,57}\in
spin(10)$, respectively. With respect to the inner product $\langle
u, v \rangle_{\mathfrak u}=-{\rm tr}uv$ for $u, v\in \mathfrak{k}
\subset \mathfrak{e}_6 \subset \mathfrak{gl}(H_3(\mathbf K)^{\mathbf
C})$,
\begin{equation*}
\langle \mathbf{e}_0, \mathbf{e}_0 \rangle=72, \quad \langle
\mathbf{e}_i, \mathbf{e}_i \rangle = 6, \quad \langle
\mathbf{e}_\alpha, \mathbf{e}_\beta \rangle = 0,
\end{equation*}
for $1\leq i \leq 5$ and $ 0\leq \alpha\neq \beta \leq 5$.
It follows that the inner products of the dual bases $\{ y_0, y_1,
y_2, y_3, y_4, y_5\}$ of $\mathfrak{t}^{*}$ corresponding to
$\{\mathbf{e}_0, \mathbf{e}_1, \mathbf{e}_2,\mathbf{e}_3,
\mathbf{e}_4, \mathbf{e}_5\}$ of $\mathfrak{t}$ are given by
\begin{equation*}
\begin{split}
& \langle y_\alpha, y_\beta \rangle =0, \quad (0\leq \alpha \neq \beta \leq 5),\\
& \langle y_0, y_0\rangle=\frac{1}{72},\quad \langle y_i, y_j
\rangle =\frac{1}{6}, \quad (1 \leq i\neq j \leq 5).
\end{split}
\end{equation*}

For
\begin{equation*}
\begin{split}
\Lambda&=
p_{0}y_{0}+p_{1}y_{1}+p_{2}y_{2}+p_{3}y_{3}+p_{4}y_{4}+\epsilon p_{5}y_{5}\in{D(K, K_0)},\\
\Lambda^{\prime}&=p_{0}y_{0}+\frac{p_{0}}{6}y_{1}+p_{2}^\prime
y_{2}+ p_{2}^{\prime}y_{3}
+p_{4}^{\prime}y_{4}+ p_{4}^{\prime} y_{5}\\
&= -\frac{p_0}{3} \hat{y}_1 + (p_2^{\prime}+p_4^{\prime})\hat{y}_2+ (p_2^{\prime}-p_4^{\prime})\hat{y}_3\in{D(K_2, K_0)},\\
\Lambda^{\prime\prime}&=-\frac{p_0}{3}\hat{y}_1+\hat{p}_2^{\prime\prime}\hat{y}_2
\in D(K_1, K_0),
\end{split}
\end{equation*}
the eigenvalue formulas of the Casimir operators
$\mathcal{C}_{K/K_0}$, $\mathcal{C}_{K_2/K_0}$ and
$\mathcal{C}_{K_1/K_0}$ with respect to the inner product $\langle
\, , \,\rangle_{\mathfrak u}$ are given respectively by
\begin{equation*}
\begin{split}
-c_{\Lambda} &= \frac{1}{72}p_{0}^{2}
+\frac{1}{6}\{(p_{1}+8)p_{1}+(p_{2}+6)p_{2}+(p_{3}+4)p_{3}
+(p_{4}+2)p_{4}+(p_{5})^{2}\},\\
-c_{\Lambda^\prime}&=\frac{1}{72}(p^{\prime}_{0})^{2}
+\frac{1}{6}\{(p^{\prime}_{1})^{2}+(p^{\prime}_{2}+6)p^{\prime}_{2}
+(p^{\prime}_{3}+4)p^{\prime}_{3} +(p^{\prime}_{4}+2)p^{\prime}_{4}
+(p^{\prime}_{5})^{2}\}\\
&=\frac{1}{72}(\hat{p}^{\prime}_{0})^{2}
+\frac{1}{6}\{(\hat{p}^{\prime}_{1})^{2}+(\hat{p}^{\prime}_{2}+6)\hat{p}^{\prime}_{2}
+(\hat{p}^{\prime}_{3}+4)\hat{p}^{\prime}_{3}
+(\hat{p}^{\prime}_{4}+2)\hat{p}^{\prime}_{4}
+(\hat{p}^{\prime}_{5})^{2}\}\\
&=\frac{1}{72}(p_0)^{2}
+\frac{1}{6}\{(\frac{1}{6}p_{0})^{2}+(p^{\prime}_{2}+6)p^{\prime}_{2}+(p^{\prime}_{2}+4)p^{\prime}_{2}
+(p^{\prime}_{4}+2)p^{\prime}_{4}+(p^{\prime}_{4})^{2}\},\\
-c_{\Lambda^{\prime\prime}}&=\frac{1}{72}(\hat{p}^{\prime\prime}_{0})^{2}
+\frac{1}{6}\{(\hat{p}^{\prime\prime}_{1})^{2}+(\hat{p}^{\prime\prime}_{2})^{2}
+(\hat{p}^{\prime\prime}_{3}+4)\hat{p}^{\prime\prime}_{3}
+(\hat{p}^{\prime\prime}_{4}+2)\hat{p}^{\prime\prime}_{4}
+(\hat{p}^{\prime\prime}_{5})^{2}\}\\
&=\frac{1}{6}\{(\frac{1}{3}p_{0})^{2}+(\hat{p}^{\prime\prime}_{2})^{2}\}.
\end{split}
\end{equation*}
Then for each $\Lambda\in{D(K,K_{0})}$, we have the following
eigenvalue formula
\begin{equation*}
\begin{split}
-c_{L}&=-12c_{\Lambda}+6c_{\Lambda^\prime}+3c_{\Lambda^{\prime\prime}}\\
&= 2\{(p_{1}+8)p_{1}+(p_{2}+6)p_{2}+(p_{3}+4)p_{3}
+(p_{4}+2)p_{4}+(p_{5})^{2}\}\\
& - \{(p^{\prime}_{2}+6)p^{\prime}_{2}
+(p^{\prime}_{2}+4)p^{\prime}_{2} +(p^{\prime}_{4}+2)p^{\prime}_{4}
+(p^{\prime}_{4})^{2}\} -\frac{1}{2}(\hat{p}^{\prime\prime}_{2})^{2}
\\
=& 2(p_{1}+8)p_{1}
+2((p_{2})^{2}-(p^{\prime}_{2})^{2})+12p_{2}-10p^{\prime}_{2}
+2(p_{3})^{2}+8p_{3}\\
& +2((p_{4})^{2}-(p^{\prime}_{4})^{2})+4p_{4}-2p^{\prime}_{4}
+2(p_{5})^{2}-\frac{1}{2}(\hat{p}^{\prime\prime}_{2})^{2}
\\
=& 2(p_{1}+8)p_{1}
+2((p_{2})^{2}-(p^{\prime}_{2})^{2})+2p_{2}+10(p_{2}-p^{\prime}_{2})
+2(p_{3})^{2}+8p_{3}\\
&
+2((p_{4})^{2}-(p^{\prime}_{4})^{2})+2p_{4}+2(p_{4}-p^{\prime}_{4})
+2(p_{5})^{2}-\frac{1}{2}(\hat{p}^{\prime\prime}_{2})^{2}
\\
\geq& 2(p_{1}+8)p_{1}+2p_{2}+
2(p_{3})^{2}-\frac{1}{2}(\hat{p}^{\prime\prime}_{2})^{2}+8p_{3}+2p_{4}
+2(p_{5})^{2}
\\
=& 2(p_{1}+8)p_{1}+2p_{2}+
(2(p^{\prime}_{5})^{2}-\frac{1}{2}(\hat{p}^{\prime\prime}_{2})^{2})
+8p_{3}+2p_{4}+2(p_{5})^{2}
\\
\geq& 2(p_{1}+8)p_{1}+2p_{2}+8p_{3}+2p_{4}+2(p_{5})^{2},
\end{split}
\end{equation*}
where the equalities hold if and only if $p_{2}=p^{\prime}_{2}$,
$p_{4}=p^{\prime}_{4}$,
$2p_{3}=2p_{4}=2p^{\prime}_{4}=2p^{\prime}_{5}
=\vert{\hat{p}^{\prime\prime}_{2}}\vert$ since we have
\begin{equation*}
\begin{split}
&p_{1}\geq p_{2}\geq p^{\prime}_{2}=p^{\prime}_{3} \geq p_{3}\geq
p_{4}\geq p^{\prime}_{4}=p^{\prime}_{5} \geq p_{5}\geq 0,
\\
&-2p^{\prime}_{4}=-2p^{\prime}_{5}=-(\hat{p}^{\prime}_{2}-\hat{p}^{\prime}_{3})
\leq \hat{p}^{\prime\prime}_{2} \leq
\hat{p}^{\prime}_{2}-\hat{p}^{\prime}_{3}
=2p^{\prime}_{5}=2p^{\prime}_{4}.
\end{split}
\end{equation*}

Notice that if $p_{1}=0$, then $-c_{L}=0$ and if $p_{1}\geq 2$, then
$-c_{L}\geq 40>30$. In case $p_{1}=\frac{3}{2}$, the possible
$\Lambda=(p_{0},p_{1},p_{2},p_{3},p_{4},p_{5})\in D(K,K_0)$ are
\begin{equation*}
\begin{split}
&(p_{0},\frac{3}{2},\frac{3}{2},\frac{3}{2},\frac{3}{2},\frac{3}{2}),
(p_{0},\frac{3}{2},\frac{3}{2},\frac{3}{2},\frac{3}{2},-\frac{3}{2}),
(p_{0},\frac{3}{2},\frac{3}{2},\frac{3}{2},\frac{3}{2},\frac{1}{2}),
(p_{0},\frac{3}{2},\frac{3}{2},\frac{3}{2},\frac{3}{2},-\frac{1}{2}),\\
&(p_{0},\frac{3}{2},\frac{3}{2},\frac{3}{2},\frac{1}{2},\frac{1}{2}),
(p_{0},\frac{3}{2},\frac{3}{2},\frac{3}{2},\frac{1}{2},-\frac{1}{2}),
(p_{0},\frac{3}{2},\frac{3}{2},\frac{1}{2},\frac{1}{2},\frac{1}{2}),
(p_{0},\frac{3}{2},\frac{3}{2},\frac{1}{2},\frac{1}{2},-\frac{1}{2}),\\
&(p_{0},\frac{3}{2},\frac{1}{2},\frac{1}{2},\frac{1}{2},\frac{1}{2}),
(p_{0},\frac{3}{2},\frac{1}{2},\frac{1}{2},\frac{1}{2},-\frac{1}{2}).
\end{split}
\end{equation*}
In these cases, the eigenvalue of the Casimir operator is given by
\begin{equation*}
\begin{split}
-c_{L}\geq&
2(p_{1}+8)p_{1}+2p_{2}+8p_{3}+2p_{4}+2(p_{5})^{2}\\
\geq& 2\cdot (\frac{3}{2}+8)\cdot
\frac{3}{2}+2\cdot\frac{1}{2}+8\cdot \frac{1}{2}+2\cdot \frac{1}{2}
+2\cdot (\frac{1}{2})^{2}\\
=&35>30.
\end{split}
\end{equation*}
Hence in order to decide the Hamiltonian stability, i.e., to compare
the first eigenvalue $-c_{L}$ and $30$, we can only concern on the
cases when $p_{1}=\frac{1}{2}$ or $1$.

It follows from the description of $D(K,K_0)$ that the element in
$D(K,K_0)$ when $p_1=\frac{1}{2}$ is given by
$$(p_{0},\frac{1}{2},\frac{1}{2},\frac{1}{2},\frac{1}{2},\frac{1}{2})
\text{ or }
(p_{0},\frac{1}{2},\frac{1}{2},\frac{1}{2},\frac{1}{2},-\frac{1}{2})
$$
and the element in $D(K,K_0)$ for $p_1=1$
is given by
\begin{equation*}
\begin{split}
&(p_{0},1,0,0,0,0), \quad (p_{0},1,1,0,0,0), \quad (p_{0},1,1,1,0,0),\\
&(p_{0},1,1,1,1,0), \quad (p_{0},1,1,1,1,1) \text{ or } \,
(p_{0},1,1,1,1,-1).
\end{split}
\end{equation*}

Using the branching laws, the descriptions of $D(K_2, K_0)$, $D(K_1,
K_0)$ and the eigenvalue formula given above, by direct computation
we get the following small eigenvalues in the above cases.

\begin{center}
\begin{tabular}
{|l|l|l|l|} \hline $\Lambda$ &$\Lambda^{\prime}$
&$\Lambda^{\prime\prime}$ &$-c_{L}$
\\
\hline
$3,\frac{1}{2},\frac{1}{2},\frac{1}{2},\frac{1}{2},\frac{1}{2}$
&$3,\frac{1}{2},\frac{1}{2},\frac{1}{2},\frac{1}{2},\frac{1}{2}$
&$0,-1,1,0,0,0$ &$15$
\\
\hline
$3,\frac{1}{2},\frac{1}{2},\frac{1}{2},\frac{1}{2},\frac{1}{2}$
&$3,\frac{1}{2},\frac{1}{2},\frac{1}{2},\frac{1}{2},\frac{1}{2}$
&$0,-1,-1,0,0,0$ &$15$
\\
\hline
$-3,\frac{1}{2},\frac{1}{2},\frac{1}{2},\frac{1}{2},-\frac{1}{2}$
&$-3,-\frac{1}{2},\frac{1}{2},\frac{1}{2},\frac{1}{2},\frac{1}{2}$
&$0,1,1,0,0,0$ &$15$
\\
\hline
$-3,\frac{1}{2},\frac{1}{2},\frac{1}{2},\frac{1}{2},-\frac{1}{2}$
&$-3,-\frac{1}{2},\frac{1}{2},\frac{1}{2},\frac{1}{2},\frac{1}{2}$
&$0,1,-1,0,0,0$ &$15$
\\
\hline $6,1,0,0,0,0$ &$6,1,0,0,0,0$ &$0,-2,0,0,0,0$ &$18$
\\
\hline $-6,1,0,0,0,0$ &$-6,-1,0,0,0,0$ &$0,2,0,0,0,0$ &$18$
\\
\hline $0,1,1,0,0,0$ &$0,0,0,0,0,0$ &$0,0,0,0,0,0$ &$32$
\\
\hline $0,1,1,0,0,0$ &$0,0,1,1,0,0$ &$0,0,0,0,0,0$ &$20$
\\
\hline $6,1,1,1,0,0$ &$6,1,1,1,0,0$ &$0,-2,0,0,0,0$ &$30$
\\
\hline $-6,1,1,1,0,0$ &$-6,-1,1,1,0,0$ &$0,2,0,0,0,0$ &$30$
\\
\hline $0,1,1,1,1,0$ &$0,0,1,1,0,0$ &$0,0,0,0,0,0$ &$36$
\\
\hline $0,1,1,1,1,0$ &$0,0,1,1,1,1$ &$0,0,0,0,0,0$ &$32$
\\
\hline $0,1,1,1,1,0$ &$0,0,1,1,1,1$ &$0,0,2,0,0,0$ &$30$
\\
\hline $0,1,1,1,1,0$ &$0,0,1,1,1,1$ &$0,0,-2,0,0,0$ &$30$
\\
\hline $6,1,1,1,1,1$ &$6,1,1,1,1,1$ &$0,-2,2,0,0,0$ &$32$
\\
\hline $6,1,1,1,1,1$ &$6,1,1,1,1,1$ &$0,-2,-2,0,0,0$ &$32$
\\
\hline $6,1,1,1,1,1$ &$6,1,1,1,1,1$ &$0,-2,0,0,0,0$ &$34$
\\
\hline $-6,1,1,1,1,-1$ &$-6,-1,1,1,1,1$ &$0,2,2,0,0,0$ &$32$
\\
\hline $-6,1,1,1,1,-1$ &$-6,-1,1,1,1,1$ &$0,2,-2,0,0,0$ &$32$
\\
\hline $-6,1,1,1,1,-1$ &$-6,-1,1,1,1,1$ &$0,2,0,0,0,0$ &$34$
\\
\hline
\end{tabular}
\end{center}
Here, $\Lambda=(p_0,p_1,p_2,p_3,p_4,p_5)\in{D(K,K_{0})}$,
$\Lambda^{\prime}=(p^{\prime}_0,p^{\prime}_1,p^{\prime}_2,p^{\prime}_3,p^{\prime}_4,p^{\prime}_5)\in{D(K_{2},K_{0})}$
and $\Lambda^{\prime\prime}
=(\hat{p}^{\prime\prime}_0,\hat{p}^{\prime\prime}_1,
\hat{p}^{\prime\prime}_2, \hat{p}^{\prime\prime}_3,
\hat{p}^{\prime\prime}_4, \hat{p}^{\prime\prime}_5)
\in{D(K_{1},K_{0})}$.

Since $\Lambda_1=(3,\frac{1}{2},\frac{1}{2},\frac{1}{2},\frac{1}{2},\frac{1}{2})$
corresponds to the complexified isotropy representation of EIII and
it is conjugate to $\Lambda_2=(-3,
\frac{1}{2},\frac{1}{2},\frac{1}{2},\frac{1}{2}, -\frac{1}{2})$, we
see that $\Lambda_1, \Lambda_2 \not\in D(K,K_{[\mathfrak{a}]})$.

Suppose that
$\Lambda=(p_{0},p_{1},p_{2},p_{3},p_{4},p_{5})=(6,1,0,0,0,0)\in D(K, K_0)$. Then
by the branching laws we get
$\Lambda^{\prime}=6y_{0}+y_{1}\in{D(K_{2},K_{0})}$,
$\Lambda^{\prime\prime}=-2\hat{y}_{1}\in{D(K_{1},K_{0})}$ and
$\Lambda^{\prime\prime\prime}=0\in{D(K_{0})}$.
Hence, the eigenvalue of the Casimir operator is $-c_{L}=18<30$.

On the other hand,
\begin{equation*}
\begin{split}
V_{\Lambda}\cong& \Bigl\{
\begin{pmatrix}
0&0&0\\
0&\xi_{2}&x_{1}\\
0&\bar{x}_{1}&\xi_{3}
\end{pmatrix}
{\ }\vert{\ }\xi_{2},\xi_{3}\in{\mathbf C}, x_{1}\in{\mathbf
K}^{\mathbf C}\Bigr\}
\cong {\mathbf C}^{10}\\
\supset&{\ }W_{\Lambda^{\prime}}
=U_{\Lambda^{\prime\prime}}=U_{\Lambda^{\prime\prime\prime}}
=(V_{\Lambda})_{K_{0}}
\end{split}
\end{equation*}
and $\rho_{\Lambda}=\mu_{6}\boxtimes \sigma_{{\mathbf C}^{10}}$,
where $\sigma_{{\mathbf C}^{10}}$ denotes the standard
representation of $SO(10)$, and for each $\phi(\theta)\in{U(1)}$,
\begin{equation*}
\mu_{6}(\phi(\theta))
\begin{pmatrix}
0&0&0\\
0&\xi_{2}&x_{1}\\
0&\bar{x}_{1}&\xi_{3}
\end{pmatrix}
= \theta^{-6}
\begin{pmatrix}
0&0&0\\
0&\xi_{2}&x_{1}\\
0&\bar{x}_{1}&\xi_{3}
\end{pmatrix},
\end{equation*}
where $\theta=e^{\sqrt{-1}t_{0}/2}$.
Since for any $\exp(\hat{t}_{0}\sqrt{-1}R(e_{1}-2e_{2}+e_{3}))\in S^1 \subset K_0$,
\begin{equation*}
\begin{split}
&\,\exp(\hat{t}_{0}\sqrt{-1}R(e_{1}-2e_{2}+e_{3}))\\
&= \exp(\hat{t}_{0}\frac{1}{2}\sqrt{-1}R(2e_{1}-e_{2}-e_{3}))
\exp(-\hat{t}_{0}\frac{3}{2}\sqrt{-1}R(e_{2}-e_{3}))\\
&\in U(1)\cdot Spin(2)\subset K,
\end{split}
\end{equation*}
we compute
\begin{equation*}
\begin{split}
&\rho_{\Lambda}(\exp (\hat{t}_{0}\sqrt{-1}R(e_{1}-2e_{2}+e_{3})))
\begin{pmatrix}
0&0&0\\
0&\xi_{2}&x_{1}\\
0&\bar{x}_{1}&\xi_{3}
\end{pmatrix}
\\
=& (\mu_{6}\boxtimes\sigma_{{\mathbf C}^{10}}) (\exp
(\hat{t}_{0}\sqrt{-1}R(e_{1}-2e_{2}+e_{3})))
\begin{pmatrix}
0&0&0\\
0&\xi_{2}&x_{1}\\
0&\bar{x}_{1}&\xi_{3}
\end{pmatrix}
\\
=&
\mu_{6}(\exp(\hat{t}_{0}\frac{1}{2}\sqrt{-1}R(2e_{1}-e_{2}-e_{3})))
\alpha_{23}(-\hat{t}_{0}\frac{3}{2})
\begin{pmatrix}
0&0&0\\
0&\xi_{2}&x_{1}\\
0&\bar{x}_{1}&\xi_{3}
\end{pmatrix}
\\
=& (e^{\sqrt{-1}\frac{1}{2}\hat{t}_{0}\frac{1}{2}})^{-6}
\begin{pmatrix}
0&0&0\\
0&e^{-\sqrt{-1}\hat{t}_{0}\frac{3}{2}}\xi_{2}&x_{1}\\
0&\bar{x}_{1}&e^{\sqrt{-1}\hat{t}_{0}\frac{3}{2}}\xi_{3}
\end{pmatrix}
\\
=& e^{-\sqrt{-1}\frac{3}{2}\hat{t}_{0}}
\begin{pmatrix}
0&0&0\\
0&e^{-\sqrt{-1}\hat{t}_{0}\frac{3}{2}}\xi_{2}&x_{1}\\
0&\bar{x}_{1}&e^{\sqrt{-1}\hat{t}_{0}\frac{3}{2}}\xi_{3}
\end{pmatrix}
\\
=&
\begin{pmatrix}
0&0&0\\
0&e^{-\sqrt{-1}3\hat{t}_{0}}\xi_{2}
&e^{-\sqrt{-1}\frac{3}{2}\hat{t}_{0}}x_{1}\\
0&e^{-\sqrt{-1}\frac{3}{2}\hat{t}_{0}}\bar{x}_{1}&\xi_{3}
\end{pmatrix}.
\end{split}
\end{equation*}
In particular,
\begin{equation*}
\rho_{\Lambda}(\exp(\hat{t}_{0}\sqrt{-1}R(e_{1}-2e_{2}+e_{3})))
\begin{pmatrix}
0&0&0\\
0&0&0\\
0&0&\xi_{3}
\end{pmatrix}
=
\begin{pmatrix}
0&0&0\\
0&0&0\\
0&0&\xi_{3}
\end{pmatrix}
\end{equation*}
for each $\hat{t}_{0}\in{\mathbf R}$. Hence,
\begin{equation*}
(V_{\Lambda})_{K_0} \cong {\ } \Bigl\{
\begin{pmatrix}
0&0&0\\
0&0&0\\
0&0&\xi_{3}
\end{pmatrix}
{\ }\vert{\ }\xi_{3}\in{\mathbf C}\Bigr\}.
\end{equation*}
But as a generator of ${\mathbf Z}_{4}$ of $K_{[\mathfrak{a}]}$, the
action of
$\alpha_{23}(\pi)(\alpha_{1},\alpha_{2},\alpha_{3})\in{K_{[{\mathfrak
a}]}}$ given by \eqref{eq:generator_EIII} is
\begin{equation*}
\begin{split}
&\rho_{\Lambda}(\alpha_{23}(\pi)(\alpha_{1},\alpha_{2},\alpha_{3}))
\begin{pmatrix}
0&0&0\\
0&0&0\\
0&0&\xi_{3}
\end{pmatrix}
\\
=& (\alpha_{23}(\pi))
\begin{pmatrix}
0&0&0\\
0&0&0\\
0&0&\xi_{3}
\end{pmatrix}
=
\begin{pmatrix}
0&0&0\\
0&0&0\\
0&0&-\xi_{3}
\end{pmatrix}.
\end{split}
\end{equation*}
Therefore $(V_{\Lambda})_{K_{[\mathfrak a]}}=\{0\}$ and
$\Lambda=6y_{0}+y_{1}\not\in{D(K,K_{[{\mathfrak a}]})}$. Similarly,
$\Lambda=-6y_{0}+y_{1}\not\in{D(K,K_{[{\mathfrak a}]})}$.

Suppose $\Lambda=(p_{0},p_{1},p_{2},p_{3},p_{4},p_{5})=(0,1,1,0,0,0)\in D(K, K_0)$.
Then
by the branching laws we get
\begin{eqnarray*}
\Lambda^{\prime}&=&(p^{\prime}_{0},p^{\prime}_{1},p^{\prime}_{2},p^{\prime}_{3},p^{\prime}_{4},p^{\prime}_{5})
=(0,0,1,1,0,0) \in D(K_2, K_0),\\
\Lambda^{\prime\prime}&=&(\hat{p}^{\prime\prime}_{0},\hat{p}^{\prime\prime}_{1},\hat{p}^{\prime\prime}_{2},
\hat{p}^{\prime\prime}_{3},\hat{p}^{\prime\prime}_{4},\hat{p}^{\prime\prime}_{5})
=(0,0,0,0,0,0)\in D(K_1, K_0).
\end{eqnarray*}
Here
$\rho^{\prime}_{\Lambda^{\prime}}
=\mathrm{Id}\boxtimes\mathrm{Id}\boxtimes\mathrm{Ad}^{\mathbf
C}_{Spin(8)}
=\mathrm{Id}\boxtimes\mathrm{Id}\boxtimes\mathrm{Ad}^{\mathbf C}_{SO(8)}
\in\mathcal{D}(K_{2})$.
Notice that
$W_{\Lambda^{\prime}}={\mathfrak o}(8)^{\mathbf C} ={\mathfrak o}(2)^{\mathbf C}\oplus{\mathfrak
o}(6)^{\mathbf C}\oplus M(2,6;{\mathbf R})^{\mathbf C}$, and the
subgroups $U(1)$ and $Spin(2)$ of $K_{2}=(U(1)\times (Spin(2)\cdot
Spin(8))/{\mathbf Z}_{4}$ acts trivially on ${\mathfrak
o}(8)^{\mathbf C}$. The subgroup $Spin(6)$ of $Spin(2)\cdot Spin(6)$
acts trivially on ${\mathfrak o}(2)^{\mathbf C}$, hence
$(W_{\Lambda^{\prime}})_{K_{0}}={\mathfrak o}(2)^{\mathbf C}$.
For $\alpha_{23}(\pi)(\alpha_{1},\alpha_{2},\alpha_{3})\in{K_{[{\mathfrak a}]}}$
a generator of ${\mathbf Z}_{4}$ given in \eqref{eq:generator_EIII},
$\alpha_{23}(\pi)$ and $(\alpha_{1},\alpha_{2},\alpha_{3})$ commute to each other.
$\alpha_{23}(\pi)\in{Spin(2)}$ acts trivially on ${\mathfrak o}(2)^{\mathbf C}$. $\alpha_{2}$ of
$(\alpha_{1},\alpha_{2},\alpha_{3})$ acts on ${\mathbf R}1+{\mathbf R}\mathbf{e}$ as $\left(
                                                    \begin{array}{cc}
                                                      0 & -1 \\
                                                      -1 & 0 \\
                                                    \end{array}
                                                  \right)$
and preserves the vector subspace orthogonally complementary to
${\mathbf R}1+{\mathbf R}\mathbf{e}$ in ${\mathbf K}\cong{\mathbf R}^{8}$.
Thus the $Spin(2)$-factor of $(\alpha_{1},\alpha_{2},\alpha_{3})$ in
$Spin(2)\cdot Spin(6)$ corresponds to
$\begin{pmatrix}
0&-1\\
-1&0
\end{pmatrix}
\in{O(2)}$.
Since its adjoint action of
on ${\mathfrak o}(2)^{\mathbf C}$ is $-\mathrm{Id}$, the adjoint
action of $(\alpha_{1},\alpha_{2},\alpha_{3})\in{Spin(8)}$ is not
trivial on ${\mathfrak o}(2)^{\mathbf C}$. Hence
$(W_{\Lambda^{\prime}})_{K_{[{\mathfrak a}]}}=\{0\}$
and in particular we obtain
$\Lambda=y_{1}+y_{2}\not\in{D(K,K_{[{\mathfrak a}]})}$.

Suppose $\Lambda=(p_{0},p_{1},p_{2},p_{3},p_{4},p_{5})=(6,1,1,1,0,0)\in D(K,K_0)$.
Then $\dim_{\mathbf C} V_{\Lambda}=120$.
By the branching laws we get
$\Lambda^{\prime}=6y_0+y_{1}+y_{2}+y_{3}=-2\hat{y}_1+\hat{y}_{2}+\hat{y}_{3}\in{D(K_{2},K_{0})}$,
$\Lambda^{\prime\prime}=-2\hat{y}_1 \in{D(K_{1},K_{0})}$ and
$\Lambda^{\prime\prime\prime}=0\in{D(K_{0})}$.
Hence, the eigenvalue of the Casimir operator is $-c_{L}=30$.

On the other hand,
$\rho^{\prime}_{\Lambda^{\prime}} =
\mathrm{Id}\boxtimes \mu_{-2}
\boxtimes \mathrm{Ad}^{\mathbf C}_{Spin(8)}
=\mathrm{Id}\boxtimes \mu_{-2} \boxtimes\mathrm{Ad}^{\mathbf C}_{SO(8)}
\in{\mathcal{D}(K_{2})}$.
Here $W_{\Lambda^{\prime}}={\mathfrak o}(8)^{\mathbf C} ={\mathfrak
o}(2)^{\mathbf C}\oplus{\mathfrak o}(6)^{\mathbf C}\oplus
M(2,6;{\mathbf R})^{\mathbf C}$.
Same as the previous case, we get
$(W_{\Lambda^{\prime}})_{K_{0}}={\mathfrak o}(2)^{\mathbf C}$.
Notice that for the generator
$\alpha_{23}(\pi)(\alpha_{1},\alpha_{2},\alpha_{3})$
of ${\mathbf Z}_{4}$ in $K_{[\mathfrak a]}$ given by \eqref{eq:generator_EIII},
the action of $\alpha_{23}(\pi)\in{Spin(2)}$ on $H_3(\mathbf{K}^{\mathbf C})$
is given by
\begin{equation*}
\left(
  \begin{array}{ccc}
    \xi_1 & x_3 & \bar{x}_2 \\
    \bar{x}_3 & \xi_2 & x_1 \\
    x_2 & \bar{x}_1 & \xi_3 \\
  \end{array}
\right) \mapsto \left(
  \begin{array}{ccc}
    \xi_1 & \sqrt{-1} x_3 & -\sqrt{-1}\bar{x}_2 \\
    \sqrt{-1}\bar{x}_3 & -\xi_2 & x_1 \\
   -\sqrt{-1} x_2 & \bar{x}_1 & -\xi_3 \\
  \end{array}
\right).
\end{equation*}
In particular, $\alpha_{23}(\pi)$ transforms $u_2$ to
$-\sqrt{-1}u_2$ and ${\mathbf e}u_2$ to $-\sqrt{-1}\mathbf{e}u_2$,
which says that $\alpha_{23}(\pi)$ acts on $\mathfrak{o}(2)\cong
\mathbf{R}1+\mathbf{R}\mathbf{e}$ as the matrix multiplication by
$\left(
                                   \begin{array}{cc}
                                     -\sqrt{-1} & 0 \\
                                     0 & -\sqrt{-1} \\
                                   \end{array}
                                 \right)
$. Thus $\mu_{-2}(\alpha_{23}(\pi))$ acts on $\mathfrak{o}(2)\cong
\mathbf{R}1+\mathbf{R}\mathbf{e}$ is just the matrix multiplication
by $-\mathrm{Id}$. On the other hand, $\alpha_{2}$ of
$(\alpha_{1},\alpha_{2},\alpha_{3})$ acts on ${\mathbf R}1+{\mathbf
R}e$ as $\begin{pmatrix}
0&-1\\
-1&0
\end{pmatrix}$.
Thus the $Spin(2)$-factor of $(\alpha_{1},\alpha_{2},\alpha_{3})$ in
$Spin(2)\cdot Spin(6)$ corresponds to
$\begin{pmatrix}
0&-1\\
-1&0
\end{pmatrix}
\in{O(2)}$.
Hence its
adjoint action
on $\mathfrak{o}(2)^{\mathbf C}$ is $-\mathrm{Id}$. Therefore,
$(V_{\Lambda})_{K_{[\mathfrak a]}}=\mathfrak{o}(2)^{\mathbf C}$, i.e.,
$\Lambda=6y_0+y_1+y_2+y_3\in D(K,K_{[\mathfrak
a]})=\mathfrak{o}(2)^{\mathbf C}$. Thus $\Lambda=6y_0+y_1+y_2+y_3\in
D(K,K_{[\mathfrak a]})$ with multiplicity $1$. Similarly,
$\Lambda=-6y_0+y_1+y_2+y_3\in D(K,K_{[\mathfrak a]})$ with
multiplicity $1$.

Suppose $\Lambda= (p_{0},p_{1},p_{2},p_{3},p_{4},p_{5})=(0,1,1,1,1,0)\in D(K,K_0)$.
Then $\dim_{\mathbf C} V_{\Lambda}=210$.
By the branching laws we get the following decomposition of $V_{\Lambda}$ into irreducible modules of $K_2$
and $K_1$:
\begin{equation*}
\begin{split}
&V_{\Lambda(0,1,1,1,1,0)}\\
=& W_{\Lambda^\prime_1(0,0,1,1,1,1)}\oplus W_{\Lambda^\prime_2(0,0,1,1,0,0)}\\
=&(U_{\Lambda_1^{\prime\prime}(0,0,0,0,0,0)}\oplus U_{\Lambda_1^{\prime\prime}(0,0,2,0,0,0)}
\oplus U_{\Lambda_1^{\prime\prime}(0,0,-2,0,0,0)})
\oplus U_{\Lambda_2^{\prime\prime}(0,0,0,0,0,0)}.
\end{split}
\end{equation*}
Then  the Casimir operator $-\mathcal{C}_L$ has eigenvalues $-c_{L}=32, 30, 30$ or $36$ along this decomposition.

On the other hand, $\Lambda_1^\prime=2\hat{y}_{2}\in{D(K_{2},K_{0})}$,
$W_{\Lambda_1^{\prime}} \cong S^{2}_{0}({\mathbf C}^{8}) \cong S^{2}_{0}({\mathbf K}^{8})$ and
\begin{equation*}
W_{\Lambda^\prime_1} \cap (V_{\Lambda})_{K_{0}}=
U_{\Lambda_1^{\prime\prime}(0,0,0,0,0,0)}\oplus
(U_{\Lambda_1^{\prime\prime}(0,0,2,0,0,0)})_{K_0}\oplus
(U_{\Lambda_1^{\prime\prime}(0,0,-2,0,0,0)})_{K_0}.
\end{equation*}

Recall that $\{1,c_{1},\cdots,c_{7}\}$ denote the standard basis of the
Cayley algebra $\mathbf K$ and $\mathbf{e}:=c_{4}$. Then
\begin{equation*}
3(1\cdot 1+\mathbf{e}\cdot \mathbf{e}) -(c_{1}\cdot c_{1}+c_{2}\cdot c_{2}+c_{3}\cdot
c_{3} +c_{5}\cdot c_{5}+c_{6}\cdot c_{6}+c_{7}\cdot c_{7})
\in{S^{2}_{0}({\mathbf K}^{\mathbf C})}.
\end{equation*}
For any $ A=
\begin{pmatrix}
\cos{t}&-\sin{t}\\
\sin{t}&\cos{t}
\end{pmatrix}
\in{SO(2)} $, $ A(1, \mathbf{e})=(1,\mathbf{e})
\begin{pmatrix}
\cos{t}&-\sin{t}\\
\sin{t}&\cos{t}
\end{pmatrix}
$.
Hence
\begin{equation*}
\begin{split}
A(1\cdot 1)
=&(\cos{t}1+\sin{t}\mathbf{e})\cdot(\cos{t}1+\sin{t}\mathbf{e})\\
=&\cos^{2}{t}(1\cdot 1)+\sin^{2}{t}(\mathbf{e}\cdot \mathbf{e})+2\sin{t}\cos{t}(1\cdot \mathbf{e}),\\
A(\mathbf{e}\cdot \mathbf{e})
=&(-\sin{t}1+\cos{t} \mathbf{e})\cdot (-\sin{t}1+\cos{t}\mathbf{e})\\
=&\sin^{2}{t}(1\cdot 1)+\cos^{2}{t}(\mathbf{e}\cdot \mathbf{e})-2\sin{t}\cos{t}(1\cdot \mathbf{e}),\\
A(1\cdot \mathbf{e})
=&(\cos{t}1+\sin{t} \mathbf{e})\cdot (-\sin{t}1+\cos{t}\mathbf{e})\\
=&-\frac{1}{2}\sin{2t}(1\cdot 1- \mathbf{e}\cdot \mathbf{e})+\cos{2t}(1\cdot \mathbf{e}).
\end{split}
\end{equation*}
In particular,
$A(1\cdot 1+ \mathbf{e}\cdot \mathbf{e})=1\cdot 1+ \mathbf{e}\cdot \mathbf{e}$
and
\begin{equation*}
\begin{split}
&A(3(1\cdot 1+ \mathbf{e}\cdot \mathbf{e}) -(c_{1}\cdot c_{1}+ c_{2}\cdot
c_{2}+c_{3}\cdot c_{3}+c_{5}\cdot c_{5}+ c_{6}\cdot c_{6}+ c_{7}\cdot c_{7}))\\
=& 3(1\cdot 1+\mathbf{e}\cdot \mathbf{e}) -(c_{1}\cdot c_{1}+c_{2}\cdot
c_{2}+c_{3}\cdot c_{3} +c_{5}\cdot c_{5}+c_{6}\cdot c_{6}+c_{7}\cdot
c_{7}).
\end{split}
\end{equation*}
Thus,
$3(1\cdot 1+\mathbf{e}\cdot \mathbf{e}) -(c_{1}\cdot c_{1}+ c_{2}\cdot c_{2}+ c_{3}\cdot
c_{3} +c_{5}\cdot c_{5}+c_{6}\cdot c_{6}+c_{7}\cdot c_{7}) \in
U_{\Lambda^{\prime\prime}(0,0,0,0,0,0)}$.
On the other hand,
$1\cdot 1-\mathbf{e}\cdot \mathbf{e}-2\sqrt{-1}(1\cdot \mathbf{e})$,
$1\cdot 1-\mathbf{e}\cdot \mathbf{e}+2\sqrt{-1}(1\cdot \mathbf{e})\in{S^{2}_{0}({\mathbf
K}^{\mathbf C})}$,
and we see that
\begin{equation*}
\begin{split}
&A(1\cdot 1-\mathbf{e}\cdot \mathbf{e}-2\sqrt{-1}1\cdot \mathbf{e})
=
\mathbf{e}^{\sqrt{-1}2t}(1\cdot 1-\mathbf{e}\cdot \mathbf{e}-2\sqrt{-1}1\cdot \mathbf{e}),\\
&A(1\cdot 1-\mathbf{e}\cdot \mathbf{e}+2\sqrt{-1}1\cdot \mathbf{e})
=
\mathbf{e}^{-\sqrt{-1}2t}(1\cdot 1- \mathbf{e}\cdot \mathbf{e}+2\sqrt{-1}1\cdot \mathbf{e}).\\
\end{split}
\end{equation*}
Hence,
$1\cdot 1-\mathbf{e}\cdot \mathbf{e}-2\sqrt{-1}1\cdot \mathbf{e} \in
U_{\Lambda^{\prime\prime}(0,0,2,0,0,0)}$,
$1\cdot 1-e\cdot \mathbf{e}+2\sqrt{-1}1\cdot \mathbf{e}
\in U_{\Lambda^{\prime\prime}(0,0,-2,0,0,0)}$.
Therefore,
\begin{equation*}
\begin{split}
&(V_{\Lambda})_{K_{0}}\cap W_{\Lambda^\prime_1}\\
=& {\ }
{\mathbf C} (3(1\cdot 1+ \mathbf{e}\cdot \mathbf{e}) -(c_{1}\cdot
c_{1}+c_{2}\cdot c_{2}+c_{3}\cdot c_{3} +c_{5}\cdot c_{5}+c_{6}\cdot
c_{6}+c_{7}\cdot c_{7}))\\
&\oplus
{\mathbf C}(1\cdot 1-\mathbf{e}\cdot \mathbf{e}-2\sqrt{-1}1\cdot \mathbf{e})\\
&\oplus {\mathbf C}(1\cdot 1-\mathbf{e}\cdot \mathbf{e}+2\sqrt{-1}1\cdot \mathbf{e}).
\end{split}
\end{equation*}
Since the action of the generator $\alpha_{23}(\pi)(\alpha_{1},\alpha_{2},\alpha_{3})$ is given by
\begin{eqnarray*}
&(\alpha_{23}(\pi)(\alpha_{1},\alpha_{2},\alpha_{3}))
(2\sqrt{-1}(1\cdot \mathbf{e})) =2(\sqrt{-1}\mathbf{e}\cdot (-1))
=-2\sqrt{-1}(1\cdot \mathbf{e}),\\
&(\alpha_{23}(\pi)(\alpha_{1},\alpha_{2},\alpha_{3})) (1\cdot
1-\mathbf{e}\cdot \mathbf{e})
=1\cdot 1-\mathbf{e}\cdot \mathbf{e},\\
&(\alpha_{23}(\pi)(\alpha_{1},\alpha_{2},\alpha_{3})) (1\cdot
1+\mathbf{e}\cdot \mathbf{e}) =-(1\cdot 1+ \mathbf{e}\cdot \mathbf{e}),
\end{eqnarray*}
we obtain
\begin{equation*}
(V_{\Lambda})_{K_{[{\mathfrak a}]}}\cap W_{\Lambda^\prime_1}={\mathbf C}(1\cdot 1-\mathbf{e}\cdot \mathbf{e}),
\end{equation*}
and thus
$\Lambda=y_{1}+y_{2}+y_{3}+y_{4}\in{D(K,K_{[{\mathfrak a}]})}$,
which has eigenvalue $30$ of $-\mathcal{C}_L$ with the multiplicity $1$.
Therefore,
\begin{equation*}
\begin{split}
n(L^{30})&=\dim_{\mathbf C} V_{(6,1,1,1,0,0)} + \dim_{\mathbf C}
V_{(-6,1,1,1,0,0)}
+\dim_{\mathbf C} V_{(0,1,1,1,0,0)}\\
&=120+120+210=450\\
&=\dim SO(32)-\dim U(1)\cdot Spin(10)=n_{kl}(L^{30}).
\end{split}
\end{equation*}
Then we conclude that
\begin{thm0} The Gauss image
$$
L^{30}=(U(1)\cdot Spin(10))/(S^{1}\cdot Spin(6) \cdot {\mathbf Z}_{4}) \subset Q_{30}({\mathbf C})
$$
is strictly Hamiltonian stable.
\end{thm0}



\begin{thebibliography}{9999}

\bibitem{Abe-Yokota97}
K.~Abe and I.~Yokota, \textit{Realization of spaces $E_{6}/(U(1)
Spin(10))$, $E_{7}/(U(1)E_{6})$, $E_{8}/(U(1)E_{7})$ and their
volumes}, Tokyo J.~Math. \textbf{20} no.1, (1997), 73--86.

\bibitem{Amar-Ohn03}
A.~Amarzaya and Y.~Ohnita, \textit{Hamiltonian stability of certain
minimal Lagrangian submanifolds in complex projective spaces},
Tohoku Math.~J. \textbf{55} (2003), 583--610.


\bibitem{Amar-Ohn06pp}
A.~Amarzaya and Y.~Ohnita, \textit{Hamiltonian stability of parallel
Lagrangian submanifolds embedded in complex space forms}, in
preparation.


\bibitem{Asoh1981}
T.~Asoh, \textit{Compact transformation groups on ${\mathbf
Z}_{2}$-cohomology spheres with orbits of codimension $1$},
Hiroshima~Math.~J. \textbf{11} (1981), 571--616.

\bibitem{Asoh1983}
T.~Asoh, \textit{Supplement to \lq\lq Compact transformation groups
on ${\mathbf Z}_{2}$-cohomology spheres with orbits of codimension
$1$\rq\rq}, Hiroshima~Math.~J. \textbf{13} (1983), 647--652.


\bibitem{Borel-Hirzebruch}A.~Borel and F.~Hirzebruch,
\em Characteristic classes and homogeneous, I. \em Amer. J. Math.
\textbf{80} (1958), 458--538.

\bibitem{Bourbaki} N. Bourbaki,
\emph{ Elements de mathematique. Fasc. XXXIV. Groupes et algebres de
Lie},
Actualites Scientifiques et Industrielles, No. 1337, Hermann, Paris
1968.


\bibitem{Brocker-Dieck} T.~Br\"{o}cker and T.~tom Dieck,
\emph{Representations of compact Lie groups}, GTM 98, Springer-Verlag, New York, 1985.


\bibitem{Castro-Urbano}
I.~Castro and F.~Urbano, \textit{Minimal Lagrangian surfaces in
$S^2\times S^2$}, Commun. Anal. Geom. \textbf{15} (2007), 217--248.


\bibitem{Cecil-Chi-Jensen}
T.~Cecil, Q.-S.~Chi and G.~R.~Jensen, \textit{Isoparametric
hypersurfaces with four principal curvatures}, Ann.~of Math.
\textbf{166} (2007), 1--76.


\bibitem{Chevalley-Schafer}
C.~Chavalley and R.D.~Schafer, \textit{The exceptional simple Lie
algrebras $F_4$ and $E_6$}, Proc. Nat. Acad. Sci. U.S.A. \textbf{36}
(1950), 137--141.


\bibitem{ChiII} Q.-S.~Chi,
\em Isoparametric hypersurfaces with four principal curvatures, II,
\em Nagoya Math. J. \textbf{204} (2011), 1--18.

\bibitem{ChiIII} Q.-S.~Chi,
\em Isoparametric hypersurfaces with four principal curvatures, III,
\em arXiv:1104.3249, 2011.

\bibitem{FKM} D.~Ferus, H.~Karcher and H.~F.~M\"unzner,
\textit{Cliffordalgebren und neue isoparametrische Hyperfl\"achen},
Math.~Z. \textbf{177} (1981), 479--502.


\bibitem{Freudenthal} H.~Freudenthal, \textit{Oktaven, Ausnahmegruppen and Oktavengeometrie},
Utrecht, 1951, reprinted in Geom. Dedicata \textbf{19} (1985), 7--63.


\bibitem{Fulton-Harris} W.~Fulton and J.~Harris,
\textit{Representation Theory: A First Course}, GTM 129,
Springer-Verlag, 1991.

\bibitem{Goto-Grosshans1978}
M.~Goto and F.~D.~Grosshans, \textit{Semisimple Lie Algebras},
Lecture Notes in Pure and Applied Mathematics, Vol.~38. Marcel
Dekker, Inc., New York-Basel, 1978.


\bibitem{HL}{R.~Harvey and H.B.~Lawson,
 \textit{Calibrated geometries},
Acta Mathematica  \textbf{148} (1982), 47--157.}

\bibitem{Hsiang-Lawson1971}
W.-Y.~Hsiang and H.B.~Lawson, Jr., \textit{Minimal submanifolds of
low cohomogeneity}, J.~Diff. Geom. \textbf{5} (1971), 1--38.

\bibitem{Ikeda-Taniguchi78}
A.~Ikeda and Y.~Taniguchi, \textit{Spectra and eigenforms of the
Laplacians on $S^n$ and $P^n({\mathbf C})$}, Osaka~J.~Math.
\textbf{15} (1978), 515--546.

\bibitem{Immervoll} S.~Immervoll,
\textit{On the classification of isoparametric hypersurfaces with
four distinct principal curvatures in spheres}, Ann. of Math.
\textbf{168} (2008), 1011--1024.

\bibitem{MIse76} M.~Ise,
\textit{Bounded symmetric domains of exceptional type},
J.~Fac.~Sci.~Univ.~Tokyo Section IA Mathematics \textbf{23} (1976),
75--105.

\bibitem{LMW}H.~Li, H.~Ma and G.~Wei,
\textit{A class of minimal Lagrangain submanifolds in complex
hyperquadrics}, Geom. Dedicata \textbf{158}, no.1 (2012), 137--148.


\bibitem{Lepowsky} J.~Lepowsky,
\emph{Multiplicity formulas for certain semisimple Lie groups},
Bull. Amer. Math. Soc. \textbf{77}(1971), 601--605.

\bibitem{Kimura-Tanaka}
T.~Kimura and M.S.~Tanaka, \textit{Totally geodesic submanifolds in
compact symmetric spaces of rank Two}, Tokyo J.~Math. \textbf{31}, no.2 (2008),
421--447.

\bibitem{Ma-Ohnita1}
H.~Ma and Y.~Ohnita, \textit{On Lagrangian submanifolds in complex
hyperquadrics and isoparametric hypersurfaces in spheres},
 Math. Z. ~\textbf{261} (2009), 749--785.

\bibitem{Ma-OhnitaCONM2010}
H.~Ma and Y.~Ohnita, \emph{Differential geometry of Lagrangian
submanifolds and Hamiltonian variational problems}, Contemporary
Mathematics. \textbf{542} (2011), 115--134. Proceedings of the
Conference in honor of J.\ C.\ Wood's sixtieth birthday. Edited by
E.\ Loubeau and S.\ Montaldo.

\bibitem{McKay-Patera}
W.~G.~McKay and J.~Patera, \textit{Tables of dimensions, indices,
and branching rules for representations of simple Lie algebras},
Lecture Notes in Pure and Applied Mathematics, Vol.~69. Marcel
Dekker, Inc., New York-Basel, 1981.


\bibitem{JLMilhorat1998}
J.-L.~Milhorat, \textit{Spectrum of the Dirac operator on
$Gr_{2}({\mathbf C}^{m+2})$}, J.~Math.~Phys.~\textbf{39} (1) (1998),
594--609.

\bibitem{Miyaoka}R.~Miyaoka,
\em Isoparametric hypersurfaces with $(g,m)=(6,2)$,
\em to appear in Ann.~of~Math. 2012.

\bibitem{Muenzner1}
H.~F.~M\"unzner, \textit{Isoparametrische Hyperfl\"ache in
Sph\"aren}, Math.~Ann.~\textbf{251} (1980), 57--71.

\bibitem{Muenzner2}
H.~F.~M\"unzner, \textit{Isoparametrische Hyperfl\"ache in
Sph\"aren, II}, Math.~Ann.~\textbf{256} (1981), 215--232.

\bibitem{Oh90} Y.~G.~Oh,
\textit{Second variation and stabilities of minimal Lagrangian
submanifolds in K\"ahler manifolds}, Invent.~math. \textbf{101}
(1990), 501--519.

\bibitem{Oh91} Y.~G.~Oh,
\textit{Tight Lagrangian submanifolds in ${\mathbf C}P^{n}$}, Math.
Z. \textbf{207} (1991), 409--416.

\bibitem{Oh93} Y.~G.~Oh,
\textit{Volume minimization of Lagrangian submanifolds under
Hamiltonian deformations}, Math.~Z. \textbf{212} (1993), 175--192.

\bibitem{Oh94} Y.~G.~Oh,
\textit{Mean curvature vector and symplectic topology of Lagrangian
submanifolds in Einstein-K\"ahler manifolds}, Math.~Z. \textbf{216}
(1994), 471--482.

\bibitem{Ono_JMSJ}H.~Ono,
\textit{Minimal Lagrangian submanifolds in adjoint orbits and upper
bounds on the first eigenvalue of the Laplacian}, J. Math. Soc.
Japan \textbf{55} (2003), 243--254.

\bibitem{Ono_TJM}H.~Ono,
\textit{Minimality and Hamiltonian stability of Lagrangian
submanifolds in adjoint orbits}, Tokyo J. Math. \textbf{26} (2003),
83--106.

\bibitem{HOno04}H.~Ono,
\em Integral formula of Maslov index and its applications,
\em Japan J. Math. (N.S.) \textbf{30}(2004), 413--421.

\bibitem{Ono_AGAG}H.~Ono,
\textit{Hamiltonian stability of Lagrangian tori in toric K\"{a}hler
manifolds}, Ann. Glob. Anal. Geom. \textbf{37} (2007), 329--343.

\bibitem{Ozeki-TakeuchiI} H.~Ozeki and M.~Takeuchi,
\textit{On some types of isoparametric hypersurfaces in spheres I},
Tohoku Math.~J.(2) \textbf{27} (1975), 515--559.

\bibitem{Ozeki-TakeuchiII} H.~Ozeki and M.~Takeuchi,
\textit{On some types of isoparametric hypersurfaces in spheres II},
Tohoku Math.~J.(2) \textbf{28} (1976), 7--55.

\bibitem{Palmer97} B.~Palmer,
\textit{Hamiltonian minimality and Hamiltonian stability of Gauss
maps}, Diff.~Geom. and its Appl. \textbf{7} (1997), 51--58.

\bibitem{SW}
R.~Schoen and J.~Wolfson, \textit{Minimizing volume among Lagrangian
submanifolds}, in `Differential Equations: La Pietra 1996', (ed.:
Giaquinta, Shatah and Varadhan), Proc. Symp. Pure Math {\bf 65}
(1999), 181--199.

\bibitem{Takagi-Takahashi1972}
R.~Takagi and T.~Takahashi, \textit{On the principal curvatures of
homogeneous hypersurfaces in a unit sphere}, Differential Geometry,
in honor of K.~Yano, Kinokuniya, Tokyo, 1972, 469--481.

\bibitem{Takeuchi}
M.~Takeuchi, Modern theory of spherical functions, Translations of
Mathematical Monographs, AMS, 1993.

\bibitem{Tsukamoto}C.~Tsukamoto,
\emph{Spectra of Laplace-Beltrami operators on $SO(n+2)/SO(2)\times
SO(n)$ and $Sp(n+1)/Sp(1)\times Sp(n)$}, Osaka J. Math. \textbf{18}
(1981), 407--426.


\bibitem{FUchida1980}
F.~Uchida, \textit{An orthogonal transformation group of
$(8k-1)$-sphere}, J.~Differential Geom. \textbf{15} (1980),
569--574.


\bibitem{Urbano07}F.~Urbano,
\textit{Hamiltonian stability and index of minimal Lagrangian
surfaces of the complex projective plane}, Indiana Univ. Math. J.
\textbf{56} (2007), 931--946.

\bibitem{SYamaguchi79} S.~Yamaguchi,
\textit{Spectra of flag manifolds}, Mem.~Fac.~Sci.~Kyushu
Univ.~\textbf{33} (1979), 95--112.


\bibitem{IchiroYokota09}
I.~Yokota, \textit{Exceptional Lie Groups},
arXiv:0902.0431v1[mathDG] (2009).

\end{thebibliography}
\end{document}